\definecolor{gr}{rgb}   {0.,   0.69,   0.23 }
\definecolor{bl}{rgb}   {0.,   0.5,   1. }
\definecolor{mg}{rgb}   {0.85,  0.,    0.85}
\definecolor{yl}{rgb}   {0.8,  0.7,   0.}
\definecolor{or}{rgb}  {0.7,0.2,0.2}
\tikzset{
	dot/.style={circle,fill=black,draw=black,inner sep=0pt,minimum size=0.5mm},
	>=stealth,
	}
\tikzset{
	dot2/.style={circle,fill=black,draw=black,inner sep=0pt,minimum size=0.2mm},
	>=stealth,
	}
\tikzset{
	ddot/.style={circle,fill=black,draw=black,inner sep=0pt,minimum size=0.8mm},
	>=stealth,
	}
\tikzset{decision/.style={ 
        draw,
        diamond,
        aspect=1.5
    }}
\tikzset{dia2/.style
={diamond,fill=white,draw=black,inner sep=0pt,minimum size=1mm},
	>=stealth,
	}
\tikzset{dia/.style
={star,fill=black,draw=black,inner sep=0pt,minimum size=1mm},
	>=stealth,
	}
\tikzset{dia/.style
={diamond,fill=black,draw=black,inner sep=0pt,minimum size=1.3mm},
	>=stealth,
	}
\def\DeclareSymbol#1#2#3{\xsavebox{#1}{\tikz[baseline=#2,scale=0.15]{#3}}}
\def\<#1>{\xusebox{#1}}
\newsavebox{\peA}
\newsavebox{\pneA}
\newsavebox{\plA}
\newsavebox{\pgA}
\newsavebox{\pleA}
\newsavebox{\pgeA}
\newsavebox{\pezA}
\savebox{\peA}{\tikz \draw (0,0) node[shape=circle,draw,inner sep=0pt,minimum size=8.5pt] {\scriptsize  $=$};}
\savebox{\pneA}{\tikz \draw (0,0) node[shape=circle,draw,inner sep=0pt,minimum size=8.5pt] {\footnotesize $\neq$};}
\savebox{\plA}{\tikz \draw (0,0) node[shape=circle,draw,inner sep=0pt,minimum size=8.5pt] {\scriptsize $<$};}
\savebox{\pgA}{\tikz \draw (0,0) node[shape=circle,draw,inner sep=0pt,minimum size=8.5pt] {\scriptsize $>$};}
\savebox{\pleA}{\tikz \draw (0,0) node[shape=circle,draw,inner sep=0pt,minimum size=8.5pt] {\scriptsize $\leqslant$};}
\savebox{\pgeA}{\tikz \draw (0,0) node[shape=circle,draw,inner sep=0pt,minimum size=8.5pt] {\scriptsize $\geqslant$};}
\savebox{\pezA}{\tikz \draw (0,0) node[shape=circle,draw,
fill=white, 
inner sep=0pt,minimum size=8.5pt]{} ;}
\def \peB{\mathchoice
{\scalebox{.7}{{\usebox{\peA}}}}
{\scalebox{.7}{{\usebox{\peA}}}}
{\scalebox{.7}{{\usebox{\peA}}}}
{}
}
\def \plB{\mathchoice
{\scalebox{.7}{{\usebox{\plA}}}}
{\scalebox{.7}{{\usebox{\plA}}}}
{\scalebox{.7}{{\usebox{\plA}}}}
{}
}
\def \pgB{\mathchoice
{\scalebox{.7}{{\usebox{\pgA}}}}
{\scalebox{.7}{{\usebox{\pgA}}}}
{\scalebox{.7}{{\usebox{\pgA}}}}
{}
}
\def \pgeB{\mathchoice
{\scalebox{.7}{{\usebox{\pgeA}}}}
{\scalebox{.7}{{\usebox{\pgeA}}}}
{\scalebox{.7}{{\usebox{\pgeA}}}}
{}
}
\def \pezB{\mathchoice
{\scalebox{.7}{{\usebox{\pezA}}}}
{\scalebox{.7}{{\usebox{\pezA}}}}
{\scalebox{.7}{{\usebox{\pezA}}}}
{}
}
\newcommand{\pe}{\mathbin{{\peB}}}
\newcommand{\pl}{\mathbin{{\plB}}}
\newcommand{\pg}{\mathbin{{\pgB}}}
\newcommand{\pge}{\mathbin{{\pgeB}}}
\newcommand{\pez}{\mathbin{{\pezB}}}
\tikzset{>=stealth',
         cvertex/.style={circle,draw=black,inner sep=1pt,outer sep=3pt},
         vertex/.style={circle,fill=black,inner sep=1pt,outer sep=3pt},
         star/.style={circle,fill=yellow,inner sep=0.75pt,outer sep=0.75pt},
         tvertex/.style={inner sep=1pt,font=\scriptsize},
         gap/.style={inner sep=0.5pt,fill=white}}
\tikzstyle{mybox} = [draw=black, fill=blue!10, very thick,
\tikzstyle{boxtitle} =[fill=blue!50, text=white,rectangle,rounded corners]
\tikzstyle{decision} = [diamond, draw, fill=blue!20,
\tikzstyle{block} = [rectangle, draw, fill=blue!20,
\tikzstyle{line} = [draw, very thick, color=black!50, -latex']
\tikzstyle{cloud} = [draw, ellipse,fill=red!40, 
\tikzstyle{cloud2} = [draw, ellipse,fill=red!30, text=white,text width=10em, node distance=2.5cm, text centered, minimum height=4em]
\tikzstyle{cloud3} = [draw, ellipse, fill=cyan!30, 
\tikzstyle{cloud4} = [draw, ellipse,fill=orange!70, node distance=2.5cm,
\tikzstyle{cloud5} = [draw, ellipse,fill=red!20, node distance=2.5cm,
\tikzstyle{cloud6} = [draw, ellipse,fill=red!20, node distance=2.5cm,
\tikzset{
    position/.style args={#1:#2 from #3}{
        at=(#3.#1), anchor=#1+180, shift=(#1:#2)
    }
}
\newtheorem{theorem}{Theorem} [section]
\newtheorem{lemma}[theorem]{Lemma}
\newtheorem{proposition}[theorem]{Proposition}
\newtheorem{remark}[theorem]{Remark}
\newtheorem{corollary}[theorem]{Corollary}
\DeclareMathOperator*{\supp}{supp}
\DeclareMathOperator{\Id}{Id}
\DeclareMathOperator{\sgn}{sgn}
\newcommand{\1}{\hspace{0.2mm}\textup{I}\hspace{0.2mm}}
\newcommand{\II}{\text{I \hspace{-2.8mm} I} }
\newcommand{\III}{\text{I \hspace{-2.9mm} I \hspace{-2.9mm} I}}
\newcommand{\IV}{\text{I \hspace{-2.9mm} V}}
\newcommand{\5}{\text{V}}
\newcommand{\noi}{\noindent}
\newcommand{\Z}{\mathbb{Z}}
\newcommand{\R}{\mathbb{R}}
\newcommand{\T}{\mathbb{T}}
\let\Re=\undefined\DeclareMathOperator*{\Re}{Re}
\let\Im=\undefined\DeclareMathOperator*{\Im}{Im}
\let\P= \undefined
\newcommand{\P}{\mathbf{P}}
\newcommand{\E}{\mathbb{E}}
\renewcommand{\L}{\mathcal{L}}
\newcommand{\K}{\mathcal{K}}
\newcommand{\F}{\mathcal{F}}
\newcommand{\al}{\alpha}
\newcommand{\be}{\beta}
\newcommand{\dl}{\delta}
\newcommand{\nb}{\nabla}
\newcommand{\Dl}{\Delta}
\newcommand{\eps}{\varepsilon}
\newcommand{\kk}{\kappa}
\newcommand{\g}{\gamma}
\newcommand{\ld}{\lambda}
\newcommand{\Ld}{\Lambda}
\newcommand{\s}{\sigma}
\newcommand{\Si}{\Sigma}
\newcommand{\ft}{\widehat}
\newcommand{\wt}{\widetilde}
\newcommand{\cj}{\overline}
\newcommand{\dt}{\partial_t}
\newcommand{\dd}{\partial}
\newcommand{\ud}{\underline}
\newcommand{\ta}{\theta}
\renewcommand{\l}{\ell}
\renewcommand{\o}{\omega}
\renewcommand{\O}{\Omega}
\newcommand{\les}{\lesssim}
\newcommand{\ges}{\gtrsim}
\newcommand{\jb}[1]
{\langle #1 \rangle}
\newcommand{\jbb}[1]
{[\hspace{-0.6mm}[ #1 ]\hspace{-0.6mm}]}
\newcommand{\ind}{\mathbf 1}
\newcommand{\N}{\mathbb{N}}
\renewcommand{\H}{\mathcal{H}}
\DeclareMathOperator{\Lip}{Lip}
\newtheorem*{ackno}{Acknowledgements}
\newcommand{\I}{\mathcal{I}}
\newcommand{\If}{\mathfrak{I}}
\newcommand{\A}{\mathcal{A}}
\newcommand{\C}{\mathcal{C}}
\numberwithin{equation}{section}
\numberwithin{theorem}{section}
\newcommand{\Q}{\mathbb{Q}}
\newcommand{\PP}{\mathbb{P}}
\DeclareMathOperator{\Law}{Law}
\newcommand{\ZZ}{\mathfrak{Z}}
\newcommand{\muu}{\vec{\mu}}
\newcommand{\rhoo}{\vec{\rho}}
\newcommand{\W}{\mathcal{W}}
\newcommand{\U}{\mathcal{U}}
\newcommand{\dr}{\theta}
\newcommand{\Dr}{\Theta}
\newcommand{\Ha}{\mathbb{H}_a}
\newcommand{\Hc}{\mathbb{H}_c}
\newcommand{\NN}{\mathcal{N}}
\newcommand{\D}{\mathcal{D}}
\newcommand{\Res}{\mathfrak{R}}
\newcommand{\Qxy}{Q_{X,Y}}
\newcommand{\QxyN}{Q_{X_N,Y_N}}
\newcommand{\dia}{\diamond}
\newcommand{\too}{\longrightarrow}
\newcommand{\proj}{\Pi}
\newcommand{\Ups}{\Upsilon}
\newcommand{\UUps}{{\ud \Upsilon}}
\newcommand{\Ab}{\mathbb{A}}
\newcommand{\plan}{\mathfrak{p}}
\newcommand{\Xc}{\mathcal{X}}
\newcommand{\Zc}{\mathcal{Z}}
\begin{document}
\baselineskip = 14pt

\title[Stochastic quantization of $\Phi^3_3$]
{Stochastic quantization of the $\Phi^3_3$-model}

\author[T.~Oh, M.~Okamoto, and L.~Tolomeo]
{Tadahiro Oh, Mamoru Okamoto, and Leonardo Tolomeo}

\address{
Tadahiro Oh, School of Mathematics\\
The University of Edinburgh\\
and The Maxwell Institute for the Mathematical Sciences\\
James Clerk Maxwell Building\\
The King's Buildings\\
Peter Guthrie Tait Road\\
Edinburgh\\ 
EH9 3FD\\
  United Kingdom,
 and
 School of Mathematics and Statistics, Beijing Institute of Technology, Beijing 100081, China}

\email{hiro.oh@ed.ac.uk}

\address{
Mamoru Okamoto\\
Department of Mathematics\\
 Graduate School of Science\\ Osaka University\\
Toyonaka\\ Osaka\\ 560-0043\\ Japan}
\email{okamoto@math.sci.osaka-u.ac.jp}

\address{
Leonardo Tolomeo\\ 
Mathematical Institute, Hausdorff Center for Mathematics, Universit\"at Bonn, Bonn, Germany,
and
School of Mathematics\\
The University of Edinburgh\\
and The Maxwell Institute for the Mathematical Sciences\\
James Clerk Maxwell Building\\
The King's Buildings\\
Peter Guthrie Tait Road\\
Edinburgh\\ 
EH9 3FD\\
 United Kingdom}

\email{l.tolomeo@ed.ac.uk}

\subjclass[2020]{60H15, 81T08, 60L40, 35L71,  35K15}

\keywords{$\Phi^3_3$-measure;
stochastic quantization;
stochastic nonlinear wave equation; nonlinear wave equation;
Gibbs measure; 
paracontrolled calculus}

\begin{abstract}

We study the construction of the $\Phi^3_3$-measure
and  complete the 
 program
on the (non-)construction of the focusing Gibbs measures,
initiated by Lebowitz, Rose, and Speer (1988).
This problem turns out to be critical, 
exhibiting the following phase transition.
In the weakly nonlinear regime, 
we prove normalizability of the $\Phi^3_3$-measure
and show that it is singular with respect to the massive Gaussian free field.
Moreover, we show that there exists a shifted measure with respect to which 
the $\Phi^3_3$-measure is absolutely continuous.
In the strongly nonlinear regime, 
by further developing the 
machinery 
introduced  by the authors, 
we establish non-normalizability of 
the $\Phi^3_3$-measure.
Due to the singularity of the $\Phi^3_3$-measure with respect to the massive Gaussian free field, 
this  non-normalizability part poses a particular challenge
as compared to our previous works.
In order to overcome this issue, we first construct a $\s$-finite version
of the $\Phi^3_3$-measure and show that this measure is not normalizable.
Furthermore, we prove that the truncated $\Phi^3_3$-measures
have no weak limit in a natural space, even up to a subsequence.

We also study the 
dynamical problem 
for  the canonical stochastic quantization of the $\Phi^3_3$-measure, 
namely, 
  the three-dimensional stochastic damped nonlinear wave equation with 
a quadratic
nonlinearity
forced by an additive space-time white noise
(= the hyperbolic $\Phi^3_3$-model).
By adapting the paracontrolled approach, 
in particular from the  works by Gubinelli, Koch, and the first author (2018)
and by the authors (2020),  
we prove
almost sure global well-posedness
of the hyperbolic $\Phi^3_3$-model
and invariance of the Gibbs  measure in the weakly nonlinear regime.
In the globalization part, 
we introduce a new, 
  conceptually simple and straightforward approach, 
where we  directly work with the  (truncated) Gibbs measure, 
using the Bou\'e-Dupuis variational formula
and ideas from  theory of optimal transport.

\end{abstract}

%
\maketitle
\tableofcontents

\section{Introduction}
\label{SEC:1}


\subsection{Overview}

In this paper, we study
the $\Phi^3_3$-measure 
on 
the three-dimensional torus on $\T^3 = (\R/2\pi\Z)^3$, formally written as
\begin{align}
d\rho(u) = Z^{-1} \exp \bigg(\frac{\s}3 \int_{\T^3} u^3 dx\bigg) d\mu(u), 
\label{H1}
\end{align}

\noi
and its associated stochastic quantization.
Here, $\mu$ is the massive Gaussian free field on $\T^3$
and the coupling constant $\s \in \R\setminus \{0\}$ measures the strength
of the cubic interaction.
The associated energy functional 
for the $\Phi^3_3$-measure  $\rho$ in \eqref{H1} 
is given by
\begin{align}
E(u)
= \frac 12 \int_{\T^3} |\jb{\nabla} u|^2 dx 
- \frac \s3 \int_{\T^3} u^3 dx, 
\label{H2}
\end{align}

\noi
where $\jb{\nabla} = \sqrt{1-\Dl}$.
Since $u^3$ is not sign definite,
the sign of $\s$ does not play any role
and, in particular, the problem is not  defocusing even if $\s < 0$.

Our main goal in this paper is to study 
the construction of the $\Phi^3_3$-measure
and its associated dynamics, 
following the program 
on the (non-)construction of focusing\footnote{By ``focusing'', 
we also mean the  non-defocusing (non-repulsive) case, such as the cubic interaction 
appearing in \eqref{H1}, 
such that the interaction potential 
(for example, $\frac{\s}3 \int_{\T^3} u^3 dx$ in \eqref{H1}) is unbounded from above.} 
Gibbs measures, 
initiated  by Lebowitz, Rose, and Speer \cite{LRS}.
Let us first go over the known results.
In the seminal work \cite{LRS}, 
Lebowitz, Rose, and Speer  studied the one-dimensional case
and constructed
the one-dimensional focusing Gibbs measures\footnote{As pointed out by 
Carlen, Fr\"ohlich, and Lebowitz \cite[p.\,315]{CFL}, there is in fact an error in the Gibbs measure
construction in \cite{LRS}, 
which was amended by Bourgain \cite{BO94} 
(for $2 < p< 6$ with any $K > 0$
and $p = 6$ with $0 < K \ll 1$) and 
the first and third authors
with Sosoe \cite{OST} (for $p = 6$ and 
$K \leq  \|Q\|_{L^2(\R)}^2$).
See \cite{OST} for a further discussion.
}
in the $L^2$-(sub)critical setting (i.e.~$2 < p\leq 6$)
with an $L^2$-cutoff:
\begin{align}
d\rho(u) = Z^{-1} 
\ind_{\{\int_\T |u|^2 dx \le K \}} \exp \bigg(\frac{1}{p} \int_{\T} |u|^p dx\bigg) d\mu(u)
\label{AX1}
\end{align}

\noi
or with a taming by the $L^2$-norm:
\begin{align}
d\rho(u) = Z^{-1} 
 \exp \bigg(\frac{1}{p} \int_{\T} |u|^p dx - A \Big(\int_\T u^2 dx\Big)^{q}\bigg) d\mu(u)
\label{AX2}
\end{align}

\noi
for some appropriate $q = q(p)$, 
where $\mu$ denotes the periodic Wiener measure on $\T$.
See Remark 2.1 in \cite{LRS}.
Here, the parameter $A > 0$ denotes the so-called
(generalized) 
chemical potential
and the expression \eqref{AX2} is referred to as the generalized grand-canonical Gibbs measure.
See also  the work by 
Carlen, Fr\"ohlich, and Lebowitz \cite{CFL}
for a further discussion, where they describe the details
of the construction of the 
generalized grand-canonical Gibbs measure in \eqref{AX2}
in the $L^2$-subcritical setting ($2 < p < 6$).
In  \cite{LRS}, 
Lebowitz, Rose, and Speer  also proved
non-normalizability of the focusing Gibbs measure $\rho$ in \eqref{AX1}:
\begin{align*}
\E_\mu \bigg[
\ind_{\{\int_\T |u|^2 dx \le K \}} \exp \bigg(\frac{1}{p} \int_{\T} |u|^p dx\bigg) \bigg]
= \infty
\end{align*}

\noi
in (i) 
 the $L^2$-supercritical case  ($ p>6$) for any $K > 0$
and 
(ii) the  $L^2$-critical case ($ p>6$), 
provided that $K > \|Q\|_{L^2(\R)}^2$, 
where $Q$ is the (unique\footnote{Up to the symmetries.}) optimizer for the Gagliardo-Nirenberg-Sobolev inequality
on $\R$
such that $\|Q\|_{L^6(\R)}^6 = 3\|Q'\|_{L^2(\R)}^2$.
In  a recent work \cite{OST}, the first and third authors
with Sosoe proved that the focusing $L^2$-critical Gibbs measure $\rho$ in \eqref{AX1}
(with $p = 6$)
is indeed constructible 
at the optimal mass threshold $K = \|Q\|_{L^2(\R)}^2$, 
thus answering an open question posed by Lebowitz, Rose, and Speer \cite{LRS}
and completing the program in the one-dimensional case.

In the two-dimensional setting, 
Brydges and Slade \cite{BS}
continued the study on the focusing Gibbs measures
and showed that with the quartic interaction ($p = 4$), 
the focusing Gibbs measure $\rho$ in \eqref{AX1}
(even with proper renormalization on the potential energy
$\frac{1}{4} \int_{\T^2} |u|^4 dx$ and on the $L^2$-cutoff)
is not normalizable as a probability measure.
See also \cite{OS} for an alternative proof.
In view of 
\begin{align}
 \ind_{\{|\,\cdot \,| \le K\}}(x) \le \exp\big( -  A |x|^\gamma\big) \exp\big(A K^\g\big)
\label{AX4}
\end{align}

\noi
for any $K>0$, $\g>0$, and $A>0$, 
this non-normalizability result 
of the focusing Gibbs measure on $\T^2$
with the quartic interaction ($p = 4$)
also applies
to the generalized grand-canonical Gibbs measure in \eqref{AX2}.
Furthermore, the same non-normalizability applies
for higher order interaction (for an integer $p\geq 5$).

In \cite{BO95}, 
 Bourgain reported Jaffe's construction of a  $\Phi^3_2$-measure endowed with a Wick-ordered
 $ L^2$-cutoff:
\begin{align}
d\rho = Z^{-1}
 \ind_{ \{\int_{\T^2} :\,u^2: \, dx\leq K\}} 
 e^{ \frac 13 \int_{\T^2}  :u^3: \, dx  }d \mu(u) ,
\notag
\end{align}
where $:\! u^2 \!:$ and $:\! u^3 \!:$ denote the Wick powers of $u$, 
and  $\mu$ denotes the massive Gaussian free field on $\T^2$.
See also \cite{OS}.
We point out that such a Gibbs measure with a (Wick-ordered) $L^2$-cutoff 
is not suitable for stochastic quantization in the heat and wave settings
due to the lack of the $L^2$-conservation.
In \cite{BO95}, 
Bourgain instead 
constructed
the following
generalized grand-canonical formulation of 
 the $\Phi^3_2$-measure:
\begin{align}
d\rho(u) = Z^{-1}
 e^{ \frac 13 \int_{\T^2}  :u^3: \, dx  - A
\big(\int_{\T^2} :\,u^2: \, dx\big)^2} d \mu(u) 
\notag
\end{align}

\noi
for sufficiently large $A>0$.
See \cite{OTh2, GKO, OOR, GKOT}
for the associated (stochastic) nonlinear wave dynamics.

In this paper, we consider the three-dimensional case
and complete the focusing Gibbs measure construction program
initiated by Lebowitz, Rose, and Speer \cite{LRS}.
More precisely, we consider
the following generalized grand-canonical formulation
of 
 the $\Phi^3_3$-measure 
(namely, with a taming by the Wick-ordered $L^2$-norm):
\begin{align}
d\rho(u) = Z^{-1}
 \exp \bigg( \frac \s3 \int_{\T^3} :\! u^3\!: \, dx - A
\bigg|\int_{\T^3} :\! u^2 \! : \, dx\bigg|^\g\bigg) d \mu(u)
\label{H3}
\end{align}

\noi
for suitable $A, \g > 0$.
We now state our first main  result in a somewhat formal manner.
See Theorem \ref{THM:Gibbs}
for the precise statement.

\begin{theorem}\label{THM:Gibbs0}
The following phase transition holds
for the $\Phi^3_3$-measure in \eqref{H3}.

\begin{itemize}
\item[\textup{(i)}] \textup{(weakly nonlinear regime).}
Let $0 < |\s|\ll 1$ and $\g = 3$.
Then, 
by introducing a further renormalization, 
the $\Phi^3_3$-measure 
$\rho$ in~\eqref{H3} exists as a probability measure, 
provided that $A = A(\s)> 0$ is sufficiently large.
In this case, the resulting $\Phi^3_3$-measure
$\rho$ and the massive Gaussian free field $\mu$ on $\T^3$
are mutually singular.

\smallskip

\item[\textup{(ii)}] \textup{(strongly nonlinear regime).}
When $|\s| \gg1$,
the $\Phi^3_3$-measure in \eqref{H3} is not normalizable
for any $A> 0$ and $\g > 0$.
Furthermore, the truncated $\Phi^3_3$-measures $\rho_N$ 
\textup{(}see \eqref{GibbsN} below\textup{)}
do not have a weak limit, 
as measures on $\C^{-\frac 34}(\T^3)$, 
even up to a subsequence. 

\end{itemize}

\end{theorem}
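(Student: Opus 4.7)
The plan for both parts is to analyze the truncated partition function
\begin{align*}
Z_N = \mathbb{E}_\mu\bigg[\exp\bigg(\frac{\sigma}{3}\int_{\mathbb{T}^3} :\!u_N^3\!:\, dx - R_N(u) - A\bigg|\int_{\mathbb{T}^3} :\!u_N^2\!:\, dx\bigg|^\gamma\bigg)\bigg]
\end{align*}
via the Bou\'e--Dupuis variational formula, which rewrites $-\log Z_N$ as an infimum over adapted drifts $\theta$ of $\mathbb{E}\big[U_N(Y + I(\theta)) + \tfrac12 \|\theta\|_{L^2_{t,x}}^2\big]$, where $Y$ carries the law of the truncated free field and $I(\theta)$ is the integrated drift. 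A paracontrolled decomposition $\theta = \theta^{(1)} + \theta^{(2)}$ is used to absorb the most singular resonances of the cubic term into the Gaussian reference, after which the counterterm $R_N$ can be chosen so that the cubic functional of $Y + I(\theta)$ splits as a sum of explicit stochastic objects plus a remainder controlled by $\|I(\theta^{(2)})\|_{H^1}^2$ and by $\big|\int :\!u^2\!:\, dx\big|$. The two halves of the theorem then reduce to opposite bounds on this variational problem.

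For part (i), once the splitting is in place, the worst uncontrolled cubic piece is bounded by $|\sigma|\cdot M(\sigma)\, \big|\int :\!u^2\!:\, dx\big|^3$ together with terms absorbed by the drift norm. The choice $\gamma = 3$ is therefore critical: the taming term dominates provided $A \gg |\sigma|^3$, yielding a uniform two-sided bound on $\log Z_N$ and tightness of $\{\rho_N\}$ in $\mathcal{C}^{-3/4-\epsilon}(\mathbb{T}^3)$. Singularity of the limit $\rho$ with respect to $\mu$ is then shown by producing an explicit paracontrolled shift $\Psi$ such that $\rho$ is absolutely continuous with respect to $\operatorname{Law}(Y + \Psi)$; since $\Psi$ is almost surely not in the Cameron--Martin space of $\mu$, Feldman--H\'ajek forces $\rho \perp \mu$.

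For part (ii), the plan is to insert a family of deterministic drifts $\theta_M$, sharply localized in a frequency annulus of scale $M$ and oriented to maximise $\int :\!u^3\!:$, into the same variational formula. A scaling argument yields $\tfrac{\sigma}{3}\int :\!(Y + I(\theta_M))^3\!:\, dx \lesssim -M^{\alpha}$ while $\tfrac12\|\theta_M\|^2 + A\big|\int :\!u^2\!:\, dx\big|^\gamma \lesssim M^{\alpha-\delta}$ once $|\sigma| \gg 1$, so the variational infimum tends to $-\infty$ and $\log Z_N \to +\infty$; by tuning the profile one makes the cubic gain beat the taming cost for any $\gamma$. The substantive obstacle, flagged in the abstract, is that $\rho$ is \emph{singular} with respect to $\mu$, so the divergent partition function does not by itself preclude a subsequential weak limit of $\rho_N$. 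I would address this by (a) defining a $\sigma$-finite ``reference'' measure $\widetilde{\rho}$ as a locally integrable density against a shifted Gaussian $\nu$ relative to which the renormalized cubic density makes pointwise sense, and proving $\widetilde{\rho}(\mathcal{C}^{-3/4}) = \infty$ by inserting the same drifts $\theta_M$ into a Bou\'e--Dupuis formula adapted to $\nu$; and (b) using $\theta_M$ to construct, for each $L > 0$, a bounded continuous functional $\Phi_L$ on $\mathcal{C}^{-3/4}$ whose $\rho_N$-integrals grow to $+\infty$ while $\rho_N(\{\Phi_L \leq L\})$ remains uniformly bounded below, forcing escape of mass and ruling out every subsequential weak limit. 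Reconciling the singular renormalization with the variational estimates in both regimes simultaneously is the main difficulty.
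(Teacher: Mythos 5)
The overall variational strategy is aligned with the paper: Bou\'e--Dupuis for both regimes, a shift of the drift to absorb the divergent second renormalization, criticality of $\gamma = 3$, and for part (ii) the recognition that singularity of $\rho$ with respect to $\mu$ means the divergence of $Z_N$ does not by itself rule out a subsequential weak limit, so a $\sigma$-finite reference measure is needed. You are also right that the non-normalizability uses a deterministic drift with a concentrated, soliton-like profile. However, there are two concrete gaps.

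First, your singularity argument is not valid as stated. Feldman--H\'ajek applies to pairs of \emph{Gaussian} measures; $\operatorname{Law}(Y+\Psi)$ is not Gaussian when $\Psi$ is a nonlinear functional of $Y$. Nor does the Cameron--Martin intuition ``$\Psi \notin H^1$ a.s.\ $\Rightarrow$ singular'' hold for $Y$-dependent shifts: take $\Psi = Y' - Y$ with $Y'$ an independent copy of $Y$; then $\Psi \notin H^1$ a.s., yet $\operatorname{Law}(Y+\Psi) = \mu$. The paper sidesteps this entirely by proving singularity \emph{directly}: it exhibits a test functional, namely $(\log N_k)^{-3/4} R_{N_k}(u)$, which tends to $0$ $\mu$-a.s.\ (via a second-moment/Borel--Cantelli estimate) but to $-\infty$ $\rho$-a.s.\ (via the variational formula showing $\int e^{(\log N_k)^{-3/4} R_{N_k}(u)} d\rho \to 0$). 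If you want to go through a shifted measure as in Barashkov--Gubinelli, you would need a much more careful argument than a one-line appeal to Feldman--H\'ajek.

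Second, the reference measure in part (ii) cannot be defined as ``a locally integrable density against a shifted Gaussian.'' The obstruction is sharper than you describe: $R_N^{\diamond}(u)$, and hence $e^{-R_N^\diamond(u)}$, does \emph{not} converge at all, not even in distribution, because the divergent constant $\alpha_N \to \infty$ is removed at the level of the exponent but the associated stochastic term $\sigma\int :\!Y_N^2\!: \Theta_N\, dx$ does not stabilize. What the paper does is tame the truncated density with an additional weight $\delta\|u_N\|_{\mathcal{A}}^{20}$ (the Besov-type $\mathcal{A}$-norm), prove via Bou\'e--Dupuis that the resulting probability measures $\nu_{N,\delta}$ converge to a limit $\nu_\delta$, and only then define $\overline\rho_\delta = e^{\delta\|u\|_{\mathcal{A}}^{20}}\, d\nu_\delta$. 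The taming is not cosmetic: without it there is no convergent object against which to write a density. Relatedly, the non-convergence of $\rho_N$ in $\mathcal{A}(\mathbb{T}^3)$ in the paper follows by a clean contradiction --- if $\rho_{N_k} \rightharpoonup \nu_0$ on $\mathcal{A}$, then the relation $d\nu_\delta = (\text{const})\cdot e^{-\delta\|u\|_{\mathcal{A}}^{20}} d\nu_0$ would force $\int 1\, d\overline\rho_\delta < \infty$, contradicting non-normalizability --- rather than via escape-of-mass functionals. Finally, you gloss over the one genuinely critical estimate in part (i): controlling $\|\Upsilon_N\|_{L^2}^6$ by $\big|\int(2Y_N\Upsilon_N + \Upsilon_N^2)\, dx\big|^3 + \|\Upsilon_N\|_{H^1}^2 + B(\omega)$ (Lemma~\ref{LEM:Dr8}), which is where the scaling-critical nature of the cubic interaction appears and requires a careful Littlewood--Paley orthogonality argument; it is not the generic consequence of $\gamma = 3$ that your proposal suggests.
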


Theorem \ref{THM:Gibbs0} shows that the 
$\Phi^3_3$-model is critical
in terms of the measure construction.
In the case 
of  a higher order focusing interaction on $\T^3$
(replacing $:\!u^3\!:$ by $:\!u^p\!:$ in \eqref{H3} for an integer $p \geq 4$ 
with $\s > 0$ when $p$ is even), 
or  the $\Phi^3_4$-model on the four-dimensional torus $\T^4$, 
the focusing nonlinear interaction gets only worse
and thus
we expect that 
the same approach would yield non-normalizability.
Hence, in view of the previous results
\cite{LRS, BO94, BS, OST, OS}, 
Theorem \ref{THM:Gibbs0}
completes
the focusing Gibbs measure construction program,
thus answering an open question 
posed by Lebowitz, Rose, and Speer 
(see ``Extension to higher dimensions'' in \cite[Section 5]{LRS}).
See also our companion paper \cite{OOTol1}, 
where we completed
 the program
on the (non-)construction of the focusing Hartree Gibbs measures in the three-dimensional
setting.
See Remark \ref{REM:Hart} for a further discussion.

We point out that in the weakly nonlinear regime, 
the $\Phi^3_3$-measure $\rho$
 is constructed only as a weak limit of the truncated $\Phi^3_3$-measures.
 Moreover, we prove that there exists a shifted measure
 with respect to which the $\Phi^3_3$-measure is
 absolutely continuous;
 see Appendix~\ref{SEC:AC}.
As for the non-normalizability result
in Theorem~\ref{THM:Gibbs0}\,(ii), 
our proof is based on 
a refined version of the machinery introduced by the authors \cite{OOTol1}
and the first and third authors with Seong~\cite{OS},
which was in turn inspired by the work 
of the third author and Weber \cite{TW} 
on the non-construction of the Gibbs measure 
for the focusing cubic nonlinear Schr\"odinger equation (NLS) on the real line,
giving an alternative proof of Rider's result \cite{Rider}. 
We, however,  point out that there is an additional difficulty
in proving Theorem~\ref{THM:Gibbs0}\,(ii) 
due to the singularity of the $\Phi^3_3$-measure
with respect to the base massive Gaussian free field $\mu$.
(Note that the focusing Gibbs measures considered
in \cite{OOTol1, OS}
are equivalent to the base  Gaussian measures.)
In order to overcome this difficulty, 
we first introduce  a reference measure\footnote{This reference measure
is introduced as a tamed version of the $\Phi^3_3$-measure
and is 
not to be confused with the shifted measure mentioned above.
See Proposition \ref{PROP:ref}.} 
$\nu_\dl$
and construct a $\sigma$-finite version of the $\Phi^3_3$-measure
(expressed in terms of the reference measure $\nu_\dl$).
We then show that this $\s$-finite version of the $\Phi^3_3$-measure is not normalizable.
See 
Section~\ref{SEC:non}.

\begin{remark}\label{REM:Bourgain}\rm

(i) 
As the name suggests, 
the $\Phi^3_3$-measure
is of interest 
from the point of view
of constructive  quantum field theory.
In the defocusing case ($\s < 0$)
with a quartic interaction ($u^4$ in place of $u^3$), 
the measure $\rho$ in \eqref{H1} 
corresponds
to the well-studied $\Phi^4_3$-measure.
The construction of the $\Phi^4_3$-measure
is one of the early achievements in constructive quantum field theory.
For an overview of the  constructive program, 
see the introductions in \cite{AK, GH18b}.

\smallskip

\noi
(ii)
In the one- and two-dimensional cases, 
the non-normalizability of the focusing Gibbs measures
 emerges 
   in the $L^2$-critical case ($p = 6$ when $d = 1$
and $p = 4$ when $d = 2$), suggesting
its close relation to the finite time blowup phenomena
of the associated focusing NLS.
See~\cite{OST} for a further discussion.
In the three-dimensional case, 
it is interesting to  note that the $\Phi^3_3$-model
is $L^2$-{\it subcritical}
and yet we have the non-normalizability (in the strongly nonlinear regime).
Thus,  the non-normalizability of 
the $\Phi^3_3$-measure is not related to a blowup phenomenon.
Note that, unlike the focusing $\Phi^6_1$-
and $\Phi^4_2$-models which make sense 
in the complex-valued setting, 
the $\Phi^3_3$-model makes sense only in the real-valued setting.
It seems  of interest to investigate a possible relation to 
the following Gagliardo-Nirenberg inequality:
\[\int_{\R^3}|u(x)|^3 dx \les  \| u \|_{L^2(\R^3)}^\frac 32 \|u\|_{\dot H^1(\R^3)}^\frac 32. \]

\smallskip

\noi
(iii) 
Consider 
a $\Phi^3_3$-measure with a Wick-ordered $L^2$-cutoff:\footnote{With a slight modification, 
one may also consider $\rho$ in \eqref{H4} with a slightly different cutoff
$\ind_{\{ \int_{\T^3} \, : \, u^2 :\, dx  \le K\}}$, i.e.~without an absolute value, 
and prove the same (non-)normalizability results.
See Remark 5.10 in \cite{OOTol1}.}
\begin{align}
d\rho(u) = Z^{-1}
\ind_{\{ |\int_{\T^3} \, : \, u^2 :\, dx | \le K\}}
 \exp \bigg( \frac \s3 \int_{\T^3} :\! u^3\!: \, dx 
\bigg) d \mu(u).
\label{H4}
\end{align}

\noi
Then, an analogue of 
Theorem \ref{THM:Gibbs0}
holds 
for the $\Phi^3_3$-measure in \eqref{H4}.
In view of 
\eqref{AX4}, 
Theorem~\ref{THM:Gibbs0}
implies normalizability of the $\Phi^3_3$-measure in \eqref{H4}
(with a further renormalization)
in the weakly nonlinear regime ($0 < |\s| \ll 1$).
On the other hand, in the strongly nonlinear regime
($|\s| \gg 1$), 
a modification of the proof of Theorem \ref{Gibbs0}\,(ii)  (see also  \cite{OOTol1, OS})
yields non-normalizability
of the $\Phi^3_3$-measure in \eqref{H4}
for any $K > 0$.

\end{remark}

\begin{remark}\label{REM:Hart}\rm

In  \cite{BO97}, 
Bourgain 
 studied
the invariant Gibbs dynamics
for the focusing
 Hartree NLS 
on~$\T^3$ (with $\s > 0$):
\begin{align}
i \dt u + (1- \Dl) u -  \s(V*|u|^2) u = 0, 
\label{NLS1}
\end{align}

\noi
where $V = \jb{\nb}^{-\be} $ is  the Bessel potential of order $\be>0$.
In \cite{BO97}, 
Bourgain first constructed the 
focusing Gibbs measure with a Hartree-type interaction
(for complex-valued $u$), 
endowed 
with a Wick-ordered $L^2$-cutoff:
\begin{align*}
d\rho(u) = Z^{-1} \ind_{\{\int_{\T^3} :\,|u|^2: \, dx\leq K\}} \, e^{\frac \s4 \int_{\T^3} (V*:|u|^2:)\,  :|u|^2 :\, dx} d\mu(u)
\end{align*}

\noi
for $\be > 2$
and then constructed the invariant Gibbs dynamics
for the associated dynamical problem.\footnote{By combining
the construction of the focusing Hartree Gibbs measure
in the critical case ($\be = 2$) with $0 < \s \ll 1$ in \cite{OOTol1}  and 
the well-posedness result in \cite{DNY4}, 
this result 
on the focusing
 Hartree NLS \eqref{NLS1}
 by Bourgain \cite{BO97} 
can be extended to the 
critical   case $\be = 2$
(in the weakly nonlinear regime $0 < \s \ll 1$).}
In \cite{OOTol1}, 
we continued the study of the focusing Hartree $\Phi^4_3$-measure
in the generalized grand-canonical formulation (with  $\s > 0$):
\begin{align}
d\rho(u) = Z^{-1}
 \exp \bigg( \frac \s4 \int_{\T^3} (V \ast :\! u^2 \!:) :\! u^2 \!: \, dx - A
\bigg|\int_{\T^3} :\! u^2 \! : \, dx\bigg|^\g\bigg) d \mu(u)
\label{GibbsH}
\end{align}

\noi
and  established a phase transition
in two respects
(i)  the 
 focusing Hartree $\Phi^4_3$-measure $\rho$ in~\eqref{GibbsH}
is constructible for $\be > 2$, 
while it is not for $\be < 2$
and (ii) when $\be = 2$, 
the  focusing Hartree $\Phi^4_3$-measure
is constructible for $0 < \s \ll 1$, 
while it is not for $\s \gg 1$.
See~\cite{OOTol1} 
for the precise statements.
These results in \cite{OOTol1} in particular show the critical nature
of the focusing Hartree $\Phi^4_3$-model
when $\be = 2$.
In the same work, we also constructed the invariant Gibbs dynamics
for the associated (canonical) stochastic quantization equation.
See also \cite{OOTol1, Bring1, Bring2} for the defocusing case ($\s < 0$).
Note that when $\be = 0$, 
the defocusing Hartree $\Phi^4_3$-measure reduces
to the usual $\Phi^4_3$-measure.

In terms of scaling,
the focusing Hartree $\Phi^4_3$-model 
with $\be=2$ corresponds to the $\Phi^3_3$-model
and as such, they share some common features.
For example, 
they are both critical with a phase transition,
depending on  the size of 
the coupling constant~$\s$. 
At the same time, however, 
there are some differences.
While
the focusing Hartree $\Phi^4_3$-measure  
with $\be=2$ is absolutely continuous with 
respect to the base massive Gaussian free field $\mu$, 
the $\Phi^3_3$-measure studied in this paper 
is singular with respect to the base massive Gaussian free field $\mu$.
As mentioned above, 
this singularity of the $\Phi^3_3$-measure 
causes an additional difficulty in proving non-normalizability
in the strongly nonlinear regime $|\s| \gg 1$.

\end{remark}

\medskip

Next, we discuss the dynamical problem associated with the $\Phi^3_3$-measure
constructed in Theorem \ref{THM:Gibbs0}.
In the following, 
we consider 
 the canonical stochastic quantization equation \cite{PW, RSS}
for the $\Phi^3_3$-measure in \eqref{H3} (with $\g = 3$).
More precisely, 
we study   the following stochastic damped nonlinear wave equation (SdNLW)  
with a quadratic nonlinearity, 
posed
on~$\T^3$:
\begin{align}
\dt^2 u + \dt u + (1 -  \Dl)  u  - \s u^2  = \sqrt{2} \xi,
\qquad (x, t) \in \T^3\times \R_+,
\label{SNLW0}
\end{align}
where 
$\s \in \R\setminus \{0\}$,  $u$ is an unknown function, 
and $\xi$ denotes a (Gaussian) space-time white noise on $\T^3\times \R_+$
with the space-time covariance given by
\[ \E\big[ \xi(x_1, t_1) \xi(x_2, t_2) \big]
= \dl(x_1 - x_2) \dl (t_1 - t_2).\]

\noi
In this introduction, 
we keep our discussion at a formal level and do not worry about various renormalizations 
required 
to give a  proper meaning to the equation \eqref{SNLW0}.

With  $\vec{u} = (u, \dt u)$, 
define the energy $\mathcal{E}(\vec u)$ by 
\begin{align*}
\mathcal{E}(\vec{u})
& = E(u) +  \frac 12 \int_{\T^3} (\dt u)^2 dx \\
& = \frac 12 \int_{\T^3} |\jb{\nabla} u|^2 dx + \frac 12 \int_{\T^3} (\dt u)^2 dx 
- \frac \s3 \int_{\T^3} u^3 dx, 
\end{align*}

\noi
where $E(u)$ is as in \eqref{H2}.
This is precisely the energy (= Hamiltonian) 
of  the  (deterministic) nonlinear wave equation (NLW) on $\T^3$
with a quadratic nonlinearity:
\begin{align}
\dt^2 u + (1 -  \Dl)  u - \s u^2  = 0.
\label{NLW1}
\end{align}

\noi
Then,
by letting $v = \dt u$, we can write \eqref{SNLW0} as
the first order system:
\begin{align*}
\dt   \begin{pmatrix}
u \\ v
\end{pmatrix}
=     
\begin{pmatrix} 
\frac{\dd\mathcal{E}}{\dd v}
\rule[-3mm]{0pt}{2mm}
\\
- \frac{\dd\mathcal{E}}{\dd u}
\end{pmatrix}
+      \begin{pmatrix} 
0 
\\
- v + \sqrt 2 \xi
\end{pmatrix},
\end{align*}

\noi
which shows that 
the SdNLW dynamics \eqref{SNLW0}
is given as a superposition
of the deterministic NLW dynamics \eqref{NLW1}
and the Ornstein-Uhlenbeck dynamics
for $v = \dt u$:
\begin{align*}
\dt v = - v + \sqrt 2 \xi.
\end{align*}

\noi
Now, consider 
the Gibbs measure $\rhoo$, formally given by 
\begin{align}
\begin{split}
d\rhoo(\vec u ) 
& = Z^{-1}e^{-\mathcal E(\vec u  )}d\vec u 
=  d\rho \otimes d\mu_0(\vec u )\\
& = Z^{-1} \exp \bigg( \frac \s 3 \int_{\T^3} u^3 dx \bigg) d (\mu\otimes \mu_0)(u, v) , 
\end{split}
\label{Gibbs0}
\end{align}

\noi 
where  $\rho$ is the $\Phi^3_3$-measure in~\eqref{H1}
and $\mu_0$ denotes the  white noise measure;
see \eqref{gauss0}.
See Remark \ref{REM:Gibbs} for the precise definition of 
the Gibbs measure $\rhoo$.
Then, the observation above
shows that $\rhoo$ is
expected to be invariant under the dynamics of the quadratic SdNLW~\eqref{SNLW0}.
Indeed, from the stochastic quantization point of view, 
the equation \eqref{SNLW0}
is the so-called canonical 
 stochastic quantization equation (namely, the Hamiltonian stochastic quantization)
 for the $\Phi^3_3$-measure; see
 \cite{RSS}.
For this reason, it is natural 
to refer to \eqref{SNLW0}
as the {\it hyperbolic $\Phi^3_3$-model}.

Let us now state our main dynamical result in a somewhat formal manner.
See Theorem~\ref{THM:GWP} for the precise statement.

\begin{theorem}\label{THM:GWP0}

Let $\g=3$ and
$0 < |\s|\ll 1$.
Suppose that  
 $A = A(\s)> 0$ is sufficiently large as in Theorem \ref{THM:Gibbs0}\,(i).
Then, the hyperbolic $\Phi^3_3$-model~\eqref{SNLW0} 
on the three-dimensional torus $\T^3$ 
\textup{(}with a proper renormalization\textup{)} is almost surely globally well-posed
with respect to the random initial data distributed
by the \textup{(}renormalized\textup{)} Gibbs measure $\rhoo = \rho \otimes \mu_0$ in \eqref{Gibbs0}.
Furthermore, the Gibbs measure $\rhoo$ is invariant under the resulting dynamics.

\end{theorem}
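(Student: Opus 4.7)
The plan is to prove Theorem \ref{THM:GWP0} in three stages: local well-posedness of the renormalized equation via paracontrolled calculus, invariance of the truncated Gibbs measure under the corresponding truncated dynamics, and a globalization step that, crucially, does not rely on absolute continuity with respect to the Gaussian reference.

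For local well-posedness, I would set $u = \Psi + v$, where $\Psi$ is the stationary solution of the linear SdNLW $\partial_t^2 \Psi + \partial_t \Psi + (1-\Delta)\Psi = \sqrt{2}\,\xi$, and substitute into \eqref{SNLW0} with the quadratic nonlinearity Wick-renormalized at the level of Fourier cutoffs. Since $\Psi \in C_t \C^{-1/2-\varepsilon}$ and the Wick square $:\!\Psi^2\!:$ lies only in $C_t \C^{-1-\varepsilon}$, a direct Picard iteration for $v$ fails because the product $\Psi \cdot v$ is critical. Following the paracontrolled approach of \cite{GKO} and the authors' prior work, I would refine the ansatz by $v = \mathcal{I}(\s\,{:}\!\Psi^2\!{:}) + w$, where $\mathcal{I}$ is the Duhamel map for the damped linear wave operator, and set up a contraction for the more regular remainder $w$ in a suitably defined paracontrolled space. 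The required multilinear estimates would be handled by a Littlewood--Paley/paraproduct decomposition, while a small number of additional stochastic objects (notably resonance-type products such as $\mathcal{I}({:}\!\Psi^2\!{:}) \mathbin{\odot} \Psi$) are constructed as space-time distributions via second-moment computations.

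Next, for the equation obtained by inserting a Fourier projection $\pi_N$, standard finite-dimensional SDE theory, combined with a splitting of the flow into a Hamiltonian NLW part (preserving the truncated Gibbs density) and an Ornstein--Uhlenbeck part for $\partial_t u$ (preserving the white-noise marginal), yields invariance of the truncated Gibbs measure $\rhoo_N$ under the truncated dynamics. Combined with the weak convergence $\rhoo_N \rightharpoonup \rhoo$ guaranteed by the construction in Theorem~\ref{THM:Gibbs0}(i), this sets up the usual framework for passing to the limit.

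The main obstacle is the globalization step. In the defocusing or absolutely continuous setting, Bourgain's invariant measure argument exploits uniform $L^p(d\mu)$ bounds on the Gibbs density to extend local solutions globally. Here, however, $\rho$ is singular with respect to $\mu$, so the Radon--Nikodym density is unavailable and that route is blocked. My plan is to implement the strategy flagged in the abstract: apply the Bou\'e--Dupuis variational formula directly to integrals against the truncated Gibbs density to produce, for each finite time horizon $T$, a drift-optimization control on the law of the truncated solution at time $T$; then combine this with Kantorovich-type transport estimates comparing $\rhoo_N$ to $\rhoo$ (and comparing nearby truncation parameters) to extract tail bounds for $\|\vec u(t)\|$ in the paracontrolled energy topology that are uniform in $N$ on any finite time interval. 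Together with the local theory and a continuity argument for the local solution map, these bounds yield almost sure global existence $\rhoo$-a.s., and invariance of $\rhoo$ follows by passing $N \to \infty$ along a subsequence. The genuine technical heart of the argument, and the hardest step, is verifying that the variational/transport control is quantitatively strong enough to close the globalization in a topology compatible with the singular measure $\rho$, since one cannot hide error terms in an ambient Gaussian $L^p$ norm as in the absolutely continuous case.
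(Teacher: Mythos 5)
Your three-stage outline is broadly aligned with the paper's strategy (paracontrolled local theory, invariance of the truncated measure, globalization via Bou\'e--Dupuis and transport), but there are two genuine gaps.

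First, the local theory you describe applies to the wrong equation. The Gibbs measure constructed in Theorem~\ref{THM:Gibbs0}(i) necessarily carries the taming term $-A\big|\int_{\T^3}:\!u^2\!:\,dx\big|^3$, and the canonical Langevin equation is therefore \eqref{SNLW1}, which has the extra term $M(\,:\!u^2\!:\,)u$ with $M(w)=6A\big|\int w\,dx\big|\int w\,dx$. This is not a perturbation: it contains the ill-defined $:\!u^2\!:$, i.e.\ the very object one needs the paracontrolled structure to interpret. Your ansatz $u=\Psi+\I(\s:\!\Psi^2\!:)+w$ with ``a suitably defined paracontrolled space'' is the two-unknown GKO-type system, and that system closes only when the problematic resonant product $X\pe\Psi$ appears on the right-hand side of the $Y$-equation alone, where it can be resolved by substituting the Duhamel formula for $X$. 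Here the $M$-term places $X\pe\Psi$ into the $X$-equation as well (through $:\!u^2\!:$), so the substitution never terminates, and one is forced to introduce $\Res:=X\pe\Psi$ as a genuinely new unknown and solve a system of \emph{three} equations. Without addressing the $M$-term, the local theory you set up does not leave the Gibbs measure $\rhoo$ invariant, so the subsequent invariance argument has nothing to apply to.

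Second, the globalization plan is missing the stability step. The Kantorovich/Wasserstein comparison between $(\Xi_N)_\#(\rhoo_N\otimes\PP_2)$ and $(\Xi)_\#(\rhoo\otimes\PP_2)$ tells you the enhanced data sets are close in distribution, and the variational formula together with invariance and a discrete Gronwall argument gives you uniform-in-$N$ control on the truncated solutions. But to transfer that control to the limiting dynamics you need a deterministic \emph{stability} estimate for the paracontrolled system: if two enhanced data sets are $\kappa$-close in $\Xc^{\eps}_T$ and one admits a solution bounded by $C_0$ on $[0,T]$, then the other admits a solution bounded by $C_0+1$ on $[0,T]$, with convergence as $\kappa\to 0$ and $N\to\infty$. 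The paper proves this by a contraction in a time-exponentially-weighted norm $e^{-\ld t}$, which converts a long-time comparison into a fixed-point problem with small Lipschitz constant once $\ld$ is large. Without this (or an equivalent mechanism), having the laws of the data close does not by itself propagate the uniform bound to the limit, and the globalization does not close.
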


In view of the critical nature of the $\Phi^3_3$-measure, 
Theorem \ref{THM:GWP0} is sharp in the sense that 
almost sure global well-posedness does not extend
to SdNLW with a focusing nonlinearity of a higher order.
The construction of the $\Phi^3_3$-measure in 
Theorem \ref{THM:Gibbs0}
requires us to introduce several renormalizations
together with the taming by the Wick-ordered $L^2$-norm.
This introduces modifications
to the equation \eqref{SNLW0}.
See Subsection \ref{SUBSEC:1.3}
and 
Sections~\ref{SEC:LWP} and \ref{SEC:GWP} for the precise formulation of the problem.

Over the last five years, 
stochastic nonlinear wave equations (SNLW)
in the singular setting have been studied extensively
in various settings:\footnote{Some of the works mentioned below
are on SNLW without damping.}
\begin{align}
\dt^2 u + \dt u + (1 -  \Dl)  u + \NN(u)  =  \xi
\label{SNLW0a}
\end{align}

\noi
for a power-type nonlinearity
 \cite{GKO, GKO2, GKOT,   Deya1, Deya2, ORTz, OOR, OOcomp, Tolomeo2, 
 OOTol1, Bring2, OWZ}
and for trigonometric and exponential nonlinearities
\cite{ORSW,  ORW,  ORSW2}.
We also  mention the works 
 \cite{OTh2,   OPTz,  OOTz, Bring2}
 on
 nonlinear wave equations with  rough random initial data.
In \cite{GKO2}, 
by combining  the paracontrolled calculus, 
originally introduced in the parabolic setting~\cite{GIP, CC, MW1},  
with the multilinear harmonic analytic approach, 
more traditional in studying dispersive equations, 
Gubinelli, Koch, and the first author studied the quadratic SNLW
\eqref{SNLW0} (without the damping).
The paracontrolled approach
in the wave setting was also used in our previous work \cite{OOTol1}
and was further developed by Bringmann \cite{Bring2}.
In order to prove local well-posedness of the hyperbolic $\Phi^3_3$-model \eqref{SNLW0}, 
we also follow the paracontrolled approach, 
in particular combining the analysis in \cite{GKO2, OOTol1}.  See Section \ref{SEC:LWP}.
As for the globalization part, 
a naive approach would be to  apply Bourgain's invariant measure argument \cite{BO94, BO96}.
However, due to the singularity of the $\Phi^3_3$-measure $\rho$
with respect to the base massive Gaussian free field $\mu$
(and the fact that the truncated  $\Phi^3_3$-measure $\rho_N$  converges to $\rho$
only weakly), 
there is an additional difficulty to overcome for the hyperbolic $\Phi^3_3$-model.
Hence, Bourgain's invariant measure argument is not directly applicable.
In the context of the defocusing Hartree cubic NLW on $\T^3$, 
Bringmann~\cite{Bring2} encountered a similar difficulty
and developed a new globalization argument.
While it is possible to adapt Bringmann's analysis to our current setting, 
we instead introduce a new  alternative argument, 
which is conceptually simple and straightforward.
In particular, 
we extensively use the variational approach and also 
use ideas from theory of optimal transport to 
directly estimate a probability with respect to the limiting Gibbs measure $\rhoo$
(in particular, without going through shifted measures as in \cite{Bring2}).
See Subsection \ref{SUBSEC:1.3} and Section \ref{SEC:GWP} for details.

\begin{remark}\rm 
A slight modification of our proof of Theorem \ref{THM:GWP0}
yields the corresponding results
(namely, almost sure global well-posedness and invariance of the associated Gibbs measure)
for the (deterministic) quadratic NLW \eqref{NLW1}
 on $\T^3$
in the weakly nonlinear regime.

\end{remark}

\begin{remark}\label{REM:heat}
\rm
We point out that an analogue 
of Theorem \ref{THM:GWP0}
also holds for  the parabolic $\Phi^3_3$-model, 
namely, the stochastic nonlinear heat equation
with a quadratic nonlinearity:
\begin{align}
 \dt u + (1 -  \Dl)  u  - \s u^2  = \sqrt{2} \xi,
\qquad (x, t) \in \T^3\times \R_+.
\label{heat1}
\end{align}

\noi
Thanks to the strong smoothing of the heat propagator, 
the well-posedness of \eqref{heat1} follows
from elementary analysis based on the first order expansion
(also known as the Da Prato-Debussche trick \cite{DPD2}).
See for example \cite{EJS}.
While there is an extra term coming from 
the taming by the Wick-ordered $L^2$-norm
(see,  for example,  \eqref{SNLW1} in the hyperbolic case), 
this term does not cause any issue in the parabolic setting.

\end{remark}

\begin{remark}\rm

In \cite{Tolomeo1}, the third author introduced 
a new approach to establish unique
ergodicity of Gibbs measures
for stochastic dispersive/hyperbolic equations.
This was further developed in \cite{Tolomeo3}
 to prove 
ergodicity of the hyperbolic $\Phi^4_2$-model, 
namely~\eqref{SNLW0a} on $\T^2$ with $\NN(u) = u^3$.
See also 
\cite{FT} by the third author and Forlano on the asymptotic Feller property
of the invariant Gibbs dynamics 
for the cubic SNLW on $\T^2$ with a slightly smoothed noise.
The ergodic property of the hyperbolic $\Phi^3_3$-model
is a challenging problem, in particular  due to its non-defocusing nature.

\end{remark}

\subsection{Construction of the $\Phi^3_3$-measure}
\label{SUBSEC:Gibbs}

In this subsection, we describe
a renormalization procedure 
and also a taming by the Wick-ordered $L^2$-norm
required to construct
the $\Phi^3_3$-measure in \eqref{H3}
and make 
a precise  statement (Theorem \ref{THM:Gibbs}).
For this purpose,  we first fix some notations. 
Given $ s \in \R$, 
let $\mu_s$ denote
a Gaussian measure with the Cameron-Martin space $H^s(\T^3)$,   formally defined by
\begin{align}
 d \mu_s 
   = Z_s^{-1} e^{-\frac 12 \| u\|_{{H}^{s}}^2} du
& =  Z_s^{-1} \prod_{n \in \Z^3} 
 e^{-\frac 12 \jb{n}^{2s} |\ft u(n)|^2}   
 d\ft u(n) , 
\label{gauss0}
\end{align}

\noi
where 
  $\jb{\,\cdot\,} = (1+|\,\cdot\,|^2)^\frac{1}{2}$.
When $ s= 1$, 
the Gaussian measure $\mu_s$ corresponds to 
 the massive Gaussian free field, 
 while it corresponds to  the white noise measure $\mu_0$ when $s = 0$.
For simplicity, 
we set 
\begin{align}
\mu = \mu_1
\qquad \text{and}
\qquad 
\muu = \mu \otimes \mu_{0} .
\label{gauss1}
\end{align}

Define the index sets $\Ld$ and $\Ld_0$ by 
\begin{align}
\Ld = \bigcup_{j=0}^{2} \Z^j\times \N \times \{ 0 \}^{2-j}
\qquad \text{and}\qquad \Ld_0 = \Ld \cup\{(0, 0, 0)\}
\label{index}
\end{align}

\noi
such that $\Z^3 = \Ld \cup (-\Ld) \cup \{(0, 0, 0)\}$.
Then, 
let 
$\{ g_n \}_{n \in \Ld_0}$ and $\{ h_n \}_{n \in \Ld_0}$
 be sequences of mutually independent standard complex-valued\footnote
{This means that $g_0,h_0\sim\NN_\R(0,1)$
and  
$\Re g_n, \Im g_n, \Re h_n, \Im h_n \sim \NN_\R(0,\tfrac12)$
for $n \ne 0$.}
 Gaussian random variables 
 and 
set $g_{-n} := \cj{g_n}$ and $h_{-n} := \cj{h_n}$ for $n \in \Ld_0$.
Moreover, we assume that 
$\{ g_n \}_{n \in \Ld_0}$ and $\{ h_n \}_{n \in \Ld_0}$ are independent from the space-time white noise $\xi$ in \eqref{SNLW0}.
We now define random distributions $u= u^\o$ and $v = v^\o$ by 
the following  Gaussian Fourier series:\footnote{By convention, 
 we endow $\T^3$ with the normalized Lebesgue measure $dx_{\T^3}= (2\pi)^{-3} dx$.}
\begin{equation} 
u^\o = \sum_{n \in \Z^3 } \frac{ g_n(\o)}{\jb{n}} e_n
 \qquad
\text{and}
\qquad
v^\o = \sum_{n\in \Z^3}  h_n(\o) e_n, 
\label{IV2}
\end{equation}

\noi
where  $e_n=e^{i n\cdot x}$.
Denoting by $\Law(X)$ the law of a random variable $X$ 
(with respect to the underlying probability measure $\PP$), 
we then have
\[
\Law (u, v) = \muu = \mu \otimes \mu_0
\] 

\noi
for $(u, v)$ in \eqref{IV2}.
Note that  $\Law (u, v) = \muu$ is supported on
\begin{align*}
\H^{s}(\T^3): = H^{s}(\T^3)\times H^{s - 1}(\T^3)
\end{align*}

\noi
for $s < -\frac 12$ but not for $s \geq -\frac 12$
(and more generally in $W^{s, p}(\T^3) \times W^{s-1, p}(\T^3)$
for any $1 \le p \le \infty$
 and $s < -\frac 12$).

 We now consider the $\Phi^3_3$-measure formally given by \eqref{H1}.
Since $u$ in the support of 
the massive Gaussian free field $\mu$ is merely a distribution, 
the cubic potential energy in \eqref{H1} is not well defined
 and thus a proper renormalization is required to give a meaning
 to the potential energy.
In order to explain the renormalization process, we first study the regularized model.

Given $N \in \N$, 
we denote by 
$\pi_N = \pi_N^\text{cube}$  the  frequency projector
onto the (spatial) frequencies
$\{n   = (n_1, n_2, n_3) \in\Z^3: \max_{j = 1, 2, 3} |n_j|\leq N\}$, 
defined by 
\begin{align}
\pi_N f = 
\pi_N^\text{cube} f
= 
\sum_{ n \in \Z^3  } \chi_N(n) \ft f (n)  e_n, 
\label{pi}
\end{align}

\noi
associated with  a Fourier multiplier $\chi_N = \chi_N^\text{cube}$:
\begin{align}
\chi_N(n) = 
\chi_N^\text{cube}(n) = 
\ind_Q\big(N^{-1}n\big), 
\label{chi}
\end{align}

\noi
where $Q$ denotes the cube of side length $2$  in $\R^3$ centered at the origin:
\begin{align}
Q = \big\{\xi   = (\xi_1, \xi_2, \xi_3) \in\R^3: \max_{j = 1, 2, 3} |\xi_j|\leq 1\big\}.
\label{Q1}
\end{align}

\noi
It turns out that, 
due to the critical nature of the $\Phi^3_3$-measure, 
a choice of frequency projectors makes
a difference.
See Remark \ref{REM:freq1}
and Subsection \ref{SUBSEC:freq} below
for discussions on different frequency projectors.
In comparing different frequency projectors, 
 we refer to $\pi_N = \pi_N^\text{cube}$ in~\eqref{pi} as the cube frequency projector
in the following.

Let   $u$  be as in \eqref{IV2}
and 
set $u_N = \pi_N u$.
For each fixed $x \in \T^3$, 
 $u_N(x)$ is
a mean-zero real-valued Gaussian random variable with variance
\begin{align}
\s_N = \E\big[u_N^2(x)\big] = \sum _{n \in \Z^3 } \frac{\chi^2_N(n)}{\jb{n}^2}
\sim N \too \infty, 
\label{sigma1}
\end{align}

\noi
as $N \to \infty$. Note that $\s_N$ is independent of $x\in \T^3$
due to the stationarity of $\mu$.
We define the Wick powers $:\! u_N^2 \!: $ and $:\! u_N^3 \!: $ by setting
\begin{align*}
:\! u_N^2 \!: \, = H_2(u_N; \s_N) = u_N^2 - \s_N
\quad \text{and} \quad 
:\! u_N^3 \!: \, = H_3(u_N; \s_N) = u_N^3 - 3 \s_N u_N, 
\end{align*}

\noi
where  $H_k (x,\s)$ denotes the Hermite polynomial of degree $k$ with variance parameter $\s$
 defined by the generating function:
\begin{align*}
 e^{tx - \frac{1}{2}\s t^2} = \sum_{k = 0}^\infty \frac{t^k}{k!} H_k(x;\s).
\end{align*}

\noi
This suggests us to consider the following
 renormalized potential energy:
\begin{align}
\begin{split}
R_N (u)
&= - \frac \s 3 \int_{\T^3} :\! u_N^3 \!: dx
+ A \bigg| \int_{\T^3} :\! u_N^2 \!: dx \bigg|^\g.
\end{split}
\label{K1}
\end{align}

\noi
As in the case of the  $\Phi^4_3$-measure in \cite{BG},
the renormalized potential energy $R_N(u)$
in \eqref{K1} is divergent (as $N \to \infty$)
and thus we need to introduce a further  renormalization.
This leads to  the following renormalized potential energy:
\begin{align} 
\begin{split}
 R_N^\dia (u)
=
R_N(u)
+ \al_N,
\end{split}
\label{K1r}
\end{align}
where $\al_N$ is a diverging constant (as $N \to \infty$)  defined in \eqref{YZ14} below.
Finally, we define 
 the truncated (renormalized) $\Phi^3_3$-measure $\rho_N$ by
\begin{align}
d \rho_N (u) = Z_N^{-1} e^{-R_N^\dia(u)} d \mu(u),
\label{GibbsN}
\end{align}
where the partition function $Z_N$ is given by
\begin{align}
Z_N = \int e^{-R_N^\dia(u)} d \mu(u).
\label{pfN1}
\end{align}

\noi
Then, we have the following construction
and non-normalizability of the $\Phi^3_3$-measure.
Due to the singularity of the $\Phi^3_3$-measure with respect to the 
base Gaussian measure $\muu$, we need to state our non-normalizability result
in a careful manner.
Compare this with \cite[Theorem~1.15]{OOTol1}
and  \cite[Theorem 1.3]{OS}.
See  the beginning of Section \ref{SEC:non}
for a further discussion.

\begin{theorem} \label{THM:Gibbs}
There exist $\s_1 \geq \s_0 > 0$ such that 
the following statements hold.
\smallskip

\begin{itemize}
\item[\textup{(i)}] 
\textup{(weakly nonlinear regime)}. 
Let $0 < |\s| < \s_0$.
Then, by choosing $\g = 3$ and $A = A(\s)>0$ sufficiently large, 
we have 
 the  uniform exponential integrability of the density\textup{:}
\begin{equation}
\sup_{N\in \N}Z_N = \sup_{N\in \N} \Big\| e^{- R_N^{\dia}(u)}\Big\|_{L^1(\mu)}
< \infty
\label{exp1c}
\end{equation}

\noi
and 
the truncated $\Phi^3_3$-measure $\rho_N$ in \eqref{GibbsN} 
converges weakly to a unique limit $\rho$, formally given by\footnote{By hiding $\al_N$
in \eqref{GibbsN} into the partition function $Z_N$, we could also say that 
the limiting $\Phi^3_3$-measure $\rho$ is formally given by  \eqref{H3} (with $\g = 3$).}
\begin{align}
d\rho(u) = Z^{-1}
 \exp \bigg( \frac \s3 \int_{\T^3} :\! u^3\!: \, dx - A
\bigg|\int_{\T^3} :\! u^2 \! : \, dx\bigg|^3 - \infty \bigg) d \mu(u).
\label{H3a}
\end{align}

\noi
In this case, the resulting $\Phi^3_3$-measure $\rho$ and the base massive Gaussian free field $\mu$ are mutually singular.

\smallskip
\item[\textup{(ii)}] \textup{(strongly nonlinear regime)}.
Let  $|\s| > \s_1$ and $\g\ge 3$.
Then, the $\Phi^3_3$-measure is not normalizable in the following sense.

Fix $\dl > 0$. Given $N \in \N$, 
let $\nu_{N, \dl}$ be  
 the following tamed version  of the truncated $\Phi^3_3$-measure\textup{:}
\begin{align}
d \nu_{N, \dl} (u) = Z_{N, \dl}^{-1} \exp\Big(-\dl \|\pi_N u\|_{B^{-\frac 34}_{3, \infty}}^{20} - R^\dia_N(u)\Big)  d\mu(u).
\label{tame1}
\end{align}

\noi
Then, $\{\nu_{N, \dl}\}_{N \in \N}$ converges
weakly to some limiting probability measure  $\nu_\dl$
and the following $\sigma$-finite version of the $\Phi^3_3$-measure\textup{:}
\begin{align*}
 d \cj   \rho_\dl
 &  = \exp\Big(\dl \|u\|_{B^{-\frac 34}_{3, \infty}}^{20}\Big) d \nu_\dl\\
&  = \lim_{N \to \infty} Z_{N, \dl}^{-1} \, \exp\Big(\dl \| u\|_{B^{-\frac 34}_{3, \infty}}^{20} \Big)
\exp\Big(-\dl \|\pi_N u\|_{B^{-\frac 34}_{3, \infty}}^{20} - R_N^\dia(u)\Big) d\mu(u)
\end{align*}

\noi
is a well-defined measure on $\C^{-100}(\T^3)$.
Furthermore, this $\s$-finite version  $\cj \rho_\dl$ of the $\Phi^3_3$-measure 
is not normalizable\textup{:}
\begin{align*}
\int 1\, d\cj \rho_\dl = \infty.
\end{align*}

Under the same assumption, 
the sequence $\{\rho_N\}_{N \in \N}$
of the truncated $\Phi^3_3$-measures in~\eqref{GibbsN}
does not converge to any weak limit, even up to a subsequence, 
as measures on  the Besov   space $B^{-\frac 34}_{3, \infty}(\T^3) \supset \C^{-\frac 34}(\T^3)$. 

\end{itemize}
\end{theorem}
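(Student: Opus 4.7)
The natural tool for both parts is the Bou\'e--Dupuis variational formula
\[
-\log \E_\mu\bigl[e^{-R_N^\dia}\bigr]
= \inf_{\dr}\E\!\left[R_N^\dia\bigl(Y + I(\dr)\bigr) + \tfrac12\int_0^1 \|\dr(t)\|_{L^2}^2\,dt\right],
\]
where $Y(1)$ has law $\mu$ and $I(\dr)$ is the associated drift of finite Cameron--Martin energy. Expanding $R_N^\dia(Y+I(\dr))$ produces a sum of multilinear pieces in $Y$ and $I(\dr)$ that have to be controlled against $\tfrac12\|\dr\|_{L^2}^2$ plus (in the $\g=3$ regime) the taming $A|\int\!:\!(Y+I(\dr))_N^2\!:\!|^3$.

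\textbf{Part (i), weakly nonlinear regime.}
After expansion, the pieces to estimate are $\int\!:\!Y_N^3\!:$ (a.s.~finite by hypercontractivity), $\int\!:\!Y_N^2\!:\!I(\dr)_N$ (Cauchy--Schwarz together with the $L^2$-mass taming), $\int Y_N I(\dr)_N^2$, and $\int I(\dr)_N^3$ (Gagliardo--Nirenberg $\|I(\dr)\|_{L^3}^3 \les \|I(\dr)\|_{L^2}^{3/2}\|I(\dr)\|_{H^1}^{3/2}$ followed by Young); all of these close for $|\s|$ small and $A=A(\s)$ sufficiently large. The deterministic divergence arising when the Wick cube is reorganized around $I(\dr)$ is absorbed exactly by the counterterm $\al_N$ in~\eqref{K1r}, producing the uniform bound~\eqref{exp1c}. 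Running the same variational estimate on differences $R_M^\dia-R_N^\dia$ yields tightness and a unique weak limit $\rho$. Singularity of $\rho$ with respect to $\mu$ would then be extracted from the shifted-measure decomposition of Appendix~\ref{SEC:AC}: the required shift involves a Wick-cube-type correction of the free field and therefore lies outside the Cameron--Martin space of $\mu$, precluding mutual absolute continuity.

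\textbf{Part (ii), strongly nonlinear regime.}
I would first engineer the taming functional $F\ge 0$ (say a suitable combination of a coercive negative Sobolev norm and the Wick $L^2$-mass) strong enough to restore the closure of the Part~(i) estimate for \emph{all} values of $\s$, yielding uniform-in-$N$ bounds on $\nu_{N,\dl}$. Tightness together with uniqueness of any cylindrical limit then gives weak convergence to $\nu_\dl$, and the $\s$-finite measure $\bar\rho_\dl = e^{\dl F}\,\nu_\dl$ is well-defined by truncating $\{F\le K\}$ and sending $K\to\infty$. To establish non-normalizability I would exhibit an adversarial drift $\dr_N = \dr_N(\s)$ concentrated at a mesoscopic scale $M=M(N,\s)$, tuned so that $-\frac{\s}{3}\int I(\dr_N)_N^3$ dominates $\tfrac12\|\dr_N\|_{L^2}^2$, the taming, and $\dl F$ simultaneously. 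Inserted into the variational formula, this forces $\int e^{\dl F(u)-\dl F(\pi_N u) - R_N^\dia}\,d\mu \to \infty$, hence $\int d\bar\rho_\dl=\infty$.

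\textbf{Main obstacle.}
The most delicate claim is the failure of $\{\rho_N\}$ to possess any weak limit on $\A(\T^3)$, since the singularity of $\rho_N$ with respect to $\mu$ rules out a na\"{\i}ve Radon--Nikodym comparison. I would argue by contradiction: if $\rho_{N_k}\to\rho_\infty$ weakly in $\A(\T^3)$, testing against the bounded continuous functional $e^{-\dl F}$ gives the relation
\[
\frac{Z_{N_k,\dl}}{Z_{N_k}}\int e^{\dl F(u)-\dl F(\pi_{N_k} u)}\,d\nu_{N_k,\dl}
\;\longrightarrow\; \int e^{-\dl F}\,d\rho_\infty \in [0,\infty),
\]
while the convergence $\nu_{N,\dl}\to\nu_\dl$ and the non-normalizability $\int d\bar\rho_\dl=\infty$ together force the left-hand side to diverge, producing the contradiction. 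Making $F$ continuous on $\A(\T^3)$ and controlling the $N$-dependent low-frequency error $F(u)-F(\pi_N u)$ uniformly along the subsequence is where the bulk of the technical work will lie.
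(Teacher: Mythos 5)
Your overall strategy matches the paper's: Bou\'e--Dupuis for the uniform bound, the change of variables absorbing $\al_N$, a variational comparison for uniqueness, and a tamed reference measure for the non-normalizability and non-convergence claims in part~(ii). However, there are two genuine gaps that would prevent the argument from closing as written.

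First, the uniqueness step in part~(i) is not a routine ``compare $R_{M}^\dia$ and $R_N^\dia$'' computation. When you compare the variational problems at two truncations $N^1_k \ge N^2_k$ and plug the near-optimizer $\UUps^{N^2_k}$ from the lower truncation into the higher one, one of the resulting difference terms is $\bigl\| \int_{\T^3} (Y_{N^1_k} - Y_{N^2_k}) \UUps_{N^2_k}\, dx \bigr\|_{L^3_\omega}$. This quantity is \emph{critical}: it is uniformly bounded by the Bou\'e--Dupuis positivity term but does not tend to zero by any perturbative estimate, because there is no room to spare in $\omega$-integrability or in spatial regularity. The paper kills it by the exact orthogonality $\int (Y_{N^1_k}-Y_{N^2_k})\UUps_{N^2_k}\,dx = 0$, which holds because $\UUps_{N^2_k}=\pi_{N^2_k}\UUps^{N^2_k}$ and $\pi_{N^2_k}(Y_{N^1_k}-Y_{N^2_k})=0$ for the cube projector (see the term $\1_2$ in \eqref{KK0a} and Remark~\ref{REM:uniq1}). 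Without invoking this structural cancellation your estimate does not close, and this is emphasized in the paper as the reason the smooth frequency projector is problematic.

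Second, in part~(ii) your choice of taming functional is unworkable for the non-convergence argument. You propose building $F$ from ``the Wick $L^2$-mass'', i.e.\ $\int :\!u^2\!:\,dx$, but that is a second Wiener chaos, not a pointwise-defined continuous function of $u$. For the contradiction argument to go through you need $e^{-\dl F}$ to be a bounded continuous functional on $\A(\T^3)$ so that weak convergence of $\rho_{N_k}$ on $\A$ transfers, and you also need a quantitative comparison between $F(\pi_N u)$ and $F(u)$ to go from $\nu_{N,\dl}$ (which tames $\pi_N u$) to the measure $\nu_\dl^{(N)}$ that tames $u$ itself. The paper's choice $F(u)=\|u\|_{\A}^{20}$ is exactly engineered to satisfy both: $\|\cdot\|_{\A}$ is continuous on $\A$, and the difference $\|u\|_\A-\|\pi_N u\|_\A$ is controlled by $N^{-\dl}\|u\|_{W^{-5/8,3}}$ using the $L^3$-boundedness of the cube projector (Lemma~\ref{LEM:conv1} and Remark~\ref{REM:non1}). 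This estimate is the crux; your sketch asserts the analogous convergence without identifying the mechanism, and with a Wick-chaos taming there is no analogue. (There is also a sign error in your identity relating $\int e^{-\dl F}\,d\rho_{N_k}$ to $\nu_{N_k,\dl}$: the exponent should be $-\dl F(u) + \dl F(\pi_{N_k}u)$, not the reverse; and it would be cleaner to follow the paper's route via the measure $\nu_\dl^{(N)}$ and the identity $d\nu_\dl = c\, e^{-\dl F}\,d\nu_0$.)

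On the singularity claim in part~(i), you take a different route from the paper. You observe that $\rho$ is absolutely continuous with respect to the shifted measure $\Law(Y+\s\ZZ+\W)$ and that the shift lies outside the Cameron--Martin space, hoping to conclude $\rho\perp\mu$. The deduction ``$\rho \ll \nu$ and $\nu\perp\mu$ imply $\rho\perp\mu$'' is in fact logically sound, but the premise $\nu = \Law(Y+\s\ZZ+\W)\perp\mu$ is itself nontrivial: the shift is a random, $Y$-dependent process, so one cannot simply invoke the deterministic Cameron--Martin criterion, and the paper does not establish this singularity. One would have to prove it by Girsanov-type considerations, showing that $\dot\ZZ$ does not have almost surely finite $L^2_tH^1_x$ drift energy. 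The paper instead gives a direct proof (Proposition~\ref{PROP:sing}) by exhibiting a quantity $(\log N_k)^{-3/4}R_{N_k}(u)$ that tends to $0$ $\mu$-a.s.\ but to $-\infty$ $\rho$-a.s., using the divergence of $\al_N$. Your route could in principle be made to work but would require additional input and is not ``extracted from'' Appendix~\ref{SEC:AC} as stated.
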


In the weakly nonlinear regime, 
we also prove that the $\Phi^3_3$-measure $\rho$ is absolutely continuous
with respect to the shifted measure
$\Law (Y(1) +\s \ZZ(1) + \W(1))$, 
where  $\Law(Y(1) ) = \mu$, 
 $\ZZ = \ZZ(Y)$ is the limit of the quadratic process $\ZZ^N$ defined in \eqref{YZ12}, 
 and the auxiliary quintic process $\W = \W(Y)$ is defined in \eqref{AC0}.
While we do not use this property in this paper,
we present the proof in Appendix \ref{SEC:AC} for completeness.

As in case of the  $\Phi^4_3$-measure in \cite{BG}, 
we can prove uniform exponential integrability
of  the truncated density 
$ e^{- R_N^\dia  (u)}$
in $L^p(\mu)$
only for $p = 1$ due to 
the second renormalization introduced in \eqref{K1r}.
See also \cite{OOTol1, Bring1}
for a similar phenomenon
in the case of the defocusing Hartree $\Phi^4_3$-measure. 
We point out that 
the renormalized potential energy $R_N^{\dia}(u)$ in \eqref{K1r}
does {\it not} converge to any limit
and neither does the density $e^{-R_N^{\dia}(u)}$, 
which is essentially the source of the singularity of the $\Phi^3_3$-measure
with respect to the massive Gaussian free field $\mu$.

As in \cite{OOTol1}, 
following the variational approach introduced by Barashkov and Gubinelli \cite{BG}, 
we use the Bou\'e-Dupuis variational formula (Lemma \ref{LEM:var3})
to prove Theorem~\ref{THM:Gibbs}.
In fact, we make use of the
Bou\'e-Dupuis variational formula
in almost every single step of the proof.
In proving Theorem 
\ref{THM:Gibbs}\,(i), 
we first use the variational formula 
to  establish the uniform exponential integrability~\eqref{exp1c}
of  the truncated density 
$ e^{- R_N^\dia  (u)}$, 
from which 
tightness of the truncated $\Phi^3_3$-measure~$\rho_N$ in \eqref{GibbsN}
follows.
See Subsection \ref{SUBSEC:tight}.
Due to the  singularity of the $\Phi^3_3$-measure,
we need to apply a change of variables (see~\eqref{YZ13}) in the variational formulation
and thus 
we need to treat the taming part more carefully than that for the focusing Hartree $\Phi^4_3$-measure 
studied in \cite{OOTol1}.
See Lemma \ref{LEM:Dr8} below.
This lemma also reflects the critical nature of 
the $\Phi^3_3$-measure.

In Subsection \ref{SUBSEC:wcon}, 
we prove uniqueness of the limiting $\Phi^3_3$-measure.
Our main strategy is to follow
the  approach introduced in our previous work~\cite{OOTol1}
and compare two (arbitrary) subsequences $\rho_{N_{k_1}}$
and $\rho_{N_{k_2}}$, using the variational formula.
We point out, however,  that, due to the 
 critical nature of the $\Phi^3_3$-measure, 
 our uniqueness argument becomes more involved than
that in \cite[Subsection 6.3]{OOTol1}
for the subcritical 
defocusing Hartree $\Phi^4_3$-measure.
In particular, 
we need to make use of a certain orthogonality  property
to eliminate a problematic term.
See Remark \ref{REM:uniq1}.
See also Subsection \ref{SUBSEC:freq}.

In proving the singularity of the $\Phi^3_3$-measure, 
we once again follow the direct approach introduced in \cite{OOTol1}, 
making use of the variational formula.
We point out that 
 the proof of the singularity of the $\Phi^4_3$-measure by Barashkov and Gubinelli  \cite{BG2} goes through the shifted measure.
 On the other hand, as in \cite{OOTol1}, 
our proof is based on a direct argument  without referring to shifted measures. 
See Subsection \ref{SUBSEC:notAC}.

Let us now turn to the strongly nonlinear regime 
considered in Theorem \ref{THM:Gibbs}\,(ii).
As mentioned above, 
due to the singularity of the $\Phi^3_3$-measure, 
our formulation of the non-normalizability result in  
Theorem \ref{THM:Gibbs}\,(ii) is rather subtle.
In the situation where the truncated density 
$ e^{- R_N^\dia  (u)}$ converges to the limiting density
(as in \cite{OOTol1, OS}), 
it would suffice to prove 
\begin{align}
\sup_{N \in \N} \E_\mu \Big[ e^{- R_N^\dia  (u)} \Big] = \infty, 
\label{QQ3}
\end{align}

\noi
since \eqref{QQ3} would imply that 
there is no normalization constant
which would make the limit of the measure
$e^{- R_N^\dia  (u)} d\mu(u)$ into a probability measure.
In the current problem, however, 
 the potential energy $R_N^\dia(u)$ in \eqref{K1r}
(and the corresponding density $e^{-R_N^\dia(u)}$)
does {\it not} converge to any limit.
Thus, even if we prove a statement of the form  \eqref{QQ3}, 
we may still choose a sequence of constants
 $\ft Z_N$ such that 
the measures $\ft Z_N^{-1} e^{-R_N^\dia(u)} d\mu$ have a weak limit.
A similar phenomenon happens for  the $\Phi^4_3$-measure, 
where one needs to introduce the second order renormalization;
see~\cite{BG}.
The non-convergence of the truncated $\Phi^3_3$-measures
claimed in Theorem \ref{THM:Gibbs}\,(ii) 
tells us that this can not happen for the $\Phi^3_3$-measure.
See also Remark \ref{REM:A} below.

Our strategy is to first construct a $\s$-finite version of the $\Phi^3_3$-measure
and then prove its non-normalizability.
As stated in Theorem \ref{THM:Gibbs}\,(ii), 
we first introduce a tamed version $\nu_{N, \dl}$ of the truncated $\Phi^3_3$-measure, 
by introducing an appropriate taming function $F$; see \eqref{RM-1} below.
The first step is to show that this tamed truncated $\Phi^3_3$-measure 
 $\nu_{N, \dl}$ converges weakly to some limit $\nu_\dl$
 (Proposition \ref{PROP:ref}).
We then define a $\s$-finite version $\cj \rho_\dl$ of the $\Phi^3_3$-measure by setting
\begin{align*}
 d \cj   \rho_\dl = e^{\dl F(u)} d \nu_\dl
\end{align*}

\noi
and prove that $\cj \rho_\dl$ is not normalizable
(Proposition \ref{PROP:Gibbsref}).
Here, the $\s$-finite version $\cj \rho_\dl$ of the $\Phi^3_3$-measure
clearly depends on the choice of a taming function $F$.
Our choice is quite natural since 
the $\s$-finite version $\cj \rho_\dl$ of the $\Phi^3_3$-measure
is absolutely continuous with respect to 
the shifted measure
$\Law (Y(1) +\s \ZZ(1) + \W(1))$,
just like the (normalizable) $\Phi^3_3$-measure in the weakly nonlinear regime 
discussed above.
See Remark \ref{REM:ac}.

Once we construct the $\s$-finite version $\cj \rho_\dl$
of the $\Phi^3_3$-measure, our argument 
follows closely the strategy 
introduced 
in \cite{OOTol1, OS} for establishing
non-normalizability, using the  
 Bou\'e-Dupuis variational formula.
For this approach, 
we need to construct a drift achieving the desired divergence, 
where (the antiderivative of) the drift is designed to look like
``$-Y(1)$ + a perturbation'', 
where $\Law(Y(1) ) = \mu$; see \eqref{paa0} below.
Here,  the perturbation term is bounded in $L^2(\T^3)$
but has a large $L^3$-norm, thus having a highly concentrated
profile, such  as a soliton
or a finite time blowup profile.
As compared to our previous works \cite{OOTol1, OS}, 
there is an additional difficulty in proving
the non-normalizability claim 
in 
 Theorem \ref{THM:Gibbs}\,(ii)
due to the singularity of the $\Phi^3_3$-measure, 
which forces us to use 
a change of variables (see~\eqref{YZ13}) in the variational formulation.
See Remark \ref{REM:diff}.
The non-convergence of the truncated $\Phi^3_3$-measures $\rho_N$
stated in Theorem \ref{THM:Gibbs}\,(ii) follows
as a corollary to the non-normalizability 
of the $\s$-finite version $\cj \rho_\dl$ of the $\Phi^3_3$-measure;
see Proposition \ref{PROP:nonconv} and Subsection \ref{SUBSEC:nonconv}.
If the $\Phi^3_3$-measure existed as a probability measure
in the strongly nonlinear regime, 
then we would expect its support to be contained in $ \C^{-\frac 12-\eps}(\T^3)$
for any  $\eps > 0$, 
just as in the weakly nonlinear regime (and the $\Phi^4_3$-measure).
For this reason, the Besov space $B^{-\frac 34}_{3, \infty}(\T^3) \supset \C^{-\frac 34}(\T^3)$
 is a quite natural space to consider.
The restriction $\g \ge 3$ in Theorem \ref{THM:Gibbs}\,(ii) 
comes from 
the construction of the tamed 
version $\nu_\dl$ of the $\Phi^3_3$-measure; see~\eqref{QQ4} below.
For $\g < 3$, the taming by the Wick-ordered $L^2$-norm 
in~\eqref{H3} becomes weaker and thus we expect 
an analogous non-normalizability result to hold.

\begin{remark}\label{REM:freq1} \rm
We prove  Theorem \ref{THM:Gibbs}
for the cube frequency projector $\pi_N = \pi_N^\text{cube}$
defined in~\eqref{pi}.
If we instead consider the ball frequency projector $\pi_N^\text{ball}$
defined in \eqref{pib1} below, 
then our argument for the non-convergence claim in the strongly nonlinear regime
(Proposition~\ref{PROP:nonconv})
breaks down, while the other claims in Theorem 
\ref{THM:Gibbs} remain true for 
the ball frequency projector $\pi_N^\text{ball}$.
If we consider the smooth frequency projector $\pi_N^\text{smooth}$
defined in \eqref{pis1} below, 
then 
our argument for the uniqueness
of the limiting $\Phi^3_3$-measure in the weakly  nonlinear regime
(Proposition \ref{PROP:uniq})
breaks down.
In particular, the latter issue is closely related to the
critical nature of the $\Phi^3_3$-model
and, while we believe that uniqueness
of the limiting $\Phi^3_3$-measure holds
even in the case of 
 the smooth frequency projector $\pi_N^\text{smooth}$, 
it seems  non-trivial to prove this claim  by a modification of our argument.
We point out that the same issue  also appears in 
showing uniqueness of the limit  $\nu_\dl$ of 
 the tamed version $\nu_{N, \dl}$ of the truncated 
$\Phi^3_3$-measure  in \eqref{tame1}
in the strongly nonlinear regime (Proposition \ref{PROP:ref})
and  in the dynamical part (Proposition \ref{PROP:plan}).
See Subsection~\ref{SUBSEC:freq}
for a further discussion.
See also 
Remarks \ref{REM:uniq1} and~\ref{REM:non1}.

\end{remark}

\begin{remark}\label{REM:A} \rm
In the strongly nonlinear regime, 
Theorem \ref{THM:Gibbs}\,(ii) tells us that the truncated $\Phi^3_3$-measures
$\rho_N$ do not converge weakly to any limit as measures on 
 $B^{-\frac 34}_{3, \infty}(\T^3)\supset \C^{-\frac 34}(\T^3)$.
It is, however, possible that 
 the truncated $\Phi^3_3$-measures converges weakly to some limit (say, 
 the Dirac delta measure $\dl_0$ on the trivial function)
 as measures on some space with a very weak topology, say $\C^{-100}(\T^3)$.
Theorem \ref{THM:Gibbs}\,(ii) shows that 
if such weak convergence takes place, 
it must do so in a very pathological manner.

\end{remark}

\begin{remark}\rm The second renormalization in \eqref{K1r}
(i.e.~the cancellation  of the diverging constant $\al_N$)
 appears only at the level of the measure.
The associated equation (see \eqref{SNLW3} below)
does not see this additional renormalization.

\end{remark}

\begin{remark}\rm
It is of interest to investigate
a threshold value  $\s_* >0$
such that the construction of the $\Phi^3_3$-measure (Theorem \ref{THM:Gibbs}\,(i))
holds
for $0 < |\s| < \s_*$, 
while 
the non-normalizability of the $\Phi^3_3$-measure (Theorem \ref{THM:Gibbs}\,(ii))
holds
for $ |\s| > \s_*$.  
If such a threshold value $\s_*$ could be determined, 
it would also be of interest to determine
whether the $\Phi^3_3$-measure is normalizable
at the threshold $|\s| = \s_*$.
Such a problem, however, requires optimizing all the estimates
in the proof of Theorem \ref{THM:Gibbs}
and is out of reach at this point.
See  a recent work~\cite{OST}
by Sosoe and the first and third authors
for such analysis in 
 the one-dimensional case.

\end{remark}

\begin{remark}\label{REM:Gibbs}\rm

Consider the truncated Gibbs measure
$\rhoo_N = \rho_N \otimes \mu_0$
for the  hyperbolic $\Phi^3_3$-model \eqref{SNLW0}
with the density:
\begin{align}
d \rhoo_N (u, v) = Z_N^{-1} e^{-R_N^\dia(u)} d \muu(u, v),
\label{GibbsN2}
\end{align}

\noi
where $R_N^\dia(u)$  and $\muu$
are as in \eqref{K1r} and \eqref{gauss1}, respectively.
Since the potential energy $R_N^\dia(u)$  is independent of 
the second component $v$, 
Theorem \ref{THM:Gibbs} directly applies to the truncated Gibbs measure $\rhoo_N$.
In particular, 
in the weakly nonlinear regime ($0 < |\s| < \s_0$), 
 the truncated Gibbs measure $\rhoo_N$
 converges weakly to the limiting Gibbs measure 
 \begin{align}
 \rhoo = \rho \otimes \mu_0, 
 \label{Gibbs2}
 \end{align}
 where $\rho$ is the limiting $\Phi^3_3$-measure
 constructed in Theorem \ref{THM:Gibbs}\,(i).
 Moreover, the limiting Gibbs measure $\rhoo$
 and the base Gaussian measure $\muu = \mu \otimes \mu_0$
 are  mutually singular.

\end{remark}

\subsection{Hyperbolic $\Phi^3_3$-model}
\label{SUBSEC:1.3}

In this subsection, 
we 
provide
 a precise meaning to the hyperbolic $\Phi^3_3$-model \eqref{SNLW0}
and make Theorem \ref{THM:GWP0} more precise.
By considering the Langevin equation
for the Gibbs measure $\rhoo = \rho \otimes \mu_0$ constructed in Remark \ref{REM:Gibbs}, 
we formally obtain the following 
quadratic SdNLW (= the hyperbolic $\Phi^3_3$-model):
\begin{align}
\dt^2 u + \dt u + (1 -  \Dl)  u - \s :\! u^2 \!: + \,M (\,:\! u^2 \!:\,) u = \sqrt{2} \xi,
\label{SNLW1}
\end{align}

\noi
where $M$ is defined by 
\begin{align}
M (w) = 6A \bigg| \int_{\T^3} w dx \bigg| \int_{\T^3} w dx
\label{addM}.
\end{align}

\noi
Here, the term  $M (\,:\! u^2 \!:\,) u$ in \eqref{SNLW1}
comes from the taming by the Wick-ordered $L^2$-norm
appearing in~\eqref{H3a}.
The term 
$:\! u^2 \!:$ denotes the Wick renormalization\footnote{In order to give a proper meaning to 
 $:\! u^2 \!:$, we need to assume a structure on $u$.
 We postpone this discussion to Section \ref{SEC:LWP}.} 
 of $u^2$, formally given by 
$:\! u^2 \!:  \, = u^2 - \infty$.
Namely, the equation~\eqref{SNLW1}
is just a formal expression at this point.
In the following, we provide the meaning
of the process $u$ in \eqref{SNLW1}
by a limiting procedure.
In Section \ref{SEC:LWP}, 
we use the paracontrolled calculus to give a more precise meaning
to \eqref{SNLW1} by rewriting it into a system for three unknowns.
See~\eqref{SNLW6} below.

Given $N \in \N$, we consider the following quadratic SdNLW with 
a truncated noise:
\begin{align}
\begin{split}
\dt^2 & u_N + \dt u_N  + (1 -  \Dl)  u_N  
-\s    :\!  u_N^2 \!: 
+  M (\,:\! u_N^2 \!:\,)  u_N 
= \sqrt{2} \pi_N \xi, 
\end{split}
\label{SNLW2}
\end{align}

\noi
where $\pi_N$ is as in \eqref{pi} and the renormalized nonlinearity is defined by 
\begin{align}
 :\!  u_N^2 \!:  \, = 
 u_N^2 -\s_N
\label{stoconv1}
 \end{align}

\noi
with $\s_N$ as in \eqref{sigma1}.  See also \eqref{sigma1a}.
In Section \ref{SEC:LWP}, 
we study SdNLW \eqref{SNLW2} with the truncated noise
and prove the following local well-posedness statement
for the hyperbolic $\Phi^3_3$-model.

\begin{theorem}\label{THM:LWP0}
Given $  s > \frac 12$, 
let $(u_0, u_1) \in \H^{s}(\T^3)$.
Let $(\phi_0^\o, \phi^\o_1)$ be a pair of the Gaussian random distributions
with $\Law (\phi_0^\o, \phi^\o_1) = \muu = \mu \otimes \mu_0$.
Then, the solution $(u_N, \dt u_N)$ to the quadratic SdNLW \eqref{SNLW2}
with the truncated noise  and  the initial data
\begin{align}
(u_N, \dt u_N)|_{t = 0} = (u_0, u_1) + \pi_N (\phi_0^\o, \phi^\o_1)
\label{IV3}
\end{align}

\noi
converges  to a stochastic process $(u, \dt u) \in C([0, T]; \H^{-\frac 12 -\eps} (\T^3))$ almost surely,
where $T = T(\o)$ is an almost surely positive stopping time.
\end{theorem}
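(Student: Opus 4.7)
The plan is to set up a paracontrolled expansion, adapting Gubinelli--Koch--Oh~(2018) and the authors' 2020 work on stochastic damped NLW. Fix $T_0 > 0$ and work on $[0,T_0]$; the final stopping time $T(\o)$ will arise from the radius of convergence of the contraction below. The first step is to build an \emph{enhanced data tower} from the truncated stochastic convolution $\Psi_N$ solving the linear damped wave equation $(\dt^2 + \dt + 1 - \Dl)\Psi_N = \sqrt 2\, \pi_N \xi$ with initial data $\pi_N(\phi_0^\o,\phi_1^\o)$. The damping makes $\Psi_N$ a stationary Gaussian process, and Wiener chaos/Kolmogorov arguments give $\Psi_N \to \Psi$ almost surely in $C([0,T_0]; \C^{-\frac12-\eps}(\T^3))$, the Wick square $:\!\Psi_N^2\!:$ (renormalized with the same $\s_N$ as in $:\!u_N^2\!:$) converges in $C([0,T_0]; \C^{-1-\eps}(\T^3))$, and the Duhamel image $X_N := \I[:\!\Psi_N^2\!:]$ converges in $C([0,T_0]; \C^{-\eps}(\T^3))$, where $\I$ is the Duhamel operator for the damped wave equation. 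Further resonance-type stochastic objects (notably the renormalized resonant product between $\Psi_N$ and $X_N$) are constructed by the same methodology.

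Next, decompose $u_N = \Psi_N + \s X_N + Y_N$. Expanding $:\!u_N^2\!: \, = \,:\!\Psi_N^2\!: + 2\Psi_N(\s X_N + Y_N) + (\s X_N + Y_N)^2$ and cancelling $:\!\Psi_N^2\!:$ against the definition of $X_N$, the remainder $Y_N$ solves, schematically,
\[
(\dt^2 + \dt + 1 - \Dl) Y_N
= 2\s\, \Psi_N Y_N + 2\s^2\, \Psi_N X_N + \s (\s X_N + Y_N)^2 - M(:\!u_N^2\!:)\, u_N.
\]
All nonlinear terms are tractable by classical product estimates except the borderline resonance $\Psi_N Y_N$, whose low-times-high part has insufficient regularity for a literal product. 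Following GKO2, impose the paracontrolled ansatz
\[
Y_N = -2\s\, \I\bigl(Y_N \pl \Psi_N\bigr) + Z_N,
\]
where $\pl$ denotes the low-high paraproduct. Then $Z_N$ enjoys additional regularity, the leftover commutator term is handled by the commutator lemma for the damped wave Duhamel operator (as in GKO2 and OOTol1), and one arrives at a three-unknown system in $(\Psi_N, Y_N, Z_N)$ whose only stochastic inputs are the enhanced data built in Step~1.

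Local well-posedness then reduces to a Banach contraction for $(Y_N, Z_N)$ in a product space of the form $C_T \H^{s_1}(\T^3) \times C_T H^{s_2}(\T^3)$, with parameters $s_1 < \tfrac12$ and $s_2 > \tfrac12$ tuned so that the paraproduct, resonant, and classical product estimates close, and with a random time $T = T(\o)>0$ determined by the $C_T$-norms of the enhanced data. The taming term is a harmless scalar-valued perturbation: writing $v_N = \s X_N + Y_N$, one has $\int_{\T^3}:\!u_N^2\!:\,dx = \int :\!\Psi_N^2\!:\,dx + 2\int \Psi_N v_N\,dx + \int v_N^2\,dx$, and each summand is a duality pairing uniformly bounded in $N$ with Lipschitz dependence on $(\Psi_N, X_N, Y_N)$; hence $t \mapsto M(:\!u_N^2\!:)(t)$ is a continuous scalar, and $M(:\!u_N^2\!:)u_N$ enters the equation for $Y_N$ as a Lipschitz perturbation of the contraction. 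Convergence of $(u_N,\dt u_N)$ in $C([0,T(\o)];\H^{-\frac12-\eps}(\T^3))$ then follows from the almost sure convergence of the enhanced data and the continuity of the solution map with respect to it.

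The main obstacle is the paracontrolled analysis of the resonance $\Psi_N Y_N$: with $\Psi_N \in \C^{-\frac12-\eps}(\T^3)$ and $Y_N$ short of the positive regularity needed for a classical product, one must combine the paracontrolled ansatz above with a fine commutator estimate for the wave Duhamel operator and with precise control of all mixed renormalized resonances. This is the technical heart inherited from GKO2 and adapted to the damped setting in OOTol1. By contrast, the apparent complication caused by the nonlocal taming term $M(:\!u_N^2\!:)u_N$ dissolves once one observes that $M(:\!u_N^2\!:)$ is scalar-valued in space and is already controlled by the enhanced data and the paracontrolled structure of $u_N$.
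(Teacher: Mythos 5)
Your proposal captures the skeleton of the argument — the stochastic tower $\<1>$, $\<2>$, $\<20>$, $\<21p>$, the paracontrolled ansatz, the contraction in a product space — but it contains a genuine gap in the treatment of the taming term $M(:\!u_N^2\!:)u_N$, and this is exactly where the paper has to deviate from GKO2.

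You assert that $M(:\!u_N^2\!:)$ is ``already controlled by the enhanced data and the paracontrolled structure of $u_N$'' because each summand of $\int_{\T^3}:\!u_N^2\!:\,dx = \int :\!\Psi_N^2\!:\,dx + 2\int \Psi_N v_N\,dx + \int v_N^2\,dx$ is a uniformly bounded duality pairing. This is not the case. With $v_N = \s X_N + Y_N$, the resonant part of $\int \Psi_N v_N\,dx$ contains $\int \Psi_N\pe Y_N\,dx$; decomposing further $Y_N = -2\s\I(Y_N\pl\Psi_N) + Z_N$, the piece $\I(Y_N\pl\Psi_N)\pe\Psi_N$ has a regularity count of $(\tfrac12-)+(-\tfrac12-)<0$ and is \emph{not} a duality pairing, nor is it one of the enhanced data you listed. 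In GKO2 this is resolved by substituting the Duhamel formula for the paracontrolled piece into the ill-defined resonant product, which works because the paracontrolled piece there solves an equation whose right-hand side is explicit. Here, however, that very piece must also absorb the term $-M(:\!u_N^2\!:)\Psi_N$ (of regularity $-\tfrac12-\eps$; if it instead went into $Z_N$, then $Z_N$ would have regularity only $\tfrac12-$, which is insufficient to define $Z_N\pe\Psi_N$). So the paracontrolled piece $X$ satisfies $(\dt^2+\dt+1-\Dl)X = 2(X+Y+\<20>)\pl\<1> - M(:\!u^2\!:)\<1>$, and $M(:\!u^2\!:)$ in turn contains the ill-defined resonant product $X\pe\<1>$. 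Substituting the Duhamel formula would therefore lead to an infinite regress.

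The paper's resolution is structurally different from yours: it promotes the resonant product $\Res:=X\pe\<1>$ to a third \emph{unknown}, yielding a system \eqref{SNLW6} for $(X,Y,\Res)$, with the $\Res$-equation obtained by substituting the Duhamel formula only once and closing the loop via a fixed point. To make the terms in the $\Res$-equation estimable one also needs the paracontrolled random operators $\If_{\pl}^{(1)}$, $\If_{\pl,\pe}$ (from GKO2) and the additional stochastic kernel $\Ab(x,t,t')$ adapted from OOTol1, none of which appear in your list of enhanced data. Moreover, since $\Res$ sits inside the argument of $M$, the $\Res$-equation is \emph{nonlinear} in the unknowns — unlike the paracontrolled equations in GKO2 or MW1 — which is another feature your two-unknown contraction would miss. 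Your intuition about the scalar-valued nature of $M(:\!u_N^2\!:)$ is correct and is indeed exploited in the estimates (it is why the $M$-terms remain tractable in the end), but it does not make the term a perturbation of the GKO2 scheme: it changes the algebraic structure of the paracontrolled system.
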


The limit $(u, \dt u)$ formally satisfies 
the equation \eqref{SNLW1}.
Here, we took the initial data of the form \eqref{IV3} 
for simplicity of the presentation.
A slight modification of the proof
yields an analogue of Theorem \ref{THM:LWP0}
with deterministic initial data
$(u_N, \dt u_N)|_{t = 0} = (u_0, u_1)$.
In this case, we need to choose a diverging 
constant $\s_N$, depending on $t.$
See \cite{GKO, GKO2} for such an argument.

We follow the paracontrolled approach in \cite{GKO2}, 
where the quadratic SNLW on $\T^3$ was studied.
However, the additional term $M$ in \eqref{SNLW1} and \eqref{SNLW2}
contains an ill-defined product $:\!u^2\!:$
(or $:\!u_N^2\!:$ in the limiting sense).
In order to treat this term, 
the analysis in \cite{GKO2} is not sufficient 
and thus 
we also need to adapt the paracontrolled analysis in our previous work \cite{OOTol1}
and rewrite the equation into a system for three unknowns.
(Note that in \cite{GKO2}, the resulting system
was for two unknowns.)
We also point out that, unlike \cite{GKO2}
(see also \cite{MW1} in the context of the parabolic $\Phi^4_3$-model), 
the equation for a less regular,  paracontrolled component
in our system (see \eqref{SNLW6} below) is nonlinear in the unknowns.
We then construct a continuous map 
from the space of enhanced data sets
to solutions.
While the proof of Theorem \ref{THM:LWP0}
follows from  a slight modification of 
the arguments in \cite{GKO2, OOTol1},
we present details  in Section \ref{SEC:LWP} for readers' convenience.

In order to establish our main goal in the dynamical part of the program (Theorem \ref{THM:GWP0}), 
we need to study the hyperbolic $\Phi^3_3$-model with the Gibbs measure initial
data.
Since the Gibbs measure $\rhoo = \rho \otimes \mu_0$ in \eqref{Gibbs2}
and the Gaussian field $\muu = \mu \otimes \mu_0$
are mutually singular as shown in Theorem \ref{THM:Gibbs}, 
it may seem that the local well-posedness in Theorem \ref{THM:LWP0}
with the Gaussian initial data (plus smoother deterministic initial data)
is irrelevant.
However, as we see in Section \ref{SEC:GWP},  
the analysis  for proving Theorem \ref{THM:LWP0} provides
us with a good intuition of the well-posedness problem
for the hyperbolic $\Phi^3_3$-model with the Gibbs measure initial data.
Furthermore, one of  advantages of considering the Gaussian initial data
(as in \eqref{IV3}) is that it provides a clear reason why 
 $\s_N$ appears in the renormalization in \eqref{stoconv1},
since $\s_N$ is nothing but the variance of the first order
approximation (= the stochastic convolution defined in \eqref{W2})
to the solution to \eqref{SNLW2}; see \eqref{sigma1a}.
This is the main reason for considering the local-in-time problem 
with the Gaussian initial data.

\medskip

Next, we turn our attention to the globalization problem.
For this purpose, we need to consider a different approximating equation.
Given $N \in \N$, we consider the truncated hyperbolic $\Phi^3_3$-model:
\begin{align}
\begin{split}
\dt^2 & u_N + \dt u_N  + (1 -  \Dl)  u_N  \\
& 
-\s  \pi_N \big(  :\! (\pi_N u_N)^2 \!:  \big)
+  M (\,:\! (\pi_N u_N)^2 \!:\,) \pi_N u_N 
= \sqrt{2} \xi, 
\end{split}
\label{SNLW3}
\end{align}

\noi
where 
$:\! (\pi_N u_N)^2 \!:  \, = 
(\pi_N u_N)^2 -\s_N.$
A slight modification of the proof of Theorem \ref{THM:LWP0}
yields uniform (in $N$) local well-posedness
of the truncated equation \eqref{SNLW3}
(with the same limiting process $(u, \dt u)$ as in Theorem \ref{THM:LWP0})
for the initial data of the form \eqref{IV3}.
By exploiting (formal) invariance of the truncated Gibbs measure
$\rhoo_N$ in \eqref{GibbsN2},\footnote{This is essentially Bourgain's invariant measure argument
\cite{BO94} applied to the truncated hyperbolic $\Phi^3_3$-model~\eqref{SNLW3}, 
whose nonlinear part is finite dimensional.} 
we see that the truncated hyperbolic $\Phi^3_3$-model \eqref{SNLW3} is almost surely globally well-posed
with respect to the truncated Gibbs measure $\rhoo_N$
and, moreover, $\rhoo_N$ is invariant under the resulting dynamics;
see Lemma~\ref{LEM:GWP4}.

We now state almost sure global well-posedness
of the hyperbolic $\Phi^3_3$-model.

\begin{theorem}\label{THM:GWP}
Let $0 < |\s| < \s_0$ and $A = A(\s)> 0$ is sufficiently large
 as in Theorem~\ref{THM:Gibbs}\,(i).
Then, 
there exists a non-trivial stochastic process $(u, \dt u) \in C(\R_+; \H^{-\frac 12 -\eps}(\T^3))$ for any $\eps>0$ such that,
given any $T>0$, the solution $(u_N, \dt u_N)$ to the truncated hyperbolic $\Phi^3_3$-model~\eqref{SNLW3}
with the random initial data distributed by the truncated Gibbs measure 
$\rhoo_N = \rho_N \otimes \mu_0$ in \eqref{GibbsN2}
converges to $(u, \dt u)$ in $C([0,T]; \H^{-\frac 12-\eps}(\T^3))$.
Furthermore, 
we have $\Law\big((u(t), \dt u(t))\big) = \rhoo$ for any $t \in \R_+$.

\end{theorem}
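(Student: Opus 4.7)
The plan is to promote Theorem~\ref{THM:LWP0} together with the finite-dimensional invariance of each truncated Gibbs measure $\rhoo_N$ to a global-in-time convergence statement, using the Bou\'e--Dupuis variational formula combined with ideas from optimal transport to bypass Bourgain's invariant measure argument, which is unavailable here because $\rho$ is singular with respect to $\mu$. I would proceed in three phases.

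First, I would adapt the paracontrolled analysis underlying Theorem~\ref{THM:LWP0} to the truncated equation~\eqref{SNLW3}, obtaining local well-posedness on a random time interval $[0, T^\star]$ with estimates uniform in $N$, for initial data of the form ``smoother part plus a Gaussian sample from~$\muu$''. For each fixed $N$, \eqref{SNLW3} decouples into a finite-dimensional Langevin-type system on frequencies with $|n|\le N$ (for which $\rho_N$ is formally invariant by the Hamiltonian-plus-Ornstein--Uhlenbeck structure) and a linear Ornstein--Uhlenbeck evolution on the complement (for which the Gaussian marginal is preserved). Bourgain's invariant measure argument then yields almost sure global well-posedness of~\eqref{SNLW3} with invariance of $\rhoo_N$ under the flow $\Phi_N^t$; this is the content of Lemma~\ref{LEM:GWP4}.

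The second and crucial phase is to show that $\{(\Phi_N^t)_{\#}\rhoo_N\}_N$ is Cauchy in a suitable topology on laws over $C([0,T];\H^{-\frac12-\eps})$. For $N<M$, I would build a coupling of initial data $(u_0^N,u_1^N)\sim\rhoo_N$ and $(u_0^M,u_1^M)\sim\rhoo_M$ on a common probability space under which the two pairs are close in an appropriate strong norm. The Bou\'e--Dupuis formula represents each $\rhoo_N$ as the push-forward of the Wiener measure by an explicit optimal drift built on the change of variables~\eqref{YZ13}; choosing drifts for $N$ and $M$ that share common building blocks produces a natural coupling whose Wasserstein-type transport cost can be bounded, via the uniform exponential integrability~\eqref{exp1c} and the weak convergence $\rhoo_N\to\rhoo$ from Theorem~\ref{THM:Gibbs}\,(i), by a quantity vanishing as $N,M\to\infty$. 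The uniform-in-$N$ local theory from the first phase propagates this initial-data closeness to any fixed time $t\in[0,T^\star]$; invariance of $\rhoo_N$ then allows iteration on successive intervals of random but uniformly positive length, extending the convergence to arbitrary $T>0$.

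The third phase identifies the limiting law: $\Law((u(t),\dt u(t)))=\rhoo$ follows by passing to the limit the identity $(\Phi_N^t)_{\#}\rhoo_N=\rhoo_N$, combining the convergence of flows from the second phase with the weak convergence $\rhoo_N\to\rhoo$ from Theorem~\ref{THM:Gibbs}\,(i). The main obstacle is the second phase: engineering a coupling between $\rhoo_N$ and $\rhoo_M$ whose transport cost is controllable by Bou\'e--Dupuis and simultaneously compatible with the paracontrolled solution map. The singularity of $\rho$ with respect to $\mu$ forces one away from naive Radon--Nikodym couplings and into the drift-based variational picture; integrating this stochastic-analytic input with the deterministic continuity of the paracontrolled system of Section~\ref{SEC:LWP}, while carefully tracking the dependence on the large $L^2$-taming parameter~$A$ appearing in~\eqref{K1r}, is where the bulk of the technical work will lie.
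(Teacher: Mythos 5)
Your proposal identifies the correct high-level strategy --- bypassing Bourgain's globalization argument (which fails because $\rho \perp \mu$) by combining the Bou\'e--Dupuis variational formula with Wasserstein-type transport estimates and the uniform exponential integrability from Theorem~\ref{THM:Gibbs}\,(i). Phase~1 and phase~3 are essentially the paper's. But phase~2, the core of the argument, contains two genuine gaps.

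First, you couple the \emph{initial data} $(u_0^N,u_1^N)\sim\rhoo_N$ and propose to measure closeness ``in an appropriate strong norm''. This is not the right object: the paracontrolled local theory (Theorem~\ref{THM:1}) is continuous in the enhanced data set $\Xi = (\<1>,\<2>,\<20>,\<21p>,\Ab,\If_{\pl,\pe})$, measured in the space $\Xc^\eps_T$, not in the initial pair $(u_0,u_1)$. Closeness of $(u_0,u_1)$ in, say, $\H^{-\frac12-\eps}$ gives no control whatsoever over the renormalized quadratic and trilinear objects. What must be coupled are the laws of the truncated enhanced data sets $\Xi_N(\vec u_0,\o_2)$ under $\rhoo_N\otimes\PP_2$; this is precisely what the paper's Proposition~\ref{PROP:plan} does, pushing $\rhoo_N\otimes\PP_2$ forward under $\Xi_N$ and comparing in Wasserstein-$1$ on $\Xc^\eps_T$. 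Also, the Bou\'e--Dupuis formula is a variational representation of $-\log\E[e^{-F}]$, not a pushforward map; the paper applies it inside the Kantorovich dual of the transport problem to bound a \emph{difference of log-Laplace transforms}, rather than to manufacture an explicit coupling from optimal drifts.

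Second, your iteration ``on successive intervals of random but uniformly positive length'' is essentially a Bourgain/Bringmann-type re-coupling scheme, and it is not clear it closes. After one local step the two solutions have the correct marginals again (by invariance), but the \emph{coupling} has degraded, and the error one accumulates at each step is not shown to stay summable; moreover the local time $T^\star$ is a function of the (random, potentially large) enhanced data norm, so ``uniformly positive'' is not for free. The paper avoids iteration entirely: Proposition~\ref{PROP:tail2} produces, via invariance of $\rhoo_N$ and a discrete Gronwall inequality, a \emph{single global-in-time} bound $\|(X_N,Y_N,\Res_N)\|_{Z^{s_1,s_2,s_3}(T)}\le C_0$ with probability $\ge 1-\dl$ uniformly in $N$, for an arbitrary fixed $T$; and Proposition~\ref{PROP:LWPv} provides a \emph{global-in-time} stability statement on $[0,T]$ by running the contraction in a norm with exponentially decaying weight $e^{-\ld t}$, which removes the need to cut $[0,T]$ into small pieces. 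The Wasserstein estimate then transfers the ``good set'' for $\rhoo_N\otimes\PP_2$ to the corresponding set for $\rhoo\otimes\PP_2$ in one shot. These two ingredients --- the discrete Gronwall bound and the exponentially weighted stability norm --- are the missing mechanism that makes the argument non-iterative, and neither appears in your proposal.
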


The main difficulty in proving Theorem \ref{THM:GWP}
comes from the mutual singularity of 
the Gibbs measure $\rhoo$ and the base Gaussian measure $\muu$
(and the fact that
 the truncated  Gibbs measure $\rhoo_N$  converges to $\rhoo$
only weakly)
such that 
Bourgain's invariant measure argument \cite{BO94, BO96} is not directly applicable.
In the context of the defocusing Hartree NLW on $\T^3$, 
Bringmann~\cite{Bring2} encountered the same issue, 
and introduced a new globalization argument, 
where a large time stability theory (in the paracontrolled setting) plays a crucial role.
Bourgain's invariant measure argument is often described
(see \cite{Bring2})
as ``the probabilistic version of a deterministic global theory using a (sub-critical) conservation law".
In~\cite{Bring2}, Bringmann considers the quantity  $\vec \rho_M((u_N, \dt u_N)(t) \in A)$, 
where $(u_N, \dt u_N)$ is the solution to the truncated equation 
with a cutoff parameter $N$.
While such an expression  is not conserved for $M \ne N$, 
it should be close to being constant in time when $M, N \gg 1$.
For this reason, he describes
his new globalization argument as 
``the probabilistic version of a deterministic global theory using almost conservation laws''.
We also point out that Bringmann's analysis
 relies on 
the fact that 
 the (truncated) Gibbs measure
is absolutely continuous with respect to a shifted  measure
\cite{OOTol1, Bring1} (as in Appendix \ref{SEC:AC} below).

While it is possible to follow Bringmann's approach, 
we instead introduce a new simple alternative argument
to prove almost sure global well-posedness.
Our approach consists of the following four steps:

\begin{itemize}
\item[{\bf 1.}]
We first 
establish a uniform (in $N$)  exponential integrability of the truncated enhanced data set 
(see \eqref{data3x} below)
with respect to the truncated measure
(Proposition~\ref{PROP:exptail2}).
We directly achieve this by combining the variational approach
with space-time estimates
{\it without} any reference to (the truncated version of) the shifted measure 
 constructed in Appendix~\ref{SEC:AC}.

\smallskip

\item[{\bf 2.}] 
Next,  by  a slight modification 
of the local well-posedness argument, 
 we prove   a stability result (Proposition \ref{PROP:LWPv}).
This is done  by a simple contraction argument, 
with  an exponentially decaying weight in time.

\smallskip
\item[{\bf 3.}] 
Then,  using the invariance of the truncated Gibbs measure, 
we establish a uniform (in $N$) control
on the solution to the truncated system (see \eqref{SNLW9} below)
with a large probability.
The argument relies on
a discrete Gronwall argument but is very straightforward.

\smallskip
\item[{\bf 4.}]
In the last step, 
we study  the convergence property of 
the distributions of the truncated enhanced data sets, 
emanating  from the truncated Gibbs measures. 
In particular, 
we study the Wasserstein-1 distance
of such a distribution with the limiting distribution, 
using ideas from theory of optimal transport (the Kantorovich duality).
See Proposition~\ref{PROP:plan} below.

\end{itemize}

\smallskip

\noi
Once we establish these four steps, 
Theorem \ref{THM:GWP} follows in a straightforward manner.
We believe that our new globalization argument is 
very simple, at least at a conceptual level,
and is easy to implement. 
See Section \ref{SEC:GWP}
for further details.

\begin{remark}\rm 
(i)
In this paper, we treated the hyperbolic $\Phi^3_3$-model.
In the three-dimensional case, it is possible
to consider the defocusing quartic interaction potential,
namely the $\Phi^4_3$-measure.
This leads to the following
 hyperbolic $\Phi^4_3$-model on $\T^3$:
\begin{align}
\dt^2 u + \dt u + (1 -  \Dl)  u + u^3  = \sqrt{2} \xi.
\label{NLWd}
\end{align}

\noi
Over the last ten years,  the parabolic $\Phi^4_3$-model:
\begin{align}
 \dt u +  (1 -  \Dl)  u + u^3  =  \sqrt 2\xi, 
\label{heat0}
\end{align}

\noi
has been studied extensively by many authors.
See
 \cite{Hairer, GIP, CC, Kupi, MW1, MWX, AK, GH18} and references therein. 
Up to date, the well-posedness issue of the  hyperbolic $\Phi^4_3$-model \eqref{NLWd}
 remains as an important open problem.\footnote{In a recent preprint \cite{BDNY},  Bringmann, Deng, Nahmod, and Yue
resolved this open problem 
in the case of the Gibbsian initial data with no stochastic forcing.}
In \cite{OWZ}, using Bringmann's analysis \cite{Bring2}, 
Y.~Wang, Zine, and the first author recently proved local well-posedness
of the cubic stochastic NLW\footnote{In \cite{OWZ}, 
the authors considered the undamped SNLW but the same analysis applies
to the damped SNLW.} on $\T^3$ with an almost space-time white noise forcing
(i.e.~replacing $\xi$ by $\jb{\nb}^{-\al}\xi$ for any $\al >0$
  in~\eqref{NLWd}).

\smallskip

\noi
(ii) In the parabolic setting \eqref{heat1}, 
there is no issue is applying Bourgain's invariant measure argument
in the usual manner since it is possible to prove local well-posedness
with deterministic initial data at the regularity of the $\Phi^3_3$-measure.
See \cite{HM} in the case of the parabolic $\Phi^4_3$-model \eqref{heat0}.

\end{remark}

\subsection{On frequency projectors}
\label{SUBSEC:freq}

We conclude this introduction by 
discussing different frequency projectors.
Given $N \in \N$, 
define 
 the   ball frequency projector
 $\pi_N^\text{ball}$ 
onto the  frequencies  $\{n\in\Z^3:|n|\leq N\}$ by setting
\begin{align}
\pi_N^\text{ball} f = 
\sum_{ n\in \Z^3 }\chi^\text{ball}_N(n) \ft f (n)  e_n, 
\label{pib1}
\end{align}

\noi
associated with  a Fourier multiplier 
\begin{align*}
 \chi_N^\text{ball} (n) = \ind_B\big(N^{-1}n\big), 
\end{align*}

\noi
where $B$ denotes the unit ball in $\R^3$ centered at the origin:
\[B = \big\{\xi   = (\xi_1, \xi_2, \xi_3) \in\R^3: |\xi|\leq 1\big\}.\]

\noi
We also 
define 
  the  smooth frequency projector
  $ \pi_N^\text{smooth}$
onto the  frequencies  $\{n\in\Z^3:|n|\leq N\}$
by setting
\begin{align}
\pi_N^\text{smooth} f = 
\sum_{ n\in \Z^3 } \chi_N^\text{smooth}(n) \ft f (n)  e_n, 
\label{pis1}
\end{align}

\noi
associated with  a Fourier multiplier  
\begin{align*}
\chi_N^\text{smooth} (n) = \chi\big(N^{-1}n\big)
\end{align*}

\noi
for some fixed  even function $\chi \in C^\infty_c(\R^3; [0, 1])$
with $\supp \chi \subset \{\xi\in\R^3:|\xi|\leq 1\}$ and $\chi\equiv 1$ 
on $\{\xi\in\R^3:|\xi|\leq \tfrac12\}$.

In Subsections \ref{SUBSEC:Gibbs}
and \ref{SUBSEC:1.3}, 
we stated the (non-)construction of the $\Phi^3_3$-measure
(Theorem~\ref{THM:Gibbs})
and the dynamical results for the hyperbolic $\Phi^3_3$-model
(Theorems \ref{THM:LWP0} and \ref{THM:GWP}), 
using the cube frequency projector
$\pi_N = \pi_N^\text{cube}$ defined in \eqref{pi}.
In comparison with the ball frequency projector $\pi_N^\text{ball}$
and the smooth frequency projector $\pi_N^\text{smooth}$, 
there are two important properties
that  the cube frequency projector 
$\pi_N^\text{cube}$ possesses simultaneously.

\begin{itemize}
\item[(i)]
As a composition of  (modulated) Hilbert transforms
in different coordinate directions, 
 the cube frequency projector 
$\pi_N^\text{cube}$ is uniformly (in $N$) bounded in $L^p(\T^3)$
for any $ 1 < p < \infty$.  

\smallskip

\item[(ii)]
The cube frequency projector is indeed a projection, in particular satisfying 
$(\Id - \pi_N^\text{cube})  \pi_N^\text{cube} = 0 $.

\end{itemize}

\smallskip

\noi
We make use of both of these properties in a crucial manner.
Note that while the  ball frequency projector $\pi_N^\text{ball}$
satisfies the property (ii), it is bounded in $L^p(\T^3)$ 
only for $p = 2$~\cite{Feff} and thus the property (i) is not satisfied.
On the other hand, 
by Young's inequality, 
the smooth frequency projector $\pi_N^\text{smooth}$
is bounded on $L^p(\T^3)$ for any $ 1\le p \le \infty$ but
it does not satisfy the property (ii).

Roughly speaking, Theorem \ref{THM:Gibbs} on the (non-)construction of the $\Phi^3_3$-measure consists
of the following five results:

\begin{itemize}
\item[(1)]
 the uniform exponential integrability \eqref{exp1c} and tightness
of the truncated $\Phi^3_3$-measures $\rho_N$
in the weakly nonlinear regime,

\smallskip

\item[(2)] uniqueness of the limiting $\Phi^3_3$-measure
in the weakly nonlinear regime,

\smallskip

\item[(3)] mutual singularity of the $\Phi^3_3$-measure
and the base Gaussian free field
in the weakly nonlinear regime, 

\smallskip

\item[(4)] non-normalizability
of the $\Phi^3_3$-measure
in the strongly nonlinear regime, 

\smallskip

\item[(5)] non-convergence
of the truncated $\Phi^3_3$-measures $\rho_N$
in the strongly nonlinear regime.

\end{itemize}

\noi
Starting with the truncated $\Phi^3_3$-measures $\rho_N$
in \eqref{GibbsN} defined in terms of the cube frequency projector $\pi_N^\text{cube}$
in \eqref{pi}, we establish (1) - (5) in Sections \ref{SEC:Gibbs} and \ref{SEC:non}.
In proving (5), the property~(i) above plays an important role
and thus our argument does not apply to 
 the ball frequency projector $\pi_N^\text{ball}$.
See Remark \ref{REM:non1}.

In establishing (2), uniqueness of the limiting $\Phi^3_3$-measure
(Proposition \ref{PROP:uniq}), 
we crucially make use of the property (ii) to show that a certain problematic term vanishes;
see $\1_2$ in~\eqref{KK0a}.
It turns out that this problematic term reflects the critical nature of the problem, 
where there is no room to spare, not even logarithmically.
In the case of the cube frequency projector $\pi_N^\text{cube}$, 
the property (ii) allows us to conclude that this term in fact vanishes.
In the case of the smooth projector $\pi_N^\text{smooth}$, 
the property~(ii) does not hold
and thus we need to show by hand that this problematic  term tends to 0.
As mentioned above, however, there is no room to spare
and it seems rather non-trivial to prove such a convergence result
by a modification of our argument.
See Remark \ref{REM:uniq1}.
In establishing (4) and (5), 
we first construct a
reference measure $\nu_\dl$
as a limit  of the tamed version  $\nu_{N, \dl}$ of the truncated 
$\Phi^3_3$-measure in \eqref{tame1}
(Proposition~\ref{PROP:ref}).
With the smooth projector $\pi_N^\text{smooth}$, 
the same issue also appears in 
showing uniqueness of the limit~$\nu_\dl$.

While we believe that 
Theorem \ref{THM:Gibbs} holds
for both 
 the ball frequency projector $\pi_N^\text{ball}$ (in particular (5) above)
 and the smooth frequency projector $\pi_N^\text{smooth}$ (in particular (2) above), 
we do not pursue these issues further in this paper 
in order to keep the paper length under control.

Let us now turn to 
 the dynamical part.
 As for the smooth frequency projector $\pi_N^\text{smooth}$, 
 there is no modification needed for the local well-posedness part.
However, as mentioned above, 
there is no   uniqueness of the limiting $\Phi^3_3$-measure
in this case.
Furthermore, 
we point out that 
the proof of Proposition \ref{PROP:plan}
also breaks down for 
the smooth frequency projector $\pi_N^\text{smooth}$
since part of the argument relies on 
 the proof of Proposition \ref{PROP:uniq}; see \eqref{Ga8}.
On the other hand, as for the  ball frequency projector $\pi_N^\text{ball}$, 
both Theorems \ref{THM:LWP0} and \ref{THM:GWP} hold
as they are stated.
However, the proof of the local well-posedness part needs to be modified
in view of the unboundedness of the ball frequency projector $\pi_N^\text{ball}$
in the Strichartz spaces (see~\eqref{M0}).
Note that this issue can be easily remedied by using
the Fourier restriction norm method
via the ($L^2$-based) $X^{s, b}$-spaces as in \cite{OTh2, Bring2, OWZ}.

\section{Notations and basic lemmas}
\label{SEC:2}

In describing regularities of functions and distributions, 
we  use $\eps > 0$ to denote a small constant.
We usually  suppress the dependence on such $\eps > 0$ in an estimate.
For $a, b > 0$, we use $a\lesssim b$ to mean that
there exists $C>0$ such that $a \leq Cb$.
By $a\sim b$, we mean that $a\lesssim b$ and $b \lesssim a$.

In dealing with space-time functions, 
we use the following short-hand notation 
$L^q_TL^r_x$ = $L^q([0, T]; L^r(\T^3))$, etc.

\subsection{Sobolev 
 and Besov spaces}
\label{SUBSEC:21}

Let $s \in \R$ and $1 \leq p \leq \infty$.
We define the $L^2$-based Sobolev space $H^s(\T^d)$
by the norm:
\begin{align*}
\| f \|_{H^s} = \| \jb{n}^s \ft f (n) \|_{\l^2_n}.
\end{align*}

\noi
We also define the $L^p$-based Sobolev space $W^{s, p}(\T^d)$
by the norm:
\begin{align*}
\| f \|_{W^{s, p}} = \big\| \F^{-1} [\jb{n}^s \ft f(n)] \big\|_{L^p}.
\end{align*}

\noi
When $p = 2$, we have $H^s(\T^d) = W^{s, 2}(\T^d)$.

Let $\phi:\R \to [0, 1]$ be a smooth  bump function supported on $[-\frac{8}{5}, \frac{8}{5}]$ 
and $\phi\equiv 1$ on $\big[-\frac 54, \frac 54\big]$.
For $\xi \in \R^d$, we set $\varphi_0(\xi) = \phi(|\xi|)$
and 
\begin{align}
\varphi_{j}(\xi) = \phi\big(\tfrac{|\xi|}{2^j}\big)-\phi\big(\tfrac{|\xi|}{2^{j-1}}\big)
\label{phi1}
\end{align}

\noi
for $j \in \N$.
Then, for $j \in \Z_{\geq 0} := \N \cup\{0\}$, 
we define  the Littlewood-Paley projector  $\P_j$ 
as the Fourier multiplier operator with a symbol $\varphi_j$.
Note that we have 
\begin{align*}
\sum_{j = 0}^\infty \varphi_j (\xi) = 1
\end{align*}

\noi
 for each $\xi \in \R^d$. 
Thus, 
we have 
\[ f = \sum_{j = 0}^\infty \P_j f. \]

Let us now recall
 the definition and basic properties of  paraproducts
 introduced by Bony~\cite{Bony}.
 See~\cite{BCD, GIP} for further details.
Given two functions $f$ and $g$ on $\T^3$
of regularities $s_1$ and $s_2$, 
we write the product $fg$ as
\begin{align}
\begin{split}
fg & \hspace*{0.3mm}
= f\pl g + f \pe g + f \pg g\\
& := \sum_{j < k-2} \P_{j} f \, \P_k g
+ \sum_{|j - k|  \leq 2} \P_{j} f\,  \P_k g
+ \sum_{k < j-2} \P_{j} f\,  \P_k g.
\end{split}
\label{para1}
\end{align}

\noi
The first term 
$f\pl g$ (and the third term $f\pg g$) is called the paraproduct of $g$ by $f$
(the paraproduct of $f$ by $g$, respectively)
and it is always well defined as a distribution
of regularity $\min(s_2, s_1+ s_2)$.
On the other hand, 
the resonant product $f \pe g$ is well defined in general 
only if $s_1 + s_2 > 0$. See Lemma \ref{LEM:para} below.
In the following, we also use the notation $f\pge g := f\pg g + f\pe g$.
In studying a  nonlinear problem, 
main difficulty usually arises in making sense 
of a product. 
Since paraproducts are always well defined, 
such a problem comes from a resonant product.
In particular, when the sum of regularities is negative, 
we need to impose an extra structure
to make sense of a  (seemingly)  ill-defined resonant product.
See Section \ref{SEC:LWP}
for a further discussion on the paracontrolled approach in this direction.

Next, we  recall the basic properties of the Besov spaces $B^s_{p, q}(\T^d)$
defined by the norm:
\begin{equation*}
\| u \|_{B^s_{p,q}} = \Big\| 2^{s j} \| \P_{j} u \|_{L^p_x} \Big\|_{\l^q_j(\Z_{\geq 0})}.
\end{equation*}

\noi
We denote the H\"older-Besov space by  $\C^s (\T^d)= B^s_{\infty,\infty}(\T^d)$.
Note that (i)~the parameter $s$ measures differentiability and $p$ measures integrability, 
(ii)~$H^s (\T^d) = B^s_{2,2}(\T^d)$,
and (iii)~for $s > 0$ and not an integer, $\C^s(\T^d)$ coincides with the classical H\"older spaces $C^s(\T^d)$;
see \cite{Graf}.

We recall the basic estimates in Besov spaces.
See \cite{BCD, GOTW} for example.

\begin{lemma}\label{LEM:Bes}
The following estimates hold.

\noi
\textup{(i) (interpolation)} 
Let $s, s_1, s_2 \in \R$ and $p, p_1, p_2 \in (1,\infty)$
such that $s = \ta s_1 + (1-\ta) s_2$ and $\frac 1p = \frac \ta{p_1} + \frac{1-\ta}{p_2}$
for some $0< \ta < 1$.
Then, we have
\begin{equation}
\| u \|_{W^{s,  p}} \les \| u \|_{W^{s_1, p_1}}^\ta \| u \|_{W^{s_2, p_2}}^{1-\ta}.
\label{interp}
\end{equation}

\noi
\textup{(ii) (immediate  embeddings)}
Let $s_1, s_2 \in \R$ and $p_1, p_2, q_1, q_2 \in [1,\infty]$.
Then, we have
\begin{align} 
\begin{split}
\| u \|_{B^{s_1}_{p_1,q_1}} 
&\les \| u \|_{B^{s_2}_{p_2, q_2}} 
\qquad \text{for $s_1 \leq s_2$, $p_1 \leq p_2$,  and $q_1 \geq q_2$},  \\
\| u \|_{B^{s_1}_{p_1,q_1}} 
&\les \| u \|_{B^{s_2}_{p_1, \infty}}
\qquad \text{for $s_1 < s_2$},\\
\| u \|_{B^0_{p_1, \infty}}
 &  \les  \| u \|_{L^{p_1}}
 \les \| u \|_{B^0_{p_1, 1}}.
\end{split}
\label{embed}
\end{align}

%


\smallskip

\noi
\textup{(iii) (Besov embedding)}
Let $1\leq p_2 \leq p_1 \leq \infty$, $q \in [1,\infty]$,  and  $s_2 \ge s_1 + d \big(\frac{1}{p_2} - \frac{1}{p_1}\big)$. Then, we have
\begin{equation*}
 \| u \|_{B^{s_1}_{p_1,q}} \les \| u \|_{B^{s_2}_{p_2,q}}.
\end{equation*}

\smallskip

\noi
\textup{(iv) (duality)}
Let $s \in \mathbb{R}$
and  $p, p', q, q' \in [1,\infty]$ such that $\frac1p + \frac1{p'} = \frac1q + \frac1{q'} = 1$. Then, we have
\begin{equation}
\bigg| \int_{\T^d}  uv \, dx \bigg|
\le \| u \|_{B^{s}_{p,q}} \| v \|_{B^{-s}_{p',q'}},
\label{dual}
\end{equation}

\noi
where $\int_{\T^d} u v \, dx$ denotes  the duality pairing between $B^{s}_{p,q}(\T^d)$ and $B^{-s}_{p',q'}(\T^d)$.

\smallskip
	
\noi		
\textup{(v) (fractional Leibniz rule)} 
Let $p, p_1, p_2, p_3, p_4 \in [1,\infty]$ such that 
$\frac1{p_1} + \frac1{p_2} 
= \frac1{p_3} + \frac1{p_4} = \frac 1p$. 
Then, for every $s>0$, we have
\begin{equation}
\| uv \|_{B^{s}_{p,q}} \les  \| u \|_{B^{s}_{p_1,q}}\| v \|_{L^{p_2}} + \| u \|_{L^{p_3}} \| v \|_{B^s_{p_4,q}} .
\label{prod}
\end{equation}

\end{lemma}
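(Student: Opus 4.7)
The plan is to prove each part of Lemma \ref{LEM:Bes} using the Littlewood-Paley decomposition $f = \sum_{j\geq 0} \mathbf{P}_j f$ as the organizing tool, together with Bernstein's inequality
\[
\|\mathbf{P}_j f\|_{L^{p_1}} \lesssim 2^{jd(\frac{1}{p_2}-\frac{1}{p_1})} \|\mathbf{P}_j f\|_{L^{p_2}}, \qquad 1 \leq p_2 \leq p_1 \leq \infty,
\]
which follows from Young's inequality since $\mathbf{P}_j f = \check{\varphi}_j * f$ with $\check{\varphi}_j$ having $L^1$-norm bounded uniformly in $j$ (after rescaling). Since these are well-known results, my proof would be mostly bookkeeping with the definitions, and I would closely follow the presentations in \cite{BCD} and \cite{GOTW}.

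For parts (i), (ii), (iii), I would proceed as follows. Part (ii) is immediate from the definition of the $B^s_{p,q}$ norm and the embeddings $\ell^{q_1} \hookrightarrow \ell^{q_2}$ for $q_1 \leq q_2$, $L^{p_1} \hookrightarrow L^{p_2}$ on $\mathbb{T}^d$ for $p_1 \geq p_2$ (using that $\mathbb{T}^d$ has finite measure), and for the embedding from $B^{s_2}_{p_1,\infty}$ into $B^{s_1}_{p_1,q_1}$ with $s_1 < s_2$, summability of the geometric series $\sum_j 2^{(s_1-s_2)jq_1}$ does the job. The bottom line of \eqref{embed} requires summing $\mathbf{P}_j f$ in $j$ and using the triangle inequality in $L^{p_1}$ together with the almost-orthogonality of the $\mathbf{P}_j$ on the Fourier side. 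Part (iii) is Bernstein applied to each $\mathbf{P}_j$ block, which produces exactly the loss $2^{jd(\frac{1}{p_2}-\frac{1}{p_1})}$ needed to shift the regularity index. Part (i) can be proven directly on the Littlewood-Paley side: one writes
\[
2^{sj}\|\mathbf{P}_j u\|_{L^p} \leq \bigl(2^{s_1 j}\|\mathbf{P}_j u\|_{L^{p_1}}\bigr)^\theta \bigl(2^{s_2 j}\|\mathbf{P}_j u\|_{L^{p_2}}\bigr)^{1-\theta}
\]
by H\"older's inequality with exponents $1/\theta$ and $1/(1-\theta)$, and then applies H\"older in $j$; alternatively, one invokes complex interpolation of the $W^{s,p}$-scale directly.

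For parts (iv) and (v), the key tools are duality for Littlewood-Paley sums and Bony's paraproduct decomposition \eqref{para1}. For (iv), I would use that $\int u v\,dx = \sum_{|j-k|\leq 2}\int \mathbf{P}_j u \, \mathbf{P}_k v\,dx$ up to finitely many near-diagonal terms (by Plancherel / frequency support considerations), then apply H\"older in $x$ with exponents $(p,p')$ followed by H\"older in $j$ with exponents $(q,q')$, after redistributing the weight $2^{sj}\cdot 2^{-sj}$. For (v), I would decompose $uv = u\pl v + u\pe v + u\pg v$ as in \eqref{para1}. Each paraproduct $u\pl v$ has the frequency support of a given block essentially equal to $\mathbf{P}_k v$'s support, so $\mathbf{P}_j(u \pl v) \sim \bigl(\sum_{i<k-2}\mathbf{P}_i u\bigr)\mathbf{P}_k v$ for $k\sim j$; taking $L^p$-norms and using $\bigl\|\sum_{i<k-2}\mathbf{P}_i u\bigr\|_{L^{p_3}} \lesssim \|u\|_{L^{p_3}}$ (uniform $L^p$ boundedness of partial sums, valid for $1<p_3<\infty$; the endpoint cases use standard modifications) produces the second term on the right-hand side of \eqref{prod}. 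The symmetric term $u\pg v$ produces the first term. For the resonant piece $u\pe v$, one uses that $\mathbf{P}_j(u\pe v)$ only collects contributions with $i\sim k \gtrsim j$, so after H\"older one may sum in the diagonal index using $s>0$ for convergence of the geometric series.

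Honestly, there is no serious obstacle here --- all five statements are standard and the arguments above are textbook material; the only mild care needed is (a) to handle the $q=\infty$ endpoints in the $\ell^q$-summations without losing the $s>0$ gain in (v), and (b) to make sure the $L^p$ boundedness of the partial Littlewood-Paley sum used in the paraproduct estimate is available in the relevant range $1<p<\infty$, which is why the hypotheses of (v) are stated as they are. Since the paper cites \cite{BCD, GOTW} for precisely these facts, in the actual write-up I would simply refer the reader to those references and omit the proof.
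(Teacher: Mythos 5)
The paper does not actually prove Lemma~2.1: it simply cites \cite{BCD, GOTW} for parts (ii)--(v), and for part (i) gives the one-line remark that it ``follows from the Littlewood-Paley characterization of Sobolev norms via the square function and H\"older's inequality.'' Your plan to do the same --- sketch the standard arguments and defer to the references --- is therefore fully consistent with the paper.

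There is, however, one genuine technical slip in your sketch of part (i). The blockwise chain
\[
2^{sj}\|\P_j u\|_{L^p} \le \bigl(2^{s_1 j}\|\P_j u\|_{L^{p_1}}\bigr)^\theta \bigl(2^{s_2 j}\|\P_j u\|_{L^{p_2}}\bigr)^{1-\theta},
\]
followed by H\"older in $j$, bounds $\|2^{sj}\|\P_j u\|_{L^p}\|_{\ell^q_j}$, i.e.\ a Besov norm $B^s_{p,q}$, not $W^{s,p}$. For $p\neq 2$ one has $W^{s,p}=F^s_{p,2}\neq B^s_{p,2}$, so this route does not prove the stated estimate. The paper's hint is the correct one: using
$\|u\|_{W^{s,p}}\sim\|(\sum_j 2^{2sj}|\P_j u|^2)^{1/2}\|_{L^p}$ for $1<p<\infty$, one applies H\"older \emph{pointwise in $x$} on the inner $\ell^2_j$-sum, i.e.
\[
\sum_j 2^{2sj}|\P_j u(x)|^2 \le \Bigl(\sum_j 2^{2s_1 j}|\P_j u(x)|^2\Bigr)^\theta\Bigl(\sum_j 2^{2s_2 j}|\P_j u(x)|^2\Bigr)^{1-\theta},
\]
and then H\"older in $x$ with the exponents dictated by $\tfrac1p=\tfrac\theta{p_1}+\tfrac{1-\theta}{p_2}$. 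Your fallback via complex interpolation of the $W^{s,p}$-scale is a valid alternative. The remaining parts (ii)--(v) are fine as sketched and match the standard references the paper cites.
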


The interpolation \eqref{interp} follows
from the Littlewood-Paley characterization of Sobolev norms via the square function
and H\"older's inequality.

\begin{lemma}[paraproduct and resonant product estimates]
\label{LEM:para}
Let $s_1, s_2 \in \R$ and $1 \leq p, p_1, p_2, q \leq \infty$ such that 
$\frac{1}{p} = \frac 1{p_1} + \frac 1{p_2}$.
Then, we have 
\begin{align}
\| f\pl g \|_{B^{s_2}_{p, q}} \les 
\|f \|_{L^{p_1}} 
\|  g \|_{B^{s_2}_{p_2, q}}.  
\label{para2a}
\end{align}

\noi
When $s_1 < 0$, we have
\begin{align}
\| f\pl g \|_{B^{s_1 + s_2}_{p, q}} \les 
\|f \|_{B^{s_1 }_{p_1, q}} 
\|  g \|_{B^{s_2}_{p_2, q}}.  
\label{para2}
\end{align}

\noi
When $s_1 + s_2 > 0$, we have
\begin{align}
\| f\pe g \|_{B^{s_1 + s_2}_{p, q}} \les 
\|f \|_{B^{s_1 }_{p_1, q}} 
\|  g \|_{B^{s_2}_{p_2, q}}  .
\label{para3}
\end{align}

\end{lemma}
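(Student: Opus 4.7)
The plan is to deduce all three bounds from the Fourier-support properties of the dyadic blocks $\P_j$, combined with H\"older's inequality and the $\ell^q$ structure of the Besov norm. Writing $S_m f = \sum_{j \le m} \P_j f$ for the low-frequency truncation, the paraproduct takes the form $f \pl g = \sum_k S_{k-3} f \, \P_k g$. Since $S_{k-3} f$ has Fourier support in $\{|\xi| \lesssim 2^{k-3}\}$ and $\P_k g$ in an annulus $\{|\xi| \sim 2^k\}$, each summand has Fourier support contained in a fixed annulus $\{|\xi| \sim 2^k\}$. Consequently $\P_\ell(f \pl g)$ vanishes unless $|\ell - k| \le C$ for some absolute $C$, and H\"older's inequality (together with the uniform $L^{p_1}$-boundedness of the Fourier multipliers $S_m$) gives
\begin{equation*}
\|\P_\ell(f \pl g)\|_{L^p} \lesssim \sum_{|k - \ell| \le C} \|S_{k-3} f\|_{L^{p_1}} \|\P_k g\|_{L^{p_2}}.
\end{equation*}

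For \eqref{para2a}, I bound $\|S_{k-3} f\|_{L^{p_1}} \lesssim \|f\|_{L^{p_1}}$ uniformly in $k$, multiply the display by $2^{\ell s_2}$, and take $\ell^q_\ell$ to read off $\|f \pl g\|_{B^{s_2}_{p,q}} \lesssim \|f\|_{L^{p_1}} \|g\|_{B^{s_2}_{p_2,q}}$. For \eqref{para2}, the hypothesis $s_1 < 0$ allows the improved bound
\begin{equation*}
\|S_{k-3} f\|_{L^{p_1}} \le \sum_{j \le k-3} 2^{-j s_1} \bigl( 2^{j s_1} \|\P_j f\|_{L^{p_1}} \bigr) \lesssim 2^{-k s_1} \|f\|_{B^{s_1}_{p_1,\infty}} \lesssim 2^{-k s_1} \|f\|_{B^{s_1}_{p_1,q}},
\end{equation*}
where the geometric series is summable precisely because $-s_1 > 0$. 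Inserting this and multiplying by $2^{\ell(s_1 + s_2)}$ reduces the $\ell^q_\ell$ norm of the left-hand side to that of $\bigl( 2^{k s_2} \|\P_k g\|_{L^{p_2}} \bigr)_k$, which is exactly $\|g\|_{B^{s_2}_{p_2,q}}$.

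The resonant product $f \pe g = \sum_{|j-k|\le 2} \P_j f \, \P_k g$ is more delicate because each summand has Fourier support in a \emph{ball} $\{|\xi| \lesssim 2^k\}$ rather than an annulus, so $\P_\ell(f \pe g)$ receives contributions from all $k \ge \ell - C$. Applying H\"older blockwise yields
\begin{equation*}
2^{\ell(s_1 + s_2)} \|\P_\ell(f \pe g)\|_{L^p} \lesssim \sum_{k \ge \ell - C} 2^{(\ell - k)(s_1 + s_2)} a_k \, b_k,
\end{equation*}
where $a_k := 2^{k s_1} \|\P_k f\|_{L^{p_1}}$ and $b_k := 2^{k s_2} \|\P_k g\|_{L^{p_2}}$. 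Writing the right-hand side as a convolution in $k$ with kernel $K(m) := 2^{m(s_1+s_2)} \mathbf{1}_{m \le C}$ acting on the sequence $(a_k b_k)_k$, the hypothesis $s_1 + s_2 > 0$ gives $\|K\|_{\ell^1} < \infty$. Young's inequality for sequences, together with H\"older ($\|ab\|_{\ell^q} \le \|a\|_{\ell^q} \|b\|_{\ell^\infty}$) and the embedding $\ell^q \hookrightarrow \ell^\infty$, then closes the estimate by $\|a\|_{\ell^q} \|b\|_{\ell^q} = \|f\|_{B^{s_1}_{p_1,q}} \|g\|_{B^{s_2}_{p_2,q}}$.

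The main subtlety, and the only place where real analytic content is used, is the convergence of the geometric series in the two resonant regimes: the hypothesis $s_1 < 0$ in \eqref{para2} controls the low-frequency tail $\sum_{j \le k-3} 2^{-j s_1}$ via its largest term, while $s_1 + s_2 > 0$ in \eqref{para3} makes the diagonal sum $\sum_{k \ge \ell - C} 2^{(\ell - k)(s_1 + s_2)}$ finite. These two thresholds are sharp, and they encode the genuine analytic content of Bony's decomposition; the rest is purely organizational bookkeeping with Littlewood-Paley blocks.
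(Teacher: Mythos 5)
Your proof is correct and is the standard Bony/Littlewood--Paley argument (frequency localization of $S_{k-3}f\,\P_k g$ and $\P_jf\,\P_k g$, H\"older blockwise, then geometric summation under the sign conditions on $s_1$ and $s_1+s_2$, plus discrete Young for the $\ell^q$-step). The paper does not write out a proof of Lemma~\ref{LEM:para} at all --- it states that the estimates ``follow easily from the definition \eqref{para1}'' and refers to \cite{BCD, MW2} for details, and the argument in those references is precisely the one you gave, with the periodic adaptation handled, as you implicitly do, by the uniform $L^{p}$-boundedness of the Fourier multipliers $S_m$ and $\P_j$ on $\T^3$.
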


The product estimates \eqref{para2a},  \eqref{para2},  and \eqref{para3}
follow easily from the definition \eqref{para1} of the paraproduct 
and the resonant product.
See \cite{BCD, MW2} for details of the proofs in the non-periodic case
(which can be easily extended to the current periodic setting).

We also recall the following product estimate from \cite{GKO, BOZ}.

\begin{lemma}\label{LEM:gko}
Let $s> 0$.

\smallskip

\noi
\textup{(i)}
Let  $1<p_j,q_j,r\le\infty$, $j=1,2$ such that $\frac{1}{r}=\frac{1}{p_j}+\frac{1}{q_j}$.
Then, we have 
$$
\|\jb{\nb}^s(fg)\|_{L^r(\T^3)}\lesssim\| \jb{\nb}^s f\|_{L^{p_1}(\T^3)} \|g\|_{L^{q_1}(\T^3)}+ \|f\|_{L^{p_2}(\T^3)} 
\|  \jb{\nb}^s g\|_{L^{q_2}(\T^3)}.
$$

\smallskip

\noi
\textup{(ii)}
Let   $1<p \le \infty$ and $1 < q,r<\infty$ such that $s \geq   3\big(\frac{1}{p}+\frac{1}{q}-\frac{1}{r}\big)$
and $q, r' \ge p'$.
Then, we have
$$
\|\jb{\nb}^{-s}(fg)\|_{L^r(\T^3)}
\lesssim\| \jb{\nb}^{-s} f\|_{L^{p}(\T^3)} \| \jb{\nb}^{s} g\|_{L^{q}(\T^3)} .
$$
\end{lemma}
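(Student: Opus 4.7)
The plan is to prove both parts via the Littlewood-Paley/Bony paraproduct decomposition $fg = f\pl g + f\pe g + f\pg g$, and then estimate each piece by exploiting the frequency localization together with the paraproduct/resonant estimates of Lemma~\ref{LEM:para}.

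For part (i), which is a form of the Kato-Ponce (fractional Leibniz) inequality, I would proceed as follows. On each dyadic block $\P_j f \, \P_k g$, the operator $\jb{\nb}^s$ acts (up to a Mikhlin-type multiplier) as $2^{\max(j,k)s}$. In the low-high piece $f\pl g$ (where $j < k-2$), this yields $\jb{\nb}^s(f\pl g) \approx f \pl (\jb{\nb}^s g)$ modulo a commutator whose symbol $\jb{n+m}^s/\jb{m}^s$ is smooth and bounded on the relevant frequency region, hence an $L^p$-bounded Fourier multiplier by Mikhlin. Applying~\eqref{para2a} then bounds this contribution by $\|f\|_{L^{p_2}} \|\jb{\nb}^s g\|_{L^{q_2}}$. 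The high-low piece $f\pg g$ is handled symmetrically, producing $\|\jb{\nb}^s f\|_{L^{p_1}} \|g\|_{L^{q_1}}$. For the resonant piece $f\pe g$ (with $|j-k|\le 2$), write $\jb{\nb}^s(\P_j f\,\P_k g) \approx \P_j(\jb{\nb}^s f) \cdot \P_k g$; summing via a square-function estimate and H\"older gives either of the two desired bounds. Assembling the three pieces with the Littlewood-Paley characterization of $L^r$ concludes (i).

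For part (ii), I would again decompose $fg$ via paraproducts. The easiest pieces are $f\pe g$ and $f\pg g$. For $f\pg g$, the frequency of $f$ dominates, so $\jb{\nb}^{-s}$ acts essentially on $f$ and one commutes it inside to write $\jb{\nb}^{-s}(f\pg g) \approx (\jb{\nb}^{-s} f) \pg g$; combined with the fact that $\jb{\nb}^{-s}$ is bounded on $L^q$ for $1<q<\infty$ and $s\ge 0$ (so that $\|g\|_{L^q}\lesssim \|\jb{\nb}^s g\|_{L^q}$), Lemma~\ref{LEM:para} yields an acceptable bound once the H\"older relation is consistent with $\frac1r = \frac1p + \frac1q$. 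The genuinely delicate piece is $f\pl g$, where the high frequency comes from $g$. Using the dyadic identity
\begin{equation*}
\P_j f \cdot \P_k g \;=\; 2^{js}\, (\P_j \jb{\nb}^{-s} f)\cdot 2^{-ks}\,(\P_k \jb{\nb}^{s} g), \qquad j<k-2,
\end{equation*}
one picks up a power $2^{(j-k)s}$ after applying $\jb{\nb}^{-s}$, summable in $j$ and leaving a factor $2^{-ks}$ on the $\jb{\nb}^s g$ block. Converting this $2^{-ks}$ into an $L^a$-to-$L^r$ gain via Bernstein/Sobolev embedding requires exactly $s \ge 3(\tfrac1p + \tfrac1q - \tfrac1r)$, which is where the hypothesis is used.

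The main technical obstacle will be the low-high paraproduct in part~(ii): we need to trade the negative regularity on the \emph{product} against a \emph{positive} regularity on $g$ and a \emph{negative} regularity on $f$, a mismatch that cannot be resolved by pointwise H\"older alone. The dyadic trick above, together with Sobolev embedding $W^{s,a}\hookrightarrow L^r$ with $\frac1a=\frac1p+\frac1q$, provides precisely the right budget, and this is where the sharp condition $s\ge 3(\tfrac1p+\tfrac1q-\tfrac1r)$ naturally appears. Once all three pieces are controlled, summing them via the Littlewood-Paley square function and the embedding $B^0_{r,2}\hookrightarrow L^r$ (valid for $1<r<\infty$) concludes the proof of (ii).
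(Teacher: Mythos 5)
The paper does not prove Lemma~\ref{LEM:gko}; it simply cites \cite{GKO} for both parts, and the only new content is the remark after the lemma: the case of strict inequality $s > 3(\tfrac1p+\tfrac1q-\tfrac1r)$ in~(ii) reduces to the critical case $s = 3(\tfrac1p+\tfrac1q-\tfrac1r)$ by the inclusion $L^{p_1}(\T^3)\subset L^{p_2}(\T^3)$ for $p_1\ge p_2$ (choose $\tilde r > r$ with $s = 3(\tfrac1p+\tfrac1q-\tfrac1{\tilde r})$ and use $L^{\tilde r}\hookrightarrow L^r$). Your sketch is a genuine proof from scratch via Littlewood--Paley/Bony decomposition and Bernstein, which is the standard route (and indeed how one proves such estimates in \cite{GKO}). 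For part~(i), the commutator/Mikhlin argument you describe is correct. For the low-high paraproduct in~(ii), your identification of the dyadic budget $2^{(j-k)s}\cdot 2^{k(3(\tfrac1p+\tfrac1q-\tfrac1r)-s)}$ is exactly right and explains where the hypothesis $s\ge 3(\tfrac1p+\tfrac1q-\tfrac1r)$ enters.

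There are, however, several gaps that would need to be filled before this is a complete proof. First, you call $f\pe g$ and $f\pg g$ the ``easiest'' pieces and only discuss $f\pg g$, restricting to $\tfrac1r=\tfrac1p+\tfrac1q$; when $\tfrac1r < \tfrac1p+\tfrac1q$ (the case where the hypothesis on $s$ is active), $f\pg g$ requires the \emph{same} Bernstein gain you use for $f\pl g$, applied at the frequency of $f$. Second, $f\pe g$ is not addressed at all, and it is actually the most delicate piece: the output of $\P_j f\,\P_k g$ with $|j-k|\le 2$ can concentrate at arbitrarily low frequency, so $\jb{\nb}^{-s}$ does not give a fixed gain $2^{-ks}$; one must further decompose in the output frequency $m \le k+O(1)$, apply $\jb{\nb}^{-s}\sim 2^{-ms}$ and Bernstein at scale $2^m$, and then sum in $m$ (and at the endpoint $s=3(\tfrac1p+\tfrac1q-\tfrac1r)$ this sum in $m$ produces a logarithmic loss that cannot be removed by naive geometric summation --- a square-function or Coifman--Meyer argument is needed). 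Third, the assertion ``$B^0_{r,2}\hookrightarrow L^r$ for $1<r<\infty$'' is false for $r<2$ (the correct embedding is $B^0_{r,\min(r,2)}\hookrightarrow L^r$); what you want is the Triebel--Lizorkin characterization $L^r = F^0_{r,2}$, i.e.\ the square-function identity, not a Besov embedding. Finally, the case $s=0$ (where $\sum_{j<k}2^{(j-k)s}$ fails to converge) should be separated out; it reduces to H\"older plus $L^a(\T^3)\hookrightarrow L^r(\T^3)$.
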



\subsection{On discrete convolutions}

Next, we recall the following basic lemma on a discrete convolution.

\begin{lemma}\label{LEM:SUM}
Let $d \geq 1$ and $\al, \be \in \R$ satisfy
\[ \al+ \be > d  \qquad \text{and}\qquad  \al < d .\]
\noi
Then, we have
\[
 \sum_{n = n_1 + n_2} \frac{1}{\jb{n_1}^\al \jb{n_2}^\be}
\les \jb{n}^{- \al + \ld}\]

\noi
for any $n \in \Z^d$, 
where $\ld = 
\max( d- \be, 0)$ when $\be \ne d$ and $\ld = \eps$ when $\be = d$ for any $\eps > 0$.

%
%
%
%

\end{lemma}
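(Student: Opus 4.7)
\smallskip

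\noindent
\textbf{Proof plan.} The plan is to perform a standard dyadic/region decomposition of the sum according to the relative sizes of $|n_1|$, $|n_2|$, and $|n|$. Write $|n_1| \sim 2^{j_1}$ and $|n_2| \sim 2^{j_2}$ (with the usual convention for the zero frequency via $\jb{\cdot}$) and split the sum into the three regions
\begin{align*}
\text{(I)}\ \ 2^{j_1} \lesssim \jb{n},\ 2^{j_2} \sim \jb{n}; \qquad
\text{(II)}\ \ 2^{j_1} \sim \jb{n},\ 2^{j_2} \lesssim \jb{n}; \qquad
\text{(III)}\ \ 2^{j_1} \sim 2^{j_2} \gg \jb{n}.
\end{align*}
Since $n_1 + n_2 = n$, every summand falls into at least one of these regions, so it suffices to bound the contribution of each.

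In region (I), $\jb{n_2}^{-\be} \sim \jb{n}^{-\be}$, and using $\al < d$ the sum $\sum_{|n_1|\lesssim \jb{n}} \jb{n_1}^{-\al}$ is $\lesssim \jb{n}^{d-\al}$, contributing $\jb{n}^{d-\al-\be} = \jb{n}^{-\al - (\be - d)}$, which is $\lesssim \jb{n}^{-\al+\ld}$ since $\ld \geq d - \be$. Region (III) is handled by a dyadic sum: the number of pairs $(n_1,n_2)$ with $n_1+n_2 = n$ and $|n_1|\sim|n_2|\sim 2^j$ is $O(2^{jd})$, so one gets
\begin{equation*}
\sum_{2^j \gg \jb{n}} 2^{jd}\cdot 2^{-j(\al+\be)} \lesssim \jb{n}^{d-\al-\be},
\end{equation*}
where the hypothesis $\al+\be > d$ ensures convergence; this again fits the claimed bound.

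Region (II) is where the three cases of $\ld$ appear. Here $\jb{n_1}^{-\al}\sim \jb{n}^{-\al}$ and we must control $\sum_{|n_2|\lesssim \jb{n}} \jb{n_2}^{-\be}$. If $\be > d$ the sum is $O(1)$, giving $\jb{n}^{-\al}$ and matching $\ld = 0$. If $\be < d$ the sum is $\lesssim \jb{n}^{d-\be}$, giving exactly $\jb{n}^{-\al + (d-\be)}$. If $\be = d$ one picks up a logarithm $\log\jb{n}$, which is absorbed into $\jb{n}^{\eps}$ for any $\eps > 0$. Combining the three regions gives the stated bound.

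The only mild subtlety is the endpoint $\be = d$, where the logarithmic loss forces the $\eps$ in the definition of $\ld$; there is otherwise no essential obstacle, as the three region estimates are elementary summations. One could equally run the argument with a continuous dyadic decomposition $\sum_{j_1,j_2\geq 0} \mathbf{1}_{|n_1|\sim 2^{j_1}}\mathbf{1}_{|n_2|\sim 2^{j_2}}$, but the region decomposition above is the cleanest presentation.
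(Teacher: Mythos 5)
Your proof is correct. The paper does not actually give a proof of this lemma; it just states that the estimate ``follows from elementary computations'' and refers to \cite[Lemma 4.2]{GTV} and \cite[Lemma 4.1]{MWX}. Your three-region decomposition is the standard elementary argument behind those references: the constraint $n = n_1 + n_2$ forces $\max(|n_1|,|n_2|) \gtrsim |n|$, so every term lies in one of your regions; the hypothesis $\al < d$ makes the region (I) sum $\lesssim \jb{n}^{d-\al}$; the three cases of $\ld$ arise precisely from the region (II) sum over the small variable $n_2$; and $\al + \be > d$ gives geometric decay of the high-frequency tail in region (III). Each regional bound is then $\lesssim \jb{n}^{-\al+\ld}$, as you check, so the argument closes.
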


Lemma \ref{LEM:SUM} follows
from elementary  computations.
See, for example,  
 \cite[Lemma 4.2]{GTV} and \cite[Lemma 4.1]{MWX}.

\subsection{Tools from stochastic analysis}

We conclude this section by recalling useful lemmas
from stochastic analysis.
See \cite{Shige, Nua} for basic definitions.
Let $(H, B, \mu)$ be an abstract Wiener space.
Namely, $\mu$ is a Gaussian measure on a separable Banach space $B$
with $H \subset B$ as its Cameron-Martin space.
Given  a complete orthonormal system $\{e_j \}_{ j \in \N} \subset B^*$ of $H^* = H$, 
we  define a polynomial chaos of order
$k$ to be an element of the form $\prod_{j = 1}^\infty H_{k_j}(\jb{x, e_j})$, 
where $x \in B$, $k_j \ne 0$ for only finitely many $j$'s, $k= \sum_{j = 1}^\infty k_j$, 
$H_{k_j}$ is the Hermite polynomial of degree $k_j$, 
and $\jb{\cdot, \cdot} = \vphantom{|}_B \jb{\cdot, \cdot}_{B^*}$ denotes the $B$-$B^*$ duality pairing.
We then 
denote the closure  of 
polynomial chaoses of order $k$ 
under $L^2(B, \mu)$ by $\mathcal{H}_k$.
The elements in $\H_k$ 
are called homogeneous Wiener chaoses of order $k$.
We also set
\begin{align}
 \H_{\leq k} = \bigoplus_{j = 0}^k \H_j
\notag
\end{align}

\noi
 for $k \in \N$.

As a consequence
of the  hypercontractivity of the Ornstein-Uhlenbeck
semigroup  due to Nelson \cite{Nelson2}, 
we have the following Wiener chaos estimate
\cite[Theorem~I.22]{Simon}.
See also \cite[Proposition~2.4]{TTz}.

\begin{lemma}\label{LEM:hyp}
Let $k \in \N$.
Then, we have
\begin{equation*}
\|X \|_{L^p(\O)} \leq (p-1)^\frac{k}{2} \|X\|_{L^2(\O)}
 \end{equation*}
 
 \noi
 for any finite $p \geq 2$
 and any $X \in \H_{\leq k}$.

\end{lemma}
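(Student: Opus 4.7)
The plan is to deduce the Wiener chaos estimate from Nelson's hypercontractivity theorem, which is cited just before the statement. Recall that this theorem asserts that the Ornstein--Uhlenbeck semigroup $\{T_t\}_{t \geq 0}$ on $L^2(B,\mu)$ extends to a contraction $T_t : L^{p}(\mu) \to L^{q}(\mu)$ whenever $e^{2t}(p-1) \geq q-1$. The crucial structural input I will invoke is that $T_t$ acts as multiplication by $e^{-jt}$ on each homogeneous chaos $\H_j$; this is a spectral consequence of the Mehler formula, since a polynomial chaos of the form $\prod_i H_{k_i}(\jb{x,e_i})$ with total degree $j = \sum_i k_i$ factors into one-dimensional Hermite eigenfunctions, each of which is mapped by the one-dimensional Ornstein--Uhlenbeck semigroup to itself multiplied by $e^{-k_i t}$.

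Given $X \in \H_{\leq k}$, the first step is to write the orthogonal decomposition $X = \sum_{j=0}^{k} X_j$ with $X_j \in \H_j$, and then to introduce the lifted element
\[
Y := \sum_{j=0}^{k} e^{j t}\, X_j \in \H_{\leq k},
\]
which by the spectral property above satisfies $T_t Y = X$. Using $L^{2}$-orthogonality of distinct chaoses together with the trivial bound $e^{jt} \leq e^{kt}$, I obtain
\[
\|Y\|_{L^2}^{2} = \sum_{j=0}^{k} e^{2 j t}\,\|X_j\|_{L^2}^{2} \leq e^{2kt} \sum_{j=0}^{k}\|X_j\|_{L^2}^{2} = e^{2kt}\,\|X\|_{L^2}^{2}.
\]

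To conclude, for any $p \geq 2$ I take $t = \tfrac12 \log(p-1) \geq 0$, so that Nelson's estimate applied with exponents $2 \to p$ yields $\|T_t Y\|_{L^{p}} \leq \|Y\|_{L^{2}}$, and hence
\[
\|X\|_{L^{p}} = \|T_{t} Y\|_{L^{p}} \leq \|Y\|_{L^{2}} \leq e^{k t}\,\|X\|_{L^{2}} = (p-1)^{k/2}\,\|X\|_{L^{2}},
\]
which is the desired bound. The only subtle point is identifying the spectrum of $T_t$ on each chaos $\H_j$; once this is in place, the remainder is a one-line orthogonality calculation together with a single application of Nelson's contraction. In particular, no separate argument is needed to pass from the homogeneous case ($\H_k$) to the inhomogeneous case ($\H_{\leq k}$), since the lifting $Y$ stays inside $\H_{\leq k}$ and the same choice of $t$ works uniformly across all chaos levels $j \leq k$.
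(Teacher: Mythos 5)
Your proof is correct and is precisely the standard derivation of the Wiener chaos estimate from Nelson's hypercontractivity theorem, which is what the paper invokes (citing Simon's book and Thomann--Tzvetkov rather than giving a proof). The spectral identification $T_t|_{\H_j} = e^{-jt}\,\mathrm{Id}$, the lifting $Y = \sum_j e^{jt} X_j$ so that $T_t Y = X$, the orthogonality bound $\|Y\|_{L^2} \leq e^{kt}\|X\|_{L^2}$, and the choice $t = \tfrac12\log(p-1)$ making the $L^2 \to L^p$ contraction sharp are all exactly as in the cited references; nothing is missing.
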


Lastly, we recall the following orthogonality
relation for the Hermite polynomials. 
See  \cite[Lemma 1.1.1]{Nua}.

\begin{lemma}\label{LEM:Wick2}
Let $f$ and $g$ be jointly Gaussian random variables with mean zero 
and variances $\s_f$
and $\s_g$.
Then, we have 
\begin{align*}
\E\big[ H_k(f; \s_f) H_\l(g; \s_g)\big] = \dl_{k\l} k! \big\{\E[ f g] \big\}^k, 
\end{align*}

\noi
where
 $H_k (x,\s)$ denotes the Hermite polynomial of degree $k$ with variance parameter $\s$.

\end{lemma}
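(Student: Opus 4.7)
The plan is to prove the identity by comparing two different expansions of the joint moment generating function of $(f, g)$, exploiting the generating function for the Hermite polynomials given in the paper,
\[
e^{tx - \frac{1}{2}\sigma t^2} = \sum_{k = 0}^\infty \frac{t^k}{k!} H_k(x; \sigma).
\]
Multiplying the generating function for $f$ (with parameter $s$ and variance $\sigma_f$) by the one for $g$ (with parameter $t$ and variance $\sigma_g$) and taking expectation gives, on the one hand,
\[
\E\!\left[e^{sf - \frac{1}{2}\sigma_f s^2}\, e^{tg - \frac{1}{2}\sigma_g t^2}\right]
= \sum_{k,\ell \ge 0} \frac{s^k t^\ell}{k!\,\ell!}\, \E\!\left[H_k(f;\sigma_f) H_\ell(g;\sigma_g)\right].
\]

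On the other hand, since $(f, g)$ is jointly Gaussian with mean zero, the variable $sf + tg$ is Gaussian with mean zero and variance $s^2 \sigma_f + 2st\,\E[fg] + t^2 \sigma_g$. Hence
\[
\E\!\left[e^{sf + tg}\right]
= \exp\!\left(\tfrac{1}{2} s^2 \sigma_f + st\,\E[fg] + \tfrac{1}{2} t^2 \sigma_g\right),
\]
so that the left-hand side collapses to
\[
\E\!\left[e^{sf - \frac{1}{2}\sigma_f s^2}\, e^{tg - \frac{1}{2}\sigma_g t^2}\right]
= e^{st\,\E[fg]}
= \sum_{k \ge 0} \frac{(st)^k \{\E[fg]\}^k}{k!}.
\]

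Comparing coefficients of $s^k t^\ell$ in the two expansions yields the lemma: terms with $k \ne \ell$ do not appear on the right-hand side, forcing $\E[H_k(f;\sigma_f) H_\ell(g;\sigma_g)] = 0$ in that case, while matching the coefficient of $(st)^k$ gives
\[
\frac{1}{(k!)^2}\,\E\!\left[H_k(f;\sigma_f) H_k(g;\sigma_g)\right] = \frac{\{\E[fg]\}^k}{k!},
\]
that is, $\E[H_k(f;\sigma_f) H_k(g;\sigma_g)] = k!\,\{\E[fg]\}^k$. The only nontrivial input is the Gaussian moment generating function identity, so there is no real obstacle; the mild care needed is just to justify termwise identification of coefficients, which is automatic since both sides are convergent power series in $(s, t)$ in a neighborhood of the origin (the joint Gaussian moment generating function is entire).
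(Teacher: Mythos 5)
Your argument is correct and is precisely the classical generating-function proof — the same one given in the reference the paper cites for this lemma (Nualart, Lemma 1.1.1). Nothing further to add; the only technical point, the termwise identification of coefficients (equivalently, justifying the interchange of $\E$ with the power-series expansion), is indeed routine given that the joint Gaussian moment generating function is entire, as you note.
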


\section{Construction of the $\Phi^3_3$-measure
in the weakly  nonlinear regime}
\label{SEC:Gibbs}

In this section, we present the construction
of the $\Phi^3_3$-measure in the weakly nonlinear regime (Theorem \ref{THM:Gibbs}\,(i)).
Our proof is based on the variational approach
introduced by  Barashkov and Gubinelli \cite{BG}.
See the  Bou\'e-Dupuis variational formula (Lemma \ref{LEM:var3})
below.
In Subsection~\ref{SUBSEC:var}, 
we briefly go over the setup of the variational
formulation for a partition function.
In Subsection~\ref{SUBSEC:tight}, 
we first establish the uniform exponential integrability~\eqref{exp1c}
and then prove tightness of the truncated $\Phi^3_3$-measures $\rho_N$ in \eqref{GibbsN}, 
which implies weak convergence of a subsequence.
In Subsection~\ref{SUBSEC:wcon},
we follow the approach introduced in our previous work \cite{OOTol1}
and prove uniqueness of the limiting $\Phi^3_3$-measure, 
thus establishing weak convergence of the entire  sequence $\{ \rho_N \}_{N \in \N}$.
Finally, 
in Subsection~\ref{SUBSEC:notAC},
we show that the $\Phi^3_3$-measure 
and the base Gaussian free field $\mu$ in \eqref{gauss1} are mutually singular.
While our proof of singularity of the $\Phi^3_3$-measure is inspired by the discussion in Section 4 of \cite{BG2}, 
we directly prove singularity without referring to  a shifted measure.
In  Appendix \ref{SEC:AC}, we show that the $\Phi^3_3$-measure is indeed absolutely continuous with respect 
to the shifted measure
$\Law (Y(1) +\s \ZZ(1) + \W(1))$, 
where  $\Law(Y(1) ) = \mu$, 
 $\ZZ = \ZZ(Y)$ is the limit of the quadratic process $\ZZ^N$ defined in \eqref{YZ12}, 
 and the auxiliary quintic process $\W = \W(Y)$ is defined in \eqref{AC0}.

\subsection{Bou\'e-Dupuis variational formula}
\label{SUBSEC:var}

Let $W(t)$ be the cylindrical 
 Wiener process on $L^2(\T^3)$ (with respect to the underlying probability measure $\PP$):
\begin{align}
W(t)
 = \sum_{n \in \Z^3} B_n (t) e_n, 
\label{W1}
\end{align}

\noi
where
$\{ B_n \}_{n \in \Z^3}$ 
is defined by 
$B_n(t) = \jb{\xi, \ind_{[0, t]} \cdot e_n}_{ x, t}$.
Here, $\jb{\cdot, \cdot}_{x, t}$ denotes 
the duality pairing on $\T^3\times \R$.
Note that we have, for any $n \in \Z^3$,  
 \[\text{Var}(B_n(t)) = \E\Big[
 \jb{\xi, \ind_{[0, t]} \cdot e_n}_{x, t}\cj{\jb{\xi, \ind_{[0, t]} \cdot e_n}_{x, t}}
 \Big] = \|\ind_{[0, t]} \cdot e_n\|_{L^2_{x, t}}^2 = t.\]
As a result, 
we see that $\{ B_n \}_{n \in \Ld_0}$ is a family of mutually independent complex-valued
Brownian motions conditioned so that $B_{-n} = \cj{B_n}$, $n \in \Z^3$.\footnote
{In particular, $B_0$ is  a standard real-valued Brownian motion.}  
 We  then define a centered Gaussian process $Y(t)$
by 
\begin{align}
Y(t)
=  \jb{\nabla}^{-1}W(t).
\label{P2}
\end{align}

\noi
Then, 
we have $\Law (Y(1)) = \mu$. 
By setting  $Y_N = \pi_NY $, 
we have   $\Law (Y_N(1)) = (\pi_N)_\#\mu$. 
In particular, 
we have  $\E [Y_N(1)^2] = \s_N$,
where $\s_N$ is as in~\eqref{sigma1}.

Next, let $\Ha$ denote the space of drifts, which are the progressively measurable processes 
 belonging to
$L^2([0,1]; L^2(\T^3))$, $\PP$-almost surely. 
For later use, we also define
$\Ha^1$
to be  the space of drifts, which are the progressively measurable processes 
 belonging to
$L^2([0,1]; H^1(\T^3))$, $\PP$-almost surely. 
Namely, we have
\begin{align}
\Ha^1 = \jb{\nb}^{-1} \Ha.
\label{Ha}
\end{align}

\noi
We now state  the  Bou\'e-Dupuis variational formula \cite{BD, Ust};
in particular, see Theorem 7 in~\cite{Ust}.
See also Theorem 2 in \cite{BG}.

\begin{lemma}\label{LEM:var3}
Let $Y(t)
=  \jb{\nabla}^{-1}W(t)$ be as in \eqref{P2}.
Fix $N \in \N$.
Suppose that  $F:C^\infty(\T^3) \to \R$
is measurable such that $\E\big[|F(Y_N(1))|^p\big] < \infty$
and $\E\big[|e^{-F(Y_N(1))}|^q \big] < \infty$ for some $1 < p, q < \infty$ with $\frac 1p + \frac 1q = 1$.
Then, we have
\begin{align}
- \log \E\Big[e^{-F(Y_N(1))}\Big]
= \inf_{\dr \in \mathbb H_a}
\E\bigg[ F( Y_N(1) + \pi_N I(\dr)(1)) + \frac{1}{2} \int_0^1 \| \dr(t) \|_{L^2_x}^2 dt \bigg], 
\label{BD1}
\end{align}

\noi
where $I(\ta)$ is defined by 
\begin{align}
I(\dr)(t) = \int_0^t \jb{\nabla}^{-1} \dr(t') dt'.
\label{P3a}
\end{align}
\end{lemma}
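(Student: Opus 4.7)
The identity is the Bou\'e--Dupuis variational formula of \cite{BD}, recast in a general Wiener-space setting in \cite{Ust}, specialized to the random variable $F(Y_N(1))$. Since $\pi_N$ projects onto the finite set of Fourier modes $\{n\in\Z^3:|n_j|\leq N\}$, the quantity $F(Y_N(1))$ depends on only finitely many of the scalar Brownian motions $\{B_n\}$ generating $W$, so the essential content reduces to a finite-dimensional Girsanov identity. My plan is to establish the two inequalities separately.

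For the upper bound $\leq$, fix any $\theta\in\Ha$ with $\E\int_0^1\|\theta(t)\|_{L^2_x}^2\,dt<\infty$ (otherwise the bound is trivial). Introduce the Dol\'eans--Dade exponential
\begin{equation*}
\mathcal E(\theta)_t = \exp\Big(-\int_0^t \langle \theta(s), dW(s)\rangle_{L^2_x} - \tfrac12\int_0^t \|\theta(s)\|_{L^2_x}^2\,ds\Big)
\end{equation*}
and the tilted measure $d\mathbb Q_\theta = \mathcal E(\theta)_1\,d\PP$. After a standard localization in $\|\theta\|_{L^2_{t,x}}^2$ to secure Novikov's condition, Girsanov's theorem makes $\widetilde W(t):=W(t)+\int_0^t\theta(s)\,ds$ a cylindrical Wiener process under $\mathbb Q_\theta$. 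The Donsker--Varadhan variational characterization of entropy
\begin{equation*}
-\log\E_\PP[e^{-F(Y_N(1))}] \leq \E_{\mathbb Q_\theta}[F(Y_N(1))] + H(\mathbb Q_\theta\,|\,\PP),
\end{equation*}
combined with the computation $H(\mathbb Q_\theta\,|\,\PP) = \tfrac12\E_{\mathbb Q_\theta}\int_0^1\|\theta\|_{L^2_x}^2\,dt$ (from $-\log\mathcal E(\theta)_1$ expressed in terms of $\widetilde W$) and the pointwise identity $Y_N(1) = \pi_N\jb{\nabla}^{-1}\widetilde W(1) - \pi_N I(\theta)(1)$, yields the inequality; passing back to $\PP$ via $\Law_{\mathbb Q_\theta}(\widetilde W) = \Law_\PP(W)$ and taking the infimum gives \eqref{BD1} as an upper bound.

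For the lower bound $\geq$, I construct a near-optimal drift by martingale representation. First reduce to bounded $F$ via the truncations $F_M := (-M)\vee F\wedge M$ and pass to the limit $M\to\infty$ using dominated convergence; the hypotheses $F(Y_N(1))\in L^p(\PP)$ and $e^{-F(Y_N(1))}\in L^q(\PP)$ with $\tfrac1p+\tfrac1q=1$ are precisely what is needed (via H\"older) to dominate both sides of the variational identity along the truncation. For bounded $F$, let $M_t := \E_\PP[e^{-F(Y_N(1))}\mid\F_t]$, a strictly positive $L^2$-martingale with respect to the Brownian filtration. Apply It\^o's martingale representation theorem to get $dM_t = \langle M_t\theta^*_t,\,dW(t)\rangle_{L^2_x}$ for a predictable $L^2_x$-valued $\theta^*$, and then apply It\^o's formula to $\log M_t$ to obtain
\begin{equation*}
-F(Y_N(1)) - \log \E_\PP[e^{-F(Y_N(1))}] = \log M_1 - \log M_0 = \int_0^1 \langle \theta^*, dW\rangle - \tfrac12\int_0^1\|\theta^*\|_{L^2_x}^2\,dt.
\end{equation*}
A further Girsanov change of measure to $d\mathbb Q^* = (M_1/M_0)\,d\PP$, under which $\widetilde W := W - \int_0^\cdot\theta^*\,ds$ is Brownian, rewrites $Y_N(1) = \pi_N\jb{\nabla}^{-1}\widetilde W(1) + \pi_N I(\theta^*)(1)$ and kills the stochastic-integral term on taking $\mathbb Q^*$-expectation, producing equality in \eqref{BD1} with the candidate drift $\theta^*$ (relabeled as a functional of $\widetilde W$, which has $\PP$-Brownian law under $\mathbb Q^*$). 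This shows the infimum is attained and proves the $\geq$ direction.

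The main technical obstacle is not the algebraic structure of the formula but the integrability housekeeping: verifying (i) that $\theta^*$ from the martingale representation lies in $\Ha$, i.e.\ $\E\int_0^1\|\theta^*\|_{L^2_x}^2\,dt<\infty$, which follows from $M\in L^q$ via Doob and Burkholder--Davis--Gundy together with the $L^p$-control on $F(Y_N(1))$; (ii) that the stochastic integrals above are true martingales (not merely local) so their expectations vanish, done by the same $L^p$--$L^q$ duality; and (iii) that the truncation limit $F_M\to F$ can be taken on both sides of the identity, justified by dominated convergence using the $L^p(\PP)$ bound on $F(Y_N(1))$ for the term $\E[F(Y_N(1)+\pi_N I(\theta^*)(1))]$ (via Cauchy--Schwarz on the tilted measure $\mathbb Q^*$ whose density lies in $L^q$). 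These are standard but must be carried out carefully because of the finite but not uniform-in-$\theta$ nature of the drift cost.
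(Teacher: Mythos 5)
The paper does not prove Lemma~\ref{LEM:var3}: immediately after the statement it cites the Bou\'e--Dupuis formula, pointing to \cite{BD}, ``in particular, see Theorem 7 in \cite{Ust},'' and Theorem 2 in \cite{BG}. So your argument is a proof of an imported theorem rather than a reproof of something done in the paper, and the meaningful comparison is with the Bou\'e--Dupuis/\"Ust\"unel proof, whose outline you follow.

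That outline (upper bound by Girsanov plus the Donsker--Varadhan entropy inequality; lower bound by martingale representation and It\^o on $\log M_t$; truncation to remove boundedness) is the right strategy, but it glosses over the one point that constitutes the real work, namely adaptedness of the (near-)optimal drift to the filtration of the tilted Brownian motion. Under $\mathbb Q_\theta$ (resp.\ $\mathbb Q^*$) the process $\widetilde W$ is a cylindrical Brownian motion, but $\theta$ (resp.\ $\theta^*$) is only given as progressively measurable for the $W$-filtration, and the $\widetilde W$-filtration may be strictly smaller. Writing ``$\theta^*$ relabeled as a functional of $\widetilde W$'' asserts exactly what has to be proved, and ``passing back to $\PP$ via $\Law_{\mathbb Q_\theta}(\widetilde W)=\Law_\PP(W)$'' only controls the marginal law of $\widetilde W$, not the joint law of $(\widetilde W,\theta)$ that appears inside the expectation. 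For the upper bound the gap is easily sidestepped: instead of tilting $\PP$, let $\nu$ be the law under $\PP$ of the shifted path $W+\int_0^\cdot\theta\,ds$; bound $H(\nu\mid\mathrm{Wiener})\le\tfrac12\,\E_\PP\int_0^1\|\theta\|_{L^2_x}^2\,dt$ by the standard energy inequality for adapted shifts (prove it for simple drifts via Girsanov and pass to the limit), and apply Donsker--Varadhan to $\nu$. For the lower bound the adaptedness is essential and cannot be waved away; it is here that your opening remark about finite-dimensionality should be made to do actual work: since $F(Y_N(1))$ depends only on finitely many modes $\{B_n(1):|n_j|\le N\}$, the martingale $M_t$ is a smooth function of $(t,\pi_NW(t))$ alone for $t<1$, the F\"ollmer drift $\theta^*=-\nabla\log M_t$ is Markovian, and $W=\widetilde W+\int_0^\cdot\theta^*(s,\pi_NW(s))\,ds$ is a finite-dimensional SDE driven by $\widetilde W$ that is pathwise solvable; an $L^2_t$ bound on $\theta^*$ extends the solution (hence the adaptedness) to $t=1$. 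That F\"ollmer-process argument is what justifies your reduction ``to a finite-dimensional Girsanov identity,'' and as written it is exactly the step the proof omits.
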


Lemma \ref{LEM:var3} plays a fundamental role
in almost every step of the argument presented in this section 
and Section \ref{SEC:non}.

We state a useful lemma on the  pathwise regularity estimates  of 
$:\!Y^k(t)\!:$ and $I(\dr)(1)$.

\begin{lemma}  \label{LEM:Dr}

\textup{(i)} 
For $k=1,2$, any finite $p \ge 2$, and $\eps>0$,
$:\!  Y_N^k(t) \!:$ converges to $:\! Y^k(t) \!:$ in $L^p(\O; \C^{-\frac k2-\eps}(\T^3))$
and also almost surely in $\C^{-\frac k2-\eps}(\T^3)$.
Moreover, 
we have 
\begin{align}
\begin{split}
\E 
\Big[ \|:\! Y_N^k(t) \!:\|_{\C^{-\frac k2 - \eps}}^p
\Big]
& \les  p^\frac k2   <\infty, \\
 \end{split}
 \label{P4}
\end{align}
uniformly in $N \in \N$ and $t \in [0, 1]$.
We also have 
\begin{align}
\E 
\Big[ \|:\! Y_N^2(t) \!:\|_{H^{-1}}^2
\Big]
\sim  t^2 \log N
\label{XX2}
\end{align}

\noi
for any   $t \in [0, 1]$.

\smallskip

\noi
\textup{(ii)} 
For any $N \in \N$, we have 
\begin{align}
\begin{split}
\E 
\bigg[ \int_{\T^3} :\!  Y_N^3(1) \!: dx
\bigg]
= 0.
 \end{split}
\notag
\end{align}

\smallskip

\noi
\textup{(iii)} For any $\dr \in \Ha$, we have
\begin{align*}
\| I(\dr)(1) \|_{H^{1}}^2 \leq \int_0^1 \| \dr(t) \|_{L^2}^2dt.
\end{align*}
\end{lemma}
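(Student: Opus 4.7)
The three items are essentially standard stochastic computations, so my plan is to treat them in increasing order of depth, dispatching the easy parts first. Part (iii) is immediate: from \eqref{P3a} and the identification $\Ha^1=\jb{\nb}^{-1}\Ha$,
\begin{align*}
\|I(\dr)(1)\|_{H^1}=\Bigl\|\int_0^1 \dr(t)\,dt\Bigr\|_{L^2_x}\le\Bigl(\int_0^1\|\dr(t)\|_{L^2_x}^2\,dt\Bigr)^{1/2}
\end{align*}
by Minkowski and Cauchy-Schwarz on $[0,1]$; squaring gives the claim. For part (ii), I would use the pointwise identity $\E[H_k(Z;\s)]=0$ for $k\ge 1$ and $Z\sim\NN_\R(0,\s)$. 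Since $Y_N(1,x)\sim\NN_\R(0,\s_N)$ and $:\!Y_N^3(1,x)\!:\,=H_3(Y_N(1,x);\s_N)$, this identity applied pointwise in $x$ and then integrated yields the claim by Fubini.

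Part (i) is the substantive statement, and my plan is a Wiener-chaos/Besov-embedding argument that is by now standard in the subject. Since $P_j\!:\!Y_N^k(t)\!:\,\in\H_k$ (Littlewood-Paley multipliers preserve the chaos order), Lemma \ref{LEM:hyp} reduces $L^p_\omega$ bounds to $L^2_\omega$ bounds up to a factor $p^{k/2}$. I would first compute via Lemma \ref{LEM:Wick2} that, uniformly in $x\in\T^3$, $t\in[0,1]$, and $N\in\N$,
\begin{align*}
\E\bigl|P_j\!:\!Y_N^k(t,x)\!:\bigr|^2\les t^k\sum_{|n|\sim 2^j}\,\sum_{n_1+\cdots+n_k=n}\prod_{i=1}^k\frac{\chi_N(n_i)^2}{\jb{n_i}^2}\les 2^{kj},
\end{align*}
where for $k=1$ the bound $2^j$ is direct, while for $k=2$ one first uses Lemma \ref{LEM:SUM} with $\al=\be=2$ and $\lambda=1$ to reduce the inner sum to $\jb{n}^{-1}$, and then sums over $|n|\sim 2^j$ to produce $2^{2j}$. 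Combined with the Besov embedding $B^{-k/2-\eps/2}_{p,p}\hookrightarrow\C^{-k/2-\eps}$ for $p$ sufficiently large, integration over $\T^3$, and summation of the resulting geometric series in $j$, this yields \eqref{P4}.

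For convergence in $L^p(\O;\C^{-k/2-\eps})$, I would run exactly the same estimate on the difference $:\!Y_N^k\!:-:\!Y_M^k\!:\,$: the restriction that at least one of the $n_i$ satisfies $|n_i|>M$ produces an extra factor $M^{-\eps'}$, so $\{:\!Y_N^k(t)\!:\}_N$ is Cauchy in $L^p(\O;\C^{-k/2-\eps})$; almost sure convergence (for the full sequence, given uniqueness of the limit) then follows by passing to a subsequence and Borel-Cantelli. Finally, for \eqref{XX2}, Lemma \ref{LEM:Wick2} reduces the computation to
\begin{align*}
\E\|:\!Y_N^2(t)\!:\|_{H^{-1}}^2=2t^2\sum_{n\in\Z^3}\jb{n}^{-2}\sum_{n_1+n_2=n}\frac{\chi_N(n_1)^2\chi_N(n_2)^2}{\jb{n_1}^2\jb{n_2}^2},
\end{align*}
and Lemma \ref{LEM:SUM} together with the cutoffs $|n_i|\le N$ evaluates the inner sum as $\sim\jb{n}^{-1}$ for $|n|\lesssim N$ and as $0$ for $|n|\gg N$, yielding a total $\sim t^2\sum_{1\le|n|\lesssim N}\jb{n}^{-3}\sim t^2\log N$. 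The only mild obstacle I anticipate is securing the matching lower bound in a neighborhood of the cutoff $|n|\sim N$, which I would handle by restricting further to $|n_i|\le N/10$ so that both frequency constraints are automatically saturated for $|n|\le N/10$ and contribute a definite logarithmic mass.
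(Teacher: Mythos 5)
Your proposal is correct and follows essentially the same approach as the paper: part (iii) is Minkowski plus Cauchy--Schwarz, part (ii) is the orthogonality of Hermite polynomials against $H_0\equiv 1$, and part (i) combines the hypercontractivity estimate, Lemma \ref{LEM:Wick2}, Lemma \ref{LEM:SUM}, and a Littlewood--Paley/Besov-embedding reduction (which the paper delegates to \cite{GKO,GKO2}). Two small remarks: for the lower bound in \eqref{XX2}, the paper restricts to $|n|\le \tfrac23 N$ and $\tfrac14|n|\le|n_1|\le\tfrac12|n|$ rather than to $|n_i|\le N/10$, but both restrictions force the cutoffs and produce the logarithmic mass; and for the almost sure convergence of the full sequence, the subsequence remark is a red herring --- what you actually need is the quantitative $N^{-\eps'}$ decay you established, combined with Chebyshev and Borel--Cantelli applied to the full sequence.
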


\begin{proof}
The  bound \eqref{P4}  for $\eps > 0$ follows
from the Wiener chaos estimate (Lemma \ref{LEM:hyp}), Lemma~\ref{LEM:Wick2}, 
and then carrying out summations, using Lemma \ref{LEM:SUM}.
See, for example, \cite{GKO, GKO2}.
As for \eqref{XX2},  
proceeding as in the proof of Lemma 2.5 in \cite{OTh}
with Lemma \ref{LEM:Wick2}, we have
\begin{align}
\begin{split}
\E 
& \Big[ \|:\! Y_N^2(t) \!:\|_{H^{-1}}^2
\Big]\\
& = \sum_{n \in \Z^3}\frac{1}{\jb{n}^2} \int_{\T^3_x \times \T^3_y}
 \E\Big[  H_2(Y_N(x, t); t \s_N) H_2(Y_N(y, t); t \s_N)\Big] e_n(y - x)dx dy\\
& =\sum_{n \in \Z^3}\frac{t^2}{\jb{n}^2}
\sum_{n_1, n_2\in \Z^3}
\frac{\chi_N^2(n_1)\chi_N^2(n_2)}{\jb{n_1}^2\jb{n_2}^2}
\int_{\T^3_x \times \T^3_y}
e_{n_1 + n_2 - n}(x-y) dx dy\\
& =\sum_{n \in \Z^3}\frac{t^2}{\jb{n}^2}
\sum_{n = n_1 + n_2}
\frac{\chi_N^2(n_1)\chi_N^2(n_2)}{\jb{n_1}^2\jb{n_2}^2}, 
\end{split}
\label{XX3}
\end{align}

\noi
where $\chi_N(n_j)$ is as in \eqref{chi}.
The upper bound in \eqref{XX2} follows from applying Lemma~\ref{LEM:SUM}
to~\eqref{XX3}.
As for the lower bound, we consider the contribution from $|n| \le \frac{2}{3}N$
and $\frac 14 |n| \leq |n_1| \leq \frac 12 |n|$
(which implies $|n_2| \sim |n|$ and $|n_j| \le N$, $j = 1, 2$).
Then, from \eqref{XX3}, we obtain 
\begin{align*}
\E 
 \Big[ \|:\! Y_N^2(t) \!:\|_{H^{-1}}^2
\Big]
\ges  \sum_{\substack{n \in \Z^3\\|n| \le \frac 23 N}}
\frac{t^2}{\jb{n}^3} \sim t^2 \log N, 
\end{align*}

\noi
which proves the lower bound in \eqref{XX2}.
As for (ii), it follows from 
recalling the definition $:\!  Y_N^3(1) \!: \, = H_3(Y_N(1); \s_N)$
(with $\s_N$ as in \eqref{sigma1})
and 
the orthogonality relation of the Hermite polynomials
(Lemma \ref{LEM:Wick2} with $k =3$ and $\l = 0$).
Lastly, 
the claim in  (iii) follows from Minkowski's integral inequality
and Cauchy-Schwarz inequality; see Lemma~4.7 in \cite{GOTW}.
\end{proof}

\begin{remark}\rm
In \cite{GOTW, ORSW2}, 
a  slightly different (and weaker) variational formula
was used.  See also Lemma 1 in \cite{BG}.
Given a drift $\dr \in \Ha$, 
we  define the measure $\Q_\dr$ 
whose Radon-Nikodym derivative with 
respect to $\PP$ is given by the following stochastic exponential:
\begin{align*}
\frac{d\Q_\dr}{d\PP} = e^{\int_0^1 \jb{\dr(t),  dW(t)} - \frac{1}{2} \int_0^1 \| \dr(t) \|_{L^2_x}^2dt}, 
\end{align*}

\noi
where $\jb{\cdot, \cdot}$ stands for the usual  inner product on $L^2(\T^3)$.
Let  $\Hc$ denote the subspace of $\Ha$ consisting of drifts such that $\Q_\dr(\O) = 1$.
Then, the (weaker) variational formula
used in \cite{GOTW, ORSW2}
is given by \eqref{BD1}, 
where the infimum is taken over $\Hc\subset \Ha$
and we replace 
$Y$ and $\E = \E_\PP$
by $Y_\ta = Y - I(\ta)$ and $\E_{\Q_\ta}$.
Here, 
 $\E = \E_\PP$ and $\E_{\Q_\ta}$
 denote 
expectations with respect to the underlying probability measure $\PP$
and the measure $\Q_\ta$, respectively.
In such a formulation, 
$Y_\ta$ and the measure $\Q_\ta$ depend on a drift $\ta$.
This, however,  is not suitable for our purpose, 
since we construct a drift $\ta$ in \eqref{BD1} depending on $Y$.

\end{remark}

\subsection{Uniform exponential integrability and tightness}
\label{SUBSEC:tight}

In this subsection, 
we first prove  the uniform exponential integrability \eqref{exp1c}
via the  Bou\'e-Dupuis variational formula (Lemma~\ref{LEM:var3}).
Then, 
we establish  tightness of the truncated $\Phi^3_3$-measures $\{\rho_N\}_{N \in \N}$.

As in the case of 
 the $\Phi^4_3$-measure 
 studied in \cite{BG}
 (see also Section 6 in \cite{OOTol1}),
we need to introduce a 
 further renormalization than the standard Wick renormalization (see \eqref{K1r}).
 As a result, 
 the resulting $\Phi^3_3$-measure 
 is singular  with respect to the base Gaussian free field $\mu$;
 see Subsection \ref{SUBSEC:notAC}.
  We point out that this extra renormalization
 appears only at the level of the measure
 and thus does not affect the dynamical problem, at least locally in time.\footnote{As mentioned in Section \ref{SEC:1}, 
this singularity of the $\Phi^3_3$-measure
 causes an additional difficulty for  the globalization problem.} 
  In the following, we use the following short-hand notations: $Y_N(t) = \pi_N Y(t)$,
$\Dr (t) = I(\dr)(t)$, and $\Dr_N(t) = \pi_N \Dr(t)$
with $Y_N = Y_N(1)$ and $\Dr_N = \Dr_N(1)$.
 We also use 
 $Y=Y(1)$ and $\Dr =\Dr (1)$.

Let us first explain 
the second renormalization introduced in \eqref{K1r}.
Let $R_N$ be as in \eqref{K1}
and set 
\begin{align*}
\wt Z_N = \int e^{-R_N(u)} d\mu (u). 
\end{align*}

\noi
By Lemma \ref{LEM:var3}, 
we can  express the partition function $\wt Z_N$ 
as
\begin{align*}
- \log \wt Z_N 
= \inf_{\dr \in \mathbb H_a}
\E\bigg[ R_N ( Y+\Dr ) + \frac{1}{2} \int_0^1 \| \dr(t) \|_{L^2_x}^2 dt \bigg].
\end{align*}

\noi
By expanding the cubic Wick power, we have
\begin{align}
\begin{split}
-  \frac\s3\int_{\T^3} :\! (Y_N + \Dr_N)^3\!: dx
& = - \frac\s3\int_{\T^3} :\! Y_N^3\!: dx
-\s \int_{\T^3} :\! Y_N ^2\!:  \Dr_Ndx\\
& \quad -\s\int_{\T^3}  Y_N   \Dr_N^2 dx 
- \frac \s3 \int_{\T^3}  \Dr_N^3 dx.
\end{split}
\label{BD3}
\end{align}

\noi
In view of 
Lemma \ref{LEM:Dr}, 
the first term on the right-hand side vanishes
under an expectation, 
while 
we can estimate the third and fourth terms on 
 the right-hand side of \eqref{BD3}
(see Lemma~\ref{LEM:Dr7}).
As we see below, 
 the second term turns out to be divergent
 (and does not vanish under an expectation).
From the Ito product formula, we have
\begin{align}
\E\bigg[\int_{\T^3} \!:\!Y_N^2\!:\, \Dr_N dx \bigg] 
 = \E\bigg[ \int_0^1 \int_{\T^3}  
 :\!Y_N^2(t)\!: \, \dot \Dr_N(t) dx dt \bigg], 
 \label{YZ11}
\end{align}
 
 \noi
 where we have 
 $\dot \Theta_N (t) = \jb{\nabla}^{-1} \pi_N \theta(t)$ 
 in view of \eqref{P3a}.
Define $\ZZ^N$ with $\ZZ^N(0) = 0$ by its time derivative:
\begin{align}
\dot \ZZ^N (t) = (1-\Delta)^{-1} \!:\!Y_N^2(t)\!:
\label{YZ12}
\end{align}

\noi
and set 
$\ZZ_N = \pi_N \ZZ^N$.
Then, we
perform a 
  change of variables: 
\begin{align}
\dot \Ups^N(t) = \dot \Dr(t)  - \s \dot \ZZ_N(t)
\label{YZ13}
\end{align}

\noi
and set $\Ups_N = \pi_N \Ups^N$.
From \eqref{YZ11}, \eqref{YZ12}, and \eqref{YZ13}, 
we have 
\begin{align}
\begin{split}
\E\bigg[ -\s \int_{\T^3} :\!Y_N^2\!:  \Dr_N dx 
+ \frac12 \int_0^1 \|\dr(t)\|_{L^2_x}^2 dt \bigg]
= \frac 12 \E\bigg[  \int_0^1 \|\dot \Ups^N (t)\|_{H^1_x}^2 dt
\bigg] - \al_N,
\end{split}
\label{YZ13a}
\end{align}

\noi
where the divergent constant $\al_N$ is given by 
\begin{align}
\al_N  = \frac{\s^2}2  \E\bigg[\int_0^1   \| \dot \ZZ_N(t) \|_{H^1_x}^2 dt \bigg]
\too \infty, 
\label{YZ14}
\end{align}

\noi
 as $N \to \infty$.
The divergence in \eqref{YZ14}
can be easily seen from the spatial regularity 
$1- \eps$ of $\dot \ZZ_N(t)  = (1-\Delta)^{-1} \!:\!Y_N^2(t)\!:$
(with a uniform bound in $N \in \N$).  See Lemma \ref{LEM:Dr}.

In view of the discussion above, 
we define $R_N^\dia$ as in \eqref{K1r}, 
which removes the divergent constant $\al_N$ in \eqref{YZ13a}.
Then, 
from \eqref{pfN1} and the Bou\'e-Dupuis variational formula (Lemma~\ref{LEM:var3}), 
we have
\begin{equation}
- \log Z_N = \inf_{\dr \in \Ha} \E
\bigg[ R_N^\dia (Y + \Dr) + \frac{1}{2} \int_0^1 \| \dr(t) \|_{L^2_x}^2 dt \bigg]
\label{K7}
\end{equation}

\noi
for any $N \in \N$.
By setting
\begin{equation}
\W_N (\dr) = \E
\bigg[ R_N^\dia (Y + \Dr) + \frac{1}{2} \int_0^1 \| \dr(t) \|_{L^2_x}^2 dt \bigg], 
\label{K8}
\end{equation}

\noi
it follows from
\eqref{K1} with $\g=3$,
\eqref{K1r}, \eqref{BD3}, 
\eqref{YZ13a}, and Lemma \ref{LEM:Dr}\,(ii)
that 
\begin{align}
\begin{split}
\W_N(\dr)
&=\E
\bigg[  
-\s \int_{\T^3} Y_N \Dr_N^2 dx
-\frac \s 3 \int_{\T^3} \Dr_N^3 dx
\\
&\hphantom{XXii}
+ A \bigg| \int_{\T^3} \Big( :\! Y_N^2 \!: + 2 Y_N \Dr_N + \Dr_N^2 \Big) dx \bigg|^3 
+ \frac{1}{2} \int_0^1 \| \dot \Ups^N(t) \|_{H^1_x}^2 dt
 \bigg].
\end{split}
\label{K9}
\end{align}

\noi
We also set 
\begin{align}
\Ups_N = \Ups_N(1)
= \pi_N  \Ups^N(1) \qquad \text{and} \qquad \ZZ_N = \ZZ_N(1) = \pi_N \ZZ^N(1).
\label{K9a}
\end{align}

\noi
In view of the change of variables \eqref{YZ13}, 
we have
\begin{align}
\Dr_N = \Ups_N + \s \pi_N \ZZ_N
=: \Ups_N + \s \wt \ZZ_N, \qquad 
\text{i.e.~}\wt \ZZ_N := \pi_N \ZZ_N.
\label{K9b}
\end{align}

\noi
Namely, the original  drift $\dr$ in \eqref{K7}
depends on $Y$.
By the definition \eqref{YZ12} and \eqref{K9a}, 
$\ZZ_N$ is determined by $Y_N$.
Hence, in the following, we view
$\dot \Ups^N$ as a drift
and 
study the minimization problem~\eqref{K7}
by first studying 
each term in \eqref{K9}
(where we now view $\W_N$ as a function of $\dot \Ups^N$)
and 
then taking an infimum in $\dot \Ups^N \in \Ha^1$, 
where $\Ha^1$ is as in \eqref{Ha}.
Our main goal is to show that 
$\W_N(\dot \Ups^N)$ in \eqref{K9} is bounded away from $-\infty$, 
uniformly in $N \in \N$ and  $\dot \Ups^N \in \Ha^1$.

\begin{remark}\rm
In this paper, 
we work with 
the cube frequency projector $\pi_N = \pi_N^\text{cube}$ defined in~\eqref{pi}, 
satisfying $\pi_N^2 = \pi_N$.
In view of \eqref{K9a} and \eqref{K9b}, we have 
$\wt \ZZ_N = \ZZ_N$.
Nonetheless, we introduce the notation 
$\wt \ZZ_N$ in \eqref{K9b}
to indicate the modifications necessary 
to consider the case of the smooth frequency projector $\pi_N^\text{smooth}$
defined in \eqref{pis1}, which does not satisfy  
$(\pi_N^\text{smooth})^2 = \pi_N^\text{smooth}$.
This comment applies to the remaining part of the paper.

\end{remark}

We first state two lemmas whose proofs are 
presented at the end of this subsection.
While the first lemma is elementary, 
the second lemma (Lemma \ref{LEM:Dr8}) requires
much more careful analysis, 
reflecting  the critical nature of the $\Phi^3_3$-measure.

\begin{lemma}\label{LEM:Dr7}
Let $A>0$ and  $0<|\s|<1$.
Then, there exist small $\eps>0$ and   a constant  $c  >0$ 
 such that, for any $\dl> 0$, there exists $C_\dl > 0$ such that 
\begin{align}
 \bigg|\int_{\T^3}  Y_N \Dr_N^2 dx \bigg|
 &\les
 1 +  C_\dl \| Y_N \|_{\C^{-\frac 12-\eps}}^c + \dl \| \Ups_N\|_{L^2}^6 + \dl \| \Ups_N \|_{H^1}^2
  + \|\ZZ_N\|_{\C^{1 - \eps}}^c,
\label{KZ1a} \\
 \bigg|\int_{\T^3}  \Dr_N^3 dx \bigg|
 &\les
1  + \| \Ups_N\|_{L^2}^6 +  \| \Ups_N \|_{H^1}^2 +  \|\ZZ_N\|_{\C^{1 - \eps}}^3,
\label{KZ1}
\end{align}

\noi
and 
\begin{align}
\begin{split}
A \bigg| \int_{\T^3}  \Big( :\! Y_N^2 \!:  & + 2 Y_N \Dr_N + \Dr_N^2 \Big) dx \bigg|^3
\ge
\frac A2 
\bigg| \int_{\T^3}  \Big( 2 Y_N \Ups_N + \Ups_N^2 \Big) dx \bigg|^3
- \dl \| \Ups_N \|_{L^2}^6 \\
&\quad
- C_{\dl, \s} \Bigg\{
\bigg| \int_{\T^3} :\! Y_N^2 \!: dx \bigg|^3
+ \| Y_N \|_{\C^{-\frac 12-\eps}}^6
+ \| \ZZ_N \|_{\C^{1-\eps}}^6
\Bigg\}, 
\end{split}
\label{KZ2}
\end{align}

\noi
uniformly in $N \in \N$, 
where 
$\Dr_N = \Ups_N + \s \wt \ZZ_N$ as in \eqref{K9b}.

\end{lemma}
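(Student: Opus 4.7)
The overall strategy is to substitute the decomposition $\Dr_N = \Ups_N + \s\wt\ZZ_N$ from \eqref{K9b}, expand each integrand into monomials in $Y_N$, $\Ups_N$, and $\wt\ZZ_N$, and bound each monomial via Besov duality \eqref{dual}, the fractional Leibniz rule \eqref{prod}, and $L^2$--$H^1$ interpolation \eqref{interp} for the drift $\Ups_N$. The pathwise regularities $Y_N\in\C^{-\frac12-\eps}$ (from Lemma~\ref{LEM:Dr}\,(i)) and $\wt\ZZ_N\in\C^{1-\eps}$ (which follows from $\dot\ZZ^N=(1-\Dl)^{-1}\!:\!Y_N^2\!:$ combined with Lemma~\ref{LEM:Dr}\,(i)) serve as deterministic multipliers, while $\|\Ups_N\|_{H^1}$ is supplied by Lemma~\ref{LEM:Dr}\,(iii). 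Each estimate is then closed by $\dl$-weighted Young inequalities so that the $\Ups_N$-dependence lands in the reservoir $\dl\|\Ups_N\|_{L^2}^6+\dl\|\Ups_N\|_{H^1}^2$ and the remainder is a $C_\dl$-weighted function of the purely stochastic objects.

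For \eqref{KZ1a}, the worst monomial after expanding $\Dr_N^2 = \Ups_N^2 + 2\s\Ups_N\wt\ZZ_N + \s^2\wt\ZZ_N^2$ is $\int_{\T^3} Y_N \Ups_N^2\,dx$. Besov duality yields $|\int Y_N \Ups_N^2\,dx|\les \|Y_N\|_{\C^{-\frac12-\eps}}\|\Ups_N^2\|_{B^{\frac12+\eps}_{1,1}}$, and \eqref{prod} together with Sobolev embedding and \eqref{interp} gives $\|\Ups_N^2\|_{B^{\frac12+\eps}_{1,1}}\les \|\Ups_N\|_{L^2}^{\frac32-\eps}\|\Ups_N\|_{H^1}^{\frac12+\eps}$. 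A weighted Young with conjugate exponents $\bigl(\tfrac{4}{1+2\eps},\tfrac{4}{3-2\eps}\bigr)$ deposits $\dl\|\Ups_N\|_{H^1}^2$ and a residual $C_\dl\|Y_N\|^{c'}\|\Ups_N\|_{L^2}^2$; a second Young with exponents $(3,\tfrac32)$ converts this into $\dl\|\Ups_N\|_{L^2}^6+C_\dl\|Y_N\|_{\C^{-\frac12-\eps}}^c$. The cross terms $\int Y_N\Ups_N\wt\ZZ_N\,dx$ and $\int Y_N\wt\ZZ_N^2\,dx$ are easier because $\wt\ZZ_N\in L^\infty$ absorbs some of the integrability burden, and after Young they contribute only to $\|\wt\ZZ_N\|_{\C^{1-\eps}}^c$. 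For \eqref{KZ1}, the critical piece is $\int \Ups_N^3\,dx=\|\Ups_N\|_{L^3}^3$; the Gagliardo--Nirenberg inequality on $\T^3$ gives $\|\Ups_N\|_{L^3}^3\les \|\Ups_N\|_{L^2}^{3/2}\|\Ups_N\|_{H^1}^{3/2}$, and Young with conjugate exponents $(4,\tfrac43)$ produces exactly $\|\Ups_N\|_{L^2}^6+\|\Ups_N\|_{H^1}^2$ (with absolute constants, so no $\dl$ is needed here). The mixed monomials $\int \Ups_N^{3-k}\wt\ZZ_N^k\,dx$ for $k=1,2,3$ are handled by H\"older against $\|\wt\ZZ_N\|_{L^\infty}^k$ followed by Young.

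For \eqref{KZ2}, split
\begin{align*}
:\!Y_N^2\!:\,+\,2Y_N\Dr_N+\Dr_N^2 \;=\; \underbrace{\bigl(2Y_N\Ups_N+\Ups_N^2\bigr)}_{=:\,J_1}\;+\;\underbrace{:\!Y_N^2\!:\,+\,2\s Y_N\wt\ZZ_N+2\s\Ups_N\wt\ZZ_N+\s^2\wt\ZZ_N^2}_{=:\,J_2}.
\end{align*}
By the elementary inequality $|a+b|^3\ge\tfrac12|a|^3-C|b|^3$, it suffices to bound $|\!\int J_2\,dx|^3$ from above by the right-hand side of \eqref{KZ2}. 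The term $|\int\!:\!Y_N^2\!:\,dx|^3$ appears verbatim; $|\int Y_N\wt\ZZ_N\,dx|^3$ is controlled by $\|Y_N\|_{\C^{-\frac12-\eps}}^3\|\wt\ZZ_N\|_{\C^{1-\eps}}^3$ via \eqref{dual} and the embedding $\C^{1-\eps}\hra B^{\frac12+\eps}_{1,1}$, then split by Young into $\|Y_N\|^6+\|\wt\ZZ_N\|^6$-type bounds; Cauchy--Schwarz gives $|\int\Ups_N\wt\ZZ_N\,dx|\le \|\Ups_N\|_{L^2}\|\wt\ZZ_N\|_{L^2}$, which after cubing and Young with exponents $(2,2)$ produces $\dl\|\Ups_N\|_{L^2}^6+C_\dl\|\wt\ZZ_N\|_{L^2}^6$; and $|\int\wt\ZZ_N^2\,dx|^3=\|\wt\ZZ_N\|_{L^2}^6$ is harmless. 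All $L^2$-norms of $\wt\ZZ_N$ are finally dominated by $\|\wt\ZZ_N\|_{\C^{1-\eps}}^6$ via Besov embedding.

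The main obstacle is the delicate balance imposed by \eqref{KZ2}: the coercive cubic $\frac{A}{2}\bigl|\int(2Y_N\Ups_N+\Ups_N^2)\,dx\bigr|^3$ must be preserved intact---it is precisely this term that later dominates $\|\Ups_N\|_{L^2}^6$ inside \eqref{K9}---while every error term must land in either $\dl\|\Ups_N\|_{L^2}^6$ or a quantity depending only on $Y_N$ and $\wt\ZZ_N$. In particular, no $\|\Ups_N\|_{H^1}^2$ contribution is admissible on the right-hand side of \eqref{KZ2}. This forces the $\Ups_N\wt\ZZ_N$ cross term to be paired with $\|\wt\ZZ_N\|_{L^2}$ alone (not $\|\wt\ZZ_N\|_{H^1}$); after cubing, the conjugate exponent of $\|\Ups_N\|_{L^2}^3$ is exactly $2$, matching the $\|\Ups_N\|_{L^2}^6$ budget with zero slack---a direct manifestation of the critical scaling of the $\Phi^3_3$-model.
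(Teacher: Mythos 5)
Your proposal is correct and follows essentially the same route as the paper's proof: Besov duality against $Y_N\in\C^{-\frac12-\eps}$, fractional Leibniz and $L^2$--$H^1$ interpolation for $\Ups_N$, the elementary $|a+b|^3\ge\frac12|a|^3-C|b|^3$ splitting for \eqref{KZ2}, and $\delta$-weighted Young inequalities to steer all $\Ups_N$-contributions into $\dl\|\Ups_N\|_{L^2}^6+\dl\|\Ups_N\|_{H^1}^2$. The only cosmetic difference is that you expand $\Dr_N^2$ into monomials before applying duality and the Leibniz rule, whereas the paper bounds $|\int Y_N\Dr_N^2|\les\|Y_N\|_{\C^{-\frac12-\eps}}\|\Dr_N\|_{H^{\frac12+2\eps}}\|\Dr_N\|_{L^2}$ first and expands $\Dr_N=\Ups_N+\s\wt\ZZ_N$ afterward; the two orderings yield the same estimates.
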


The next lemma allows us to control the term 
$\| \Ups_N \|_{L^2}^6$ appearing in Lemma \ref{LEM:Dr7}.

\begin{lemma} \label{LEM:Dr8}
There exists
 a non-negative random variable 
 $B(\o)$ 
  with $\E [ B^p ] \le C_p < \infty$ for any finite $p \ge 1$
  such that 
\begin{align}
\| \Ups_N \|_{L^2}^6
\les
\bigg| \int_{\T^3} \Big( 2 Y_N \Ups_N + \Ups_N^2 \Big) dx \bigg|^3
+ \| \Ups_N \|_{H^1}^2
+ B(\o),
\label{DN2a}
\end{align}
uniformly in $N \in \N$.

\end{lemma}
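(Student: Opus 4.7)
The plan is to decompose $V':=\int_{\T^3}(2Y_N\Ups_N+\Ups_N^2)\,dx=\|\Ups_N\|_{L^2}^2+2\langle Y_N,\Ups_N\rangle_{L^2}$ and argue by cases on the relative size of $\|\Ups_N\|_{L^2}^2$ and $|V'|$. Writing $R=\|\Ups_N\|_{L^2}$ and $\rho=\|\Ups_N\|_{H^1}$, the easy regime $R^2\le C_0|V'|$ gives $R^6\le C_0^3|V'|^3$ immediately. The core of the lemma is the opposite regime, in which the identity $V'-R^2=2\langle Y_N,\Ups_N\rangle$ combined with the reverse triangle inequality forces $|\langle Y_N,\Ups_N\rangle|\gtrsim R^2$, so the task reduces to a sharp control of the pairing $\langle Y_N,\Ups_N\rangle$ by $\rho$ and a random constant.

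I would bound $|\langle Y_N,\Ups_N\rangle|$ via a low/high frequency decomposition $Y_N=\mathbb{P}_{\le M}Y_N+\mathbb{P}_{>M}Y_N$, dualized in $L^2\times L^2$ and $H^{-1}\times H^1$ respectively. From the explicit Fourier expansion of $Y$ together with the Wiener chaos bound of Lemma~\ref{LEM:hyp}, one has the uniform-in-$N$ moment estimates $\E\|\mathbb{P}_{\le M}Y_N\|_{L^2}^{2p}\lesssim_p M^p$ and $\E\|\mathbb{P}_{>M}Y_N\|_{H^{-1}}^{2p}\lesssim_p M^{-p}$ for every $p\ge 1$. Packaging the relevant dyadic suprema of these errors into a single random variable $B_0(\omega)$ with all moments (via Borel--Cantelli over dyadic $M$) and balancing the two contributions by the choice $M\sim\rho/R$ yields
\[
|\langle Y_N,\Ups_N\rangle|\lesssim B_0(\omega)\,R^{1/2}\rho^{1/2},
\]
which, combined with the lower bound $|\langle Y_N,\Ups_N\rangle|\gtrsim R^2$ in the nontrivial regime, gives the multiplicative estimate $R^6\lesssim B_0(\omega)^{4}\rho^2$.

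The hard part is to pass from this multiplicative form to the additive conclusion $R^6\lesssim \rho^2+B(\omega)$ demanded by the lemma: a naive Young's inequality cannot remove the random coefficient in front of $\rho^2$, and this is precisely the manifestation of the critical scaling of the $\Phi^3_3$-model, absent in the subcritical Hartree analysis of \cite{OOTol1}. To close, I would refine the preceding decomposition using the slightly supercritical duality $H^{-1/2-\epsilon}\times H^{1/2+\epsilon}$ (whose moment bounds remain uniform in $N$ by Lemma~\ref{LEM:Dr}\,(i)) together with the interpolation $\|\Ups_N\|_{H^{1/2+\epsilon}}\le\|\Ups_N\|_{L^2}^{1/2-\epsilon}\|\Ups_N\|_{H^1}^{1/2+\epsilon}$; the small excess factors in $R$ and $\rho$ produced by this interpolation can then be balanced by a three-term Young inequality, part of the $R$-power being absorbed into $R^6$ on the left-hand side and part of the $\rho$-power into the target $\rho^2$ on the right, leaving an absolutely bounded remainder that enters $B(\omega)$. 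The delicate step is this final critical balancing, which is forced on us by the change of variables \eqref{YZ13} that makes the taming estimate tight at every scale; it is the technical novelty of Lemma~\ref{LEM:Dr8} compared with its subcritical analogues.
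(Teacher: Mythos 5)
Your case split and the overall strategy through the low/high decomposition match the paper, and you correctly diagnose the dead end after the naive duality: the bound $\|\Ups_N\|_{L^2}^6\lesssim B_0(\o)^4\|\Ups_N\|_{H^1}^2$ is multiplicative in the random prefactor and cannot be turned into the required additive form. However, the refinement you sketch will not close. Starting from $\|\Ups_N\|_{L^2}^2\lesssim\|Y_N\|_{H^{-\frac12-\eps}}\|\Ups_N\|_{L^2}^{\frac12-\eps}\|\Ups_N\|_{H^1}^{\frac12+\eps}$ and solving for $\|\Ups_N\|_{L^2}^6$ yields a power $(6+12\eps)/(3+2\eps)$ of $\|\Ups_N\|_{H^1}$, which is strictly \emph{greater} than $2$ for $\eps>0$. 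A superquadratic power of $\|\Ups_N\|_{H^1}$ cannot be brought below the critical target $\|\Ups_N\|_{H^1}^2$ by any Young inequality; moving to the supercritical dual pairing pushes the exponent in the wrong direction.

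The idea you are missing is that the case hypothesis $\|\Ups_N\|_{L^2}^2\lesssim|\int_{\T^3} Y_N\Ups_N\,dx|$ must be invoked a \emph{second} time, inside the low-frequency part of the pairing estimate, and it is precisely this re-use that kills the extra factor of $\|\Ups_N\|_{L^2}$. Concretely, writing $P:=|\int Y_N\Ups_N\,dx|$ and $\Pi_{\le j_0}$, $\Pi_{>j_0}$ for the low/high projections, Cauchy--Schwarz gives $|\langle\Pi_{\le j_0}Y_N,\Ups_N\rangle|\le\|\Pi_{\le j_0}Y_N\|_{L^2}\|\Ups_N\|_{L^2}\lesssim\|\Pi_{\le j_0}Y_N\|_{L^2}P^{\frac12}$; Cauchy's inequality then absorbs $\frac12 P$ into the left-hand side and leaves $\|\Pi_{\le j_0}Y_N\|_{L^2}^2$, with no $\Ups_N$-dependence at all. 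Adding the high-frequency term $\|\Pi_{>j_0}Y_N\|_{H^{-1}}\|\Ups_N\|_{H^1}$ yields the \emph{additive} bound $P\lesssim\|\Pi_{\le j_0}Y_N\|_{L^2}^2+\|\Pi_{>j_0}Y_N\|_{H^{-1}}\|\Ups_N\|_{H^1}$, which can then be cubed and balanced by the choice $2^{j_0}\sim1+\|\Ups_N\|_{H^1}^{2/3}$. (The paper packages this slightly more elaborately via a shell-by-shell orthogonal decomposition $\Pi_j\Ups_N=\ld_j\Pi_jY_N+w_j$, but the absorption is the essential mechanism.) There is also a secondary technical gap in your construction of $B_0(\o)$: since $\E\|\Pi_{\le M}Y_N\|_{L^2}^{2p}\sim_p M^p$ with no decay in $M$, the dyadic supremum of $M^{-1/2}\|\Pi_{\le M}Y_N\|_{L^2}$ does not have moments bounded uniformly in $N$. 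One must first Wick-renormalize $\|\Pi_{\le j_0}Y_N\|_{L^2}^2$ and $\|\Pi_{>j_0}Y_N\|_{H^{-1}}^2$ to split off the deterministic contributions of size $2^{j_0}$ and $2^{-j_0}$; the remaining Wick products have variances decaying geometrically in $j_0$, which is what makes the dyadic supremum $L^p$-bounded and eventually lets the deterministic part $2^{3j_0}\sim\|\Ups_N\|_{H^1}^2$ land on the correct side of the estimate.
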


By assuming Lemmas \ref{LEM:Dr7}
and \ref{LEM:Dr8}, we now prove
the uniform exponential integrability~\eqref{exp1c}
and tightness of the truncated $\Phi^3_3$-measures $\rho_N$.

\smallskip

\noi
$\bullet$ {\bf Uniform exponential integrability:}
In view of \eqref{K9} and Lemma \ref{LEM:Dr8}, 
define the positive part $\U_N$ of $\W_N$ by 
\begin{align}
\U_N(\dot \Ups^N )
= \E \bigg[
\frac A2 \bigg| \int_{\T^3} \Big( 2 Y_N \Ups_N + \Ups_N^2 \Big) dx \bigg|^3
+ \frac{1}{2} \int_0^1 \| \dot \Ups^N(t) \|_{H^1_x}^2 dt
\bigg].
\label{K10}
\end{align}

As a corollary to Lemma~\ref{LEM:Dr}\,(i) with \eqref{YZ12}, we have, 
for any finite $p \ge 1$, 
\begin{align}
\E\Big[\|\ZZ_N \|_{\C^{1-\eps}}^p \Big]
\le \int_0^1 \E\Big[\|\!:\!Y_N^2(t)\!:\|_{\C^{-1-\eps}}^p\Big] dt 
\les p < \infty, 
\label{YZ15}
\end{align}

\noi
uniformly in $N \in \N$.
Then, by applying 
Lemmas \ref{LEM:Dr7} and 
 \ref{LEM:Dr8} to \eqref{K9}
together with 
Lemma~\ref{LEM:Dr} and \eqref{YZ15},
we obtain
\begin{align}
\begin{split}
\W_N(\dot \Ups^N)
&\ge 
-C_0 +
\E
\bigg[
\Big(\frac A{2} -c|\s|\Big)\bigg| \int_{\T^3} \Big( 2 Y_N \Ups_N + \Ups_N^2 \Big) dx \bigg|^3
\\
&\hphantom{XXXXXXX}
+ \Big( \frac 12 - c|\s| \Big) \int_0^1 \| \dot \Ups^N(t) \|_{H^1_x}^2 dt
\bigg]\\
& \ge -C_0' + \frac 1{10} \U_N(\dot \Ups^N), 
\end{split}
\label{DN2}
\end{align}

\noi
for any $0 < |\s| < \s_0$, 
provided  $A = A(\s_0) >0$ is sufficiently large.
Noting that the estimate~\eqref{DN2}
is uniform in $N \in \N$
and $\dot \Ups^N \in \Ha^1$, we conclude that 
\begin{align}
\inf_{N \in \mathbb{N}} \inf_{\dot \Ups^N \in \Ha^1} \W_N (\dot \Ups^N) 
\geq 
\inf_{N \in \mathbb{N}} \inf_{\dot \Ups^N \in \Ha^1}
\bigg\{ -C_0' + \frac{1}{10}\U_N(\dot \Ups^N)\bigg\}
 \geq - C_0' >-\infty.
\label{KZ14a}
\end{align}

\noi
Therefore, 
the uniform exponential integrability \eqref{exp1c}
follows from \eqref{K7}, \eqref{K8}, and \eqref{KZ14a}.

\smallskip

\noi
$\bullet$ {\bf Tightness:}
Next, we prove 
 tightness of the truncated $\Phi^3_3$-measures  $\{\rho_N\}_{N\in \N}$.
Although it follows from a slight modification of the argument in our previous work \cite[Subsection 6.2]{OOTol1},
we present a proof here for readers' convenience.

As a preliminary step, 
we first prove that 
 $Z_N$ in \eqref{pfN1} is uniformly bounded away from 0:
\begin{align}
\inf_{N \in \N} Z_N > 0.
\label{TT1}
\end{align}

\noi
In view of \eqref{K7} and \eqref{K8}, 
it suffices to establish an upper bound on $\W_N$ in \eqref{K9}.
By Lemma \ref{LEM:Bes} and \eqref{K9b}, we have 
\begin{align*}
\bigg|\int_{\T^3}  2 Y_N \Dr_N  dx \bigg|^3
& \les \|Y_N\|_{\C^{-\frac 12-\eps}}^3 \|\Dr_N\|_{H^{\frac 12 + 2\eps}}^3\\
& \les 1 + \|Y_N\|_{\C^{-\frac 12-\eps}}^c + \|\ZZ_N \|_{\C^{1-\eps}}^c
 + \|\Ups_N\|_{H^1}^c.
\end{align*}

\noi
Thus, we have
\begin{align}
\begin{split}
A \bigg| \int_{\T^3} &  \Big( :\! Y_N^2 \!:   + 2 Y_N \Dr_N + \Dr_N^2 \Big) dx \bigg|^3\\
& \les 1 + \|:\! Y_N^2 \!:\|_{\C^{-1-\eps}}^3
+ 
\|Y_N\|_{\C^{-\frac 12-\eps}}^c + \|\ZZ_N \|_{\C^{1-\eps}}^c
 + \|\Ups_N\|_{H^1}^c.
\end{split}
\label{TT2}
\end{align}

\noi
Then, from \eqref{K9}, Lemma \ref{LEM:Dr7}, and \eqref{TT2}
with Lemma \ref{LEM:Dr} and \eqref{YZ15}, 
we obtain
\begin{align*}
\inf_{ \dot \Ups^N \in \Ha^1} \W_N 
& \les 1 + 
\inf_{ \dot \Ups^N \in \Ha^1}  \E\Bigg[\bigg( \int_0^1 \| \dot \Ups^N(t) \|_{H^1_x}^2 dt\bigg)^c\Bigg]
\les 1
\end{align*}

\noi
by taking $\dot \Ups^N \equiv 0$, for example.
This proves \eqref{TT1}.

We now prove tightness of the truncated $\Phi^3_3$-measures.
Fix small $\eps > 0$ and 
let 
$ B_R \subset H^{-\frac 12 - \eps}(\T^3)$ 
be the closed ball of radius $R> 0$ centered at the origin.
Then, by Rellich's compactness lemma, 
we see that  $B_R$ is compact in $H^{-\frac 12 - 2\eps}(\T^3)$.
In the following, we show that 
 given any small $\dl > 0$, there exists $R = R(\dl ) \gg1 $
such that 
\begin{align}
\sup_{N \in \N} \rho_N(B_R^c)< \dl.
\label{T0}
\end{align}
\noi

Given $M \gg1 $, 
let $F$ be a bounded smooth non-negative function such that 
\begin{align}
F(u) = 
\begin{cases}
 M, & \text{if }\|u\|_{H^{-\frac 12 - \eps}} \leq \frac R2,\\
0, & \text{if }\|u\|_{H^{-\frac 12 - \eps} }> R.
\end{cases}
\label{T0a}
\end{align}

\noi
Then, from \eqref{TT1}, we have 
\begin{align}
\rho_N(B_R^c) \leq Z_N^{-1} \int e^{-F(u)-R_N^{\dia}(u)} d\mu
\les \int e^{-F(u)-R_N^{\dia}(u)} d\mu
=: \ft Z_N, 
\label{T1}
\end{align}

\noi
uniformly in  $N \gg 1$.
Under  the change of variables \eqref{YZ13}
(see also \eqref{YZ13a}), 
define $\ft R^{\dia}_{N}(Y+ \Ups^{N}+ \s \ZZ_{N})$  by 
\begin{align}
\begin{split}
\ft R^{\dia}_{N}(Y+ \Ups^{N}+ \s \ZZ_{N})
& =  -\frac \s 3 \int_{\T^3} :\! Y_N^3 \!: dx
- \s \int_{\T^3} Y_N \Dr_N^2 dx
- \frac \s 3 \int_{\T^3} \Dr_N^3 dx
\\
&\hphantom{X}
+ A \bigg| \int_{\T^3} \Big( :\! Y_N^2 \!: + 2 Y_N \Dr_N + \Dr_N^2 \Big) dx \bigg|^3, 
\end{split}
\label{KZ16}
\end{align}

\noi
where 
$\Dr_N = \Ups_N + \s \wt \ZZ_N$
with $\wt \ZZ_N = \pi_N \ZZ_N $ as in \eqref{K9b}.
Then, by \eqref{T1} and
the Bou\'e-Dupuis  variational formula (Lemma~\ref{LEM:var3}),
we have
\begin{align}
\begin{split}
-\log \ft Z_{N}
= \inf_{\dot \Ups^{N}\in  \mathbb H_a^1}
\E \bigg[& F(Y+ \Ups^{N} + \s \ZZ_{N}) \\
& +  \ft R^{\dia}_{N}(Y+ \Ups^{N} + \s\ZZ_{N}) 
+ \frac12 \int_0^1 \| \dot \Ups^{N}(t) \|_{H^1_x}^2dt \bigg].
\end{split}
\label{T2}
\end{align}

Since
$Y+ \s \ZZ_N \in \H_{\leq 2}$, 
it follows from  Lemma \ref{LEM:Dr}, \eqref{YZ15}, 
Chebyshev's inequality, 
and choosing $R \gg 1$ that 
\begin{align}
\begin{split}
\PP \Big( &   \|Y+\Ups^{N} + \s \ZZ_{N}
\|_{H^{-\frac 12 - \eps}} >  \tfrac R2
\Big) \\
&  \leq  \PP \Big( \|Y+ \s \ZZ_N
\|_{H^{-\frac 12 - \eps}} >  \tfrac R4
\Big) 
+ \PP \Big( \|\Ups^{N}\|_{H^1} >  \tfrac R4
\Big) \\
& \leq \frac 12 
+ \frac {16}{R^2} \E\Big[\|\Ups^N\|_{H^1_x}^2\Big], 
\end{split}
\label{T3}
\end{align}

\noi
uniformly in $N \in \N$ and $R\gg1$.
Then, 
from 
\eqref{T0a}, \eqref{T3},  and Lemma \ref{LEM:Dr}, 
we obtain 
\begin{align}
\begin{split}
\E  \Big[ F(Y+ \Ups^{N}+ \s \ZZ_{N})
 \Big]
& \ge M \E \Big[  \ind_{ \big\{\|Y+ \Ups^{N}+ \s \ZZ_{N}
\|_{H^{-\frac 12 - \eps}} \leq  \tfrac R2\big\}}\Big]\\
&  \geq \frac{M}{2}
-  \frac {16M }{R^2} \E\Big[\|\Ups^N\|_{H^1_x}^2\Big]\\
&  \geq \frac{M}{2}
-  \frac {1}{4}\E\bigg[ \int_0^1 \| \dot \Ups^{N}(t) \|_{H^1_x}^2dt \bigg], 
\end{split}
\label{T4}
\end{align}

\noi
where we set  $M = \frac{1}{64} R^2$ in the last step.
Hence, from \eqref{T2}, \eqref{T4}, 
and repeating the computation leading to \eqref{KZ14a}
(by possibly making $\s_0$ smaller), we obtain
\begin{align}
\begin{split}
-\log \ft Z_{N}
& \geq \frac M{2}
+ 
 \inf_{\dot \Ups^{N}\in  \mathbb H_a^1}
\E \bigg[
   \ft R^{\dia}_{N}(Y+ \Ups^{N}+ \s \ZZ_{N}) 
+ \frac14 \int_0^1 \| \dot \Ups^{N}(t) \|_{H^1_x}^2dt \bigg]\\
& \geq \frac M4, 
\end{split}
\label{T5}
\end{align}

\noi
uniformly $N \in \N$ and $M = \frac{1}{64}  R^2 \gg 1$.
Therefore, given any small $\dl > 0$, by choosing $R = R(\dl ) \gg1 $
and setting $M = \frac 1{64} R^2\gg1$, 
the desired bound \eqref{T0} follows from 
\eqref{T1} and~\eqref{T5}. 
This proves 
tightness of the truncated $\Phi^3_3$-measures $\{\rho_N\}_{N\in \N}$.

\medskip

We conclude this subsection by presenting the proofs of 
 Lemmas \ref{LEM:Dr7} and  \ref{LEM:Dr8}.

\begin{proof}[Proof of Lemma \ref{LEM:Dr7}]

From \eqref{dual}, \eqref{prod}, \eqref{embed}, and \eqref{interp}
in Lemma \ref{LEM:Bes} followed by Young's inequality,
we have
\begin{align}
\begin{split}
\bigg|  \int_{\T^3} &   Y_N \Dr_N^2 dx \bigg|
\les
\| Y_N \|_{\C^{-\frac 12-\eps}} \| \Dr_N \|_{H^{\frac 12+2\eps}} \| \Dr_N \|_{L^2} \\
&\les
\| Y_N \|_{\C^{-\frac 12-\eps}}
\Big( \| \Ups_N \|_{H^{\frac 12+2\eps}} \big(\| \Ups_N \|_{L^2} + \| \ZZ_N \|_{\C^{1-\eps}} \big) + \| \ZZ_N \|_{\C^{1-\eps}}^2 \Big) \\
&\les
\| Y_N \|_{\C^{-\frac 12-\eps}}
\Big( \| \Ups_N \|_{L^2}^{\frac 12-2\eps} \| \Ups_N \|_{H^1}^{\frac 12+2\eps} \big(\| \Ups_N \|_{L^2} + \| \ZZ_N \|_{\C^{1-\eps}} \big)
+ \| \ZZ_N \|_{\C^{1-\eps}}^2 \Big) \\
&\les
1
+ C_\dl \| Y_N \|_{\C^{-\frac 12-\eps}}^c
+ \dl \| \Ups_N \|_{L^2}^6 + \dl \| \Ups_N \|_{H^1}^2
 + \| \ZZ_N \|_{\C^{1-\eps}}^c,
\end{split}
\label{KZ3}
\end{align}

\noi
which yields \eqref{KZ1a}.
As for 
the second estimate \eqref{KZ1}, 
it follows from Sobolev's inequality, the interpolation \eqref{interp}, 
and Young's inequality that 
\begin{align}
&\bigg| \int_{\T^3} \Ups_N^3 dx \bigg|
\les \| \Ups_N \|_{H^{\frac 12}}^3
\les \| \Ups_N \|_{L^2}^{\frac 32} \| \Ups_N \|_{H^1}^{\frac 32}
\les \| \Ups_N \|_{L^2}^6 + \| \Ups_N \|_{H^1}^2,
\label{KZ3a}
\end{align}

\noi
while H\"older's inequality with \eqref{embed} shows
\begin{align*}
\bigg| \int_{\T^3} \Ups_N^2 \wt \ZZ_N dx \bigg|
+
\bigg| \int_{\T^3} \Ups_N \wt \ZZ_N^2 dx \bigg|
+
\bigg| \int_{\T^3} \wt \ZZ_N^3 dx \bigg|
\les
1+ \| \Ups_N \|_{L^2}^6 + \| \ZZ_N \|_{\C^{1-\eps}}^3.
\end{align*}

Note that, given any $\g > 0$,  there exists a constant $C= C(J)>0$ such that
\begin{align}\label{YY9}
\bigg|\sum_{j = 1}^J a_j\bigg|^\g
\ge \frac 12 |a_1|^\g -C \bigg( \sum_{j = 2}^J |a_j|^\g\bigg)
\end{align}

\noi
for any $a_j\in \R$.  See Section 5 in \cite{OOTol1}.
Then, from \eqref{YY9} and Cauchy's inequality, 
we have
\begin{align*}
&A \bigg| \int_{\T^3}  \Big( :\! Y_N^2 \!:  + 2 Y_N \Dr_N + \Dr_N^2 \Big) dx \bigg|^3 \\
&\ge
\frac A2 
\bigg| \int_{\T^3}  \Big( 2 Y_N \Ups_N + \Ups_N^2 \Big) dx \bigg|^3
- C A \Bigg\{
\bigg| \int_{\T^3} :\! Y_N^2 \!: dx \bigg|^3
+ |\s|^3 \bigg| \int_{\T^3} Y_N \wt \ZZ_N dx \bigg|^3 \\
&\hphantom{XXX}
+ |\s|^3 \bigg| \int_{\T^3} \Ups_N \wt \ZZ_N dx \bigg|^3
+ \s^6 \bigg| \int_{\T^3} \wt \ZZ_N^2 dx \bigg|^3
\Bigg\} \\
&\ge
\frac A2 
\bigg| \int_{\T^3}  \Big( 2 Y_N \Ups_N + \Ups_N^2 \Big) dx \bigg|^3
- \dl \| \Ups_N \|_{L^2}^6 \\
&\hphantom{XXX}
- C_{\dl, \s} \Bigg\{
\bigg| \int_{\T^3} :\! Y_N^2 \!: dx \bigg|^3
+ \| Y_N \|_{\C^{-\frac 12-\eps}}^6
+ \| \ZZ_N \|_{\C^{1-\eps}}^6
\Bigg\}.
\end{align*}

\noi
This proves \eqref{KZ2}.
This completes the proof of Lemma \ref{LEM:Dr7}.
\end{proof}

Next, we present the proof of
 Lemma \ref{LEM:Dr8}.

\begin{proof}[Proof of Lemma \ref{LEM:Dr8}]
If we have 
\begin{align}
\| \Ups_N \|_{L^2}^2 \gg \bigg| \int_{\T^3} Y_N \Ups_N dx \bigg|, 
\label{DN9}
\end{align}

\noi
then, we have
\begin{align}
\begin{split}
\| \Ups_N \|_{L^2}^6
=
\bigg( \int_{\T^3} \Ups_N^2 dx\bigg)^3
\sim
\bigg| \int_{\T^3} \Big( 2 Y_N \Ups_N + \Ups_N^2 \Big) dx \bigg|^3,
\end{split}
\label{DNc1}
\end{align}
which shows \eqref{DN2a}.
Hence, we assume that 
\begin{align}
\| \Ups_N \|_{L^2}^2 \les \bigg| \int_{\T^3} Y_N \Ups_N dx \bigg|
\label{DN9a}
\end{align}

\noi
in the following.

Given $j \in \N$, 
define the  sharp frequency projections $\Pi_j$
with a Fourier multiplier 
$\ind_{\{|n|\le 2\}}$ when $j = 1$
and $\ind_{\{2^{j-1}<  |n|\le 2^j\}}$ when $j \ge 2$.
We also set $\Pi_{\le j} = \sum_{k = 1}^j \Pi_k$
and $\Pi_{> j} = \Id - \Pi_{\le j}$.
Then,
write  $\Ups_N$ as 
\begin{align}
\Ups_N  = \sum_{j=1}^\infty \Pi_j \Ups_N = \sum_{j=1}^\infty (\ld_j \proj_j Y_N + w_j),
\label{DN6}
\end{align}

\noi
where $\ld_j$ and $w_j$ are given by 
\begin{align}
\ld_j &:=
\begin{cases}
\frac{\jb{\Ups_N, \proj_j Y_N}}{\|\proj_j Y_N\|_{L^2}^2},  & \text{if } \| \proj_j Y_N \|_{L^2} \neq 0, \\
0, & \text{otherwise},
\end{cases}
\qquad
\text{and}
\qquad  
w_j :=
\proj_j \Ups_N - \ld_j \proj_j Y_N.
\label{DN6a}
\end{align}

\noi
By definition,  $w_j = \Pi_j w_j$ is orthogonal to $\proj_j Y_N$ (and also to $Y_N$) in $L^2(\T^3)$.
Thus, we have 
\begin{align}
\| \Ups_N \|_{L^2}^2
&= \sum_{j=1}^\infty \Big( \ld_j^2 \| \proj_j Y_N \|_{L^2}^2 + \| w_j \|_{L^2}^2 \Big), \label{DN7} \\
\int_{\T^3} Y_N \Ups_N dx
&= \sum_{j=1}^\infty \ld_j \| \proj_j Y_N \|_{L^2}^2.
\label{DN8}
\end{align}

\noi
Hence, 
from \eqref{DN9a}, \eqref{DN7}, and \eqref{DN8}, we have
\begin{align}
\sum_{j=1}^\infty \ld_j^2 \| \proj_j Y_N \|_{L^2}^2
\les \bigg| \sum_{j=1}^\infty \ld_j \| \proj_j Y_N \|_{L^2}^2 \bigg|.
\label{DN11}
\end{align}

Fix  $j_0 = j_0(\o) \in \N$ (to be chosen later).
By  Cauchy-Schwarz's inequality and \eqref{DN6a}, we have
\begin{align}
\begin{split}
\bigg| \sum_{j=j_0+1}^\infty \ld_j \| \proj_j Y_N \|_{L^2}^2 \bigg|
&\le \bigg( \sum_{j=1}^\infty \ld_j^2 2^{2j} \| \proj_j Y_N \|_{L^2}^2 \bigg)^{\frac 12}
\bigg( \sum_{j=j_0+1}^\infty 2^{-2j} \| \proj_j Y_N \|_{L^2}^2 \bigg)^{\frac 12} \\
&\le \bigg( \sum_{j=1}^\infty 2^{2j} \| \proj_j \Ups_N \|_{L^2}^2 \bigg)^{\frac 12}
\bigg( \sum_{j=j_0+1}^\infty 2^{-2j} \| \proj_j Y_N \|_{L^2}^2 \bigg)^{\frac 12} \\
&\sim\| \Ups_N \|_{H^1}
 \|\Pi_{> j_0} Y_N \|_{H^{-1}}.
\end{split}
\label{DN12}
\end{align}

\noi
On the other hand, it follows
from
 Cauchy-Schwarz's inequality, 
 \eqref{DN11}, and 
Cauchy's inequality that 
\begin{align}
\begin{split}
\bigg| \sum_{j=1}^{j_0} \ld_j \| \proj_j Y_N \|_{L^2}^2 \bigg|
&\le \bigg( \sum_{j=1}^\infty \ld_j^2 \| \proj_j Y_N \|_{L^2}^2 \bigg)^{\frac 12}
\bigg( \sum_{j=1}^{j_0} \| \proj_j Y_N \|_{L^2}^2 \bigg)^{\frac 12} \\
&\le C\bigg| \sum_{j=1}^\infty \ld_j \| \proj_j Y_N \|_{L^2}^2 \bigg|^{\frac 12}
\bigg( \sum_{j=1}^{j_0} \| \proj_j Y_N \|_{L^2}^2 \bigg)^{\frac 12} \\
&\le
\frac 12 \bigg| \sum_{j=1}^\infty \ld_j \| \proj_j Y_N \|_{L^2}^2 \bigg|
+ C' \|\Pi_{\le j_0}Y_N \|_{L^2}^2.
\end{split}
\label{DN14}
\end{align}

\noi
Hence,  from \eqref{DN12} and \eqref{DN14}, 
we obtain 
\begin{align}
\begin{split}
\bigg| \sum_{j=1}^\infty \ld_j \| \proj_j Y_N \|_{L^2}^2 \bigg|
\les
\| \Ups_N \|_{H^1}
 \|\Pi_{> j_0} Y_N \|_{H^{-1}}
+
\|\Pi_{\le j_0}Y_N \|_{L^2}^2.
\end{split}
\label{DN15}
\end{align}

Since $Y_N$ is spatially homogeneous, we have
\begin{align}
 \|\Pi_{> j_0} Y_N \|_{H^{-1}}^2
&=  \int_{\T^3} :\! ( \jb{\nb}^{-1}\Pi_{> j_0} Y_N )^2 \!: dx
+  \E \big[( \jb{\nb}^{-1}\Pi_{> j_0} Y_N )^2 \big].
\label{DN15x}
\end{align}

\noi
Recalling \eqref{P2}, 
we can bound  
the second term  by 
\begin{align}
\wt \s_{j_0} : =  \E \big[( \jb{\nb}^{-1}\Pi_{> j_0} Y_N )^2 \big]
 = \sum_{\substack{n\in \Z^3\\|n| > 2^{j_0}}}\frac{\chi_N^2(n)}{\jb{n}^4}
 \les 2^{-j_0}.
\label{DN15xx}
\end{align}

\noi
Let $Z_{N, j_0} =  \jb{\nb}^{-1}\Pi_{> j_0} Y_N $.
Proceeding as in the proof of Lemma 2.5 in \cite{OTh}
with Lemma \ref{LEM:Wick2}, we have
\begin{align}
\begin{split}
\E \Bigg[ \bigg(
 \int_{\T^3} :\! Z_{N, j_0}^2\!: dx\bigg)^2 \Bigg]
& = \int_{\T^3_x \times \T^3_y}
 \E\Big[  H_2( Z_{N, j_0}(x); \wt \s_{j_0})
 H_2( Z_{N, j_0} (y); \wt \s_{j_0})\Big] dx dy\\
& =  2
\sum_{\substack{n_1, n_2\in \Z^3\\|n_j| > 2^{j_0}}}\frac{\chi_N^2(n_1)\chi_N^2(n_2)}{\jb{n_1}^4\jb{n_2}^4}
\int_{\T^3_x \times \T^3_y}
e_{n_1 + n_2}(x-y) dx dy\\
& 
=  2 \sum_{\substack{n\in \Z^3\\|n| > 2^{j_0}}}\frac{\chi_N^4(n)}{\jb{n}^8}
\sim 2^{-5j_0}.
\end{split}
\label{DN15y}
\end{align}

\noi
Now, define a non-negative random variable $B_1(\o)$ by 
\begin{align}
B_1(\o) = 
\bigg(
\sum_{j=1}^\infty
2^{4 j}
\Big(
 \int_{\T^3} :\! Z_{N, j}^2\!: dx \Big)^2
\bigg)^{\frac 12}.
\label{DN15z}
\end{align}

\noi
By Minkowski's integral inequality, the Wiener chaos estimate (Lemma \ref{LEM:hyp}), and \eqref{DN15y}, 
we have 
\begin{align}
\E \big[B_1^p\big] \leq p^p
\Bigg(
\sum_{j=1}^\infty
2^{4 j}
\bigg\|
 \int_{\T^3} :\! Z_{N, j}^2\!: dx \bigg\|_{L^2(\O)}^2
\Bigg)^{\frac p2}
\les p^p < \infty
\label{DN100}
\end{align}

\noi
for any finite $p \geq 2$ (and hence for any finite $p \ge 1$).
Hence, from 
\eqref{DN15x}, \eqref{DN15xx},  and \eqref{DN15z}, we obtain
\begin{align}
 \|\Pi_{> j_0} Y_N \|_{H^{-1}}^2
\les  2^{-2 j_0} B_1(\o)+ 2^{-j_0}.
\label{DN15a}
\end{align}

\noi
Next, define a non-negative random variable $B_2(\o)$ by 
\[ B_2(\o) = \sum_{j = 1}^\infty\bigg|\int_{\T^3} :\! (\prod_{j} Y_N)^2 \!: dx\bigg|.\]
Then, a similar computation shows 
\begin{align}
\begin{split}
\|\Pi_{\le j_0}Y_N \|_{L^2}^2
&=  \int_{\T^3} :\! (\proj_{\le j_0} Y_N)^2 \!: dx
+  \E \big[ (\proj_{\le j_0} Y_N)^2 \big] \\
&\les B_2(\o) + 2^{j_0}
\end{split}
\label{DN15b}
\end{align}

\noi
and  $\E \big[B_2^p\big] \leq C_p < \infty$
for any finite $p \geq 1$.

Therefore, putting 
\eqref{DN9a}, \eqref{DN8} 
 \eqref{DN15},  \eqref{DN15a}, and \eqref{DN15b} 
together, 
choosing $2^{j_0} \sim 1 + \| \Ups_N \|_{H^1}^{\frac 23}$, 
and applying Cauchy's inequality, 
 we obtain
\begin{align}
\begin{split}
\| \Ups_N \|_{L^2}^6
& \les \bigg| \int_{\T^3} Y_N \Ups_N dx \bigg|^3
= \bigg| \sum_{j=0}^\infty \ld_j \| \proj_j Y_N \|_{L^2}^2 \bigg|^3\\
&\les \Big(   2^{-3j_0} B_1(\o)^{\frac 32} + 2^{-\frac 32 j_0}  \Big) \| \Ups_N \|_{H^1}^3 +   B_2^3(\o)
+ 2^{3j_0}  
\\
& \les
\| \Ups_N \|_{H^1}^2 + 
B_1^3(\o)
+ B_2^3(\o) + 1.
\end{split}
\label{DNc2}
\end{align}

\noi
This proves \eqref{DN2a} in the case \eqref{DN9a} holds.
This concludes the proof of Lemma \ref{LEM:Dr8}.
\end{proof}

\begin{remark}\label{REM:bd}
\rm

From the proof of Lemma \ref{LEM:Dr8}
(see \eqref{DN9} and \eqref{DNc2})
with Lemma \ref{LEM:Dr8},
we also have
\begin{align}
\begin{split}
\E\Bigg[\bigg| \int_{\T^3} Y_N \Ups_N dx \bigg|^3\Bigg]
& \les
\E\Big[\| \Ups_N \|_{L^2}^6
+ \| \Ups_N \|_{H^1}^{2}\Big]
+ 1\\
& \les
\U_N + 1, 
\end{split}
\label{DNc3}
\end{align}

\noi
where  $\U_N$ is as in \eqref{K10}.

\end{remark}

\subsection{Uniqueness of the limiting $\Phi^3_3$-measure}
\label{SUBSEC:wcon}

The tightness of the truncated Gibbs measures $\{\rho_N\}_{N\in \N}$, proven in the previous subsection,
together with Prokhorov's theorem implies
existence of a weakly convergent subsequence.
In this subsection, we prove uniqueness of 
the limiting $\Phi^3_3$-measure, which  allows us to conclude
the weak convergence of the entire sequence 
$\{\rho_N\}_{N\in \N}$.
While we  follow the uniqueness argument in our previous work \cite[Subsection 6.3]{OOTol1}, 
there are extra terms to control due to the focusing nature of the problem under consideration.

\begin{proposition}\label{PROP:uniq}
Let $\{ \rho_{N^1_k}\}_{k = 1}^\infty$
and $\{ \rho_{N^2_k}\}_{k = 1}^\infty$ be 
two weakly convergent  subsequences
of the truncated $\Phi^3_3$-measures
$\{\rho_N\}_{N\in \N}$ defined in \eqref{GibbsN}, 
converging weakly to $\rho^{(1)}$ and $\rho^{(2)}$ as $k \to \infty$,  respectively.
 Then, we have 
$\rho^{(1)} = \rho^{(2)}$.

\end{proposition}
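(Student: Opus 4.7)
The plan is to reduce uniqueness of the limit to subsequence-independence of the functionals
\[ \mathcal{Z}_N(F) := \int e^{-F(u) - R_N^\dia(u)} d\mu(u) \]
as $N \to \infty$, for every bounded continuous $F : H^{-\frac12-\eps}(\T^3) \to \R$. Once existence and subsequence-independence of $\lim_N \mathcal{Z}_N(F)$ are established, together with $\lim_N Z_N = \lim_N \mathcal{Z}_N(0)$ (which follows from the same analysis) and the tightness proved in Subsection~\ref{SUBSEC:tight}, every weak limit point of $\{\rho_N\}$ is uniquely determined by
\[ \int F \, d\rho = \frac{\lim_N \mathcal{Z}_N(F)}{\lim_N Z_N}, \]
forcing $\rho^{(1)} = \rho^{(2)}$.

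By Lemma~\ref{LEM:var3} and the change of variables \eqref{YZ13},
\[ -\log \mathcal{Z}_N(F) = \inf_{\dot\Ups^N \in \Ha^1} \W_N^F(\dot\Ups^N), \qquad \W_N^F(\dot\Ups^N) := \W_N(\dot\Ups^N) + \E\bigl[F(Y + \Ups^N + \s \ZZ_N)\bigr], \]
with $\W_N$ as in \eqref{K9}. The core step is to show that for $N_1 \le N_2$,
\[ \bigl| \inf \W_{N_1}^F - \inf \W_{N_2}^F \bigr| \to 0 \quad \text{as } N_1, N_2 \to \infty. \]
I would use a standard competitor argument: given a near-minimizer $\dot\Ups^{N_2}$ for $\W_{N_2}^F$, use $\dot{\wt\Ups}^{N_1} := \pi_{N_1} \dot\Ups^{N_2}$ as a competitor for $\W_{N_1}^F$ (and symmetrically in the reverse direction, extending a near-minimizer for $\W_{N_1}^F$ by $0$ at higher frequencies). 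Expanding both functionals via \eqref{K9}, the difference splits into contributions from the Wick cubic $\int Y_N^3$, the taming term, the quadratic resonance $\int Y_N \Dr_N^2$, the pure-drift cubic $\int \Dr_N^3$, and the kinetic term. The a priori bounds from \eqref{DN2}--\eqref{DNc3} furnish uniform-in-$N$ control on $\|\Ups^N\|_{H^1}^2$, $\|\Ups_N\|_{L^2}^6$, and the tamed integral, while Lemma~\ref{LEM:Dr} gives convergence of $Y - Y_N$ and $\ZZ - \ZZ_N$ in the appropriate H\"older--Besov norms. Standard multilinear estimates, analogous to those in the proof of Lemma~\ref{LEM:Dr7}, then bound every difference except one by a factor that tends to zero.

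The decisive point, reflecting the critical scaling of the $\Phi^3_3$-model, is a single cross term which does not admit extra decay from soft analysis. Schematically it has the form
\[ \s \int_{\T^3} (Y_{N_2} - Y_{N_1}) \cdot \pi_{N_1}\bigl(\text{bilinear expression in the lower-frequency pieces}\bigr) \, dx. \]
Here the saving must be structural, not analytic: since $\chi_N^{\text{cube}} = \ind_Q(N^{-1}\,\cdot\,)$ are indicator functions of nested cubes, for $N_1 \le N_2$ we have $\pi_{N_1}\pi_{N_2} = \pi_{N_1}$, so the factor $Y_{N_2} - Y_{N_1} = (\pi_{N_2} - \pi_{N_1})Y$ is $L^2$-orthogonal via Plancherel to anything in the range of $\pi_{N_1}$, making the integral vanish identically. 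This is precisely where property~(ii) of the cube frequency projector from Subsection~\ref{SUBSEC:freq} is indispensable, and the same orthogonality explains why the smooth projector $\pi_N^{\text{smooth}}$ causes our argument to break down.

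The main obstacle is therefore not quantitative — the term-by-term estimates mirror Subsection~\ref{SUBSEC:tight} and are essentially routine — but the organizational one: the expansion must be set up so that the single $O(1)$ contribution which cannot be closed by multilinear estimates is captured exactly by this structural cancellation. Once this algebraic identity is in place, combining the remaining estimates with the uniform bound on $\U_N$ from \eqref{K10}--\eqref{KZ14a} and the continuity of $F$ shows that the sequence $\{-\log \mathcal{Z}_N(F)\}_N$ is Cauchy, hence converges to a limit independent of subsequence, which concludes $\rho^{(1)} = \rho^{(2)}$.
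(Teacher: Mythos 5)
Your overall strategy --- Bou\'e--Dupuis, change of variables \eqref{YZ13}, competitor argument, cube-projector orthogonality to kill the critical cross-term --- is indeed the paper's strategy, and your identification of the cross-term as vanishing by $\pi_{N_1}\pi_{N_2}=\pi_{N_1}$ is correct for one of the two directions you propose. The ``extension'' direction (near-minimizer at the smaller index $N_1$, truncated to frequencies $\le N_1$ at no cost, then used verbatim as a competitor for the larger-index functional $\W^F_{N_2}$) is exactly what the paper does in \eqref{KZ17a}--\eqref{KZ18}: the drift-dependent parts of $\W^F_{N_1}$ and $\W^F_{N_2}$ then coincide, only the $Y_N$- and $\ZZ_N$-pieces differ, and the critical factor $\int(Y_{N_2}-Y_{N_1})\Ups_{N_1}\,dx$ vanishes by \eqref{ortho1}.

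The gap is in the other direction. You also propose taking a near-minimizer $\dot\Ups^{N_2}$ for $\W^F_{N_2}$ and using $\pi_{N_1}\dot\Ups^{N_2}$ as a competitor for $\W^F_{N_1}$ (with $N_1\le N_2$). This does not close. The drift-dependent parts now differ by $(\pi_{N_2}-\pi_{N_1})\Ups^{N_2}$, and the critical cross-term coming from the taming part is
\[
\int_{\T^3}(\pi_{N_2}-\pi_{N_1})Y\cdot(\pi_{N_2}-\pi_{N_1})\Ups^{N_2}\,dx,
\]
where \emph{both} factors sit on the frequency annulus $N_1\lesssim|n|\lesssim N_2$. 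The cube-projector orthogonality gives nothing here, and this term has exactly the criticality the paper flags in Remark~\ref{REM:uniq1}: the variational a priori bounds give $\Ups^{N_2}\in L^2_\omega H^1\cap L^6_\omega L^2$, which by interpolation reaches only $L^3_\omega H^{1/2}$, one $\eps$ short of what is needed to pair against $\C^{-\frac12-\eps}$ inside an $L^3_\omega$ norm; the $N_1^{-a}$ decay from restricting $Y$ to high frequencies compensates neither the missing spatial regularity nor the missing $\omega$-integrability. This is the same obstruction the paper cannot overcome for $\pi_N^{\text{smooth}}$ --- you have simply moved it to the other side.

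The truncation direction is, however, unnecessary, which is worth spelling out. The extension direction alone yields $-\log\mathcal{Z}_{N_2}(F)\le-\log\mathcal{Z}_{N_1}(F)+o_{N_1\to\infty}(1)$ for all $N_2\ge N_1$; combined with the uniform lower bound on $-\log\mathcal{Z}_N(F)$ (from \eqref{exp1c} and boundedness of $F$), this already forces the full sequence to converge to its liminf. The paper's formulation achieves the same thing by comparing two subsequences, passing to a further subsequence so that one index dominates the other, proving the one-sided inequality, and then swapping the roles of the two subsequences --- thereby only ever using the extension direction.

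A minor point: the reduction should be to bounded \emph{Lipschitz} $F$, not merely bounded continuous $F$; the competitor step produces $F$ evaluated at two slightly different arguments (differing by a $\pi^\perp$-tail of the drift and by $\ZZ_{N_1}-\ZZ_{N_2}$), and controlling that difference under the expectation uses $\Lip(F)$, as in the paper's Step~2.
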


\begin{proof}

\noi
$\bullet$ {\bf Step 1:}
We first show that 
\begin{align}
\lim_{k \to \infty} Z_{N^1_k} = \lim_{k \to \infty} Z_{N^2_k}, 
\label{KZ15}
\end{align}

\noi
where $Z_N$ is as in \eqref{pfN1}.
By taking a further subsequence, we may assume that 
 $N^1_k \ge N^2_k$, $k \in \N$.
Recall the change of variables \eqref{YZ13} 
and
let  $\ft R^{\dia}_{N}(Y+ \Ups^{N}+ \s \ZZ_{N})$ be as in \eqref{KZ16}.
Then, by 
the Bou\'e-Dupuis variational formula (Lemma~\ref{LEM:var3}), we have
\begin{align}
-\log Z_{N^j_k}
= \inf_{\dot \Ups^{N^j_k}\in  \mathbb H_a^1}\E \bigg[ \ft R^{\dia}_{N^j_k}(Y+ \Ups^{N^j_k}+ \s \ZZ_{N^j_k}) + \frac12 \int_0^1 \| \dot \Ups^{N^j_k}(t) \|_{H^1_x}^2dt \bigg]
\label{KZ17}
\end{align}

\noi
for $j= 1, 2$ and $k \in \N$.
We point out that  $Y$ and $\ZZ_N$ do not depend
on the drift $\dot \Ups^N$
in~\eqref{KZ17}.

Given $\dl > 0$, 
let  $ \UUps^{N^2_k}$ be an almost optimizer for \eqref{KZ17} with $j = 2$:
\begin{align}
-\log Z_{N^2_k}
& \ge 
\E \bigg[ \ft R^{\dia}_{N^2_k}(Y+  \UUps^{N^2_k}+ \s \ZZ_{N^2_k}) 
+ \frac12 \int_0^1 \| \dot \UUps^{N^2_k}(t) \|_{H^1_x}^2dt \bigg] - \dl.
\label{KZ17a}
\end{align}

\noi
By setting $\UUps_{N^2_k}: =\pi_{N^2_k}\UUps^{N^2_k}$, we have 
 \begin{align}
\pi_{N^1_k}\UUps_{N^2_k} = \UUps_{N^2_k}
 \label{chi2}
 \end{align}

\noi
since $N^1_k \ge N^2_k$.
Then, by choosing 
 $\Ups^{N^1_k} = \UUps_{N^2_k}$, 
 it follows from \eqref{KZ17a} and \eqref{chi2} that 
 \begin{align}
- & \log  Z_{N^1_k}
+\log Z_{N^2_k} \notag \\
&\le \inf_{\dot \Ups^{N^1_k}\in  \mathbb H_a^1} \E \bigg[ \ft R^{\dia}_{N^1_k}(Y+ \Ups^{N^1_k}+ \s \ZZ_{N^1_k}) + \frac12 \int_0^1 \| \dot \Ups^{N^1_k}(t) \|_{H^1_x}^2dt \bigg]\notag \\
& \quad \quad 
- \E \bigg[ \ft R^{\dia}_{N^2_k}(Y+ \UUps^{N^2_k}+ \s \ZZ_{N^2_k}) 
+ \frac12 \int_0^1 \| \dot  \UUps^{N^2_k}(t) \|_{H^1_x}^2dt \bigg] + \dl\notag \\
& \le 
\E \bigg[ \ft R^{\dia}_{N^1_k}(Y+ \UUps_{N^2_k}+ \s \ZZ_{N^1_k}) + \frac12 \int_0^1 \| \dot \UUps_{N^2_k}(t) \|_{H^1_x}^2dt \bigg]\notag \\
& \quad\quad 
- \E \bigg[ \ft R^{\dia}_{N^2_k}(Y+ \UUps^{N^2_k}+ \s \ZZ_{N^2_k}) 
+ \frac12 \int_0^1 \| \dot  \UUps^{N^2_k}(t) \|_{H^1_x}^2dt \bigg] + \dl\notag \\
& \leq   \E\Big[
\ft R^{\dia}(Y_{N^1_k}+ \UUps_{N^2_k}+ \s \wt \ZZ_{N^1_k})
- \ft R^{\dia}(Y_{N^2_k}+ \UUps_{N^2_k}+ \s \wt \ZZ_{N^2_k}) \Big]+ \dl, 
\label{KZ18}
\end{align}

\noi
where  $\wt \ZZ_{N^j_k}  = \pi_{N^j_k}  \ZZ_{N^j_k}  $ is as in \eqref{K9b}.
Here, 
$ \ft R^{\dia}$ is defined by 
\begin{align}
\begin{split}
\ft R^{\dia}(Y+ \Ups+ \s \ZZ)
& =  
- \s \int_{\T^3} Y \Dr^2 dx
- \frac \s 3 \int_{\T^3} \Dr^3 dx
\\
&\hphantom{X}
+ A \bigg| \int_{\T^3} \Big( :\! Y^2 \!: + 2 Y \Dr + \Dr^2 \Big) dx \bigg|^3, 
\end{split}
\label{KZ18a}
\end{align}

\noi
where  $\Dr = \Ups + \s \ZZ$.

We now estimate the right-hand side of \eqref{KZ18}.
The main point is that in  the difference
\begin{align}
 \E\Big[\ft R^{\dia}(Y_{N^1_k}+ \UUps_{N^2_k}+ \s \wt \ZZ_{N^1_k})
- \ft R^{\dia}(Y_{N^2_k}+ \UUps_{N^2_k}+ \s \wt \ZZ_{N^2_k})\Big], 
\label{KZ18b}
\end{align}

\noi
we only have  differences in  $Y$-terms and $\ZZ$-terms,
which allows us to gain  a negative power of $N^2_k$.
The contribution from 
the first  term on the right-hand side
in \eqref{KZ18a}  is given by 
\begin{align}
\begin{split}
-\s \E& \bigg[ \int_{\T^3}
( Y_{N^1_k} - Y_{N^2_k} ) \UUps_{N^2_k}^2 dx\bigg] \\
&-\s^2 \E \bigg[ \int_{\T^3}
( Y_{N^1_k} - Y_{N^2_k} ) (2 \UUps_{N^2_k} + \s \wt \ZZ_{N^1_k}) \wt \ZZ_{N^1_k} dx\bigg] \\
&-\s^2 \E \bigg[ \int_{\T^3}
Y_{N^2_k} (\wt \ZZ_{N^1_k} -\wt  \ZZ_{N^2_k}) ( 2 \UUps_{N^2_k}+ \s \wt \ZZ_{N^1_k} + \s \wt \ZZ_{N^2_k} ) dx\bigg].
\end{split}
\label{KZ19}
\end{align}

\noi
Let $\U _{N_k^2}= \U _{N_k^2}(\dot \UUps^{N_k^2})$ be as in \eqref{K10} with $\Ups_N = \UUps_{N_k^2}$ and $\Ups^N = \UUps^{N_k^2}$.
Then, from Lemmas~\ref{LEM:Dr} and \ref{LEM:Dr8}, we have 
\begin{align*}
\E\Big[\| \UUps_{N^2_k} \|_{ H^1}^2+ \| \UUps_{N^2_k} \|_{ L^2}^6\Big]
\les 1 + \U _{N_k^2}.
\end{align*}

\noi
Now, proceeding as in \eqref{KZ3}
together with H\"older's inequality in $\o$
and Young's inequality, 
we bound 
the first term in \eqref{KZ19} by
\begin{align}
\begin{split}
 \E & \Big[\| Y_{N^1_k} - Y_{N^2_k} \|_{\C^{-\frac 12-\eps}} 
\| \UUps_{N^2_k} \|_{L^2}^{\frac 32 -2\eps}\|\UUps_{N^2_k} \|_{H^1}^{\frac 12+2\eps} 
\Big] \\
&\le \| Y_{N^1_k} - Y_{N^2_k} \|_{L^\frac 6{3-4\eps}_\o \C^{-\frac 12-\eps}_x} 
\| \UUps_{N^2_k} \|_{L^6_\o L^2_x}^{\frac 32 - 2\eps } \|\UUps_{N^2_k} \|_{L^2_\o H^1_x}^{\frac 12+ 2\eps}
 \\
&\les (N^2_k)^{-a}
\| \UUps_{N^2_k} \|_{L^6_\o L^2_x}^{\frac 32 - 2\eps } \|\UUps_{N^2_k} \|_{L^2_\o H^1_x}^{\frac 12+ 2\eps}
\\
&\les (N^2_k)^{-a}\Big( 1+ 
\U_{N_k^2}
\Big), 
\end{split}
\label{KZ19b}
\end{align}

\noi
where the second inequality follows from
a modification of the proof of Lemma \ref{LEM:Dr}\,(i)
and noting that the Fourier transform of 
$Y_{N^1_k} - Y_{N^2_k}$ is supported on the frequencies $\{|n| \ges N^2_k\}$,
which allows us to gain a small negative power of $N^2_k$.
Note that 
the implicit constants in~\eqref{KZ19b}
depend on $A>0$ and $\s$.
However, the sizes of $A$ and $|\s|$ do not
play any role in the subsequent analysis and thus 
we suppress the dependence on  $A$ and $\s$
in the following.
The same comment applies to Subsections \ref{SUBSEC:wcon} and \ref{SUBSEC:notAC}.

The second and third terms in \eqref{KZ19}
and the second term on the right-hand side of \eqref{KZ18a}
can be handled in a similar manner (with \eqref{YZ15}
to control the $\wt \ZZ_{N^j_k}$-terms). 
As a result, we can bound the first two terms on the right-hand side of \eqref{KZ18a}
by 
\begin{align}
\begin{split}
 (N^2_k)^{-a}\Big( 
&C(Y_{N^1_k}, Y_{N^2_k},  \ZZ_{N^1_k}, \ZZ_{N^2_k})
+ \U _{N_k^2}
\Big)
\les  (N^2_k)^{-a}\Big( 
1 
+
\U _{N_k^2}
\Big)
\end{split}
\label{KZ20}
\end{align}

\noi
for some small $a > 0$,
where
 $C(Y_{N^1_k}, Y_{N^2_k},  \ZZ_{N^1_k}, \ZZ_{N^2_k})$ denotes
certain high moments of various stochastic terms involving
$Y_{N^j_k}$ and $\ZZ_{N^j_k}$, $j = 1, 2$, 
which are bounded by some constant,  independent of 
$N^j_k$, $j = 1, 2$, in view of Lemma \ref{LEM:Dr} and \eqref{YZ15}.

It remains to treat the difference coming from the last term in \eqref{KZ18a}.
By Young's and H\"older's inequalities, we have
\begin{align}
 & \E \Bigg[ \bigg| \int_{\T^3} \Big( :\! Y_{N_k^1}^2 \!: + 2 Y_{N_k^1} (\UUps_{N_k^2} + \s \wt \ZZ_{N_k^1}) + (\UUps_{N_k^2} + \s\wt  \ZZ_{N_k^1})^2 \Big) dx \bigg|^3
\notag \\
&\hphantom{XX}
- \bigg| \int_{\T^3} \Big( :\! Y_{N_k^2}^2 \!: + 2 Y_{N_k^2} (\UUps_{N_k^2} + \s \wt \ZZ_{N_k^2}) + (\UUps_{N_k^2} + \s\wt  \ZZ_{N_k^2})^2 \Big) dx \bigg|^3 \bigg] \notag \\
&\les \Bigg\{ \bigg\| \int_{\T^3} \Big( :\! Y_{N_k^1}^2 \!: - :\! Y_{N_k^2}^2 \!: \Big) dx \bigg\|_{L^3_\o}
+ \bigg\|\int_{\T^3} (Y_{N_k^1} - Y_{N_k^2}) \UUps_{N_k^2} dx\bigg\|_{L^3_\o} \notag\\
&\hphantom{XX}
+\bigg\| \int_{\T^3}  (Y_{N_k^1} - Y_{N_k^2}) \wt \ZZ_{N_k^1} dx \bigg\|_{L^3_\o} 
+ \bigg\| \int_{\T^3} Y_{N_k^2} (\wt \ZZ_{N_k^1} - \wt \ZZ_{N_k^2}) dx \bigg\|_{L^3_\o} 
\notag\\
&\hphantom{XX}
+\bigg\| \int_{\T^3} (\wt \ZZ_{N_k^1} -\wt  \ZZ_{N_k^2}) (
2\UUps_{N^2_k} + \s\wt \ZZ_{N_k^1}+ \s\wt  \ZZ_{N_k^2}) dx \bigg\|_{L^3_\o}
\Bigg\}
\notag\\
&\quad
\times \Bigg\{
\bigg\| \int_{\T^3} \Big( :\! Y_{N_k^1}^2 \!: + 2 Y_{N_k^1} (\UUps_{N_k^2} + \s \wt \ZZ_{N_k^1}) + (\UUps_{N_k^2} + \s \wt \ZZ_{N_k^1})^2 \Big) dx \bigg\|_{L^3_\o}^2 \notag\\
&\hphantom{XX}
+
\bigg\| \int_{\T^3} \Big( :\! Y_{N_k^2}^2 \!: + 2 Y_{N_k^2} (\UUps_{N_k^2} + \s \wt \ZZ_{N_k^2}) + (\UUps_{N_k^2} + \s \wt \ZZ_{N_k^2})^2 \Big) dx \bigg\|_{L^3_\o}^2
\Bigg\} \notag\\
&=: \1 \times \II.
\label{KZa1}
\end{align}

\noi
We divide $\1$ into two groups:
\begin{align}
\begin{split}
\1&  = \Bigg(\1 -  \bigg\|\int_{\T^3} (Y_{N_k^1} - Y_{N_k^2}) \UUps_{N_k^2} dx\bigg\|_{L^3_\o}\Bigg)
+   \bigg\|\int_{\T^3} (Y_{N_k^1} - Y_{N_k^2}) \UUps_{N_k^2} dx\bigg\|_{L^3_\o}\\
& =: \1_1 + \1_2.
\end{split}
\label{KK0a}
\end{align}

\noi
By the definition \eqref{pi}
of the cube frequency projector $\pi_N = \pi_N^\text{cube}$, we have 
\begin{align}
\int_{\T^3} (Y_{N_k^1} - Y_{N_k^2}) \UUps_{N_k^2} dx 
\int_{\T^3} \pi_{N^2_k}(Y_{N_k^1} - Y_{N_k^2}) \cdot \UUps_{N_k^2} dx = 0
\label{ortho1}
\end{align}

\noi
and thus $\1_2 = 0$.

By 
Lemma \ref{LEM:Bes}, 
H\"older's inequality in $\o$,
and Young's inequality, 
 followed by Lemma \ref{LEM:Dr8} with~\eqref{K10}, 
we can estimate $\1_1$ in \eqref{KK0a} by
\begin{align}
\begin{split}
\1_1
&\les
\|  :\! Y_{N_k^1}^2 \!: - :\! Y_{N_k^2}^2 \!:  \|_{L^3_\o \C^{-1-\eps}_x} \\
&\hphantom{X}
+ \| Y_{N_k^1} - Y_{N_k^2} \|_{L^6_\o \C^{-\frac12-\eps}_x}
 \| \wt \ZZ_{N_k^1} \|_{L^6_\o \C^{1-\eps}_x}
+ \| Y_{N_k^2} \|_{L^6_\o \C^{-\frac12-\eps}_x} \| \wt \ZZ_{N_k^1} -\wt  \ZZ_{N_k^2} \|_{L^6_\o \C^{1-\eps}_x}
\\
&\hphantom{X}
+\| \wt \ZZ_{N_k^1} - \wt \ZZ_{N_k^2} \|_{L^6_\o \C^{1-\eps}_x} \Big( \| \UUps_{N_k^2} \|_{L^6_\o L^2_x} + \| \wt \ZZ_{N_k^1} \|_{L^6_\o \C^{1-\eps}_x} + \| \wt \ZZ_{N_k^2} \|_{L^6_\o \C^{1-\eps}_x} \Big) \\
&\les
(N_k^2)^{-a} \Big( 1+ \U_{N_k^2} \Big)^{\frac 16}, 
\end{split}
\label{KZa4}
\end{align}

\noi
where we used Lemma \ref{LEM:Dr} and \eqref{YZ15} 
in bounding the terms involving $Y_{N_k^j}$ and $\wt \ZZ_{N_k^j} = \pi_{N_k^j} \ZZ_{N_k^j}$.  
As for  $\II$ in \eqref{KZa1},
it follows from \eqref{DNc3}, Lemma \ref{LEM:Dr8}, and \eqref{K10} that
\begin{align}
\begin{split}
\bigg\| & \int_{\T^3} \Big( :\! Y_{N_k^j}^2 \!: + 2 Y_{N_k^j} (\UUps_{N_k^2} + \s\wt  \ZZ_{N_k^j}) + (\UUps_{N_k^2} + \s \wt \ZZ_{N_k^j})^2 \Big) dx \bigg\|_{L^3_\o}
\\
&\les
1+ \bigg\| \int_{\T^3} Y_{N_k^j} \UUps_{N_k^2} dx \bigg\|_{L^3_\o} + \| \UUps_{N_k^2} \|_{L^6_\o L^2_x}^2 \\
&\les 1+ \U _{N_k^2}^{\frac 13}.
\end{split}
\label{KZa2}
\end{align}

 From 
 \eqref{DN2}, 
 \eqref{K8}, \eqref{K9}, 
 \eqref{KZ16}, 
and replacing $\UUps^{N^2_k}$ by 0 in view of 
 \eqref{KZ17},  
  we have
\begin{align}
\begin{split}
\sup_{k \in \N} \, 
\U _{N_k^2}(\dot \UUps^{N^2_k})
& \leq 10 C_0' + 
10 \sup_{k \in \N}\E \bigg[ \ft R^{\dia}_{N^2_k}(Y+ \UUps^{N^2_k}+ \s \ZZ_{N^2_k}) 
+ \frac12 \int_0^1 \| \dot  \UUps^{N^2_k}(t) \|_{H^1_x}^2dt \bigg] \\
& \les 1 + \dl 
+ \sup_{k \in \N}
\E \big[ \ft R^{\dia}_{N^2_k}(Y+ 0+ \s \ZZ_{N^2_k})  \big]\\
& \les 1.
\end{split}
\label{KZ20a}
\end{align}

\noi
Hence, 
from \eqref{ortho1}, \eqref{KZa4}, \eqref{KZa2}, 
and \eqref{KZ20a}, we obtain that 
\begin{align}
\1 \cdot \II 
\les  (N^2_k)^{-a}
\too 0, 
\label{KK0b}
\end{align}

\noi
as $k \to \infty$.
Therefore, from  \eqref{KZ20} and \eqref{KK0b}, 
we conclude 
that 
\begin{align}
 \E\bigg[\ft R^{\dia}(Y_{N^1_k}+ \UUps_{N^2_k}+ \s\wt \ZZ_{N^1_k})
- \ft R^{\dia}(Y_{N^2_k}+ \UUps_{N^2_k}+ \s \wt \ZZ_{N^2_k})\bigg]
\too 0, 
\label{KZ21}
\end{align}

\noi
as $k \to \infty$.
 Since the choice of $\dl > 0$ was arbitrary, 
it follows from \eqref{KZ18} and \eqref{KZ21} that 
\begin{align}
\lim_{k \to \infty} Z_{N^1_k} \geq  \lim_{k \to \infty} Z_{N^2_k}. 
\label{KZ21a}
\end{align}

\noi
By taking a subsequence of $\{N^2_k\}_{k \in \N}$, 
still denoted by  $\{N^2_k\}_{k \in \N}$, 
we may assume that  $ N^1_k \leq  N^2_k$.
By repeating the computation above, 
we then obtain 
\begin{align}
\lim_{k \to \infty} Z_{N^1_k} \leq  \lim_{k \to \infty} Z_{N^2_k}. 
\label{KZ21b}
\end{align}

\noi
Therefore, 
\eqref{KZ15} follows from \eqref{KZ21a} and \eqref{KZ21b}.

\smallskip

\noi
$\bullet$ {\bf Step 2:}
Next, we prove $\rho^{(1)} = \rho^{(2)}$.
This claim follows from a small modification  of Step~1.
For this purpose, we need to prove that for every bounded Lipschitz continuous function $F: \C^{-100}(\T^3) \to \R$,
we have  
\begin{align}
 \lim_{k\to \infty} \int \exp(F(u)) d \rho_{N^1_k} 
 \ge  \lim_{k\to \infty} \int \exp(F(u)) d \rho_{N^2_k}
\notag
\end{align}

\noi
under the condition   $N^1_k \ge N^2_k$, $k \in \N$
(which can be always satisfied by taking a subsequence of  $\{N^1_k\}_{k \in \N}$).
In view of \eqref{pfN1} and \eqref{KZ15}, it suffices to show
\begin{align}
\begin{split}
\limsup_{k\to \infty} \bigg[  -\log\Big(\int  & \exp(  F(u)
 -R_{N^1_k}^{\dia}(u))
d\mu \Big) \\
& + \log\Big(\int \exp(F(u) -R_{N^2_k}^{\dia}(u)) d \mu\Big)\bigg] \le 0.
 \label{KZ23}
\end{split}
\end{align}

\noi

By the Bou\'e-Dupuis variational formula (Lemma \ref{LEM:var3}), we have 
\begin{align}
\begin{split}
- & \log  \Big(\int \exp(F(u)
-R_{N^j_k}^{\dia}(u))
 d \mu  \Big)
\\
& = \inf_{\dot \Ups^{N^j_k}\in  \mathbb H_a^1}\E \bigg[ 
- F(Y+ \Ups^{N^j_k}+ \s \ZZ_{N^j_k}) \\
&\hphantom{XXXXXXXX}
+ \ft R^{\dia}_{N^j_k}(Y+ \Ups^{N^j_k}+ \s \ZZ_{N^j_k}) + \frac12 \int_0^1 \| \dot \Ups^{N^j_k}(t) \|_{H^1_x}^2dt \bigg], 
\end{split}
\label{KZ24}
\end{align}

\noi
where $\ft R^{\dia}_{N^j_k}(Y+ \Ups^{N^j_k}+ \s \ZZ_{N^j_k})$ is as in \eqref{KZ16}.
Given $\dl > 0$, 
let  $\UUps^{N^2_k}$ be an almost optimizer for~\eqref{KZ24} with $j = 2$:
\begin{align*}
-  \log   &  \Big(\int \exp(F(u)
-R_{N^2_k}^{\dia}(u))
 d \mu  \Big)\\
& \ge 
\E \bigg[ 
- F(Y+ \UUps^{N^2_k}+ \s \ZZ_{N^2_k})\\
& \quad  + 
\ft R^{\dia}_{N^2_k}(Y+ \UUps^{N^2_k}+ \s \ZZ_{N^2_k})  
+ \frac12 \int_0^1 \| \dot  \UUps^{N^2_k}(t) \|_{H^1_x}^2dt \bigg] - \dl.
\end{align*}

\noi
Then, by choosing $\Ups^{N^1_k} = \UUps_{N^2_k} = \pi_{N^2_k}\UUps^{N^2_k}$
and proceeding as in \eqref{KZ18}, 
 we have 
\begin{align}
  - & \log\Big(\int   \exp(  F(u)
 -R_{N^1_k}^{\dia}(u))
d\mu \Big) 
 + \log\Big(\int \exp(F(u) -R_{N^2_k}^{\dia}(u)) d \mu\Big) \notag \\
& \le 
\E \bigg[ 
- F(Y+ \UUps_{N^2_k}+ \s \ZZ_{N^1_k})\notag \\
& \hphantom{XXXi}
+ \ft R^{\dia}_{N^1_k}(Y+ \UUps_{N^2_k}+ \s \ZZ_{N^1_k}) + \frac12 \int_0^1 \| \dot \UUps_{N^2_k}(t) \|_{H^1_x}^2dt \bigg]\notag \\
& \quad
- \E \bigg[ 
- F(Y+ \UUps^{N^2_k}+ \s \ZZ_{N^2_k})\notag \\
& \hphantom{XXXi}
+ \ft R^{\dia}_{N^2_k}(Y+ \UUps^{N^2_k}+ \s \ZZ_{N^2_k}) 
+ \frac12 \int_0^1 \| \dot  \UUps^{N^2_k}(t) \|_{H^1_x}^2dt \bigg] + \dl\notag \\
& \le  
\Lip (F)\cdot 
 \E\Big[ \|  \pi_{N^2_k}^\perp\UUps^{N^2_k}
 - \s ( \ZZ_{N^1_k} -  \ZZ_{N^2_k}) \|_{\C^{-100}} \Big] \notag \\
 & \quad +  \E\Big[
\ft R^{\dia}(Y_{N^1_k}+ \UUps_{N^2_k}+ \s \wt \ZZ_{N^1_k})
- \ft R^{\dia}(Y_{N^2_k}+ \UUps_{N^2_k}+ \s \wt \ZZ_{N^2_k}) \Big]+ \dl, 
\label{KZ26}
\end{align}

\noi
where $\pi_N^\perp = \text{Id} - \pi_N$ and $\ft R^{\dia}$ is as in \eqref{KZ18a}.
We can proceed as in Step 1 to show that  the second term
on the right-hand side of \eqref{KZ26}
satisfies \eqref{KZ21}.
Here, we need to use the boundedness of $F$ in showing an analogue
of \eqref{KZ20a} in the current context
(with an almost optimizer
$\UUps^{N^2_k}$ for \eqref{KZ24}).

Finally, we estimate the first term on the right-hand side of \eqref{KZ26}.
Write 
\begin{align*}
 \E\Big[ &  \|  \pi_{N^2_k}^\perp\UUps^{N^2_k}
 - \s ( \ZZ_{N^1_k} -  \ZZ_{N^2_k}) \|_{\C^{-100}} \Big] \\
&  \les
 \E\Big[ \|  \pi_{N^2_k}^\perp\Ups^{N^2_k}\|_{\C^{-100}} \Big] 
 +  
 \E\Big[ \| 
  \ZZ_{N^1_k}  -   \ZZ_{N^2_k}\|_{\C^{-100}} \Big].
\notag
\end{align*}

\noi
A standard computation 
with \eqref{YZ12}
shows  that the second term on the right-hand side tends
to 0 as $k \to \infty$.
As for the first term, 
from Lemma \ref{LEM:Dr} and (an analogue of) \eqref{KZ20a}, we obtain
\begin{align*}
 \E\Big[ \|  \pi_{N^2_k}^\perp\UUps^{N^2_k}\|_{\C^{-100}} \Big] 
 \les
(N^2_k)^{-a} \|  \UUps^{N^2_k}\|_{L^2_\o H^1_x} 
 \les
(N^2_k)^{-a} \Big( \sup_{k \in \N} \, \U_{N^2_k} \Big)^{\frac 12}
\too 0,  
\end{align*}

\noi
as $k \to \infty$.
Since the choice of $\dl > 0$ was arbitrary, 
we conclude \eqref{KZ23}
and hence $\rho^{(1)} = \rho^{(2)}$.
This completes the proof of Proposition \ref{PROP:uniq}.
\end{proof}

\begin{remark} \label{REM:uniq1}\rm
In the proof of Proposition \ref{PROP:uniq}, 
we used the orthogonality relation \eqref{ortho1}
to conclude that $\1_2 = 0$.
While the same orthogonality holds
for the ball frequency projector $\pi_N^\text{ball}$ in \eqref{pib1},
such an orthogonality relation is false for the smooth 
frequency projector $\pi_N^\text{smooth}$ in~\eqref{pis1}.
As seen from the proof of Lemma \ref{LEM:Dr8}
and the uniform bound \eqref{KZ20a}
on $\U_{N^2_k}(\dot \UUps^{N^2_k})$, the quantity 
$\1_2$ in \eqref{KK0a} is critical (with respect to the spatial regularity/integrability and 
also with respect to the $\o$-integrability).
From Remark \ref{REM:bd}
and \eqref{KZ20a}, we see that the quantity $\1_2$ is bounded, 
uniformly in $k \in \N$.
In the absence of   the orthogonality \eqref{ortho1}, however, we do 
not know how to show that this term tends to $0$ as $k \to \infty$
in the case of the smooth 
frequency projector $\pi_N^\text{smooth}$.
We point out that the same issue  also appears in 
the proofs of 
Propositions \ref{PROP:ref}
and  \ref{PROP:plan}
in the case of the smooth 
frequency projector $\pi_N^\text{smooth}$.

\end{remark}

\subsection{Singularity of the $\Phi^3_3$-measure}
\label{SUBSEC:notAC}
We conclude this section 
by proving mutual singularity of the $\Phi^3_3$-measure $\rho$, 
constructed in the previous subsections, 
and the base  Gaussian free field $\mu$ in \eqref{gauss1}.
In Section 4 of \cite{BG2}, 
Barashkov and Gubinelli proved the singularity of the $\Phi^4_3$-measure
by making use of the shifted measure.
In the following, we 
follow our previous work  \cite{OOTol1}
and present a direct proof of
singularity of the $\Phi^3_3$-measure
without referring to  a shifted measure.
See also Appendix \ref{SEC:AC}, where we construct 
a shifted measure with respect to which the $\Phi^3_3$-measure 
is absolutely continuous.

\begin{proposition}\label{PROP:sing}
Let  $R_N$ be as  in \eqref{K1} with $\g = 3$, 
and $\eps>0$.
Then, 
there exists a strictly increasing sequence $\{ N_k \}_{k \in \N} \subset \N$ such that
the set
\begin{align}
S := \big\{ u \in H^{-\frac 12-\eps}(\T^3) : \lim_{k \to \infty} (\log N_k)^{-\frac 34} R_{N_k}(u) =0 \big\}
\notag
\end{align}
satisfies
\begin{align}
 \mu (S) = 1\qquad \text{but}\qquad \rho (S) =0.
\label{Ks2}
\end{align}

\noi
In particular, the $\Phi^3_3$-measure $\rho$ and 
the massive Gaussian free field $\mu$ in \eqref{gauss1} are mutually singular.
\end{proposition}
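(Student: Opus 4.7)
The strategy is to exhibit a functional whose behavior distinguishes $\mu$ from $\rho$: under $\mu$, $R_N$ is of order $(\log N)^{1/2}$, while under $\rho$ it is (typically) of order $-\log N$. The separating quantity $(\log N_k)^{-3/4}R_{N_k}$ then tends to $0$ under $\mu$ and to $-\infty$ under $\rho$ along a suitable subsequence.

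For the $\mu$-side the proof is essentially elementary. A direct Wick/orthogonality computation (Lemma \ref{LEM:Wick2}) combined with the discrete convolution bound (Lemma \ref{LEM:SUM}) gives
\begin{align*}
\E_\mu\bigg[\bigg(\int_{\T^3}\!:\!u_N^3\!:dx\bigg)^2\bigg]\sim \log N, \qquad \E_\mu\bigg[\bigg|\int_{\T^3}\!:\!u_N^2\!:dx\bigg|^6\bigg]\lesssim 1,
\end{align*}
so $\|R_N\|_{L^2(\mu)}^2 \lesssim \log N$, and Chebyshev yields $\mu\bigl((\log N)^{-3/4}|R_N|>\eta\bigr)\lesssim \eta^{-2}(\log N)^{-1/2}$. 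Choosing $N_k=2^{2^k}$ so that $\sum_k(\log N_k)^{-1/2}<\infty$ and applying Borel-Cantelli yields $\mu(S)=1$.

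For the $\rho$-side the goal is the concentration statement
\begin{align*}
\rho(R_N > -c\log N)\lesssim N^{-a}
\end{align*}
for some $c, a > 0$ depending on $\sigma$; a second Borel-Cantelli along $N_k$ would then give $R_{N_k}\leq -c\log N_k$ for all large $k$ $\rho$-a.s., whence $(\log N_k)^{-3/4}R_{N_k}\leq -c(\log N_k)^{1/4}\to -\infty$ and $\rho(S)=0$. To prove this concentration I would combine Markov's exponential inequality with the Bou\'e-Dupuis formula (Lemma \ref{LEM:var3}) applied to the truncated measure $\rho_M$: for $\lambda > 0$ small and $M \geq N$,
\begin{align*}
\rho_M(R_N > -c\log N)\leq e^{\lambda c\log N}\cdot Z_M^{-1}\int e^{\lambda R_N - R_M^\dia}\,d\mu,
\end{align*}
and estimate the last integral via the infimum of $\E[-\lambda R_N(Y+\Theta) + R_M^\dia(Y+\Theta) + \tfrac12\int\|\theta\|^2\,dt]$ over drifts. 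After the change of variables \eqref{YZ13}, this is the functional $\W_M$ from \eqref{K9} plus the extra piece $-\lambda\E[R_N(Y+\Upsilon+\sigma\ZZ_M)]$; expanding via the Wick calculus, the cross-term $-\sigma\E[\int :\!Y_N^2\!:(\sigma\ZZ_M)_N\,dx]$ contributes a deterministic $-c_0\sigma^2\log N$ computable from \eqref{XX2} and \eqref{YZ12}, while the remaining pieces are either $O(1)$ (by Lemma \ref{LEM:Dr}) or absorbed in the quadratic-in-$\Upsilon$ structure of $\W_M$ exactly as in the proof of Lemma \ref{LEM:Dr8}. This would yield $\E_{\rho_M}[e^{\lambda R_N}]\lesssim e^{-c'\lambda\log N}$ uniformly in $M \geq N$, and weak convergence $\rho_M\rightharpoonup\rho$ (Proposition \ref{PROP:uniq}) together with a Fatou argument transfers the bound to $\rho$.

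The hard part will be justifying the claimed lower bound on the variational infimum, namely verifying that no drift $\Upsilon$ can simultaneously make $\E[R_N(Y+\Upsilon+\sigma\ZZ_M)]$ large and $\W_M(\Upsilon)$ small. This is analogous to, but sharper than, the non-normalizability argument for Theorem \ref{THM:Gibbs}\,(ii): because the $\Phi^3_3$-model is critical, one has essentially no slack in the comparison, and the cancellations between the renormalization constants $\alpha_N$ and $\alpha_M$ must be tracked to leading order. An analogous singularity proof appears for the defocusing Hartree $\Phi^4_3$-measure in \cite{OOTol1}, and I expect the present argument to follow the same blueprint with appropriate modifications to accommodate the focusing cubic interaction.
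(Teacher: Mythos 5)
For the $\mu$-side your argument is essentially the paper's: both compute $\|R_N\|_{L^2(\mu)}^2 \lesssim \log N$ via Wick orthogonality and Lemma \ref{LEM:SUM}, then pass to a subsequence; the paper invokes $L^2$-convergence-to-$0$-implies-a.s.-convergence along a subsequence rather than spelling out Chebyshev plus Borel--Cantelli, but this is cosmetic. For the $\rho$-side you take a genuinely different route. The paper tilts by a \emph{vanishing} coefficient: it shows $\int e^{G_k}\,d\rho \to 0$ with $G_k = (\log N_k)^{-3/4}R_{N_k}$, yielding the subexponential decay $\int e^{G_k}\,d\rho \lesssim \exp(-c(\log N_k)^{1/4})$; you propose a \emph{fixed} exponential tilt $e^{\lambda R_N}$, aiming for the polynomial bound $\rho(R_N > -c\log N) \lesssim N^{-a}$. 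Both rely on the same engine: Bou\'e--Dupuis, the change of variables \eqref{YZ13}, and the identity $\E[\int_0^1\jb{\dot\ZZ_N,\dot\ZZ_M}_{H^1}\,dt] \sim \log N$ (Lemma \ref{LEM:Ks3}), which is what makes the cross-term in $R_N$ produce the $\log N$ divergence. What the paper's $(\log N_k)^{-3/4}$ factor buys is that the nuisance contributions from $R_{N_k}$ --- the $III,IV,V$ terms in \eqref{Ks6} and, crucially, the $-A(\log N_k)^{-3/4}\big|\int:\!(Y+\Theta)_{N_k}^2\!:\big|^3$ piece that enters with a \emph{minus} sign --- are automatically $o(\U_M)$ and absorb for free; your version would give a sharper concentration statement for $R_N$ under $\rho$, but only after verifying that all these nuisance terms, now with coefficient $\lambda$ rather than $(\log N_k)^{-3/4}$, can still be absorbed into $\W_M$ for $\lambda$ small and fixed. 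In particular the combination $-\lambda A\big|\int:\!u_N^2\!:\big|^3 + A\big|\int:\!u_M^2\!:\big|^3$ has different truncations and does not cancel pointwise (by orthogonality, $\int\Theta_N^2 - \int\Theta_M^2 = -\|\Theta_M-\Theta_N\|_{L^2}^2$, which is of the same order as the coercive $\|\Upsilon_M\|_{L^2}^6$ term); you need to run the $|a|^3 \le (1+\delta)|b|^3 + C_\delta|a-b|^3$ splitting and invoke Lemma \ref{LEM:Dr8} to bound the remainder by $\lambda\cdot\U_M$ before choosing $\lambda$ small depending on $\sigma$ and $A$. You flag this as the ``hard part'' but do not resolve it; the paper sidesteps it entirely by letting the coefficient vanish. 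One simplification available to you: since $R_N$ depends only on $\pi_N u$ it is continuous on $H^{-1/2-\eps}(\T^3)$, so $\{R_N > -c\log N\}$ is open, and the transfer from $\rho_M$ to $\rho$ is an immediate application of the Portmanteau theorem for open sets --- you do not need the Fatou/bump-function gymnastics that the paper uses for its (unbounded) integrand $e^{G_k}$.
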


\begin{proof}

From \eqref{K1} with $\g=3$, the Wiener chaos estimate (Lemma \ref{LEM:hyp}), 
Lemma \ref{LEM:Wick2}, 
and Lemma \ref{LEM:SUM},
we have
\begin{align*}
\| R_N(u) \|_{L^2(\mu)}^2
&\les \bigg\| \int_{\T^3} :\! u_N^3 \!: dx \bigg\|_{L^2(\mu)}^2
+ \bigg\| \int_{\T^3} :\! u_N^2 \!: dx \bigg\|_{L^6(\mu)}^6 \\
&\les \bigg\| \int_{\T^3} :\! u_N^3 \!: dx \bigg\|_{L^2(\mu)}^2
+ \bigg\| \int_{\T^3} :\! u_N^2 \!: dx \bigg\|_{L^2(\mu)}^6 \\
&\les \sum_{\substack{n_1+n_2+n_3=0 \\ n_j \in NQ}} \jb{n_1}^{-2} \jb{n_2}^{-2} \jb{n_3}^{-2}
+ \bigg( \sum_{\substack{n_1+n_2=0 \\ n_j \in NQ}} \jb{n_1}^{-2} \jb{n_2}^{-2} \bigg)^3
\\
&\les
\sum_{\substack{|n_1| , |n- n_1| \les N}} \jb{n_1}^{-2} \jb{n-n_1}^{-1}
+ 1 \\
&\les \log N, 
\end{align*}

\noi
where 
$Q$ denotes the  cube of side length $2$ in $\R^3$ centered at the origin 
 as in \eqref{Q1}.
Thus, we have 
\[
\lim_{N \to \infty} (\log N)^{-\frac 34}
\| R_N(u) \|_{L^2(\mu)}
\les \lim_{N \to \infty}
(\log N)^{-\frac 14}
= 0.
\]

\noi
Hence, there exists a subsequence such that 
\[
\lim_{k \to \infty} (\log N_k)^{-\frac 34} R_{N_k}(u) =0, 
\]
almost surely with respect to $\mu$.
This proves $\mu(S) = 1$ in \eqref{Ks2}.

Given $k \in \N$, define $G_k(u)$ by 
\begin{align}
 G_k(u) = (\log N_k)^{-\frac 34} R_{N_k} (u).
 \label{Ks3a}
\end{align}

\noi
In the following, 
 we show that 
$e^{G_k(u)}$ tends to $0$ in $L^1(\rho)$.
This will imply that there exists a subsequence
of $G_k(u)$ 
 tending to $- \infty$, 
almost surely with respect to the $\Phi^3_3$-measure $\rho$,
 which in turn yields the second claim in~\eqref{Ks2}:
 $\rho(S) = 0$.

Let $\phi$ be a smooth bump function as in Subsection \ref{SUBSEC:21}.
By Fatou's lemma, the weak convergence of $\rho_M$ to $\rho$, 
 the boundedness of $\phi$, and \eqref{GibbsN}, we have
\begin{align}
\begin{split}
\int  e^{G_k(u)}d\rho(u)
& \leq \liminf_{K \to \infty}
\int \phi \bigg(\frac{G_k(u) }{K}\bigg)e^{G_k(u)}d\rho(u)\\
& = \liminf_{K \to \infty}
\lim_{M \to \infty}
\int \phi \bigg(\frac{G_k(u)}{K}\bigg)e^{G_k(u)}d\rho_M(u)\\
& \leq 
\lim_{M \to \infty}
\int e^{G_k(u)}d\rho_M(u)
= Z^{-1} \lim_{M \to \infty}
\int e^{G_k(u) - R_M^\dia(u)}d\mu(u)\\
& =: Z^{-1} \lim_{M\to \infty} C_{M, k}, 
\end{split}
\label{Ks4}
\end{align}

\noi
provided that  $\lim_{M\to \infty} C_{M, k} $ exists.
Here,  $Z = \lim_{M \to \infty} Z_M$ denotes the partition function for~$\rho$.

Our main goal is to  show that the right-hand side of~\eqref{Ks4}
tends to $0$ as $k \to \infty$.
As in the previous subsections, we proceed with 
 the change of variables \eqref{YZ13}: 
\[ \dot \Ups^M(t)  = \dot \Dr(t) - \s \dot \ZZ_M(t).\]

\noi
Then, 
by the Bou\'e-Dupuis variational formula (Lemma \ref{LEM:var3})
and \eqref{Ks3a}, we have 
\begin{align}
\begin{aligned}
- \log C_{M, k} 
& 
= \inf_{\dot \Ups^{M}\in  \Ha^1}
\E \bigg[ 
- (\log N_k)^{-\frac 34} R_{N_k}(Y+ \Ups^{M} +\s \ZZ_{M}) 
\\
& \hphantom{XXXXX}+ 
\ft R^{\dia}_{M}(Y+ \Ups^{M}+\s \ZZ_{M}) + \frac12 \int_0^1 \| \dot \Ups^{M}(t) \|_{H^1_x}^2dt \bigg]\\
& 
=:  \inf_{\dot \Ups^{M}\in  \Ha^1}
\ft \W_{M, k}(\dot \Ups^M), 
\end{aligned}
\label{Ks5a}
\end{align}

\noi
where 
$\ft R^{\dia}_{N}$ is as in \eqref{KZ16}.
In the following, we prove that the right-hand side (and hence the left-hand side)
of \eqref{Ks5a} 
diverges to $\infty$ as $k \to \infty$.

Proceeding   as in Subsection \ref{SUBSEC:tight}
(see \eqref{DN2}), 
we bound  the last two terms
on the right-hand side of \eqref{Ks5a} as
\begin{align}
\E \bigg[ 
\ft R^{\dia}_{M}(Y+ \Ups^{M} + \s \ZZ_{M}) + \frac12 \int_0^1 \| \dot \Ups^{M}(t) \|_{H^1_x}^2dt \bigg]
\geq 
 -C_0 + \frac{1}{10}\U_M, 
\label{Ks62}
\end{align}

\noi
where
 $\U_M = \U_M(\dot \Ups^M)$
is given by  \eqref{K10} with $\Ups_N = \pi_M \Ups^M$
and 
 $\Ups^N = \Ups^M$:
 \begin{align}
\U_M
= \E \bigg[
\frac A2 \bigg| \int_{\T^3} \Big( 2 Y_M \pi_M \Ups^M + (\pi_M \Ups^M)^2 \Big) dx \bigg|^3
+ \frac{1}{2} \int_0^1 \| \dot \Ups^M(t) \|_{H^1_x}^2 dt
\bigg].
\label{Ks62a}
\end{align}

Next, we study the first term on the right-hand side of \eqref{Ks5a}, 
which gives the main (divergent) contribution.
From \eqref{K1} with $\g=3$,
we have
\begin{align}
\begin{split}
R_{N_k} (Y + \Ups^M +\s \ZZ_M)
&=
-\frac \s 3 \int_{\T^3} :\! Y_{N_k}^3 \!:  dx
- \s \int_{\T^3} :\! Y_{N_k}^2 \!: \Dr_{N_k} dx \\
&\hphantom{X}
- \s \int_{\T^3} Y_{N_k} \Dr_{N_k}^2 dx
- \frac \s 3 \int_{\T^3} \Dr_{N_k}^3 dx
\\
&\hphantom{X}
+ A \bigg| \int_{\T^3} \Big( :\! Y_{N_k}^2 \!: + 2 Y_{N_k} \Dr_{N_k} + \Dr_{N_k}^2 \Big) dx \bigg|^3\\
& =: \1 + \II + \III + \IV + \5
\end{split}
\label{Ks6}
\end{align}

\noi
for $N_k \leq M$,
where
$\Dr_{N_k}$ is given by 
\begin{align}
\Dr_{N_k} := \pi_{N_k} \Dr = \pi_{N_k} \Ups^M + \s \pi_{N_k} \ZZ_M.
\label{Ks61}
\end{align}

\noi
As 
we see below, 
 under an expectation, 
the second term $\II$ on the right-hand side of \eqref{Ks6} 
(which is precisely the term removed by the second renormalization) gives
a divergent contribution; see \eqref{Ks8} below.
From Lemma \ref{LEM:Dr}, 
the first term $\1$ on the right-hand side of \eqref{Ks6} gives $0$
under an expectation.
As for the last three terms, 
 we proceed  as in Subsection \ref{SUBSEC:tight}
 (see also the proof of Proposition \ref{PROP:uniq})
 and obtain
\begin{align}
\begin{split}
\big|\E \big[ \III + \IV + \5 \big] \big|
 \les 
C(Y_{N_k},   \pi_{N_k} \ZZ_{M})
+ 
\U_{N_k}
 \les 
1+ 
\U_{N_k}
\end{split}
\label{Ks63}
\end{align}

\noi
where 
$C(Y_{N_k}, \pi_{N_k}  \ZZ_{M})$ denotes
certain high moments of various stochastic terms involving
$Y_{N_k}$ and $ \pi_{N_k} \ZZ_M$
and 
$\U_{N_k} = \U_{N_k}( \dt \pi_{N_k} \Ups^M)$ 
is given by  \eqref{K10} with $\Ups_N = 
\Ups^N =  \pi_{N_k} \Ups^M $:
\begin{equation}
\begin{split}
\U_{N_k} = \E\bigg[ & \frac A2 \bigg| \int_{\T^3} \Big( 2 Y_{N_k} 
\pi_{N_k}\Ups^M + (\pi_{N_k} \Ups^M)^2 \Big) dx \bigg|^3
+ \frac{1}{2} \int_0^1 \| \dt (\pi_{N_k} \Ups^M) (t) \|_{H^1_x}^2 dt\bigg].
\end{split}
\label{Ks64}
\end{equation}

In view of the smallness of $(\log N_k)^{-\frac 34} $
in \eqref{Ks5a},
the second term in \eqref{Ks64}
can be controlled by the positive terms $\U_M$ in \eqref{Ks62}
(in particular by the second term in \eqref{Ks62a}).
As for  the first term in \eqref{Ks64},
it follows from \eqref{DNc3}, 
$\pi_{N_k}  \Ups^M = \pi_{N_k} \pi_M \Ups^M$ for $N_k \le M$, 
and Lemma~\ref{LEM:Dr8}
with \eqref{Ks62a} that 
\begin{align}
\begin{split}
 \E \Bigg[ \bigg| \int_{\T^3} \Big( 2 Y_{N_k} \Ups^M + (\pi_{N_k} \Ups^M)^2 \Big) dx \bigg|^3 \Bigg]
&
\les 
\bigg\| \int_{\T^3} Y_{N_k} \pi_{N_k} \Ups^M dx \bigg\|_{L^3_\o}^3
+ \| \pi_{N_k} \Ups^M \|_{L^6_\o L^2_x}^6 \\
&
\les 
1+ \| \pi_M \Ups^M \|_{L^6_\o L^2_x}^6
+ \| \Ups^M \|_{L^2_\o H^1_x}^2 \\
&\les
1+\U_M
\end{split}
\notag
\end{align}
for $N_k \le M$.
Hence, 
$\U_{N_k}$ in \eqref{Ks64}
can be controlled by  $\U_M$ in \eqref{Ks62a}:
\begin{align}
\U_{N_k}
\les 
1+\U_M.
\label{Ks69}
\end{align}

\noi
Hence, from \eqref{Ks5a}, \eqref{Ks62}, \eqref{Ks6}, \eqref{Ks63}, 
and \eqref{Ks69},
we obtain
\begin{align}
\ft \W_{M, k}(\dot \Ups^M)
\ge \s (\log N_k)^{-\frac 34} \E \bigg[ 
 \int_{\T^3} :\! Y_{N_k}^2 \!:  \Dr_{N_k} dx \bigg]
 - C_1 + \frac{1}{20}\U_M
\label{Ks66}
\end{align}

\noi
for any $M\ge  N_k \gg 1$.

Therefore, it remains to estimate the contribution from the second term
on the right-hand side of~\eqref{Ks6}.
Let us first state a lemma whose proof is presented at the end of this subsection.

\begin{lemma}\label{LEM:Ks3}
We have
\begin{align}
 \E \bigg[  \int_0^1 \jb{ \dot \ZZ_N (t), \dot \ZZ_M (t)}_{H^1_x}dt \bigg] 
\sim  \log N
\label{Ks7}
\end{align}

\noi
for  any $1\leq N \le M$, where
 $\dot \ZZ_N = \pi_N \dot \ZZ^N$.
\end{lemma}

By assuming Lemma \ref{LEM:Ks3}, 
we complete the proof of Proposition \ref{PROP:sing}.
By \eqref{YZ11}, \eqref{YZ12} with $\ZZ_{N_k} = \pi_{N_k}\ZZ^{N_k}$, \eqref{Ks61}, 
Lemma \ref{LEM:Ks3},  Cauchy's inequality
(with small $\eps_0 > 0$), 
and Lemma \ref{LEM:Dr} (see \eqref{XX2}), 
we have 
\begin{align}
\begin{aligned}
 \s \E & \bigg[\int_{\T^3} \!:\!Y^2_{N_k}\!: \Dr_{N_k} dx \bigg] 
 = \s \E\bigg[ \int_0^1 \int_{\T^3}  
 \! :\!Y_{N_k}^2(t)\!: \dot \Dr_{N_k}(t) dt \bigg]\\
& 
 = 
  \s^2 \E \bigg[ \int_0^1  \jb{ \dot \ZZ_{N_k}(t),  \dot \ZZ_{M} (t)}_{H^1_x}  dt \bigg]
+ \s \E\bigg[   \int_0^1   \jb{\dot \ZZ_{N_k} (t), \dot \Ups^M (t)}_{H^1_x} dt \bigg]\\
& 
 \geq  c\log N_k
- \eps_0 \E\bigg[ \int_0^1 \| :\!  Y_{N_k}^2(t)\!:\|_{H^{-1}_x}^2 dt   \bigg]
- C_{\eps_0}\E\bigg[ \int_0^1 \| \dot \Ups^M (t) \|_{H^1_x}^2 dt   \bigg]\\
& \ge  
\frac{c}{2} \log N_k
- C_{\eps_0} \E\bigg[ \int_0^1 \| \dot \Ups^M (t) \|_{H^1_x}^2 dt \bigg]
\end{aligned}
\label{Ks8}
\end{align}

\noi
for $M\ge  N_k \gg 1$.
Thus, putting 
\eqref{Ks5a}, 
\eqref{Ks66}, 
and 
\eqref{Ks8} together, 
we have 
\begin{align}
\begin{aligned}
- \log C_{M, k} 
& 
\geq \inf_{\dot \Ups^{M}\in  \Ha^1}\Big\{ 
c (\log N_k)^{\frac 14} - C_2 + \frac{1}{40}\U_M\Big\}
\geq 
c (\log N_k)^{\frac 14} - C_2
\end{aligned}
\label{Ks9}
\end{align}

\noi
for any sufficiently large $k \gg1 $ (such that $N_k \gg 1$).
Hence, from \eqref{Ks9}, 
we obtain 
\begin{align}
C_{M, k}
\les
\exp \Big( -c (\log N_k)^{\frac 14} \Big)
\label{Ks10}
\end{align}
for $M\ge  N_k \gg 1$,  uniformly in $M \in \N$.
Therefore, 
by taking limits in $M \to \infty$ and then $k \to \infty$, 
we conclude from \eqref{Ks4} and \eqref{Ks10} that
\[ 
\lim_{k \to \infty} \int  e^{G_k(u)}d\rho(u) = 0\]

\noi
as desired.
This completes the proof of Proposition \ref{PROP:sing}.
\end{proof}

We conclude this section  by presenting the proof of Lemma \ref{LEM:Ks3}.

\begin{proof}[Proof of Lemma \ref{LEM:Ks3}]
For simplicity, we suppress the time dependence in the following.
From~\eqref{YZ12}, we have 
\begin{align}
\ft {\dot \ZZ}_N (n)
&= \jb{n}^{-2} 
\sum_{\substack{n_1, n_2 \in \Z^3 \\ n=n_1+n_2 \neq 0}}
\ft Y_N(n_1) \ft Y_N(n_2)
\label{Kt2}
\end{align}

\noi
for $n \ne 0$.
On the other hand, when $n = 0$, 
it follows from Lemma \ref{LEM:Wick2} that 
\begin{align}
\E \Big[ |\ft {\dot \ZZ}_{N} (0)|^2 \Big]
= \E\bigg[\Big(\sum_{\substack{n_1 \in \Z^3\\ n_1\in NQ}} \big(|\ft Y_N(n_1)|^2 - \jb{n_1}^{-2}\big)\Big)^2\bigg]
\les \sum_{n_1 \in \Z^3} \jb{n_1}^{-4}
\les 1, 
\label{Kt3}
\end{align}

\noi
where $Q$ is as in \eqref{Q1}.
Hence, from \eqref{Kt2} and \eqref{Kt3},
we have
\begin{align*}
\E \bigg[ \int_0^1 & \jb{ \dot \ZZ_N(t), \dot\ZZ_M(t)}_{H^1_x}dt \bigg] 
= 
\int_0^1 \E \Bigg[ \sum_{n \in \Z^3} \jb{n}^2 
\ft {\dot \ZZ}_N (n, t)\cj{\ft {\dot\ZZ}_M (n, t)}  \bigg] dt \\
&=
\int_0^1 \E \Bigg[ \sum_{n \in \Z^3\setminus \{0\}} \jb{n}^2 
\ft {\dot \ZZ}_N (n, t)\cj{\ft {\dot\ZZ}_M (n, t)}  \bigg] dt 
+ O (1).
\end{align*}

\noi
We now proceed as in the proof
of \eqref{XX2} in Lemma \ref{LEM:Dr}\,(i).
By  applying \eqref{YZ12} and   Lemma~\ref{LEM:Wick2}, 
and summing over  $\big\{|n| \le \frac{2}{3}N, 
\ \frac 14 |n| \leq |n_1| \leq \frac 12 |n|\big\}$
(which implies $|n_2| \sim |n|$ and $|n_j| \le  N$, $j = 1, 2$), 
we have 
\begin{align*}
\E & \Bigg[ \sum_{n \in \Z^3\setminus \{0\}} \jb{n}^2 
\ft {\dot \ZZ}_N (n, t)\cj{\ft {\dot\ZZ}_M (n, t)}  \bigg] \\
& = \sum_{n \in \Z^3}\frac{\chi_N(n)\chi_M(n)}{\jb{n}^2} \\
& \hphantom{XX}\times \int_{\T^3_x \times \T^3_y}
 \E\Big[  H_2(Y_N(x, t); t \s_N) H_2(Y_N(y, t); t \s_N)\Big] e_n(y - x)dx dy\\
& =\sum_{n \in \Z^3}\frac{t^2\chi_N(n)\chi_M(n)}{\jb{n}^2}
\sum_{n_1, n_2\in \Z^3}
\frac{\chi_N^2(n_1)\chi_N^2(n_2)}{\jb{n_1}^2\jb{n_2}^2}
\int_{\T^3_x \times \T^3_y}
e_{n_1 + n_2 - n}(x-y) dx dy\\
& =\sum_{n \in \Z^3}\frac{t^2\chi_N(n)\chi_M(n)}{\jb{n}^2}
\sum_{n = n_1 + n_2}
\frac{\chi_N^2(n_1)\chi_N^2(n_2)}{\jb{n_1}^2\jb{n_2}^2}
\sim t^2 \log N, 
\end{align*}

\noi
where $\chi_N(n_j)$ is as in \eqref{chi}.
By integrating on $[0, 1]$, 
we obtain
the desired bound \eqref{Ks7}.
\end{proof}

\section{Non-normalizability in the strongly nonlinear regime}
\label{SEC:non}

\subsection{Reference measures and the $\s$-finite $\Phi^3_3$-measure}
In this section, we prove non-normalizability 
of the $\Phi^3_3$-measure in the strongly nonlinear regime
(Theorem \ref{THM:Gibbs}\,(ii)).
In \cite{OOTol1}, 
we introduced a strategy for establishing
non-normalizability
in the context of the focusing Hartree $\Phi^4_3$-measures on $\T^3$, using the  
 Bou\'e-Dupuis variational formula.
We point out that, in~\cite{OOTol1}, 
the focusing Hartree $\Phi^4_3$-measures
were absolutely continuous with respect to the base Gaussian free field $\mu$.
Moreover, the truncated  potential energy $R_N^\text{Hartree}(u)$ and the 
corresponding density
$e^{- R_N^\text{Hartree}(u)}$
of the truncated focusing Hartree $\Phi^4_3$-measures
formed convergent sequences.
In \cite{OOTol1}, we proved the following version of the non-normalizability
 of the focusing Hartree $\Phi^4_3$-measure:
\begin{align}
\sup_{N \in \N} \E_\mu \Big[ e^{- R_N^\text{Hartree} (u)} \Big] = \infty.
\label{XX4}
\end{align}

\noi
Denoting the limiting density by $e^{-R^\text{Hartree} (u)}$, 
this result says that the $\s$-finite version of the focusing Hartree $\Phi^4_3$-measure:
\[ e^{- R^\text{Hartree} (u)}d\mu(u)\]

\noi
is not normalizable (i.e.~there is no normalization constant
to make this into a probability measure).
See also \cite{OS} for an analogous non-normalizability result
for the log-correlated focusing Gibbs measures with a quartic interaction potential.

The main new difficulty in our current problem
is the singularity of the $\Phi^3_3$-measure.
In particular, the potential energy $R_N^\dia(u)$ in \eqref{K1r}
(and the corresponding density $e^{-R_N^\dia(u)}$)
does {\it not} converge to any limit.
Hence, even if we prove a non-normalizability statement
of the form~\eqref{XX4}, 
it might still be possible that 
by choosing a sequence of  constants $\ft Z_N$ appropriately, 
the measure $\ft Z_N^{-1} e^{-R_N^\dia(u)} d\mu$ has a weak limit.
This is precisely the case for the $\Phi^4_3$-measure;
see~\cite{BG}.
The non-convergence claim in Theorem \ref{THM:Gibbs}\,(ii)
for the truncated $\Phi^3_3$-measures (see Proposition \ref{PROP:nonconv} below)
tells us that this is not the case for the $\Phi^3_3$-measure.

In order to overcome this issue, 
we first construct a reference measure $\nu_\dl$
as a weak limit of the following tamed version  of the truncated $\Phi^3_3$-measure
(with $\dl > 0$):
\begin{align*}
d \nu_{N, \dl} (u) = Z_{N, \dl}^{-1} \exp\Big(-\dl F(\pi_N u) - R^\dia_N(u)\Big)  d\mu(u)
\end{align*}

\noi
for some appropriate taming function $F $; see \eqref{RM-1}.
See Proposition \ref{PROP:ref}.
We also show that $F(u)$, without the frequency projection $\pi_N$ on $u$, is well defined almost surely 
with respect to the limiting reference measure $\nu_\dl = \lim_{N \to \infty} \nu_{N, \dl}$.
This  allows us to construct a $\sigma$-finite version of the $\Phi^3_3$-measure:
\begin{align}
 d \cj   \rho_\dl = e^{\dl F(u)} d \nu_\dl
 = \lim_{N \to \infty} Z_{N, \dl}^{-1} \, e^{\dl F( u)} 
 e^{-\dl F(\pi_N u) - R_N^\dia(u)} d\mu(u).
\label{rho1}
\end{align}

\noi
The main point is that while the truncated $\Phi^3_3$-measure $\rho_N$
(= $\nu_{N, \dl}$ with $\dl = 0$) may not be convergent, 
the tamed version $\nu_{N, \dl}$ of the truncated $\Phi^3_3$-measure
converges to the limit $\nu_\dl$, thus allowing us to define 
a $\s$-finite version of the $\Phi^3_3$-measure.
We then show that this  $\s$-finite version $\cj \rho_\dl$ 
of the $\Phi^3_3$-measure in \eqref{rho1} is not normalizable
in the strongly nonlinear regime.
See Proposition \ref{PROP:Gibbsref}.
Furthermore, as a corollary to this non-normalizability 
result of the  $\s$-finite version $\cj \rho_\dl$ 
of the $\Phi^3_3$-measure, 
we also show that 
the sequence $\{\rho_N\}_{N \in \N}$
of the truncated $\Phi^3_3$-measures defined in \eqref{GibbsN}
does not converge weakly in a natural space\footnote{For example, 
in the weakly nonlinear regime, the support of the limiting $\Phi^3_3$-measure  
constructed in Theorem \ref{THM:Gibbs}\,(i)
is contained in the space $\A(\T^3) \supset \C^{-\frac 34}(\T^3)$.}
$\A(\T^3)$ (see~\eqref{RM-2} below)
for the $\Phi^3_3$-measure.
See Proposition \ref{PROP:nonconv}.

\medskip

We first state the construction of the reference measure.
Let $p_t$ be the kernel of the heat semigroup $e^{t\Dl}$.
Then, define the space $\A = \A(\T^3)$ via the norm:
\begin{align}
\|  u \|_{\A} := \sup_{0 < t \le 1} \Big( t^{\frac38}\| p_t \ast u \|_{L^3(\T^3)} \Big).
\label{RM-2}
\end{align}

\noi
Recall from \cite[Theorem 5.3]{LR}\footnote{The discussion in \cite{LR}
is on $\R^d$, but a slight modification yields the corresponding result on $\T^d$.}
(see also  \cite[(2.41)]{Tri} and \cite[Theorem 2.34]{BCD})
that 
\begin{align}
 \A = B^{-\frac{3}{4}}_{3, \infty}(\T^3).
 \label{LR1}
\end{align}

\noi
In particular, the space $\A$ contains the support of the massive Gaussian 
free field $\mu$ on $\T^3$ and thus we have 
$\|  u \|_{\A} < \infty$, $\mu$-almost surely.
See Lemma \ref{LEM:emb2} below.
In the following, 
 for simplicity of notation, 
we use  $\A$ rather than $B^{-\frac{3}{4}}_{3, \infty}(\T^3)$.
Moreover, the notation~$\A$ is suitable for our purpose, since we make use of the characterization~\eqref{RM-2}
extensively via the Schauder estimate, 
which we recall now
(see for example \cite{ORW}):
\begin{align}
\| p_t*  u \|_{L^q (\T^3)} \le C_{\al, p, q} \,  t^{-\frac{\al}2-\frac 32 (\frac1p-\frac1q)} \|\jb{\nabla}^{-\al} u \|_{L^p(\T^3)}
\label{Sch}
\end{align}

\noi
for any  $\al \geq 0$ and $1\le p\le q\le \infty$.
From the Schauder estimate \eqref{Sch}
(or directly from~\eqref{LR1}), 
we see that $W^{-\frac{3}{4}, 3}(\T^3) \subset \A$.

Given $N \in \N$, we set 
$u_N = \pi_N u$.
Then, given $\dl > 0$ and $N \in \N$, we define the measure
$\nu_{N,\delta}$ by 
\begin{align}
d \nu_{N,\delta} (u)
=Z_{N,\delta}^{-1} \exp \Big( -\delta \| u_N \|_{\A}^{20} - R_N^{\dia}(u) \Big) d \mu (u)
\label{RM-1}
\end{align}
for $N \in \N$ and $\dl>0$,
where $R_N^\dia$ is as in \eqref{K1r} and
\begin{align}
Z_{N,\delta}
= \int \exp \Big( -\delta \| u_N \|_{\A}^{20} - R_N^{\dia}(u) \Big) d \mu (u).
\label{RM-1a}
\end{align}

\noi
Namely, $ \nu_{N,\delta} $ is a tamed version of the truncated $\Phi^3_3$-measure $\rho_N$ in \eqref{GibbsN}.
We prove that the sequence $\{\nu_{N, \dl}\}_{N\in \N}$
converges weakly to some limiting probability measure  $\nu_\dl$.

\begin{proposition} \label{PROP:ref}
Let $\sigma \neq 0$ and $\g \ge 3$.
Then, given any  $\delta > 0$, the sequence of measures $\{ \nu_{N,\delta} \}_{N \in \N}$ 
defined in \eqref{RM-1} 
converges weakly to a unique probability measure $\nu_\delta$, 
and similarly $Z_{N,\delta}$ converges to $Z_\delta$.
Moreover, $\| u \|_{\A}$ is finite $\nu_\delta$-almost surely, and we have
\begin{equation} \label{ref1}
d \nu_\delta (u)
= \frac{\exp(-(\delta-\delta') \| u \|_{\A}^{20})}{\int \exp(-(\delta-\delta') \| u \|_{\A}^{20}) d \nu_{\delta'}(u)} d \nu_{\delta'} (u)
\end{equation}
for $\dl>\dl'>0$.
\end{proposition}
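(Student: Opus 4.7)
The plan is to mirror the strategy of Section~\ref{SEC:Gibbs}, treating the taming term $\dl\|u_N\|_{\A}^{20}$ as an additional source of coercivity that compensates for the strongly nonlinear regime where the bound \eqref{KZ14a} fails. Applying the Bou\'e-Dupuis formula (Lemma~\ref{LEM:var3}) to $Z_{N,\dl}$ and performing the change of variables \eqref{YZ13}, I would express $-\log Z_{N,\dl}$ as the infimum over $\dot\Ups^N \in \Ha^1$ of
\[
\W_{N,\dl}(\dot\Ups^N) = \E\bigg[\dl\bigl\|\pi_N(Y + \Ups^N + \s\ZZ_N)\bigr\|_{\A}^{20} + \widehat R_N^{\dia}(Y + \Ups^N + \s\ZZ_N) + \tfrac12\!\int_0^1\!\|\dot\Ups^N(t)\|_{H^1_x}^2 dt\bigg],
\]
with $\widehat R_N^\dia$ as in \eqref{KZ16}. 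The main point is that, by interpolation between $\A$ and $H^1$ and by pulling out $\|Y_N\|_{\A}$ and $\|\wt\ZZ_N\|_{\A}$ as random constants with all moments finite (a consequence of Lemma~\ref{LEM:Dr} and \eqref{YZ15}), a refinement of Lemmas~\ref{LEM:Dr7}--\ref{LEM:Dr8} should yield bounds of the form $|\int Y_N\Dr_N^2|+|\int \Dr_N^3| \les \dl\|\pi_N \Ups^N\|_{\A}^{20} + \eps\|\Ups^N\|_{H^1}^2 + B(\o)$, for any $\dl,\eps>0$, with $\E[B^p]<\infty$ for every $p$. Combined with the trivial lower bound $\dl\|\pi_N(Y + \Ups^N + \s\ZZ_N)\|_{\A}^{20} \ges \dl \|\pi_N \Ups^N\|_{\A}^{20} - C(Y,\ZZ)$ (using \eqref{YY9}), this would give a uniform-in-$N$ lower bound on $\W_{N,\dl}$ regardless of the size of $|\s|$, so that $\sup_N Z_{N,\dl}<\infty$. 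The matching lower bound $\inf_N Z_{N,\dl}>0$ follows by evaluating at $\dot\Ups^N\equiv 0$ as in \eqref{TT1}, since the new term $\dl\|Y_N + \s\ZZ_N\|_{\A}^{20}$ has bounded expectation.

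Tightness of $\{\nu_{N,\dl}\}$ in $H^{-\frac12-\eps}(\T^3)$ then follows exactly as in \eqref{T0}--\eqref{T5}, using a smooth cutoff at radius $R$ in the variational formulation and the uniform integrability just obtained. For the uniqueness of the limit I would run the argument of Proposition~\ref{PROP:uniq}: pick two weakly convergent subsequences with $N_k^1 \ge N_k^2$, use an almost minimizer $\UUps^{N_k^2}$ for $-\log Z_{N_k^2,\dl}$ as the candidate drift for $-\log Z_{N_k^1,\dl}$ (possible because $\pi_{N_k^1}\UUps_{N_k^2}=\UUps_{N_k^2}$), and bound the resulting difference term by term. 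The new taming contribution $\dl(\|\pi_{N_k^1}(Y+\UUps_{N_k^2}+\s\ZZ_{N_k^1})\|_{\A}^{20}-\|\pi_{N_k^2}(Y+\UUps_{N_k^2}+\s\ZZ_{N_k^2})\|_{\A}^{20})$ is continuous in the arguments and uniformly controlled by the uniform bound on $\U_{N_k^2}$ (the analogue of \eqref{KZ20a}), so a density argument makes it vanish as $k\to\infty$. The essential difficulty is identical to that in Proposition~\ref{PROP:uniq}: the critical contribution $\1_2$ in \eqref{KK0a} coming from $\|\int(Y_{N_k^1}-Y_{N_k^2})\UUps_{N_k^2}\,dx\|_{L^3_\o}$, which vanishes thanks to the orthogonality \eqref{ortho1} enjoyed by $\pi_N^{\text{cube}}$. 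This is where the proof depends crucially on the choice of frequency projector (cf.\ Remark~\ref{REM:uniq1}) and constitutes the \emph{main obstacle}.

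The final two claims are consequences of Step~1--Step~3. For finiteness of $\|u\|_{\A}$ under $\nu_\dl$, a variational argument analogous to \eqref{T5} applied to the functional $F_M(u)=M\cdot \ind_{\{\|u\|_{\A}>R\}}$ (suitably mollified) yields the uniform moment bound $\E_{\nu_{N,\dl}}[\|u_N\|_{\A}^{20}]\les \dl^{-1}$; passing to the limit via Fatou and the weak convergence of $\nu_{N,\dl}$ then gives $\E_{\nu_\dl}[\|u\|_{\A}^{20}]<\infty$, in particular $\|u\|_{\A}<\infty$ $\nu_\dl$-a.s. (the continuity of $u\mapsto \|u\|_{\A}$ on $H^{-\frac12-\eps}$, together with $\pi_N u\to u$ in that topology, legitimises the Fatou step). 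The Radon--Nikodym identity \eqref{ref1} is the limit as $N\to\infty$ of the tautological truncated identity
\[
d\nu_{N,\dl}(u) = \frac{Z_{N,\dl'}}{Z_{N,\dl}}\,\exp\bigl(-(\dl-\dl')\|u_N\|_{\A}^{20}\bigr)\,d\nu_{N,\dl'}(u),
\]
which follows by writing both sides against an arbitrary bounded continuous test function and invoking the already-proved convergences $Z_{N,\dl}\to Z_\dl$, $\nu_{N,\dl'}\to\nu_{\dl'}$, together with $\nu_{\dl'}$-a.s.\ convergence $\|\pi_N u\|_{\A}\to \|u\|_{\A}$ and the uniform integrability coming from the moment bound above.
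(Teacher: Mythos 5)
Your overall strategy for Step~1 (uniform bound on $Z_{N,\delta}$) is correct in spirit, but the key product estimate you propose is wrong in form. You claim a bound of the shape $|\int Y_N\Dr_N^2|+|\int\Dr_N^3| \les \dl\|\pi_N\Ups^N\|_\A^{20} + \eps\|\Ups^N\|_{H^1}^2 + B(\o)$. But $\A$ is a \emph{negative-regularity} norm (from Lemma~\ref{LEM:emb1}, $\|u\|_\A \les \|u\|_{H^{-1/4}}$, and $W^{-3/4,3}\subset\A$), so $\|\Ups\|_\A$, raised to any power, cannot dominate the positive-regularity quantities $\|\Ups\|_{L^2}^6$ or $\|\Ups\|_{L^3}^3$ coming out of Lemma~\ref{LEM:Dr7}. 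A smooth, localized bump can have small $\A$-norm, moderate $H^1$-norm, and very large cubic integral. What actually makes the argument work (and what the paper does) is two-fold: first, one retains the positive coercive term $C_0\|\Ups_N\|_{L^2}^6$ coming from the taming term via Lemma~\ref{LEM:Dr8} (after \eqref{QQ4}), exactly as in the small-$\s$ regime; second, the residual term $-\frac{\s}{3}\int\Ups_N^3\,dx$ that cannot be absorbed by $\|\Ups_N\|_{L^2}^6$ alone for large $|\s|$ is handled by the heat-kernel interpolation $\|\Ups_N\|_{L^3}^3 \les t^{-9/8}\|\Ups_N\|_\A^3 + t^{3/4}\|\Ups_N\|_{H^1}^3$ followed by an optimization in $t$ (see \eqref{ZND1}--\eqref{ZND2}). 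The $\A$-norm only helps in interplay with $H^1$, not on its own.

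The more serious gap is in Steps~2 and~3. You write that ``the continuity of $u\mapsto\|u\|_\A$ on $H^{-\frac12-\eps}$'' legitimises the Fatou step. This is false: the $\A$-norm is not continuous (indeed not even finite) on $H^{-\frac12-\eps}(\T^3)$; the embedding goes the wrong way, since $H^{-\frac12-\eps}\not\subset H^{-\frac14}$. The map $u\mapsto\|u\|_\A$ is only lower semicontinuous on the weak-convergence space $\C^{-100}(\T^3)$, which means that $u\mapsto\exp(-\alpha\|u\|_\A^{20})$ is only \emph{upper} semicontinuous; Portmanteau then gives a one-sided estimate, not the equality you need for \eqref{ref1}. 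Compounding this, the limit passage in your ``tautological truncated identity'' has a double $N$-dependence: the measure $\nu_{N,\delta'}$ and the integrand $\exp(-\alpha\|\pi_N u\|_\A^{20})$ both depend on $N$, and the $\nu_{\delta'}$-a.s.\ convergence $\|\pi_N u\|_\A\to\|u\|_\A$ (a fact about a fixed measure) cannot be combined with the weak convergence $\nu_{N,\delta'}\to\nu_{\delta'}$ (a fact about a varying family of measures) without further argument. What resolves both of these issues in the paper is the mollifier $\rho_\eps$ built from finitely many Fourier modes (see \eqref{XX5a}--\eqref{XX7}): one has $\rho_\eps*u = \rho_\eps*\pi_N u$ for $N\ge N_0(\eps)$, the map $u\mapsto\|\rho_\eps*u\|_\A$ \emph{is} continuous on $\C^{-100}(\T^3)$, and $\|\rho_\eps*u\|_\A\le\|u\|_\A$ monotonically. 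This lets one first pass $N\to\infty$ at fixed $\eps$ by genuine weak convergence, and then handle the $\eps\to 0$ limit via a convergence-in-measure argument supported by the uniform bound on $\|u_N\|_{W^{-5/8,3}}$ under $\nu_{N,\delta'}$ established in \eqref{ref3}. Without this ingredient your argument does not close.

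Your identification of the uniqueness obstacle (the $\1_2$ term and the need for the orthogonality \eqref{ortho1} of the cube projector) matches the paper's own comment, and your sketch of that part is fine.
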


This proposition  allows us to define a $\s$-finite version of the $\Phi^3_3$-measure by 
\begin{align}
 d \cj   \rho_\dl  =  e^{\dl \|u\|_{\A}^{20} }d \nu_\dl
\label{ref1a}
\end{align}

\noi
for any $\dl > 0$.
At a very {\it formal} level, 
$\dl \|u\|_{\A}^{20}$ in the exponent of \eqref{ref1a}
and $-\delta \| u_N \|_{\A}^{20}$ in the exponent of \eqref{RM-1}
cancel each other in the limit as $N \to \infty$, 
and thus the right-hand side of~\eqref{ref1}
 formally looks like
$Z_\dl^{-1} \lim_{N\to \infty} e^{-R^\dia_N(u)} d\mu$.
While this discussion is merely formal, 
it explains why we refer to the measure $\cj \rho_\dl$
as 
a $\s$-finite version of the $\Phi^3_3$-measure.
The identity~\eqref{ref1}
shows how $\nu_\dl$'s for different values of $\dl > 0$ are related.
When $\dl = 0$, the expression $Z_\dl  \cj \rho_\dl$ 
would formally correspond
to a limit of $e^{-R_N^\dia(u)} d\mu$, but 
in order to achieve the weak convergence claimed in Proposition \ref{PROP:ref}
and construct a $\s$-finite version of the $\Phi^3_3$-measure, 
we need to start with a tamed version (i.e.~$\dl > 0$) of the truncated $\Phi^3_3$-measure.
For the sake of concreteness, we chose a taming via the  $\A$-norm
but it is possible to consider a different taming (say, based on some other norm)
and obtain the same result.

The next proposition shows that the $\s$-finite version $\cj \rho_\dl$
of the $\Phi^3_3$-measure defined in \eqref{ref1a} is not normalizable
in the strongly nonlinear regime.

\begin{proposition} \label{PROP:Gibbsref}
Let $\s \gg 1$ and $\g \ge 3$.
Given  $\delta > 0$, let  $\nu_\delta$ be the measure constructed  in Proposition~\ref{PROP:ref}
and let $\cj \rho_\dl$ be as in \eqref{ref1a}.
Then, we have
\begin{align}
\int 1\, d \cj \rho_\dl = \int \exp\Big( \delta  \|u \|_{\A}^{20} \Big) d \nu_\delta = \infty.
\label{pa00a}
\end{align}
\end{proposition}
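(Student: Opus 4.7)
The strategy is to apply the Bou\'e-Dupuis variational formula (Lemma~\ref{LEM:var3}) to derive a lower tail bound on $\nu_\dl(\|u\|_\A \ge L)$ that decays strictly slower than $e^{-\dl L^{20}}$, following the non-normalizability template of \cite{OOTol1, OS} adapted to the present singular setting via the change of variables~\eqref{YZ13}. Since $\nu_{N, \dl} \to \nu_\dl$ weakly by Proposition~\ref{PROP:ref}, and the set $\{u : \|u\|_\A \ge L\}$ is closed (as $\|\cdot\|_\A$ is lower semicontinuous, being the supremum of the continuous functionals $u \mapsto t^{3/8}\|p_t*u\|_{L^3}$), Portmanteau's theorem yields
\[
\nu_\dl(\|u\|_\A \ge L) \ge \limsup_{N \to \infty} \nu_{N, \dl}(\|u\|_\A \ge L).
\]
Hence, to establish $\int e^{\dl\|u\|_\A^{20}} d\nu_\dl = \infty$, it suffices to exhibit some $\eps_0 \in (0, \dl)$ such that for every large $L$ one can find $N = N(L) \in \N$ with $\nu_{N, \dl}(\|u\|_\A \ge L) \ge e^{-(\dl - \eps_0) L^{20}}$, since this then implies $\int e^{\dl\|u\|_\A^{20}}d\nu_\dl \ge e^{\eps_0 L^{20}} \to \infty$.

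To establish this truncated tail bound, I would apply Bou\'e-Dupuis to $Z_{N,\dl} \cdot \nu_{N, \dl}(\|u\|_\A \ge L) = \E\big[\ind_{\{\|Y\|_\A \ge L\}} e^{-\dl \|Y_N\|_\A^{20} - R_N^{\dia}(Y)}\big]$, after smoothing the indicator to avoid the $+\infty$-valued logarithm, and then perform the change of variables $\dot\Ups^N = \dot\Dr - \s\dot\ZZ_N$ from~\eqref{YZ13}. The task thus becomes the construction of a drift $\dot\Ups^N \in \Ha^1$ for which $\|Y + \Dr\|_\A \ge L$ with probability close to one and the variational functional
\[
\E\bigg[\dl \|\pi_N(Y + \Dr)\|_\A^{20} + R_N^{\dia}(Y+\Dr) + \tfrac12 \int_0^1\|\dot\Ups^N\|_{H^1}^2 \,dt\bigg]
\]
is bounded above by $(\dl - \eps_0)L^{20} + C$. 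In the spirit of~\cite{OOTol1, OS}, I would choose the drift so that its antiderivative behaves like \textit{``$-Y(1) + \tau f$''}, where $f$ is a fixed highly concentrated profile on $\T^3$ with $\int f \, dx = 0$ and $\s \int f^3 \, dx > 0$ (the focusing sign being chosen freely), and $\tau = \tau(L)$ is tuned so that $\|\tau f\|_\A$ is of order $L$. The focusing cubic contribution $-\tfrac{\s}{3}\int (\tau f)^3$ inside $R_N^\dia$ then produces the required strict gap in the strongly nonlinear regime $|\s| > \s_1$, while the lower-order taming $A|\int :\! u^2\!:|^\g$ can be kept uniformly bounded for this drift using $\g \ge 3$ together with the fact that $\int (\tau f)^2$ can be tuned to be of order one by rescaling $f$. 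Combined with the positive lower bound $Z_{N, \dl} \to Z_\dl > 0$ from Proposition~\ref{PROP:ref}, this yields the desired tail bound.

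The main obstacle lies in the delicate balance needed to produce the strict gap $\eps_0 > 0$: at the natural scaling $\|\tau f\|_\A \asymp L$, the $\A$-norm taming term contributes \emph{exactly} $\dl L^{20}$ to the variational functional, so the focusing cubic gain in the strongly nonlinear regime must strictly beat this benchmark. This precise matching of exponents reflects the critical nature of the $\Phi^3_3$-model and requires a careful optimization of the soliton parameters. A further analytic burden, not present in~\cite{OOTol1, OS}, comes from the stochastic cross-terms in~\eqref{K9} involving $\wt\ZZ_N$ that are introduced by the change of variables~\eqref{YZ13}; these must be controlled uniformly in $N$ via Lemma~\ref{LEM:Dr} and~\eqref{YZ15}.
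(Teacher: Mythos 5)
Your approach diverges fundamentally from the paper's, and the divergence reflects a genuine misunderstanding of where the infinity in \eqref{pa00a} comes from. You propose to lower bound the tail $\nu_\dl(\|u\|_\A \ge L)$ by a quantity of the form $e^{-(\dl-\eps_0)L^{20}}$, which requires constructing a drift with $\|Y+\Dr\|_\A \sim L$ so that the taming term contributes a full $\dl L^{20}$ to the variational functional and the focusing cubic must strictly beat it. A scaling computation shows this cannot work. Using a profile $f_M$ concentrated at frequency $M$ with $\|f_M\|_{L^2}\sim 1$, $\|f_M\|_\A \sim M^{-1/4}$, $\|f_M\|_{L^3}^3\sim M^{3/2}$, $\|f_M\|_{H^1}\sim M$, and setting $\Dr \approx \tau f_M$ with $\tau M^{-1/4}\sim L$, the cubic gain is $\sim |\s|\tau^3 M^{3/2}=|\s|L^3M^{9/4}$ and the kinetic cost is $\sim \tau^2 M^2 = L^2 M^{5/2}$. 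Beating the taming requires $M\gtrsim L^{68/9}$, but keeping the kinetic cost subdominant requires $M\lesssim L^4$, and $68/9 > 4$, so no choice of $M$ works. (Your suggestion to tune $\int(\tau f)^2$ to order one is also incompatible with $\|\tau f\|_\A\sim L$: with $\tau a\sim 1$ and $\tau aM^{-1/4}\sim L$ one would need $M\sim L^{-4}<1$.) In addition, the Portmanteau step is incorrect as stated: since $\|\cdot\|_\A$ is lower semicontinuous, the set $\{\|u\|_\A\le L\}$ is closed but $\{\|u\|_\A\ge L\}$ need not be, so the inequality $\nu_\dl(\|u\|_\A\ge L)\ge\limsup_N\nu_{N,\dl}(\|u\|_\A\ge L)$ does not follow.

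The paper's proof exploits a cancellation you overlook. The $\s$-finite measure $\cj\rho_\dl = e^{\dl F(u)}d\nu_\dl$ is formally the limit of $Z_{N,\dl}^{-1}e^{\dl F(u)-\dl F(\pi_N u)-R_N^\dia(u)}d\mu$, so the two $\A$-norm exponents appear as a \emph{difference} and (nearly) cancel. Rather than make $\|u\|_\A$ large, the paper reduces \eqref{pa00a} via Fatou, the Fourier mollifier $\rho_\eps$ (which commutes with $\pi_N$ for $N$ large), and the truncation $\min(\cdot,L)$ to the estimate \eqref{pa00}; after the change of variables \eqref{YZ13} and Bou\'e-Dupuis, the taming enters as $-\dl\min(\mathrm{I},\mathrm{II})$ in the notation of \eqref{XX8}, which is \emph{bounded} once $L\gg M^5$ and $\eps=\eps(M)$ is small (see \eqref{pa001a}--\eqref{pa002} and Remark \ref{REM:diff}, which explains explicitly why the same coefficient $\dl$ must be kept on both taming terms). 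The drift $\Ups^N=-Z_M+\sgn(\s)\sqrt{\al_M}f_M$ then has $\|\Ups^N\|_\A\sim M^{1/4}\ll L$, the Wick quartic taming $A|\int:\!u^2\!:|^\g$ stays $O(1)$ by the cancellations in Lemma \ref{LEM:L2} (including the algebraic identity \eqref{pabb2}, not the naive rescaling you suggest), and the divergence comes from the cubic gain $\gtrsim|\s|M^3$ dominating the kinetic cost $\lesssim M^3$ for $|\s|$ large, at scale $M^3$ rather than $L^{20}$. In short, the paper never asks the cubic term to beat $\dl L^{20}$; it arranges that $\dl L^{20}$ never appears.
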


\begin{remark} \label{REM:ac2}\rm
(i) 
A slight modification
of  the computation in Subsection \ref{SUBSEC:notAC}
combined with  the analysis in Subsection~\ref{SUBSEC:refm} presented below
(Step 1 of the proof of Proposition \ref{PROP:ref})
shows that 
 the tamed version~$\nu_\dl$
of the $\Phi^3_3$-measure, constructed in Proposition \ref{PROP:ref}, 
and the massive Gaussian free field $\mu$ 
are mutually singular, just like the $\Phi^3_3$-measure
 in the weakly nonlinear regime, constructed in Section \ref{SEC:Gibbs}.
As a consequence, 
 the $\s$-finite version $\cj \rho_\dl$
of the $\Phi^3_3$-measure defined in \eqref{ref1a}
and the massive Gaussian free field $\mu$ 
are mutually singular.

\smallskip

\noi
(ii) 
In Appendix \ref{SEC:AC}, 
we show that the limiting $\Phi^3_3$-measure is absolutely continuous
with respect to the shifted measure
$\Law (Y(1) +\s \ZZ(1) + \W(1))$
in the weakly nonlinear regime.
A slight modification of the argument 
in Appendix \ref{SEC:AC}
also shows that the tamed version~$\nu_\dl$
of the $\Phi^3_3$-measure
constructed in Proposition \ref{PROP:ref}
and the $\s$-finite version $\cj \rho_\dl$
of the $\Phi^3_3$-measure in~\eqref{ref1a}
are also  absolutely continuous
with respect to the same shifted measure, 
even in the strongly nonlinear regime.
See Remark \ref{REM:ac}.
This shows that 
 the measure $\cj \rho_\dl$
in \eqref{ref1a} is a quite natural candidate
to consider 
as a  $\s$-finite version 
of the $\Phi^3_3$-measure.

\end{remark}

As a corollary to (the proofs of)
Propositions \ref{PROP:ref} and \ref{PROP:Gibbsref}, 
we show the following non-convergence result
for the truncated $\Phi^3_3$-measure $\rho_N$ in \eqref{GibbsN}.

\begin{proposition}\label{PROP:nonconv}
Let $\s \gg 1$,  $\g \ge 3$, 
and  $\A = \A(\T^3)$ be as in \eqref{RM-2}.
Then, 
the sequence $\{\rho_N\}_{N \in \N}$
of the truncated $\Phi^3_3$-measures defined in \eqref{GibbsN}
does not converge weakly to any limit 
as probability measures on $\A$.
The same claim holds for any subsequence 
 $\{\rho_{N_k}\}_{k \in \N}$.
\end{proposition}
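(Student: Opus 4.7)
The plan is to derive a contradiction from any supposed weak convergence by showing that, for each fixed $R > 0$, $\rho_N(\{\|u\|_{\A} \le R\}) \to 0$. This forces any hypothetical weak limit to be supported ``at infinity'', incompatible with being a probability measure on $\A$. The technical engine is the comparison identity obtained from \eqref{GibbsN} and \eqref{RM-1}: for every $\dl > 0$,
\begin{align*}
d\rho_N(u) \;=\; \frac{Z_{N,\dl}}{Z_N}\, e^{\dl\|\pi_N u\|_{\A}^{20}}\, d\nu_{N,\dl}(u),
\qquad \frac{Z_N}{Z_{N,\dl}} \;=\; \int e^{\dl\|\pi_N u\|_{\A}^{20}}\, d\nu_{N,\dl}(u).
\end{align*}

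The first main step is to establish $Z_N/Z_{N,\dl} \to \infty$ as $N \to \infty$. The uniform $L^p$-boundedness of the cube frequency projector $\pi_N^{\textup{cube}}$ (property (i) of Subsection \ref{SUBSEC:freq}) yields the uniform operator bound $\|\pi_N\|_{\A\to\A} \le C$; combined with $\pi_N u \to u$ in $\A$ for $\nu_\dl$-a.e.\ $u$ (from the regularity of samples discussed in Remark \ref{REM:ac2}), the standard Banach--Steinhaus argument (pointwise convergence on a dense subset plus uniform boundedness) shows that $\pi_N \to \Id$ uniformly on compact subsets of $\A$. Hence, for each $M > 0$, the truncated integrand $f_N(u) = e^{\dl\|\pi_N u\|_{\A}^{20}} \wedge M$ is bounded continuous on $\A$ and converges to $f(u) = e^{\dl\|u\|_{\A}^{20}} \wedge M$ uniformly on compact subsets. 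Combining this with the tightness of $\{\nu_{N,\dl}\}$ on $\A$ (from Prokhorov applied to the weak convergence in Proposition \ref{PROP:ref}) and the weak convergence $\nu_{N,\dl} \to \nu_\dl$ tested against bounded continuous functions yields
\begin{align*}
\liminf_{N\to\infty}\int e^{\dl\|\pi_N u\|_{\A}^{20}}\, d\nu_{N,\dl}(u)
\;\ge\;  \int \big(e^{\dl\|u\|_{\A}^{20}} \wedge M\big)\, d\nu_\dl(u),
\end{align*}
and letting $M\to\infty$ together with Proposition \ref{PROP:Gibbsref} forces the left-hand side to diverge.

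The second step completes the argument. Using $\|\pi_N u\|_{\A} \le C\|u\|_{\A}$ uniformly in $N$, for every $R > 0$ and $N \in \N$,
\begin{align*}
\rho_N\big(\{\|u\|_{\A} \le R\}\big)
\;=\; \frac{Z_{N,\dl}}{Z_N}\int_{\{\|u\|_{\A}\le R\}} e^{\dl\|\pi_N u\|_{\A}^{20}}\, d\nu_{N,\dl}(u)
\;\le\; \frac{Z_{N,\dl}}{Z_N}\, e^{\dl C^{20} R^{20}}
\;\too\; 0
\end{align*}
by Step 1. Now assume for contradiction that some subsequence $\rho_{N_k}$ converges weakly on $\A$ to a probability measure $\rho^*$. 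For each $R > 0$, pick a bounded continuous $g_R:[0,\infty)\to[0,1]$ with $g_R\equiv 1$ on $[0,R-1]$ and $g_R\equiv 0$ on $[R,\infty)$, so that $u\mapsto g_R(\|u\|_{\A})$ is bounded continuous on $\A$. The preceding estimate yields $\int g_R(\|u\|_{\A})\, d\rho_{N_k} \le \rho_{N_k}(\{\|u\|_{\A}\le R\}) \to 0$; weak convergence then gives $\int g_R(\|u\|_{\A})\, d\rho^* = 0$ for every $R>0$, forcing $\rho^*(\{\|u\|_{\A}<\infty\}) = 0$, contradicting $\rho^*(\A) = 1$. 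The same argument rules out convergence along any further subsequence.

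The main obstacle is Step 1: passing from the non-normalizability of Proposition \ref{PROP:Gibbsref} to the divergence of $Z_N/Z_{N,\dl}$ through the identity above. The difficulty is that both the integrand $e^{\dl\|\pi_N u\|_{\A}^{20}}$ and the integrating measure $\nu_{N,\dl}$ depend on $N$, while only weak convergence $\nu_{N,\dl}\to\nu_\dl$ is available, precluding any direct dominated convergence argument. The resolution hinges on two special features of the cube frequency projector $\pi_N^{\textup{cube}}$: its uniform $L^p$-boundedness supplies both the operator bound $\|\pi_N\|_{\A\to\A} \le C$ (used crucially in Step 2) and the uniform-on-compacts strong convergence $\pi_N \to \Id$ on $\A$ (used in Step 1). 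This is precisely the structural input that breaks down for the ball frequency projector, as already noted in Remark \ref{REM:freq1}.
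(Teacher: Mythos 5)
Your strategy is genuinely different from the paper's. The paper assumes for contradiction that $\rho_{N_k}$ converges weakly on $\A$ to some $\nu_0$, uses Lemma \ref{LEM:conv1} together with the fact that $e^{-\dl\|\cdot\|_{\A}^{20}}$ is \emph{bounded} continuous on $\A$ (so it can be tested against the hypothetical weak limit) to derive $d\nu_0 = c\, d\cj\rho_\dl$, and then invokes Proposition \ref{PROP:Gibbsref} to conclude $1 = c\cdot\infty$. You instead aim at the unconditional statement $\rho_N(\{\|u\|_\A \le R\}) \to 0$ by first establishing $Z_N / Z_{N,\dl} \to \infty$. Your Step 2, and the conclusion, are sound granted Step 1, and the operator bound $\|\pi_N\|_{\A\to\A}\le C$ is correct (it reduces to uniform $L^3$-boundedness of $\pi_N^{\textup{cube}}$ via $p_t * \pi_N u = \pi_N(p_t*u)$).

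However, your Step 1 as written has a genuine gap. You obtain tightness of $\{\nu_{N,\dl}\}$ on $\A$ ``from Prokhorov applied to the weak convergence in Proposition \ref{PROP:ref}''. This is circular: the converse direction of Prokhorov (weak convergence implies tightness) requires the weak convergence to already be on $\A$, whereas Proposition \ref{PROP:ref} is proved by the tightness argument of Subsection \ref{SUBSEC:tight}, which yields weak convergence only in the weaker topology of $H^{-\frac12-\eps}(\T^3)$ (equivalently $\C^{-100}(\T^3)$). In that topology the functional $u\mapsto \|u\|_{\A}$ is not continuous, so you cannot upgrade $\nu_{N,\dl}\rightharpoonup\nu_\dl$ on $\C^{-100}$ to the statement $\int f^M_N\,d\nu_{N,\dl}\to\int f^M\,d\nu_\dl$ for test functions that are only $\A$-continuous. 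A related issue is the claim that $\pi_N\to\Id$ uniformly on compact subsets of $\A$: since $\A$ (essentially $B^{-3/4}_{3,\infty}$) is nonseparable and smooth functions are not dense, $\pi_N u\to u$ in $\A$ can fail for general $u\in\A$, and the Banach--Steinhaus argument does not go through as stated. Both gaps are fixable by bringing in uniform moment bounds in a strictly smaller space: the estimate \eqref{ref3} gives $\sup_N\int\|u_N\|_{W^{-5/8,3}}\,d\nu_{N,\dl'}<\infty$, and the high-frequency part of $u$ under $\nu_{N,\dl}$ is Gaussian, so $\sup_N\int\|u\|_{W^{-5/8,3}}\,d\nu_{N,\dl}<\infty$; since $W^{-5/8,3}\hookrightarrow\A$ compactly, this gives $\A$-tightness and, on those compact sets, $\pi_N\to\Id$ uniformly. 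Alternatively, and more directly, the conclusion $Z_N\to\infty$ (hence $Z_N/Z_{N,\dl}\to\infty$ since $Z_{N,\dl}\to Z_\dl>0$) already follows from the uniform-in-$N$ variational lower bound \eqref{E14} in the proof of Proposition \ref{PROP:Gibbsref}, with no reference to weak convergence at all; that would be the cleanest way to close Step 1.
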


In Subsection \ref{SUBSEC:refm}, 
we present the proof of  Proposition \ref{PROP:ref}.
In Subsection \ref{SUBSEC:NN}, 
we then prove the non-normalizability (Proposition \ref{PROP:Gibbsref}).
Finally, we present the proof of Proposition \ref{PROP:nonconv}
in Subsection \ref{SUBSEC:nonconv}.

\subsection{Construction of the reference measure}
\label{SUBSEC:refm}

In this subsection, we present the proof of  Proposition \ref{PROP:ref}
on the construction of the reference measure $\nu_\dl$.
We first establish several  preliminary lemmas.

\begin{lemma} \label{LEM:emb1}
Let the $\A$-norm be as in \eqref{RM-2}.
Then, we have 
\begin{align*}
 \|u\|_{\A} \les \|u\|_{H^{-\frac 14}}.
\end{align*}

\end{lemma}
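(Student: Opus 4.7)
The plan is to apply the Schauder estimate \eqref{Sch} directly with a judicious choice of parameters, matching the time weight $t^{3/8}$ in the definition \eqref{RM-2} of the $\A$-norm to the Sobolev scale $H^{-1/4}$.

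Concretely, I would use \eqref{Sch} with $p=2$, $q=3$, and $\alpha = \tfrac14$. Since
\[
\frac{3}{2}\Big(\frac{1}{p}-\frac{1}{q}\Big) = \frac{3}{2}\cdot \frac{1}{6} = \frac{1}{4}
\qquad \text{and} \qquad \frac{\alpha}{2} = \frac{1}{8},
\]
the Schauder estimate yields
\[
\|p_t * u\|_{L^3(\T^3)} \lesssim t^{-\frac18 - \frac14}\,\|\jb{\nabla}^{-\frac14} u\|_{L^2(\T^3)} = t^{-\frac38}\,\|u\|_{H^{-\frac14}}
\]
for all $0 < t \leq 1$. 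Multiplying by $t^{3/8}$ and taking the supremum over $t \in (0,1]$ gives $\|u\|_{\A} \lesssim \|u\|_{H^{-1/4}}$.

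There is no real obstacle here; the proof is a one-line application of the heat kernel smoothing estimate already recorded as \eqref{Sch}. The only thing to verify is that the chosen exponents $(p,q,\alpha) = (2,3,\tfrac14)$ indeed satisfy the hypotheses $\alpha \geq 0$ and $1 \leq p \leq q \leq \infty$ of \eqref{Sch}, which is immediate.
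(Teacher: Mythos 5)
Your proof is correct and is exactly the argument the paper intends: the paper's proof of this lemma simply says it is immediate from the Schauder estimate \eqref{Sch}, and your choice of parameters $(p,q,\alpha)=(2,3,\tfrac14)$ is precisely what makes the exponent $-\tfrac38$ come out to cancel the weight $t^{3/8}$ in \eqref{RM-2}.
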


\begin{proof}
This is immediate from  the Schauder estimate \eqref{Sch}.
\end{proof}

\begin{lemma} \label{LEM:emb2}
We have $W^{-\frac{3}{4}, 3}(\T^3) \subset \A$ and thus the quantity $\| u \|_{\A}$ 
is finite $\mu$-almost surely.
Moreover, given any  $1 \le p < \infty$, we have
\begin{align}
\E_\mu \Big[ \| \pi_N u \|_{\A}^p \Big] \le C_p < \infty, 
\label{RM0}
\end{align}
uniformly in $N \in \N \cup \{\infty\}$ with the understanding that $\pi_\infty = \Id$.
\end{lemma}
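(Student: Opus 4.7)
The plan is to deduce both parts of the lemma from a single embedding statement combined with standard Gaussian computations. First, I will establish the inclusion $W^{-\frac{3}{4}, 3}(\T^3) \subset \A$, and then reduce the uniform moment bound~\eqref{RM0} to a pointwise variance estimate by exploiting the uniform-in-$N$ boundedness of the cube projector.

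For the embedding, I would apply the Schauder estimate~\eqref{Sch} with $\alpha = \frac{3}{4}$ and $p = q = 3$, which gives $\|p_t * u\|_{L^3(\T^3)} \le C\, t^{-3/8}\|u\|_{W^{-3/4, 3}}$ for all $0 < t \le 1$. Multiplying by $t^{3/8}$ and taking the supremum over $t \in (0, 1]$ then yields $\|u\|_{\A} \lesssim \|u\|_{W^{-3/4, 3}}$ by the definition~\eqref{RM-2} of the $\A$-norm. In particular, $W^{-3/4, 3}(\T^3) \subset \A$.

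To obtain~\eqref{RM0}, I combine this embedding with property (i) from Subsection~\ref{SUBSEC:freq}: the cube projector $\pi_N^{\text{cube}}$, being a composition of modulated Hilbert transforms, is bounded on $L^3(\T^3)$ uniformly in $N$. Since $\jb{\nabla}^{-3/4}$ commutes with $\pi_N$, this gives $\|\pi_N u\|_{\A} \lesssim \|\pi_N u\|_{W^{-3/4, 3}} \lesssim \|u\|_{W^{-3/4, 3}}$, uniformly in $N \in \N \cup \{\infty\}$. It then suffices to bound $\E_\mu[\|u\|_{W^{-3/4, 3}}^p]$. For $p \ge 3$, Minkowski's integral inequality yields
\[
\E_\mu\Big[\|\jb{\nabla}^{-3/4}u\|_{L^3_x}^p\Big]^{1/p} \le \Big\| \|\jb{\nabla}^{-3/4}u(x)\|_{L^p(\Omega)} \Big\|_{L^3_x}.
\]
Since $u$ is distributed by $\mu$, the random variable $\jb{\nabla}^{-3/4}u(x)$ is a mean-zero Gaussian whose variance
\[
\sum_{n \in \Z^3} \jb{n}^{-2(1 + 3/4)} = \sum_{n \in \Z^3} \jb{n}^{-7/2} < \infty
\]
is finite and independent of $x \in \T^3$ by the stationarity of $\mu$. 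The Wiener chaos estimate (Lemma~\ref{LEM:hyp}) with $k = 1$ then gives $\|\jb{\nabla}^{-3/4}u(x)\|_{L^p(\Omega)} \lesssim \sqrt{p}$ uniformly in $x$, and integrating over $\T^3$ produces a bound $C_p < \infty$. The cases $1 \le p < 3$ follow from Hölder's inequality. The almost-sure finiteness of $\|u\|_{\A}$ under $\mu$ is then an immediate consequence of~\eqref{RM0} applied with $N = \infty$.

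There is no substantive obstacle here — the lemma is a routine preliminary. The only point that requires a small amount of care is ensuring uniformity in $N$, and this is handled precisely by appealing to property (i) of the cube projector rather than, say, the smooth projector, for which the corresponding $L^3$-bound would follow from Young's inequality anyway.
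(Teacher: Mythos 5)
Your proof is correct and follows essentially the same route as the paper: Schauder estimate for the embedding $W^{-3/4,3}\subset\A$, then Minkowski's integral inequality and the Wiener chaos estimate with the explicit variance computation $\sum_n \jb{n}^{-7/2}<\infty$. You make a few points more explicit than the paper does (the $L^3$-boundedness of $\pi_N^{\text{cube}}$, the commutation with $\jb{\nabla}^{-3/4}$, and the reduction of $1\le p<3$ to $p\ge 3$ by H\"older), but the underlying argument is identical.
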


\begin{proof}
As we already mentioned,  the first claim follows
from the Schauder estimate~\eqref{Sch}
 (or from \eqref{LR1}).
As for the bound \eqref{RM0}, 
from the Schauder estimate \eqref{Sch}, 
Minkowski's integral inequality, and the Wiener chaos estimate (Lemma \ref{LEM:hyp}) with \eqref{IV2}, we have
\begin{align*}
\E_\mu \Big[ \|  \pi_N u\|_{\A}^p \Big] 
& \les \E_\mu \Big[ \| u \|_{W^{-\frac 34, 3}}^p \Big] 
\les \Big\| \| \jb{\nb}^{-\frac{3}{4}} u (x)\|_{L^p(\mu)}\Big\|_{L^3_x}^p \\
& \le p^\frac{p}{2} \Big\| \| \jb{\nb}^{-\frac{3}{4}} u (x)\|_{L^2(\mu)}\Big\|_{L^3_x}^p\\
& \le p^\frac{p}{2}\bigg( \sum_{n \in \Z^3} \frac{1}{\jb{n}^\frac 72 }\bigg)^p < \infty.
\end{align*}

\noi
This proves \eqref{RM0}.
\end{proof}

We now  present the proof of Proposition \ref{PROP:ref}.

\begin{proof}[Proof of Proposition \ref{PROP:ref}]
$\bullet$ {\bf Step 1:}
In this first part, we prove that $Z_{N,\delta}$ in \eqref{RM-1a} is uniformly bounded in $N\in \N$.
As for the tightness of $\{ \nu_{N,\dl} \}_{N \in \N}$ and the uniqueness of $\nu_\delta$ 
claimed in the statement, 
we can repeat arguments analogous to those in Subsections \ref{SUBSEC:tight} and \ref{SUBSEC:wcon}
and thus we omit details. 

From  \eqref{RM-1a} and
the Bou\'e-Dupuis variational formula (Lemma \ref{LEM:var3}) with 
the change of variables \eqref{YZ13},
we have 
\begin{equation}
\begin{split}
-\log Z_{N,\delta}
&= \inf_{\dot \Ups^N \in \mathbb H_a^1 }\E
\bigg[  \delta \| Y_N + \Dr_N \|_\A^{20} 
-\s \int_{\T^3} Y_N \Dr_N^2 dx
-\frac \s 3 \int_{\T^3} \Dr_N^3 dx
\\
&\hphantom{XXXXX}
+ A \bigg| \int_{\T^3} \Big( :\! Y_N^2 \!: + 2 Y_N \Dr_N + \Dr_N^2 \Big) dx \bigg|^\g \\
&\hphantom{XXXXX}
+ \frac{1}{2} \int_0^1 \| \dot \Ups^N(t) \|_{H^1_x}^2 dt
 \bigg],
\end{split}
\label{RM4}
\end{equation}
where 
$\Dr_N = \Ups_N + \s \wt \ZZ_N$
with $\wt \ZZ_N = \pi_N\ZZ_N$ as in \eqref{K9b}.
Our goal is to establish a uniform lower bound on the right-hand side 
of~\eqref{RM4}.
Unlike Subsection \ref{SUBSEC:tight}, 
we do not assume smallness  on $|\s|$.
In this case, a rescue comes from the extra positive term 
$\delta \| Y_N + \Dr_N \|_\A^{20}$ as compared to \eqref{K9}.

Given any $0 < c_0 < 1$, 
it follows from Young's inequality \eqref{YY9} with $\g \ge 3$ that 
\begin{align}
\bigg| \int_{\T^3} \Big( :\! Y_N^2 \!: + 2 Y_N \Dr_N + \Dr_N^2 \Big) dx \bigg|^\g
\ge c_0
\bigg| \int_{\T^3} \Big( :\! Y_N^2 \!: + 2 Y_N \Dr_N + \Dr_N^2 \Big) dx \bigg|^3 - C.
\label{QQ4}
\end{align}

\noi
Then, taking an expectation and applying
Lemmas \ref{LEM:Dr7} and \ref{LEM:Dr8}
 with 
 Lemma \ref{LEM:Dr} and \eqref{YZ15}, we have
\begin{align}
\E\Bigg[A \bigg| \int_{\T^3} \Big( :\! Y_N^2 \!: + 2 Y_N \Dr_N + \Dr_N^2 \Big) dx \bigg|^\g\Bigg]
\ge C_0 \E  \Big[ \| \Ups_N \|_{L^2}^6\Big]
 - C_1 \E \Big[\| \Ups_N\|_{H^1}^2\Big]- C
\label{RM4a}
\end{align}

\noi
for some $C_0 > 0$ and $0 < C_1 \le \frac 14$.
Hence, 
it follows  from~\eqref{RM4}, \eqref{RM4a}, 
and Lemma~\ref{LEM:Dr7} together with 
 Lemma \ref{LEM:Dr} and \eqref{YZ15} that 
there exists $C_2 > 0$ such that
\begin{align}
\begin{split}
-\log Z_{N,\delta}
&\ge \inf_{\dot \Ups^N \in \mathbb H_a^1 }\E\bigg[
\delta \| Y_N + \Ups_N + \s \wt \ZZ_N \|_\A^{20}
 -\frac \s 3 \int_{\T^3} (\Ups_N + \s \wt \ZZ_N)^3 dx\\
&\hphantom{XXXXXX} 
+ C_2   \| \Ups_N \|_{L^2}^6 
+ C_2\| \Ups_N \|_{H^1}^2    \bigg] 
- C.
\end{split}
\label{RM5}
\end{align}

\noi
By Young's inequality, we have 
\begin{align}
\begin{split}
\bigg| \int_{\T^3} \Ups_N^2 \wt \ZZ_N dx \bigg|
+ \bigg| \int_{\T^3} \Ups_N\wt  \ZZ_N^2 dx \bigg|
&\le \| \Ups_N \|_{L^2}^2  \| \ZZ_N \|_{\C^{1-\eps}} + \| \Ups_N \|_{L^2} \| \ZZ_N \|_{\C^{1-\eps}}^2 \\
& \le
 \frac{C_2}{2|\s|} \| \Ups_N \|_{L^2}^6 
+ \| \ZZ_N \|_{\C^{1-\eps}}^c + C_\s.
\end{split}
\label{RM6}
\end{align}

\noi
Hence, from \eqref{RM5} and \eqref{RM6}
with  \eqref{YY9} (with $\g = 20$) and Lemma  \ref{LEM:emb2},
we obtain  
\begin{equation} 
\begin{split}
-\log Z_{N,\delta}
\ge \inf_{\dot \Ups_N \in \mathbb H_a^1 }
\E\bigg [ & \, \frac \delta2 \| \Ups_N \|_\A^{20}  - \frac{|\s|}3 \|\Ups_N\|_{L^3}^3\\
&  + \frac {C_2} 2   \| \Ups_N \|_{L^2}^6 + C_2\| \Ups_N \|_{H^1}^2  
 \bigg] - C.
 \end{split}
 \label{ZND1}
\end{equation}

\noi
Now, we need to estimate the $L^3$-norm of $\Ups_N$.
From \eqref{RM-2}, Sobolev's inequality, 
and the mean value theorem: $|1 - e^{-t|n|^2}| \les 
(t |n|^2)^\ta$ for any $0 \le \ta \leq 1$,  
 we have 
\begin{align*}
\|\Ups_N\|_{L^3}^3
&\les t^{-\frac 98} \| \Ups_N \|_\A^3 + \|\Ups_N - p_t \ast \Ups_N\|_{H^\frac 12 }^3 \\
&\les t^{-\frac 98} \| \Ups_N \|_\A^3 + t^\frac 34 \|\Ups_N \|_{H^1}^3
\end{align*}

\noi
for $0<t \le 1$.
By choosing $ t^\frac 34  \sim  \big(1 + \frac{|\s|}{C_2} \| \Ups_N \|_{H^1}\big)^{-1}$
and applying Young's inequality, we obtain
\begin{equation} \label{ZND2}
\begin{aligned}
|\s| \|\Ups_N\|_{L^3}^3
&\le C_{C_2, |\s|}
\| \Ups_N \|_{H^1}^\frac 32\| \Ups_N \|_\A^3
+ \frac {C_2}4  \| \Ups_N \|_{H^1}^2 + 1\\
&\le C_{C_2, |\s|, \dl} + \frac \delta 4\| \Ups_N \|_\A^{20}
 + \frac {C_2}2\| \Ups_N \|_{H^1}^2 .
\end{aligned}
\end{equation}

\noi
Therefore, 
from  \eqref{ZND1} and \eqref{ZND2},
we conclude that 
\[  Z_{N,\delta}\le C_\dl < \infty, \]

\noi
uniformly in $N \in \N$.

\smallskip

\noi
$\bullet$ {\bf Step 2:}
Next, we show  that $\| u \|_{\A}$ is finite $\nu_\delta$-almost surely.
Let $ \eta$ be a smooth function with compact support with 
$\int_{\R^3} | \eta  (\xi)|^2  d\xi = 1$
and set 
\[ \ft \rho(\xi)= \int_{\R^3} \eta (\xi - \xi_1) \cj{\eta (-\xi_1)}d\xi_1.\]

\noi
Given $\eps > 0$, 
 define $\rho_\eps$ by 
\begin{align}
 \rho_\eps(x) = \sum_{n \in \Z^3} \ft \rho (\eps n) e^{i n \cdot x}. 
 \label{XX5a}
\end{align}

\noi
Since the support of $\ft \rho$ is compact,
the sum on the right-hand side is over finitely many frequencies.
Thus, given  any $\eps>0$, there exists $N_0(\eps) \in \N$ such that 
\begin{align}
\rho_\eps \ast u = \rho_\eps \ast u_N
\label{XX6}
\end{align}

\noi
for any $N \ge N_0(\eps)$.
From the Poisson summation formula, we have 
\[ \rho_\eps (x) = \sum_{n \in \Z^3} \eps^{-3} \big|\F_{\R^3}^{-1}(\eta)(\eps^{-1} x + 2\pi n)\big|^2
\ge 0,  \]

\noi
where $\F_{\R^3}^{-1}$ denotes the inverse Fourier transform on $\R^3$.
Noting that 
\[
\| \rho_\eps\|_{L^1(\T^3)} = 
\int_{\T^3} \rho_\eps (x) dx
= \ft \rho (0)
= \| \eta \|_{L^2(\R^3)}^2
=1,
\]

\noi
we have, from Young's inequality, that 
\begin{align}
\| \rho_\eps \ast u \|_{\A} \le \| u \|_{\A}.
\label{XX7}
\end{align}

\noi
Moreover, $\{\rho_\eps\}$ defined above
is an approximation to the identity on $\T^3$
and thus for any distribution $u$ on $\T^3$, 
$\rho_\eps \ast u \to u$ in the $\A$-norm,  as $\eps \to 0$.

Let $\dl>\delta'>0$.
By Fatou's lemma, the weak convergence of $\{ \nu_{N, \dl} \}_{N \in \N}$
from Step 1
 with \eqref{XX6},  \eqref{XX7}, 
 and the definition \eqref{RM-1} of $\nu_{N, \dl}$, 
we have 
\begin{align*}
\int \exp\Big( (\delta - \delta') \|u \|_{\A}^{20} \Big) d \nu_\delta
&\le \liminf_{\eps \to 0} \int \exp\Big( (\delta - \delta') \|\rho_\eps \ast u \|_{\A}^{20} \Big) d \nu_\delta \\
&= \liminf_{\eps \to 0} \lim_{N \to \infty}  \int \exp\Big( (\delta - \delta') \|\rho_\eps \ast u_N \|_{\A}^{20} \Big) d \nu_{N,\delta}\\
&\le \lim_{N \to \infty}  \int \exp\Big( (\delta - \delta') \| u_N \|_{\A}^{20} \Big) d \nu_{N,\delta}\\
&= \lim_{N \to \infty} \frac{Z_{N,\delta'}}{Z_{N,\delta}} \int 1 \, d \nu_{N,\delta'}\\
&= \frac{Z_{\delta'}}{Z_\delta}.
\end{align*}

\noi
Hence, we have 
\begin{align*}
\int \exp\Big((\delta - \delta') \|u \|_{\A}^{20} \Big) d \nu_\delta < \infty
\end{align*}

\noi
for any $\dl>\dl'>0$.
By choosing $\delta' = \frac \delta2$,
we obtain
\begin{align*}
\int \exp\Big(\frac \delta2 \|u \|_{\A}^{20} \Big) d \nu_\delta < \infty, 
\end{align*}

\noi
which shows
 that $\|u\|_{\A}$ is finite almost surely
with respect to $\nu_\dl$.

\smallskip

\noi
$\bullet$ {\bf Step 3:}
Finally, we prove the relation  \eqref{ref1}.
We first note that it suffices to show that
\begin{align}
 \frac{Z_{\delta}}{Z_{\delta'}} d \nu_\delta = \exp\Big( - (\delta - \delta') \|u \|_{\A}^{20} \Big)   d \nu_{\delta'}
\label{RM8a}
\end{align}
for any $\delta > \delta'>0$.
In fact, once we have \eqref{RM8a}, 
by integration, we obtain 
\begin{align}
 \frac{Z_{\delta}}{Z_{\delta'}} = \int \exp\Big( - (\delta - \delta') \|u \|_{\A}^{20} \Big)   d \nu_{\delta'}
\label{RM8b}
\end{align}

\noi
and thus \eqref{ref1} follows from \eqref{RM8a} and \eqref{RM8b}.

Let $F: \C^{-100}(\T^3) \to \R$ be a bounded Lipschitz function with $F \ge 0$.
The dominated convergence theorem, the weak convergence of $\{ \nu_{N, \dl} \}_{N \in \N}$
from Step 1, and \eqref{RM-1} yield that
\begin{align*}
& \frac{Z_{\delta}}{Z_{\delta'}} \int F(u) d \nu_\delta  -  \int F(u) \exp\Big( - (\delta - \delta') \|u \|_{\A}^{20} \Big) d \nu_{\delta'} \\
& = \lim_{\eps \to 0} \bigg( \frac{Z_{\delta}}{Z_{\delta'}} \int F(u) d \nu_\delta  -  \int F(u) \exp\Big( - (\delta - \delta') \|\rho_\eps \ast u \|_{\A}^{20} \Big) d \nu_{\delta'} \bigg) \\
&= \lim_{\eps \to 0} \lim_{N \to \infty}
\bigg(
\frac{Z_{N,\delta}}{Z_{N,\delta'}} \int F(u) d \nu_{N,\delta}  -  \int F(u) \exp\Big( - (\delta - \delta') \|\rho_\eps \ast u_N \|_{\A}^{20} \Big) d \nu_{N,\delta'} \bigg) \\
&= \lim_{\eps \to 0} \lim_{N \to \infty}
\int F(u) \Big[ \exp\Big( - (\delta - \delta') \|u_N \|_{\A}^{20}\Big)  - \exp\Big(-(\delta-\delta') \|\rho_\eps \ast u_N \|_{\A}^{20} \Big)\Big] d \nu_{N, \delta'}.
\end{align*}
Therefore, we have 
\begin{align}
\begin{split}
 \bigg|  \frac{Z_{\delta}}{Z_{\delta'}} & \int F(u) d \nu_\delta  -  \int F(u) \exp\Big( - (\delta - \delta') \|u \|_{\A}^{20} \Big) d \nu_{\delta'} \bigg|
\\
&\les \limsup_{\eps \to 0} \limsup_{N \to \infty}
\int \Big|
\exp\Big( - (\delta - \delta') \|u_N \|_{\A}^{20}\Big)\\
& \hphantom{XXXXXXXXXX}- \exp\Big(-(\delta-\delta') \|\rho_\eps \ast u_N \|_{\A}^{20} \Big)
 \Big| \, d \nu_{N,\delta'}(u)\\
&\les \limsup_{\eps \to 0} \limsup_{N \to \infty}
\int \Big|
\exp\Big( - (\delta - \delta') \|\pi_N u^N (\o)\|_{\A}^{20}\Big)\\
& \hphantom{XXXXXXXXXX}- \exp\Big(-(\delta-\delta') \|\rho_\eps \ast \pi_N u^N (\o)\|_{\A}^{20} \Big)
 \Big| \, d\PP(\o), 
\end{split}
\label{RM8c}
\end{align}

\noi
where $u^N$ is a random variable with $\Law(u^N) = \nu_{N, \dl'}$.
Noting that the integrand is uniformly bounded by $2$, 
it follows from the bounded convergence theorem that 
 the right-hand side of~\eqref{RM8c} tends to 0 
 once we show that 
 $\|\rho_\eps \ast \pi_N u^N (\o) - \pi_N u^N(\o) \|_{\A}$ tends to 0 
in measure (with respect to $\PP$).
Namely, it suffices to show 
\begin{align*}
 \lim_{\eps \to 0} & \lim_{N \to \infty}
 \PP\big( \{ \o \in \O: \|\rho_\eps \ast \pi_N u^N (\o) - \pi_N u^N(\o)\|_{\A} 
 > \al \}\big)\\
& =   \lim_{\eps \to 0} \lim_{N \to \infty}
\nu_{N,\delta'}\big( \big\{ \|u_N - \rho_\eps \ast u_N \|_{\A} > \al \}\big) = 0
\end{align*}
for any $\al>0$.

From  \eqref{RM-2} and \eqref{Sch}, we have 
\begin{align}
\| u_N - \rho_\eps \ast u_N \|_{\A}
 \les \| u_N - \rho_\eps \ast u_N \|_{W^{-\frac 34, 3}} 
\les \eps^{\frac 18} \| u_N\|_{W^{-\frac 58, 3}}.
\label{ref2a}
\end{align}

\noi
Hence, from Chebyshev's inequality and \eqref{ref2a}, 
it suffices to prove
\begin{equation}
\int  \| u_N\|_{W^{-\frac 58, 3}}  d \nu_{N,\delta'} 
\les \int \exp \big(   \| u_N\|_{W^{-\frac 58, 3}} \big) d \nu_{N,\delta'} \le C_{\dl'} < \infty, 
\label{ref3}
\end{equation}
uniformly in $N \in \N$.
We use the variational formulation as in \eqref{RM4}, and write  
\begin{align*}
-  & \log   \bigg(\int \exp \big(  \| u_N\|_{W^{-\frac 58, 3}} \big) d \nu_{N,\delta'} \bigg) \\
&= \inf_{\dot \Ups^N \in \mathbb H_a^1 }\E
\bigg[ 
 \delta' \| Y_N + \Dr_N \|_\A^{20} 
- \|Y_N + \Dr_N\|_{W^{-\frac 58, 3}}-\s \int_{\T^3} Y_N \Dr_N^2 dx \\ 
&\hphantom{XXXXXX} 
-\frac \s 3 \int_{\T^3} \Dr_N^3 dx
+ A \bigg| \int_{\T^3} \Big( :\! Y_N^2 \!: + 2 Y_N \Dr_N + \Dr_N^2 \Big) dx \bigg|^\g \\
&\hphantom{XXXXXX}
+ \frac{1}{2} \int_0^1 \| \dot \Ups^N(t) \|_{H^1_x}^2 dt
 \bigg]\\
 &\phantom{X} + \log Z_{N,\delta'},
\end{align*}
where 
$\Dr_N = \Ups_N + \s \wt \ZZ_N$.
From 
Lemma \ref{LEM:Dr}
and \eqref{YZ15}, 
we have, for any finite $p \geq 1$,  
\begin{align}
\E\Big[\| Y_N\|_{W^{-\frac 58, 3}}^p
+ \| \ZZ_N\|_{W^{-\frac 58, 3}}^p\Big] < \infty,
\label{ref4}
\end{align}

\noi
uniformly in $N \in \N$.  See also the proof of Lemma \ref{LEM:emb2}.
Then, 
arguing as in  \eqref{ZND1} and \eqref{ZND2} with Young's inequality, 
Sobolev's inequality, and \eqref{ref4}, we obtain
\begin{align*}
-  & \log \bigg(\int \exp \big( \| u_N\|_{W^{-\frac 58, 3}} \big) d \nu_{N,\delta'} \bigg) \\
&\ge \inf_{\dot \Ups^N \in \mathbb H_a^1 }\E\bigg[-\|\Ups_N\|_{W^{-\frac 58,3}} 
+ C_0 \big(  \| \Ups_N \|_{L^2}^6 + \| \Ups_N \|_{H^1}^2\big)  + \frac {\delta'}4 \| \Ups_N \|_\A^{20} \bigg]
 - C_{C_0,\delta'}\\
&\gtrsim -1.
\end{align*}

\noi
This proves
 \eqref{ref3}
 and hence
 concludes the proof of Proposition \ref{PROP:ref}.
\end{proof}

\subsection{Non-normalizability
of the $\s$-finite measure $\cj \rho_\dl$}
\label{SUBSEC:NN}

In this subsection, we present the proof of Proposition \ref{PROP:Gibbsref}
on the non-normalizability of the $\s$-finite version $\cj \rho_\dl$
of the $\Phi^3_3$-measure defined in \eqref{ref1a}.

Given $\eps > 0$, let $\rho_\eps$ be as in \eqref{XX5a}.
Then, by  \eqref{XX7}, the weak convergence of $\{ \nu_{N,\dl} \}_{N \in \N}$
(Proposition \ref{PROP:ref}),  \eqref{XX6}, and \eqref{RM-1},
we have
\begin{align*}
\int & \exp\Big( \delta  \|u \|_{\A}^{20} \Big) d \nu_\delta
\ge \int \exp\Big( \delta  \|\rho_\eps \ast u  \|_{\A}^{20} \Big) d \nu_\delta\\
&\ge \limsup_{L \to \infty} \int \exp\Big( \dl \min\big(  \|\rho_\eps \ast u \|_{\A}^{20}, L\big) \Big) d \nu_\delta \\
&=  \limsup_{L \to \infty} \lim_{N \to \infty}
\int \exp\Big( \dl \min\big(  \|\rho_\eps \ast u_N \|_{\A}^{20}, L\big) \Big) d \nu_{N,\delta}\\
&=  \limsup_{L \to \infty} \lim_{N \to \infty}
Z_{N,\delta}^{-1} \int \exp\Big(\delta  \min\big(\|\rho_\eps \ast u_N \|_{\A}^{20},L\big) -\dl \| u_N \|_{\A}^{20}  - R_N^{\dia}(u)\Big) d \mu (u).
\end{align*}

\noi
Hence, \eqref{pa00a} is reduced to showing that
\begin{align}
\limsup_{L \to \infty} \lim_{N \to \infty}
\E_\mu \Big[ \exp\Big(\delta   \min\big(\|\rho_\eps \ast u_N \|_{\A}^{20},L\big) - \dl\| u_N \|_{\A}^{20}
- R_N^{\dia}(u)\Big) \Big]
= \infty.
\label{pa00}
\end{align}

Let $Y=Y(1)$ be as in \eqref{P2}.
By the Bou\'e-Dupuis variational formula (Lemma \ref{LEM:var3})
with the  change of variables \eqref{YZ13},
we have
\begin{align}
\begin{split}
-& \log \E \Big[ \exp\Big(\delta   \min\big(\|\rho_\eps \ast u_N \|_{\A}^{20},L\big) - \dl \| u_N \|_{\A}^{20} 
- R_N^{\dia}(u)\Big) \Big] \\
&= \inf_{\dot \Ups^N \in \mathbb H_a^1}
\E\bigg[
-\delta   \min\big(\|\rho_\eps \ast (Y_N + \Ups_N +\s \wt \ZZ_N) \|_{\A}^{20},L\big) 
+ \dl \| Y_N + \Ups_N +\s\wt  \ZZ_N \|_{\A}^{20} 
\\
&\hphantom{XXXXXXX}
+ \ft R_N^\dia (Y + \Ups^N +\s\ZZ_N)
+ \frac 12 \int_0^1 \| \dot \Ups^N (t)\|_{H^1_x} ^2 dt \bigg],
\label{DPf}
\end{split}
\end{align}

\noi
where $\ft R_N^\dia$ is as in \eqref{KZ16} with the third power in the last term replaced by the $\g$th power.
With $\Dr_N = \Ups_N + \s \wt \ZZ_N$,
a slight modification of \eqref{KZ3} yields
\begin{align}
\begin{split}
\bigg| & \int_{\T^3} Y_N \Dr_N^2 dx \bigg|
=
\bigg| \int_{\T^3} Y_N ( \Ups_N^2 + 2 \s \Ups_N \wt \ZZ_N + \s^2 \wt \ZZ_N^2) dx \bigg| \\
&\le
C_\s\Big( 1
+ \| Y_N \|_{\C^{-\frac 12-\eps}}^c +  \| \ZZ_N \|_{\C^{1-\eps}}^c\Big)
+ \frac 1{100 |\s|} \Big( \| \Ups_N \|_{L^2}^3
+ \| \Ups_N \|_{H^1}^2 \Big).
\end{split}
\label{E4}
\end{align}

\noi
By Young's inequality, 
we have
\begin{align}
\begin{split}
\bigg|  \int_{\T^3} \Dr_N^3 dx - \int_{\T^3} \Ups_N^3 dx \bigg|
& =
\bigg| \int_{\T^3} \Big( 3 \s \Ups_N^2 \wt\ZZ_N + 3 \s^2 \Ups_N \wt\ZZ_N^2 + \s^3\wt \ZZ_N^3 \Big) dx \bigg| \\
&\le
C_\s \| \ZZ_N \|_{\C^{1-\eps}}^3
+ \frac 1{100 |\s|}  \| \Ups_N \|_{L^2}^3.
\end{split}
\label{E5}
\end{align}

\noi
Then, applying \eqref{E4} and \eqref{E5}
with 
Lemma \ref{LEM:Dr} and 
\eqref{YZ15} to \eqref{DPf}, 
we obtain
\begin{align}
\begin{split}
-& \log \E \Big[ \exp\Big(\delta   \min\big(\|\rho_\eps \ast u \|_{\A}^{20},L\big) - \dl \| u_N \|_{\A}^{20}
- R_N^{\dia}(u)\Big) \Big] \\
&\le \inf_{\dot \Ups^N \in \mathbb H_a^1}
\E \bigg[
-\delta   \min\big(\|\rho_\eps \ast (Y_N + \Ups_N +\s \wt\ZZ_N) \|_{\A}^{20},L\big) 
+ \dl \| Y_N + \Ups_N +\s \wt\ZZ_N \|_{\A}^{20} \\
&\hphantom{XXXXX}
- \frac \s3 \int_{\T^3} \Ups_N^3 dx
+ \| \Ups_N \|_{L^2}^3
+ A \bigg| \int_{\T^3} \Big( :\! Y_N^2 \!: + 2 Y_N \Dr_N + \Dr_N^2 \Big) dx \bigg|^\g
\\
&\hphantom{XXXXX}
+ \frac 3{4} \int_0^1 \| \dot \Ups^N (t)\|_{H^1_x} ^2 dt \bigg]
+C_\s, 
\end{split}
\label{E5a}
\end{align}

\noi
where  $\Dr_N = \Ups_N + \s \wt\ZZ_N$.

In the following, we show that 
the right-hand side of \eqref{E5a} tends to $-\infty$
as $N, L \to \infty$, provided that $|\s| >0$ is sufficiently large.
By following the strategy introduced in our previous works
\cite{OOTol1, OS}, 
we construct a drift $\dot \Ups^N$, achieving this goal.
The main idea is to construct a drift $\dot \Ups^N$
such that $ \Ups^N$
 looks like ``$- Y(1) + $ 
a perturbation'' (see \eqref{paa0}), where the perturbation term is bounded in $L^2(\T^3)$
but has a large cubic integral (see \eqref{E10b} below).
While we do not make use of solitons in this paper, 
one should think of this perturbation as something like a soliton
or a finite blowup solution (at a fixed time)
with a highly concentrated profile.

\begin{remark}\label{REM:diff}\rm
While our construction of the drift
follows that in \cite{OOTol1}, 
we need to proceed more carefully in our current problem
in handling the first two terms under the expectation in \eqref{E5a}.
If we simply apply
 \eqref{YY9} (with $\g = 20$)
 to separate $\Ups_N$ from $Y_N$ and $\s \wt \ZZ_N$, 
 we end up with an expression like
 \[  -\dl   \min\Big(\frac 12 \|\rho_\eps \ast \Ups_N \|_{\A}^{20},L\Big) 
+ 2\dl \|  \Ups_N  \|_{\A}^{20} \]

\noi
such that the coefficients of 
$\|\rho_\eps \ast \Ups_N \|_{\A}^{20}$ and 
$\| \Ups_N \|_{\A}^{20}$ no longer agree, which causes a serious trouble.
We instead need to keep the same coefficient
for  the first two terms under the expectation in \eqref{E5a}
and make use of the difference structure.
Compare this with the analysis in \cite{OOTol1, OS}, 
where no such cancellation was needed.

\end{remark}

\medskip

Fix a parameter $M \gg 1$.
Let $f: \R^3 \to \R$ be a real-valued Schwartz function
such that 
the Fourier transform $\ft f$ is a smooth even non-negative function
supported  $\big\{\frac 12 <  |\xi| \le 1 \}$ such that  $\int_{\R^3} |\ft f (\xi)|^2 d\xi = 1$.
Define a function $f_M$  on $\T^3$ by 
\begin{align}
f_M(x) &:= M^{-\frac 32} \sum_{n \in \Z^3} \ft f\Big( \frac n M\Big) e_n, \label{fMdef} 
\end{align}

\noi
where $\ft f$ denotes the Fourier transform on $\R^3$ defined by 
\begin{align}
\ft f(\xi) = \frac{1}{(2\pi)^\frac{3}{2}} \int_{\R^3} f(x) e^{-in\cdot x} dx.
\notag
\end{align}

\noi
Then, a direct calculation shows the following lemma.

\begin{lemma} \label{LEM:fmb}
For any $M \in \N$ and $\al>0$, we have
\begin{align}
\int_{\T^3} f_M^2 dx &= 1 + O(M^{-\alpha}), \label{fM0} \\
\int_{\T^3} (\jb{\nabla}^{-1} f_M)^2 dx &\les M^{-2}, \label{fm2} \\
\int_{\T^3} |f_M|^3 dx  \sim \int_{\T^3} f_M^3 dx &
\sim M^{\frac 32}.
\label{E10b}
\end{align}
\end{lemma}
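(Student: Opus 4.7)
The plan is that all three identities follow by observing that $f_M$ is, in physical space, essentially a dilation of $f$ concentrated near the origin on scale $M^{-1}$, and in Fourier space has coefficients $\ft{f_M}(n) = M^{-3/2}\ft f(n/M)$ supported in the annulus $\{M/2 < |n| \le M\}$. Consequently, all three integrals reduce to either Riemann sums or rescaling computations, and Poisson summation supplies super-polynomially small error terms because $\ft f$ is smooth and compactly supported.

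For \eqref{fM0}, Plancherel on $\T^3$ gives
\begin{align*}
\int_{\T^3} f_M^2 \, dx = M^{-3} \sum_{n \in \Z^3} |\ft f(n/M)|^2,
\end{align*}
which is an $M^{-1}$-mesh Riemann sum for $\int_{\R^3}|\ft f|^2\, d\xi = 1$; Poisson summation applied to the smooth compactly supported integrand $|\ft f|^2$ yields the approximation with error $O(M^{-\alpha})$ for every $\alpha > 0$. The estimate \eqref{fm2} is then immediate from Plancherel and the frequency localization $\jb{n} \sim M$ on $\supp \ft{f_M}$: $\int_{\T^3}(\jb{\nabla}^{-1} f_M)^2\, dx \les M^{-2} \int_{\T^3} f_M^2\, dx \les M^{-2}$.

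For the signed cubic in \eqref{E10b}, Plancherel again reduces the computation to a convolution sum
\begin{align*}
\int_{\T^3} f_M^3\, dx = M^{-9/2} \sum_{n_1+n_2+n_3=0} \prod_{j=1}^3 \ft f(n_j/M) = M^{3/2} \cdot M^{-6} \sum_{n_1, n_2 \in \Z^3} \ft f(n_1/M)\, \ft f(n_2/M)\, \ft f\big({-}(n_1+n_2)/M\big),
\end{align*}
and the inner sum is an $M^{-1}$-mesh Riemann sum for the constant $c_f := \iint \ft f(\xi_1)\ft f(\xi_2)\ft f(-\xi_1-\xi_2)\, d\xi_1 d\xi_2 = (\ft f \ast \ft f \ast \ft f)(0)$. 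The unsigned cubic $\int_{\T^3}|f_M|^3\, dx$ cannot be linearised on the Fourier side, so here I would invoke Poisson summation directly on $f_M$ to obtain the periodization identity
\begin{align*}
f_M(x) = (2\pi)^{3/2} M^{3/2} \sum_{k \in \Z^3} f(M(x+2\pi k)),
\end{align*}
whose $k \ne 0$ contributions are $O(M^{-\alpha})$ in $L^\infty(\T^3)$ by the Schwartz decay of $f$. Changing variables $y=Mx$ on a fundamental domain centred at the origin then yields $\int_{\T^3}|f_M|^3\, dx = (2\pi)^{3/2} M^{3/2} \|f\|_{L^3(\R^3)}^3 + O(M^{3/2-\alpha})$, and the same computation without the absolute values recovers $c_f \cdot M^{3/2}$ up to normalisation, consistent with the Fourier-side calculation.

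The only mild subtlety is that the signed cubic asymptotic requires $c_f > 0$, equivalently $\int_{\R^3} f^3\, dy > 0$. This dictates the choice of $\ft f$: taking $\ft f$ radial as $\ft f(\xi) = \psi(|\xi|)$ for a smooth non-negative bump $\psi$ supported in $(1/2, 1]$ suffices, since then for any $\xi$ with $|\xi|$ slightly below $1$ and $\eta = (\xi + v)/2$ with $v \perp \xi$ and $0 < |v| < \sqrt{3}$, both $\eta$ and $\xi - \eta$ lie in $\supp \ft f$, so $(\ft f \ast \ft f)(\xi) > 0$ on a neighbourhood where $\ft f$ is also positive, giving $c_f = \int \ft f \cdot (\ft f \ast \ft f)\, d\xi > 0$.
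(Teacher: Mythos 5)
Your proof is correct, and in a couple of places it actually supplies more than the paper does. For \eqref{fM0} and \eqref{fm2} the paper simply cites Lemma~5.12 of \cite{OOTol1}; your Plancherel-plus-Riemann-sum argument (with the Poisson-summation bound on the error, using that $|\ft f|^2$ is smooth and compactly supported) is exactly the content of that reference, so you have filled in an outsourced detail rather than diverged from the paper. For the signed cubic in \eqref{E10b}, your convolution-sum identity coincides with the paper's \eqref{E10c}. Where you genuinely differ is the unsigned cubic: the paper obtains the upper bound $\|f_M\|_{L^3}^3 \les M^{3/2}$ by Hausdorff--Young (since $\|\ft{f_M}\|_{\ell^{3/2}}^{3/2} = M^{-9/4}\sum_n |\ft f(n/M)|^{3/2} \les M^{3/4}$) and the lower bound by simply noting $\int|f_M|^3 \ge |\int f_M^3| \ges M^{3/2}$, whereas you periodize $f_M$ via Poisson summation and rescale, obtaining the sharper asymptotic $\int_{\T^3}|f_M|^3\,dx = (2\pi)^{3/2}M^{3/2}\|f\|_{L^3(\R^3)}^3 + O(M^{3/2-\alpha})$. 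Your route costs a bit more bookkeeping (the $L^\infty$ error $O(M^{3/2-\alpha})$ from the nonzero lattice shifts must be combined with $L^2$/$L^3$ bounds on the main term to control cross terms in $|a+b|^3 - |a|^3$), but it buys the exact leading constant and extends without change to any $L^p$ norm of $f_M$. Finally, your remark about the positivity of $c_f = (\ft f \ast \ft f \ast \ft f)(0)$ is a genuine and correct observation: the hypotheses stated just before the lemma (real, Schwartz, $\ft f$ smooth even non-negative on $\{1/2 < |\xi|\le 1\}$, $\|\ft f\|_{L^2}=1$) do \emph{not} by themselves force $\int_{\R^3} f^3 \ne 0$ — one can cook up a two-bump $\ft f$ with $\supp\ft f \cap (\supp\ft f + \supp\ft f) = \emptyset$ that kills the tri-convolution — so the paper is implicitly making a slightly stronger choice of $f$, and your radial construction is one concrete way to realise it.
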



\begin{proof}
As for \eqref{fM0} and \eqref{fm2}, 
see the proof of Lemma 5.13 in \cite{OOTol1}. 
From  \eqref{fMdef} and the fact that $\ft f$ is supported on $\{ \frac 12 < |\xi| \le 1 \}$, 
we have 
\begin{align}
\int_{\T^3} f_M^3 dx
= M^{-\frac 92} \sum_{n_1,n_2 \in \Z^3}
\ft f \Big( \frac{n_1} M \Big)
\ft f \Big( \frac{n_2} M \Big)
\ft f \Big( -\frac{n_1+n_2} M \Big)
\sim M^{\frac 32}.
\label{E10c}
\end{align}

\noi
The bound $\| f_M\|_{L^3}^3 \ges M^\frac{3}{2}$
follows from \eqref{E10c}, 
while $\| f_M\|_{L^3}^3 \les M^\frac{3}{2}$
follows from Hausdorff-Young's inequality.
This proves \eqref{E10b}.
\end{proof}

We define $Z_M$ and $\al_M$ by 
\begin{align}
Z_M := \sum_{|n| \le M} \widehat{Y\big(\tfrac 12)} (n) e_n
\qquad \text{and}\qquad 
\al_M := \E \big[ Z_M^2(x) \big]. 
\label{fmb1}
\end{align}

\noi
Note that $\al_M$ is independent of $x \in \T^3$
thanks to 
 the spatial translation invariance of $Z_M$. 
Then, we have the following lemma.
See Lemma 5.14 in \cite{OOTol1} for the proof.

\begin{lemma} \label{LEM:leo2}
Let $ M\gg 1$ and  $ 1 \le p < \infty$.
Then, we have
\begin{align}
&\al_M \sim M,\label{NRZ0}\\
&\E\bigg[\int_{\T^3} |Z_M|^p dx \bigg] \le C(p) M^\frac p2,
\notag\\
&\E\bigg[\Big(\int_{\T^3} Z_M^2 dx - \al_M \Big)^2\bigg]
+\E\bigg[ \Big( \int_{\T^3} Y_N Z_M dx - \int_{\T^3} Z_M^2 dx \Big)^2  \bigg] \les 1, 
\notag\\
&\E\bigg[\Big( \int_{\T^3} Y_N  f_M dx \Big)^2\bigg] 
+ \E\bigg[\Big( \int_{\T^3} Z_M  f_M dx \Big)^2\bigg] \les M^{-2}
\notag
\end{align}

\noi
for any $N \ge M$.

\end{lemma}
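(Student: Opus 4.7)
The plan is to reduce every estimate in Lemma \ref{LEM:leo2} to explicit computations with the Fourier representations
\[ Z_M = \sum_{|n|\le M} \jb{n}^{-1} B_n(\tfrac12) e_n, \qquad f_M = M^{-\frac32}\sum_n \ft f\!\left(\tfrac nM\right) e_n, \]
and the orthogonality relation $\E[B_{n_1}(s)\cj{B_{n_2}(t)}] = (s\wedge t)\,\delta_{n_1,n_2}$ for the complex-valued Brownian motions $\{B_n\}$. For \eqref{NRZ0}, translation invariance gives $\alpha_M = \E[Z_M(x)^2] = \tfrac12 \sum_{|n|\le M}\jb n^{-2}$, and this is $\sim M$ by a standard lattice-point count (or Lemma \ref{LEM:SUM}). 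For the $L^p$ moment bound, stationarity of $Z_M$ reduces $\E[\int |Z_M|^p dx]$ to $\E[|Z_M(0)|^p]$; since $Z_M(0) \in \H_1$ with $\|Z_M(0)\|_{L^2(\Omega)}^2 = \alpha_M \sim M$, the Wiener chaos estimate (Lemma \ref{LEM:hyp}) yields $\|Z_M(0)\|_{L^p(\Omega)} \les p^{1/2} M^{1/2}$, hence the claim.

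For the third line, Parseval (using the normalized Lebesgue measure, so that $\int e_n\,dx = \delta_{n,0}$) gives the diagonal identities
\[ \int_{\T^3} Z_M^2\,dx - \alpha_M \;=\; \sum_{|n|\le M} \jb{n}^{-2}\Big(|B_n(\tfrac12)|^2 - \tfrac12\Big), \]
and, using the hypothesis $N\ge M$ to conclude that $\chi_N(n)=1$ for every $|n|\le M$,
\[ \int_{\T^3} Y_N Z_M\,dx \;-\; \int_{\T^3} Z_M^2\,dx \;=\; \sum_{|n|\le M} \jb{n}^{-2}\big(B_n(1)-B_n(\tfrac12)\big)\cj{B_n(\tfrac12)}. \]
The summands in the first expression are pairwise uncorrelated once identified on the involution $n\leftrightarrow -n$; the summands in the second are mean zero by independence of Brownian increments, and pairwise uncorrelated by the same token. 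Hence in both second moments only the diagonal survives, and the total is $\les \sum_{|n|\le M}\jb n^{-4} \les 1$.

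For the last line, Parseval and the support hypothesis $\supp\ft f\subset\{\tfrac12<|\xi|\le 1\}$ give
\[ \E\bigg[\bigg(\int_{\T^3} Y_N f_M\,dx\bigg)^{\!2}\bigg] \;=\; M^{-3}\!\!\sum_{\frac M2<|n|\le M}\!\! \jb n^{-2}\,|\ft f(n/M)|^2 \;\les\; M^{-3}\cdot M^{-2}\cdot M^{3} \;=\; M^{-2}, \]
where the factor $M^3$ counts the lattice points in the annulus $\{\tfrac M2<|n|\le M\}$ and we again used $\chi_N\equiv 1$ there. The estimate for $\int Z_M f_M\,dx$ is identical up to the factor $\tfrac12$ coming from $\Var B_n(\tfrac12) = \tfrac12$. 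No step is expected to present any real difficulty; the main thing to track is the bookkeeping between the cube projector $\chi_N$ defining $Y_N$ and the ball projector $\ind_{|n|\le M}$ defining $Z_M$, but the hypothesis $N\ge M$ collapses the former to the identity on the support of $Z_M$ and on the support of $\ft f(\cdot/M)$, so that every sum reduces to a single band of frequencies.
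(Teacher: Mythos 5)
Your proposal is correct, and the argument is the natural one: explicit Fourier-side computations using $\E[B_{n_1}(s)\overline{B_{n_2}(t)}]=(s\wedge t)\delta_{n_1,n_2}$, translation invariance, Lemma~\ref{LEM:hyp} for the $L^p$ moment, and the lattice-point count on the annulus $\{\tfrac M2 < |n| \le M\}$ coming from the support of $\ft f$. The paper itself does not reprove this lemma but defers to Lemma~5.13 of~\cite{OOTol1}, where the proof proceeds along exactly the same lines, so there is no genuine divergence in method to report.
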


We now present the proof of Proposition \ref{PROP:Gibbsref}.

\begin{proof}[Proof of Proposition \ref{PROP:Gibbsref}]

As described above, our main goal is to prove \eqref{pa00}.

Fix $N \in \N$, appearing in \eqref{E5a}.
For $M \gg 1$,
we set $f_M$, $Z_M$, and $\al_M$ as in \eqref{fMdef} and~\eqref{fmb1}.
We now choose  a drift $\dot \Ups^N$ for~\eqref{E5a}
by setting
\begin{align}
\dot \Ups^N (t) = 2 \cdot \ind_{t > \frac 12} \jb{\nabla} \Big( -Z_M + \sgn(\s) \sqrt{\al_M} f_M \Big), 
\label{paax}
\end{align}

\noi
where $\sgn(\s)$ is the sign of $\s \ne 0$.
Then, we have 
\begin{align}
\Ups^N := I(\dot \Ups^N)(1) 
= \int_0^1 \jb{\nb}^{-1} \dot \Ups^N(t) dt
= - Z_M + \sgn(\s) \sqrt{\al_M} f_M.
\label{paa0}
\end{align}

\noi
Note that for $N \ge M \ge 1$, 
we have $\Ups_N = \pi_N \Ups^N = \Ups^N$, 
since $Z_M$ and $f_M$ are supported on the frequencies $\{|n|\le M\}$.

Let us first make some preliminary computations.
We start with the first two terms under the expectation in \eqref{E5a}:
\begin{align}
\begin{split}
- & \delta   \min\big(\|\rho_\eps \ast (Y_N + \Ups_N +\s \wt\ZZ_N) \|_{\A}^{20},L\big) 
+ \dl \| Y_N + \Ups_N +\s\wt \ZZ_N \|_{\A}^{20} \\
& = -\delta   \min\big(\|\rho_\eps \ast (Y_N + \Ups_N +\s\wt \ZZ_N) \|_{\A}^{20}
- \| Y_N + \Ups_N +\s\wt \ZZ_N \|_{\A}^{20} ,\\
& \hphantom{XXXXXk} L-  \| Y_N + \Ups_N +\s \wt\ZZ_N \|_{\A}^{20} \big) \\
& =:  -\dl \min(\1, \II).
\end{split}
\label{XX8}
\end{align}

\noi
We first consider $\II$.
From Lemma \ref{LEM:emb1}, \eqref{interp}, and Lemma \ref{LEM:fmb}, 
we have 
\begin{align}
\| f_M \|_\A
\les \| f_M \|_{H^{-\frac 14}}
\les \| f_M \|_{L^2}^{\frac 34} \| f_M \|_{H^{-1}}^{\frac 14}
\les M^{-\frac 14}.
\label{pa001}
\end{align}

\noi
From \eqref{paa0}, \eqref{NRZ0} in  Lemma \ref{LEM:leo2},
and  \eqref{pa001}, we have
\begin{align}
\begin{split}
\II & \geq  L-  2 \al_M^{10} \|  f_M \|_{\A}^{20} 
- C\Big( \| Y_N\|_{\A}^{20}  + \|Z_M\|_{\A}^{20} + |\s| \| \ZZ_N \|_{\A}^{20} \Big)\\
 & \geq  L-  C_0 M^5 
- C\Big( \| Y_N\|_{\A}^{20}  + \|Z_M\|_{\A}^{20} + |\s| \| \ZZ_N \|_{\A}^{20} \Big)\\
 & \geq  \frac 12 L
- C\Big( \| Y_N\|_{\A}^{20}  + \|Z_M\|_{\A}^{20} + |\s| \| \ZZ_N \|_{\A}^{20} \Big)
\end{split}
\label{pa001a}
\end{align}

\noi
for $L \gg M^5$.
Note that the second term on the right-hand side is harmless
since it is  bounded under an expectation.
Next, we turn to $\1$ in \eqref{XX8}.
Let $\dl_0$ denote the Dirac delta on $\T^3$. 
Then,  by applying \eqref{paa0},  Young's inequality, 
 Lemma \ref{LEM:emb1}, 
 \eqref{NRZ0}, and \eqref{fM0} in Lemma~\ref{LEM:fmb}
 and by choosing $\eps = \eps(M) > 0$ sufficiently small, 
 we have 
\begin{align}
\begin{split}
\1 & \ge - \Big| \|\rho_\eps \ast (Y_N + \Ups_N +\s \wt\ZZ_N) \|_{\A}^{20}
- \| Y_N + \Ups_N +\s \wt\ZZ_N \|_{\A}^{20} \Big|\\
& \ge - C \| (\rho_\eps - \dl_0)  \ast (Y_N + \Ups_N +\s\wt \ZZ_N) \|_{\A}
 \| Y_N + \Ups_N +\s\wt \ZZ_N \|_{\A}^{19} \\
& \ge - C \al_M^{10} \| (\rho_\eps - \dl_0)  \ast f_M \|_{H^{-\frac 14}}^{20}
- C\Big( \| Y_N\|_{\A}^{20}  + \|Z_M\|_{\A}^{20} + |\s| \| \ZZ_N \|_{\A}^{20} \Big)\\
& \ge - C \eps^5  M^{10} 
- C\Big( \| Y_N\|_{\A}^{20}  + \|Z_M\|_{\A}^{20} + |\s| \| \ZZ_N \|_{\A}^{20} \Big)\\
& =  - C_0
- C\Big( \| Y_N\|_{\A}^{20}  + \|Z_M\|_{\A}^{20} + |\s| \| \ZZ_N \|_{\A}^{20} \Big).
\end{split}
\label{pa001b}
\end{align}

\noi
Therefore, from \eqref{XX8}, \eqref{pa001a}, and \eqref{pa001b}
together with \eqref{fmb1}, Lemma \ref{LEM:emb2} and \eqref{YZ15}, we obtain 
\begin{align}
\E\Big[-\dl \min(\1, \II)\Big] \le C(\dl, \s).
\label{pa002}
\end{align}

Next, we treat the third term under the expectation in \eqref{E5a}.
This term gives the main contribution.
From 
\eqref{paa0} and Young's inequality with  Lemma \ref{LEM:fmb}, 
we have 
\begin{align}
\begin{split}
& \s \int_{\T^3} \Ups_N^3 dx - |\s| \al_M^{\frac 32} \int_{\T^3} f_M^3 dx \\
&= -\s \int_{\T^3} Z_M^3 dx
+3 |\s| \int_{\T^3} Z_M^2 \sqrt{\al_M} f_M dx
- 3\s \int_{\T^3} Z_M \al_M f_M^2 dx \\
&\ge - \eta |\s| \al_M^{\frac 32} \int_{\T^3} f_M^3 dx
- C_{\eta} |\s| \int_{\T^3} |Z_M|^3 dx
\end{split}
\label{pa1}
\end{align}
for any $0<\eta<1$.
Then,
it follows from 
 \eqref{pa1} with $\eta = \frac 12$ and Lemmas \ref{LEM:fmb} and \ref{LEM:leo2}
 that 
\begin{align}
\begin{split}
\E \bigg[ \s \int_{\T^3} \Ups_N^3 dx \bigg]
&\ge (1-\eta) |\s| \al_M^{\frac 32} \int_{\T^3} f_M^3 dx
- C_\eta |\s| \E \bigg[ \int_{\T^3} |Z_M|^3 dx \bigg] \\
&\ges |\s| M^3 - |\s| M^{\frac 32} \\
&\ges |\s| M^3
\end{split}
\label{paa2}
\end{align}
for $M \gg 1$.

We now treat the fourth and sixth terms  under the expectation in \eqref{E5a}.
From \eqref{paa0}, we have $\Ups_N \in \H_{\le 1}$.
Then, by the Wiener chaos estimate (Lemma \ref{LEM:hyp})
and \eqref{paa0} with Lemmas~\ref{LEM:fmb} and \ref{LEM:leo2}, we have 
\begin{align}
\E \Big[ \| \Ups_N \|_{L^2}^3 \Big]
\les \E \Big[ \| \Ups_N \|_{L^2}^2 \Big]^{\frac 32}
\les M^{\frac 32}.
\label{p2b}
\end{align}

\noi
Recall that  both $\ft Z_M$ and $\ft f_M$ are supported on $\{|n|\leq M\}$. 
Then, 
from  \eqref{paax},  \eqref{paa0}, 
and Lemmas \ref{LEM:fmb} and \ref{LEM:leo2} as above,  we have 
\begin{align}
\E \bigg[ \int_0^1 \| \dot \Ups^N (t)\|_{H^1_x}^2  dt \bigg]
\les M^2 \E \Big[ \| \Ups^N \|_{L^2}^2 \Big]
\les M^3. \label{pa4}
\end{align}

We state a lemma which controls
the fifth  term  under the expectation in \eqref{E5a}.
We present the proof of this lemma at the end of this subsection.

\begin{lemma}\label{LEM:L2}
Let $\g > 0$.  Then, we have 
\begin{align}
\E \bigg[ \Big|\int_{\T^3} :\! (Y_N + \Ups_N +\s \wt \ZZ_N)^2 \!: dx \Big|^\g \bigg]  
\le C(\s, \g) < \infty, 
\label{p2a}
\end{align}

\noi
uniformly in $N \ge M\ge 1$.\footnote{Recall from \eqref{paa0} that the definition of 
$\Ups_N$ depends on $M$.}

\end{lemma}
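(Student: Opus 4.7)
The plan is to expand the Wick square, isolate the handful of terms that are individually of size $O(\al_M) \sim O(M)$, and exhibit an algebraic cancellation that is precisely the reason the drift \eqref{paa0} was designed the way it was. Writing $X_N := Y_N + \Ups_N + \s\wt\ZZ_N$ and using $:\! X_N^2 \!: \,= X_N^2 - \s_N$ together with $:\! Y_N^2 \!: \,= Y_N^2 - \s_N$, I would decompose
\begin{align*}
\int_{\T^3} :\! X_N^2 \!: dx
= \int_{\T^3} :\! Y_N^2 \!: dx
+ 2\int_{\T^3} Y_N(\Ups_N+\s\wt\ZZ_N)\,dx
+ \int_{\T^3} (\Ups_N+\s\wt\ZZ_N)^2\,dx.
\end{align*}
Each summand lies in $\H_{\le 2}$, so by the Wiener chaos estimate (Lemma~\ref{LEM:hyp}) it is enough to prove uniform $L^2(\O)$-bounds and then upgrade to $L^\g$-bounds. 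Since $N\ge M$, one has $\Ups_N = -Z_M + \sgn(\s)\sqrt{\al_M}\, f_M$.

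The easy pieces are handled by soft estimates. First, $\int :\!Y_N^2\!:\,dx$ is the zero Fourier mode of $:\!Y_N^2\!:$, and a direct Fourier computation (or Lemma~\ref{LEM:Wick2}) gives $\E\bigl[(\int :\!Y_N^2\!:dx)^2\bigr] \les \sum_n \chi_N(n)^4 \jb{n}^{-4} \les 1$. All terms containing $\wt\ZZ_N$ ($\int Y_N\wt\ZZ_N dx$, $\int\wt\ZZ_N^2 dx$, $\int Z_M\wt\ZZ_N dx$, $\sqrt{\al_M}\int f_M\wt\ZZ_N dx$) are bounded by duality between $\C^{-\frac12-\eps}$ or $H^{-\frac12-\eps}$ and $\C^{1-\eps}$, combined with $\E\bigl[\|\ZZ_N\|_{\C^{1-\eps}}^p\bigr]\les 1$ from \eqref{YZ15}; for the last one we use the Fourier-localization estimate $\|f_M\|_{L^1}\les M^{-3/2}$ coming from \eqref{fMdef}, which absorbs the $\sqrt{\al_M}\les M^{1/2}$ prefactor.

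The heart of the matter is the $L^2$-bound on
\begin{align*}
\mathsf{R} := -2\int_{\T^3} Y_N Z_M\, dx
+ \int_{\T^3} Z_M^2\, dx
+ \al_M \int_{\T^3} f_M^2\, dx,
\end{align*}
together with the cross terms $\sqrt{\al_M}\int Y_N f_M dx$ and $\sqrt{\al_M}\int Z_M f_M dx$ coming from expanding $2\int Y_N\Ups_N dx + \int\Ups_N^2 dx$. Each of the three summands in $\mathsf{R}$ is, to leading order, $\al_M\sim M$: by Lemma~\ref{LEM:leo2}, $\int Y_N Z_M dx = \int Z_M^2 dx + O_{L^2}(1) = \al_M + O_{L^2}(1)$, and by \eqref{fM0} in Lemma~\ref{LEM:fmb}, $\al_M\int f_M^2 dx = \al_M + O(M^{-\alpha+1})$. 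Plugging these in, the $\al_M$-sized contributions collapse as $-2\al_M + \al_M + \al_M = 0$, leaving an $O_{L^2}(1)$ remainder. The cross terms are bounded by Lemma~\ref{LEM:leo2}, which gives $\E\bigl[(\sqrt{\al_M}\int Y_N f_M dx)^2\bigr] + \E\bigl[(\sqrt{\al_M}\int Z_M f_M dx)^2\bigr] \les \al_M\cdot M^{-2}\les M^{-1}$. Summing the contributions yields the desired uniform bound.

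The only delicate step is the exact cancellation in $\mathsf{R}$, which is more a bookkeeping issue than a genuine obstacle: it is exactly the identity that forced the choice $\sqrt{\al_M}\,f_M$ of concentration scale in \eqref{paa0}, matching the $L^2$-taming variance of $Z_M$. Once the cancellation is in place, hypercontractivity delivers the $L^\g$-bound \eqref{p2a} uniformly in $N\ge M\ge 1$.
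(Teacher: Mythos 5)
Your proposal is correct and follows essentially the same route as the paper's proof: both hinge on expanding $\Ups_N^2 + 2Y_N\Ups_N$ with $\Ups_N = -Z_M + \sgn(\s)\sqrt{\al_M}\,f_M$ and exhibiting the exact cancellation of the three $O(\al_M)$ contributions (compare your quantity $\mathsf{R}$ with the paper's algebraic rearrangement \eqref{pabb2}), then controlling the remaining centered pieces via Lemmas~\ref{LEM:fmb} and~\ref{LEM:leo2} and concluding with hypercontractivity. One small slip: the integrals involving $\wt\ZZ_N$ are fourth-order chaos, so the statement ``each summand lies in $\H_{\le 2}$'' should read $\H_{\le 4}$, as in the paper; this costs only a harmless change of constant in the Wiener chaos estimate and does not affect the argument.
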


Therefore, putting 
\eqref{E5a}, \eqref{pa002}, \eqref{paa2}, 
\eqref{p2b},  \eqref{pa4}, and 
Lemma \ref{LEM:L2}  together, 
we obtain 
\begin{align}
\begin{split}
-& \log \E \Big[ \exp\Big(\delta   \min\big(\|\rho_\eps \ast u \|_{\A}^{20},L\big) - \dl \| u_N \|_{\A}^{20}
- R_N^{\dia}(u)\Big) \Big] \\
&\le
-  C_1 |\s| M^3 + C_2 M^3 +C(\dl, \s, \g)
\end{split}
\label{E14}
\end{align}

\noi
for some  $C_1, C_2 > 0 $, 
provided that $L \gg M^5 \gg 1$ and $\eps = \eps(M) > 0$ sufficiently small.
By taking the limits in $N$ and $L$, we 
 conclude from \eqref{E14} that
\begin{align}
\limsup_{L \to \infty} & \lim_{N \to \infty}
\E_\mu \Big[ \exp\Big(\delta   \min\big(\|\rho_\eps \ast u_N \|_{\A}^{20},L\big) - \dl\| u_N \|_{\A}^{20}
- R_N^{\dia}(u)\Big) \Big]
 \notag\\
&
\ge \exp \Big(  C_1 |\s| M^3 - C_2 M^3  - C_0(\s) \Big)
\too \infty, 
\notag
\end{align}
as $M \to \infty$,
provided that
$|\s|$ is sufficiently large.
This proves \eqref{pa00}
and thus we 
 conclude the proof of Proposition \ref{PROP:Gibbsref}.
\end{proof}

We conclude this subsection by presenting the proof of Lemma \ref{LEM:L2}.

\begin{proof}[Proof of Lemma \ref{LEM:L2}]

From  \eqref{YZ12} and \eqref{K9b},
we have
\begin{align}
\begin{split}
\int_{\T^3} \Ups_N \wt \ZZ_N dx
& = \int_0^1 \int_{\T^3} \jb{\nb}^{-\frac 34} \Ups_N \cdot \jb{\nb}^{-\frac 54}
\pi_N^2( : \! Y_N^2(t) \!: ) dxdt\\
& \le
\| \Ups_N \|_{H^{-\frac 34}} \int_0^1 \| : \! Y_N^2(t) \!: \|_{H^{-\frac 54}} dt.
\end{split}
\label{Pabb3}
\end{align}

\noi
As for the first factor, 
it follows from \eqref{paa0}, \eqref{interp}, \eqref{NRZ0}, and Lemma \ref{LEM:fmb} that
\begin{align}
\begin{split}
\| \Ups_N \|_{H^{-\frac 34}}
&\les \| Z_M \|_{H^{-\frac 34}}
+ \sqrt{\al_M} \| f_M \|_{H^{-\frac 34}} \\
&\les \| Z_M \|_{H^{-\frac 34}}
+ \sqrt{\al_M} \| f_M \|_{H^{-1}}^{\frac 34} \| f_M \|_{L^2}^{\frac 14} \\
&\les \| Z_M \|_{H^{-\frac 34}}
+ M^{-\frac 14}.
\label{Pabb4}
\end{split}
\end{align}
Hence, from \eqref{Pabb3}, \eqref{Pabb4}, \eqref{fmb1}, and Lemma \ref{LEM:Dr},
we obtain
\begin{align}
\E \bigg[ \Big| \int_{\T^3} \Ups_N  \wt \ZZ_N dx \Big|^2 \bigg]
\les \E \Big[ \| \Ups_N \|_{H^{-\frac 34}}^2 \Big]
+ \E \Big[\| : \! Y_N^2 (t)\!: \|_{L^1_t([0, 1]; H^{-\frac 54}_x)}^2 \Big]
\les 1.
\label{Pabb5}
\end{align}

From \eqref{paa0}, we have 
\begin{align}
\begin{split}
\Ups_N^2 & + 2 Y_N \Ups^N\\
&=
Z_M^2 -2 \sgn(\s)  \sqrt{\al_M} Z_M f_M + \al_M f_M^2\\
& \quad   -2Y_N Z_M + 2\sgn(\s)  \sqrt{\al_M} Y_N f_M \\
&= (Z_M^2-\al_M) -2 \sgn(\s)  \sqrt{\al_M} Z_M f_M + \al_M (-1 + f_M^2)
+ 2 \al_M\\
&\quad  -2(Y_N Z_M - Z_M^2) - 2(Z_M^2 - \al_M) -2 \al_M + 
2\sgn(\s)  \sqrt{\al_M} Y_N f_M\\
& = 
- (Z_M^2-\al_M) -2 \sgn(\s)  \sqrt{\al_M} Z_M f_M + \al_M (-1 + f_M^2)\\
&\quad  -2(Y_N Z_M - Z_M^2) + 
2\sgn(\s)  \sqrt{\al_M} Y_N f_M.
\end{split}
\label{pabb2}
\end{align}

\noi
Note from \eqref{YZ12} and \eqref{paa0} that 
$\int_{\T^3} :\! (Y_N + \Ups_N +\s \wt \ZZ_N)^2 \!: dx  \in \H_{\le 4}$.
Then, 
from the Wiener chaos estimate (Lemma \ref{LEM:hyp}), \eqref{paa0}, \eqref{Pabb5}, \eqref{pabb2},
and 
Lemmas \ref{LEM:Dr} and \ref{LEM:leo2} with \eqref{fM0}, 
we have 
\begin{align*}
 \E & \bigg[ \Big|\int_{\T^3} :\! (Y_N + \Ups_N +\s \wt \ZZ_N)^2 \!: dx \Big|^\g \bigg]  \\
& \le C(\g) \Bigg\{\E \bigg[ \Big|\int_{\T^3} :\! (Y_N + \Ups_N +\s \wt \ZZ_N)^2 \!: dx \Big|^2 \bigg] 
\Bigg\}^\frac \g2 \\
&=  C(\g) \Bigg\{\E \bigg[ \Big|\int_{\T^3} :{Y_N^2}: dx
+ \int_{\T^3} (\Ups_N^2 + 2Y_N \Ups_N) dx
+\s^2 \int_{\T^3}\wt  \ZZ_N^2 dx \\
&\hphantom{XXXXXX}
+ 2 \s \int_{\T^3} \Ups_N \wt \ZZ_N dx
+ 2 \s \int_{\T^3}Y_N \wt  \ZZ_N  dx
\Big|^2 \bigg] \Bigg\}^\frac \g2 
\\
&\le  C(\g) \Bigg\{ \E \bigg[ \Big(\int_{\T^3} :{Y_N^2}: dx \Big)^2 \bigg]
+ \s^4 \E \bigg[ \Big( \int_{\T^3} \wt \ZZ_N^2 dx \Big)^2 \bigg] \\
&\hphantom{XXXX}
+ \s^2 \E \bigg[ \Big( \int_{\T^3} \Ups_N \wt \ZZ_N dx \Big)^2 \bigg]
+ \s^2 \E \bigg[ \Big( \int_{\T^3} Y_N\wt  \ZZ_N dx \Big)^2 \bigg] \\
&\hphantom{XXXX}
+ \E \bigg[ \Big( -\int_{\T^3} Y_N Z_M dx + \int_{\T^3} Z_M^2 dx \Big)^2 \bigg] \\
&\hphantom{XXXX}
 + \E \bigg[ \Big( \int_{\T^3} Z_M^2 dx - \al_M \Big)^2 \bigg] 
+ \al_M^2 \bigg( - 1 + \int_{\T^3} f_M^2 dx \bigg)^2 \\
&\hphantom{XXXX}
 + \al_M  \E \bigg[ \Big( \int_{\T^3} Y_N f_M dx \Big)^2 \bigg]
 + \al_M \E \bigg[ \Big( \int_{\T^3} Z_Mf_M dx \Big)^2 \bigg]  \Bigg\}^\frac \g2 \\
&\le C(\s, \g), 
\end{align*}

\noi
which yields the bound \eqref{p2a}.
\end{proof}

\subsection{Non-convergence of the truncated $\Phi^3_3$-measures}
\label{SUBSEC:nonconv}

In this subsection, we present the proof of 
Proposition \ref{PROP:nonconv}
on non-convergence of the truncated $\Phi^3_3$-measures $\{\rho_N\}_{N \in \N}$.

We first define a slightly different  tamed version of the truncated $\Phi^3_3$-measure
by setting
\begin{align}
d \nu_{\delta}^{(N)} (u)
=(Z_{\delta}^{(N)})^{-1} \exp \Big( -\delta \| u \|_{\A}^{20} - R_N^{\dia}(u) \Big) d \mu (u)
\label{rho3}
\end{align}
for $N \in \N$ and $\dl>0$,
where the $\A$-norm and $R_N^\dia$ are as in \eqref{RM-2} and \eqref{K1r},
respectively,  and
\begin{align*}
Z_{\delta}^{(N)}
= \int \exp \Big( -\delta \| u \|_{\A}^{20} - R_N^{\dia}(u) \Big) d \mu (u).
\end{align*}

\noi
As compared to  $\nu_{N, \dl}$ in \eqref{RM-1}, 
there is no frequency cutoff $\pi_N$ in  
the taming $-\delta \| u \|_{\A}^{20}$ in \eqref{rho3}. 
As a corollary to the proof of Proposition \ref{PROP:ref}, 
we obtain the following convergence result
for~$\nu_\dl^{(N)}$.

\begin{lemma}\label{LEM:conv1}
Let  $\dl > 0$, 
Then,  as measures on $\C^{-100}(\T^3)$, 
the sequence of measures $\{ \nu_{\delta}^{(N)} \}_{N \in \N}$ defined in \eqref{rho3} converges weakly to the limiting measure 
 $\nu_\dl$
constructed in Proposition \ref{PROP:ref}.

\end{lemma}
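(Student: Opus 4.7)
The plan is to prove $\nu_\dl^{(N)}\to\nu_\dl$ by comparing with $\nu_{N,\dl}$ and invoking Proposition~\ref{PROP:ref}. The two measures are related by
$$\frac{d\nu_\dl^{(N)}}{d\nu_{N,\dl}}(u) = \frac{Z_{N,\dl}}{Z_\dl^{(N)}}\,G_N(u), \qquad G_N(u):=\exp\bigl(\dl\|\pi_N u\|_\A^{20}-\dl\|u\|_\A^{20}\bigr),$$
so that for every bounded continuous $F:\C^{-100}(\T^3)\to\R$,
$$\int F\,d\nu_\dl^{(N)} = \frac{\int FG_N\,d\nu_{N,\dl}}{\int G_N\,d\nu_{N,\dl}}.$$
Combined with $\int F\,d\nu_{N,\dl}\to\int F\,d\nu_\dl$ from Proposition~\ref{PROP:ref}, the lemma reduces to showing $G_N\to 1$ in $L^1(\nu_{N,\dl})$, from which both $\int G_N\,d\nu_{N,\dl}\to 1$ and $\int FG_N\,d\nu_{N,\dl}\to\int F\,d\nu_\dl$ follow.

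To prepare, I would first re-run Step~3 of the proof of Proposition~\ref{PROP:ref} with $\|u\|_{W^{-5/8,3}}$ in place of $\|u_N\|_{W^{-5/8,3}}$, obtaining $\sup_N\int\exp(c_0\|u\|_{W^{-5/8,3}})\,d\nu_{N,\dl}<\infty$. Combined with the identity~\eqref{ref1} at $\dl'=\dl/2$ (which furnishes uniform exponential moments of $\|\pi_N u\|_\A^{20}$) and with the Bernstein-type gain $\|u-\pi_N u\|_\A\les N^{-1/8}\|u\|_{W^{-5/8,3}}$ (since $\|(\Id-\pi_N)u\|_{W^{-3/4,3}}\les N^{-1/8}\|u\|_{W^{-5/8,3}}$ and $W^{-3/4,3}\hra\A$ from Lemma~\ref{LEM:emb2}), the decomposition $\|u\|_\A\le\|\pi_N u\|_\A+\|u-\pi_N u\|_\A$ then yields the $N$-uniform polynomial moments $\sup_N\int\|u\|_\A^{p}\,d\nu_{N,\dl}<\infty$ for every finite~$p$.

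Setting $X_N:=\dl(\|\pi_N u\|_\A^{20}-\|u\|_\A^{20})$ and applying the factorization $a^{20}-b^{20}=(a-b)\sum_{j=0}^{19}a^jb^{19-j}$ with H\"older's inequality, the Bernstein gain above produces $\|X_N\|_{L^2(\nu_{N,\dl})}\les N^{-1/8}$, whence $X_N\to 0$ and $G_N\to 1$ in $\nu_{N,\dl}$-probability. Using $|e^x-1|\le |x|e^{|x|}$ and Cauchy--Schwarz,
$$\int|G_N-1|\,d\nu_{N,\dl}\le \|X_N\|_{L^2(\nu_{N,\dl})}\Bigl(\int e^{2|X_N|}\,d\nu_{N,\dl}\Bigr)^{1/2}.$$
For the second factor, a careful application of Young's inequality to the same factorization yields the pointwise bound $2|X_N|\le c_N(\|u\|_\A^{20}+\|u\|_{W^{-5/8,3}}^{20})$ with $c_N=O(N^{-1/8})$; since $c_N$ is eventually much smaller than $\dl/2^{20}$, the decomposition $\|u\|_\A^{20}\le 2^{19}(\|\pi_N u\|_\A^{20}+\|u-\pi_N u\|_\A^{20})$ together with uniform exponential integrability of $\|\pi_N u\|_\A^{20}$ (via~\eqref{ref1}) and of $\|u\|_{W^{-5/8,3}}$ gives $\sup_N\int e^{2|X_N|}\,d\nu_{N,\dl}<\infty$. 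Combined with the $L^2$-smallness of $X_N$, this yields $G_N\to 1$ in $L^1(\nu_{N,\dl})$ and hence the claim.

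The main obstacle is the critical integrability of the taming functional: both $\|u\|_\A^{20}$ and $\|\pi_N u\|_\A^{20}$ saturate the exponential integrability of $\nu_{N,\dl}$, so any naive uniform $L^p$-bound on $G_N$ with $p>1$ fails (the naive bound $G_N\le e^{\dl\|\pi_N u\|_\A^{20}}$ integrates to $Z_{N,0}/Z_{N,\dl}$, which is unbounded in the strongly nonlinear regime). All the smallness must therefore be extracted from the Bernstein gain $N^{-1/8}$ in the difference $u-\pi_N u$, leveraged against polynomial (rather than exponential) moments of $\|u\|_\A$.
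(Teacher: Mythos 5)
Your initial reduction---comparing $\nu_\dl^{(N)}$ and $\nu_{N,\dl}$ via the Radon--Nikodym derivative $G_N$ and reducing the lemma to showing $G_N\to 1$ in $L^1(\nu_{N,\dl})$---is sound and is essentially what the paper does. The gap is in the Cauchy--Schwarz step: you need $\sup_N \int e^{2|X_N|}\,d\nu_{N,\dl} < \infty$, but this integral is in fact \emph{infinite} for every $N$. On the event where $\|\pi_N u\|_{\A}$ is small while $\|\pi_N^\perp u\|_{\A}$ is large one has $|X_N| = \dl\,\big|\|\pi_N u\|_{\A}^{20} - \|u\|_{\A}^{20}\big| \gtrsim \|\pi_N^\perp u\|_{\A}^{20}$; since $\pi_N^\perp u$ retains the Gaussian marginal under $\nu_{N,\dl}$ (the taming density depends only on $\pi_N u$) and $\|\pi_N^\perp u\|_{\A}$ is a subGaussian norm of a Gaussian field, its 20th power has only stretched-exponential tails and so $e^{\eps \|\pi_N^\perp u\|_{\A}^{20}}$ is not integrable for \emph{any} $\eps > 0$. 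Your Young bound $2|X_N| \le c_N(\|u\|_{\A}^{20}+\|u\|_{W^{-5/8,3}}^{20})$ suffers from the same defect: both terms are 20th powers of subGaussian norms, and uniform exponential integrability of $\|u\|_{W^{-5/8,3}}$ to the \emph{first} power (which you cite) does not give exponential integrability of $\|u\|_{W^{-5/8,3}}^{20}$. Replacing Cauchy--Schwarz by H\"older with any exponent $p'>1$ on the $e^{|X_N|}$ factor fails for the same reason.

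The paper sidesteps this by applying the mean value theorem directly to the difference of exponentials,
\[
\big|e^{-\dl\|u\|_{\A}^{20}} - e^{-\dl\|u_N\|_{\A}^{20}}\big|
\le \dl\, e^{-\dl\min(\|u\|_{\A}^{20},\,\|u_N\|_{\A}^{20})}\big|\|u\|_{\A}^{20}-\|u_N\|_{\A}^{20}\big|,
\]
and crucially using the uniform boundedness $\|\pi_N u\|_{\A}\les\|u\|_{\A}$ (property (i) of the cube projector from Subsection~\ref{SUBSEC:freq}) to obtain $\min(\|u\|_{\A}^{20},\|u_N\|_{\A}^{20})\ge c_0\|u_N\|_{\A}^{20}$. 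This preserves a decaying exponential factor (with the smaller constant $c_0\dl$), so the resulting integral is controlled by $N^{-1/8}\,Z_{N,c_0\dl}\,\int\|u\|_{W^{-5/8,3}}^{20}\,d\nu_{N,c_0\dl}$, which needs only \emph{polynomial} moments of $\|u\|_{W^{-5/8,3}}$---and these are harmlessly finite. In your setup the right move is to split $\int|G_N-1|\,d\nu_{N,\dl}$ by the sign of $X_N$: on $\{X_N\le 0\}$ use $|e^{X_N}-1|\le|X_N|$, and on $\{X_N>0\}$ observe that $e^{X_N}$ times the $\nu_{N,\dl}$-density is exactly $Z_{N,\dl}^{-1}e^{-\dl\|u\|_{\A}^{20}-R_N^\dia(u)}$, whose taming by $\|u\|_{\A}^{20}$ can then absorb the growth of $X_N$. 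The single Cauchy--Schwarz with $e^{|X_N|}$ throws this structure away and cannot be made to work.
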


\begin{proof}

By the definitions \eqref{RM-1} and \eqref{rho3} of $\nu_{N, \dl}$ and $\nu_\dl^{(N)}$, it suffices to prove
\begin{align*}
\lim_{N \to \infty}
\bigg\{& \int F(u) \exp \Big( -\delta \| u \|_{\A}^{20}  - R_N^{\dia}(u) \Big) d \mu (u)\\
&  - \int F(u)\exp \Big(  -\delta \| u_N \|_{\A}^{20} - R_N^{\dia}(u) \Big) d \mu (u)
\bigg\} = 0
\end{align*}

\noi
for any bounded continuous function 
 $F: \C^{-100}(\T^3) \to \R$.
In the following, we prove 
\begin{align}
\lim_{N\to \infty} \int \Big| \exp \Big( -\delta \| u \|_{\A}^{20} - R_N^{\dia}(u) \Big)  - \exp \Big( -\delta \| u_N \|_{\A}^{20} - R_N^{\dia}(u) \Big)\Big| d \mu (u) = 0.
 \label{rho4} 
\end{align}

By the uniform boundedness of the frequency projector $\pi_N$ on $\A$, 
we have  
\begin{align}
\| u_N \|_\A \les \| u \|_\A,
\label{rho5} 
\end{align}

\noi
uniformly in $N \in \N$.
Then, it follows from the mean-value theorem, \eqref{rho5}, 
and the Schauder estimate \eqref{Sch}
that there exists $c_0 > 0$ such that 
\begin{align}
\begin{split}
\int &  \Big| \exp \Big( -\delta \| u \|_{\A}^{20} - R_N^{\dia}(u) \Big)  - \exp \Big( -\delta \| u_N \|_{\A}^{20} - R_N^{\dia}(u) \Big)\Big| \, d \mu (u)\\
& \les  \dl \int \exp \Big( -\delta \min\big(\| u \|_{\A}^{20}, \| u_N\|_{\A}^{20}\big) - R_N^{\dia}(u) \Big) 
\big|\|u\|_{\A}^{20}-\|u_N\|_{\A}^{20}\big| \, d \mu (u)\\
& \les \dl  \int \exp \Big( -\delta c_0 \| u_N\|_{\A}^{20} - R_N^{\dia}(u) \Big) \|u-u_N\|_{\A}\|u\|_{\A}^{19} d \mu (u)\\
& \les  \dl \int \exp \Big( -\delta c_0 \| u_N\|_{\A}^{20} - R_N^{\dia}(u) \Big) N^{-\frac 18} \|u\|_{W^{-\frac 58, 3}}^{20} d \mu (u).
\end{split}
\label{rho6a}
\end{align}

\noi
In the last step, we used the following bound:
\[ \| u - u_N \|_{\A} \les \|\pi_N^\perp u\|_{W^{-\frac 34, 3}} \les N^{-\frac 18} \| u\|_{W^{-\frac 58, 3}}, \]

\noi
which follows from 
\eqref{RM-2}, \eqref{Sch}, and the fact that 
$\pi_N^\perp u = u - u_N$ has the frequency support $\{|n|\ges N\}$.
Therefore, by \eqref{RM-1}, Proposition \ref{PROP:ref}, and \eqref{ref3}, we obtain
\begin{align*}
\limsup_{N\to \infty}&  \int \Big| \exp \Big( -\delta \| u \|_{\A}^{20} - R_N^{\dia}(u) \Big)  - \exp \Big( -\delta \| u_N \|_{\A}^{20} - R_N^{\dia}(u) \Big)\Big| d \mu (u) \\
&\les \dl\lim_{N\to \infty}
 \int \exp \Big( -\delta c_0 \| u_N\|_{\A}^{20} - R_N^{\dia}(u) \Big) N^{-\frac 18} \|u\|_{W^{-\frac 58, 3}}^{20} d \mu (u) \\
&= \dl \lim_{N\to \infty} N^{-\frac 18} Z_{N, c_0\dl} \int \|u\|_{W^{-\frac 58, 3}}^{20}  d \nu_{N, c_0\dl}\\
&= 0.
\end{align*}

\noi
This proves \eqref{rho4}.
\end{proof}

\begin{remark}\label{REM:non1}\rm
In the penultimate step of 
\eqref{rho6a}, we used the boundedness of the cube frequency projector $\pi_N = \pi_N^\text{cube}$
on $L^3(\T^3)$ and hence
this argument 
does not work 
for the ball frequency projector $\pi_N^\text{ball}$
defined in \eqref{pib1}.

\end{remark}

We conclude this section by presenting the proof of Proposition \ref{PROP:nonconv}.

\begin{proof}[Proof of Proposition \ref{PROP:nonconv}]
Suppose by contradiction that, as probability measures on  $\A$, 
 $\{\rho_{N_k}\}_{k \in \N}$ 
has a weak limit $\nu_0$. 
Then, given any 
 $\delta > 0$,  from  Lemma \ref{LEM:conv1}
 with \eqref{rho3} and  \eqref{GibbsN}, we have 
\begin{align}
\begin{split}
d\nu_\delta 
&= \lim_{k \to \infty} \frac{\exp \Big( -\delta \| u \|_{\A}^{20} - R_{N_k}^{\dia}(u) \Big)}
{\int \exp \Big( -\delta \| v \|_{\A}^{20} - R_{N_k}^{\dia}(v) \Big) d \mu (v)} d \mu (u) \\
&=\lim_{k\to \infty} \frac{\exp \big( -\delta \| u \|_{\A}^{20} \big) }
{\int \exp \big( -\delta \| v \|_{\A}^{20} \big) d \rho_{N_k} (v)} d \rho_{N_k} (u) \\
&= \frac{\exp \big( -\delta \| u \|_{\A}^{20} \big)}{\int \exp \big( -\delta \| v \|_{\A}^{20} \big) d \nu_0 (v)} d \nu_0 (u),
\end{split}
\label{rho7}
\end{align}

\noi
where the limits are interpreted as weak limits of measures on $\C^{-100}(\T^3)$. 
Note that, in the last step,  we used the weak convergence in $\A$ of the 
truncated $\Phi^3_3$-measures $\rho_{N_k}$, since $\exp ( -\delta \| u \|_{\A}^{20} )$ is continuous on $\A$, but not on $\C^{-100}(\T^3)$.
Therefore, from \eqref{rho7} and \eqref{ref1a}, we obtain
\begin{align}
 d\nu_0(u)
= \bigg(\int \exp \big( -\delta \| v \|_{\A}^{20} \big) d \nu_0 (v)\bigg) 
d\cj \rho_\dl (u).
\label{rho8}
\end{align}

\noi
By assumption, $\nu_0$ is a probability measure on $\A$
and thus $\| u \|_{\A} < \infty$, $\nu_0$-almost surely.
By the fact that  $\nu_0$ is a probability measure, \eqref{rho8}, 
 and Proposition \ref{PROP:Gibbsref}, 
 we obtain
\begin{align*}
1 &= \int 1 \, d \nu_0\\
&= \int \exp \Big( -\delta \| u \|_{\A}^{20} \Big) d \nu_0 (u) 
\int 1 \,  d \cj \rho_\dl (u)\\
&= \infty,
\end{align*}

\noi
which yields a 
contradiction.
Therefore, 
no subsequence of the truncated $\Phi^3_3$-measures $\rho_N$ has a weak limit
 as probability measures on  $\A$.
\end{proof}

\section{Local well-posedness}
\label{SEC:LWP}

In this section,
we present the proof of Theorem \ref{THM:LWP0} on 
  local well-posedness
of the (renormalized) hyperbolic $\Phi^3_3$-model \eqref{SNLW1}:
\begin{align}
\dt^2 u + \dt u + (1 -  \Dl)  u - \s :\! u^2 \!: + M (\,:\! u^2 \!:\,) u = \sqrt{2} \xi,
\label{SNLWA2B}
\end{align}

\noi
where $M$ is defined as in \eqref{addM}.
For the local theory, the size of $\s \ne 0$ does not play any role
and hence we set $\s = 1$ in the remaining part of this section.
As mentioned in Section \ref{SEC:1}, 
local well-posedness of \eqref{SNLWA2B}
follows from a slight modification of the argument in \cite{GKO2, OOTol1}.
We, however, point out that the argument in \cite{GKO2} 
on the quadratic SNLW alone
is not sufficient due to the additional term 
$M (\,:\! u^2 \!:\,) u$, coming from the taming in constructing
the $\Phi^3_3$-measure.

\subsection{Paracontrolled approach}

In this subsection, we 
go over a paracontrolled approach to rewrite the equation \eqref{SNLWA2B}
into a system of three unknowns.
While our presentation closely follows those in \cite{GKO2, OOTol1}, 
we present some details for readers' convenience.
Proceeding 
 in the spirit of 
\cite{CC, MW1, GKO2, OOTol1}, 
we 
transform the quadratic SdNLW \eqref{SNLWA2B} to a system of PDEs.
In order to treat the additional term $M (\,:\! u^2 \!:\,) u$ in \eqref{SNLWA2B},
which contains an ill-defined product in $:\! u^2 \!:\,$, 
we follow the approach
 in our previous work \cite{OOTol1}
on the focusing Hartree $\Phi^4_3$-model,
which leads to 
 the system of three equations; see \eqref{SNLW6} below.
Compare this with  \cite{CC, MW1, GKO2}, 
where the resulting  systems consist of two equations.
At the end of this subsection, 
we state a local well-posedness result
 of the resulting system.

The main difficulty in studying the hyperbolic $\Phi^3_3$-model \eqref{SNLWA2B}
comes from the roughness of the space-time white noise.
This is already manifested at the level of the linear equation.
Let $\Psi$  denote the stochastic convolution, satisfying
 the following linear stochastic damped wave equation:
\begin{align}
\notag
\begin{cases}
\dt^2 \Psi + \dt\Psi +(1-\Dl)\Psi  = \sqrt{2}\xi\\
(\Psi,\dt\Psi)|_{t=0}=(\phi_0,\phi_1),
\end{cases}
\end{align}

\noi
where  $(\phi_0, \phi_1) = (\phi_0^\o, \phi_1^\o)$ 
is a pair of 
 the Gaussian random distributions with 
$\Law (\phi_0^\o, \phi_1^\o) = \muu = \mu \otimes \mu_0$ in \eqref{gauss1}.
Define the linear damped wave propagator $\D(t)$ by 
\begin{equation} \notag 
\D(t) = e^{-\frac t2 }\frac{\sin\Big(t\sqrt{\tfrac34-\Dl}\Big)}{\sqrt{\tfrac34-\Dl}}
\label{D1}
\end{equation}

\noi
viewed as a Fourier multiplier operator.
By setting
\begin{align}
\jbb{n} = \sqrt{\frac 34+|n|^2},
\label{jbbn}
\end{align}

\noi
we have
\begin{align}
\D(t) f  =  e^{-\frac t2}\sum_{n\in \Z^3} 
\frac{\sin (t \jbb{n})}{\jbb{n}}
 \ft f (n) e_n.
\label{W3}
\end{align}

\noi
Then, the stochastic convolution $\Psi$ can be expressed as 
\begin{equation}
\Psi (t) =  S(t)(\phi_0, \phi_1) + \sqrt 2 \int_0^t\D(t-t')dW(t'), 
\label{W2}
\end{equation}

\noi
where  $S(t)$ is defined by 
\begin{align}
S(t) (f, g) = \dt\D(t)f +  \D(t) (f + g)
\label{St0}
\end{align}

\noi
and 
$W$ denotes a cylindrical Wiener process on $L^2(\T^3)$ defined in \eqref{W1}.
It is easy to see that $\Psi$ almost surely lies in 
$C (\R_+;W^{-\frac{1}{2}-\eps, \infty}(\T^3))$
for any $\eps > 0$; 
see Lemma \ref{LEM:stoconv0} below.
In the following, we use $\eps > 0$ to denote
a small positive constant, which can be arbitrarily small.

In the following,  we adopt Hairer's convention to denote the stochastic terms by trees;
the vertex ``\,$\<dot>$\,''  corresponds to the space-time white noise $\xi$,
while the edge denotes the Duhamel integral operator 
$\I$ given by 
\begin{align}
\I (F) (t) = \int_0^t \D(t - t') F(t') dt'
= \int_0^t e^{-\frac {t-t'}2 }\frac{\sin\Big((t-t')\sqrt{\tfrac34-\Dl}\Big)}{\sqrt{\tfrac34-\Dl}}
F(t') dt'.
\label{lin1}
\end{align}

\noi
With a slight abuse of notation,
we set  
\begin{align}
 \<1> : = 
 \Psi,  
\label{W2a}
\end{align}

\noi
where $\Psi$ is as in \eqref{W2}, 
with the understanding that $\<1>$ in \eqref{W2a} includes the 
random linear solution $S(t)(\phi_0, \phi_1)$.
As mentioned above, $\<1>$
has (spatial) regularity\footnote{We only discuss spatial regularities of various stochastic objects
in this part. Hereafter,   we use $a-$ 
 to denote $a- \eps$ 
for arbitrarily small $\eps > 0$.}
$- \frac 12 -$.

Given $N \in \N$,
we  define the truncated stochastic terms  $\<1>_N$ 
and   $\<20>_N$ by 
\begin{align}
\<1>_N := \pi_N \<1>
\qquad \text{and}
\qquad 
  \<20>_N :=  \I (\<2>_N)
    =  \int_{0}^t \D(t-t') \<2>_N(t') dt', 
\label{so4a}
\end{align}

\noi
where
$\pi_N$ is the frequency projector defined in \eqref{pi} and
$\<2>_N$ is the Wick power
defined by 
\begin{align}
\<2>_N   := \<1>_N^2 - \s_N
\label{so4b}
\end{align}

\noi
 with 
\begin{align}
\s_N &  = \E \big[ \<1>_N^2(x, t)\big]
 =  \sum_{n\in \Z^3} \frac{\chi^2_N(n)}{ \jb{n}^2}  \sim  N
\too \infty, 
\label{sigma1a}
\end{align}

\noi
as $N \to \infty$.
Note that $\s_N$ in \eqref{sigma1a} is independent\footnote{This comes
from the space-time translation invariance of the truncated stochastic convolution $\<1>_N$.} of $(x, t) \in \T^3 \times \R_+$
and  agrees with $\s_N$ defined in \eqref{sigma1}.
Note that we have 
$\<2> = \lim_{N \to \infty} \<2>_N$ in $C([0,T];W^{-1 - ,\infty}(\T^3))$ almost surely.
See Lemma \ref{LEM:stoconv0}.

Next, we define the second order stochastic term $\<20>$:
\begin{align}
  \<20> := 
   \I (\<2>)
  =  \int_{0}^t \D(t-t') \<2>(t') dt', 
\notag
\end{align}

\noi
as a limit of $\<20>_N$ defined in \eqref{so4a}.
With a naive regularity counting, with one degree of smoothing
from  the damped wave Duhamel integral operator $\I$ in \eqref{lin1},  one may expect that 
$\<20>$ has regularity $0- = 2(-\frac 12 -)+1$.
However,
by exploiting the multilinear dispersive smoothing effect, 
Gubinelli, Koch, and the first author showed that 
there is an extra $\frac 12$-smoothing
for $\<20>$ and that $\<20>$ has regularity $\frac 12-$.
See Lemma \ref{LEM:sto1} below.
See also \cite{OOcomp, Bring2, OWZ}
for analogous multilinear dispersive smoothing 
for the random wave equations.
In particular, see \cite{Bring2, OWZ}, 
where multilinear smoothing has been studied extensively
for higher order stochastic objects in the cubic case.

If we proceed with the second order expansion as in \cite{GKO2}:
\begin{align*}
u = \<1> + \<20> + v, 
\end{align*}

\noi
the residual term $v$ satisfies the equation of the form:
\begin{align*}
(\dt^2 + \dt  +1 -  \Dl)  v 
& =    2 v \<1> + 2 \<1>\<20> + \text{other terms}.
\end{align*}

\noi
Inheriting the worse regularity $-\frac 12-$ of $\<1>$, 
the second  term  $\<1>\<20>$ 
has regularity $-\frac 12 -$.
Hence, we expect $v$ to have regularity at most $\frac 12- = (-\frac 12-) + 1$.
In particular, the product 
$v \<1>$ is not well defined since $(\frac 12-) + (-\frac 12-) < 0$.

In order to overcome this problem, 
we  now introduce a paracontrolled ansatz
as in \cite{MW1, GKO2}:
\begin{align}
u = \<1> + \<20> + X+ Y,
\label{decomp3a}
\end{align}

\noi
where $X$ and $Y$ satisfy
\begin{align}
\begin{split}
 (\dt^2 +\dt +1 - \Dl) X  & =  2 (X+Y+\<20>)\pl \<1>  - M( \,:\! u^2 \!:\, ) \<1>,
\end{split}
\label{SNLW5}\\
\begin{split}
 (\dt^2 +\dt +1  - \Dl) Y 
&  = (X+Y+\<20>)^2 +  2(X+Y+\<20>)\pge \<1> \\
& \quad   - M( \,:\! u^2 \!:\, )(X+Y + \<20>)
\end{split}
\label{SNLW5y}
\end{align}

\noi
with the understanding that
\begin{align}
:\! u^2 \!:\,  = (X+Y+ \<20>)^2 + 2 (X+Y) \<1> + 2 \<1> \<20> + \<2> .
\label{u2}
\end{align}

\noi
Here, $\pge = \pg + \pe$.
Note that, in the $X$-equation \eqref{SNLW5}, we collected the worst terms from the $v$-equation, 
while all the terms in the $Y$-equation \eqref{SNLW5y} are expected to behave better
(that is, if the resonant product in \eqref{SNLW5y} can be given a meaning).
We point out that the problematic term $M( \,:\! u^2 \!:\, )$
appears in {\it both} equations, unlike the situation in \cite{GKO2}.

There are two resonant products in the system \eqref{SNLW5} - \eqref{SNLW5y}, 
which do not a priori  make sense:
$\<20>\pe \<1>$ and $X\pe \<1>$.
We can use stochastic analysis and multilinear harmonic analysis to 
give a meaning to 
 the first resonant product:
\begin{align*}
\<21p> := \<20>\pe \<1>
\end{align*}

\noi
as a distribution of regularity $0- = (\frac{1}{2}-) + (-\frac 12 -)$
(without renormalization).
See Lemma~\ref{LEM:sto2} below.
This in particular says that $Y$ has expected regularity $1-$.

In view of Lemma \ref{LEM:para}, 
the right-hand side of \eqref{SNLW5} has regularity $-\frac 12-$
(if we pretend that $M( \,:\! u^2 \!:\, )$ makes sense), 
and thus 
we expect that 
$X$ has regularity $\frac 12-$.
In particular, 
the resonant product    $X \pe \<1>$ in the $Y$-equation is not well defined since 
 the sum of the regularities is negative.
In \cite{GKO2}, this issue was overcome
by substituting the Duhamel formulation of the $X$-equation
into the resonant product
$X \pe \<1>$ and then introducing certain paracontrolled operators
(see \eqref{X2}, \eqref{X3}, and \eqref{X6} below).
This was possible in~\cite{GKO2} since
there was no additional term $ M( \,:\! u^2 \!:\, )$ in the system, in particular
in the $X$-equation.
In our current problem, 
the problematic resonant product 
$X\pe \<1>$ also appears in $ M( \,:\! u^2 \!:\, )$, in particular, in the $X$-equation.
Thus, a strategy in \cite{MW1, GKO2} of substituting
the Duhamel formulation of the $X$-equation into $X\pe \<1>$ would lead
to an infinite iteration of such substitutions.
We point out that such an infinite
iteration of the Duhamel formulation works in certain situations
but we choose an alternative approach which is simpler.

The main idea is to follow the strategy in our previous work \cite{OOTol1}
and introduce a new unknown, representing the problematic resonant product:
\begin{align}
\text{``}\,\Res = X \pe \<1>\,\text{''}
\label{Res1}
\end{align}

\noi 
which leads to a system of three
unknowns $(X, Y, \Res)$.

We now turn our attention to $:\!u^2\!:$ in \eqref{u2}.
Let $\Qxy$ to 
denote
 a good part of $:\! u^2 \!:$ defined by 
\begin{align}
\Qxy =
(X+Y)^2 + 2 (X+Y) \<20> +2 X \pl \<1> + 2 X \pg \<1> + 2 Y \<1>.
\label{Pxy}
\end{align}
In view of $X \pl \<1>$ and $ Y \<1>$, 
$\Qxy$ has (expected) regularity 
 $ - \frac 12-$
From \eqref{decomp3a}, \eqref{Res1}, and \eqref{Pxy}, 
we can write $:\!u^2\!:$ as
\begin{align}
:\! u^2 \!: \,
= \Qxy + 2 \Res + \<20>^2 + 2 \<21> + \<2>, 
\label{u3}
\end{align}

\noi 
where $\<21>$ denotes the product of  $\<20>$ and  $\<1>$ given by 
\[
\<21>
=  \<20> \pl \<1> +  \<21p> 
+  \<20> \pg \<1>.
\]

\noi
By substituting the Duhamel formulation of
the $X$-equation \eqref{SNLW5} and \eqref{u3}
into \eqref{Res1}, we obtain
\begin{align}
\Res &=
2 \I \Big( (X+Y+\<20>)\pl \<1> \Big)\pe \<1>
- \I \Big( M \big( \Qxy + 2 \Res + \<20>^2 + 2 \<21> + \<2> \big) \<1> \Big) \pe \<1>.
\label{Res2}
\end{align}

\noi
As we see below, both resonant products on the right-hand side
are not well defined at this point.

Let us consider the  first term on the right-hand side of  \eqref{Res2}:
\begin{align}
\I \Big( (X+Y+\<20>)\pl \<1> \Big)\pe \<1>.
\label{u4}
\end{align}

\noi
Due to the paraproduct structure (with the high frequency part given by $\<1>$)
under  the Duhamel integral operator $\I$, 
we see that the resonant product in \eqref{u4} is not well defined at this point
since a term   $\I (w\pl \<1>)$ has (at best) regularity $\frac 12-$.
In order to give a precise meaning to the right-hand side of \eqref{Res2}, 
we now recall the paracontrolled operators introduced in~\cite{GKO2}.\footnote
{Strictly speaking, the paracontrolled operators introduced in \cite{GKO2}
are for the undamped wave equation. 
Since the local-in-time mapping property remains unchanged, 
we ignore this minor point.}
We point out that
in the parabolic setting, 
it is at this step where
one would introduce commutators  and exploit their smoothing properties.
For our dispersive problem, however, 
one of the commutators does not provide any smoothing
and thus such an argument does not seem to work.
See \cite[Remark~1.17]{GKO2}.

Given a function $w$ on $\T^3 \times \R_+$, 
 define
\begin{align}
\begin{split}
 \If_{\pl}(w) (t)
:  \! &  =  \I (w\pl \<1>)(t) \\
& =       \sum_{n \in \Z^3}
e_n  \sum_{\substack{n =  n_1 +  n_2\\  |n_1| \ll |n_2|}}
\int_0^t e^{-\frac{t-t'}2} \frac{\sin ((t-t')\jbb{n})}{\jbb{n}}
\ft w(n_1, t')\,  \ft{\<1>}(n_2, t') dt',
\end{split}
\label{X2}
\end{align}

\noi
where $\jbb{n}$ is as in \eqref{jbbn}.
Here, $|n_1| \ll |n_2|$ signifies the paraproduct $\pl$
in the definition of $\If_{\pl}$.\footnote{For simplicity of the presentation, 
we use the less precise definitions of paracontrolled operators.
For example, see~\eqref{XX2a}
for the precise definition of 
the paracontrolled operator $ \If_{\pl}^{(1)}$.}
As mentioned above, 
the regularity of 
$ \If_{\pl}(w)$ is (at best) $\frac 12-$
and thus the resonant product 
$ \If_{\pl}(w) \pe \<1>$ does not make sense
in terms of deterministic analysis.
Proceeding as in  \cite{GKO2}, 
we divide the paracontrolled operator $\If_{\pl}$
into two parts.
Fix small $\theta  > 0$.
Denoting by $n_1$ and $n_2$ the spatial frequencies
of $w$ and $\<1>$ as in \eqref{X2}, 
we  define $ \If_{\pl}^{(1)}$ and $ \If_{\pl}^{(2)}$ 
as the restrictions of $\If_{\pl}$ onto $\{ |n_1| \ges |n_2|^{\theta}\}$
and  $\{ |n_1| \ll |n_2|^{\theta}\}$.
More concretely, we set 
\begin{align}
 \If_{\pl}^{(1)}(w)(t) 
:= \sum_{n \in \Z^3}
e_n  \sum_{\substack{n =  n_1 +  n_2\\    |n_2|^\theta \les |n_1| \ll |n_2|}}
\int_0^t e^{-\frac{t-t'}2} \frac{\sin ((t-t') \jbb{n})}{\jbb{n}}
\ft w(n_1, t')\,  \ft{\<1>}(n_2, t') dt'
\label{X3}
\end{align}

\noi
and 
\begin{align}
 \If_{\pl}^{(2)}(w)   := \If_{\pl}(w) - \If_{\pl}^{(1)}(w).
 \label{X3a}
\end{align}

\noi
As for the first paracontrolled operator $ \If_{\pl}^{(1)}$, 
 the lower bound  $|n_1| \ges |n_2|^\theta$ 
 and the positive regularity of $w$
 allow us to prove 
a smoothing property
such that 
 the resonant product 
$ \If_{\pl}^{(1)}(w) \pe \<1>$ is well defined.
See Lemma~\ref{LEM:sto3}
below.

As noted in \cite{GKO2},
the second paracontrolled operator $ \If_{\pl}^{(2)}$
does not seem to possess a (deterministic) smoothing property.
One of the main novelties in \cite{GKO2}
was then to directly 
 study the  random operator $\If_{\pl, \pe}$ defined by
\begin{align}
\begin{split}
\If_{\pl, \pe}(w) (t)
: \!  & =
  \If_{\pl}^{(2)}(w)\pe \<1> (t) \\
&  = \sum_{n \in \Z^3}e_n 
    \int_0^{t} 
\sum_{n_1 \in \Z^3}
\ft w(n_1, t') \A_{n, n_1} (t, t') dt', 
\end{split}
\label{X6}
\end{align}

\noi
where $\A_{n, n_1} (t, t')$ is given by 
\begin{align}
\A_{n, n_1} (t, t')
= \ind_{[0 , t]}(t') \sum_{\substack{n - n_1 =  n_2 + n_3\\ |n_1| \ll |n_2|^\theta \\ |n_1 + n_2|\sim |n_3|}}
 e^{-\frac{t-t'}{2}} \frac{\sin ( (t - t') \jbb{n_1+n_2})}{\jbb{n_1+n_2}}
   \ft{\<1>}(n_2, t')\,   \ft{\<1>}(n_3, t) .
\label{X7}
\end{align}

\noi
Here, 
the condition  $ |n_1 + n_2|\sim |n_3|$
is used to denote the  spectral multiplier corresponding 
to the resonant product $\pe$ in \eqref{X6}.
See \eqref{A0a} and \eqref{A0b} for the precise definitions.
The almost sure bounded property of 
the  random operator $\If_{\pl, \pe}$
was studied in \cite{GKO2, OOTol1}.
See Lemma \ref{LEM:sto4J} below.

Next, we consider 
the  second  term on the right-hand side of  \eqref{Res2}:
\begin{align}
\I \Big( M \big( \Qxy + 2 \Res + \<20>^2 + 2 \<21> + \<2> \big) \<1> \Big) \pe \<1>.
\label{u5}
\end{align}

\noi
Once again, the resonant product is not well defined since
the sum of regularities is negative.
The term \eqref{u5} appeared in our previous work 
\cite{OOTol1} on the focusing Hartree $\Phi^4_3$-model, 
where we introduced 
 the following stochastic term:
\begin{align}
\Ab(x, t, t') = \sum_{n \in \Z^3} e_n(x) \sum_{\substack{n = n_1 + n_2\\|n_1|\sim| n_2|}}
 e^{-\frac{t-t'}{2}} \frac{\sin ( (t - t') \jbb{n_1})}{\jbb{n_1}}
 \ft{\<1>} (n_1, t') \ft{\<1>} (n_2, t)
\label{sto3}
\end{align}

\noi
for $t \geq t' \geq 0$, 
where $|n_1|\sim| n_2|$ signifies the resonant product.
Then, we have
\begin{align}
\Big( \I\big( M(w) \<1> \big) \pe \<1> \Big)(t)
= \int_0^t 
M(w) (t') \Ab(t, t') dt'.
\label{sto1a}
\end{align}

\noi
We point out that the Fourier transform $\ft \Ab(n, t, t')$
corresponds to $\A_{n, 0}(t, t')$ defined in \eqref{X7}
and thus the analysis for $\Ab$ is closely related to 
that for the paracontrolled
operator $\If_{\pl, \pe}$ in~\eqref{X6}.
See  Lemma \ref{LEM:stoA} below for 
the almost sure regularity of $\Ab$.

\medskip
Finally, 
we are  ready to present the full system for the three unknowns $(X, Y, \Res)$.
Putting together \eqref{SNLW5}, \eqref{SNLW5y}, \eqref{Pxy},  \eqref{Res2},
\eqref{X3}, \eqref{X6}, and \eqref{sto1a}, 
we arrive at the following system:
\begin{align}
\begin{split}
 (\dt^2 + \dt  +1 - \Dl) X  
& =  2 (X+Y+\<20>)\pl \<1>\\
& \hphantom{X}
- M \big( \Qxy + 2 \Res + \<20>^2 + 2 \<21> + \<2> \big) \<1>,\\
 (\dt^2 + \dt +1  - \Dl) Y
&  = (X+Y+\<20>)^2 + 2 (\Res + Y\pe \<1> + \<21p>) + 2 (X+Y+\<20>) \pg \<1> \\
& \hphantom{X}
- 
M \big( \Qxy + 2 \Res + \<20>^2 + 2 \<21> + \<2> \big) (X+Y+\<20>),\\
\Res 
&= 2\If_{\pl}^{(1)}
 \big( X+Y+\<20> \big)\pe \<1>
+2   \If_{\pl, \pe}
 \big( X+Y+\<20>\big)\\
& \hphantom{X}
-\int_0^t  M \big( \Qxy + 2 \Res + \<20>^2 + 2 \<21> + \<2> \big) 
\Ab(t, t') dt',\\
(X, \dt X, Y, \dt Y)|_{t = 0} & = (X_0, X_1, Y_0, Y_1).
\end{split}
\label{SNLW6}
\end{align}

\noi
By  viewing the following random distributions and operator
in the system above:
\begin{align}
 \<1>,  \quad \<2>, \quad \<20>, \quad  \<21p>, \quad
\Ab,
\quad  \text{and}\quad 
\If_{\pl, \pe},
\label{Xi1}
\end{align}

\noi
as predefined deterministic data
with certain regularity\,/\,mapping properties, 
we prove the following local well-posedness 
 of the system \eqref{SNLW6}.

\begin{theorem} \label{THM:1}
Let 
 $ \frac 14<s_1 < \frac 12 < s_2 \le s_1+\frac 14$
and $s_2 - 1 \le s_3 < 0$. 
Then,
there exist $\ta = \ta(s_3) > 0$ 
and $\eps = \eps(s_1, s_2, s_3 ) > 0$
such that 
if 
\begin{itemize}
\item   $\<1>$ is a distribution-valued function belonging to
 $C([0, 1]; W^{-\frac 12 - \eps, \infty}(\T^3))\cap C^1([0, 1]; W^{-\frac 32 - \eps, \infty}(\T^3))$, 

\smallskip
\item
$\<2>$ is a distribution-valued function belonging to $ C([0, 1]; W^{-1 - \eps, \infty}(\T^3))$, 

\smallskip
\item
$\<20>$ is a distribution-valued function belonging to 
$ C([0, 1]; W^{\frac 12 - \eps, \infty}(\T^3))\cap C^1([0, 1]; W^{-1 - \eps, \infty}(\T^3))$,

\smallskip
\item
$\<21p>$ is a distribution-valued function belonging to $C([0, 1]; H^{- \eps}(\T^3))$, 

\smallskip
\item
$\Ab(t, t')$ 
is a distribution-valued function belonging to $ L^\infty_{t'} L^3_{t}(\Dl_2(1); 
H^{ - \eps}(\T^3))$, 
where $\Dl_2(T) \subset [0, T]^2 $ is defined by 
\begin{align}
\Dl_2(T) = \{(t, t') \in \R_+^2: 0 \leq t' \leq t \leq T\}, 
\label{sto2}
\end{align}

\smallskip

\item 
 the operator
 $\If_{\pl, \pe}$ 
belongs to  the class
 $ \L_2\big(\tfrac32, 1\big)$, 
 where  $ \L_2(q, T)$ is defined by 
 \begin{align}
\L_2(q, T) := \L(L^q([0, T]; L^2(\T^3) )
\, ;\, 
 L^\infty([0, T]; H^{s_3}(\T^3))), 
\label{L1ast}
\end{align}

\smallskip

\end{itemize}

\noi
then the system \eqref{SNLW6} is locally well-posed in 
$\H^{s_1}(\T^3) \times \H^{s_2}(\T^3)$.
More precisely, 
given any $(X_0, X_1, Y_0, Y_1) \in \H^{s_1}(\T^3)\times \H^{s_2}(\T^3)$, 
there exist $T > 0$ 
and  a unique solution $(X, Y, \Res) $ to the 
hyperbolic $\Phi^3_3$-system \eqref{SNLW6} on $[0, T]$
in the class\textup{:}
\begin{align}
 Z^{s_1, s_2, s_3}(T)
 =  X^{s_1}(T) \times Y^{s_2}(T) \times 
 L^3([0, T ];H^{s_3}(\T^3)). 
\label{Z1}
\end{align}

\noi
Here,
$X^{s_1} (T)$ and $Y^{s_2}(T)$ are 
the energy spaces at the regularities $s_1$ and $s_2$
intersected with appropriate Strichartz spaces
defined in \eqref{M0}  below.
Furthermore, the solution $(X, Y, \Res)$
depends  Lipschitz-continuously 
on the enhanced data set\textup{:}
\begin{align}
\big(X_0, X_1, Y_0, Y_1, 
\<1>, \,  \<2>,  \,  \<20>, \, \<21p>, \,
\Ab, \,  \If_{\pl, \pe}\big)
\label{data1}
\end{align}

\noi
in the class\textup{:}
\begin{align*}
\mathcal{X}^{s_1, s_2, \eps}_T
& = \H^{s_1}(\T^3) \times 
\H^{s_2}(\T^3)\\
& \hphantom{X}
\times 
\big(C([0,T]; W^{-\frac 12 - \eps, \infty}(\T^3))\cap C^1([0,T]; W^{-\frac 32 - \eps, \infty}(\T^3))\big)\\
& \hphantom{X}
\times 
C([0,T]; W^{-1 - \eps, \infty}(\T^3))\\
& \hphantom{X}
\times 
\big(C([0,T]; W^{\frac 12 - \eps, \infty}(\T^3)) \cap C^1([0, T]; W^{-1 - \eps, \infty}(\T^3))\big)\\
& \hphantom{X}
\times 
C([0,T]; H^{- \eps}(\T^3))
\times  L^\infty_{t'}L^3_t(\Dl_2(T); H^{ - \eps}(\T^3))
\times
 \L_2\big(\tfrac32, T\big).
\end{align*}

\end{theorem}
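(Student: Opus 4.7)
\medskip

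\noindent
\textbf{Proof plan.}
The plan is to set up the system \eqref{SNLW6} as a fixed point problem in the space $Z^{s_1,s_2,s_3}(T)$ defined in \eqref{Z1} and to run a contraction argument in a closed ball of radius $R$, where $R$ depends on the norms of the enhanced data set \eqref{data1} and $T>0$ will be chosen sufficiently small. Writing the first two equations of \eqref{SNLW6} in Duhamel form via the damped wave operator $\I$ from \eqref{lin1} and keeping the $\Res$-equation in its current integral form, I define a map $\Phi = (\Phi_X, \Phi_Y, \Phi_\Res)$ on this ball. The key analytic inputs are: (i) the standard energy and Strichartz estimates for $\I$, yielding one derivative of gain from the energy space $X^{s_1}(T)$ (resp.~$Y^{s_2}(T)$) into $H^{s_1-1}(T^3)$-valued forcings (resp.~$H^{s_2-1}$), together with auxiliary $L^q_t L^r_x$ space-time integrability; (ii) the paraproduct and resonant product estimates in Lemma \ref{LEM:para}; (iii) the product estimates of Lemma \ref{LEM:gko}; and (iv) the prescribed regularities of the stochastic data \eqref{Xi1}, including the smoothing of $\If^{(1)}_{\pl}$ from Lemma \ref{LEM:sto3} and the random operator bound for $\If_{\pl,\pe} \in \L_2(\tfrac32, T)$.

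I would then estimate each component of $\Phi$ in turn. For $\Phi_X$, the two forcing terms on the right-hand side of the $X$-equation have (spatial) regularity $-\tfrac12-$: the paraproduct $(X+Y+\<20>)\pl \<1>$ is controlled using \eqref{para2} (since $s_1,s_2 > 0$ and $\<20>\in C_T W^{1/2-\eps,\infty}$), and $M(\,:\!u^2\!:\,)\<1>$ is a scalar-valued function of $t$ times $\<1>$, whose scalar prefactor is a polynomial of the spatial integral $\int_{\T^3} :\!u^2\!:\,dx$ that, by \eqref{u3}, is cubic in the components of the unknown plus the stochastic data; Cauchy--Schwarz in space handles the pairing and Hölder in time gives a small-$T$ power. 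Thus $\Phi_X$ maps into $X^{s_1}(T)$, one derivative better, giving $s_1 < \tfrac12$. For $\Phi_Y$, all the quadratic products in $(X+Y+\<20>)^2$, $Y\pe \<1>$ (which requires $s_2 + (-\tfrac12-) > 0$, i.e.~$s_2 > \tfrac12$), and $(X+Y+\<20>) \pg \<1>$ lie in $H^{s_2-1}(\T^3)$ thanks to \eqref{para2}--\eqref{para3}; the term $\Res$ sits at regularity $s_3 \geq s_2-1$, and the $M$-term is handled as above. For $\Phi_\Res$, I would invoke the smoothing property of $\If^{(1)}_{\pl}$ (gaining $\theta$-many derivatives from the paraproduct) together with the almost-sure boundedness of $\If_{\pl,\pe}$ as an operator from $L^{3/2}_T L^2_x$ into $L^\infty_T H^{s_3}_x$, and control the $\Ab$-integral in $L^3_T H^{s_3}_x$ using $\Ab \in L^\infty_{t'} L^3_t H^{-\eps}_x$, Hölder in $t'$, and the scalar nature of $M$.

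The difference estimates needed for contraction repeat the same chain of estimates applied to differences; multilinearity in $(X,Y,\Res)$ produces a factor of $R^{k}$ for some $k\geq 0$ in the constants, and all time integrations yield a positive power $T^{\eta}$ of the final time. Choosing $T = T(R, \text{data})$ small enough, $\Phi$ becomes a contraction on the ball, which gives existence, uniqueness, and Lipschitz continuity with respect to the enhanced data set \eqref{data1} in the class $\mathcal{X}^{s_1,s_2,\eps}_T$.

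The main obstacle, and the step requiring most care, is the $\Res$-equation. Two features make it delicate: first, $\Res$ appears inside $:\!u^2\!:$ via \eqref{u3}, so the forcing of $\Res$ is itself a polynomial in $\Res$, producing a genuinely implicit (not merely Picard-iterable) coupling that must be absorbed by either smallness of $T$ or smallness of $R$ before closing the contraction; second, the $\If_{\pl,\pe}$ term has only a random-operator bound in $\L_2(\tfrac32, T)$, so the Strichartz pair and the choice of time integrability for $X+Y+\<20>$ must be matched exactly to $L^{3/2}_T L^2_x$, which in turn constrains the admissible values of $s_1, s_2$ and forces the chain of inequalities $\tfrac14 < s_1 < \tfrac12 < s_2 \leq s_1 + \tfrac14$ and $s_2 - 1 \leq s_3 < 0$ in the statement. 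The remaining work, after these structural constraints are respected, reduces to bookkeeping with the estimates of Section \ref{SEC:2} and gaining a small positive power of $T$ in each nonlinear estimate.
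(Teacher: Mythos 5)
Your plan is essentially the paper's own proof: write the $X$- and $Y$-equations in Duhamel form via $\I$, keep the $\Res$-equation as stated, and run a contraction in $Z^{s_1,s_2,s_3}(T)$ using the damped-wave Strichartz estimates (Lemma~\ref{LEM:Str}), the paraproduct/resonant-product bounds (Lemmas~\ref{LEM:para} and~\ref{LEM:gko}), the smoothing of $\If_{\pl}^{(1)}$ (Lemma~\ref{LEM:sto3}), and the operator bound on $\If_{\pl,\pe}$. Your itemization of the delicate terms (the $Y\pe\<1>$ resonance needing $s_2>\tfrac12$, the $M(\,:\!u^2\!:\,)$ coupling, and the $\Ab$-integral) matches Section~\ref{SUBSEC:LWP3}.

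One misattribution worth correcting: the chain $\tfrac14<s_1<\tfrac12<s_2\le s_1+\tfrac14$ is \emph{not} forced by matching $X+Y+\<20>$ to the $L^{3/2}_TL^2_x$ source space of $\If_{\pl,\pe}$ --- indeed $C_T L^2_x\hookrightarrow L^{3/2}_TL^2_x$ is automatic on a bounded interval for any $s_1,s_2>0$. The constraints $s_2\le s_1+\tfrac14$ and $s_1>\tfrac14$ come from the Strichartz estimate for the quadratic term $(X+Y+\<20>)^2$ in the $Y$-equation (see~\eqref{M4a}), which requires controlling $\jb{\nb}^{s_2-\frac12}X$ via the $L^8_TW^{s_1-\frac14,8/3}_x$ component of $X^{s_1}(T)$; $s_2>\tfrac12$ comes from the resonant product $Y\pe\<1>$; and only $s_3<0$ is tied to the $\If_{\pl,\pe}$ mapping property (Lemma~\ref{LEM:sto4J}). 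Also, $:\!u^2\!:$ in~\eqref{u3} is quadratic (not cubic) in $(X,Y,\Res)$ and the data, so the scalar $M(\,:\!u^2\!:\,)$ is a quartic expression; this does not alter the mechanics but does affect the power of $R$ in the contraction constant. Finally, the coupling through $\Res$ is handled by the same contraction --- it is not a separate implicit step requiring extra structure --- so ``not merely Picard-iterable'' overstates the difficulty; the genuine novelty (relative to~\cite{GKO2}) is simply the introduction of $\Res$ as a third unknown rather than iterated substitution.
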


Given the a priori regularities of the enhanced data, 
Theorem \ref{THM:1} follows
from the standard energy and Strichartz estimates for the wave equation.
While the proof is a slight modification of those  in \cite{GKO2, OOTol1}, 
we present the proof of Theorem \ref{THM:1} in Subsection \ref{SUBSEC:LWP3} for readers' convenience.
The local well-posedness of the hyperbolic $\Phi^3_3$-model
(Theorem \ref{THM:LWP0})
follows from  Theorem \ref{THM:1}
and the almost sure convergence of the truncated stochastic objects:
\begin{align}
 \<1>_N,  \quad \<2>_N, \quad \<20>_N, \quad  \<21p>_N, \quad
\Ab_N,
\quad  \text{and}\quad 
\If_{\pl, \pe}^N
\label{enhz}
\end{align}

\noi
to the elements in the enhanced data set in \eqref{Xi1};
see Lemmas \ref{LEM:stoconv0}, \ref{LEM:sto1}, \ref{LEM:sto2}, 
\ref{LEM:sto3}, \ref{LEM:sto4J}, and \ref{LEM:stoA}
in Subsection \ref{SUBSEC:para}.
See Remark \ref{REM:conv0} below.

\begin{remark}\label{REM:conv0}\rm
(i)  For the sake of the well-posedness
of the system \eqref{SNLW6}, 
we considered 
general initial data $(X_0, X_1, Y_0, Y_1) \in \H^{s_1}(\T^3)\times \H^{s_2}(\T^3)$
in Theorem \ref{THM:1}.
However, in order to go back from the system~\eqref{SNLW6}
to the hyperbolic $\Phi^3_3$-model \eqref{SNLWA2B}
with the identification~\eqref{Res1}
(in the limiting sense), 
we need to set $(X_0, X_1) = (0, 0)$
since  the resonant product
of the linear solution  
$ S(t) (X_0, X_1)$ and $\<1>$ is not well defined
in general.
As we see in Section \ref{SEC:GWP}, 
we simply use the zero initial data for the system \eqref{SNLW6}
in constructing global-in-time invariant Gibbs dynamics
for  the hyperbolic $\Phi^3_3$-model \eqref{SNLWA2B}.

\smallskip

\noi
(ii) Our choice of the norms for $\<21p>$ 
is crucial in the globalization argument.
See Proposition~\ref{PROP:exptail2}
and Remark \ref{REM:3}.

\smallskip

\noi
(iii) In proving  the local well-posedness result 
of  the system \eqref{SNLW6} stated in Theorem \ref{THM:1}, 
we do not need to use the $C^1_T$-norms
for $\<1>$ and $\<20>$.
However, we will need
these $C^1_T$-norms for $\<1>$ and $\<20>$
in the globalization argument presented in Section~\ref{SEC:GWP}
and thus have included them in the hypothesis
and the definition of Theorem \ref{THM:1}
of the space $\mathcal{X}^{s_1, s_2, \eps}_T$.
See also \eqref{data3} and Remark \ref{REM:data}.

Furthermore, with this definition of the space $\mathcal{X}^{s_1, s_2, \eps}_T$, 
the map 
from an enhanced data set in \eqref{data1} (with $(X_0, X_1, Y_0, Y_1) = (0, 0, u_0, u_1)$)
to $(u, \dt u)$, where $ u = \<1> + \<20> + X + Y$ as in~\eqref{decomp3a}
becomes a continuous map
from $\mathcal{X}^{s_1, s_2, \eps}_T$
to $ C([0, T]; \H^{-\frac{1}{2}-\eps}(\T^3))$.

\end{remark}

\subsection{Strichartz estimates}
Given  $0 \leq s \leq 1$, 
we say that a pair $(q, r)$ is $s$-admissible 
(a pair $(\wt q, \wt r)$ is dual $s$-admissible,\footnote{Here, we define
the notion of dual $s$-admissibility for the convenience of the presentation.
Note that $(\wt q, \wt r)$ is dual $s$-admissible
if and only if $(\wt q', \wt r')$ is $(1-s)$-admissible.}
 respectively)
if $1 \leq \wt q < 2 < q \leq \infty$, 
 $1< \wt r \le 2 \leq r <\infty$, 
\begin{align*}
 \frac{1}{q} + \frac 3r  = \frac{3}{2}-  s =  \frac1{\wt q}+ \frac3{\wt r} -2, 
\qquad
\frac 1q + \frac{1}{r} \leq \frac 1 2, 
\qquad \text{and} 
\qquad  
\frac1{\wt q}+\frac1{\wt r} \geq \frac32   .
\end{align*}


We say that  $u$ is a solution to the following nonhomogeneous linear damped wave equation:
\begin{align}
\begin{cases}
(\dt^2 + \dt + 1-\Dl) u = F \\
( u, \dt u) |_{t = 0}=(u_0,  u_1) 
\end{cases}
\label{NLW2}
\end{align}

\noi
on a time interval containing $t= 0$, 
if $u$ satisfies the following Duhamel formulation:
\[ u = 
S(t) (u_0, u_1) + \int_0^t\D(t-t') F(t') dt',
\]  
where $S(t)$ and $\D(t)$ are  as in \eqref{St0} and \eqref{W3}, respectively.
We now recall the  Strichartz estimates
for solutions to the nonhomogeneous linear damped wave equation~\eqref{NLW2}.

\begin{lemma}\label{LEM:Str}
Given $0 \leq s \leq 1$,
let $(q, r)$ and $(\wt q,\wt r)$
be $s$-admissible and dual $s$-admissible pairs, respectively. 
Then, a solution $u$ to the nonhomogeneous linear damped wave equation~\eqref{NLW2}
satisfies
\begin{align}
\| (u, \dt u) \|_ {L^\infty_T \H^s_x } + 
 \| u  \|_{L^q_TL^r_x}
\lesssim 
\|(u_0, u_1) \|_{\H^s}  +  \| F\|_{L^{\wt q}_TL^{\wt r}_x}
\label{Str1}
\end{align}

\noi
for all $0 < T \leq 1$. 
The following estimate also holds:
\begin{align}
\| (u, \dt u) \|_ {L^\infty_T \H^s_x } 
+  \| u  \|_{L^q_TL^r_x}
\lesssim 
\|(u_0, u_1) \|_{\H^s}  +  \| F \|_{L^{1}_T H^{s-1}_x}
\label{Str2}
\end{align}

\noi
for all $0 < T \leq 1$.
The same estimates also hold for any finite $T > 1$ but 
with the implicit constants depending on $T$.

\end{lemma}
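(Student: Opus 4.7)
The plan is to reduce the damped wave Cauchy problem \eqref{NLW2} to a standard Klein--Gordon problem via an exponential change of unknown, then invoke the well-known Strichartz estimates on $\T^3$ for the Klein--Gordon equation. Concretely, given a solution $u$ of \eqref{NLW2}, set $v(t,x) := e^{t/2} u(t,x)$. A direct computation gives
\begin{align*}
v_{tt} - \Delta v + \tfrac{3}{4} v = e^{t/2} F,
\qquad (v, v_t)|_{t=0} = \bigl(u_0,\, u_1 + \tfrac{1}{2}u_0\bigr),
\end{align*}
which is a linear Klein--Gordon equation with mass $\sqrt{3}/2$ (matching the operator $\jbb{\nabla}$ appearing in \eqref{jbbn}--\eqref{W3}). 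Since $0 \le t \le T \le 1$, the multipliers $e^{\pm t/2}$ are uniformly bounded and so are harmless in all space-time norms on $[0,T]$; this reduction is exact and the constants are universal for $T \le 1$ and only $T$-dependent for larger $T$.

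Next, I would apply the standard local-in-time Strichartz estimates for the Klein--Gordon propagator on $\T^3$. For the homogeneous part $S(t)(u_0,u_1)$, the estimate
\begin{align*}
\|(v, \dt v)\|_{L^\infty_T \H^s} + \|v\|_{L^q_T L^r_x} \lesssim \|(v_0,v_1)\|_{\H^s}
\end{align*}
for $s$-admissible $(q,r)$ is the classical Strichartz bound for the wave/Klein--Gordon equation, valid on $\T^3$ locally in time by Kapitanski's transfer from the Euclidean Strichartz estimates of Ginibre--Velo (finite speed of propagation and a partition-of-unity argument allow one to transport the $\R^3$ estimates to $\T^3$ on bounded time intervals without loss). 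Transported back to $u = e^{-t/2} v$, this gives \eqref{Str1} in the homogeneous case.

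For the inhomogeneous part, I would use the Duhamel formula with $F$ replaced by $e^{t/2} F$ at the $v$-level: the homogeneous Strichartz estimate and the dual of its adjoint yield the $L^{\wt q}_T L^{\wt r}_x \to L^\infty_T \H^s_x \cap L^q_T L^r_x$ bound for $s$-admissible $(q,r)$ and dual $s$-admissible $(\wt q, \wt r)$ by a $TT^*$ argument (diagonal case) combined with the Christ--Kiselev lemma, justified by $\frac{1}{\wt q} > \frac{1}{q}$ which is forced by the strict inequalities $\wt q < 2 < q$. This produces \eqref{Str1}. The estimate \eqref{Str2} follows from a simpler direct argument: by Minkowski's inequality and the semigroup bound
\begin{align*}
\|\D(t-t') F(t')\|_{\H^s_x} \lesssim \|F(t')\|_{H^{s-1}_x},
\end{align*}
which is immediate from \eqref{W3} and \eqref{jbbn}, one integrates in $t'$ and then applies the homogeneous Strichartz bound (which gains the $L^q_T L^r_x$-norm) to the resulting $L^\infty_T \H^s$ control.

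The main (minor) obstacle is justifying the local-in-time Strichartz estimates on $\T^3$ with the precise scaling and off-diagonal range stated in Lemma \ref{LEM:Str}. This is however standard: the condition $\frac{1}{q}+\frac{1}{r} \le \frac{1}{2}$ and its dual $\frac{1}{\wt q} + \frac{1}{\wt r} \ge \frac{3}{2}$ are precisely the wave admissibility conditions in three dimensions, and the Kapitanski reduction to the Euclidean estimates preserves them. The dependence of the constants on $T$ for $T > 1$ arises solely from the boundedness of $e^{t/2}$ on $[0,T]$ in the change of variables and is therefore linear in $e^{T/2}$.
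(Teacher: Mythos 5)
Your proof is correct and follows essentially the same route as the paper: the exponential substitution $v = e^{t/2}u$ reducing \eqref{NLW2} to a Klein--Gordon equation with mass $\sqrt{3}/2$, followed by an appeal to the standard local-in-time Strichartz estimates on $\T^3$ transferred from $\R^3$ via finite speed of propagation. You are also a bit more careful than the paper's terse sketch in recording the correct transformed initial data $(v, \dt v)|_{t=0} = (u_0,\, u_1 + \tfrac12 u_0)$ (the paper's display contains a typo here) and in spelling out the $TT^*$/Christ--Kiselev step for the off-diagonal inhomogeneous bound, but these are cosmetic elaborations rather than a different method.
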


The Strichartz estimates on $\R^d$ are well known; see \cite{GV, LS, KeelTao}
in the context of the undamped wave equation
(with the linear part $\dt^2 - \Dl$).
For the undamped Klein-Gordon equation (with the linear part
$\dt^2 + 1- \Dl$), 
see \cite{KSV}.
Thanks to the finite speed of propagation, 
these estimates on $\T^3$ follow from the corresponding
estimates on $\R^3$.

As for the current damped case, 
by setting $v(t) = e^{\frac{t}{2}}u(t)$, 
the damped wave equation \eqref{NLW2} becomes 
\begin{align*}
\begin{cases}
(\dt^2 +  \frac{3}{4}-\Dl) v = e^{\frac{t}{2}}F \\
( v, \dt v) |_{t = 0}=(u_0,  u_1),  
\end{cases}
\end{align*}

\noi
to which the Strichartz estimates for the Klein-Gordon equation apply.
By undoing the transformation, 
we then obtain the Strichartz estimates
for  the damped equation \eqref{NLW2}
on finite time intervals $[0, T]$, 
where the implicit constants depend on $T$.

In proving Theorem \ref{THM:1}, we use the fact that 
$\big(8, \frac{8}{3}\big)$
and $(4, 4)$ are $\frac 14$-admissible
and 
  $\frac 12$-admissible, respectively.
We also use
a dual $\frac12$-admissible pair  $\big(\frac 43, \frac 43\big)$.

\subsection{Stochastic terms and paracontrolled operators}
\label{SUBSEC:para}

In this subsection, we collect regularity properties of stochastic terms and 
the paracontrolled operators.  See \cite{GKO2, OOTol1}
for the proofs.
Note that the stochastic objects are constructed
from the stochastic convolution $\<1> = \Psi$ in \eqref{W2}.
In particular, in the following, probabilities of various events 
are measured with respect to the Gaussian initial data
and the space-time white noise.\footnote{With the notation in Section \ref{SEC:GWP}
(see \eqref{PPP}),
this is equivalent to saying that we measure various events with respect to $\muu \otimes \PP_2$.}

First,
we state the regularity properties of $\<1>$ and $\<2>$.
See Lemma 3.1 in \cite{GKO2} and Lemma 4.1 in \cite{OOTol1}.

\begin{lemma}\label{LEM:stoconv0}
Let  $T >0$.

\smallskip

\noi
\textup{(i)}
For any $\eps > 0$, 
$ \<1>_N $  in \eqref{so4a} converges to $\<1>$
in $C([0,T];W^{- \frac 12 -\eps,\infty}(\T^3))\cap C^1([0,T];W^{- \frac 32 -\eps,\infty}(\T^3))$
 almost surely.
 In particular, we have
  \[\<1> \in C([0,T];W^{- \frac 12 -\eps,\infty}(\T^3))
  \cap C^1([0,T];W^{- \frac 32 -\eps,\infty}(\T^3))
  \]
  
  \noi
  almost surely.
Moreover, we have 
the following tail estimate:
\begin{align}
\PP\Big( \|\<1>_N\|_{C_T W^{-\frac 12-\eps, \infty}_x\cap \, C^1_T W^{-\frac 32-\eps, \infty}_x}> \ld\Big) 
\leq C(1+T)\exp\big(-c \ld^{2}\big)
\label{ex1}
\end{align}

\noi
for any $T>0$  and $\ld > 0$, 
uniformly in $N \in \N \cup\{\infty\}$
with the understanding that $\<1>_\infty = \<1>$.

\smallskip

\noi
\textup{(ii)}
For any $\eps > 0$, 
$ \<2>_N $  in \eqref{so4b} converges to $\<2>$
 in 
$C([0,T];W^{- 1-\eps,\infty}(\T^3))$ almost surely.
 In particular, we have
 \[\<2> \in C([0,T];W^{-1-\eps,\infty}(\T^3))\]
 
 \noi
 almost surely.
Moreover, we have 
the following tail estimate:
\begin{align*}
\PP\Big( \|\<2>_N\|_{C_T W^{- 1-\eps, \infty}_x}> \ld\Big) 
\leq C(1+T)\exp\big(-c \ld\big)
\end{align*}

\noi
for any $T>0$  and $\ld > 0$, 
uniformly in $N \in \N \cup\{\infty\}$
with the understanding that $\<2>_\infty = \<2>$.

\end{lemma}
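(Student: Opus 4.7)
\medskip

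\noindent
\textbf{Proof plan for Lemma \ref{LEM:stoconv0}.}
The plan is to reduce everything to $L^p(\Omega)$ bounds on the Littlewood-Paley blocks $\P_j \<1>_N(x,t)$ and $\P_j \<2>_N(x,t)$, and then to pass from those pointwise (in $(x,t)$) bounds to uniform-in-$(x,t)$ and H\"older-in-$t$ bounds by a Besov/Kolmogorov-type argument. For the first part, I would begin by writing the truncated stochastic convolution as
\begin{align*}
\<1>_N(x,t) = \sum_{n\in\Z^3}\chi_N(n)\jb{n}^{-1}\bigl[a_n(t)\,g_n + b_n(t)\,h_n + c_n(t)\,\beta_n(t)\bigr]e_n(x),
\end{align*}
where $a_n,b_n,c_n$ come from \eqref{W3}, \eqref{St0}, \eqref{W2} and $(g_n,h_n,\beta_n)$ are jointly Gaussian. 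A direct calculation using $|\sin|\le 1$ and $\jbb{n}\gtrsim \jb{n}$ gives the bound
\begin{align*}
\E\bigl[|\P_j \<1>_N(x,t)|^2\bigr] \;\lesssim\; \sum_{n\in\Z^3}\varphi_j(n)^2\chi_N(n)^2\jb{n}^{-2} \;\lesssim\; 2^{j},
\end{align*}
uniformly in $N\in \N\cup\{\infty\}$, $x\in\T^3$ and $t\in[0,T]$.

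Next I would promote this second-moment estimate to an $L^p(\Omega)$ estimate by the Wiener chaos bound (Lemma \ref{LEM:hyp}) applied in the first chaos, obtaining $\|\P_j \<1>_N(x,t)\|_{L^p(\Omega)} \lesssim p^{1/2}\,2^{j/2}$. A parallel computation of $\E[|\P_j(\<1>_N(x,t)-\<1>_N(x,t'))|^2]$ uses $|e^{-(t-t')/2}\sin(t\jbb{n})-\sin(t'\jbb{n})|\lesssim (|t-t'|\jb{n})^{\alpha}$ for any $\alpha\in[0,1]$ and produces a gain of $|t-t'|^{2\alpha}2^{2\alpha j}$, hence a small H\"older gain in $t$ after choosing $\alpha$ slightly less than any target exponent. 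Summing $2^{-(1/2+\eps)j}\|\P_j\<1>_N\|_{L^p_\omega L^\infty_{x,t}}$ via Besov embedding $B^{s+3/p+1/p}_{p,p}(\T^3\times[0,T])\hookrightarrow C_t C^s_x$ with $p$ large, and then invoking the same bound on $\<1>_N-\<1>_M$ (where one gains a factor $\min(N,M)^{-\delta}$ from the frequency support $\{|n|\gtrsim \min(N,M)\}$), gives both the almost sure convergence in $C_T W^{-1/2-\eps,\infty}$ and the analogous statement for $C^1_T W^{-3/2-\eps,\infty}$ after differentiating in $t$ (which costs one derivative in space). Finally, the tail estimate \eqref{ex1} follows either from Fernique's theorem or directly from a Borell-TIS-type concentration inequality for Gaussian vectors: since the norm in question is a measurable seminorm on a Gaussian random element in the first chaos, $\PP(\|\<1>_N\|> \lambda)\le C\exp(-c\lambda^2/M^2)$ with $M^2$ the median squared, and Steps above show $M\lesssim 1+T^{1/2}$.

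For the second part on $\<2>_N = H_2(\<1>_N;\sigma_N)$, the strategy is identical but carried out in the \emph{second} Wiener chaos. The key new input is the computation of $\E[|\P_j \<2>_N(x,t)|^2]$, which by the orthogonality of Hermite polynomials (Lemma \ref{LEM:Wick2}) reduces to a double sum
\begin{align*}
\E\bigl[|\P_j \<2>_N(x,t)|^2\bigr] \;\lesssim\; \sum_{n} \varphi_j(n)^2 \sum_{n=n_1+n_2}\frac{\chi_N(n_1)^2\chi_N(n_2)^2}{\jb{n_1}^2\jb{n_2}^2} \;\lesssim\; 2^{2j},
\end{align*}
after applying Lemma \ref{LEM:SUM} with $\alpha=\beta=2$, which gives $\jb{n}^{-1+0}$ in the inner sum. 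The Wiener chaos estimate now yields $\|\P_j\<2>_N\|_{L^p(\Omega)}\lesssim p\cdot 2^j$, and the same Besov/Kolmogorov scheme delivers convergence of $\<2>_N$ in $C_T W^{-1-\eps,\infty}$. The tail bound $\exp(-c\lambda)$ (rather than $\exp(-c\lambda^2)$) is the only subtle point: it reflects the fact that a seminorm of a second-chaos random element has only \emph{exponential}, not Gaussian, concentration; this can be obtained by combining the $L^p$ bound $\|\|\<2>_N\|_{C_TW^{-1-\eps,\infty}}\|_{L^p(\Omega)}\lesssim p$ for all $p\ge 2$ with Chebyshev's inequality and optimizing in $p$.

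The main technical obstacle is the second-moment estimate for $\<2>_N$ together with the pairwise-difference estimate $\E[|\P_j(\<2>_N - \<2>_M)(x,t)|^2]$, which by Lemma \ref{LEM:Wick2} becomes a constrained double sum restricted to $\max(|n_1|,|n_2|)\gtrsim \min(N,M)$; one gains a small power of $\min(N,M)$ by trading regularity via $\jb{n_j}^{-2}\le \jb{n_j}^{-2+2\eta}\min(N,M)^{-2\eta}$ on that restricted region. Modulo this bookkeeping and the corresponding H\"older-in-$t$ computation, every other step is a direct transcription of the first-chaos argument.
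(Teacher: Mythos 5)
The paper does not prove this lemma directly; it cites Lemma~3.1 of \cite{GKO2} and Lemma~4.1 of \cite{OOTol1}. Your overall scheme---Littlewood--Paley block $L^p(\Omega)$ estimates via the Wiener chaos bound (Lemma~\ref{LEM:hyp}), orthogonality of Hermite polynomials (Lemma~\ref{LEM:Wick2}), the discrete convolution estimate (Lemma~\ref{LEM:SUM}), a time-increment bound for H\"older regularity, and a Besov/Kolmogorov argument---is the standard route to such statements and is essentially what those references use. The second-moment computations you wrote down ($\E|\P_j\<1>_N|^2\lesssim 2^j$, $\E|\P_j\<2>_N|^2\lesssim 2^{2j}$) and the treatment of the differences $\<1>_N-\<1>_M$, $\<2>_N-\<2>_M$ by trading regularity for a negative power of $\min(N,M)$ are all correct.

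There is, however, a genuine gap in the tail estimate you propose for part (i). The claimed bound in \eqref{ex1} has a $T$-\emph{independent} exponential rate $c$ with only a $C(1+T)$ prefactor. You invoke a Borell--TIS/Fernique argument of the form $\PP(\|\<1>_N\|>\ld)\le C\exp(-c\ld^2/M^2)$ with median $M\lesssim 1+T^{1/2}$, which gives the strictly weaker $\exp(-c\ld^2/(1+T))$; this is not the paper's bound. Two issues: (a) the concentration inequality is a \emph{shift}, $\PP(\|X\|>M+\ld)\le \exp(-\ld^2/(2\s^2))$ with $\s^2$ the weak variance, not a rescaling by $M^2$; and (b) the estimate $M\lesssim 1+T^{1/2}$ is too crude---it arises from fixing $p=2$ in the Besov embedding. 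The cleaner route, which you in fact \emph{do} use correctly for part (ii), works just as well here: the Besov/Kolmogorov step produces $\|\|\<1>_N\|_{C_TW^{-1/2-\eps,\infty}\cap C^1_TW^{-3/2-\eps,\infty}}\|_{L^p(\O)}\lesssim T^{1/p}\,p^{1/2}$ uniformly in $N$ (the $T^{1/p}$ comes from the $L^p_t$-integral over $[0,T]$ in the Besov embedding), after which Chebyshev and optimization $p\sim\ld^2$ give exactly $C(1+T)\exp(-c\ld^2)$. Alternatively, a union bound over $\lceil T\rceil$ unit time intervals, using the uniform-in-$k$ variance bound for $\<1>_N$ on $[k,k+1]$ afforded by the damping $e^{-t/2}$, reaches the same conclusion. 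Either way, the Borell--TIS argument as stated with $M\lesssim T^{1/2}$ does not reach \eqref{ex1} and should be replaced.
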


\begin{remark}\label{REM:decay} \rm

A slight modification of the proof of the exponential tail estimate \eqref{ex1} shows that 
there exists small $\dl > 0$ such that 
\begin{align*}
\PP\Big( N_2^\dl \| \<1>_{N_1} - \<1>_{N_2}\|_{C_T W^{-\frac 12-\eps, \infty}_x
\cap \, C^1_T W^{-\frac 32-\eps, \infty}_x}> \ld\Big) 
\leq C(1+T)\exp\big(-c \ld^{2}\big)
\end{align*}

\noi
for any $T>0$  and $\ld > 0$, 
uniformly in $N_1 \ge N_2 \ge 1$.
A similar comment applies to the other elements
$ \<2>_N$, $\<20>_N$, $\<21p>_N$, 
$\Ab_N$, and 
$\If_{\pl, \pe}^N$
 in the truncated enhanced data set
in \eqref{enhz}.

\end{remark}

The next two lemmas 
treat  $\<20>$ 
and the resonant product $\<21p>$, 
exhibiting  an extra $\frac 12$-smoothing.
See Propositions 1.6 and 1.8 in \cite{GKO2}.
While the exponential tail estimates \eqref{stox1}
and \eqref{stox2} were not proven in \cite{GKO2}, 
they follow from the second moment bounds
on the Fourier coefficients of $\<20>_N$ and $\<21p>_N$
obtained in \cite{GKO2}
and arguing as in the proof of Lemma 2.3 in \cite{GKOT}, 
using a version of the 
Garsia-Rodemich-Rumsey inequality (see Lemma 2.2 in \cite{GKOT})
with the fact that  $\<20>_N \in \H_2$ and $\<21p>_N\in \H_{\le 3}$.
Since the required argument is verbatim from \cite{GKOT}, 
we omit details.

\begin{lemma}\label{LEM:sto1}
Let   $T >0$.
Then, 
$ \<20>_N $ converges to 
$\<20>$
in 
$C([0,T];W^{\frac 12 -\eps,\infty}(\T^3)) \cap C^1([0,T];W^{-1 -\eps,\infty}(\T^3))$
almost surely
for any  $\eps > 0$.
 In particular, we have
  \[\<20> \in C([0,T];W^{\frac 12 -\eps,\infty}(\T^3))
  \cap C^1([0,T];W^{-1 -\eps,\infty}(\T^3))\]
  
  \noi
  almost surely for any  $\eps > 0$.
Moreover, we have 
the following tail estimate:
\begin{align}
\PP\Big( \|\<20>_N\|_{C_T W^{\frac 12-\eps, \infty}_x
\cap\,  C^1_TW^{-1 -\eps,\infty}_x}> \ld\Big) 
\leq C(1+T)\exp\big(-c \ld\big)
\label{stox1}
\end{align}

\noi
for any $T>0$  and $\ld > 0$, 
uniformly in $N \in \N \cup\{\infty\}$
with the understanding that $\<20>_\infty = \<20>$.

\end{lemma}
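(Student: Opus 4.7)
My plan is to follow the strategy indicated in the paragraph preceding the lemma, namely to combine a second moment computation on the Fourier coefficients of $\<20>_N$ with the Garsia--Rodemich--Rumsey (GRR) inequality (in the version of \cite[Lemma 2.2]{GKOT}) and the Wiener chaos estimate (Lemma \ref{LEM:hyp}), exploiting the fact that $\<20>_N \in \H_2$. Concretely, define the difference process
\begin{align*}
D_{N,M}(t) := \<20>_N(t) - \<20>_M(t)
= \int_0^t \D(t-t')\bigl(\<2>_N(t') - \<2>_M(t')\bigr)\,dt'.
\end{align*}
Taking the Fourier transform and using \eqref{W3}, \eqref{so4a}--\eqref{so4b}, and Lemma \ref{LEM:Wick2}, one reduces $\E|\widehat{D_{N,M}}(n,t)|^2$ to a discrete sum over $n=n_1+n_2$ of $\jb{n_1}^{-2}\jb{n_2}^{-2}(\chi_N(n_1)\chi_N(n_2)-\chi_M(n_1)\chi_M(n_2))^2$, weighted by a time integral of the form
\begin{align*}
\biggl|\int_0^t e^{-(t-t')/2}\,\frac{\sin((t-t')\jbb{n})}{\jbb{n}}\cdot e^{-(t-t'')/2}\,\frac{\sin((t-t'')\jbb{n})}{\jbb{n}}\, K(t',t'';n_1,n_2)\, dt'dt''\biggr|,
\end{align*}
where $K$ encodes the covariance of $\<2>_N$ at two times.

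The key step is to extract the extra $\tfrac12$-smoothing from this oscillatory-damped time integral: after integrating by parts in $t'$ (or by direct stationary-phase analysis, exactly as for the undamped case in \cite{GKO2}), one gains a factor of $\jbb{n}^{-1}$ beyond the trivial bound, which combined with Lemma~\ref{LEM:SUM} yields
\begin{align*}
\E\bigl[|\widehat{D_{N,M}}(n,t)|^2\bigr] \les \min(M,N)^{-\eta}\,\jb{n}^{-2(\frac12 + \frac12 - \eps)}\,,\qquad \E\bigl[|\widehat{D_{N,M}}(n,t)-\widehat{D_{N,M}}(n,s)|^2\bigr] \les \min(M,N)^{-\eta}|t-s|^{2\g}\jb{n}^{-2(\frac12+\frac12-\eps)+c\g}
\end{align*}
for small $\eta,\g>0$ (the polynomial loss in $\jb{n}$ from the time-difference can be absorbed into a slight lowering of the target regularity). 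Summing in $n$ with Lemma~\ref{LEM:SUM} gives the desired second moment bound in $H^{1-\eps}(\T^3)$. Applying Besov embedding (Lemma~\ref{LEM:Bes}) together with the Wiener chaos estimate (Lemma~\ref{LEM:hyp}) to upgrade $L^2(\O)$ bounds to $L^p(\O)$ bounds with arbitrarily large $p$, we upgrade $H^{1-\eps}$ to $W^{\frac12-\eps,\infty}$ and, after combining with the GRR inequality in $t$, we obtain the $C([0,T];W^{\frac12-\eps,\infty})$ convergence and a stretched-exponential tail estimate. Since $\<20>_N\in \H_2$, the Wiener chaos hypercontractivity bound $\|X\|_{L^p}\le (p-1)\|X\|_{L^2}$ translates a Gaussian second-moment bound into an exponential tail of the form $\exp(-c\ld)$, matching \eqref{stox1}.

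For the $C^1([0,T];W^{-1-\eps,\infty})$ component, one differentiates the Duhamel integral to obtain
\begin{align*}
\pa_t \<20>_N(t) = \<2>_N(t) \cdot 0 + \int_0^t \pa_t \D(t-t') \<2>_N(t')\,dt' = \int_0^t e^{-(t-t')/2}\bigl(\cos((t-t')\jbb{n}) - \tfrac{\sin((t-t')\jbb{n})}{2\jbb{n}}\bigr)\,\widehat{\<2>_N}(n,t')\,dt',
\end{align*}
so the extra factor of $\jbb{n}$ from the cosine costs one derivative (no cancellation here), but we still gain the oscillatory $\tfrac12$-smoothing, landing $\pa_t\<20>_N$ in $W^{-\frac12-\eps,\infty}$, which is more than enough for $W^{-1-\eps,\infty}$. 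The same second-moment/GRR/Wiener-chaos machinery applied to the Fourier coefficients of $\pa_t D_{N,M}$ yields the claimed convergence and exponential tail.

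The main obstacle is the multilinear dispersive smoothing argument for the time integral: one has to carefully handle the interaction between the damping factor $e^{-(t-t')/2}$ and the oscillation $\sin((t-t')\jbb{n})$, verifying that the $\tfrac12$-smoothing proved in \cite[Proposition 1.6]{GKO2} for the undamped wave propagator is preserved in the damped setting. Once this smoothing is in hand, the remainder (hypercontractivity, Besov embedding, GRR, and the passage from $L^2_\omega$ bounds to exponential tails) is routine and, as noted by the authors, verbatim from \cite{GKOT}.
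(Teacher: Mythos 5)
Your overall strategy matches the paper's: the paper simply defers the a.s. convergence to Propositions 1.6 and 1.8 of~\cite{GKO2}, and obtains the exponential tail by combining the second-moment bounds on the Fourier coefficients with the Garsia--Rodemich--Rumsey inequality and hypercontractivity (Lemma~2.2 and Lemma~2.3 of~\cite{GKOT}, using $\<20>_N \in \H_2$). You have correctly identified all of these ingredients, including why a second-chaos bound yields a tail of the form $\exp(-c\lambda)$ rather than a Gaussian or cube-root one.

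Two points should be fixed. First, your displayed second-moment bound $\E|\widehat{D_{N,M}}(n,t)|^2 \lesssim \min(M,N)^{-\eta}\jb{n}^{-2(\frac12+\frac12-\eps)}$ has the wrong exponent, and the subsequent claim of an $H^{1-\eps}$ bound is inconsistent with it. The second moment of $\widehat{\<2>}_N(n,t)$ is $\sim \jb{n}^{-1}$ (Lemma~\ref{LEM:SUM}); the kernel $\jbb{n}^{-1}\sin((t-t')\jbb{n})$ contributes $\jbb{n}^{-2}$ in second moment; and the extra $\frac12$-smoothing contributes another $\jb{n}^{-1}$. The correct bound is therefore $\E|\widehat{\<20>_N}(n,t)|^2 \lesssim \jb{n}^{-4}$, which is precisely what is needed so that $\sum_n\jb{n}^{1-2\eps}\E|\widehat{\<20>_N}(n,t)|^2 < \infty$, i.e.~$\E|\jb{\nb}^{\frac12-\eps}\<20>_N(x,t)|^2 < \infty$ uniformly in $x$. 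One then upgrades via hypercontractivity to $L^p_\omega L^p_x$ with large $p$, and uses Sobolev embedding $W^{\frac12-\eps+,\,p}\hookrightarrow W^{\frac12-\eps,\infty}$; this yields $W^{\frac12-\eps,\infty}$, not the $H^{1-\eps}\to W^{\frac12-\eps,\infty}$ chain you wrote (which would require an embedding that does not hold in dimension three). Second, for the $C^1_T$ component you do not need the $\frac12$-smoothing at all: once the cosine replaces $\jbb{n}^{-1}\sin(\cdot\jbb{n})$, the naive count already puts $\partial_t\<20>_N$ at the same spatial regularity as $\<2>_N$, namely $W^{-1-\eps,\infty}$, which is exactly what the lemma states. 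Whether the oscillatory gain persists for the $\cos$ kernel (giving $W^{-\frac12-\eps,\infty}$) is a separate and unverified claim; it is not needed, and invoking it only adds risk. A standard second-moment computation on $\partial_t\widehat{\<20>_N}(n,t)$ together with GRR suffices.
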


\begin{lemma}\label{LEM:sto2}
Let   $T >0$.
Then, 
$\<21p>_N := \<20>_N\pe \<1>_N$
 converges to $\<21p>$
 in 
$C([0,T];W^{ -\eps,\infty}(\T^3))$
almost surely
for any  $\eps > 0$.
 In particular, we have
  \[\<21p> \in C([0,T];W^{ -\eps,\infty}(\T^3))\]
  
  \noi
  almost surely  for any  $\eps > 0$.
Moreover, we have 
the following tail estimate:
\begin{align}
\PP\Big( \|\<21p>_N\|_{C_T W^{-\eps, \infty}_x}> \ld\Big) 
\leq C(1+T)\exp\big(-c \ld^\frac{2}{3}\big)
\label{stox2}
\end{align}

\noi
for any $T>0$  and $\ld > 0$, 
uniformly in $N \in \N \cup\{\infty\}$
with the understanding that $\<21p>_\infty = \<21p>$.

\end{lemma}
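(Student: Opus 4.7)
The plan is to exploit the stochastic structure of $\<21p>_N = \<20>_N \pe \<1>_N$ and, in particular, the fact that it lives in the inhomogeneous Wiener chaos $\H_{\le 3}$. Since $\<20>_N = \I(\<2>_N)$ is the Duhamel integral of a Wick square it sits in $\H_2$, while $\<1>_N \in \H_1$ is Gaussian. By the Wiener product formula, the resonant product therefore splits into a triple chaos in $\H_3$ plus a contraction living in $\H_1$ obtained by pairing $\<1>_N$ with one of the two white noise factors inside $\<20>_N$. I would first write each component as an explicit Fourier series whose coefficients are iterated time integrals against the Duhamel kernel from \eqref{lin1}. The resonance constraint $|n_1|\sim|n_2|$ in the Fourier expansion of $\pe$, combined with the oscillatory kernel $\sin((t-t')\jbb{n_1+n_2})/\jbb{n_1+n_2}$ inside $\<20>_N$, is the crucial input: together they produce the extra $\tfrac12$-smoothing beyond what Bony's resonant estimate (Lemma \ref{LEM:para}) alone would provide.

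Next, I would bound the second moment of each Fourier coefficient $\widehat{\<21p>_N}(n, t)$ using the orthogonality of Hermite polynomials (Lemma \ref{LEM:Wick2}); after expanding and computing Gaussian expectations via Wick's formula, the internal frequency sums reduce to convolutions controlled by Lemma \ref{LEM:SUM}. The time integral against the oscillating Duhamel kernel is estimated by Cauchy--Schwarz in $t'$, producing a $\jbb{n_1+n_2}^{-2}$ weight after squaring; the resonance constraint $|n_1|\sim|n_2|$ then converts this into genuine decay in the external frequency $n = n_1+n_2$, yielding
\begin{align*}
\sup_{t \in [0,T]} \E\Big[\big|\widehat{\<21p>_N}(n,t)\big|^2\Big] \les \jb{n}^{-2\eps}
\end{align*}
for any small $\eps>0$, uniformly in $N$. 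Hypercontractivity (Lemma \ref{LEM:hyp}) upgrades this to $\|\widehat{\<21p>_N}(n,t)\|_{L^p(\O)} \les p^{3/2}\jb{n}^{-\eps}$, and a dyadic Littlewood--Paley decomposition together with the Besov embedding $B^{-\eps/2}_{p,p}(\T^3) \hookrightarrow W^{-\eps,\infty}(\T^3)$ for $p$ large gives
\begin{align*}
\E\Big[\|\<21p>_N(t)\|_{W^{-\eps,\infty}(\T^3)}^p\Big] \les p^{3p/2},
\end{align*}
uniformly in $N \in \N$ and $t \in [0,T]$.

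For time regularity and convergence, I would derive analogous bounds for the increments $\<21p>_N(t_1)-\<21p>_N(t_2)$ and the differences $\<21p>_{N_1}-\<21p>_{N_2}$. The time increment is handled by inserting $\jb{n}^{\eta}|t_1-t_2|^{\eta}$ at the cost of a tiny loss in spatial regularity, after which a Kolmogorov/Garsia--Rodemich--Rumsey-type argument (arguing in a chaos of fixed order, as in the proof of Lemma 2.3 in \cite{GKOT}) places $\<21p>_N$ in $C([0,T];W^{-\eps,\infty}(\T^3))$ almost surely, uniformly in $N$. For the difference estimate, restricting the Fourier sum to frequencies with $|n_1|\vee|n_2|\ges N_2$ yields an additional factor $N_2^{-\delta}$ for some small $\delta>0$ (cf.\ Remark \ref{REM:decay}), producing a Cauchy sequence in $L^p(\O; C_T W^{-\eps,\infty})$; the limit is $\<21p>$ and almost sure convergence follows by Borel--Cantelli. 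The exponential tail \eqref{stox2} with rate $\lambda^{2/3}$ is forced by membership in $\H_{\le 3}$: a random variable whose $L^p$-norm grows like $p^{k/2}$ has a tail of order $\exp(-c\lambda^{2/k})$ via Chebyshev, and here $k=3$. The main obstacle is the second-moment estimate on $\widehat{\<21p>_N}(n,t)$: naive frequency counting only gives a bound of order $\log\jb{n}$, with no decay in $n$, and one must carefully exploit both the resonance constraint $|n_1|\sim|n_2|$ \emph{and} the oscillatory Duhamel smoothing inside $\<20>_N$ simultaneously to extract the decisive $\jb{n}^{-2\eps}$ gain — this is the heart of Proposition 1.8 in \cite{GKO2}.
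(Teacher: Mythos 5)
Your proposal follows essentially the same route as the paper, whose own ``proof'' of this lemma is a citation: Proposition~1.8 of Gubinelli--Koch--Oh \cite{GKO2} supplies the regularity and convergence, and the Garsia--Rodemich--Rumsey argument of Lemma~2.3 in \cite{GKOT} supplies the exponential tail once one knows $\<21p>_N\in\H_{\le 3}$ with uniform second-moment bounds on the Fourier coefficients. You correctly identify all the ingredients that cited proof actually uses: the chaos decomposition, the joint role of the resonance constraint and the oscillatory Duhamel kernel in producing the extra $\tfrac12$-smoothing, the hypercontractivity/GRR upgrade for time continuity and convergence, and the translation of $L^p$-growth $\sim p^{3/2}$ into a tail of order $\exp(-c\lambda^{2/3})$.

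One quantitative slip worth flagging: the displayed bound $\E\big[|\widehat{\<21p>_N}(n,t)|^2\big]\les\jb{n}^{-2\eps}$ cannot by itself yield membership in $W^{-\eps,\infty}(\T^3)$. For a spatially stationary field the Fourier modes are uncorrelated, so $\E\big[|\P_j u(x)|^2\big]=\sum_{|n|\sim 2^j}\E\big[|\widehat u(n)|^2\big]$, which under your stated bound is of size $2^{j(3-2\eps)}$; this corresponds to regularity $-(\tfrac32-\eps)$, not $-\eps$. The target one actually needs is $\E\big[|\widehat{\<21p>_N}(n,t)|^2\big]\les\jb{n}^{-3+2\eps'}$ for every $\eps'>0$, so that the dyadic blocks are of size $2^{j\eps'}$ in the pointwise variance and hence $\les 2^{j\eps}$ in $L^\infty_x$ after hypercontractivity and the logarithmic loss. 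You also place $\jbb{n_1+n_2}$ inside the Duhamel kernel belonging to $\<20>_N$ while simultaneously using $n_1,n_2$ for the two paraproduct frequencies with $n=n_1+n_2$; the kernel's argument should be the frequency of $\<20>_N$ alone. These are slips in the stated exponents and labels rather than gaps in the strategy, which is the one the paper invokes.
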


Next, we state the almost sure mapping  properties of the paracontrolled operators.
We first consider the paracontrolled operator
$\If_{\pl}^{(1)}$ defined in~\eqref{X3}.
By writing out the frequency relation  $|n_2|^\theta \les |n_1| \ll |n_2|$
in a more precise manner, we have
\begin{align}
\begin{split}
 \If_{\pl}^{(1)} (w) (t)
 &   =  \sum_{n \in \Z^3}
e_n  \sum_{n =  n_1 +  n_2}
\sum_{ \ta k + c_0 \leq j < k-2}
\varphi_j(n_1) \varphi_k(n_2)  \\
& \hphantom{XXXXX}
\times 
\int_0^t e^{-\frac{t-t'}2} \frac{\sin ((t - t') \jbb{n})}{\jbb{n}} 
\ft w(n_1, t')\,  \ft{\<1>}(n_2, t') dt', 
\end{split}
\label{XX2a}
\end{align}

\noi
where $\varphi_j$ is as in \eqref{phi1} and $c_0 \in \R$ is some fixed constant.
Given a pathwise regularity of $\<1>$, 
the mapping property of 
$\If_{\pl}^{(1)}$ can be established in a deterministic manner.
See Lemma 7.1 in~\cite{OOTol1}.
See also Corollary 5.2 in \cite{GKO2}.

\begin{lemma}\label{LEM:sto3}

Let  $s>0$ and $T > 0$.
Then, given small $\theta > 0$, 
there exists small $\eps= \eps(s, \ta)  > 0$
such that 
the following deterministic estimate holds
the paracontrolled operator $ \If_{\pl}^{(1)}$ defined in~\eqref{X3}\textup{:}
\begin{align}
\| \If_{\pl}^{(1)}(w) \|_{L^\infty_T H^{\frac12+3\eps}_x}
\les \|w\|_{L^2_T H_x^{s}}
\|\<1>\|_{L^2_T W_x^{-\frac 12 - \eps, \infty}}.
\label{A00}
\end{align}

\noi
In particular, 
 $ \If_{\pl}^{(1)}$ belongs almost surely
to the class
\begin{align}
 \L_1(T) = \L ( L^2([0, T]; H^{s}(\T^3) )
 \, ; \, 
C([0, T]; H^{\frac 12 + 3\eps}(\T^3))) .
\notag
\end{align}

\noi
Moreover, by letting
$ \If_{\pl}^{(1), N}$, $N \in \N$, denote the paracontrolled operator
in \eqref{X3} with $\<1>$ replaced by the truncated stochastic convolution $\<1>_N$ in \eqref{so4a}, 
the truncated paracontrolled operator $ \If_{\pl}^{(1), N}$ converges almost surely to $ \If_{\pl}^{(1)}$ 
in $\L_1 (T)$.

\end{lemma}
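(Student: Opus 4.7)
The estimate \eqref{A00} is a purely deterministic paraproduct bound, so my plan is to prove it by a standard Littlewood-Paley decomposition, using the one-derivative smoothing of the damped wave Duhamel operator $\I$ together with the frequency constraint $|n_2|^\theta \les |n_1| \ll |n_2|$ to exchange regularity of $w$ for regularity of the output. After that, the almost-sure mapping property and the convergence of $\If_{\pl}^{(1), N}$ to $\If_{\pl}^{(1)}$ will be immediate from linearity and Lemma \ref{LEM:stoconv0}.

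Concretely, from \eqref{XX2a} I would write
\[
\P_K \If_{\pl}^{(1)}(w)
= \sum_{\theta K + c_0 \le j < K-2}\I\bigl(\P_j w \cdot \P_K \<1>\bigr),
\]
since the output has frequency $\sim 2^K$ when $j < K-2$. From the Strichartz estimate \eqref{Str2} (with $s=1$) and the spectral localization, the Duhamel integral operator $\I$ gains one derivative:
\[
\|\I(\P_j w \cdot \P_K \<1>)\|_{L^\infty_T L^2_x}
\les 2^{-K}\|\P_j w \cdot \P_K \<1>\|_{L^1_T L^2_x}.
\]
Applying H\"older in time together with the Bernstein-type bound $\|\P_K \<1>\|_{L^\infty_x} \les 2^{K(\frac12+\eps)}\|\<1>\|_{W^{-\frac12-\eps,\infty}_x}$ and $\|\P_j w\|_{L^2_x} \les 2^{-js}\|w\|_{H^s_x}$, I obtain
\[
\|\I(\P_j w \cdot \P_K \<1>)\|_{L^\infty_T L^2_x}
\les 2^{-js+K(-\frac12+\eps)}\|w\|_{L^2_T H^s_x}\|\<1>\|_{L^2_T W^{-\frac12-\eps,\infty}_x}.
\]
Summing over $j \in [\theta K + c_0, K-2]$ as a geometric series (since $s>0$) yields the bound $2^{-\theta s K + K(-\frac12+\eps)}$ for $\|\P_K \If_{\pl}^{(1)}(w)\|_{L^\infty_T L^2_x}$.

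The last step is to put this into the Littlewood-Paley characterization of $H^{\frac12+3\eps}$. The $K$-th block then contributes $2^{(1+6\eps)K - 2\theta s K + K(-1+2\eps)} = 2^{(-2\theta s + 8\eps)K}$ (squared). Choosing $\eps = \eps(s,\theta) > 0$ with $\eps < \frac{\theta s}{4}$ makes this geometrically summable, giving \eqref{A00}. The only subtlety is to keep careful track of where the implicit constant in front of $T$ enters (via the Strichartz estimate on $[0,T]$), but since we only use the $L^1_T L^2_x$ form of the Duhamel bound and H\"older in time produces only $\|w\|_{L^2_T H^s_x}\|\<1>\|_{L^2_T W^{-\frac12-\eps,\infty}_x}$, there is no hidden $T$-dependence beyond what is allowed.

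For the second assertion, Lemma \ref{LEM:stoconv0} gives almost-sure convergence of $\<1>_N$ to $\<1>$ in $C([0,T]; W^{-\frac12-\eps,\infty}(\T^3))$, hence also in $L^2_T W^{-\frac12-\eps,\infty}_x$. By linearity, applying the deterministic bound \eqref{A00} to the difference $\<1>_N - \<1>$ shows that $\If_{\pl}^{(1),N} \to \If_{\pl}^{(1)}$ almost surely in the operator norm of $\mathcal L_1(T)$, which also produces the almost-sure membership claim. I do not anticipate any real obstacle; the main point is choosing $\eps$ small relative to $\theta s$ so that the $K$-sum converges.
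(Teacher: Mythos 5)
Your argument is correct and is the standard Littlewood--Paley block estimate one would write for this kind of paracontrolled bound; the paper itself does not give a proof but refers to Lemma~7.1 of \cite{OOTol1} and Corollary~5.2 of \cite{GKO2}, which use essentially this same mechanism (one derivative of smoothing from the Duhamel operator $\I$ on a dyadic block, Bernstein/Cauchy--Schwarz, and then using the constraint $2^j \ges 2^{\theta k}$ to convert the $2^{-js}$ gain from $w$ into a $2^{-\theta s k}$ gain at the output frequency, which is then summed). Two points worth tightening if you write this up. First, the displayed identity $\P_K \If_{\pl}^{(1)}(w) = \sum_{\theta K + c_0 \le j < K-2}\I(\P_j w\cdot\P_K\<1>)$ is only correct up to the usual almost-orthogonality, since $\P_K$ applied to the output picks up $\P_k\<1>$ for $|k-K|\le 2$ rather than exactly $k=K$; this costs only a bounded number of terms and does not affect the summation. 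Second, and slightly more substantively, your estimate gives membership in $L^\infty([0,T];H^{\frac12+3\eps})$ whereas the lemma asserts membership in $\L_1(T)$ whose target is $C([0,T];H^{\frac12+3\eps})$. Continuity in time is not automatic from the $L^\infty_T$ bound alone: one gets it either by noting that $\I$ maps $L^1_T H^{s-1}_x$ into $C_T H^s_x$ (dominated convergence in the Duhamel formula), or, as you already have in hand, by observing that $\If_{\pl}^{(1),N}(w)$ is manifestly continuous in time for each smooth $\<1>_N$ and that the convergence $\If_{\pl}^{(1),N}(w) \to \If_{\pl}^{(1)}(w)$ produced by your linearity argument is uniform on $[0,T]$. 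Adding one sentence on this closes the only gap; everything else, including the choice $\eps<\theta s/4$, is right.
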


Next, we consider 
 the random operator 
$\If_{\pl, \pe}$ defined in \eqref{X6}.
By writing out the frequency relations
more carefully
as in \eqref{XX2a},
we have
\begin{align}
\begin{split}
\If_{\pl, \pe}(w) (t)
&  = \sum_{n \in \Z^3}e_n 
    \int_0^{t} 
\sum_{j = 0}^\infty
\sum_{n_1 \in \Z^3}
\varphi_j(n_1)
\ft w(n_1, t') \A_{n, n_1} (t, t') dt', 
\end{split}
\label{A0a}
\end{align}

\noi
where $\A_{n, n_1} (t, t')$ is given by 
\begin{align}
\begin{split}
\A_{n, n_1} (t, t')
& = \ind_{[0 , t]}(t')
\sum_{\substack{k = 0\\ 0 \le j < \ta k + c_0}}^\infty
\sum_{\substack{\l, m = 0\\|\l-m|\leq 2}}^\infty
\sum_{n - n_1 =  n_2 + n_3} 
 \varphi_k(n_2)  
 \varphi_\l(n_1 + n_2) 
 \varphi_m(n_3) \\
& \hphantom{XXXX}
\times
e^{-\frac{t-t'}2}
\frac{\sin ((t - t') \jbb{n_1+n_2})}{\jbb{n_1+n_2}} 
   \ft{\<1>}(n_2, t')\,   \ft{\<1>}(n_3, t)  .
\end{split}
\label{A0b}
\end{align}

\noi
Then, we have 
the  following almost sure mapping property
of the random operator 
$\If_{\pl, \pe}$.
See Proposition 2.5 in \cite{OOTol1}.
See also Proposition 1.11 in \cite{GKO2}.

\begin{lemma}\label{LEM:sto4J}
Let  $ s_3 < 0$ and $T>0$.
Then, there exists small $\theta = \theta (s_3) > 0$
such that, for any finite $ q >1$, 
the paracontrolled operator $\If_{\pl, \pe}$
defined by \eqref{X6} and \eqref{X7}
belongs to $\L_2(q, T)$ defined in \eqref{L1ast}, 
almost surely.
Furthermore the following tail estimate holds
for some  $C,c >0$:
\begin{align}
\PP \Big( \| \If_{\pl, \pe} \|_{\L_2 (q, T)} > \ld \Big)
\le
C (1+T) \exp ( -  \ld)
\label{pote2}
\end{align}

\noi
for any $\ld\gg 1$.

If we define  the truncated paracontrolled operator $\If_{\pl, \pe}^N$, $N \in \N$, 
by replacing 
$\<1>$ in~\eqref{X6} and~\eqref{X7}
with the truncated stochastic convolution $\<1>_N$ in \eqref{so4a}, 
then
the truncated paracontrolled operators $\If_{\pl, \pe}^N$ converge almost surely to $\If_{\pl, \pe}$ 
in $\L_2 (q, T)$.
Furthermore, the tail estimate~\eqref{pote2}
holds for the truncated paracontrolled operators $\If_{\pl, \pe}^N$,
uniformly in  $N\in \N$.

\end{lemma}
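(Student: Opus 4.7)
The plan is to proceed by duality, bounding the operator norm of $\If_{\pl, \pe}: L^q_T L^2_x \to L^\infty_T H^{s_3}_x$ in terms of a random norm of the kernel $\A_{n, n_1}(t, t')$ defined in \eqref{A0b}. From the Fourier representation \eqref{A0a} and Cauchy-Schwarz in $n_1$ (weighted by $\jb{n_1}^{-\alpha}$ for some $\alpha > 3/2$), together with H\"older's inequality in $t'$, the operator norm is controlled by
\begin{align*}
\Xi := \sup_{n \in \Z^3,\, t \in [0, T]} \jb{n}^{2 s_3} \int_0^T \sum_{n_1 \in \Z^3} \jb{n_1}^{\alpha} |\A_{n, n_1}(t, t')|^2 dt',
\end{align*}
so the task reduces to showing that $\Xi$ is almost surely finite and has exponential tails, uniformly in the truncation parameter.

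Next, I would compute second moments of $\A_{n, n_1}(t, t')$ from the explicit formula \eqref{A0b} using the orthogonality relations of Lemma \ref{LEM:Wick2}. Since $\A$ is bilinear in the Gaussian field $\<1>$, pairing the frequencies $(n_2, n_3)$ at times $t'$ and $t$ yields a sum over the frequency constraints $n - n_1 = n_2 + n_3$, $|n_2| \sim |n_3|$, $|n_1 + n_2| \sim |n_3|$ with $|n_1| \ll |n_2|^\theta$, weighted by $\jb{n_2}^{-2}\jb{n_3}^{-2}$ and the Duhamel kernel giving $\jb{n_1+n_2}^{-2}$. The key geometric input is that the resonant condition $|n_1 + n_2| \sim |n_3|$ combined with $|n_2| \sim |n_3|$ pins $n_2$ to a thin spherical shell once $n, n_1$ are fixed, which, together with the restriction $|n_1| \ll |n_2|^\theta$ chosen with $\theta = \theta(s_3) > 0$ sufficiently small, produces an extra power $\jb{n}^{-1-\eta}\jb{n_1}^{-\alpha-\eta}$ for some $\eta = \eta(\theta, s_3) > 0$ after applying Lemma \ref{LEM:SUM}. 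Integrating in $t'$ and summing over $n_1$ gives pointwise-in-$(n, t)$ second moment bounds of the form $\E[ |\A_{n,n_1}(t,t')|^2] \lesssim \jb{n}^{-2s_3 - \eta} \jb{n_1}^{-\alpha - \eta}$, so that $\E[\Xi] < \infty$ pointwise before taking suprema.

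To upgrade to an almost-sure bound with the exponential tail \eqref{pote2}, I would exploit that $\A_{n,n_1}(t,t') \in \H_{\le 2}$ and apply the hypercontractivity estimate (Lemma \ref{LEM:hyp}), combined with a Garsia-Rodemich-Rumsey-type argument in $(n, t)$ analogous to the proof of Lemma 2.3 in \cite{GKOT}, to convert the pointwise $L^2(\O)$ kernel bounds into an $L^p(\O)$ estimate on the supremum with growth at most $p$ in $p$. Exponentiating then produces the Gaussian-type decay $\exp(-c\ld)$ corresponding to the second Wiener chaos. Finally, for convergence of the truncated operators $\If_{\pl,\pe}^N$ and uniform tail bounds, I would repeat the analysis on the difference kernel $\A^{N_1} - \A^{N_2}$: each term $\ft{\<1>}(n_j) - \ft{\<1>_{N_j}}(n_j)$ is supported on $|n_j| \gtrsim N_j$, so a small negative power of $\min(N_1, N_2)$ can be extracted at the second-moment level, and Borel-Cantelli gives almost sure convergence in $\L_2(q, T)$. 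The main obstacle will be the kernel sum at the second-moment level: the ambient losses $\jb{n_1}^\alpha$ and $\jb{n}^{-2s_3}$ have to be balanced against the constraint $|n_1| \ll |n_2|^\theta$ and the resonant thin-shell restriction, which is precisely the place that forces $\theta$ to be taken small depending on $|s_3|$ (the restriction $|n_1| \ll |n_2|^\theta$ versus the merely paraproduct condition $|n_1| \ll |n_2|$ is what makes the argument work).
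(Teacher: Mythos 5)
The paper does not actually reproduce a proof of this lemma; it delegates to Proposition~2.5 of \cite{OOTol1} and Proposition~1.11 of \cite{GKO2}. Your overall skeleton (reduce the operator-norm bound to moment estimates on the kernel $\A_{n,n_1}$ via Cauchy--Schwarz, use hypercontractivity together with a Garsia--Rodemich--Rumsey argument for the supremum and the exponential tail, and handle convergence of $\If_{\pl,\pe}^N$ by telescoping with a small negative power of $N$) is consistent at a conceptual level with the approach in those references. However, the reduction step as you wrote it does not close, and two of the supporting heuristics are not accurate.

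First, the quantity $\Xi$ with $\sup_{n}$ does not control the $L^\infty_T H^{s_3}_x$ output. After your weighted Cauchy--Schwarz and H\"older in $t'$ you get at best
$\big|\ft{\If_{\pl, \pe}(w)}(n,t)\big|^2 \lesssim \|w\|^2_{L^q_T L^2_x}\,\jb{n}^{-2s_3}\,\Xi$,
and then $\|\If_{\pl,\pe}(w)(t)\|_{H^{s_3}}^2 = \sum_n \jb{n}^{2s_3}\big|\ft{\If_{\pl, \pe}(w)}(n,t)\big|^2 \lesssim \|w\|^2\,\Xi \sum_n 1$ diverges. Because the target is an $\ell^2_n$-type norm, you need the $\sum_n \jb{n}^{2s_3}(\cdots)$ version of $\Xi$ (a Hilbert--Schmidt-type quantity), or else you must keep $\sup_n$ but insert an extra factor $\jb{n}^{3+\eps}$ into the weight so that a convergent $\sum_n \jb{n}^{-3-\eps}$ survives. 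As written, this step fails.

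Second, the geometric picture is off. The conditions $n - n_1 = n_2 + n_3$, $|n_2|\sim|n_3|$, $|n_1+n_2|\sim|n_3|$, $|n_1|\ll|n_2|^\theta$ do not pin $n_2$ to a thin spherical shell; they merely force $|n_2|\gtrsim \max\big(|n-n_1|,\,|n_1|^{1/\theta}\big)$. The convergence of the $n_2$-sum is entirely driven by the polynomial weights $\jbb{n_1+n_2}^{-2}\jb{n_2}^{-2}\jb{n_3}^{-2}\sim\jb{n_2}^{-6}$ over this cone, not by any shell restriction. Correspondingly, the claimed pointwise bound $\E[|\A_{n,n_1}|^2]\lesssim \jb{n}^{-2s_3-\eta}\jb{n_1}^{-\alpha-\eta}$ would not even give a convergent $n_1$-sum, since $\sum_{n_1}\jb{n_1}^{\alpha}\jb{n_1}^{-\alpha-\eta}$ diverges for small $\eta$. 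The real mechanism is that $|n_2|\gg|n_1|^{1/\theta}$ makes the $n_2$-sum decay like $\jb{n_1}^{-3/\theta}$, which is what makes the $n_1$-sum converge and is also exactly where the condition ``$\theta=\theta(s_3)$ small'' enters: small $\theta$ buys fast decay in $n_1$ at the cost of a slight worsening in $n$, which must be absorbed by $-2s_3>0$. If you rework the bookkeeping with the $\sum_n$ version of $\Xi$ and the correct constraint-driven decay in $n_1$, the argument should go through in the spirit of the cited references.
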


Finally, we state the regularity property of
$\Ab$ defined in \eqref{sto3}.
See Lemma 7.2 in \cite{OOTol1}.
Given $N \in \N$, 
we define the  truncated version $\Ab_N$:
\begin{align}
\Ab_N(x, t, t') = \sum_{n \in \Z^3} e_n(x) \sum_{\substack{n = n_1 + n_2\\|n_1|\sim| n_2|}}
 e^{-\frac{t-t'}{2}} \frac{\sin ( (t - t') \jbb{n_1})}{\jbb{n_1}}
 \ft{\<1>}_N(n_1, t') \ft {\<1>}_N (n_2, t)
\label{sto3a}
\end{align}

\noi
by replacing $\<1>$ by $\<1>_N$  in \eqref{sto3}.

\begin{lemma}\label{LEM:stoA}

Fix finite $q \geq 2$.
Then, 
given any  $T,\eps>0$ and finite $p \geq 1$, 
 $\{ \Ab_N \}_{N\in \N}$ is a Cauchy sequence in 
 $L^p(\O;L^\infty_{t'}L^q_t(\Dl_2(T); H^{-\eps}(\T^3)))$, 
 converging to some limit $\Ab$ \textup{(}formally defined by \eqref{sto3}\textup{)}
 in $L^p(\O;L^\infty_{t'}L^q_t(\Dl_2(T);H^{-\eps}(\T^3)))$, 
 where $\Dl_2(T)$ is as in \eqref{sto2}.
Moreover,  $\Ab_N$  converges almost surely to the same  limit in $L^\infty_{t'}L^q_t(\Dl_2(T); 
H^{ -\eps}(\T^3))$.
Furthermore, 
 we   have 
the following uniform tail estimate\textup{:}
\begin{align*}
\PP\Big( \| \Ab_N\|_{L^\infty_{t'}L^q_t(\Dl_2(T);  H^{-\eps}_x)} > \ld\Big) 
\leq 
C (1+T) \exp ( -  \ld)
\end{align*}

\noi
for any  $\ld \gg 1$, and $N \in \N \cup \{\infty\}$, 
where $\Ab_\infty = \Ab$.

\end{lemma}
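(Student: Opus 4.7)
\smallskip

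\noindent
\textbf{Proof proposal for Lemma \ref{LEM:stoA}.}
The plan is to reduce the analysis to a second-moment estimate on each Fourier coefficient of $\Ab_N$, exploit the fact that $\Ab_N$ lives in the second Wiener chaos $\H_2$ to upgrade to arbitrary moments via hypercontractivity, and then pass from pointwise-in-$(t,t')$ bounds to the required $L^\infty_{t'}L^q_t(\Dl_2(T); H^{-\eps}(\T^3))$-bound via a Kolmogorov / Garsia-Rodemich-Rumsey style argument.

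First, I would compute $\E[|\ft{\Ab}_N(n, t, t')|^2]$. Using the explicit expression of $\<1>_N$ as a stochastic integral with kernel involving $\jbb{m}^{-1}\sin((s-s'')\jbb{m})$ from \eqref{W2} and \eqref{W3}, and writing $\ft{\<1>}_N(n_1, t')\, \ft{\<1>}_N(n_2, t)$ as an element of $\H_2$, the resonance condition $|n_1| \sim |n_2|$ from \eqref{sto3a} together with $n_1 + n_2 = n$ forces $|n_1|, |n_2| \gtrsim |n|$. Combining Ito's isometry with Lemma \ref{LEM:SUM}, one obtains
\begin{align*}
\E[|\ft{\Ab}_N(n, t, t')|^2]
\lesssim \sum_{\substack{n_1 + n_2 = n \\ |n_1| \sim |n_2|}}
\frac{1}{\jbb{n_1}^{4}\jb{n_2}^{2}}
\lesssim \jb{n}^{-3},
\end{align*}
uniformly in $N \in \N \cup \{\infty\}$ and $(t,t') \in \Dl_2(T)$. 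Since $H^{-\eps}$-summability requires $\sum_n \jb{n}^{-2\eps-3} < \infty$, this yields $\E[\|\Ab_N(t,t')\|_{H^{-\eps}}^2] \lesssim 1$.

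Next, to handle the  $L^\infty_{t'} L^q_t$-structure, I would establish Hölder-type regularity in $(t,t')$ at the level of second moments. Using \eqref{sto3a} and the boundedness of $s \mapsto \sin(s\jbb{m})/\jbb{m}$ along with its derivative (which contributes at most one power of $\jbb{m}$), an analogous computation to the one above produces
\begin{align*}
\E[\|\Ab_N(t_1,t'_1) - \Ab_N(t_2,t'_2)\|_{H^{-\eps}}^2]
\lesssim (|t_1-t_2| + |t'_1 - t'_2|)^{2\al}
\end{align*}
for some small $\al > 0$, where one sacrifices a power $\eps$ of $\jb{n}$ to gain the time-Hölder exponent. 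Since $\Ab_N$ lies in $\H_2$, Lemma \ref{LEM:hyp} upgrades this to the bound $\lesssim p^p (|t_1-t_2| + |t'_1-t'_2|)^{2\al}$ in $L^p(\O)$ for any finite $p$. A two-parameter Kolmogorov-type continuity criterion then yields
\begin{align*}
\E\Big[\|\Ab_N\|_{L^\infty_{t'} L^q_t(\Dl_2(T); H^{-\eps})}^p\Big]
\leq C(T)\, p^p,
\end{align*}
uniformly in $N \in \N \cup \{\infty\}$. The exponential tail estimate is then an immediate consequence of this moment bound: using Markov's inequality and optimizing over $p$ (taking $p \sim \ld$, which is permissible because the bound grows only polynomially in $p$), one obtains $\PP(\|\Ab_N\|_{...} > \ld) \lesssim (1+T)\exp(-c\ld)$ for $\ld \gg 1$.

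Finally, for convergence, I would apply the same machinery to the difference $\Ab_{N_1} - \Ab_{N_2}$ for $N_1 \ge N_2$. Writing $\<1>_{N_1} = \<1>_{N_2} + (\<1>_{N_1} - \<1>_{N_2})$ and expanding bilinearly, every term in $\Ab_{N_1} - \Ab_{N_2}$ contains at least one factor whose Fourier support lies in $\{|m| \gtrsim N_2\}$. Repeating the variance computation gives a gain of a small negative power $N_2^{-\dl}$ (at the cost of slightly weakening the sum exponent), hence Cauchyness in $L^p(\O; L^\infty_{t'} L^q_t(\Dl_2(T); H^{-\eps}))$. Almost sure convergence along the whole sequence then follows by a standard Borel-Cantelli argument applied to a geometric subsequence combined with the uniform tail bound. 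The main technical obstacle is the first step: verifying that the frequency constraint $|n_1| \sim |n_2|$ built into $\Ab_N$ really yields the decay $\jb{n}^{-3}$ uniformly in $(t,t')$, since one must carefully identify which combinations of $\sin$-kernels produce genuine smoothing rather than only pointwise bounds. This is precisely the multilinear dispersive smoothing phenomenon already present in the analysis of $\If_{\pl, \pe}$ in Lemma \ref{LEM:sto4J}, and the argument for $\Ab_N = \A_{n,0}(t,t')$ in \eqref{X7} is a simplified instance of that analysis.
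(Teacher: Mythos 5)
Your proposal is correct and follows the same route that the paper (and its companion \cite{OOTol1}, whose Lemma 7.2 is the actual source of the proof) take: compute the second moment of the Fourier coefficients of $\Ab_N$, observe that the resonance constraint $|n_1|\sim|n_2|$ forces $|n_1|,|n_2|\gtrsim|n|$ and yields the decay $\E[|\ft{\Ab}_N(n,t,t')|^2]\lesssim\jb{n}^{-3}$, then upgrade via Nelson hypercontractivity (Lemma~\ref{LEM:hyp} with $\H_{\le 2}$) and a Garsia--Rodemich--Rumsey-type criterion in the two parameters $(t,t')$ to pass to the $L^\infty_{t'}L^q_t(\Dl_2(T);H^{-\eps})$-norm, finally deducing the exponential tail from the $p^p$ moment growth by optimizing $p\sim\ld$. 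This is precisely the machinery the paper itself signals in the comment preceding Lemmas~\ref{LEM:sto1}--\ref{LEM:sto2}, and you correctly identify $\Ab_N=\A_{n,0}(t,t')$ as a degenerate case of the operator treated in Lemma~\ref{LEM:sto4J}. Two small remarks worth tightening: (i) $\Ab_N$ lies in $\H_{\le 2}$ rather than $\H_2$, because the $n=0$ Fourier coefficient carries a nontrivial deterministic ($\H_0$) mean from the cross-covariance $\E[\ft{\<1>}_N(n_1,t')\overline{\ft{\<1>}_N(n_1,t)}]$; this costs nothing since Lemma~\ref{LEM:hyp} already handles $\H_{\le k}$, but one should verify that this mean is $O(1)$ uniformly in $N$, which uses the oscillation of $\sin((t-t')\jbb{n_1})$ to sum the marginally divergent series $\sum\jb{n_1}^{-3}$. (ii) In the H\"older-in-time estimate, the time dependence enters not only through $\sin((t-t')\jbb{n_1})/\jbb{n_1}$ but also through the two stochastic-convolution factors $\ft{\<1>}_N(n_1,t')$ and $\ft{\<1>}_N(n_2,t)$; one interpolates both ($|\sin(s_1 m)-\sin(s_2 m)|\lesssim(|s_1-s_2|m)^{\theta}$ and $\E[|\ft{\<1>}(n,t_1)-\ft{\<1>}(n,t_2)|^2]\lesssim|t_1-t_2|^{2\theta}\jb{n}^{-2+2\theta}$), so that a total loss of a small power of $\jb{n}$ is spent to buy the H\"older exponent. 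Neither point changes the structure of your argument.
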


\subsection{Proof of local well-posedness}
\label{SUBSEC:LWP3}

In this subsection,
we present the proof of Theorem~\ref{THM:1}.
In the following, we assume that $s_3 < 0 < s_1< s_2 < 1$.
Recall that 
$\big(8, \frac{8}{3}\big)$ 
and $(4, 4)$ are $\frac 14$-admissible
and  $\frac 12$-admissible, respectively.
Given $0 < T \leq 1$, 
we define $X^{s_1} (T)$
(and $Y^{s_2} (T)$) as the intersection of the energy spaces of regularity  $s_1$ 
(and $s_2$, respectively) and the Strichartz space:
\begin{align}
\begin{split}
 X^{s_1} (T)
 & = C([0,T];H^{s_1}(\T^3))\cap C^1([0,T]; H^{s_1-1}(\T^3))
 \cap L^8([0, T]; W^{s_1-\frac 14, \frac{8}{3}}(\T^3)),\\
 Y^{s_2} (T)
 & = C([0,T];H^{s_2}(\T^3))\cap C^1([0,T]; H^{s_2-1}(\T^3))
 \cap L^4([0, T]; W^{s_2 - \frac 12, 4}(\T^3)), 
\end{split}
 \label{M0}
\end{align}

\noi
and set 
\begin{align*}
Z^{s_1, s_2, s_3} (T) =  X^{s_1}(T) \times  Y^{s_2}(T) \times L^3([0,T]; H^{s_3}(\T^3)).
\end{align*}

\noi
By writing  \eqref{SNLW6} in the Duhamel formulation,
 we have
\begin{align}
X &=  \Phi_1(X, Y,  \Res) \notag \\
:\! & = 
S(t)(X_0, X_1)
+ 2 \I \Big( (X+Y+\<20>)\pl \<1> \Big) \notag \\
& \hphantom{X}
- \I \Big( M \big( \Qxy + 2 \Res + \<20>^2 + 2 \<21> + \<2> \big) \<1>\Big),\notag \\
Y
& = \Phi_2(X, Y,  \Res) \notag \\
:\! & = 
S(t)(Y_0, Y_1)
+\I \Big( (X+Y+\<20>)^2\Big)  \label{SNLW8} \\
& \hphantom{X}
+ 2 \I \big( \Res + Y\pe \<1>+\<21p> \big) +2 \I \Big( (X+Y+\<20>) \pg \<1> \Big)\notag \\
& \hphantom{X}
 - \I \Big(M \big( \Qxy + 2 \Res + \<20>^2 + 2 \<21> + \<2> \big) (X+Y+\<20>) \Big), \notag \\
\Res 
&= \Phi_3(X, Y,  \Res)\notag \\ 
:\! &=
2\If_{\pl}^{(1)}
 \big( X+Y+\<20> \big)\pe \<1>
+2   \If_{\pl, \pe}
 \big( X+Y+\<20>\big)\notag \\
& \hphantom{X}
-\int_0^t  M \big( \Qxy + 2 \Res + \<20>^2 + 2 \<21> + \<2> \big) 
\Ab(t, t') dt'.\notag 
%
\end{align}

%

\noi
In the following, 
we use $\eps = \eps(s_1, s_2, s_3 ) > 0$ to denote a small positive number.
Given an enhanced data set as in~\eqref{data1}, 
we set
\begin{align*}
\Xi = \big(
\<1>, \,  \<2>,  \,  \<20>, \<21p>,\, \Ab, \If_{\pl, \pe}\big)
\end{align*}

\noi
and 
\begin{align}
\begin{split}
\| \Xi \|_{\mathcal{X}^{\eps}_T}
&=
\| \<1> \|_{C_T  W^{-\frac 12-\eps,\infty}_x \cap \, C_T^1  W^{-\frac 32-\eps,\infty}_x}
+ \| \<2> \|_{C_T  W^{-1-\eps,\infty}_x}\\
&\hphantom{X}
+ \| \<20> \|_{C_T W^{ \frac 12 -\eps, \infty}_x\cap \, C_T^1 W^{ -1 -\eps, \infty}_x}  
+ \| \<21p> \|_{C_T H^{ -\eps}_x} \\
&\hphantom{X}
+ \| \Ab \|_{L^\infty_{t'}L^3_t(\Dl_2; H^{ - \eps}_x)}
+  \| \If_{\pl, \pe} \|_{\L_2 (\frac32, T)} 
\label{data3}
\end{split}
\end{align}

\noi
for some small $\eps = \eps(s_1, s_2, s_3)> 0$.
Moreover, we assume that
\begin{align}
\| (X_0,X_1) \|_{\H^{s_1}}
+ \| (Y_0,Y_1) \|_{\H^{s_2}}
+ \| \Xi \|_{\mathcal{X}^{\eps}_1}
\leq K
\label{data4}
\end{align}

\noi
for some $K \geq 1$.
Here, we assume the bound on $\Xi$ 
for
the time interval $[0, 1]$.

\begin{remark}\label{REM:data} \rm
As for proving  local well-posedness stated in Theorem \ref{THM:1}, 
we do not need to use the $C^1_T  W^{-\frac 32-\eps,\infty}_x$-norm
for $\<1>$
and the $C^1_T  W^{-1 -\eps,\infty}_x$-norm
for $\<20>$.
However, in  constructing global-in-time dynamics, 
we need to make use of these norms
and thus we have included them in the definition of 
the $\mathcal{X}^{ \eps}_T$-norm in \eqref{data3}.
\end{remark}

We first establish  preliminary estimates.
By Sobolev's inequality, we have
\begin{align}
\| f^2 \|_{H^{-a}}
\les \| f^2 \|_{L^{\frac 6{3+2a}}}
= \| f \|_{L^{\frac{12}{3+2a}}}^2
\les \| f \|_{H^{\frac{3-2a}4}}^2
\label{M1-1}
\end{align}
for any $0\leq a< \frac 32$.
By \eqref{Pxy}, \eqref{M1-1}, Lemma \ref{LEM:para}, 
Lemma \ref{LEM:gko}\,(ii), and H\"older's inequality
with~\eqref{data4}, 
we have
\begin{align}
\| \Qxy \|_{L^\infty_{T} H^{-100}_x}
& \les \| (X+Y)^2 \|_{L^\infty_{T} H^{-100}_x}
+ \| X \<20> \|_{L^\infty_{T} H^{-100}_x}
+ \| Y \<20> \|_{L^\infty_{T} H^{-100}_x} \notag \\
&\quad+ \| X \pl \<1> \|_{L^\infty_{T} H^{-100}_x}
+ \| X \pg \<1> \|_{L^\infty_{T} H^{-100}_x}
+ \| Y \<1> \|_{L^\infty_{T} H^{-100}_x} \notag \\
&\les
\| X \|_{L^\infty_{T} H^\eps_x}^2
+ \| Y \|_{L^\infty_{T} H^\eps_x}^2 \notag \\
&\quad +
 \Big( \| X \|_{L^\infty_{T} L^2_x}
+ \| Y \|_{L^\infty_{T} L^2_x} \Big)
\| \<20> \|_{L^\infty_{T} L^{\infty}_x}  \label{M1}\\
&\quad +
\| X \|_{L^\infty_{T} L^2_x}
\| \<1> \|_{L^\infty_{T} W^{-\frac 12-\eps,\infty}_x}
+ \| Y \|_{L^\infty_{T} H^{\frac 12+ \eps}_x} 
\| \<1> \|_{L^\infty_{T} W^{-\frac 12-\eps,\infty}_x}
\notag \\
&\les
\|(X, Y, \Res)\|_{Z^{s_1, s_2, s_3}(T)}^2
+
K^2, \notag 
\end{align}

\noi
provided that
$s_1 \geq \eps$ 
and $s_2 \geq \frac 12 + \eps$.

We now estimate $\Phi_1 (X, Y, \Res)$  in \eqref{SNLW8}.
By \eqref{M0}, Lemmas \ref{LEM:Str} and \ref{LEM:para}, \eqref{addM}, and \eqref{M1}
with \eqref{data4},  we have
\begin{align}
\| \Phi_1  (X, Y, \Res) \|_{X^{s_1}(T)}
&\les \| (X_0, X_1)\|_{\H^{s_1}}
+ 
\big\|  (X+Y+\<20>)\pl \<1> \big\|_{L^1_{T} H^{s_1-1}_x} \notag \\
&\quad
+ \big\| M \big( \Qxy + 2 \Res + \<20>^2 + 2 \<21> + \<2> \big) \<1>\big\|_{L^1_{T} H^{s_1-1}_x}
\notag \\
&\les
\| (X_0, X_1)\|_{\H^{s_1}}
+ T
 \| X+Y+\<20> \|_{L^\infty_{T}L^2_x}
\| \<1> \|_{L^\infty_{T} W_x^{-\frac 12 -\eps, \infty}} \label{M1a}\\
&\quad+
T^{\frac 13}
\|\Qxy + 2 \Res + \<20>^2 + 2 \<21> + \<2> \|_{L^3_TH^{-100}_x}^{2} 
\|\<1>\|_{L^\infty_TH^{s_1-1}_x}
\notag \\
&\les
\| (X_0, X_1)\|_{\H^{s_1}}
+ 
T^{\frac 13}K  \Big( \|(X, Y, \Res)\|_{Z^{s_1, s_2, s_3}(T)}^4 +K^4 \Big),
\notag 
\end{align}

\noi
provided that $\eps \le s_1 <  \frac 12-\eps$, $s_2 \ge \frac 12+\eps$, and $s_3\ge-100$.

Next,  we estimate  $\Phi_2 (X, Y, \Res)$  in \eqref{SNLW8}.
By \eqref{M0} and Lemma \ref{LEM:Str}
with the fractional Leibniz rule (Lemma \ref{LEM:gko}\,(i)), we have
\begin{align}
\big\|  & \I \big( (X+Y+\<20>)^2 \big) \big\|_{Y^{s_2} (T)}
 \les \| \jb{\nb}^{s_2 - \frac 12}(X+Y+\<20>)^2\|_{L^\frac{4}{3}_{T, x}} \notag \\
& \les T^\frac{1}{4} 
\Big(\| \jb{\nb}^{s_2 - \frac 12}X\|_{L^8_T L^\frac{8}{3}_x}^2
+ \| \jb{\nb}^{s_2 - \frac 12}Y\|_{L^4_{T, x}}^2
+ \| \jb{\nb}^{s_2 - \frac 12}\<20>\|_{L^\infty_{T, x}}^2\Big) \label{M4a}\\
& \les T^\frac{1}{4} \Big( \|(X, Y, \Res)\|_{Z^{s_1, s_2, s_3}(T)}^2 +K^2 \Big), 
\notag
\end{align}

\noi
provided that $\frac 12 \le s_2 \leq \min(1 -\eps, s_1 + \frac 14)$.
By 
Lemmas \ref{LEM:Str} and \ref{LEM:para},  \eqref{M4a}, and \eqref{M1}
with \eqref{data4},   we have
\begin{align}
\| & \Phi_2  (X, Y, \Res) \|_{Y^{s_2}(T)}   \notag \\
&\les
\| (Y_0, Y_1)\|_{\H^{s_2}}
+ 
\big\| \I \big( (X+Y+\<20>)^2 \big) \big\|_{Y^{s_2} (T)}
+ \| \Res \|_{L^1_T H^{s_2-1}_x} \notag \\
& \hphantom{X}
+ \| Y \pe \<1> \|_{L^1_T H^{s_2-1}_x}
+ \| \<21p> \|_{L^1_T H^{s_2-1}_x}
+ \| (X+Y+\<20>) \pg \<1> \|_{L^1_T H^{s_2-1}_x} \notag \\
& \hphantom{X}
+ 
\big\| M \big( \Qxy + 2 \Res + \<20>^2 + 2 \<21> + \<2> \big) (X+Y+\<20>) \big\|_{L^1_T H^{s_2-1}_x}
\notag \\
&\les
\| (Y_0, Y_1)\|_{\H^{s_2}}
+ 
T^\frac{1}{4} \Big( \|(X, Y, \Res)\|_{Z^{s_1, s_2, s_3}(T)}^2 +K^2 \Big)
+ T^{\frac{2}{3}} \| \Res \|_{L^3_T H^{s_3}_x} \notag \\
&\hphantom{X}
+
T \| \<21p> \|_{L^\infty_T H^{-\eps}_x}
+ T \Big( \| X \|_{L_T^\infty H_x^{s_1}} + \| Y \|_{L_T^\infty H_x^{s_2}} + \| \<20> \|_{L^\infty_T W^{\frac 12-\eps}_x} \Big)
\| \<1> \|_{L^\infty_T W^{-\frac 12-\eps}_x}
\label{M4} \\
&\hphantom{X}
+T^{\frac 13} \| \Qxy + 2 \Res + \<20>^2 + 2 \<21> + \<2> \|_{L_T^3 H_x^{-100}}^2
\notag \\
&\quad
\hphantom{XXX}
\times 
\Big( \| X \|_{L_T^\infty H_x^{s_1}} + \| Y \|_{L_T^\infty H_x^{s_2}} + \| \<20> \|_{L^\infty_T W_x^{\frac 12-\eps, \infty}} \Big)\notag \\
&\les
\| (Y_0, Y_1)\|_{\H^{s_2}}
+ 
T^{\frac 14}   \Big( \|(X, Y, \Res)\|_{Z^{s_1, s_2, s_3}(T)}^5 +K^5 \Big),
\notag 
\end{align}

\noi
provided that $s_1 \ge \eps$, 
$\frac 12+\eps < s_2 \le \min (1-3\eps, s_1+\frac 14, s_3+1)$, 
and $s_3 \ge -100$.

Finally, we estimate $\Phi_3(X, Y,  \Res)$ in \eqref{SNLW8}.
By 
Lemma \ref{LEM:para}, 
Lemma \ref{LEM:sto3} (in particular \eqref{A00}), 
and
\eqref{M1}
with \eqref{data4}, 
we have
\begin{align}
\| & \Phi_3  (X, Y,  \Res)\|_{L^3_T H^{s_3}_x}  \notag \\
 &\les \big\|
 \If_{\pl}^{(1)}
 \big( X+Y+\<20> \big)\pe \<1> \big\|_{L^3_T H^{s_3}_x}
+ \| \If_{\pl, \pe}
 \big( X+Y+\<20>\big) \big\|_{L^3_T H^{s_3}_x}  \notag \\
& \hphantom{X}
+ \bigg\| 
\int_0^t  M \big( \Qxy + 2 \Res + \<20>^2 + 2 \<21> + \<2> \big) 
\Ab(t, t') dt' \bigg\|_{L^3_T H^{s_3}_x} \notag \\
&\les
T^{\frac 13}
\| \If_{\pl}^{(1)}
 \big( X+Y+\<20> \big) \big\|_{L^\infty_TH^{\frac 12 + 3\eps}_x}
\| \<1> \|_{L^\infty_T W^{-\frac 12 - \eps, \infty}_x} 
+ T^{\frac 13} K \| X+Y+\<20> \|_{  L^\infty_T  L^2_x}  \notag \\
& \hphantom{X}
+
\int_0^T 
| M(\Qxy + 2 \Res + \<20>^2 + 2 \<21> + \<2>) (t')| \cdot \| \Ab(t, t')\|_{L^3_t([t', T]; H^{s_3}_x)} dt'
 \label{M5}\\
&\les
T^\frac{1}{3}
K^2 \Big( \| X \|_{L_T^\infty H_x^{s_1}} + \| Y \|_{L_T^\infty H_x^{s_2}} + \| \<20> \|_{L^\infty_T W_x^{\frac 12-\eps, \infty}} \Big) \notag \\
& \hphantom{X}
+
T^{\frac 13} K \| \Qxy + 2 \Res + \<20>^2 + 2 \<21> + \<2> \|_{L_T^3 H_x^{-100}}^2
 \notag \\
& \les T^\frac{1}{3}
K  \Big( \|(X, Y, \Res)\|_{Z^{s_1, s_2, s_3}(T)}^4 +K^4 \Big)
 \notag 
\end{align}
provided that 
$s_1 > 0 $ with sufficiently small $\eps = \eps(s_1) > 0$
(in view of Lemma \ref{LEM:sto3}),     
 $s_2 \geq \frac 12 + \eps$,  and $- 100 \le s_3\le-\eps$.

Note that  $|x|x$ is differentiable with a locally bounded derivative.
In view of \eqref{addM}, this allows us to estimate the difference
$M(w_1) - M(w_2)$.
By repeating a similar computation, we also obtain the difference estimate:
\begin{align}
\begin{split}
\| \vec \Phi &  (X, Y, \Res)
- \vec \Phi   (\wt X, \wt Y, \wt \Res)
 \|_{Z^{s_1, s_2, s_3}(T)} \\
&\les
T^{\frac 1{4}} \Big(\|(X, Y, \Res)\|_{Z^{s_1, s_2, s_3}(T)}^4 + K^4 \Big)
\|(X , Y, \Res)  - (\wt X , \wt Y, \wt \Res)\|_{Z^{s_1, s_2, s_3}(T)}, 
\end{split}
\label{M6}
\end{align}

\noi
where 
\begin{align*}
\vec \Phi := (\Phi_1, \Phi_2, \Phi_3).
\end{align*}
Therefore, by choosing $T= T(K)>0$ sufficiently small, 
we conclude from \eqref{M1a}, \eqref{M4}, \eqref{M5}, and \eqref{M6} that 
$\vec \Phi = (\Phi_1, \Phi_2, \Phi_3)$ 
is a contraction on the closed ball $B_R \subset Z^{s_1, s_2, s_3}(T)$
of radius $R\sim 1+
\| (X_0, X_1) \|_{\H^{s_1}}
+ \| (Y_0, Y_1) \|_{\H^{s_2}}$ centered at the origin.
A similar computation yields 
Lipschitz continuous dependence of the solution $(X, Y, \Res)$
on the enhanced data set $(X_0,X_1,Y_0,Y_1,\Xi)$
measured in the $\mathcal{X}^{s_1, s_2,\eps}_T$-norm
by possibly making $T > 0$ smaller.
This concludes the proof of 
Theorem~\ref{THM:1}.

\section{Invariant Gibbs dynamics}
\label{SEC:GWP}

In this section, we present the proof of Theorem \ref{THM:GWP}.
In the remaining part of this section, 
we work in the weakly nonlinear regime.
Namely, we fix $\s \ne 0$ such that $|\s| \le \s_0$,
where $\s_0$ is as in 
 Theorem~\ref{THM:Gibbs}\,(i).
We also fix sufficiently large $A\gg1 $ as in 
 Theorem~\ref{THM:Gibbs}\,(i)
 such that the $\Phi^3_3$-measure 
$\rho$ is constructed as the limit of
the truncated $\Phi^3_3$-measures $\rho_N$
in \eqref{GibbsN}.
With these parameters,
consider
the truncated Gibbs measure $\rhoo_N$:
\begin{align}
\rhoo_N = \rho_N \otimes \mu_0
\label{rhooN}
\end{align}

\noi
for $N \in \N$, 
where $\mu_0$ is the white noise measure; see \eqref{gauss0} with $s = 0$.
A standard argument \cite{GKOT, ORTz, OOTol1}
shows that 
the truncated Gibbs measure $\rhoo_N$ is invariant under 
the truncated hyperbolic $\Phi^3_3$-model \eqref{SNLW3}:
\begin{align}
\begin{split}
\dt^2 & u_N + \dt u_N  + (1 -  \Dl)  u_N  \\
& 
-\s  \pi_N \big(   :\! (\pi_N u_N)^2 \!:\, \big)
+  M (\,:\! (\pi_N u_N)^2 \!:\,) \pi_N u_N 
= \sqrt{2} \xi, 
\end{split}
\label{SNLW3a}
\end{align}

\noi
where 
$:\! (\pi_N u_N)^2 \!:  \, = 
(\pi_N u_N)^2 -\s_N$
and $\pi_N$ and $\s_N$ are as in \eqref{pi} and \eqref{sigma1}, respectively.
See Lemma \ref{LEM:GWP4} below.
Moreover, as a corollary to Theorem \ref{THM:Gibbs}\,(i), 
the truncated Gibbs measure $\rhoo_N$ in~\eqref{rhooN} converges weakly to the Gibbs measure
$\rhoo = \rho \otimes \mu_0 $ in \eqref{Gibbs2}.

Our main goal is to construct global-in-time 
dynamics for the limiting hyperbolic $\Phi^3_3$-model~\eqref{SNLW1}
almost surely with respect to the Gibbs measure $\rhoo$,
and prove invariance of the Gibbs measure $\rhoo$ under the limiting 
hyperbolic $\Phi^3_3$-dynamics.
A naive approach would be 
 to apply Bourgain's invariant measure
argument \cite{BO94, BO96}, by exploiting the invariance
of the truncated Gibbs measure $\rhoo_N$ 
under the truncated hyperbolic $\Phi^3_3$-dynamics,  
and to 
try to construct global-in-time limiting dynamics
for 
the limiting process $u = \lim_{N \to \infty} u_N$.
There are, however, two issues in the current situation:
(i) the truncated Gibbs measure $\rhoo_N$ converges  to the limiting Gibbs measure
$\rhoo$ {\it only weakly} and (ii) 
the Gibbs measure $\rhoo$ and the base Gaussian measure 
$\muu = \mu \otimes \mu_0$ in \eqref{gauss1} are {\it mutually singular}.
Moreover, our local theory relies on the paracontrolled approach, 
which gives additional difficulty.
As a result, Bourgain's invariant measure
argument \cite{BO94, BO96} is not directly applicable to our problem.
In~\cite{Bring2}, Bringmann encountered a similar problem
in the context of the defocusing Hartree NLW on $\T^3$, 
where he overcame this issue by introducing a new globalization argument, 
by using  the fact that the (truncated)  Gibbs measure
is absolutely continuous with respect to a shifted  measure (as in Appendix \ref{SEC:AC} below)
\cite{OOTol1, Bring1} in a uniform manner
and 
 establishing a (rather involved) large time stability theory, 
 where sets of large probabilities are characterized
 via the shifted measures.

In the following, we introduce a new  alternative globalization
argument.
This new argument has the advantage of being  
 conceptually  simple  and straightforward.
Our approach consists of several steps:

\smallskip
\begin{itemize}
\item[{\bf 1.}]
In the first step, we
establish a uniform (in $N$)  exponential integrability of the truncated enhanced data set $\Xi_N$
(see \eqref{data3x} below)
with respect to the truncated measure $\rhoo_N\otimes \PP_2$
(Proposition~\ref{PROP:exptail2}).
Here, 
 $\PP_2$ is the measure for the stochastic forcing
defined in \eqref{PPP} below. 
By combining the variational approach
with space-time estimates, 
we prove this uniform exponential integrability 
{\it without} any reference to (the truncated version of) the shifted measure 
 $\Law (Y(1) +\s \ZZ(1) + \W(1))$  constructed in Appendix~\ref{SEC:AC}.
As a corollary, 
we construct the limiting enhanced data set $\Xi$ associated with the Gibbs measure $\rhoo$
(see~\eqref{enh1} below)
by establishing convergence of the truncated enhanced data set $\Xi_N$
almost surely with respect to the limiting measure $\rhoo \otimes \PP_2$.

\smallskip

\item[{\bf 2.}] In the second step, we establish  a stability result (Proposition \ref{PROP:LWPv}).
We prove this stability result by a simple contraction argument, 
where we use a norm with an exponentially decaying weight in time.
As a result, the proof follows from  a small modification of 
that of the local well-posedness (Theorem \ref{THM:1}).
As compared to~\cite{Bring2}, 
our stability argument is very simple (both in terms of the statements
and the proofs).

\smallskip
\item[{\bf 3.}] In the third step, 
we establish a uniform (in $N$) control
on the solution $(X_N, Y_N, \Res_N)$
to the truncated system (see \eqref{SNLW9} below)
with respect to the truncated measure $\rhoo_N \otimes \PP_2$
 (Proposition \ref{PROP:tail2}).
The proof is based on 
the invariance of the truncated Gibbs measure $\rhoo_N$
and a discrete Gronwall argument.

\smallskip
\item[{\bf 4.}]
In the fourth step, 
we study  the pushforward measures
$(\Xi_N)_\#(\rhoo_N\otimes \PP_2)$
and 
$(\Xi)_\#(\rhoo\otimes \PP_2)$.
In particular, 
by using ideas from theory of optimal transport (the Kantorovich duality)
and the Bou\'e-Dupuis variational formula, 
we prove that the pushforward measure
$(\Xi_N)_\#(\rhoo_N\otimes \PP_2)$
converges to 
$(\Xi)_\#(\rhoo\otimes \PP_2)$
in the Wasserstein-1 distance, as $N \to \infty$;
see Proposition~\ref{PROP:plan} below.

\end{itemize}

\smallskip

Once we establish Steps 1 - 4, the proof of Theorem \ref{THM:GWP}
follows in a straightforward manner.
In Subsection \ref{SUBSEC:GWP1}, 
we first study the truncated dynamics \eqref{SNLW3a}
and briefly go over almost sure global well-posedness of \eqref{SNLW3a}
and invariance of the truncated Gibbs measure $\rhoo_N$ (Lemma~\ref{LEM:GWP4}).
We then discuss the details of Step 1 above.
In Subsection \ref{SUBSEC:GWP2}, 
we first go over the details of Steps 2, 3, and 4
and then present the proof of 
 Theorem \ref{THM:GWP}.

\medskip

\noi
{\bf Notations:}
By assumption,  the Gaussian field $\muu = \mu \otimes \mu_0$ in \eqref{gauss1}
and hence the (truncated) Gibbs measure
are independent of (the distribution of) the space-time white noise $\xi$ in \eqref{SNLW1}
and \eqref{SNLW3a}.
Hence, we can write the probability space $\Omega$
as 
\begin{align}
\O = \O_1 \times \O_2
\label{O1}
\end{align}
such that the random Fourier series in \eqref{IV2}  depend only on $\o_1 \in \O_1$, 
while the cylindrical Wiener process $W$ in \eqref{W1}
depends only on $\o_2 \in \O_2$.
In view of \eqref{O1}, 
we also write the underlying probability measure $\PP$ on $\O$
as 
\begin{align}
\PP = \PP_1 \otimes \PP_2,
\label{PPP} 
\end{align}
where $\PP_j$ is the marginal probability measure on $\O_j$, $j = 1, 2$.

With the decomposition \eqref{O1} in mind, 
we set
\begin{align}
\<1> (t;\vec u_0, \o_2)
=
S(t) \vec u_0
+ \sqrt 2 \int_0^t \D(t-t') dW(t', \o_2)
\label{enh0a}
\end{align}

\noi
for $\vec u_0 =(u_0,u_1) \in \H^{-\frac 12-\eps}(\T^3)$ and $\o_2 \in \O_2$,
where $S(t)$ and $\D(t)$ are as in \eqref{St0} and \eqref{W3}, respectively.
When it is clear from the context, 
we may suppress the dependence on $\vec u_0$ and/or $\o_2$.
Given $N \in \N$, we set
\begin{align}
\<1>_N (\vec u_0, \o_2) = \pi_N \<1> (\vec u_0, \o_2), 
\label{Ba1x}
\end{align}

\noi
where $\pi_N$ is as in \eqref{pi}. 
We also set
\begin{align}
\begin{split}
\<2>_N (\vec u_0, \o_2) &= \<1>_N^2 (\vec u_0, \o_2) - \s_N, \\
\<20>_N (\vec u_0, \o_2) &= \pi_N \I (\<2>_N (\vec u_0,\o_2)), \\
\<21p>_N (\vec u_0, \o_2)& = \<20>_N(\vec u_0, \o_2) \pe \<1>_N(\vec u_0, \o_2), 
\end{split}
\label{enh0}
\end{align}

\noi
and define $\Ab_N(\vec u_0, \o_2)$ as in \eqref{sto3a}
by replacing $\<1>_N$ with $\<1>_N (\vec u_0, \o_2)$.
We define 
the paracontrolled operator
$ \wt \If_{\pl, \pe}^N = \wt \If_{\pl, \pe}^N(\vec u_0, \o_2)$
 in a manner
analogous to $ \If_{\pl, \pe}^N$
in 
Lemma  \ref{LEM:sto4J}, 
but with an extra frequency cutoff $\pi_N$. 
Namely, instead of \eqref{X2}, 
we first define
$\wt \If^N_{\pl}$ by 
\begin{align}
\wt \If^N_{\pl}(w) (t)
    =  \I (\pi_N(w\pl \<1>_N))(t) , 
\label{enh0b}
\end{align}

\noi
where $\<1>_N = \<1>_N(\vec u_0, \o_2)$ is as in \eqref{Ba1x}.
We then define
$\wt \If^{(1), N}_{\pl}$ and $\wt \If^{(2), N}_{\pl}$
as in \eqref{X3} and \eqref{X3a} with an extra frequency cutoff  
$\chi_N(n)$, 
depending on $|n_1|\ges |n_2|^\ta$
or $|n_1|\ll |n_2|^\ta$.
Note that the conclusion of Lemma~\ref{LEM:sto3}
(in particular the estimate \eqref{A00})
holds for $\wt \If^{(1), N}_{\pl}$, 
uniformly in $N \in \N$.
Finally, we  define 
$\wt \If^N_{\pl, \pe}$ by 
\begin{align}
\wt \If^N_{\pl, \pe}(w) (t)
   =
\wt  \If_{\pl}^{(2), N}(w)\pe \<1>_N(t) , 
\label{enh0c}
\end{align}

\noi
namely, by inserting 
a  frequency cutoff  $\chi_N(n_1 + n_2)$
and replacing $\<1>$ by $\<1>_N = \<1>_N(\vec u_0, \o_2)$ in~\eqref{X7}.
We then define
the truncated enhanced data set $\Xi_N (\vec u_0, \o_2)$  by 
\begin{align}
\Xi_N (\vec u_0, \o_2)
  = \big(   \<1>_N , \,  \<2>_N, \,
   \<20>_N, \, \<21p>_N, \, \Ab_N, 
   \, \wt \If_{\pl, \pe}^N\big), 
\label{data3x}
\end{align}

\noi
where, on the right-hand side,  we suppressed the dependence on 
$(\vec u_0, \o_2)$ for notational simplicity.
Note that, 
given  $\vec u_0  \in \H^{-\frac 12-\eps}(\T^3)$, 
 the enhanced data set 
$\Xi_N (\vec u_0, \o_2)$  does not converge
in general.
Nonetheless, for the notational purpose, let us 
 {\it formally} define the (untruncated) enhanced data set $\Xi(\vec u_0, \o_2)$ by setting
\begin{align}
\Xi(\vec u_0, \o_2) = \big(
\<1>, \,  \<2>,  \,  \<20>, \<21p>,\, \Ab, \If_{\pl, \pe}\big), 
\label{enh1}
\end{align}

\noi
where each term on the right-hand side
is a limit of the corresponding term in \eqref{data3x}
(if it exists).
In Corollary \ref{COR:lim}, we will construct 
the enhanced data set $\Xi(\vec u_0, \o_2)$ in \eqref{enh1} 
as a limit of 
the truncated enhanced data set $\Xi_N(\vec u_0, \o_2)$ in \eqref{data3x}
almost surely with respect to $\rhoo \otimes \PP_2$.

In the remaining part of this section, we fix $s_1, s_2, s_3 \in \R$
satisfying 
\begin{align}
 \frac 14<s_1 < \frac 12 < s_2  <   s_1 + \frac 14
 \qquad \text{and}\qquad  
s_2 - 1 <   s_3 < 0.
\label{S1}
\end{align}

\noi
Furthermore, we take both $s_1$
and  $s_2$ to be sufficiently close to $\frac 12$
(such that the conditions in~\eqref{S2}
are satisfied, say with $r_ 1= r_2 = 3$).

\begin{remark}\label{REM:decay2}\rm
(i) In view of \eqref{Ba1x} with \eqref{pi}, we have
$\<1>_N(\vec u_0, \o_2)
= \<1>_N(\pi_{N} \vec u_0, \o_2)$
and thus
 \[\Xi_N (\vec u_0, \o_2)
= \Xi_N (\pi_{N} \vec u_0, \o_2).\]

\noi
Namely, the truncated enhanced data set $\Xi_N (\vec u_0, \o_2)$  
in \eqref{data3x} depends only on the low frequency part
$\pi_{N} \vec u_0$
of the initial data.

\smallskip

\noi
(ii)  Note that 
the terms
$ \<20>_N$,  $\<21p>_N$, and $ \wt \If_{\pl, \pe}^N$
in \eqref{data3x} 
come with an extra frequency cutoff
as compared to the corresponding terms studied  in Section \ref{SEC:LWP}.
When $\Law (\vec u_0) = \muu$, 
the results in Lemmas \ref{LEM:sto1}, 
\ref{LEM:sto2},  and \ref{LEM:sto4J}, and Remark \ref{REM:decay}
from  Subsection~\ref{SUBSEC:para}
also apply to 
$ \<20>_N(\vec u_0, \o_2)$,  $\<21p>_N(\vec u_0, \o_2)$, and $ \wt \If_{\pl, \pe}^N(\vec u_0, \o_2)$.

\smallskip

\noi
(iii) Note that the $\Xc^\eps_T$-norm
for enhanced data sets defined in  \eqref{data3}
also measures 
the time derivatives of $\<1>_N$ and $\<20>_N$
in appropriate space-time norms.
In view of \eqref{enh0} and \eqref{lin1}, 
the time derivative of $\<20>_N(\vec u_0, \o_2)$
is given by 
\[ \dt \<20>_N (t; \vec u_0, \o_2)= \pi_N \int_0^t \dt \D(t-t') \<2>_N(t'; \vec u_0, \o_2) dt'.\]

\noi
As for the stochastic convolution, 
recall that, unlike the heat or Schr\"odinger case, 
the stochastic convolution
for the damped wave equation is differentiable in time
and the time derivative of $\<1>_N(\vec u_0, \o_2)$
is given by 
\begin{align}
\dt \<1>_N (t;\vec u_0, \o_2)
=
\pi_N \dt S(t) \vec u_0
+ \sqrt 2 \pi_N \int_0^t \dt \D(t-t') dW(t', \o_2).
\label{enh9}
\end{align}

\noi
The formula \eqref{enh9} easily follows 
from  viewing  the stochastic integral in \eqref{enh0a} (with an extra frequency cutoff $\pi_N$)
 as a Paley-Wiener-Zygmund integral
and taking a time derivative.

\end{remark}

\subsection{On the truncated dynamics}
\label{SUBSEC:GWP1}

In this subsection, we 
study the truncated hyperbolic $\Phi^3_3$-model~\eqref{SNLW3a}.
We first go over  local well-posedness of the truncated equation \eqref{SNLW3a}
and then almost sure global well-posedness
and  invariance of the truncated Gibbs  measure $\rhoo_N$;
see Lemmas \ref{LEM:LWPode} and \ref{LEM:GWP4}.
Then, 
by combining  the  Bou\'e-Dupuis variational formula 
(Lemma \ref{LEM:var3})
and space-time estimates, 
we prove 
 uniform (in $N$) exponential integrability of the truncated enhanced data set 
$ \Xi_N (\vec u_0, \o_2)$
 with respect to  $\rhoo_N \otimes \PP_2$ on $(\vec u_0, \o_2)$;
see  Proposition \ref{PROP:exptail2}.
As a corollary, we prove that the truncated 
enhanced data set $\Xi_N(\vec u_0, \o_2)$ in \eqref{data3x}
converges to the limiting enhanced data set $\Xi(\vec u_0, \o_2)$ in \eqref{enh1}
almost surely with respect to the limiting measure $\rhoo\otimes \PP_2$
(Corollary \ref{COR:lim}).

Given  $N \in \N$, 
let 
 $\vec u_0 = (u_0, u_1)$ 
be a pair of random distributions such that 
$\Law( (u_0, u_1)) = \rhoo_N = \rho_N \otimes \mu_0$.
Let $u_N$ be a solution 
to the truncated equation \eqref{SNLW3a} with $(u_N, \dt  u_N) |_{t = 0} 
= \vec u_0$.
With 
$:\! (\pi_N u_N)^2 \!:  \, = 
(\pi_N u_N)^2 -\s_N$, we write \eqref{SNLW3a} as 
\begin{align}
\begin{cases}
\begin{aligned}
&\dt^2 u_N  + \dt u_N + (1 - \Dl) u_N \\
&\quad - \s \pi_N \big( ( \pi_N u_N)^2 - \s_N\big)  
+ M \big( ( \pi_N u_N)^2 -\s_N \big)
\pi_N u_N =\sqrt{2} \xi
\end{aligned}
\\
(u_N,\dt u_N)|_{ t= 0}  =\vec u_0 , 
\end{cases}
\label{SNLWode2}
\end{align}

\noi
where 
$M$ is  as in  \eqref{addM}.
Note that, due to the presence
of the frequency projector $\pi_N$,  
 the dynamics \eqref{SNLWode2} on high frequencies $\{|n| \ges N\}$
and low frequencies $\{|n|\les N\}$
are decoupled.
The high frequency part of the dynamics \eqref{SNLWode2} is given by 
\begin{align}
\begin{cases}
\begin{aligned}
&\dt^2 \pi_{N}^\perp u_N  + \dt \pi_{N}^\perp u_N + (1 - \Dl)\pi_{N}^\perp  u_N =\sqrt{2} \pi_{N}^\perp \xi
\end{aligned}
\\
(\pi_{N}^\perp u_N,\dt \pi_{N}^\perp u_N)|_{ t= 0}  =\pi_{N}^\perp  \vec u_0.
\end{cases}
\label{high1}
\end{align}

\noi 
The solution $\pi_{N}^\perp u_N$ to \eqref{high1} is given by
\begin{align}
\pi_{N}^\perp u_N 
= \pi_{N}^\perp  \<1>( \vec u_0), 
\label{high2}
\end{align}

\noi
where $\<1>(\vec u_0)$ is as in \eqref{enh0a}
with the $\o_2$-dependence suppressed.
With  $v_N = \pi_{N} u_N$, 
the low frequency part of the dynamics \eqref{SNLWode2} is given by 
\begin{align}
\begin{cases}
\begin{aligned}
&\dt^2 v_N  + \dt v_N + (1 - \Dl) v_N \\
&\quad - \s \pi_N \big( (\pi_N v_N)^2 - \s_N\big)  
+ M \big(   (\pi_N v_N)^2 -\s_N \big)
\pi_N v_N =\sqrt{2} \pi_{N}\xi
\end{aligned}
\\
(v_N,\dt v_N)|_{ t= 0}  = \pi_{N} \vec u_0 , 
\end{cases}
\label{SNLWode2p}
\end{align}

\noi
where  we kept $\pi_N$ in several places to emphasize that 
\eqref{SNLWode2p} depends only on finite many
frequencies $\{n \in NQ\}$ with $Q$ as in \eqref{Q1}.
By writing~\eqref{SNLWode2p} in the Duhamel formulation, 
we have
\begin{align}
v_N (t) = \pi_{N} S(t) \vec u_0 + \int_0^t \D(t - t') \NN_N(v_N)(t') dt'
 + \<1>_{N}(t; 0), 
\label{high3}
\end{align}

\noi
where the truncated nonlinearity $\NN_N(v_N)$ is given by 
\begin{align}
\NN_N(v_N) = \s \pi_N \big(  (\pi_N v_N)^2 - \s_N\big)  
- M \big(  (\pi_N v_N)^2 -\s_N \big)\pi_N v_N.
\label{high4}
\end{align}

\noi
and  $\<1>_{N}(t; 0)$ is as in \eqref{Ba1x} with $\vec u_0 = 0$:
\begin{align*}
\<1>_{N}(t; 0, \o_2) = \sqrt 2 \int_0^t \D(t-t')  \pi_{N} dW(t', \o_2).
\end{align*}

\noi
For each fixed $N \in \N$, 
we have 
 $\<1>_{N} (t; 0) = \pi_{N} \<1> (t; 0) \in C^1(\R_+; C^\infty(\T^3))$; see Remark \ref{REM:decay2}.
 By viewing   $\<1>_{N} (t; 0)$  in \eqref{high3} as a perturbation, 
it suffices to study 
the following damped NLW with a deterministic perturbation:
\begin{align}
v_N (t) = \pi_{N} S(t) (v_0, v_1) + \int_0^t \D(t - t') \NN_N(v_N)(t') dt'
 +  F, 
\label{high6}
\end{align}

\noi
where $(v_0,v_1) \in \H^1(\T^3)$, $\s_N$ is as in \eqref{sigma1}, and 
$F \in C^1(\R_+; C^\infty(\T^3))$ is a given deterministic function.

A standard contraction argument
with the one degree of smoothing from the Duhamel integral operator $\I$ in \eqref{lin1}
and Sobolev's inequality yields the following local well-posedness
of \eqref{high6}.
Since the argument is standard, we omit details.
See, for example, the proof of Lemma 9.1 in \cite{OOTol1}.

\begin{lemma} \label{LEM:LWPode}
Let  $N \in \N$.
Given any 
$(v_0,v_1) \in \H^1(\T^3)$ and $F  \in C^1([0,1]; H^1 (\T^3))$ with
\[
\| (v_0,v_1) \|_{\H^1} \le R
\qquad \text{and}\qquad 
\| F \|_{C^1([0, 1]; H^1)} \le K
\]
for some $R,K\ge 1$, 
 there exist $\tau= \tau (R,K, N)>0$ and a unique solution $v_N$ 
 to \eqref{high6} on $[0,\tau]$,
 satisfying the bound\textup{:}
 \[ \| v_N\|_{\wt X^1(\tau)} \les R + K, \]

\noi
where
\begin{align}
\wt X^1(\tau) =  C([0,\tau];H^1(\T^3))\cap C^1([0,\tau]; L^2(\T^3)).
\notag
\end{align}
Moreover, the solution $v_N$ is unique 
in $\wt X^1(\tau)$.

\end{lemma}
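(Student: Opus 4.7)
The plan is a standard contraction mapping argument in $\wt X^1(\tau)$ applied to the fixed-point map
\[
\Phi_N(v)(t) := \pi_N S(t)(v_0,v_1) \;+\; \I(\NN_N(v))(t) \;+\; F(t),
\]
where $\I$ is the damped-wave Duhamel operator \eqref{lin1}. Applying the energy-type Strichartz estimate \eqref{Str2} with $s=1$, the linear part is controlled by $\|(v_0,v_1)\|_{\H^1}\le R$, while $F$ contributes at most $K$. For the Duhamel piece, \eqref{Str2} gives
\[
\|\I(\NN_N(v))\|_{\wt X^1(\tau)}\les \|\NN_N(v)\|_{L^1_\tau L^2_x}.
\]

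The nonlinear estimate is where the cutoff $\pi_N$ is used. Since the cube frequency projector $\pi_N$ is bounded on $L^p(\T^3)$ for $1<p<\infty$ (property (i) in Subsection 1.4), Sobolev's embedding $H^1(\T^3)\hookrightarrow L^4(\T^3)$ gives
\[
\| \pi_N ( (\pi_N v)^2 - \s_N ) \|_{L^2} \les \| v \|_{H^1}^2 + \s_N,
\]
and Cauchy--Schwarz together with \eqref{addM} gives
\[
|M ( (\pi_N v)^2 - \s_N )| \les \big\| (\pi_N v)^2 - \s_N \big\|_{L^2}^2 \les ( \| v \|_{H^1}^2 + \s_N )^2,
\]
so, since $M((\pi_N v)^2-\s_N)$ is a scalar,
\[
\| M((\pi_N v)^2-\s_N)\,\pi_N v \|_{L^2} \les (\| v \|_{H^1}^2 + N)^2 \| v \|_{H^1}.
\]
Combining these with \eqref{high4} and $\s_N\sim N$, we obtain $\|\NN_N(v)\|_{L^1_\tau L^2_x}\les \tau\, P_N(\|v\|_{\wt X^1(\tau)})$ for a polynomial $P_N$ whose coefficients depend on $N$, $\s$, and $A$. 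Expanding the quadratic and quintic differences and reusing the same inequalities yields the Lipschitz bound
\[
\| \Phi_N(v)-\Phi_N(\wt v)\|_{\wt X^1(\tau)} \les \tau\, \wt P_N(\rho)\, \| v-\wt v\|_{\wt X^1(\tau)}
\]
on the ball $B_\rho = \{ v \in \wt X^1(\tau) : \| v \|_{\wt X^1(\tau)} \le \rho \}$.

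Choosing $\rho\sim R+K$ and then $\tau=\tau(R,K,N)>0$ so small that $\tau\, P_N(\rho)\le \rho/2$ and $\tau\, \wt P_N(\rho)\le 1/2$, the map $\Phi_N$ becomes a contraction on $B_\rho$, producing a unique fixed point $v_N\in \wt X^1(\tau)$ with the claimed bound $\|v_N\|_{\wt X^1(\tau)}\les R+K$. The regularity $v_N\in C([0,\tau];H^1)\cap C^1([0,\tau];L^2)$ is then read off the Duhamel identity \eqref{high6}: continuity in $t$ follows because $\pi_N S(t)(v_0,v_1)$ and $F$ are continuous and $\NN_N(v_N)\in C([0,\tau];L^2)$, and $\dt v_N\in C([0,\tau];L^2)$ follows by differentiating \eqref{high6} in time, using the smoothing of $\I$ and the hypothesis $F\in C^1([0,1];H^1)$. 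There is no substantial obstacle: after the projection $\pi_N$, the only non-trivial part of the dynamics is supported on the finite-dimensional band $\{n\in NQ\}$ where all Sobolev norms are equivalent; the mildly delicate point is the quintic $M$-term, but the Cauchy--Schwarz bound above keeps it polynomial in $\|v\|_{\wt X^1(\tau)}$ with $N$-dependent constants, which is perfectly fine for a local-in-time argument with $\tau$ allowed to depend on $N$.
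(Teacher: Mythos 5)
Your proof is correct and takes exactly the route the paper indicates (a standard contraction in $\wt X^1(\tau)$ using the one degree of smoothing from the Duhamel operator and Sobolev's inequality, with $N$-dependent constants via $\s_N\sim N$); the paper itself omits the details and merely points to this strategy. The only minor imprecision is your closing aside about norm equivalence on the frequency band $\{n\in NQ\}$ — this plays no role in your actual estimates, since $v_N$ need not be frequency-localized (the inhomogeneous term $F$ is not) and your bounds rest solely on $H^1(\T^3)\hookrightarrow L^4(\T^3)$ and $\s_N\sim N$.
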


\begin{remark}\label{REM:uniq}\rm
(i) A standard contraction argument gives 
 $\tau= \tau (R,K, N) 
 \sim( R + K + N)^{-\ta}$ 
 for some $\ta > 0$, 
 in particular the local existence 
 depends on $N \in \N$.

\smallskip

\noi
(ii) 
We also point out that the uniqueness statement
for $v_N$  in Lemma \ref{LEM:LWPode} is unconditional, 
namely, the uniqueness of the solution $v_N$ holds in the entire class
$\wt X^1(\tau)$.
Then, from~\eqref{high2} and 
the unconditional uniqueness of the solution $v_N = v_N(\pi_{N} \vec u_0)$ to \eqref{SNLWode2p}, 
we obtain the {\it unique} representation of $u_N$:
\[ u_N =  \pi_{N}^\perp  \<1>( \vec u_0) +  v_N(\pi_{N} \vec u_0).\]

\noi
See for example \eqref{sol1} below, 
where we use a different representation of $u_N$.

\end{remark}

Before proceeding further, let us introduce some notations.
Given the cylindrical Wiener process  $W$ in~\eqref{W1}, 
by possibly enlarging the probability space $\Omega_2$, 
there exists a family of translations $\tau_{t_0} : \O_2 \to \O_2$ such that
\[
W(t, \tau_{t_0}(\o_2))
=  W(t+t_0, \o_2) - W(t_0,\o_2)
\]

\noi
for $t, t_0 \ge 0$ and $\o_2 \in \O_2$.
Denote by $\Phi^N(t)$ the stochastic flow map 
to the truncated hyperbolic $\Phi^3_3$-model~\eqref{SNLW3a}
constructed in Lemma \ref{LEM:LWPode}
(which is not necessarily global at this point).
Namely, 
\begin{align}
\begin{split}
\vec u_N(t) 
 = ( u_N(t), \dt u_N(t))
&  = \Phi^N(t) (\vec u_0,\o_2)\\
& =  \big(\Phi^N_1(t)(\vec u_0, \o_2),  \Phi^N_2(t)(\vec u_0, \o_2)\big)
\end{split}
\label{sol0}
\end{align}

\noi
is the solution 
 to \eqref{SNLW3a} with 
 $\vec u_N |_{t=0}= \vec u_0$, satisfying $\Law (\vec u_0) = \rhoo_N$, 
 and the noise $\xi(\o_2)$.
We now 
extend $\Phi^N(t)$ as
\begin{align}
\ft \Phi^N(t) (\vec u_0, \o_2)
= \big( \Phi^N(t)(\vec u_0, \o_2), \tau_t (\o_2) \big).
\label{Phi9}
\end{align}

\noi
Note that 
by the uniqueness of the solution to \eqref{SNLW3a},
we have
\begin{align*}
\Phi^N (t_1+t_2) (\vec u_0, \o_2)
= \Phi^N (t_2) \big( \Phi^N (t_1) (\vec u_0, \o_2), \tau_{t_1}(\o_2) \big)
= \Phi^N (t_2) \big(\ft \Phi^N (t_1) (\vec u_0, \o_2)\big)
\end{align*}

\noi
for $t_1, t_2 \ge 0$
as long as the flow is well defined.

\medskip

By writing the truncated dynamics \eqref{SNLW3a}
as a superposition of the deterministic NLW:
\begin{align}
&\dt^2 u_N   + (1 - \Dl) u_N 
 - \NN_N(u_N) = 0, 
\label{high7}
\end{align}

\noi
where $\NN_N(u_N)$ is as in \eqref{high4}, 
and the Ornstein-Uhlenbeck process (for $\dt u_N$):
\begin{align}
\dt (\dt u_N)  = - \dt u_N + \sqrt{2} \xi, 
\label{high8}
\end{align}

\noi
we see that 
the truncated Gibbs measure $\rhoo_N$ in \eqref{rhooN}
is formally\footnote{Namely, as long as the dynamics is well defined.} 
invariant under the dynamics of \eqref{SNLW3a}, 
since $\rhoo_N$ is invariant under the NLW dynamics \eqref{high7}, 
while the white noise measure $\mu_0$ on $\dt u_N$
(and hence $\rhoo_N = \rho_N \otimes \mu_0$ 
on $(u_N, \dt u_N)$) is invariant under the 
 Ornstein-Uhlenbeck dynamics \eqref{high8}.
Then, by exploiting the formal invariance of the truncated Gibbs measure $\rhoo_N$, 
Bourgain's invariant measure argument \cite{BO94}
yields the following result on 
almost sure global well-posedness of 
 the truncated hyperbolic $\Phi^3_3$-model~\eqref{SNLW3a}
and invariance of the truncated Gibbs measure $\rhoo_N$.
Since the argument is standard (for fixed $N\in \N$), we omit details.
See the proof of Lemma 9.3  in \cite{OOTol1} for details.

\begin{lemma} \label{LEM:GWP4}
Let $N \in \N$.
Then, the truncated hyperbolic $\Phi^3_3$-model~\eqref{SNLW3a} is almost surely globally well-posed
with respect to the random initial data distributed
by the truncated Gibbs measure $\rhoo_N$ in~\eqref{rhooN}.
Furthermore, $\rhoo_N$ is invariant under the resulting dynamics
and, as a consequence,  the measure $\rhoo_N \otimes \PP_2$ is invariant
under the extended stochastic flow map $\ft \Phi^N(t)$ defined in \eqref{Phi9}.
More precisely, there exists $\Si_N \subset \O = \O_1 \times \O_2$ 
with $\rhoo_N \otimes \PP_2 (\Si_N) = 1$ such that the solution 
$u_N = u_N (\vec u_0, \o_2) $ to \eqref{SNLW3a} exists globally in time 
and $\Law (u_N(t), \dt u_N(t)) = \rhoo_N$ for any $t \in \R_+$.
\end{lemma}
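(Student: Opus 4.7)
\medskip

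\noi
\textbf{Proof proposal for Lemma \ref{LEM:GWP4}.}
The plan is to reduce the problem to a finite-dimensional SDE system and then execute a standard Bourgain invariant measure argument \cite{BO94}. First I would exploit the decoupling
already visible in \eqref{high1}--\eqref{SNLWode2p}: the high-frequency part $\pi_N^\perp u_N$ solves the linear damped stochastic wave equation and is globally given by $\pi_N^\perp \<1>(\vec u_0, \o_2)$, while the low-frequency unknown $v_N = \pi_N u_N$ has its Fourier support contained in the finite set $\{n\in \Z^3 : n \in NQ\}$ and satisfies \eqref{SNLWode2p}. Thus \eqref{SNLWode2p} is genuinely an SDE on a finite-dimensional vector space, driven by the finitely many independent real Brownian motions appearing in $\pi_N W$, with a smooth (in fact polynomial) drift
$\NN_N$ from \eqref{high4}. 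The uniform representation $u_N = \pi_N^\perp \<1>(\vec u_0, \o_2) + v_N$ (see Remark \ref{REM:uniq}) and Lemma \ref{LEM:LWPode} give local existence with a quantitative lower bound $\tau \ges (R + K + N)^{-\theta}$ on the existence time, where $R$ controls the $\H^1$-size of $\pi_N \vec u_0$ and $K$ controls the $C^1_t H^1_x$-norm of the smooth forcing $\<1>_N(\,\cdot\,;0,\o_2)$ on short intervals.

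Next I would verify the \emph{formal} invariance of $\rhoo_N$ in \eqref{rhooN}, following the splitting observation in \eqref{high7}--\eqref{high8}. For this, restrict to the finite-dimensional phase space of modes $\{n\in NQ\}$ and consider the deterministic Hamiltonian flow generated by the truncated energy associated with $R^\dia_N$ plus the $L^2$-kinetic part: this flow is volume-preserving (Liouville) and preserves the Hamiltonian, so it preserves the marginal density of $\rhoo_N$ on the low-frequency modes. Meanwhile the Ornstein--Uhlenbeck dynamics in \eqref{high8} preserves the finite-dimensional standard Gaussian marginal on $\pi_N \dt u_N$ (and the linear damped wave dynamics on $\pi_N^\perp$ preserves $(\pi_N^\perp)_{\#}\muu$). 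A Trotter splitting argument or, more directly, a Galerkin/Fokker--Planck computation on the finite-dimensional SDE, shows that $(\pi_N)_{\#}\rhoo_N$ is invariant for \eqref{SNLWode2p}, and tensoring with the decoupled high-frequency Ornstein--Uhlenbeck dynamics yields invariance of the full $\rhoo_N$ as long as the flow is defined.

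With formal invariance in hand, I would run Bourgain's scheme. Fix $T > 0$ and a small $\tau > 0$. Using the local existence time $\ges (R+K+N)^{-\theta}$, the tail estimate \eqref{ex1} for $\|\<1>_N(\,\cdot\,;0)\|_{C^1_\tau W^{-\frac 32-\eps,\infty}_x}$, and Gaussian tail bounds for $\|\pi_N \vec u_0\|_{\H^1}$ under $\rhoo_N$ (which hold since $\pi_N \rhoo_N$ is absolutely continuous with respect to a finite-dimensional Gaussian with uniformly controlled density; here the uniform integrability of $e^{-R^\dia_N}$ from Theorem \ref{THM:Gibbs} is the relevant input), I would define for each integer $k$ with $k\tau \le T$ the set
\[
B_{N,k}(R) = \big\{ (\vec u_0, \o_2) : \| \ft \Phi^N(k\tau)(\vec u_0, \o_2) \text{ extends to } [0, (k+1)\tau]\text{ with size} \le R \big\}
\]
and use invariance of $\rhoo_N \otimes \PP_2$ under $\ft \Phi^N(k\tau)$ to see that
\[
(\rhoo_N \otimes \PP_2)(B_{N,k}(R)^c) \leq (\rhoo_N \otimes \PP_2)(B_{N,0}(R)^c) \leq C e^{-cR^\alpha}
\]
for some $\alpha > 0$. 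A union bound over $k \leq T/\tau$ and then over a sequence $T_j \to \infty$, followed by Borel--Cantelli with $R = R_j = (\log T_j)^{1/\alpha + }$, produces a full $\rhoo_N \otimes \PP_2$-measure set $\Si_N$ on which $\Phi^N(t)$ exists globally. Invariance of $\rhoo_N$ under the resulting flow is then a direct consequence of the invariance at each discrete time $k\tau$ and continuity of the flow.

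The main obstacle, and the step requiring the most care, is rigorously justifying the formal invariance of $\rhoo_N$ for the superposed SDE \eqref{SNLWode2p}. The Trotter splitting between the deterministic Hamiltonian NLW flow (preserving Lebesgue measure on the finite-dimensional phase space and the Hamiltonian) and the OU step (preserving the Gaussian marginal on $\pi_N \dt u_N$) must be shown to converge to the true flow of the SDE; equivalently, one must check that the generator of \eqref{SNLWode2p} annihilates $\rhoo_N$ in a weak sense. Since the system is finite-dimensional with smooth coefficients (polynomial drift plus constant diffusion), this reduces to a Fokker--Planck computation on finitely many modes and is routine once the Hamiltonian structure is written out explicitly; nevertheless, it is the one non-mechanical step and is the reason the argument has to be carried out at the truncated level before passing to the limit in later sections.
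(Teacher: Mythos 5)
Your proposal takes essentially the same route as the paper. The paper itself does not spell out the proof; it records the high-frequency/low-frequency decoupling in \eqref{high1}--\eqref{SNLWode2p}, observes (as you do) that the dynamics \eqref{SNLW3a} splits into the deterministic Hamiltonian NLW flow \eqref{high7} preserving $\rhoo_N$ and the Ornstein--Uhlenbeck dynamics \eqref{high8} preserving the white noise marginal $\mu_0$ on $\dt u_N$, notes that \eqref{SNLWode2p} is a finite-dimensional SDE, and then says the claim follows from Bourgain's invariant measure argument, citing \cite[Lemma~9.3]{OOTol1} for details. So the overall structure of your argument (local well-posedness via Lemma \ref{LEM:LWPode}, formal invariance via Liouville plus OU, and Borel--Cantelli with a quantitative tail bound, all at fixed $N$) coincides with the intended proof.

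Two small remarks. First, the step you flag as the non-mechanical one—rigorously converting the Trotter-style formal invariance into actual invariance of $\rhoo_N$ under the finite-dimensional SDE—is exactly the step the paper defers to \cite{OOTol1}; your identification of it as the crux is correct. Second, your parenthetical justification for the tail bound on $\|\pi_N\vec u_0\|_{\H^1}$ is slightly off target: for \emph{fixed} $N$ one does not need the uniform-in-$N$ $L^1(\mu)$ bound from Theorem \ref{THM:Gibbs}, but rather the pointwise boundedness of the finite-dimensional density $e^{-R^\dia_N}$, which holds for each fixed $N$ because the taming term $A\bigl|\int_{\T^3}:\!u_N^2\!:dx\bigr|^3$ dominates the cubic term at large amplitude (and on a finite-dimensional phase space all norms are comparable). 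With that correction, your argument goes through.
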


Next, we establish 
 uniform  exponential integrability of the truncated enhanced data set 
 $\Xi_N (\vec u_0, \o_2)$ in \eqref{data3x}
 with respect to  the truncated measure $\rhoo_N \otimes \PP_2$.
 We also establish 
 uniform  exponential integrability 
 for the difference of  the truncated enhanced data sets.

\begin{proposition} \label{PROP:exptail2}
Let $T>0$.
Then, 
we have
\begin{align}
\int \E_{\PP_2} \Big[ \exp \Big( \| \Xi_N (\vec u_0, \o_2) \|_{\mathcal{X}^{\eps}_T}^\alpha \Big) 
\Big]d \rhoo_N (\vec u_0)
\le C(T, \eps, \al) < \infty
\label{exp01}
\end{align}
for $0< \al < \frac 13$,
uniformly in $N \in \N$,
where the $\mathcal{X}^\eps_T$-norm and 
the truncated enhanced data set $\Xi_N (\vec u_0, \o_2)$ are as in \eqref{data3} and \eqref{data3x}, respectively.
Here, 
$\E_{\PP_2}$ denotes an expectation 
with respect to 
the  probability measure $\PP_2$ on $\o_2 \in \O_2$
defined in \eqref{PPP}. 

Moreover, there exists small $\be > 0$ such that  
\begin{align}
\int \E_{\PP_2} \Big[ \exp \Big( N_2^\be \| \Xi_{N_1} (\vec u_0, \o_2)
- \Xi_{N_2} (\vec u_0, \o_2)
 \|_{\mathcal{X}^{\eps}_T}^\alpha \Big) \Big]d \rhoo_N (\vec u_0)
\le C(T, \eps, \al) < \infty
\label{exp02}
\end{align}
for $0< \al < \frac 13$,
uniformly in $N, N_1, N_2 \in \N$ with $N \ge N_1 \ge N_2$.

\end{proposition}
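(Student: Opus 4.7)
\medskip

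\noindent
\textbf{Proof plan for Proposition~\ref{PROP:exptail2}.}

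The plan is to reduce the estimate under $\rhoo_N \otimes \PP_2$ to one under the tractable base measure $\muu \otimes \PP_2$ via the Radon--Nikodym derivative, and then bound the resulting weighted exponential moment by a joint Bou\'e--Dupuis variational formula, combined with the coercive estimates on $R_N^\dia$ from Subsection~\ref{SUBSEC:tight}. First, from $d\rhoo_N/d\muu = Z_N^{-1} e^{-R_N^\dia(u_0)}$ and the uniform lower bound $\inf_N Z_N > 0$ established in~\eqref{TT1}, the estimate~\eqref{exp01} is equivalent to the uniform bound
\[
\sup_{N \in \N}\, \E_{\muu \otimes \PP_2}\!\Big[\exp\bigl(\|\Xi_N(\vec u_0, \o_2)\|_{\Xc^\eps_T}^\al - R_N^\dia(u_0)\bigr)\Big] < \infty.
\]

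\medskip

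\noindent
Second, applying the Bou\'e--Dupuis formula (Lemma~\ref{LEM:var3}) jointly on the Wiener spaces underlying $\mu$ and $\mu_0 \otimes \PP_2$, one obtains
\[
-\log\E_{\muu \otimes \PP_2}\!\Big[e^{\|\Xi_N\|^\al - R_N^\dia}\Big]
= \inf_{(\dr^u,\dr^W)} \E\!\left[R_N^\dia(u_0^{\dr^u}) - \|\Xi_N^{\dr}\|_{\Xc^\eps_T}^\al + \tfrac12\!\int_0^1\!\bigl(\|\dr^u\|_{L^2}^2 + \|\dr^W\|_{L^2}^2\bigr)\,dt\right],
\]
where $u_0^{\dr^u} = Y(1) + \pi_N I(\dr^u)(1)$ and $\Xi_N^\dr$ is computed from the shifted stochastic convolution $\<1>_N^\dr(t) = \pi_N S(t)(u_0^{\dr^u}, u_1) + \sqrt{2}\,\pi_N\!\int_0^t \D(t-s)\,dW^{\dr^W}(s)$. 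By the coercive estimate~\eqref{DN2} (which in the proof of Theorem~\ref{THM:Gibbs}\,(i) yields $\E[R_N^\dia + \tfrac12\int\|\dr^u\|^2] \ge -C_0 + \tfrac{1}{10}\U_N$ with $\U_N$ controlling $\|\Ups_N\|_{H^1}^2 + \|\Ups_N\|_{L^2}^6$), the task reduces to establishing
\[
\E\big[\|\Xi_N^\dr\|_{\Xc^\eps_T}^\al\big] \;\le\; \tfrac{1}{20}\,\U_N + \tfrac14\,\E\!\left[\int_0^1\!\|\dr^W\|_{L^2}^2\,dt\right] + C,
\]
uniformly in $N$ and in the drifts $(\dr^u, \dr^W)$.

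\medskip

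\noindent
Third, I decompose each component of $\Xi_N^\dr$ as ``free Gaussian part $+$ drift correction.'' The Gaussian part uses the unshifted process $\<1>_N^{\mathrm{free}}(t) = \pi_N S(t)\vec u_0 + \sqrt{2}\,\pi_N\!\int_0^t \D(t-s)\,dW(s)$, which under $\muu \otimes \PP_2$ is a stationary centered Gaussian process at each fixed time; the exponential tail estimates of Lemmas~\ref{LEM:stoconv0}, \ref{LEM:sto1}, \ref{LEM:sto2}, \ref{LEM:sto4J}, and~\ref{LEM:stoA} therefore hold verbatim, the worst being $e^{-c\ld^{2/3}}$ for $\<21p>_N$ (third chaos). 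Since $\al < \tfrac13 < \tfrac23$, every $\al$-moment of the free-field pieces is finite and contributes to $C$. The drift corrections, by Strichartz (Lemma~\ref{LEM:Str}), paraproduct and resonant product estimates (Lemma~\ref{LEM:para}), and Lemma~\ref{LEM:Dr}\,(iii), are multilinear forms bounded by polynomials $\|\Ups_N\|_{H^1}^{c_1}\|\dr^W\|_{L^2_{t,x}}^{c_2}$ with $c_1 \le 3$ and $c_2 \le 2$ (the cubic power in $\Ups_N$ arises from the trilinear contribution to $\<20>_N$). Raising to the $\al$-th power, using $3\al < 1$ and $2\al < 1$, and applying Young's inequality, these terms are absorbed into $\tfrac{1}{20}\U_N + \tfrac14\E\!\int\|\dr^W\|^2$ up to an additive constant, as required.

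\medskip

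\noindent
Fourth, for~\eqref{exp02} I apply the same reduction, now viewing $\|\Xi_{N_1} - \Xi_{N_2}\|_{\Xc^\eps_T}^\al$ as the $\al$-norm of differences. Each multilinear building block in the difference contains at least one factor with Fourier support in $\{|n|\gtrsim N_2\}$ (coming from the mismatch $\chi_{N_1} - \chi_{N_2}$), and interpolating a small fraction of regularity as in~\eqref{KZ19b} and Remark~\ref{REM:decay} produces a smoothing factor $N_2^{-\be}$ for some $\be>0$, which is exactly the weight needed on the left-hand side of~\eqref{exp02}. The main obstacle is the multilinear bookkeeping for the drift corrections to the paracontrolled operator $\wt\If_{\pl,\pe}^N$ and the random kernel $\Ab_N$: one must allocate the regularity budget of Lemma~\ref{LEM:sto4J} and Lemma~\ref{LEM:stoA} in a way compatible with the coercive envelope $\|\Ups_N\|_{H^1}^2 + \|\Ups_N\|_{L^2}^6$, and it is precisely this allocation which forces $\al$ into the range $(0, \tfrac13)$. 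Once that bookkeeping is carried out for each of the six components of $\Xi_N$, both \eqref{exp01} and~\eqref{exp02} follow.
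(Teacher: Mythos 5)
Your high-level strategy---reduce to the base measure via the Radon--Nikodym density $Z_N^{-1}e^{-R_N^\dia}$, apply Bou\'e--Dupuis, and split the shifted stochastic objects into a free Gaussian part plus drift corrections---is the right one and matches the paper. However, the key structural difference is in where the variational formula is applied. You propose a \emph{joint} Bou\'e--Dupuis formula on the product of the Wiener spaces underlying $\mu$, $\mu_0$, and $\PP_2$, introducing a second drift $\dr^W$ for the cylindrical Wiener process (and implicitly a third one for $u_1$). The paper instead applies Bou\'e--Dupuis \emph{only} in the $u_0$ variable (with respect to $\mu$), treating $(u_1,\o_2)$ as frozen parameters. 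After the BD step, the decomposition $\<1>(Y+\Dr, u_1, \o_2)=\psi_0+\psi_1$ with $\psi_0=\<1>(Y,u_1,\o_2)$ and $\psi_1=S(t)(\Dr,0)$ has the crucial property that $\psi_0$ is \emph{still} distributed according to the free field $\muu\otimes\PP_2$ in the remaining variables, so the exponential tail estimates of Lemmas~\ref{LEM:stoconv0}--\ref{LEM:stoA} apply verbatim; the bound obtained for fixed $(u_1,\o_2)$ involves a quantity $K(Y,u_1,\o_2)$, and the final integration over $(u_1,\o_2)$ is closed by a Jensen-type inequality combined with the exponential moment bound~\eqref{et4a}. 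This conditional structure entirely avoids having to define and control drift corrections coming from a shifted space-time noise, which is what your plan would require.

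Two further points. First, your explanation of the range $0<\al<\tfrac13$ is misattributed: it is not forced by "allocating the regularity budget" among drift corrections (which would only give $3\al<2$), but by the Wiener-chaos order. After the decomposition $\psi_0+\psi_1$, the pure-$\psi_0$ part of the third-chaos object $\<21p>_N$ is controlled by the exponential integrability of a \emph{second}-chaos object ($\<20>_N[\psi_0]$, threshold $\al<\tfrac12$), and Young's inequality with exponents $(3/2,3)$ then forces $\tfrac32\al<\tfrac12$, i.e.\ $\al<\tfrac13$; see Remark~\ref{REM:3}. Second, a factual slip: you write that ``the cubic power in $\Ups_N$ arises from the trilinear contribution to $\<20>_N$'', but $\<20>_N$ is a second-order object (built from $\<2>_N=\<1>_N^2-\s_N$); the trilinear object is $\<21p>_N=\<20>_N\pe\<1>_N$. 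Fixing these two points---replacing the joint BD by the conditional-in-$u_0$ BD followed by Jensen, and correcting the chaos-order accounting---brings your plan in line with the paper's argument.
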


\begin{proof}
For simplicity, we only  prove \eqref{exp01} and \eqref{exp02}
for 
 the random operator $\wt \If_{\pl, \pe}^N$ defined in~\eqref{enh0c}.
The other terms in $\Xi_N(\vec u_0, \o_2)$ can be estimated
in an analogous manner.  See Remark~\ref{REM:3}.

\smallskip

\noi
$\bullet$ {\bf Part 1:}
We first  prove the following uniform exponential integrability:
\begin{equation}
\int \E_{\PP_2} \Big[ \exp \Big( \big\| \wt \If_{\pl, \pe}^N\big \|_{ \L_2(q, T)}^\alpha \Big) 
\Big]d \rhoo_N (\vec u_0)
\le C(T, \eps, \al) 
< \infty \label{exp}
\end{equation}
for any $T>0$,   any finite $q>1$,  and $0< \alpha < \frac 12$,
uniformly in $N \in \N$.
Note that the range $0 < \al< \frac 12$  of the exponent  
in \eqref{exp}
comes from the presence
of $\| \ZZ_N \|_{W^{1-\eps, \infty}}^2$ in~\eqref{et5} and \eqref{et7}, 
since  $\ZZ_N$ defined in one line below \eqref{YZ12}
belongs to $\H_{\le 2}$.
Similarly, the overall restriction 
$0 < \al < \frac 13$ in this proposition comes
from the terms involving $\psi_1$ in 
\eqref{et9b}, where $\psi_1$ is defined in 
\eqref{et2b} with 
\eqref{Dr1}.
Namely, the worst contribution in \eqref{et9b}
behaves like
$\| \ZZ_N \|_{W^{1-\eps, \infty}}^{3\al}$ 
which is exponentially integrable only for $\al < \frac 13$;
see \eqref{et9bb}.

From \eqref{enh0b} and \eqref{enh0c}, 
we see that $\wt \If_{\pl, \pe}^N$ depends
on two entries of $\<1>_N = \pi_N\<1>(\vec u_0, \o_2)$.
We now generalize the definition of $\wt \If_{\pl, \pe}^N$ to allow
general entries.
Given $\psi_j \in C(\R_+; \D'(\T^3))$, $j = 1, 2$, we first define
$\wt \If^N_{\pl}[\psi_1]$ by 
\begin{align}
\wt \If^N_{\pl}[\psi_1](w) 
    =  \I \big(\pi_N(w\pl (\pi_N \psi_1))\big).
\label{et1a}
\end{align}

\noi
As  in \eqref{X3} and \eqref{X3a},  
define  $\wt \If^{(2), N}_{\pl}[\psi_1]$  to be 
 the restriction of  $\wt \If^{N}_{\pl}[\psi_1]$
onto  $\{|n_1|\ll |n_2|^\ta\}$:
\begin{align}
\wt \If^{(2), N}_{\pl}[\psi_1](w)
=   \I \big(\pi_N( \K^\ta (w,  \pi_N \psi_1))\big), 
\label{et1aa}
\end{align}

\noi
where $\K^\ta$ is the bilinear Fourier multiplier operator
with the multiplier $\ind_{\{|n_1| \ll |n_2|^\ta\}}$.
More precisely, we have 
\begin{align}
\begin{split}
\wt  \If_{\pl}^{(2), N}[\psi_1] (w) (t)
 &   =  \sum_{n \in \Z^3}\chi_N(n)
e_n  \sum_{n =  n_1 +  n_2}
\sum_{0 \le j  <  \ta k + c_0 }
\varphi_j(n_1) \varphi_k(n_2) 
\chi_N(n_2)
 \\
& \hphantom{XXXXX}
\times 
\int_0^t e^{-\frac{t-t'}2} \frac{\sin ((t - t') \jbb{n})}{\jbb{n}} 
\ft w(n_1, t')\,  \ft{\psi}_1(n_2, t') dt', 
\end{split}
\label{et1ab}
\end{align}

\noi
where $\chi_N$ is as in \eqref{chi} and $c_0 \in \R$ is as in \eqref{XX2a}.
Then, 
 we define
$\wt \If^N_{\pl, \pe}[\psi_1, \psi_2]$ by 
\begin{align}
\wt \If^N_{\pl, \pe}[\psi_1, \psi_2](w) 
   =
\wt  \If_{\pl}^{(2), N}[\psi_1](w)\pe (\pi_N \psi_2).
\label{et1b}
\end{align}

\noi
Note that 
$\wt \If^N_{\pl, \pe}[\psi_1, \psi_2]$ is bilinear in $\psi_1$ and $\psi_2$.
We also set 
\begin{align}
\wt \If_{\pl, \pe}^N [\psi] = \wt \If_{\pl, \pe}^N [\psi,\psi]
\label{et2}
\end{align}
for simplicity.
With this notation, 
we can write  $\wt \If_{\pl, \pe}^N$ in \eqref{exp} as 
$\wt \If_{\pl, \pe}^N[\<1>(\vec u_0,\o_2)]$, 
where  $\vec u_0 =(u_0,u_1)$.
Note that we have
$\wt \If_{\pl, \pe}^N [\pi_N \psi]  = \wt \If_{\pl, \pe}^N [\psi] $.
Before proceeding further, 
we  record the following boundedness
of  $\K^\ta$ defined  in \eqref{et1aa} and \eqref{et1ab};
 a slight modification of the proof of~\eqref{para2a} in Lemma \ref{LEM:para}
yields
\begin{align}
\| \K^\ta(f, g) \|_{B^{s_2}_{p, q}} \les 
\|f \|_{L^{p_1}} 
\|  g \|_{B^{s_2}_{p_2, q}}
\label{para4}
\end{align}

\noi
for any $ s_2 \in \R$ and $1 \leq p, p_1, p_2, q \leq \infty$ such that 
$\frac{1}{p} = \frac 1{p_1} + \frac 1{p_2}$.

By the Bou\'e-Dupuis variational formula (Lemma \ref{LEM:var3})
 with the change of variables \eqref{YZ13},
we have
\begin{align*}
& - \log \int \exp \Big( \big\|\wt  \If_{\pl, \pe}^N [ \<1>(\vec u_0, \o_2)] \big\|_{ \L_2(q, T)}^\alpha \Big) d \rho_N(u_0) \\
&= \inf_{\dot \Ups^N \in \Ha^1}
\E \bigg[ -\big\| \wt \If_{\pl, \pe}^N [ \<1>(Y+\Dr, u_1, \o_2)]\big \|_{ \L_2(q, T)}^\alpha \\
&\hphantom{XXXXXXXX}
+ \ft R_N^\dia (Y+\Ups^N+\s \ZZ_N)
+ \frac 12 \int_0^1 \| \dot \Ups^N(t) \|_{H_x^1}^2 dt \bigg]
+ \log Z_N,
\end{align*}

\noi
where $\ft R_N^\dia$ is as in \eqref{KZ16} and 
\begin{align}
\Dr = \Ups^N + \s \ZZ_N.
\label{Dr1}
\end{align}

\noi
Recall the notation $Y_N = \pi_N Y$ and $\Ups_N = \pi_N \Ups^N$.
Then,
from Lemmas  \ref{LEM:Dr7} and \ref{LEM:Dr8}
with Lemma~\ref{LEM:Dr} and \eqref{YZ15}, 
there exists $\eps_0, C_0 > 0$ such that 
\begin{align}
\begin{split}
& - \log \int \exp \Big( \big\| \wt \If_{\pl, \pe}^N [ \<1>(\vec u_0, \o_2)] \big\|_{ \L_2(q, T)}^\alpha \Big) d \rho_N(u_0) \\
& \ge \inf_{\dot \Ups^N \in \Ha^1}
\E \Big[ -\big\| \wt \If_{\pl, \pe}^N [ \<1>(Y+\Dr, u_1, \o_2)] \big\|_{ \L_2(q, T)}^\alpha
+ \eps_0 \big( \| \Ups^N \|_{H^1}^2 + \| \Ups_N \|_{L^2}^6\big) \Big] - C_0,
\end{split}
\label{et2a}
\end{align}

\noi
uniformly in $u_1$ and $\o_2$.

In view of \eqref{enh0a}, 
we write $\<1>(Y+\Dr, u_1,\o_2)$
as 
\begin{align}
\<1>(Y+\Dr, u_1,\o_2)
= \<1>(Y,u_1,\o_2) + S(t) (\Dr,0)
=: \psi_0 + \psi_1, 
\label{et2b}
\end{align}

\noi
where $S(t)$ is as in \eqref{St0}.
By \eqref{et2},
we have 
\begin{align}
\begin{split}
\big\| \wt \If_{\pl, \pe}^N [ \<1>(Y+\Dr, u_1, \o_2)] \big\|_{ \L_2(q, T)}
&\le
\big\| \wt \If_{\pl, \pe}^N [ \psi_0, \psi_0] \big\|_{ \L_2(q, T)}
+ \big\|\wt  \If_{\pl, \pe}^N [ \psi_0, \psi_1] \big\|_{ \L_2(q, T)} \\
&\quad
+ \big\| \wt \If_{\pl, \pe}^N [ \psi_1, \psi_0] \big\|_{ \L_2(q, T)}
+ \big\| \wt \If_{\pl, \pe}^N [ \psi_1, \psi_1] \big\|_{ \L_2(q, T)}.
\end{split}
\label{et3}
\end{align}

\noi
Under the truncated Gibbs measure $\rhoo_N$, 
we have $\Law(u_1) = \mu_0$
and thus we have $\Law(Y, u_1) = \muu = \mu \otimes \mu_0$. 
Then, 
from
the uniform exponential tail estimates
in 
 Lemmas \ref{LEM:stoconv0} and \ref{LEM:sto4J}
 (see also Remark \ref{REM:decay2})
with~\eqref{YZ12}, 
there exists $K(Y, u_1, \o_2)$ such that 
\begin{align}
\big\| \wt \If_{\pl, \pe}^N [ \psi_0]\big \|_{ \L_2(q, T)}
+
\|  \psi_0 \|_{L_T^\infty W_x^{-\frac 12-\eps,\infty}}^2
+ \| \ZZ_N \|_{W^{1-\eps, \infty}}
\le K(Y, u_1, \o_2)
\label{et4}
\end{align}
and 
\begin{align}
\E_{\muu\otimes \PP_2} \big[ \exp(\delta K(Y, u_1, \o_2)) \big] < \infty
\label{et4a}
\end{align}
for sufficiently small $\delta>0$.

We now estimate the last three terms on the right-hand side of \eqref{et3}.
Let $s_3 < 0$. By Sobolev's inequality, \eqref{et1b}, 
H\"older's inequality,\footnote{To be more precise, this is the Coifman-Meyer
theorem on $\T^3$ to estimate a resonant product. 
The  Coifman-Meyer theorem on $\T^3$
 follows from 
the Coifman-Meyer theorem for functions on $\R^d$
\cite[Theorem 7.5.3]{Graf}
and
the transference principle \cite[Theorem 3]{FS}.
We may equally proceed with \eqref{para3} in Lemma \ref{LEM:para}
with a slight loss of derivative which does not affect the estimate. 
} 
\eqref{et1aa}, Sobolev's inequality, 
Lemma \ref{LEM:Str},  and \eqref{para4} with \eqref{et2b}, 
we have
\begin{align}
\begin{split}
\big\| \wt \If_{\pl, \pe}^N [ \psi_0, \psi_1](w) \big\|_{L_T^\infty H_x^{s_3}}
&\les
\big\| \wt  \If_{\pl}^{(2), N}[\psi_0](w)\pe (\pi_N \psi_1)
 \big\|_{L_T^\infty L_x^{\frac 6{3-2s_3}}} \\
&\les
\big\| \wt  \If_{\pl}^{(2), N}[\psi_0](w)\big\|_{L_T^\infty L_x^{\frac 3{1-s_3-\eps}}}
 \| \pi_N \psi_1 \|_{L_T^\infty L_x^{\frac 6{1+2\eps}}} \\
&\les
\| \I( \K^\ta (w,  \pi_N \psi_0))\|_{L_T^\infty  H_x^{s_3+\frac 12+\eps}} 
\| \psi_1 \|_{L_T^\infty H_x^{1-\eps}} \\
&\les
\|  \K^\ta (w,  \pi_N \psi_0) \|_{L_T^1 H_x^{s_3-\frac 12+\eps}} 
\| \Dr \|_{ H^{1-\eps}} \\
&\les
\| w \|_{L_T^{1} L_x^2} \|  \psi_0 \|_{L_T^\infty W_x^{-\frac 12-2\eps,\infty}}
\| \Dr \|_{ H^{1-\eps}}, 
\end{split}
\label{et4b}
\end{align}

\noi
for $\eps > 0$ sufficiently small such that  $4\eps \le -s_3$.
Hence, by the definition \eqref{L1ast} of the $\L(q, T)$-norm, 
Cauchy's inequality, and \eqref{Dr1}, 
we obtain
\begin{align}
\begin{split}
\big\| \wt \If_{\pl, \pe}^N [ \psi_0, \psi_1] \big\|_{ \L_2(q, T)}
&\les T^\frac{q-1}{q}
\|  \psi_0 \|_{L_T^\infty W_x^{-\frac 12-2\eps,\infty}} \| \Dr \|_{H^{1-\eps}}\\
& \les T^\frac{q-1}{q}\Big(
\|  \psi_0 \|_{L_T^\infty W_x^{-\frac 12-\eps,\infty}}^2
+ \| \Ups^N \|_{H^1}^2
+ \| \ZZ_N \|_{W^{1-\eps, \infty}}^2\Big).
\end{split}
\label{et5}
\end{align}

\noi
Proceeding as in \eqref{et4b} and applying Sobolev's embedding theorem 
with \eqref{Dr1}
and  \eqref{et2b}, we have
\begin{align}
\begin{split}
\big\| \wt \If_{\pl, \pe}^N [ \psi_1, \psi_1] \big\|_{ \L_2(q, T)}
&\les T^\frac{q-1}{q}
\| \psi_1 \|_{L_T^\infty W_x^{-\frac 12-2\eps,\infty}} \| \Dr \|_{H^{1-\eps}}
\les T^\frac{q-1}{q}\| \Dr \|_{H^{1-\eps}}^2 \\
&\les T^\frac{q-1}{q}\Big(
\| \Ups^N \|_{H^1}^2
+ \| \ZZ_N \|_{W^{1-\eps, \infty}}^2\Big).
\end{split}
\label{et6}
\end{align}

\noi
Finally, from 
Lemma \ref{LEM:para}, Lemma \ref{LEM:Str}, Sobolev's inequality, 
and \eqref{para4}, we have 
\begin{align}
\begin{split}
\big\| \wt \If_{\pl, \pe}^N [ \psi_1, \psi_0](w) \big\|_{L_T^\infty H_x^{s_3}}
&\le
\big\| \wt  \If_{\pl}^{(2), N}[\psi_1](w)\pe (\pi_N \psi_0) \big\|_{L_T^\infty L_x^2} \\
&\les
\| \I (\K^\ta (w,  \pi_N \psi_1)) \|_{L_T^\infty H_x^{\frac 12+2\eps}}
\| \psi_0 \|_{L_T^\infty W_x^{-\frac 12-\eps, \infty}} \\
&\les
\|\K^\ta( w,  \pi_N \psi_1) \|_{L_T^1 H_x^{-\frac 12+2\eps}}
\| \psi_0 \|_{L_1^\infty W_x^{-\frac 12-\eps, \infty}} \\
&\les
\| \K^\ta (w,  \pi_N \psi_1) \|_{L_T^1 L_x^{\frac 3{2-2\eps}}}
\| \psi_0 \|_{L_T^\infty W_x^{-\frac 12-\eps, \infty}} \\
&\les
\| w \|_{L_T^{q} L_x^2} \|  \psi_1 \|_{L_T^{q'} B^0_{\frac 6{1-4\eps}, 2}}
\| \psi_0 \|_{L_T^\infty W_x^{-\frac 12-\eps,\infty}}.
\end{split}
\label{et6a}
\end{align}
Note that $( \frac 1{3 \eps}, \frac{6}{1-4\eps})$ is $(1-\eps)$-admissible.
Since $q>1$, 
we can choose  $\eps>0$ sufficiently small
such that  $q' \le \frac 1{3\eps}$.
Then, by Minkowski's integral inequality,   \eqref{et2b},  and Lemma \ref{LEM:Str}, 
we have 
\begin{align}
\|  \psi_1 \|_{L_T^{q'} B^0_{\frac 6{1-4\eps}, 2}}
\le 
\bigg(\sum_{j = 0}^\infty \|  S(t) (\P_j \Dr,0) \|_{L_T^{q'} L^\frac 6{1-4\eps}_x}^2\bigg)^\frac 12
\les  \|\Dr\|_{H^{1-\eps}}, 
\label{et6b}
\end{align}

\noi
where $\P_j$ is the Littlewood-Paley projector
onto the frequencies $\{|n|\sim 2^j\}$.
Hence, from \eqref{L1ast},  \eqref{et6a}, \eqref{et6b}, 
and Cauchy's inequality with \eqref{Dr1}, 
we obtain
\begin{align}
\begin{split}
\big\| \wt \If_{\pl, \pe}^N [ \psi_1, \psi_0] \big\|_{ \L_2(q, T)}
& \le C(T)
\| \psi_0 \|_{L_T^\infty W_x^{-\frac 12-\eps,\infty}} 
\| \Dr \|_{H^{1-\eps}} \\
&\le C(T)
\Big(\|  \psi_0 \|_{L_T^\infty W_x^{-\frac 12-\eps,\infty}}^2
+ \| \Ups^N \|_{H^1}^2
+ \| \ZZ_N \|_{W^{1-\eps, \infty}}^2 \Big).
\end{split}
\label{et7}
\end{align}

By \eqref{et3}, \eqref{et4}, \eqref{et5}, \eqref{et6}, \eqref{et7},
and Young's inequality (with $\al < 1$)
we have
\begin{align}
\begin{split}
\inf_{\dot \Ups^N \in \Ha^1}& 
\E \Big[ -\big\| \wt \If_{\pl, \pe}^N [ \<1>(Y+\Dr, u_1, \o_2)] \big\|_{ \L_2(q,T)}^\alpha
+ \eps_0 \big( \| \Ups^N \|_{H^1}^2 + \| \Ups_N \|_{L^2}^6\big) \Big] \\
&
\ge 
- c \E \Big[ K(Y,u_1,\o_2)^{2\al} \Big]
+ \inf_{\dot \Ups^N \in \Ha^1}
 \Big( - c \| \Ups^N \|_{H^1}^{2\alpha} + \eps_0  \| \Ups^N \|_{H^1}^2 \Big) -C_1 \\
& \gtrsim - \E \Big[ K(Y,u_1,\o_2)^{2\al} \Big] - C_2.
\end{split}
\label{et8}
\end{align}

\noi
Therefore, 
from \eqref{et2a}, \eqref{et8}, Young's inequality, and Jensen's inequality,
we obtain 
\begin{align*}
\int \exp \Big( \big\| \If_{\pl, \pe}^N [ \<1>(\vec u_0, \o_2)] \big\|_{ \L_2(q, 1)}^\alpha \Big) d \rho_N(u_0) 
&\les \exp \Big( C \E \big[ K(Y,u_1,\o_2)^{2\al} \big]  \Big) \\
&\le \exp \Big( \dl \E \big[ K(Y,u_1,\o_2) \big]  \Big) \\
&\le \int  \exp \big( \dl  K(Y,u_1,\o_2)  \big) d\mu(Y)
\end{align*}
for $0<\al< \frac 12$.
Finally, by integrating in $(u_1, \o_2)$
with respect to $\mu_2 \otimes \PP_2$, 
we obtain 
 the desired bound \eqref{exp}
 from 
\eqref{et4a}.

\smallskip

\noi
$\bullet$ {\bf Part 2:}
Next, we briefly discuss how to prove \eqref{exp02}
for the random operator $\wt \If_{\pl, \pe}^N$.
For $N \ge N_1 \ge N_2 \ge 1$, 
proceeding as in Part 1, we arrive at
\begin{align*}
 -&  \log \int \exp \Big( N_2^\be \big\| 
 \wt \If_{\pl, \pe}^{N_1} [ \<1>(\vec u_0, \o_2)]- \wt \If_{\pl, \pe}^{N_2} [ \<1>(\vec u_0, \o_2)] \big\|_{ \L_2(q, T)}^\alpha \Big) d \rho_N(u_0) \\
& \ge \inf_{\dot \Ups^N \in \Ha^1}
\E\Big[ -
N_2^\be 
\big\| \wt \If_{\pl, \pe}^{N_1} [ \<1>(Y+\Dr, u_1, \o_2)]
- \wt \If_{\pl, \pe}^{N_2} [ \<1>(Y+\Dr, u_1, \o_2)] \big\|_{ \L_2(q, T)}^\alpha\\
& \hphantom{XXXXXXi}
+ \eps_0 \big( \| \Ups^N \|_{H^1}^2 + \| \Ups_N \|_{L^2}^6\big) \Big] - C_0,
\end{align*}

\noi
uniformly in $u_1$ and $\o_2$. See \eqref{et2a}.
With $\psi_0$ and $\psi_1$ as in \eqref{et2b}, we write 
\begin{align}
\begin{split}
 N_2^\frac{\be}{\al}  & 
\big\| \wt \If_{\pl, \pe}^{N_1} [ \<1>(Y+\Dr, u_1, \o_2)]
- \wt \If_{\pl, \pe}^{N_2} [ \<1>(Y+\Dr, u_1, \o_2)] \big\|_{ \L_2(q, T)}\\
&\le
N_2^\frac{\be}{\al} 
\big\| \wt \If_{\pl, \pe}^{N_1}  [ \psi_0, \psi_0]  - \wt \If_{\pl, \pe}^{N_2}  [ \psi_0, \psi_0] \big\|_{ \L_2(q, T)}\\
&\quad
+ N_2^\frac{\be}{\al} \big\|\wt \If_{\pl, \pe}^{N_1}  [ \psi_0, \psi_1] - \wt \If_{\pl, \pe}^{N_2}  [ \psi_0, \psi_1] \big\|_{ \L_2(q, T)} \\
&\quad
+ N_2^\frac{\be}{\al} \big\| \wt \If_{\pl, \pe}^{N_1}  [ \psi_1, \psi_0] -  \wt \If_{\pl, \pe}^{N_2} [ \psi_1, \psi_0] \big\|_{ \L_2(q, T)}\\
&\quad
+ N_2^\frac{\be}{\al} \big\| \wt \If_{\pl, \pe}^{N_1}  [ \psi_0, \psi_1]  - \wt \If_{\pl, \pe}^{N_2} [ \psi_1, \psi_1] \big\|_{ \L_2(q, T)}.
\end{split}
\label{Xet3}
\end{align}

\noi
In view of Remark \ref{REM:decay2} 
(see also Lemma \ref{LEM:sto4J} and Remark \ref{REM:decay}), 
we see that 
there exists $K(Y, u_1, \o_2)$ such that 
\begin{align}
\begin{split}
N_2^\frac{\be}{\al} 
\big\| & \wt \If_{\pl, \pe}^{N_1}  [ \psi_0, \psi_0] 
 - \wt \If_{\pl, \pe}^{N_2}  [ \psi_0, \psi_0] \big\|_{ \L_2(q, T)}\\
& +
\|  \psi_0 \|_{L_T^\infty W_x^{-\frac 12-\eps,\infty}}^2
+ \| \ZZ_N \|_{W^{1-\eps, \infty}}
\le \wt K(Y, u_1, \o_2)
\end{split}
\label{Xet4}
\end{align}
and 
\begin{align}
\E_{\muu\otimes \PP_2} \big[ \exp(\delta \wt K(Y, u_1, \o_2)) \big] < \infty
\label{Xet4a}
\end{align}
for sufficiently small $\delta>0$, 
provided that $\be > 0$ is sufficiently small.
The last three terms on the right-hand side of \eqref{Xet3}
can be handled as in 
\eqref{et5},  \eqref{et6}, 
and \eqref{et7}.
By noting that one of the factors comes with $\pi_{N_1} - \pi_{N_2}$, 
we gain a small negative power of $N_2$ by losing 
small regularity
in \eqref{et5},  \eqref{et6}, 
and \eqref{et7}, while keeping the resulting regularities
on the right-hand sides unchanged.
This  allows us to hide 
$N_2^\frac{\be}{\al} $ in \eqref{Xet3}.
The rest of the argument follows
precisely as in Part 1.
\end{proof}

\begin{remark}\label{REM:3} \rm
In the proof of Proposition \ref{PROP:exptail2}, 
we only treated $\wt \If_{\pl, \pe}^N$ from the truncated enhanced data set
$\Xi_N(\vec u_0, \o_2)$ in \eqref{data3x}.
Let us briefly discuss how to treat the other terms
in $\Xi_N(\vec u_0, \o_2)$ to get the exponential integrability bound \eqref{exp01}.
The second bound \eqref{exp02} follows in a similar manner.
The terms 
$\<1>_N$, $\<2>_N$,  $ \<20>_N$, and $\Ab_N$
can be estimated in a similar manner
since they are (at most) quadratic in $\<1>(Y+\Dr, u_1, \o_2)$
and the product $\psi_0 \psi_1$ is well defined, 
where $\psi_j$, $j = 0, 1$, is as in \eqref{et2b}.

As for $\<21p>_N$, with the notation above and \eqref{et2b}, 
we have 
\begin{align}
\begin{split}
\<21p>_N[\<1>(Y+\Dr, u_1, \o_2)]
& = \<21p>_N[\psi_0 + \psi_1]\\
& = \<20>_N[\psi_0 + \psi_1]\pe(\pi_N \psi_0)
+ \<20>_N[\psi_0 + \psi_1]\pe (\pi_N \psi_1).
\end{split}
\label{et9a}
\end{align}

\noi
Let $0 < \al < \frac 13$.
Then, by  Lemma \ref{LEM:para}
and 
Young's inequality, 
we can estimate
the second term on the right-hand side  as 
\begin{align}
\begin{split}
\| \<20>_N[\psi_0 + \psi_1]\pe (\pi_N \psi_1)\|_{C_T H^{-\eps}_x}^\al
& \les 
\| \<20>_N[\psi_0 + \psi_1]\|_{C_T W^{\frac 12 -\eps, \infty}_x}^\al
\| \psi_1\|_{C_T H^{1-\eps}_x}^\al\\
& \les 
\| \<20>_N[\psi_0 + \psi_1]\|_{C_T W^{\frac 12 -\eps, \infty}_x}^{\frac{3}{2}\al}
+ \| \psi_1\|_{C_T H^{1 -\eps}_x}^{3\al}.
\end{split}
\label{et9b}
\end{align}

\noi
Noting that $\frac 32 \al < \frac 12$ and $3\al < 1$, 
we can control the first term on the right-hand side of~\eqref{et9b}
by the exponential integrability bound 
for $\<20>_N$ under $\rhoo_N \otimes \PP_2$,
while by Young's inequality with \eqref{et2b} and \eqref{Dr1}, we can bound the second term by
\begin{align}
\dl \Big(\| \Ups^N \|_{H^1}
+  \| \ZZ_N \|_{W^{1-\eps, \infty}}\Big) + C_\dl.
\label{et9bb}
\end{align}

\noi
for any small $\dl > 0$.

Let us consider the first term on the right-hand side of \eqref{et9a}.
In view of \eqref{enh0}, by writing 
\begin{align}
\begin{split}
\<20>_N[\psi_0 + \psi_1]\pe(\pi_N \psi_0)
& = \<20>_N[\psi_0]\pe(\pi_N \psi_0)
+ 2 \Big(\pi_N \I \big((\pi_N\psi_0) (\pi_N\psi_1)\big)\Big)\pe(\pi_N \psi_0)\\
& \quad + \Big(\pi_N \I \big( (\pi_N\psi_1)^2\big)\Big)\pe(\pi_N \psi_0).
\end{split}
\label{et9c}
\end{align}

\noi
Note that we have 
$\<20>_N[\psi_0]\pe(\pi_N \psi_0) = 
\<21p>_N( (Y, u_1), \o_2)$, 
where the latter term is as in  \eqref{enh0}.
While there is an extra frequency cutoff as compared to $\<21p>_N $
in Lemma \ref{LEM:sto2}, 
the conclusion of Lemma \ref{LEM:sto2}
also holds for $\<20>_N[\psi_0]\pe(\pi_N \psi_0) = 
\<21p>_N( (Y, u_1), \o_2)$.
Hence, we can control 
 the first term on the right-hand side of \eqref{et9c} 
by the exponential tail estimate in Lemma \ref{LEM:sto2}
with $0 < \al < \frac 13$.
The third term on the right-hand side of \eqref{et9c}  causes no issue since 
the resonant product 
of $\pi_N \I \big( (\pi_N\psi_1)^2\big)$ and $\pi_N \psi_0$ is well defined.

Lastly, let us consider    the second term on the right-hand side of \eqref{et9c}. 
In view of 
\eqref{et1a}, \eqref{et1aa}, and \eqref{et1b}, we have 
\begin{align}
\begin{split}
\Big(\pi_N \I \big((\pi_N\psi_0) (\pi_N\psi_1)\big)\Big)\pe(\pi_N \psi_0)
& = \Big(\pi_N \I \big((\pi_N\psi_1) \pge (\pi_N\psi_0)  \big)\Big)\pe(\pi_N \psi_0)\\
& \quad + 
\wt  \If_{\pl}^{(1), N}[\psi_0](\pi_N \psi_1)\pe (\pi_N \psi_0)
+ 
\wt \If^N_{\pl, \pe}[\psi_0](\pi_N \psi_1) , 
\end{split}
\label{et10a}
\end{align}

\noi
where $\wt  \If_{\pl}^{(1), N}[\psi_0]$ is defined by 
\begin{align}
\wt  \If_{\pl}^{(1), N}[\psi_0]  := \wt \If_{\pl}^N[\psi_0] - \wt \If_{\pl}^{(2), N}[\psi_0].
\label{et10b}
\end{align}

\noi
From Lemma \ref{LEM:para} and the one degree of smoothing
from the Duhamel integral operator $\I$, 
we see that 
$\I \big((\pi_N\psi_1)\pge  (\pi_N\psi_0)\big) \in C([0, T]; H^{\frac 32 - 3\eps}(\T^3))$, 
which allows us to handle the first term on the right-hand side of \eqref{et10a}.

Next, we estimate the second term on the right-hand side of \eqref{et10a}.
Recall from \eqref{et2b} that 
$\psi_0 = \<1>(Y,u_1,\o_2) $ with $\Law(Y, u_1) = \muu$.
Namely, $\wt  \If_{\pl}^{(1), N}[\psi_0]$
defined in \eqref{et10b} is nothing but 
$ \If_{\pl}^{(1), N}$ in Lemma \ref{LEM:sto3} 
with an extra frequency cutoff $\chi_N(n)$.
Hence, 
the conclusion of 
Lemma~\ref{LEM:sto3} (in particular \eqref{A00}) holds
true for $\wt  \If_{\pl}^{(1), N}[\psi_0]$.
Then, from Lemma~\ref{LEM:para} and   Lemma~\ref{LEM:sto3}, 
 we have 
\begin{align*}
\big\|\wt  \If_{\pl}^{(1), N}[\psi_0](\pi_N \psi_1)\pe (\pi_N \psi_0)\big\|_{C_T H^{-\eps}_x}^\al
& \les \big\|\wt  \If_{\pl}^{(1), N}[\psi_0](\pi_N \psi_1)\big\|_{C_T H^{\frac 12 + 3\eps}_x}^\al
\| \psi_0\|_{C_T W^{-\frac 12-\eps, \infty}_x}^\al
\\
& 
\le C(T)  \|\psi_1 \|_{C_T H_x^{1-\eps}}^\al
\|\psi_0 \|_{C_T W_x^{-\frac 12 - \eps, \infty}}^{2\al}.
\end{align*}

\noi
Then, Young's inequality allows us to handle this term.

Finally, we treat the third  term on the right-hand side of \eqref{et10a}.
From \eqref{L1ast} and Young's inequality, we have 
\begin{align*}
\big\| \wt \If^N_{\pl, \pe}[\psi_0](\pi_N \psi_1) \big\|_{C_T H^{-\eps}_x}^\al
& \leq 
\big\| \wt \If^N_{\pl, \pe}[\psi_0]\big\|_{\L(\frac{3}{2}, T)}^\al
\|\psi_1 \|_{L^\frac{3}{2}_T L^2_x}^\al\\
& \les C(T) \Big(\big\| \wt \If^N_{\pl, \pe}[\psi_0]\big\|_{\L(\frac{3}{2}, T)}^{\frac 32\al}
+ \|\psi_1 \|_{L^\frac{3}{2}_T L^2_x}^{3\al}\Big),
\end{align*}

\noi
which can be controlled by \eqref{exp} and \eqref{et9bb}.

Therefore, Proposition \ref{PROP:exptail2} holds
for  all the elements in the truncated enhanced data set
$\Xi_N(\vec u_0, \o_2)$ in \eqref{data3x}.

\end{remark}

We conclude this subsection by constructing the full enhanced data set
$\Xi (\vec u_0, \o_2)$ in~\eqref{enh1}  under $\rhoo \otimes\PP_2$
as a limit of the truncated enhanced data set
$\Xi_N (\vec u_0, \o_2)$ in \eqref{data3x}.

\begin{corollary}\label{COR:lim}
Let $T > 0$. Then, 
the truncated enhanced data set $\Xi_N(\vec u_0, \o_2)$ in \eqref{data3x}
converges to 
the enhanced data set $\Xi(\vec u_0, \o_2)$ in \eqref{enh1},  
with respect to the $\Xc^\eps_T$-norm defined in~\eqref{data3},  
almost surely and in measure with respect to 
the limiting measure $\rhoo \otimes \PP_2$.

\end{corollary}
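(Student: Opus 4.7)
The plan is to derive Corollary \ref{COR:lim} directly from the uniform exponential difference bound \eqref{exp02} of Proposition \ref{PROP:exptail2}, combined with the weak convergence $\rhoo_N\to\rhoo$ which follows from Theorem \ref{THM:Gibbs}\,(i).

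The first step is to transfer \eqref{exp02} from the truncated measure $\rhoo_N\otimes\PP_2$ to the limiting measure $\rhoo\otimes\PP_2$. Fix $N_1\ge N_2$ and set
\[ F_{N_1,N_2}(\vec u_0) := \E_{\PP_2}\Big[\exp\Big(N_2^\be\|\Xi_{N_1}(\vec u_0,\,\cdot\,)-\Xi_{N_2}(\vec u_0,\,\cdot\,)\|_{\Xc^\eps_T}^\al\Big)\Big]. \]
By Remark \ref{REM:decay2}\,(i), $\Xi_{N_j}(\vec u_0,\o_2)$ depends on $\vec u_0$ only through the finite-frequency projection $\pi_{N_1}\vec u_0$ for $j=1,2$; hence $F_{N_1,N_2}$ is a continuous (though unbounded) function of $\vec u_0$ in the topology in which $\rhoo_N\to\rhoo$. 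For each $K>0$, $\min(F_{N_1,N_2},K)$ is bounded and continuous, so the Portmanteau theorem combined with \eqref{exp02} (letting $N\to\infty$ with $N_1,N_2$ fixed) yields $\int\min(F_{N_1,N_2},K)\,d\rhoo\le C$ uniformly in $K$. Monotone convergence in $K$ then gives the analogue of \eqref{exp02} with respect to $\rhoo\otimes\PP_2$.

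Chebyshev's inequality applied to this transferred bound gives, for any $\ld>0$ and $N_1\ge N_2$,
\[ (\rhoo\otimes\PP_2)\Big(\|\Xi_{N_1}-\Xi_{N_2}\|_{\Xc^\eps_T}>\ld\Big) \le C\exp(-N_2^\be\ld^\al). \]
Taking $N_1=N+1$, $N_2=N$, and $\ld=N^{-\be/(2\al)}$, and summing the resulting tail probabilities over $N\in\N$, Borel--Cantelli implies that $(\rhoo\otimes\PP_2)$-almost surely $\|\Xi_{N+1}-\Xi_N\|_{\Xc^\eps_T}\le N^{-\be/(2\al)}$ for all but finitely many $N$. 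Since the right-hand side is summable in $N$, the sequence $\{\Xi_N\}_{N\in\N}$ is almost surely Cauchy in $\Xc^\eps_T$, hence converges almost surely to some limit. This limit coincides with the formal expression $\Xi(\vec u_0,\o_2)$ in \eqref{enh1} by reading off the defining formulas \eqref{enh0a}--\eqref{enh0c} (each truncated component converges in $\Xc^\eps_T$ to the corresponding untruncated object), and convergence in $(\rhoo\otimes\PP_2)$-measure follows immediately from the almost sure convergence.

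The main obstacle is the first step: transferring exponential integrability from $\rhoo_N\otimes\PP_2$ to the singular limit $\rhoo\otimes\PP_2$, given that $F_{N_1,N_2}$ is unbounded and that $\rhoo$ is mutually singular with the base Gaussian $\muu$. The decisive structural feature enabling the transfer is the finite-frequency dependence of $\Xi_{N_1}-\Xi_{N_2}$ on $\vec u_0$ (Remark~\ref{REM:decay2}\,(i)), which renders $F_{N_1,N_2}$ continuous in the weak-convergence topology and reduces the passage to a bounded-continuous truncation followed by monotone convergence. Once this transfer is in hand, the remaining steps are standard applications of Chebyshev and Borel--Cantelli.
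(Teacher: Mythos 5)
Your proposal follows essentially the same route as the paper: transfer the uniform exponential difference bound \eqref{exp02} to $\rhoo\otimes\PP_2$ via weak convergence combined with a truncation-then-monotone-convergence step, then deduce convergence in measure via Chebyshev and almost-sure convergence via Borel--Cantelli. The one substantive difference is cosmetic: the paper works directly with the bounded, continuous truncated function $\exp\big(\min(N_2^\be\|\Xi_{N_1}-\Xi_{N_2}\|_{\Xc^\eps_T}^\al,L)\big)$ on the product space and uses Fatou, whereas you integrate out $\o_2$ first to obtain $F_{N_1,N_2}(\vec u_0)$ and then invoke Portmanteau. Your version implicitly requires that $F_{N_1,N_2}$ is continuous in $\vec u_0$; this is plausible (thanks to the finite-frequency dependence from Remark~\ref{REM:decay2}\,(i)) but needs a dominated-convergence argument to push continuity through the $\PP_2$-integral, a step that the paper sidesteps by truncating before integrating.

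There is one small genuine gap in the Borel--Cantelli step. With $\ld=N^{-\be/(2\al)}$, the tail is $C\exp(-N^\be\ld^\al)=C\exp(-N^{\be/2})$, which is summable; but the almost-sure Cauchy property you draw from this requires $\sum_N N^{-\be/(2\al)}<\infty$, i.e. $\al<\be/2$. Since $\be>0$ from Proposition~\ref{PROP:exptail2} is a fixed small constant while $\al\in(0,\tfrac13)$ is free, this constraint is satisfiable, but you should state it. The paper's route avoids this constraint entirely: after establishing convergence in measure and thus the existence of the limit $\Xi$, one applies Fatou once more to obtain the uniform bound $\int\exp(N_2^\be\|\Xi-\Xi_{N_2}\|^\al)\,d(\rhoo\otimes\PP_2)\lesssim 1$, whence $\rhoo\otimes\PP_2\big(\|\Xi-\Xi_{N_2}\|>\ld\big)\le Ce^{-cN_2^\be\ld^\al}$. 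For each \emph{fixed} $\ld>0$ this tail is summable in $N_2$ (as $e^{-cN_2^\be\ld^\al}$ decays faster than any polynomial in $N_2$), so Borel--Cantelli gives $\|\Xi-\Xi_{N_2}\|\le\ld$ eventually almost surely, with no relationship needed between $\al$ and $\be$. Finally, a minor phrasing issue: the limit does not ``coincide with $\Xi$ by reading off the defining formulas'' — the untruncated objects in \eqref{enh1} are not defined independently for a generic $\vec u_0$ in the support of $\rhoo$; rather $\Xi(\vec u_0,\o_2)$ \emph{is defined} as this limit, which is precisely the content of the corollary.
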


\begin{proof}
Let $0 < \al < \frac 13$ and $\be > 0$ be as in Proposition \ref{PROP:exptail2}.
Then, by 
Fatou's lemma, the weak convergence of $\rhoo_N \otimes \PP_2$
to $\rhoo \otimes \PP_2$, 
and Proposition \ref{PROP:exptail2}, we have 
\begin{align}
\int & \exp \Big( N_2^\be \| \Xi_{N_1} (\vec u_0, \o_2)
- \Xi_{N_2} (\vec u_0, \o_2)
 \|_{\mathcal{X}^{\eps}_T}^\alpha \Big) d (\rhoo \otimes \PP_2)(\vec u_0, \o_2) 
 \notag \\
& \le
\liminf_{L \to \infty}
\int \exp \Big( \min\big(N_2^\be \| \Xi_{N_1} (\vec u_0, \o_2)
- \Xi_{N_2} (\vec u_0, \o_2)
 \|_{\mathcal{X}^{\eps}_T}^\alpha, L\big) \Big) d (\rhoo \otimes \PP_2)(\vec u_0, \o_2) \notag \\
&  = \liminf_{L \to \infty}\lim_{N \to \infty}
\int \exp \Big( \min\big(N_2^\be \| \Xi_{N_1} (\vec u_0, \o_2)\notag \\
& \hphantom{XXXXXXXXXXX}
- \Xi_{N_2} (\vec u_0, \o_2)
 \|_{\mathcal{X}^{\eps}_T}^\alpha, L\big) \Big) d (\rhoo_N \otimes \PP_2)(\vec u_0, \o_2) \notag \\
&  \le \lim_{N \to \infty}
\int \exp \Big( N_2^\be \| \Xi_{N_1} (\vec u_0, \o_2)
- \Xi_{N_2} (\vec u_0, \o_2)
 \|_{\mathcal{X}^{\eps}_T}^\alpha \Big) d (\rhoo_N \otimes \PP_2)(\vec u_0, \o_2) \notag \\
 & \les 1, 
\label{Xex1}
\end{align}

\noi
uniformly in $N_1 \ge N_2 \ge 1$.
Then, by Chebyshev's inequality, we have 
\begin{align*}
\rhoo \otimes \PP_2\Big(  \| \Xi_{N_1} (\vec u_0, \o_2)
- \Xi_{N_2} (\vec u_0, \o_2)
 \|_{\mathcal{X}^{\eps}_T}^\alpha > \ld\Big) 
 \le C e^{-c N_2^\be\ld^\al}
\end{align*}

\noi
for any  $\ld > 0$ and $N_1 \ge N_2 \ge 1$.
This shows that $\{\Xi_{N} (\vec u_0, \o_2)\}_{N \in \N}$
is Cauchy in measure with respect to $\rhoo \otimes \PP_2$
and thus converges in measure to the full enhanced data set 
$\Xi(\vec u_0, \o_2)$ in~\eqref{enh1}.
By Fatou's lemma and \eqref{Xex1}, 
we also have 
\begin{align*}
\int & \exp \Big( N_2^\be \| \Xi (\vec u_0, \o_2)
- \Xi_{N_2} (\vec u_0, \o_2)
 \|_{\mathcal{X}^{\eps}_T}^\alpha \Big) d (\rhoo \otimes \PP_2)(\vec u_0, \o_2) 
 \les 1, 
\end{align*}

\noi
uniformly in $N_1 \ge N_2 \ge 1$, which in turn implies
\begin{align*}
\rhoo \otimes \PP_2\Big(  \| \Xi (\vec u_0, \o_2)
- \Xi_{N_2} (\vec u_0, \o_2)
 \|_{\mathcal{X}^{\eps}_T}^\alpha > \ld\Big) 
 \le C e^{-c N_2^\be\ld^\al}
\end{align*}

\noi
for any  $\ld > 0$ and $ N_2 \in \N$.
By summing in $N_2\in \N$
and invoking the Borel-Cantelli lemma, 
we also conclude almost sure convergence
$ \Xi_{N} (\vec u_0, \o_2)$ to 
$  \Xi (\vec u_0, \o_2)$ with respect to $\rhoo\otimes \PP_2$.
\end{proof}

\subsection{Proof of Theorem \ref{THM:GWP}}
\label{SUBSEC:GWP2}

In this subsection, we present the proof of Theorem \ref{THM:GWP}.
The main task is to prove convergence of 
the solution $(u_N, \dt u_N)$ to the truncated hyperbolic $\Phi^3_3$-model \eqref{SNLW3a}.
We first carry out Steps 2, 3, and 4
described at the beginning of this section.
Namely, 
we first establish 
a stability result (Proposition \ref{PROP:LWPv})
as a slight modification of 
 the local well-posedness argument (Theorem \ref{THM:1}).
Next, we 
 establish a uniform (in $N$) control
on the solution $(X_N, Y_N, \Res_N)$
to the truncated system (see \eqref{Ba4b} below)
with respect to the truncated measure $\rho_N \times \PP_2$
 (Proposition \ref{PROP:tail2}).
Then, by using ideas from theory of optimal transport, 
we study
the convergence property of 
  the pushforward measure
$(\Xi_N)_\#(\rhoo_N\otimes \PP_2)$
to 
$(\Xi)_\#(\rhoo \otimes \PP_2)$
with respect to the 
 Wasserstein-1 distance (Proposition~\ref{PROP:plan}).

Let $\Phi^N_1(t)(\vec u_0, \o_2)$
be the first component    of  $\Phi^N(t)(\vec u_0, \o_2)$ 
 in \eqref{sol0}.
Then, 
by decomposing $\Phi^N_1(t)(\vec u_0, \o_2)$
 as in  \eqref{decomp3a}:
\begin{align}
\Phi_1^N (t) (\vec u_0, \o_2) = \<1> (t; \vec u_0, \o_2) + \s \<20>_N (t; \vec u_0, \o_2) + X_N(t) + Y_N(t), 
\label{Ba4b}
\end{align}

\noi
we see that 
 $X_N$, $Y_N$, and $\Res_N := X_N \pe \<1>_N (\vec u_0, \o_2)$ 
 satisfy  the following system:
\begin{align}
\begin{split}
 (\dt^2&  + \dt  +1 - \Dl) X_{N}\\  
&  =
2\s \pi_{N} \Big(
\big( X_N+Y_N+\s \<20>_N \big) \pl \<1>_{N} \Big) \\
&\phantom{X}
-M (\QxyN + 2\Res_N \, + \s^2 \<20>_N^2+ 2 \s \<21>_N + \<2>_N) \<1>_N ,\\
 (\dt^2 & + \dt +1  - \Dl) Y_{N}\\
&  = \s \pi_{N} \Big( \big( X_N+Y_N+\s \<20>_N  \big)^2
+ 2 \big( \Res_N + Y_N \pe \<1>_N + \s \<21p>_N \big) \\
&\phantom{X} 
+ 2\big( X_N + Y_N + \s \<20>_N \big) \pg \<1>_{N} \Big) \\
&\phantom{X}
-M (\QxyN + 2\Res_N \, + \s^2 \<20>_N^2 + 2 \s \<21>_N + \<2>_N) (X_N+Y_N+\s \<20>_N) ,\\
\Res_{N}
&=  2\s
\wt \If_{\pl}^{(1), N}
\big( X_N+Y_N+\s \<20>_N\big) \pe \<1>_{N}\\
& \hphantom{X}
 + 2\s \wt \If_{\pl, \pe}^{N}
 \big( X_N+Y_N+\s \<20>_N \big) \\
& \hphantom{X}
- \int_0^t  M(\QxyN + 2\Res_N \, + \s^2 \<20>_N^2 + 2 \s \<21>_N + \<2>_N)(t') 
\Ab_N(t, t') dt', \\
(X_{N} & ,  \dt X_{N}, Y_{N} ,   \dt Y_{N} )|_{t = 0}  = (0, 0, 0, 0), 
\end{split}
\label{SNLW9}
\end{align}

\noi
where $M$ is as in \eqref{addM}, 
$\QxyN $ is as in 
\eqref{Pxy} with $\<1>$ replaced by 
$\<1>_N = \<1>_N (\vec u_0, \o_2)$ as in~\eqref{Ba1x}, 
and 
the enhanced data set is  given by $\Xi_N (\vec u_0,\o_2)$  in \eqref{data3x}.

We first establish the following stability result.
The main idea is that 
by introducing a norm 
with an exponential decaying weight in time 
(see \eqref{ZL1}), 
the proof essentially follows
from a straightforward modification
of the local well-posedness argument (Theorem \ref{THM:1}).
A simple, but key observation is \eqref{ZL5} below.

\begin{proposition} \label{PROP:LWPv}
Let $T\gg 1$, $K\gg1$, and $C_0 \gg 1$.
Then, there exist $N_0(T, K, C_0) \in \N$ 
and small  $\kk_0 = \kk_0(T, K, C_0) > 0$
such that the following statements hold.
Suppose that for some $N \ge N_0$, 
we have 
\begin{align}
\|\Xi_N( \vec u_0', \o_2')\|_{\mathcal{X}^{\eps}_T} \le K
\label{bd1}
\end{align}

\noi
and 
\begin{align}
\|(X_N,Y_N,\Res_N)\|_{Z^{s_1, s_2, s_3}(T)} \le C_0
\label{bd2}
\end{align}

\noi
for the solution to $(X_N,Y_N,\Res_N)$ 
to the truncated system \eqref{SNLW9}  on $[0, T]$ with the truncated enhanced data set
 $\Xi_N (\vec u_0', \o_2')$.
Furthermore, suppose that we have 
\begin{align}
 \| \Xi(\vec u_0, \o_2) - \Xi_N(\vec u_0', \o_2')\|_{\mathcal{X}^{\eps}_T} \le \kk
\label{bd3}
\end{align}

\noi
for some $0 < \kk\le \kk_0$ and some $(\vec u_0, \o_2)$, 
where $\Xi(\vec u_0, \o_2)$ denotes the enhanced data set in~\eqref{enh1}.
Then, 
there exists a solution 
$(X, Y, \Res)$ to 
the full  system~\eqref{SNLW6} on $[0, T]$
with the zero initial data 
and the enhanced data set $\Xi (\vec u_0, \o_2)$,   satisfying the bound
\begin{align*}
\|(X, Y, \Res)\|_{Z^{s_1, s_2, s_3}(T)} \le C_0+ 1.
\end{align*}

\smallskip

Conversely,
suppose  that
\[
\|\Xi(\vec u_0, \o_2)\|_{\mathcal{X}^{\eps}_T} \le K
\]
and that 
the full system \eqref{SNLW6} with the zero initial data  and the enhanced data set $\Xi(\vec u_0, \o_2)$
has a solution $(X,Y,\Res)$ on $[0, T]$, satisfying 
\begin{align}
\|(X,Y,\Res)\|_{Z^{s_1, s_2, s_3}(T)} \le C_0.
\notag
\end{align}
Then, if \eqref{bd3} holds
for some  $N \ge N_0$,  $0 < \kk\le \kk_0$, and  $(\vec u_0', \o_2')$, 
then
there exists 
a solution $(X_N, Y_N, \Res_N)$ to the truncated system~\eqref{SNLW9} 
on $[0, T]$ with 
the enhanced data set $\Xi_N (\vec u_0', \o_2')$, 
satisfying 
\begin{align}
\|(X_N, Y_N, \Res_N) - (X, Y, \Res)\|_{Z^{s_1, s_2, s_3}(T)} \le 
   A(T, K, C_0)   ( \kk +    N^{-\dl})
\label{lv3}
\end{align}

\noi
for some $A(T, K, C_0)>0$ and some small $\dl > 0$.

\end{proposition}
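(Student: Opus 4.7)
\medskip
\noindent
\textbf{Proof proposal.}
The plan is to prove both directions by a single contraction argument applied to the difference of two solutions, exploiting an exponentially weighted norm on $Z^{s_1,s_2,s_3}(T)$ to convert a long-time stability statement into a local-type contraction. Concretely, suppose $(X_N,Y_N,\Res_N)$ solves the truncated system \eqref{SNLW9} with enhanced data $\Xi_N(\vec u_0', \o_2')$ and $(X,Y,\Res)$ solves the full system \eqref{SNLW6} with data $\Xi(\vec u_0,\o_2)$, and set
\begin{align*}
V := X - X_N, \qquad W := Y - Y_N, \qquad E := \Res - \Res_N.
\end{align*}
Subtracting \eqref{SNLW9} from \eqref{SNLW6}, the triple $(V,W,E)$ satisfies a system of the same shape as \eqref{SNLW6}--\eqref{SNLW9}, whose right-hand side is a sum of two types of terms: (i) polynomial expressions in $(V,W,E)$ whose coefficients are bounded in suitable norms by $K$ and $C_0$ in view of \eqref{bd1}, \eqref{bd2}, and the hypothesis $\|\Xi(\vec u_0,\o_2)\|_{\mathcal{X}^\eps_T}\le K$; and (ii) source terms which are polynomial in the already-controlled $(X_N,Y_N,\Res_N)$ (or $(X,Y,\Res)$) with at least one factor drawn from $\Xi - \Xi_N$, together with the frequency-truncation error $(\Id - \pi_N)$ acting on a finite-regularity object. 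The latter source is controlled in the relevant $L^{\wt q}_T H^{s-1}_x$- and $L^3_T H^{s_3}_x$-norms by $A(T,K,C_0)(\kappa + N^{-\dl})$, using \eqref{bd3} together with the fact that $\|(\Id-\pi_N) f\|_{H^{s-\dl}} \lesssim N^{-\dl}\|f\|_{H^s}$.

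The main device is to replace the $Z^{s_1,s_2,s_3}(T)$-norm by its exponentially weighted cousin
\begin{align*}
\|(V,W,E)\|_{Z_\lambda^{s_1,s_2,s_3}(T)}
:= \sup_{0 \le t \le T} e^{-\lambda t} \|(V,W,E)\|_{Z^{s_1,s_2,s_3}(t)},
\end{align*}
for a large parameter $\lambda = \lambda(T,K,C_0)\gg 1$ to be fixed. The key observation -- this is presumably \eqref{ZL5} in the paper -- is that in each of the Duhamel integrals appearing in \eqref{SNLW8} (applied to the difference system), one has the elementary bound
\begin{align*}
\int_0^t e^{-\lambda(t-t')} \, g(t')\, dt' \le \lambda^{-1+\frac{1}{\wt q}} \|g\|_{L^{\wt q}_t}
\qquad\text{and similar variants},
\end{align*}
so that repeating the Strichartz and paracontrolled estimates of Subsection~\ref{SUBSEC:LWP3} with the weighted norm on both sides yields a contraction estimate in which every contribution from a nonlinear (type (i)) term carries a factor of $\lambda^{-\alpha}$ for some $\alpha = \alpha(s_1,s_2,s_3) > 0$. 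Choosing $\lambda$ large depending on $T,K,C_0$ absorbs the time-polynomial factors arising from iterating the local theory, and one closes a contraction on a ball of radius $A(T,K,C_0)(\kappa + N^{-\dl})$ in $Z^{s_1,s_2,s_3}_\lambda(T)$. Undoing the weight and using $T<\infty$ translates this into the statement \eqref{lv3}; for the forward direction (constructing $(X,Y,\Res)$ from $(X_N,Y_N,\Res_N)$), the same contraction produces a solution of the full system, with the bound $\|(X,Y,\Res)\|_{Z^{s_1,s_2,s_3}(T)} \le C_0+1$ after choosing $\kappa_0$ small enough.

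The main obstacle will be the last line of the $\Res$-equation in \eqref{SNLW9}, where the Duhamel integral is replaced by the non-standard bilinear form $\int_0^t M(\cdots)(t')\,\Ab(t,t')\,dt'$. Because $\Ab(t,t')$ enters only through its $L^\infty_{t'}L^3_t$-norm (cf.\ \eqref{data3}), the exponential weight has to be distributed between the two time variables with care: one writes $e^{-\lambda t}\int_0^t M(\cdots)(t')\Ab(t,t')\,dt' = \int_0^t e^{-\lambda(t-t')}\,\Ab(t,t')\,e^{-\lambda t'} M(\cdots)(t')\,dt'$ and applies H\"older's inequality in $t'$, using the cubic factor $e^{-\lambda(t-t')}\in L^{3/2}_{t'}$ together with the assumed $L^\infty_{t'}L^3_t$-bound on $\Ab$ to gain $\lambda^{-\alpha'}$. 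A parallel issue arises from the fact that $M$ is quadratic in $|\int :\!u^2\!:\,dx|$ rather than linear, so the difference $M(\cdot)-M(\cdot)$ produces a sum in which the bounded factor may sit on either side; however, since $M$ is $C^1$ with locally bounded derivative, the difference estimate \eqref{M6} combined with \eqref{bd1}--\eqref{bd2} still yields a linear-in-$(V,W,E)$ bound with coefficient controlled by a polynomial in $K$ and $C_0$. Once these two points are handled, the contraction closes uniformly in $N\ge N_0(T,K,C_0)$, and the uniqueness built into the contraction yields the symmetry between the two directions of the proposition.
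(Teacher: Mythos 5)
Your proposal follows essentially the same strategy as the paper: replace $Z^{s_1,s_2,s_3}(T)$ by an exponentially weighted version, exploit the convolution bound $\int_0^t e^{-\lambda(t-t')}g(t')\,dt' \lesssim \lambda^{-1/q}\|g\|_{L^{q'}}$ (this is \eqref{ZL5}) to generate a $\lambda^{-\alpha}$ smallness factor in the Strichartz and paracontrolled estimates, and split the right-hand side of the difference system into source terms (enhanced-data increments plus $\pi_N^\perp$-tails controlled by $\kappa + N^{-\delta}$) and linearized-contraction terms; your remarks on the $\Ab(t,t')$ bilinear form and the quadratic $M$ also match the paper's treatment. One small imprecision is worth noting: the weighted norm should apply $e^{-\lambda t}$ pointwise \emph{inside} the $Z^{s_1,s_2,s_3}(T)$-norm as in \eqref{ZL1}, rather than as a $\sup_{t}$-weight on the truncated-interval norms as you wrote, because for the Strichartz components the latter is strictly weaker and the gain in \eqref{ZL5a}--\eqref{ZL5d} does not pass through it directly.
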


\begin{proof}

Fix $T\gg 1$.
Given   $\ld  \ge 1$ (to be determined later), 
we define 
$ \Zc^{s_1, s_2, s_3}_\ld(T)$ by 
\begin{align}
\|(X, Y, \Res)\|_{\Zc^{s_1, s_2, s_3}_\ld(T)}
= \|(e^{-\ld t}X, e^{-\ld t}Y, e^{-\ld t}\Res)\|_{Z^{s_1, s_2, s_3}(T)}.
\label{ZL1}
\end{align}

\noi
For notational simplicity,
we  set $Z = (X, Y, Z)$, $Z_N = (X_N, Y_N, \Res_N)$, 
$\Xi = \Xi(\vec u_0, \o_2)$, 
and  $\Xi_N = \Xi_N (\vec u_0', \o_2')$.

In the following, given $N \in \N$, 
we assume that  \eqref{bd1}, \eqref{bd2}, and \eqref{bd3}
hold.
Without loss of generality, assume that $\kk \le 1$.
Then, from \eqref{bd1} and \eqref{bd3}, we have
\begin{align}
 \| \Xi(\vec u_0, \o_2) \|_{\mathcal{X}^{\eps}_T} \le K + \kk
 \le K + 1 =:K_0.
\label{bd5}
\end{align}

\noi
In the following, 
we study the difference of 
the Duhamel formulation\footnote{Recall that we set $\s =1$ in Section \ref{SEC:LWP} for simplicity
and thus need to insert $\s$ in appropriate locations of~\eqref{SNLW8}.} \eqref{SNLW8} of 
the system~\eqref{SNLW6} with the zero initial data (i.e.~$(X_0, X_1, Y_0, Y_1) = (0, 0, 0, 0)$)
and the Duhamel formulation of the truncated system \eqref{SNLW9}
with respect to the $\Zc^{s_1, s_2, s_3}_\ld(T)$-norm
by choosing appropriate $\ld = \ld(T, K_0, R) \gg1 $.
See \eqref{SNLW12} below.

The main observation is the following bound:
\begin{align}
e^{-\ld t }\| e^{\ld t'} \|_{L^q_{t'}([0, t])} \les  \ld^{-\frac 1q}.
\label{ZL5}
\end{align}

\noi
Let $\I$ be the Duhamel integral operator defined in \eqref{lin1}.
Then, using \eqref{ZL5}, we have 
\begin{align}
\begin{split}
\| e^{-\ld t} \I(F)\|_{C_T H^s_x}
& \leq \bigg\| e^{-\ld t} \int_0^t e^{\ld t'}   \| e^{-\ld t'} F(t') \|_{H^{s-1}_x} dt' \bigg\|_{L^\infty_T}\\
& \les \ld^{-\frac{1}{q}}
\| e^{-\ld t'} F(t') \|_{L^{q'}_T H^{s-1}_x}
\end{split}
\label{ZL5a}
\end{align}

\noi
for any $1 \le q \le \infty$.
Let  $(q_1, r_1)$ be an $s_1$-admissible pair
with $0 < s_1  < 1$.
Then, there exists 
an $s_2$-admissible pair
 $(q_2, r_2)$
with $0 < s_1 < s_2 < 1$
such that 
\[\frac{1}{q_1} = \frac{\ta}{\infty} + \frac{1-\ta}{q_2}, 
\quad 
\frac{1}{r_1} = \frac{\ta}{2} + \frac{1-\ta}{r_2}, 
\quad \text{and}\quad 
s_1 = \ta \cdot 0 + (1-\ta) s_2
\]

\noi
for some  $0 < \ta < 1$.
By the homogeneous Strichartz estimate (\eqref{Str1} with $F = 0$), 
we have 
\begin{align}
\begin{split}
\| e^{-\ld t} \I(F)\|_{L^{q_2}_T L^{r_2}_x}
& \le  \bigg\| \int_0^t  e^{-\ld (t - t')} \D(t - t') (e^{-\ld t'} F(t'))
 dt'\bigg\|_{L^{q_2}_T L^{r_2}_x} \\
& \le  \int_0^T \|  \D(t - t') (e^{-\ld t'} F(t'))
\|_{L^{q_2}_t([0, T];  L^{r_2}_x)}  dt'\\
& \les 
\| e^{-\ld t'} F(t') \|_{L^{1}_T H^{s_2-1}_x}.
\end{split}
\label{ZL5b}
\end{align}

\noi
Thus, 
given any $\dl >0$, 
it follows from 
 interpolating  \eqref{ZL5a}
with large $q \gg1$ and \eqref{ZL5b} that 
there exists small $\ta = \ta(\dl)> 0$ such that 
\begin{align}
\| e^{-\ld t} \I(F)\|_{L^{q_1}_T L^{r_1}_x}
& \le C(T) \ld^{-\ta}
\| e^{-\ld t'} F(t') \|_{L^{1+\dl}_T H^{s_1-1}_x}.
\label{ZL5c}
\end{align}

\noi
Recalling that $(4, 4)$ is $\frac 12$-admissible, 
it follows from  \eqref{ZL5a},  \eqref{ZL5c}, and
Sobolev's inequality that 
\begin{align}
\begin{split}
\| e^{-\ld t} \I(F)\|_{C_T\H^\frac12_x \cap L^{4}_T L^{4}_x}
& \le C(T) \ld^{-\ta}
\| e^{-\ld t'} F(t') \|_{L^{1+\dl}_T H^{-\frac 12 }_x}\\
& \le C(T) \ld^{-\ta}
\| e^{-\ld t'} F(t') \|_{L^{1+\dl}_T L^{\frac 32}_x}.
\end{split}
\label{ZL5d}
\end{align}

By writing \eqref{SNLW9} in the Duhamel formulation, we have
\begin{align}
\begin{split}
X_N 
& = \Phi_{1, N} (X_N, Y_N, \Res_N)\\
: \!&  = 
2\s \pi_{N} \I\Big(
\big( X_N+Y_N+\s \<20>_N \big) \pl \<1>_{N} \Big) \\
&\phantom{X}
-\I\Big(M (\QxyN + 2\Res_N \, + \s^2 \<20>_N^2+ 2 \s \<21>_N + \<2>_N) \<1>_N\Big) ,\\
 Y_{N}
 & = \Phi_{2, N} (X_N, Y_N, \Res_N)\\ 
:\!&  = \s \pi_{N} \I \Big( \big( X_N+Y_N+\s \<20>_N  \big)^2\Big)
+ 2\s \pi_{N}\I \big( \Res_N + Y_N \pe \<1>_N + \s \<21p>_N \big) \\
&\phantom{X} 
+ 2\s \pi_{N}\I\Big(\big( X_N + Y_N + \s \<20>_N \big) \pg \<1>_{N} \Big) \\
&\phantom{X}
-\I\Big(M (\QxyN + 2\Res_N \, + \s^2 \<20>_N^2 + 2 \s \<21>_N + \<2>_N) (X_N+Y_N+\s \<20>_N)\Big) ,\\
\Res_{N} & =\Phi_{3, N}(X_N, Y_N, \Res_N),  \\
:\!&=  2\s
\wt \If_{\pl}^{(1), N}
\big( X_N+Y_N+\s \<20>_N\big) \pe \<1>_{N}\\
& \hphantom{X}
 + 2\s \wt \If_{\pl, \pe}^{N}
 \big( X_N+Y_N+\s \<20>_N \big) \\
& \hphantom{X}
- \int_0^t  M(\QxyN + 2\Res_N \, + \s^2 \<20>_N^2 + 2 \s \<21>_N + \<2>_N)(t') 
\Ab_N(t, t') dt'.
\end{split}
\label{SNLW10}
\end{align}

\noi
Then, $Z - Z_N = (X - X_N, Y - Y_N, \Res - \Res_N)$ satisfies the system
\begin{align}
\begin{split}
X - X_N  & = \Phi_1(X, Y, \Res) - \Phi_{1, N}(X_N, Y_N, \Res_N),  \\
Y - Y_N  & = \Phi_2(X, Y, \Res) - \Phi_{2, N}(X_N, Y_N, \Res_N),  \\
\Res - \Res_N  & = \Phi_3(X, Y, \Res) - \Phi_{3, N}(X_N, Y_N, \Res_N).
\end{split}
\label{SNLW11}
\end{align}

\noi
By setting
\begin{align*}
\dl X_N = X - X_N, \quad 
\dl Y_N = Y - Y_N, \quad 
\text{and} \quad
\dl \Res_N = \Res - \Res_N, 
\end{align*}

\noi
we have 
\begin{align*}
 X = \dl X_N  + X_N, \quad 
Y =\dl  Y_N + Y_N, \quad 
\text{and} \quad 
\Res = \dl  \Res_N + \Res_N.
\end{align*}

\noi
Then, we can view the system \eqref{SNLW11}
for the system for the unknown
\[\dl Z_N = (\dl X_N, \dl Y_N, \dl \Res_N)\]

\noi
with given source terms 
$Z_N = (X_N, Y_N, Z_N)$, $\Xi_N$, and $\Xi$.
We thus rewrite \eqref{SNLW11} as 
\begin{align}
\begin{split}
\dl  X_N  & = \Psi_1(\dl X_N, \dl Y_N, \dl \Res_N),  \\
\dl  Y_N  & =  \Psi_{2}(\dl X_N, \dl Y_N, \dl \Res_N),  \\
\dl \Res_N  & = \Psi_3(\dl X_N, \dl Y_N, \dl \Res_N), 
\end{split}
\label{SNLW12}
\end{align}

\noi
where $\Psi_j$, $j = 1, 2, 3$, is given by 
\begin{align}
\begin{split}
\Psi_j& (\dl X_N, \dl Y_N, \dl \Res_N)\\
& = \Phi_j(\dl X_N + X_N , \dl Y_N + Y_N, \dl \Res_N + \Res_N) - \Phi_{j, N}(X_N, Y_N, \Res_N).
\end{split}
\label{SNLW13}
\end{align}

We now study the system \eqref{SNLW12}.
We basically repeat the computations in Subsection \ref{SUBSEC:LWP3}
 by first multiplying the Duhamel formulation by $e^{-\ld t}$
 and using \eqref{ZL5a}, \eqref{ZL5c}, and \eqref{ZL5d}
 as a replacement of the Strichartz estimates (Lemma \ref{LEM:Str}).
This allows us to place $e^{-\ld t'}$
on one of the factors of 
$\dl X_N(t')$, $\dl Y_N(t')$, or $\dl \Res_N(t')$
 appearing
on the right-hand side of \eqref{SNLW12}
under some integral operator (with integration in the variable $t'$).
Our main goal is to  prove that 
\begin{align}
\vec \Psi = (\Psi_1, \Psi_2, \Psi_3)
\label{ES0}
\end{align}
is a contraction on 
a small ball in 
$\Zc_\ld^{s_1, s_2, s_3}(T)$.
In the following, however, we first establish bounds on $\Psi_j$ in~\eqref{SNLW13}
for 
 $\dl Z_N \in B_1$, 
 where $B_1 \subset Z^{s_1, s_2, s_3}(T)$
  denotes the closed ball of radius 1 (with respect to the 
  $Z^{s_1, s_2, s_3}(T)$-norm) centered at the origin.
For 
 $\dl Z_N \in B_1$, 
 it follows from \eqref{bd2}
that 
\begin{align}
\begin{split}
\|Z \|_{Z^{s_1, s_2, s_3}(T)} 
& \le \|\dl Z_N \|_{Z^{s_1, s_2, s_3}(T)} 
+ \|Z_N \|_{Z^{s_1, s_2, s_3}(T)} \\
& \le 1 + C_0  =:R.
\end{split}
\label{bd6}
\end{align}

 We first study the first equation in \eqref{SNLW12}.
From \eqref{SNLW13} with \eqref{SNLW8}, \eqref{SNLW10}, and \eqref{SNLW13},  
we have
\begin{align}
e^{-\ld t} \Psi_1(\dl X_N, \dl Y_N, \dl \Res_N)(t)
= e^{-\ld t}\1_1 (t)+ e^{-\ld t}\1_2(t)+e^{-\ld t}\1_3(t), 
\label{ZL3}
\end{align}

\noi
where 
(i)  $\1_1$ contains 
the difference of one of the elements
in the enhanced data sets $\Xi$ and $\Xi_N$, 
  (ii) $\1_2$ contains the terms
with the high frequency projection $\pi_N^\perp  = \Id - \pi_N$
onto the frequencies $\{|n|\ges N\}$, 
and (iii) $\1_3$ consists of the rest, which contains at least
one of the differences 
$\dl X_N$, $\dl Y_N$, 
or $\dl \Res_N$ (other than those in $Z = \dl Z_N + Z_N$).

In view of \eqref{bd3}, 
the contribution from $\1_1$ gives a small number $\kk$, 
while 
the contribution from $\1_2$ with $\pi_N^\perp$ gives a small negative power of $N$
by losing a small amount of regularity.\footnote{We have 
sharp inequalities  in \eqref{S1} as compared to the regularity condition in Theorem~\ref{THM:1}.
 This allows us to gain 
 a small negative power of $N$, 
 by losing a small amount of regularity 
 and using $\pi_N^\perp$.}
Proceeding as in~\eqref{M1a}
with \eqref{bd1}, 
\eqref{bd2}, \eqref{bd3},
\eqref{bd5},  and \eqref{bd6}, 
we have 
\begin{align}
\begin{split}
\| e^{-\ld t}\1_1 + e^{-\ld t}\1_2\|_{X^{s_1}(T)}
&\le
C(T) ( \kk +    N^{-\dl} K_0)  ( R^4 +K_0^4 )\\
&\le
C(T) ( \kk +    N^{-\dl}) K_0  ( R^4 +K_0^4 )
\end{split}
\label{ES1}
\end{align}

\noi
for any $\dl Z_N \in B_1$ and some small $\dl > 0$.
As for the last term on the right-hand side of \eqref{ZL3}, 
we use 
\eqref{ZL5a} and   \eqref{ZL5c} in place of Lemma \ref{LEM:Str}.
Then, a slight modification of \eqref{M1a} yields
\begin{align}
\| e^{-\ld t} \1_3 \|_{X^{s_1}(T)}
&\le
C(T)\ld^{-\ta} K_0  \Big( R^3 \|\dl Z_N\|_{\Zc_\ld^{s_1, s_2, s_3}(T)} +K_0^4 \Big)
\label{ES2}
\end{align}

\noi
for any $\dl Z_N \in B_1$.

Next, we study the second equation in \eqref{SNLW12}.
As in \eqref{ZL3}, 
we can write
\begin{align}
e^{-\ld t} \Psi_2(\dl X_N, \dl Y_N, \dl \Res_N)(t)
= e^{-\ld t}\II_1 (t)+ e^{-\ld t}\II_2(t)+e^{-\ld t}\II_3(t), 
\label{ES3}
\end{align}

\noi
where 
(i)  $\II_1$ contains 
the difference of one of the elements
in the enhanced data sets $\Xi$ and $\Xi_N$, 
  (ii) $\II_2$ contains the terms
with the high frequency projection $\pi_N^\perp  = \Id - \pi_N$
onto the frequencies $\{|n|\ges N\}$, 
and (iii) $\II_3$ consists of the rest, which contains at least
one of the differences 
$\dl X_N$, $\dl Y_N$, 
or $\dl \Res_N$ (other than those in $Z = \dl Z_N + Z_N$).
As for the first two terms on the right-hand side of \eqref{ES3}, 
we can proceed as in \eqref{M4}
with \eqref{bd1}, 
\eqref{bd2}, \eqref{bd3},
\eqref{bd5},  and \eqref{bd6}, 
and obtain
\begin{align}
\| e^{-\ld t}\II_1 + e^{-\ld t}\II_2\|_{Y^{s_2}(T)}
&\le
C(T) ( \kk +    N^{-\dl})  ( R^5 +K_0^5 )
\label{ES4}
\end{align}

\noi
for any $\dl Z_N \in B_1$
and some small $\dl > 0$.
Before we proceed to study the last term $e^{-\ld t}\II_3(t)$, 
let us  make a preliminary computation.
By the fractional Leibniz rule (Lemma~\ref{LEM:gko}\,(i))
and Sobolev's inequality, we have 
\begin{align}
\begin{split}
\| \jb{\nb}^{s_2 - \frac 12} (fg) \|_{L^\frac{3}{2}}
& \les \| \jb{\nb}^{s_2 - \frac 12} f \|_{L^{r_1}}
\| g \|_{L^{r_2}}
+ \| f \|_{L^{r_2}}\| \jb{\nb}^{s_2 - \frac 12} g \|_{L^{r_1}}
\\
& \les \| \jb{\nb}^{s_1 - \frac 14} f \|_{L^\frac 83}
\| \jb{\nb}^{s_1 - \frac 14} g \|_{L^\frac 83}, 
\end{split}
\label{ES5}
\end{align}

\noi
provided that $\frac 1{r_1} + \frac 1{r_2} = \frac 23$ with $1 < r_1, r_2 \le \infty$, 
\begin{align}
 \frac{s_1 - s_2 + \frac 14}{3} \geq \frac 38 - \frac 1{r_1}
\qquad \text{and}\qquad 
\frac {s_1 - \frac 14}3\ge \frac 38 - \frac 1{r_2}.
\label{S2}
\end{align}

\noi
This condition is easily satisfied by taking $s_1 < \frac 12 < s_2$
both sufficiently close to $\frac 12$ and $r_1 = r_2 = 3$.
By \eqref{ZL5d},  \eqref{ES5}, and Lemma \ref{LEM:gko}\,(i), 
we have 
\begin{align}
\begin{split}
\big\|  &e^{-\ld t} \I \big( (X_1+Y_1+\Xi_0)(X_2+Y_2+\Xi_0) \big) \big\|_{Y^{s_2} (T)}\\
&  \le C(T) \ld^{-\ta} 
\big\| e^{-\ld t} \jb{\nb}^{s_2 - \frac 12 }
 \big( (X_1+Y_1+\Xi_0)(X_2+Y_2+\Xi_0) \big) \big\|_{L^{1+\dl}_{T}L^{\frac {3}{2}}_x} \\
& \le C(T) \ld^{-\ta} 
\Big(  \|\jb{\nb}^{s_1 - \frac 14}X_1\|_{L^8_{T} L^\frac{8}{3}_x}
+ \|  \jb{\nb}^{s_2 - \frac 12}Y_1\|_{L^4_{T, x}}
+ \| \jb{\nb}^{s_2 - \frac 12}\Xi_0\|_{L^\infty_{T, x}}\Big)\\
& \quad
\times \Big(\|e^{-\ld t}  \jb{\nb}^{s_1 - \frac 14}X_2\|_{L^8_{T} L^\frac{8}{3}_x}
+ \|e^{-\ld t}  \jb{\nb}^{s_2 - \frac 12}Y_2\|_{L^4_{T, x}}
+ \| \jb{\nb}^{s_2 - \frac 12}\Xi_0\|_{L^\infty_{T, x}}\Big), 
\end{split}
\label{ES6}
\end{align}

\noi
provided that
$s_1 < \frac 12 < s_2$
are both sufficiently close to $\frac 12$.
Compare this with 
\eqref{M4a}.
Then, from 
 \eqref{ZL5a}, \eqref{ZL5c}, 
and  \eqref{ES6}
with \eqref{bd1}, 
\eqref{bd2}, 
\eqref{bd5},  and \eqref{bd6}, 
a slight modification of \eqref{M4} yields
\begin{align}
\|  e^{-\ld t}\II_3 \|_{Y^{s_2}(T)}   
\le C(T) \ld^{-\ta}
   \Big( R^4 \|\dl Z_N\|_{\Zc_\ld^{s_1, s_2, s_3}(T)} +K_0^5 \Big)
\label{ES7}
\end{align}

\noi
for any $\dl Z_N \in B_1$.

Finally, we study the third equation in \eqref{SNLW12}.
As in \eqref{ZL3} and \eqref{ES3}, 
we can write
\begin{align}
e^{-\ld t} \Psi_3(\dl X_N, \dl Y_N, \dl \Res_N)(t)
= e^{-\ld t}\III_1 (t)+ e^{-\ld t}\III_2(t)+e^{-\ld t}\III_3(t), 
\label{ES8}
\end{align}

\noi
where 
(i)  $\III_1$ contains 
the difference of one of the elements
in the enhanced data sets $\Xi$ and $\Xi_N$, 
  (ii) $\III_2$ contains the terms
with the high frequency projection $\pi_N^\perp  = \Id - \pi_N$
onto the frequencies $\{|n|\ges N\}$, 
and (iii) $\III_3$ consists of the rest, which contains at least
one of the differences 
$\dl X_N$, $\dl Y_N$, 
or $\dl \Res_N$ (other than those in $Z = \dl Z_N + Z_N$).
Proceeding as in  \eqref{M5}
with \eqref{bd1}, 
\eqref{bd2}, \eqref{bd3},
\eqref{bd5},  and \eqref{bd6}, 
we have 
\begin{align}
\| e^{-\ld t}\III_1 + e^{-\ld t}\III_2\|_{L^3_T H^{s_3}_x}
&\le
C(T) ( \kk +    N^{-\dl}) K_0 ( R^4 +K_0^4 )
\label{ES9}
\end{align}

\noi
for any $\dl Z_N \in B_1$
and some small $\dl > 0$.
As for the last term on the right-hand side of \eqref{ES8}, 
let us fist consider the terms with the random operator $ \If_{\pl, \pe}$.
By  \eqref{bd5} and \eqref{ZL5}, we have 
\begin{align*}
 \big\|  &    e^{-\ld t}\, \If_{\pl, \pe}
 \big(X_1+Y_1+\Xi_0)(t)
 -   e^{-\ld t}\, \If_{\pl, \pe}
 \big(X_2+Y_2+\Xi_0)(t)\big\|_{L^3_{T} H^{s_3}_x}\\
& \leq K_0 \Big\|e^{- \ld t} \|
e^{\ld t'}( e^{-\ld t'}(X_1+Y_1 - X_2 - Y_2))
\|_{L^\frac{3}{2}_{t'}([0, t]; L^2_x)}\Big\|_{L^3_{T}}\\
& \le C(T)\ld^{-\ta}
K_0  \Big( \|e^{-\ld t}(X_1 - X_2)\|_{L^\infty_T H^{s_1}_x}
+ \|e^{-\ld t}(Y_1 - Y_2)\|_{L^\infty_T H^{s_2}_x}\Big)
 \notag 
\end{align*}

\noi
for some $\ta > 0$.
The other terms can be  estimated in a similar manner
and thus we obtain
\begin{align}
\|  e^{-\ld t} \III_3 \|_{L^3_{T} H^{s_3}_x}  
\le C(T) \ld^{-\ta}
K_0  \Big(R^3  \|\dl Z_N\|_{\Zc_\ld^{s_1, s_2, s_3}(T)} +K_0^4 \Big)
\label{ES10}
\end{align}

\noi
for any 
$\dl Z_N  \in B_1$.

Hence, putting 
\eqref{ES1}, \eqref{ES2}, 
\eqref{ES4}, 
\eqref{ES7}, 
\eqref{ES9}, and \eqref{ES10} together, we obtain
\begin{align}
\begin{split}
\|\vec \Psi(\dl Z_N) \|_{\Zc_\ld^{s_1, s_2, s_3}(T)}
& \leq C(T, K_0, R) \ld^{-\ta}
  \|\dl Z_N\|_{\Zc_\ld^{s_1, s_2, s_3}(T)}\\
& \quad
  + C(T, K_0, R)   ( \kk +    N^{-\dl})
\end{split}
\label{ES11}
\end{align}

\noi
\noi
for any 
$\dl Z_N  \in B_1$, where $\vec \Psi$ is as in \eqref{ES0}.
By a similar computation, we also obtain the difference estimate:
\begin{align}
\begin{split}
\|\vec \Psi(\dl Z_N^{(1)}) - \vec \Psi(\dl Z_N^{(2)}) \|_{\Zc_\ld^{s_1, s_2, s_3}(T)}
& \leq C(T, K_0, R) \ld^{-\ta}
  \|\dl Z_N^{(1)} - \dl Z_N^{(2)}\|_{\Zc_\ld^{s_1, s_2, s_3}(T)}
\end{split}
\label{ES12}
\end{align}

\noi
for any 
$\dl Z_N^{(1)}, \dl Z_N^{(2)}  \in B_1$.
We now introduce small $r = r(T, \ld) > 0$ such that, in view of~\eqref{ZL1}, 
we have 
\begin{align}
 \| \dl Z_N\|_{Z^{s_1, s_2, s_3}(T)}
\le e^{\ld T} \|\dl Z_N \|_{\Zc_\ld^{s_1, s_2, s_3}(T)}
\le 
e^{\ld T} r  \le 1
\label{ES13}
\end{align}

\noi
for any $\dl Z_N \in B_r^\ld$, 
where
$B_r^\ld\subset  \Zc_\ld^{s_1, s_2, s_3}(T)$
is  the closed ball 
of radius $r$ 
(with respect to the  $\Zc_\ld^{s_1, s_2, s_3}(T)$-norm) centered at the origin.
From \eqref{ES13}, 
we see that both \eqref{ES11} and \eqref{ES12} hold
on $B_r^\ld$.
Therefore, by choosing large $\ld = \ld(T, K_0, R) \gg1 $, 
 small $\kk = \kk(T, K_0, R) >0 $, 
 and large
$N_0 = N_0(T, K_0, R) \in \N $, 
we conclude that 
$\vec \Psi$ is a contraction on  $B_r^\ld$
for any $N \ge N_0$.
Hence, there exists a unique solution  $\dl Z_N \in B_r^\ld$
to the fixed point problem $\dl Z_N = \vec \Psi(\dl Z_N)$.
We need to check that 
by setting $Z = \dl Z_N + Z_N$, 
$Z$ satisfies the Duhamel formulation
\eqref{SNLW8} of the full system \eqref{SNLW6}
with the zero initial data
and the enhanced data set $\Xi = \Xi(\vec u_0, \o_2)$.
From 
\eqref{SNLW12}
and \eqref{SNLW10}, we have 
\begin{align*}
Z & = \dl Z_N + Z_N
= \vec \Psi (\dl Z_N) + \vec \Phi_N(Z_N)\\
& = \vec \Phi(\dl Z_N + Z_N)
= \vec \Phi(Z), 
\end{align*}

\noi
where
$\vec \Phi_N = 
(\Phi_{1, N}, \Phi_{2, N}, \Phi_{3, N})$.
This shows that $Z$ indeed satisfies  the Duhamel formulation~\eqref{SNLW8} 
with the zero initial data and the enhanced data set $\Xi = \Xi(\vec u_0, \o_2)$.
Lastly, we point out that 
from \eqref{bd5} and 
\eqref{bd6}, we have 
$K_0 = K+1$ and 
$R = C_0 + 1$
and thus the parameters $\ld$, $\kk$, and $N_0$
depend on $T$, $K$, and $C_0$.

As for the second claim in this proposition, 
we write $Z_N = Z - (Z-Z_N)$
and study the system for $\dl Z_N = Z - Z_N$:
\begin{align*}
\dl  Z_N   = \vec \Psi^N(\dl Z_N)
\end{align*}

\noi
where
$\vec \Psi^N = (\Psi_1^N, \Psi_2^N, \Psi_3^N)$
and 
 $\Psi_j^N$, $j = 1, 2, 3$, is given by 
\begin{align*}
\Psi_j^N& (\dl X_N, \dl Y_N, \dl \Res_N)\\
& = 
 \Phi_{j}(X, Y, \Res)
- 
\Phi_{j, N}(X - \dl X_N, Y -  \dl Y_N , \Res -  \dl \Res_N) .
\end{align*}

\noi
Here, we view
$Z = (X, Y, Z)$, $\Xi_N$, and $\Xi$
as  given source terms.
By a slight modification of the computation presented above, 
we obtain
\begin{align}
\begin{split}
\|\vec \Psi^N(\dl Z_N) \|_{\Zc_\ld^{s_1, s_2, s_3}(T)}
& \leq C(T, K_0, R) \ld^{-\ta}
  \|\dl Z_N\|_{\Zc_\ld^{s_1, s_2, s_3}(T)}\\
& \quad
  + C(T, K_0, R)   ( \kk +    N^{-\dl})
\end{split}
\label{ES14}
\end{align}

\noi
and 
\begin{align*}
\|\vec \Psi^N(\dl Z_N^{(1)}) - \vec \Psi^N(\dl Z_N^{(2)}) \|_{\Zc_\ld^{s_1, s_2, s_3}(T)}
& \leq C(T, K_0, R) \ld^{-\ta}
  \|\dl Z_N^{(1)} - \dl Z_N^{(2)}\|_{\Zc_\ld^{s_1, s_2, s_3}(T)}
\end{align*}

\noi
for any 
$\dl Z_N, \dl Z_N^{(1)}, \dl Z_N^{(2)}  \in B_1$.
This shows that there exists
a solution 
\[Z_N = Z - \dl Z_N
= \Phi(Z) - \vec \Psi^N (\dl Z_N) 
= \vec \Phi_N(Z_N)\]

\noi
to the truncated system \eqref{SNLW9} on $[0, T]$.
Furthermore, from \eqref{ES14} with $\ld = \ld(T, K_0, R) \gg1 $, 
we have
\begin{align*}
\|Z - Z_N  \|_{Z^{s_1, s_2, s_3}(T)}
& \le e^{\ld T}\|\vec \Psi^N(\dl Z_N) \|_{\Zc_\ld^{s_1, s_2, s_3}(T)}\\
& \le 
   C(T, K_0, R) e^{\ld T}  ( \kk +    N^{-\dl})
   \too 0, 
\end{align*}

\noi
as $N \to \infty$ and $\kk \to 0$.  This proves \eqref{lv3}.
This concludes the proof of Proposition \ref{PROP:LWPv}.
\end{proof}

Next, 
we prove that the solution $(X_N, Y_N, \Res_N)$ to the 
truncated system \eqref{SNLW9} has a uniform bound with a large probability.
The proof is based on 
the invariance of the truncated Gibbs measure $\rhoo_N$
under the truncated hyperbolic $\Phi^3_3$-model \eqref{SNLW3a} (Lemma \ref{LEM:GWP4})
and a discrete Gronwall argument.

\begin{proposition} \label{PROP:tail2}
Let $T > 0$.
Then, given  any $\delta > 0$, there exists $C_0 = C_0(T, \delta) \gg 1$ such that
\begin{align}
\rhoo_N \otimes \PP_2
\Big(  \|(X_N,Y_N,\Res_N)\|_{Z^{s_1, s_2, s_3}(T)} > C_0 \Big) < \delta, 
\label{F0}
\end{align}

\noi
uniformly in  $N \in \N$,
where $(X_N, Y_N, \Res_N)$ is the solution to the truncated system \eqref{SNLW9} on $[0,T]$
with the truncated enhanced data set $\Xi_N(\vec u_0, \o_2)$ in \eqref{data3x}.
\end{proposition}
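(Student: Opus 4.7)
The plan is to combine the invariance of $\rhoo_N \otimes \PP_2$ under the extended flow $\ft\Phi^N$ (Lemma \ref{LEM:GWP4}) with the uniform exponential tail bound of Proposition \ref{PROP:exptail2}, and to iterate the local well-posedness theorem (Theorem \ref{THM:1}) on short subintervals via a discrete Gronwall argument. Given $T > 0$ and $\dl > 0$, I fix a time step $T_0 \in (0, 1]$ (to be chosen) and partition $[0, T]$ into $J = \lceil T / T_0 \rceil$ subintervals $I_k = [kT_0, (k+1)T_0]$, $k = 0, 1, \ldots, J-1$. Setting $\vec u_N^{(k)} := \vec u_N(kT_0) = \Phi^N(kT_0)(\vec u_0, \o_2)$ and $\o_2^{(k)} := \tau_{kT_0}(\o_2)$, invariance guarantees that $(\vec u_N^{(k)}, \o_2^{(k)})$ has the same $\rhoo_N \otimes \PP_2$-law as $(\vec u_0, \o_2)$ for every $k$. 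Applying Proposition \ref{PROP:exptail2} to the restarted truncated enhanced data set $\Xi_N^{(k)} := \Xi_N(\vec u_N^{(k)}, \o_2^{(k)})$ and performing a union bound over $k$ produces a good event $\O^\star$ of $\rhoo_N \otimes \PP_2$-measure at least $1 - \dl/2$ on which $\max_{0 \le k < J}\|\Xi_N^{(k)}\|_{\Xc^\eps_{T_0}} \le K_0$, provided $K_0 = K_0(T, T_0, \dl)$ is chosen sufficiently large.

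On the good event $\O^\star$, I use the ``restart'' decomposition of $u_N$ on each $I_k$, namely
\[
u_N(kT_0 + s) = \<1>(s; \vec u_N^{(k)}, \o_2^{(k)}) + \s \<20>_N(s; \vec u_N^{(k)}, \o_2^{(k)}) + \wt X_N^{(k)}(s) + \wt Y_N^{(k)}(s),
\]
with $(\wt X_N^{(k)}, \wt Y_N^{(k)})|_{s = 0} = (0, 0)$. Since the restarted enhanced data set has norm at most $K_0$ and the restarted initial data vanish, Theorem \ref{THM:1} applied to the restarted version of the system \eqref{SNLW9} yields the uniform-in-$k$ bound $\|(\wt X_N^{(k)}, \wt Y_N^{(k)}, \wt\Res_N^{(k)})\|_{Z^{s_1, s_2, s_3}(T_0)} \le C(K_0)$, provided $T_0 \le T_0(K_0)$ is sufficiently small. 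The original and restart decompositions of the common function $u_N$ differ by an explicit correction $\Delta^{(k)}(s)$ which solves a linear damped wave equation with initial data determined by the endpoint state $(X_N + Y_N)(kT_0)$ together with $(\<20>_N, \dt \<20>_N)(kT_0)$. A discrete Gronwall inequality on the endpoint energy norms, bootstrapping from $k = 0$ (where $X_N|_{I_0} = \wt X_N^{(0)}$ and $Y_N|_{I_0} = \wt Y_N^{(0)}$ by uniqueness), produces uniform-in-$k$ control on both $\Delta^{(k)}$ and $\|(X_N, Y_N, \Res_N)\|_{Z^{s_1, s_2, s_3}(I_k)}$; summing over $k$ then yields $\|(X_N, Y_N, \Res_N)\|_{Z^{s_1, s_2, s_3}(T)} \le C_0(T, \dl)$ on $\O^\star$, which together with $\rhoo_N \otimes \PP_2((\O^\star)^c) \le \dl/2$ proves \eqref{F0}.

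The main technical difficulty is the circular dependence between the parameters: the local well-posedness time in Theorem \ref{THM:1} forces $T_0$ to shrink like a negative power of $K_0$, while the union bound requires $J \cdot C e^{-cK_0^\alpha} \le \dl/2$. Since the exponential in $K_0$ dominates any polynomial factor, this is resolved by first choosing $K_0 = K_0(T, \dl)$ sufficiently large and then $T_0 = T_0(K_0)$ accordingly; once $K_0$ and $T_0$ are fixed, the remaining argument is the straightforward discrete Gronwall iteration advertised in the introduction. A secondary subtlety is that the correction $\Delta^{(k)}$ naturally controls the combined component $X_N + Y_N$ rather than $X_N$ and $Y_N$ separately; this is handled by exploiting the extra regularity of $\Delta^{(k)}$ inherited from the wave semigroup acting on initial data of regularity $\min(s_1, \tfrac12 - \eps)$, and by tracking the paracontrolled and remainder components of the iterative estimate via the same split as in the system \eqref{SNLW9}.
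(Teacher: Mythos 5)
Your proposal takes a genuinely different route from the paper's proof, and the difference matters: your version has a gap that the paper's approach is specifically designed to avoid.

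The paper never restarts the paracontrolled ansatz at intermediate times. Instead, it keeps the global-in-time enhanced data set $\Xi_N(\vec u_0, \o_2)$ throughout, sets $v_N := u_N - \<1>$, and observes that $v_N = \s\<20>_N + X_N + Y_N$ lets one rewrite the system~\eqref{SNLW9} as the system~\eqref{F3}, which is \emph{linear} in $(X_N, Y_N, \Res_N)$ with coefficients $v_N$ and $M(:\!(\pi_N u_N)^2\!:)$. Invariance and Proposition~\ref{PROP:exptail2} are then used only to get uniform $L^p$ moment bounds on these coefficients (the estimates~\eqref{F1}--\eqref{F2}), which feeds a discrete Gronwall argument over subintervals $I_k = [k\tau, (k+1)\tau]$ for the linear system, with $\tau$ fixed once and for all so that $C_3 K^2 \tau^{2/3} L_1 \ll 1$ in~\eqref{F9aa}. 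No gluing of paracontrolled decompositions is required.

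The gap in your argument lies in the treatment of the correction $\Delta^{(k)}$. You write that $\Delta^{(k)}$ ``solves a linear damped wave equation with initial data determined by the endpoint state,'' but this is not the case. Tracing the two decompositions of $u_N(kT_0+s)$, the difference of the $\<1>$-parts is indeed a free wave $-S(s)\vec v_N(kT_0)$, but the difference of the $\<20>_N$-parts is $\s \I\bigl(\<2>_N(\cdot; \vec u_N^{(k)}, \o_2^{(k)})\bigr)(s) - \s\<20>_N(kT_0 + s; \vec u_0, \o_2)$, and since $\<2>_N$ is a Wick \emph{square} of $\<1>_N$, this difference contains a cross term of the schematic form $\I\bigl(\<1>_N \cdot S(\cdot)\pi_N \vec v_N(kT_0)\bigr)$. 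This is a product of two objects at regularity $-\tfrac12-\eps$; it makes sense for fixed $N$ but does not enjoy uniform-in-$N$ bounds without exactly the kind of multilinear random-tensor smoothing that underlies the construction of $\<20>$ and $\<21p>$ in the first place. In other words, $\Delta^{(k)}$ is not a linear object, and absorbing it ``by the same split as in the system~\eqref{SNLW9}'' is precisely the hard gluing problem that makes Bourgain's invariant measure argument fail to apply directly in the paracontrolled setting (as the paper emphasizes at the start of Section~\ref{SEC:GWP}). Bringmann's work~\cite{Bring2} does carry out a restart-type argument, but via a substantial large-time stability theory built on shifted measures, not a straightforward Gronwall; your proposal understates this step. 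The paper's linearization trick is exactly what replaces the gluing and makes the Gronwall argument elementary.

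A secondary, smaller issue: even if $\Delta^{(k)}$ were a free wave with initial data of regularity $\min(s_1, \tfrac12 - \eps) < \tfrac12 < s_2$, absorbing it into the $Y_N$-component (which lives at regularity $s_2$) would fail, and absorbing it into $X_N$ requires re-deriving the resonant product $\Res_N = X_N \pe \<1>_N$ with the shifted $X_N$, i.e.\ another gluing of paracontrolled structure; so ``extra regularity from the wave semigroup'' does not resolve the mismatch between the two components as you suggest.
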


\begin{proof}
Let $(u_N, \dt u) = \Phi^N(t)(\vec u_0, \o_2)$ be a global solution to \eqref{SNLW3a} constructed in Lemma \ref{LEM:GWP4}, 
where 
 $\Phi^N(t)(\vec u_0, \o_2)$ is as in \eqref{sol0}.
Then, by the 
 invariance of the truncated Gibbs measure $\rhoo_N$ (Lemma \ref{LEM:GWP4}), 
 we have
 \begin{align}
\int F( \Phi^N(t)(\vec u_0, \o_2))
d(\rhoo_N\otimes \PP_2)(\vec u_0, \o_2)
= \int F(\vec u_0) d\rho_N(\vec u_0)
 \label{F0a}
 \end{align}

\noi
for any bounded continuous function   
 $F: \C^{-100}(\T^3)\times \C^{-100}(\T^3) \to \R$ and  $t \in \R_+$.
By Minkowski's integral inequality, \eqref{F0a}, 
 \eqref{addM}, 
and Proposition~\ref{PROP:exptail2},
we have, for any finite $p \ge 1$,  
\begin{align}
\begin{split}
\bigg\|  \int_0^T & |M(\,:\! (\pi_N u_N)^2\!:\,)(t)| dt 
\bigg\|_{L^p_{\vec u_0, \o_2} (\rhoo_N\otimes \PP_2)} \\
& \le \int_0^T \|M(\,:\! (\pi_N u_0)^2\!:\,)\|_{L^p_{\vec u_0, \o_2} (\rhoo_N\otimes \PP_2)} 
 dt \\
 &\le C(T, p) < \infty, 
\end{split}
\label{F1}
\end{align}

\noi
for any  $0 \le t \le T$ and $p \ge 1$, 
uniformly in $N \in \N$.
By  defining
\[
v_N := u_N - \<1>,
\]

\noi
we see that  $v_N$ satisfies the equation 
\[
(\dt^2 + \dt + 1 - \Dl) v_N
=
\s  \pi_N \big(   :\! (\pi_N u_N)^2 \!:\, \big)
-  M (\,:\! (\pi_N u_N)^2 \!:\,) \pi_N u_N
\]

\noi
with the zero initial data, 
or equivalently
\begin{equation*}
v_N(t) =
\int_0^t e^{- \frac{t-t'}2} \frac{\sin ((t-t') \jbb{\nabla})}{\jbb{\nabla}} \Big(\sigma \pi_N( \,:\! (\pi_N u_N)^2\!:\,) - M(\,:\! (\pi_N u_N)^2\!:\,)  \pi_N u_N\Big)(t') d t'.
\end{equation*}
Thus, we have 
\begin{align*}
\|v_N(t) \|_{W^{-\eps,\infty}_x}
\le \int_0^t \bigg(
& \Big\| \frac{\sin ((t-t') \jbb{\nabla})}{\jbb{\nabla}} \sigma \pi_N( \,:\! (\pi_N u_N)^2\!:\,)(t') \Big\|_{W^{-\eps,\infty}_x} \\
&
+ \Big\|M(\,:\! (\pi_N u_N)^2\!:\,)(t')  \frac{\sin ((t-t') \jbb{\nabla})}{\jbb{\nabla}} \pi_N u_N(t')\Big\|_{W^{-\eps,\infty}_x}
 \bigg)
d t'
\end{align*}

\noi
for any $t>0$.
Then,  by using Minkowski's integral inequality, 
\eqref{F0a}, 
and Proposition \ref{PROP:exptail2} once again,
we have
\begin{align}
\begin{aligned}
\Big\|    \| & v_N(t)\|_{W^{-\eps,\infty}_x} \Big\|_{L^p_{\vec u_0, \o_2} (\rhoo_N\otimes \PP_2)} \\
&\les
\int_0^t  \bigg(\Big\|
 \frac{\sin (\tau \jbb{\nabla})}{\jbb{\nabla}} \pi_N( \,:\! (\pi_N u_0)^2\!:\,) 
 \Big\|_{L^p_{\vec u_0, \o_2} (\rhoo_N\otimes \PP_2; W^{-\eps,\infty}_x)} \\
&\phantom{XXX}
+ \Big\|M(\,:\! (\pi_N u_0)^2 \!:\,) \frac{\sin (\tau \jbb{\nabla})}{\jbb{\nabla}} \pi_N u_0
 \Big\|_{L^p_{\vec u_0, \o_2} (\rhoo_N\otimes \PP_2; W^{-\eps,\infty}_x)} \bigg)
 d \tau\\
&\le C(T, p) < \infty
\end{aligned}
\label{F2}
\end{align}

\noi
for any  $0 \le t \le T$, $p \ge 1$,  and $\eps>0$,
uniformly in $N \in \N$.

We rewrite the system \eqref{SNLW9} as 
\begin{align}
\begin{split}
 (\dt^2 & + \dt  +1 - \Dl) X_N  
 =  2\sigma \pi_N ( v_N \pl \<1>_N )
- M(\,:\! (\pi_N u_N)^2\!:\,)  \<1>_N,\\
 (\dt^2 & + \dt +1  - \Dl) Y_N\\
&  = \sigma \pi_N \Big( v_N \big( X_N+Y_N+\s \<20>_N \big)
+ 2 \big( \Res_N + Y_N\pe \<1>_N + \s \<21p>_N \big) \\
&\quad
+ 2 (X_N+Y_N+\s \<20>_N) \pg \<1>_N \Big) 
- 
M(\,:\! (\pi_N u_N)^2\!:\,)  (X_N+Y_N+\s \<20>_N),\\
\Res_N 
&= 2 \sigma \wt \If_{\pl}^{(1), N}
 \big( X_N+Y_N+\s \<20>_N \big)\pe \<1>_N
+2  \s \wt \If_{\pl, \pe}^N  \big( X_N+Y_N+\s \<20>_N\big)\\
& \hphantom{X}
 - \int_0^t M(\,:\! (\pi_N u_N)^2\!:\,)(t') \Ab_N(t, t') dt', 
\end{split}
\label{F3}
\end{align}

\noi
where we used \eqref{u3} 
(with the frequency truncations
and extra $\s$'s in appropriate places) and $v_N = \s \<20> + X_N + Y_N$
so that the right-hand side is linear in $(X_N, Y_N, \Res_N)$.

Let $\dl > 0$. In view of Proposition \ref{PROP:exptail2}, 
we choose  $K = K(T, \dl)\gg 1$ such that 
\begin{align}
\rhoo_N \otimes \PP_2
\Big(\|\Xi_N (\vec u_0, \o_2) \|_{\mathcal{X}^{\eps}_T} 
>  K \Big) < \frac \dl 3, 
\label{F3x}
\end{align}

\noi
uniformly in $N \in \N$.
We also define $L(t)$ by 
\begin{align}
L(t) = 1 +  \| v_N (t) \|_{W^{-\eps, \infty}_x}
+ |M(\,:\! (\pi_N u_N)^2\!:\,)(t)|.
\label{F3c}
\end{align}

\noi
In view of \eqref{F1} and \eqref{F2}, 
we choose $L_1 = L_1(T, \dl) \gg 1$ such that 
\begin{align}
\rhoo_N \otimes \PP_2
\Big(\| L\|_{L^3_T} 
>  L_1 \Big) < \frac \dl 3.
\label{F3a}
\end{align}

\noi
In the following, 
we work on the set 
\begin{align}
\|\Xi_N (\vec u_0, \o_2) \|_{\mathcal{X}^{\eps}_T} 
\le   K  
\qquad \text{and}\qquad 
\| L\|_{L^3_T} 
\le   L_1 .
\label{F3b}
\end{align}

By applying 
Lemma \ref{LEM:Str} with \eqref{M0}
and  Lemma~\ref{LEM:para}
 to \eqref{F3}
 and using \eqref{data3}, \eqref{F3c}, 
 and 
 \eqref{F3b}, 
we have  
 \begin{align}
\begin{split}
\|  X_N \|_{X^{s_1}(T)}
& \les
\int_0^T \Big( \| v_N  \pl \<1>_N (t)\|_{H^{s_1-1}_x}
+ |M(\,:\! (\pi_N u_N)^2\!:\,)(t)| \cdot \| \<1>_N(t) \|_{H^{s_1-1}_x} \Big)
dt \\
&\les
K 
\int_0^T L(t) dt.
\end{split}
\label{F4}
\end{align}

Since $s_2 < 1$, we can choose sufficiently small $\eps > 0$ 
such that Lemma \ref{LEM:gko}\,(ii) yields
\begin{align*}
 \|v_N(X_N+Y_N+\<20>_N)\|_{H^{s_2-1}_x}
 &\les \|v_N\|_{W^{-\eps,\infty}_x} \| X_N+Y_N+\<20>_N \|_{H^{\eps}_x}\\
 & \les \|v_N\|_{W^{-\eps,\infty}_x}
\Big(\|X_N \|_{H^{s_1}_x} + \| Y_N \|_{H^{s_2}_x} + \|\Xi_N (\vec u_0, \o_2)\|_{\mathcal{X}^{\eps}_T} \Big) .
\end{align*}

\noi
Hence, by \eqref{F3},  Lemma \ref{LEM:Str} with \eqref{M0}, 
 Lemma \ref{LEM:para} (see also \eqref{M4}),
 \eqref{F3c}, and \eqref{F3b}, 
we have
\begin{align}
\| Y_N \|_{Y^{s_2}(T)}
&\les
\int_0^T \Big( \|v_N(t)(X_N(t)+Y_N(t)+\<20>_N(t))\|_{H^{s_2-1}_x}
\notag\\
&\hphantom{XXX}
+ \| \Res_N (t) + Y_N (t)\pe \<1>_N (t) + \s \<21p>_N (t) \|_{H^{s_2-1}_x} \notag\\
&\hphantom{XXX}
+ \| (X_N (t)+Y_N (t)+\s \<20>_N (t)) \pg \<1>_N (t) \|_{H^{s_2-1}_x} \notag\\
&\hphantom{XXX}
+
|M(\,:\! (\pi_N u_N)^2\!:\,)(t)| \cdot \| X_N(t)+Y_N(t)+\s \<20>_N(t) \|_{H^{s_2-1}_x}
\Big)
dt \label{F7}\\
&\le
C(T) K^2
+ K
\int_0^T L(t)
\Big( 1+\| X_N \|_{X^{s_1}(T)} + \| Y_N \|_{Y^{s_2}(t)} \Big) dt \notag\\
&\quad
+ \int_0^T \| \Res_N(t) \|_{H^{s_3}_x} dt.
\notag
\end{align}

Fix $0 < \tau < 1$ and 
set 
\[ L^q_{I_k} = L^q(I_k), \quad \text{where} \quad I_k = [k \tau, (k+1)\tau].\]

\noi
By a computation analogous to that in \eqref{M5}, 
we obtain 
\begin{align}
\begin{split}
\| \Res_N \|_{L^3_{I_k} H^{s_3}_x} 
&\les
\| \wt \If_{\pl}^{(1), N} \big( X_N+Y_N+\s \<20>_N \big)\pe \<1>_N \|_{L^3_{I_k} H^{s_3}_x}
\\
&\hphantom{X}
+ \| \wt \If_{\pl, \pe}^N  \big( X_N+Y_N+\s \<20>_N\big) \|_{L^3_{I_k} H^{s_3}_x}
\\
& \hphantom{X}
+ \int_0^T |M(\,:\! (\pi_N u_N)^2\!:\,)(t')| \cdot \| \Ab_N (t, t')\|_{L^3_t([t', T]; H^{s_3}_x)} dt' \\
&\le
C(T) K^2  \Big(K +  \| X_N \|_{X^{s_1}(T)} + \| Y_N \|_{Y^{s_2}((k+1)\tau)} \Big)
+ K
\int_0^{T} L(t) dt .
\end{split}
\label{F8}
\end{align}

\noi
Given $0 <  t \le T$, 
let $k_*(t)$ be the largest integer such that $k_*(t) \tau \le t$.
Then, from 
\eqref{F7} and \eqref{F8}, 
we have
\begin{align}
\begin{aligned}
\| & Y_N \|_{Y^{s_2}(t)}
\le \| Y_N \|_{Y^{s_2}((k_*(t) + 1)\tau)} 
 \\
&\le C(T)K^2 
+ C_1(T)K^3  
\sum_{k = 0}^{k_*(t)}
\tau^{\frac 23}\Big( 1+ \| L(t) \|_{L^3_{I_k}}\Big)
\Big( 1+\| X_N(t) \|_{X^{s_1}(T)} \Big) \\
& \quad + C_2K
T\sum_{k = 0}^{k_*(t)}
\tau^{\frac 13}\| L(t) \|_{L^3_{I_k}}
+ C_3 K^2
\sum_{k = 0}^{k_*(t)}
\tau^{\frac 23}  \Big(1 + \| L(t)  \|_{L^3_{I_k}}\Big)
 \| Y_N \|_{Y^{s_2}((k+1)\tau)} .
\end{aligned}
\label{F9}
\end{align}

\noi
Now, choose $\tau = \tau(K, L_1) = \tau (T, \dl)>0$ sufficiently small
such that 
\begin{align}
 C_3 K^2 \tau^\frac{2}{3} L_1 \ll1.
\label{F9aa}
 \end{align}

\noi
In view of \eqref{F1} and \eqref{F2}, 
and define $L_2 = L_2(T, \dl) \gg 1$ such that
\begin{align}
\rhoo_N \otimes \PP_2
\bigg(\sum_{k = 0}^{k_*(T)}
\tau^{\frac 13}\Big( 1 + \| L(t) \|_{L^3_{I_k}} \Big)> L_2
 \bigg) < \frac \dl 3.
\label{F9a}
\end{align}

\noi
In the following, 
we work on the set 
\begin{align}
\sum_{k = 0}^{k_*(T)}
\tau^{\frac 13}\Big( 1+ \| L(t) \|_{L^3_{I_k}}\Big)  \le L_2.
\label{F9b}
\end{align}

\noi
It follows from 
\eqref{F9} with \eqref{F3b}, \eqref{F4},
\eqref{F9aa}, 
and \eqref{F9b} that 
\begin{align*}
 \| Y_N \|_{Y^{s_2}((k_*(t) + 1)\tau)} 
&\le C(T)K^4  L_1 L_2  
+ C_4 K^2
\sum_{k = 0}^{k_*(t)-1}
\tau^{\frac 23}  \| L(t)  \|_{L^3_{I_k}}
 \| Y_N \|_{Y^{s_2}((k+1)\tau)} .
\end{align*}

\noi
By applying the discrete Gronwall inequality with \eqref{F9b}, we then obtain
\begin{align}
\begin{split}
\|  Y_N \|_{Y^{s_2}(t)}
& \le \| Y_N \|_{Y^{s_2}((k_*(t) + 1)\tau)} \\
& \le C(T)K^4  L_1 L_2  \exp \bigg(C_4 K^2 \sum_{k = 0}^{k_*(t)-1}
\tau^{\frac 23}  \| L(t)  \|_{L^3_{I_k}}
\bigg)\\
& \le C(T)K^4  L_1 L_2  \exp \big(C_4 K^2 L_2
\big).
\end{split}
\label{F9d}
\end{align}

\noi
Therefore, from \eqref{F4} and \eqref{F9d}, we have 
\begin{align*}
\| X_N \|_{X^{s_1}(T)} + \| Y_N \|_{Y^{s_2}(T)} 
\le C(T) K L_1 + C(T)K^4  L_1 L_2  \exp \big(C_4 K^2 L_2\big).
\end{align*}

\noi
Together with \eqref{F8}, we then obtain 
\begin{align*}
\|(X_N,Y_N,\Res_N)\|_{Z^{s_1, s_2, s_3}(T)} 
\le C_5(T, K, L_1, L_2)
\end{align*}

\noi
under the conditions \eqref{F3b} and \eqref{F9b}.
Hence, by choosing  $C_0 = C_0(T, \dl) > 0$
in \eqref{F0} such that 
$C_0 >  C_5(T, K, L_1, L_2)$, 
we have 
\begin{align}
\begin{split}
\rhoo_N \otimes \PP_2 \bigg(&  \big\{ \|(X_N,Y_N,\Res_N)\|_{Z^{s_1, s_2, s_3}(T)} > C_0\big\}
\cap \big\{\|\Xi_N (\vec u_0, \o_2) \|_{\mathcal{X}^{\eps}_T} 
\le   K\big\}\\
& \cap \big\{\| L\|_{L^3_T}  \le  L_1\big\}
\cap \Big\{\sum_{k = 0}^{k_*(T)}
\tau^{\frac 13}\| L(t) \|_{L^3_{I_k}} \le  L_2\Big\}
 \bigg) 
= 0. 
\end{split}
\label{F11}
\end{align}

\noi
Finally, the bound \eqref{F0} follows from 
\eqref{F3x}, \eqref{F3a}
\eqref{F9a}, and \eqref{F11}.
\end{proof}

Given a map $S$ from a measure space $(X, \mu)$ to a space $Y$, 
we use $S_\#\mu$ to denote the image measure (the pushforward) of $\mu$ under $S$.
Fix $T> 0$ and we set 
\begin{align}
\nu_N = (\Xi_N)_\# (\rhoo_N \otimes \PP_2)
\qquad \text{and}\qquad
 \nu = \Xi_\# (\rhoo \otimes \PP_2), 
\label{OP1}
\end{align}

\noi
where we view $\Xi_N = \Xi_N(\vec u_0, \o_2)$  in \eqref{data3x}
and 
$\Xi = \Xi(\vec u_0, \o_2)$  in \eqref{enh1}
as  maps from 
$\H^{-\frac12 - \eps}(\T^3) \times \O_2$
to $\Xc^\eps_T$ defined in \eqref{data3}. 
In view of the weak convergence of $\rhoo_N \otimes \PP_2$
to $\rhoo \otimes \PP_2$ (Theorem \ref{THM:Gibbs}\,(i))
and the $\rhoo\otimes \PP_2$-almost sure convergence of 
 $\Xi_N(\vec u_0, \o_2)$ to  $\Xi(\vec u_0, \o_2)$ (Corollary \ref{COR:lim}), 
 we see that $\nu_N$ converges weakly to $\nu$.
Indeed, given a bounded continuous function $F: \mathcal{X}^{\eps}_T \to \R$, 
by the dominated convergence theorem, we have
\begin{align*}
\bigg|\int & F(\Xi) d\nu_N  - \int F(\Xi) d\nu\bigg| \\
& = \bigg|\int F(\Xi_N(\vec u_0, \o_2)) d(\rhoo_N\otimes \PP_2) 
- \int F(\Xi(\vec u_0, \o_2)) d(\rhoo\otimes \PP_2) \bigg| \\
& \le \|F\|_{L^\infty}
\bigg|\int 1 \, d\big( (\rhoo_N\otimes \PP_2) - (\rhoo\otimes \PP_2) \big)\bigg|\\
& \quad + \bigg| \int \big( F(\Xi_N(\vec u_0, \o_2)) - F(\Xi(\vec u_0, \o_2)) \big)d(\rhoo\otimes \PP_2) \bigg| \\
& \too 0, 
\end{align*}

\noi
as $N\to \infty$.

Next, we prove that $\nu_N = (\Xi_N)_\# (\rhoo_N \otimes \PP_2)$
converges to 
$\nu = \Xi_\# (\rhoo \otimes \PP_2)$ in the Wasserstein-1 metric.
We view this problem
as 
of Kantorovich's mass optimal transport problem
and study the dual problem under the Kantorovich duality, 
using the Bou\'e-Dupuis variational formula.
This proposition plays a crucial role
in the proof of almost sure global well-posedness
and invariance of the Gibbs measure $\rhoo$
presented at the end of this section.

\begin{proposition} \label{PROP:plan}

Fix $T>0$.
Then,
there exists a sequence $\{\plan_N \}_{N \in \N}$
of probability measures on 
$\mathcal{X}^{\eps}_T \times \mathcal{X}^{\eps}_T$ 
with the first and second marginals $\nu$ and $\nu_N$ on $ \Xc^\eps_T$, respectively, 
namely, 
\begin{align}
  \int_{\Xi^2 \in \Xc^\eps_T} d\plan_N(\Xi^1, \Xi^2) =d \nu(\Xi^1) 
\qquad \text{and}\qquad
 \int_{\Xi^1 \in \Xc^\eps_T} d\plan_N(\Xi^1, \Xi^2) =d \nu_N(\Xi^2),  
 \label{OP2}
\end{align}
such that 
\begin{align*}
\int_{\mathcal{X}^{\eps}_T\times \mathcal{X}^{\eps}_T} \min(\| \Xi^1 - \Xi^2\|_{\mathcal{X}^{\eps}_T}, 1) d \plan_{N}(\Xi^1, \Xi^2) \too 0, 
\end{align*}

\noi
as $N \to \infty$.
Namely, the total transportation cost associated to $\plan_N$
tends to $0$ as $N \to \infty$.

\end{proposition}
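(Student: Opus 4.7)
The plan is to reduce the construction of the couplings $\plan_N$ to proving weak convergence $\nu_N \to \nu$ on the separable Banach space $\Xc^\eps_T$. Indeed, the cost $c(\Xi^1,\Xi^2) := \min(\|\Xi^1 - \Xi^2\|_{\Xc^\eps_T},1)$ is itself a bounded metric generating the norm topology, and the associated Wasserstein distance metrizes weak convergence on this Polish space. Equivalently, once weak convergence of $\nu_N$ to $\nu$ is established, Skorokhod's representation theorem supplies random variables $Z \sim \nu$ and $Z_N \sim \nu_N$ on a common probability space with $Z_N \to Z$ almost surely; the law $\plan_N := \Law(Z,Z_N)$ then has first marginal $\nu$ and second marginal $\nu_N$, and $\int c\,d\plan_N = \E[c(Z,Z_N)] \to 0$ by bounded convergence.

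It therefore suffices to show that for any bounded Lipschitz test function $F : \Xc^\eps_T \to \R$ one has $\int F\,d\nu_N - \int F\,d\nu \to 0$ as $N \to \infty$. Introducing an auxiliary cutoff $M \in \N$ (to be sent to $\infty$ after $N$) and restricting to $N \ge M$, I would decompose
\begin{align*}
\int F\,d\nu_N - \int F\,d\nu
&= \int \bigl[F(\Xi_N) - F(\Xi_M)\bigr]\,d(\rhoo_N \otimes \PP_2) \\
&\quad + \int F(\Xi_M)\,d\bigl((\rhoo_N - \rhoo) \otimes \PP_2\bigr) \\
&\quad + \int \bigl[F(\Xi_M) - F(\Xi)\bigr]\,d(\rhoo \otimes \PP_2) \\
&=: I_1(N,M) + I_2(N,M) + I_3(M).
\end{align*}
The term $I_3(M)$ vanishes as $M \to \infty$ by the boundedness and Lipschitz property of $F$, the almost sure convergence $\Xi_M \to \Xi$ under $\rhoo \otimes \PP_2$ furnished by Corollary \ref{COR:lim}, and the bounded convergence theorem. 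For $I_1(N,M)$, I would dominate $|F(\Xi_N) - F(\Xi_M)|$ by the minimum of $\Lip(F)\|\Xi_N - \Xi_M\|_{\Xc^\eps_T}$ and $2\|F\|_\infty$, and invoke the refined uniform exponential tail estimate \eqref{exp02} of Proposition \ref{PROP:exptail2} together with Chebyshev's inequality; this yields $|I_1(N,M)| \les M^{-\be'}$ for some $\be' > 0$, uniformly in $N \ge M$.

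The crucial step is the control of $I_2(N,M)$, where $M$ is held fixed while $N \to \infty$. By Remark \ref{REM:decay2}, the truncated enhanced data $\Xi_M(\vec u_0,\o_2)$ depends on $\vec u_0$ only through the finite-dimensional projection $\pi_M \vec u_0$; hence, after integrating out $\o_2$ via Fubini, the functional $\vec u_0 \mapsto \int_{\O_2} F(\Xi_M(\vec u_0,\o_2))\,d\PP_2(\o_2)$ is bounded and continuous as a function on $\C^{-100}(\T^3)$. The weak convergence $\rhoo_N \to \rhoo$ furnished by Theorem \ref{THM:Gibbs}\,(i) and Remark \ref{REM:Gibbs} then gives $I_2(N,M) \to 0$ as $N \to \infty$ for each fixed $M$. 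Sending first $N \to \infty$ (which annihilates $I_2$) and then $M \to \infty$ (which annihilates $I_1$ and $I_3$) completes the proof of weak convergence, and thus, via the reduction in the first paragraph, of the proposition. The principal technical obstacle lies in the quantitative decay in $N_2$ in the tail bound \eqref{exp02} used for $I_1$: as signalled in Remark \ref{REM:freq1}, that decay ultimately exploits an orthogonality property of the cube frequency projector analogous to the vanishing of the term $\1_2$ in \eqref{KK0a} within the proof of Proposition \ref{PROP:uniq}, and it is precisely this ingredient that would break down if $\pi_N^{\text{cube}}$ were replaced by $\pi_N^{\text{smooth}}$.
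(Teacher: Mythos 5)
Your proposal is correct and takes a genuinely different, and arguably simpler, route from the paper's argument. The paper proves Proposition \ref{PROP:plan} via the Kantorovich--Rubinstein duality combined with the Bou\'e--Dupuis variational formula, essentially re-running the estimates underlying Proposition \ref{PROP:uniq} (see \eqref{Ga6}--\eqref{Ga10}). You instead observe that since the cost $\min(\|\cdot\|_{\Xc^\eps_T},1)$ is a bounded metric, it suffices to prove weak convergence $\nu_N \to \nu$ on the Polish space $\Xc^\eps_T$ and then extract the couplings via Skorokhod (or simply note that this Wasserstein distance metrizes weak convergence); the weak convergence is then established by the three-term decomposition $I_1 + I_2 + I_3$ with an auxiliary cutoff $M$, using \eqref{exp02} for $I_1$, the weak convergence $\rhoo_N \to \rhoo$ for $I_2$ (exploiting that $\Xi_M$ is a finite-dimensional, hence continuous, function of $\vec u_0$ by Remark \ref{REM:decay2}), and Corollary \ref{COR:lim} for $I_3$. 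This is more elementary and does not re-invoke the variational formula. In fact, the paper acknowledges in the remark following Proposition \ref{PROP:plan} that such a shortcut is available, but opts for the dual/variational argument on the grounds that it is of independent interest; note, however, that the naive coupling $(\Xi,\Xi_N)_\#(\rhoo\otimes\PP_2)$ suggested there does not literally have second marginal $\nu_N = (\Xi_N)_\#(\rhoo_N\otimes\PP_2)$, so the Skorokhod step in your argument is doing real work.

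Two small remarks. First, the continuity of $\vec u_0 \mapsto \int_{\O_2} F(\Xi_M(\vec u_0,\o_2))\,d\PP_2(\o_2)$ on $\C^{-100}(\T^3)^2$, which you assert, deserves a line or two of justification: for fixed $M$ and $\PP_2$-a.e.\ $\o_2$, the map $\vec u_0 \mapsto \Xi_M(\vec u_0,\o_2)$ factors through the finite-dimensional linear-continuous projection $\pi_M\vec u_0$ and is then a polynomial (at most bilinear in $\pi_M\vec u_0$) with values in $\Xc^\eps_T$, with the paracontrolled-operator component continuous by the deterministic estimates \eqref{et5}--\eqref{et7}; dominated convergence then yields continuity of the $\o_2$-average. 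Second, your closing comment misattributes the role of the cube frequency projector: the tail estimate \eqref{exp02} used for $I_1$ does not in fact rely on the orthogonality $(\Id - \pi_N)\pi_N = 0$ (its proof, in Part 2 of Proposition \ref{PROP:exptail2}, gains the power $N_2^{-\dl}$ by a deterministic loss of derivative on the factor $\pi_{N_1}-\pi_{N_2}$). The dependence of your argument on the cube projector enters instead through the weak convergence $\rhoo_N \to \rhoo$ used in $I_2$ (and indirectly in $I_3$ via Corollary \ref{COR:lim}), which is precisely what is established in Proposition \ref{PROP:uniq} using that orthogonality.
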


\begin{remark}\rm

In view of the weak convergence of the truncated Gibbs measure $\rhoo_N$
to $\rhoo$ (Theorem \ref{THM:Gibbs})
and the almost sure convergence of the truncated enhanced data set $\Xi_N$
to $\Xi$ with respect to  $\rhoo \otimes \PP_2$ (Corollary \ref{COR:lim}), 
it suffices to define 
$\plan_N = (\Xi,\Xi_N)_\# (\rhoo \otimes \PP_2).$
In the following, however, we present the full proof of Proposition \ref{PROP:plan}, 
using the Kantorovich duality and the variational approach
since we believe that such an argument is of general interest.

\end{remark}

\begin{proof}[Proof of Proposition \ref{PROP:plan}]

Define a cost function $c(\Xi^1, \Xi^2)$ on $\mathcal{X}^{\eps}_T\times \mathcal{X}^{\eps}_T$
by setting
\begin{align*}
c(\Xi^1, \Xi^2) = \min(\| \Xi^1 - \Xi^2 \|_{\mathcal{X}^{\eps}_T}, 1).
\end{align*}

\noi
Then, define the Lipschitz norm
for a function $F: \mathcal{X}^{\eps}_T \to \R$ by 
\[
\|F\|_{\Lip} = 
\sup_{\substack{\Xi^1, \Xi^2 \in \mathcal{X}^{\eps}_T\\\Xi^1 \ne \Xi^2}} \frac{|F(\Xi^1) - F(\Xi^2)|}
{c(\Xi^1, \Xi^2)}.
\]

\noi
Note that 
 $\| F \|_{\Lip} \le 1$ implies that
$F$ is bounded and Lipschitz continuous.
From the Kantorovich duality (the Kantorovich-Rubinstein theorem 
\cite[Theorem 1.14]{Villani}),
we have 
\begin{align}
\begin{aligned}
&\inf_{\plan \in \Gamma(\nu,\nu_N)}
\int_{\mathcal{X}^{\eps}_T\times \mathcal{X}^{\eps}_T}
c( \Xi^1, \Xi^2 )d \plan (\Xi^1, \Xi^2)
\\
& \quad 
= \sup_{\|F\|_{\Lip} \le 1}
\bigg( \int F(\Xi) d \nu_N(\Xi) - \int F(\Xi) d \nu(\Xi) \bigg),
\end{aligned}
\label{Ga0}
\end{align}
where $\Gamma(\nu,\nu_N)$ is the set of probability measures
on $\mathcal{X}^{\eps}_T\times \mathcal{X}^{\eps}_T$
with the first and second marginals $\nu$ and $\nu_N$ on $\mathcal{X}^{\eps}_T$, respectively.

For a function $F$ with $\|F\|_{\Lip} \le 1$,
let 
$$ G := F - \inf F + 1. $$
Then, we have
\begin{align}
\begin{aligned}
\int F(\Xi) d \nu_N(\Xi) - \int F(\Xi) d \nu(\Xi)
&= \int G(\Xi) d \nu_N(\Xi) - \int G(\Xi) d \nu(\Xi).
\end{aligned}
\label{Ga1}
\end{align}

\noi
Note that $\|G\|_{\Lip} = \|F\|_{\Lip} \le 1$ and $ 1 \le G \le 2$.
Moreover,
the mean value theorem yields that
\begin{align}
\frac 1e  \le \frac{\log x - \log y}{x-y} \le 1
\label{Ga2}
\end{align}
for any $x,y \in [1,e]$ with $x \neq y$.
Set $\{ a \}_+ = \max (a, 0)$ for any $a \in \R$.
By \eqref{Ga1} and \eqref{Ga2},
we obtain
\begin{align}
\begin{split}
\int & F(\Xi) d \nu_N(\Xi) - \int F(\Xi) d \nu(\Xi)\\
&\les
\bigg\{ -\log \Big( \int G(\Xi) d \nu(\Xi) \Big)
+ \log \Big(\int G(\Xi) d \nu_N(\Xi) \Big) \bigg\}_+
\end{split}
\label{Ga4}
\end{align}
for any $N \in \N$.

Finally, define $H = \log G$.
Then,  from \eqref{Ga2} 
and $1 \le G \le 2$, we have  $\|H\|_{\Lip} \les 1$.
Hence,
it follows from \eqref{Ga0}, \eqref{Ga1},  and \eqref{Ga4} that
\begin{align*}
& \inf_{\plan \in \Gamma(\nu,\nu_N)}
\int_{\mathcal{X}^{\eps}_T\times \mathcal{X}^{\eps}_T} 
c(\Xi^1, \Xi^2)  d \plan (\Xi^1, \Xi^2) \\
& \quad \les \sup_{\substack{0 \le H \le 1 \\ \| H \|_{\Lip} \les 1}}
\bigg\{ -\log\Big(\int \exp(H(\Xi)) d \nu(\Xi)\Big) + \log\Big(\int \exp(H(\Xi)) d \nu_N(\Xi)\Big) \bigg\}_+.
\end{align*}

\noi
Our goal is to show that the right-hand side tends 0 as $N \to \infty.$
Since
$\|H\|_{\Lip} \les 1$,
$H$ is bounded and Lipschitz continuous.
Then, 
by the weak convergence of $\{ \nu_N \}_{N \in \N}$ to $\nu$,
it suffices to show that 
\begin{equation} \label{iv1}
\begin{aligned}
\limsup_{N \to \infty} \sup_{\substack{0 \le H \le 1 \\ \| H \|_{\Lip} \les 1}} \sup_{M \ge N}
\bigg\{
&-\log\Big(\int \exp(H(\Xi)) d \nu_M(\Xi)\Big) \\
&\quad
+ \log\Big(\int \exp(H(\Xi)) d \nu_N(\Xi)\Big) \bigg\}_+
\le 0. 
\end{aligned}
\end{equation}

From \eqref{OP1}, \eqref{rhooN}, 
and \eqref{Ga2} with 
$0 \le H \le 1$,  
we have
\begin{align}
\bigg\{& -\log\Big(\int \exp(H(\Xi)) d \nu_M(\Xi)\Big) + \log\Big(\int \exp(H(\Xi)) d \nu_N(\Xi)\Big)\bigg\}_+ 
\notag \\
&= \bigg\{-\log\Big(\iiint \exp(H(\Xi_M(\vec u_0,  \o_2)))  d \rho_M(u_0) d \mu_0(u_1) d\PP_2(\o_2)\Big) 
\notag \\
&\hphantom{XXX}
+ \log\Big(\iiint \exp(H(\Xi_N(\vec u_0,  \o_2))) d \rho_N(u_0)d \mu_0(u_1) d\PP_2(\o_2)\Big)\bigg\}_+
\notag \\
&\les \bigg\{-\iiint \exp(H(\Xi_M(\vec u_0, \o_2))) d \rho_M(u_0)d \mu_0(u_1) d\PP_2(\o_2)
\notag \\
&\hphantom{XXX}
+ \iiint \exp(H(\Xi_N(\vec u_0, \o_2))) d \rho_N(u_0)d \mu_0(u_1) d\PP_2(\o_2)\bigg\}_+ 
\notag \\
&\les
\iint \bigg[\Big\{ - \int \exp(H(\Xi_M(\vec u_0,\o_2))) d \rho_M(u_0) 
\notag \\
&\hphantom{XXXXXX}
+ \int \exp(H(\Xi_N(\vec u_0, \o_2))) d \rho_N(u_0) \Big\}_+\bigg] d \mu_0(u_1)d\PP_2(\o_2)
\notag \\
&\les \iint \bigg[ \bigg\{ -\log \Big( \int \exp(H(\Xi_M(\vec u_0, \o_2))) d \rho_M(u_0)\Big) 
\notag \\
&\hphantom{XXXXXX}
+ \log\Big(\int \exp(H(\Xi_N(\vec u_0, \o_2))) d \rho_N(u_0) \Big)\bigg\}_+ \bigg] d \mu_0(u_1)d\PP_2(\o_2).
\label{iv1aa}
\end{align}

In the following, we study the integrand of the $(u_1, \o_2)$-integral.
Thus,  we fix $u_1$ and $\o_2$
and write 
 $\Xi_N(\vec u_0, \o_2)
=  \Xi_N( u_0, u_1, \o_2)$ as $\Xi_N(u_0)$
for simplicity of notation.
By the Bou\'e-Dupuis variational formula (Lemma \ref{LEM:var3})
 with the change of variables \eqref{YZ13},
we have
\begin{align}
\begin{aligned}
 - & \log \bigg(
\int \exp(H(\Xi_M(u_0))) d \rho_M(u_0) \bigg)
+ \log \bigg(\int \exp(H(\Xi_N(u_0))) d \rho_N(u_0) \bigg) \\
&= \inf_{\dot \Ups^{M}\in  \mathbb H_a^1}
\E \bigg[ 
- H(\Xi_M(Y+ \Ups^{M}+ \s \ZZ_{M}))
+ \ft R^{\dia}_{M}(Y+ \Ups^{M}+ \s \ZZ_{M})
\\
&\hphantom{XXXXXXX}
+ \frac12 \int_0^1 \| \dot \Ups^{M}(t) \|_{H^1_x}^2dt \bigg]\\
&\phantom{=} - \inf_{\dot \Ups^{N}\in  \mathbb H_a^1}
\E \bigg[ 
- H(\Xi_N(Y+ \Ups^{N}+ \s \ZZ_{N}))
+ \ft R^{\dia}_{N}(Y+ \Ups^{N}+ \s \ZZ_{N})  \\
&\hphantom{XXXXXXX}
+ \frac12 \int_0^1 \| \dot \Ups^{N}(t) \|_{H^1_x}^2dt \bigg] \\
&\quad
+ \log Z_M - \log Z_N,
\end{aligned}
\label{Ga6}
\end{align}
where $\ft R_N^\dia$ is as in \eqref{KZ16}.
Given 
 $\dl>0$,
let $\UUps^N$ be an almost optimizer, namely, 
\begin{align*} 
& \inf_{\dot \Ups^{N}\in  \mathbb H_a^1}
\E \bigg[ 
- H(\Xi_N(Y+ \Ups^{N}+ \s \ZZ_{N}))
+ \ft R^{\dia}_{N}(Y+ \Ups^{N}+ \s \ZZ_{N}) + \frac12 \int_0^1 \| \dot \Ups^{N}(t) \|_{H^1_x}^2dt \bigg] \\
&\quad \ge
\E \bigg[ 
- H(\Xi_N(Y+ \UUps^{N}+ \s \ZZ_{N}))
+ \ft R^{\dia}_{N}(Y+ \UUps^{N}+ \s \ZZ_{N}) 
+ \frac12 \int_0^1 \| \dot \UUps^{N}(t) \|_{H^1_x}^2dt \bigg] -\dl.
\end{align*}

\noi
Then,
by choosing $\Ups^M=\UUps^N$
and the Lipschitz continuity of $H$,
we have
\begin{align}
&\inf_{\dot \Ups^{M}\in  \mathbb H_a^1}
\E \bigg[ 
- H(\Xi_M(Y+ \Ups^{M}+ \s \ZZ_{M}))
+ \ft R^{\dia}_{M}(Y+ \Ups^{M}+ \s \ZZ_{M}) + \frac12 \int_0^1 \| \dot \Ups^{M}(t) \|_{H^1_x}^2dt \bigg]
\notag \\
&\hphantom{XX}
- \inf_{\dot \Ups^{N}\in  \mathbb H_a^1}
\E \bigg[ 
- H(\Xi_N(Y+ \Ups^{N}+ \s \ZZ_{N}))
+ \ft R^{\dia}_{N}(Y+ \Ups^{N}+ \s \ZZ_{N}) + \frac12 \int_0^1 \| \dot \Ups^{N}(t) \|_{H^1_x}^2dt \bigg]
\notag \\
&\hphantom{X}
\le
\dl +\E \Big[ H(\Xi_N(Y+ \UUps^{N}+ \s \ZZ_{N})) - H(\Xi_M(Y+ \UUps^{N}+ \s \ZZ_{M}))
\notag  \\
&\phantom{XXXXX}
+ \ft R^{\dia}_{M}(Y+ \UUps^{N}+ \s \ZZ_{M}) - \ft R^{\dia}_{N}(Y+ \UUps^{N}+ \s \ZZ_{N}) \Big]
\notag \\
&\hphantom{X}
\le
\dl 
+ \| H \|_{\Lip} \cdot \E \Big[ \| \Xi_M(Y+ \UUps^{N}+ \s \ZZ_{N}) 
- \Xi_N(Y+ \UUps^{N}+ \s \ZZ_{M} ) \|_{\mathcal{X}^{\eps}_T} \Big] 
\notag \\
&\phantom{XXXXX}
+ \E \bigg[ 
\ft R^{\dia}_{M}(Y+ \UUps^{N}+ \s \ZZ_{M}) - \ft R^{\dia}_{N}(Y+ \UUps^{N}+ \s \ZZ_{N}) \bigg].
\label{Ga7}
\end{align}

\noi
Proceeding as in 
 Subsection \ref{SUBSEC:wcon} with $0 \le H \le 1$, 
 we have \eqref{KZ20a}.
 Then, 
using the computations from \eqref{KZ18b} to \eqref{KZ21}
we obtain
\begin{align}
\E \bigg[\ft R^{\dia}_{M}(Y+ \UUps^{N}+ \s \ZZ_{M}) - \ft R^{\dia}_{N}(Y+ \UUps^{N}+ \s \ZZ_{N})\bigg] \too 0, 
\label{Ga8}
\end{align}
as $M \ge N \to \infty$.
We also note that as a consequence of \eqref{KZ20a} with \eqref{K10}
and Lemma \ref{LEM:Dr}, 
we have
\begin{equation}
\E \Big[     \, \| \UUps^{N} \|_{H^1}^2 \Big] \les 1,
\label{Ga8a}
\end{equation}
uniformly in $N \in \N$.

Moreover, by slightly modifying (part of)  the proof of Proposition \ref{PROP:exptail2},
we can show that 
\begin{align}
\begin{aligned}
&\E \Big[ \big\| \Xi_M(Y+ \UUps^{N}+ \s \ZZ_{N}) - \Xi_N(Y+ \UUps^{N}+ \s \ZZ_{M}) \big\|_{\mathcal{X}^{\eps}_T} \Big]
\too 0, 
\end{aligned}
\label{Ga9}
\end{align}
as $M\ge N \to \infty$.
Here,
we only consider the contribution from $\wt \If^N_{\pl, \pe}$.  
The other terms 
in the truncated enhanced data sets can be handled in a similar manner.
With the notations \eqref{et1b} and 
 \eqref{et2} (recall that we suppress the dependence on $u_1$ and $\o_2$),
we have
\begin{align}
\begin{split}
&\wt \If_{\pl, \pe}^M [\<1>(Y+ \UUps^{N}+ \s \ZZ_{M})] 
- \wt \If_{\pl, \pe}^N [\<1>(Y+ \UUps^{N}+ \s \ZZ_{N})] \\
&=
\wt \If_{\pl, \pe}^M [\<1>(Y+ \UUps^{N}+ \s \ZZ_{M}), \<1>(\s (\ZZ_{M} - \ZZ_{N}))]\\
& \quad + \wt \If_{\pl, \pe}^M [ \<1>(\s (\ZZ_{M} - \ZZ_{N})), \<1>(Y+ \UUps^{N}+ \s \ZZ_{N})] \\
&\quad
+\Big(\wt  \If_{\pl, \pe}^M [\<1>(Y+ \UUps^{N}+ \s \ZZ_{N})] 
- \wt \If_{\pl, \pe}^N [\<1>(Y+ \UUps^{N}+ \s \ZZ_{N})] \Big)\\
&=:
\1 + \II + \III.
\end{split}
\label{Gc1}
\end{align}
It follows from \eqref{et5}, \eqref{et6}, and \eqref{et7} 
together with Remark \ref{REM:decay} that
there exists small $\dl_0 > 0$ such that 
\begin{align}
\begin{split}
\| \1 & \|_{\L_2(q,T)} + \| \II \|_{\L_2(q,T)}\\
& \le
C(T)
\Big( \| Y \|_{L^\infty_T W^{-\frac 12-\eps, \infty}_x} + \| \UUps^{N} \|_{H^{1}} 
+ \| \ZZ_{N} \|_{W^{1-\eps, \infty}} \Big)
\| \ZZ_{N} - \ZZ_{M} \|_{W^{1-\eps, \infty}}\\
& \le
C(T)
N^{-\dl_0} \Big( \| Y \|_{L^\infty_T W^{-\frac 12-\eps, \infty}_x} + \| \UUps^{N} \|_{H^{1}} 
+ \| \ZZ_{N} \|_{W^{1-\eps, \infty}} \Big)^2\\
& \quad + 
N^{\dl_0} \| \ZZ_{N} - \ZZ_{M} \|_{W^{1-\eps, \infty}}^2
\end{split}
\label{Gc2}
\end{align}

\noi
and 
\begin{align}
\E \Big[N^{\dl_0} \| \ZZ_{N} - \ZZ_{M} \|_{W^{1-\eps, \infty}}^2\Big]\too 0, 
\label{Gc2a}
\end{align}

\noi
as $M\ge N \to \infty$. 
From \eqref{et1aa}
and \eqref{et1b}, we have 
\begin{align*}
\wt \If^N_{\pl, \pe}[\psi_1, \psi_2](w) 
   =
\I \big(\pi_N( \K^\ta (w,  \pi_N \psi_1))\big)\pe (\pi_N \psi_2).
\end{align*}

\noi
Hence, when we consider the difference 
in $\III$, 
we see that 
 one of the factors comes with $\pi_{M} - \pi_{N}$, 
 from which 
we can gain a small negative power of $N$.
Hence,  by repeating the   calculation above with this observation, 
we obtain
\begin{align}
\begin{split}
\big\| \III 
& - (\wt  \If_{\pl, \pe}^N [\<1>(Y)]
- \wt  \If_{\pl, \pe}^M [\<1>(Y)])
\big\|_{\L_2(q,T)}\\
& \les
N^{-\dl_0}
\Big( \| Y \|_{L^\infty_T W^{-\frac 12-\eps, \infty}_x} + \| \UUps^{N} \|_{H^{1}} 
+ \| \ZZ_{N} \|_{W^{1-\eps, \infty}} \Big)^2
\end{split}
\label{Gc3}
\end{align}
for any $M \ge N\ge 1$.
Lastly, from \eqref{Xet4} and \eqref{et2b}, 
there exists $\dl > 0$ such that
\begin{align}
\begin{split}
\big\| \
& \wt  \If_{\pl, \pe}^N [\<1>(Y)]
- \wt  \If_{\pl, \pe}^M [\<1>(Y)]
\big\|_{\L_2(q,T)} 
\le N^{-\dl_0}\wt K(Y, u_1, \o_2)
\end{split}
\label{Gc3a}
\end{align}
for any $M \ge N\ge 1$, 
where, in view of \eqref{Xet4a},  
$\E[ \wt K(Y, u_1, \o_2)] \le C(u_1, \o_2) < \infty$ for almost every $u_1$ and $\o_2$.
Therefore, from \eqref{Gc1}, \eqref{Gc2}, \eqref{Gc2a}, 
\eqref{Gc3}, and \eqref{Gc3a} with the bound~\eqref{Ga8a}, 
we obtain
\[
\E \Big[ \big\| \wt \If_{\pl, \pe}^M [Y+ \Ups_\dl^{N}+ \s \ZZ_{M}] 
- \wt \If_{\pl, \pe}^N [Y+ \Ups_\dl^{N}+ \s \ZZ_{N}] \big\|_{\L_2(q,T)} \Big]
\too 0, 
\]
as $M\ge N \to \infty$.

Note that $\{ Z_N \}_{N\in \N}$ is a convergent sequence and $\dl>0$ was arbitrary.
Hence, it follows from \eqref{Ga6}, \eqref{Ga7}, \eqref{Ga8}, and \eqref{Ga9} that
\begin{align}
\begin{aligned}
\limsup_{N \to \infty}
 \sup_{\substack{0 \le H \le 1 \\ \| H \|_{\Lip} \les 1}} 
 \sup_{M \ge N}
&\bigg\{ -\log \bigg(
\int \exp(H(\Xi_M(u_0, u_1, \o_2))) d \rho_M(u_0) \bigg) \\
&\qquad
+ \log \bigg(\int \exp(H(\Xi_N(u_0, u_1, \o_2))) d \rho_N(u_0) \bigg) \bigg\}_+
\le 0, 
\end{aligned}
\label{Ga10}
\end{align}

\noi
for almost every $u_1$ and $\o_2$, 
where the supremum in $H$ was trivially dropped in the last step of~\eqref{Ga7}.
Therefore, \eqref{iv1} follows from 
\eqref{iv1aa} and \eqref{Ga10}
with 
Fatou's lemma.
This concludes the proof of Proposition \ref{PROP:plan}.
\end{proof}

Finally, we present the proof of Theorem \ref{THM:GWP}.

\begin{proof}[Proof of Theorem \ref{THM:GWP}]
$\bullet$ {\bf Part 1:}
We first prove almost sure global well-posedness
of the hyperbolic $\Phi^3_3$-model.
As in \cite{BO94, CO, BOP2}, 
it suffices to prove ``almost'' almost sure global  well-posedness.
 More precisely, 
 it suffices to prove that given any $T >0$ and small $\dl > 0$, 
there exists $\Si_{T, \dl} \subset  \H^{-\frac{1}{2}-\eps} (\T^3) \times \O_2$
with $\rhoo\otimes\PP_2( \Si_{T, \dl}^c) < \dl$
such that for each 
$(\vec u_0, \o_2) \in  \Si_{T, \dl}$, 
the solution 
 $(X,Y,\Res)$  to  \eqref{SNLW6}, with 
 the zero initial data and 
 the enhanced data  $\Xi(\vec u_0, \o)$ in~\eqref{enh1}, 
 exists on the time interval $[0, T]$.

We assume this 
``almost'' almost sure global  well-posedness
claim for the moment.
Denote by 
 $(X_N,Y_N,\Res_N)$ 
 the  solution to the truncated system \eqref{SNLW9}  with 
 the truncated enhanced data $\Xi_N(\vec u_0, \o)$ in \eqref{data3x}
 and set
 \begin{align}
 u_N(\vec u_0, \o_2) = \<1>(\vec u_0, \o_2) + \s \<20>_N(\vec u_0, \o_2)
 + X_N + Y_N, 
 \label{sol1}
 \end{align}

\noi
which is the solution to the truncated hyperbolic $\Phi^3_3$-model \eqref{SNLW3a}
  with the initial data $(u_N, \dt u_N)|_{t = 0} = \vec u_0 = (u_0, u_1)$
  and the noise $\xi = \xi(\o_2)$.
Here, we used the uniqueness of the solution $u_N$
to \eqref{SNLW3a}; see Remark \ref{REM:uniq}.  
  Then, we 
conclude from Corollary~\ref{COR:lim} (on the almost sure convergence
of $\Xi_N(\vec u_0, \o)$ to $\Xi(\vec u_0, \o)$)
and the second part of Proposition~\ref{PROP:LWPv} 
that $(u_N, \dt u_N)(\vec u_0,  \o_2) $ in \eqref{sol1} converges  to $(u, \dt u)(\vec u_0, \o_2)$ 
in $C([0, T]; \H^{-\frac 12-\eps}(\T^3))$
 for each 
$(\vec u_0, \o_2) \in \Si_{T, \dl}$, 
where  $u(\vec u_0, \o_2)$ is defined by 
 \begin{align}
 u(\vec u_0, \o_2) = \<1>(\vec u_0, \o_2) + \s \<20>(\vec u_0, \o_2)
 + X + Y.
 \label{sol2}
 \end{align}

\noi
Now, we define 
\[\Si = \bigcup_{k = 1}^\infty \bigcap_{j = 1}^\infty \Si_{2^j, 2^{-j} k^{-1} }.\]

\noi
Then, we have $\rhoo\otimes \PP_2(\Si) = 1$
and, for each $(\vec u_0, \o_2) \in \Si$, 
the solution  $(u_N, \dt u_N)(\vec u_0,  \o_2) $ 
to the truncated hyperbolic $\Phi^3_3$-model \eqref{SNLW3a}
converges to $(u, \dt u)(\vec u_0, \o_2)$ in \eqref{sol2}
in $C(\R_+; \H^{-\frac 12-\eps}(\T^3))$
(endowed with the compact-open topology in time).
This proves the almost sure global well-posedness claim in Theorem~\ref{THM:GWP}, 
assuming 
``almost'' almost sure global  well-posedness.

We now prove 
``almost'' almost sure global  well-posedness.
Fix $T>0$ and small $\dl > 0$.
Given $\Xi = (\Xi_1, \dots, \Xi_6)\in \Xc^\eps_T$, 
let  $Z(\Xi)  = (X, Y, \Res)(\Xi)$ be the solution to \eqref{SNLW6}
with the zero initial data and the enhanced data set given by $\Xi$, 
namely,  $\Xi_j$ replacing the $j$th element in~\eqref{Xi1}.
Note that $\Xi$ here
denotes a general element in $\Xc^\eps_T$ and 
is not associated with 
any specific $(\vec u_0, \o_2) \in  \H^{-\frac 12 -\eps} (\T^3) \times \O_2$.
Similarly, given $N \in \N$ and $\Xi \in \Xc^\eps_T$, 
let 
$Z_N (\Xi) = (X_N, Y_N, \Res_N)(\Xi)$ be the solution to \eqref{SNLW9}
with the enhanced data set~$\Xi$, 
namely, $\Xi_j$ replacing the $j$th element of $\Xi_N(\vec u_0, \o_2)$ in \eqref{data3x}.

Given $C_0 > 0$, 
define the set $\Si_{C_0} \subset \Xc^\eps_T$
such that,   
for each 
$\Xi \in \Si_{C_0}$, 
 the solution 
 $Z(\Xi)$  to~\eqref{SNLW6}, with 
 the zero initial data and 
 the enhanced data  $\Xi$, 
 exists on the time interval $[0, T]$, satisfying the bound 
 \begin{align}
\|Z(\Xi)\|_{Z^{s_1, s_2, s_3}(T)} \le C_0+ 1 .
\label{sol3}
\end{align}

Let $N \in \N$.
Given   $K, C_0 > 0$,  
 we set 
 \begin{align}
A_{N, K, C_0} &= \big\{ \Xi' \in \mathcal{X}^{\eps}_T:
\|\Xi'\|_{\mathcal{X}^{\eps}_T} \le K, \,
\|Z_N(\Xi')\|_{Z^{s_1, s_2, s_3}(T)} \le C_0 \big\}
\label{sol3a}
\end{align}

 \noi
 and 
 \begin{align}
 B_{N, K, C_0} 
= 
\big\{ (\Xi, \Xi')  \in \Xc^\eps_T\times \Xc^\eps_T:
 \| \Xi - \Xi'  \|_{\mathcal{X}^{\eps}_T} \le \kk, \ \Xi' \in A_{N, K, C_0}\big\},
\label{sol4}
\end{align}

\noi
where $\kk > 0$ is a small number to be chosen later.
Then, from the stability result (the first claim in Proposition~\ref{PROP:LWPv})
with \eqref{sol3}, \eqref{sol3a}, and \eqref{sol4}, 
there exists
small  $\kk(T, K, C_0)\in (0, 1)$ and $N_0 = N_0(T, K, C_0) \in \N$ such that 
\begin{align}
\Si_{C_0} \times \Xc^\eps_T \supset B_{N, K, C_0}
\label{sol5}
\end{align}

\noi
for any $N \ge N_0$. 

Let $C_0 = C_0(T, \dl) \gg 1$ be as in Proposition \ref{PROP:tail2}
and let $\plan_N$, $N \in \N$, be as in Proposition~\ref{PROP:plan}.
Then, 
from 
\eqref{OP1}, 
\eqref{OP2}, 
and  \eqref{sol5}, we have
\begin{align}
\begin{aligned}
\rhoo  & \otimes \PP_2 \big( \Xi(\vec u_0, \o_2) \in \Si_{C_0}\big) 
 = 
 \int 
\ind_{\Xi \in \Si_{C_0}}(\Xi, \Xi')
d \plan_N (\Xi,\Xi')\\
& \ge  \int 
\ind_{B_{N, K, C_0}}(\Xi, \Xi')
d \plan_N (\Xi,\Xi')\\
&\ge 1 - \int \ind_{\{\| \Xi - \Xi'\|_{\mathcal{X}^{\eps}_T} >  \kk\}} 
d \plan_N(\Xi,\Xi') - \int \ind_{A_{N, K, C_0}^c}(\Xi')d \plan_N(\Xi,\Xi')\\
&\ge 1 - \frac 1 \kk \int \min(\| \Xi - \Xi'\|_{\mathcal{X}^{\eps}_T}, 1) d \plan_N(\Xi,\Xi')
- \rhoo_N \otimes \PP_2 ( \{ \Xi_N (\vec u_0', \o_2')\in A_{N, K, C_0}^c \})\\
&>  1 - \frac 1 \kk \int \min(\| \Xi - \Xi'\|_{\mathcal{X}^{\eps}_T}, 1) d \plan_N(\Xi,\Xi')
- 2\dl , 
\end{aligned}
\label{iv2a}
\end{align}

\noi
where the last step follows from Proposition \ref{PROP:exptail2}
by choosing $K = K(\dl)\gg1 $, together with Proposition~\ref{PROP:tail2}.
By Proposition \ref{PROP:plan},
we have
\begin{align}
\frac 1 \kk \int \min(\| \Xi - \Xi_N'\|_{\mathcal{X}^{\eps}_T}, 1) d \plan_N(\Xi,\Xi_N') \too 0, 
\label{iv2b}
\end{align}

\noi
as $N \to \infty$.
Therefore, we conclude from 
 \eqref{iv2a} and  \eqref{iv2b} that 
$$ \rhoo   \otimes \PP_2 \big( \Xi(\vec u_0, \o_2) \in \Si_{C_0}\big) >  1 - 2\dl.
$$

\noi
This proves ``almost'' almost sure global  well-posedness
with 
\[\Si_{T, \dl} = \{(\vec u_0, \o_2) \in  \H^{-\frac{1}{2}-\eps} (\T^3) \times \O_2: \Xi(\vec u_0, \o_2) \in \Si_{C_0}\},\]

\noi
and hence  almost sure global well-posedness
of the hyperbolic $\Phi^3_3$-model, 
namely, the unique limit $u = u(\vec u_0, \o_2)$
in \eqref{sol2} exists globally in time almost surely with respect to $\rhoo \otimes \PP_2$.

\smallskip

\noi
$\bullet$ {\bf Part 2:}
Next, we prove invariance of the Gibbs measure $\rhoo = \rho \otimes \mu_0$
under the limiting hyperbolic $\Phi^3_3$-dynamics.
In the following, we prove
\begin{align}
\int  F(\Phi (t) (\vec u_0,  \o_2))  d (\rhoo \otimes \PP_2)(\vec u_0, \o_2)
= \int F(\vec u_0) d \rhoo (\vec u_0)
\label{iv30}
\end{align}
for any  bounded Lipschitz functional $F: \C^{-100} (\T^3)
\times \C^{-100}(\T^3)
 \to  \R$ and  $t \in \R_+$, 
 where $\Phi(\vec u_0,  \o_2)$ is the limit
 of the solution 
$(u_N, \dt u_N) = \Phi^N(\vec u_0,  \o_2)$
to the truncated hyperbolic $\Phi^3_3$-model 
defined in  \eqref{sol0}.

As in Part 1, we use the notation 
$(X,Y,\Res) = (X,Y,\Res) (\Xi)$, etc.
Also, 
 let $\plan_N$, $N \in \N$,  be as in Proposition~\ref{PROP:plan}.
Then, by 
the decomposition \eqref{Ba4b} (also for $N = \infty$), 
\eqref{OP1}, \eqref{OP2}, 
and the invariance of  $\rhoo_N $ under the truncated hyperbolic $\Phi^3_3$-model \eqref{SNLW3a} (Lemma \ref{LEM:GWP4}),
we have 
\begin{align*}
\int &  F(\Phi(t) (\vec u_0,  \o_2))   d (\rhoo \otimes \PP_2)(\vec u_0, \o_2)\\
&= \int F(\Phi(t) (\Xi)) d \plan_N(\Xi, \Xi') \nonumber\\
&= \int F(\Phi^N(t) (\Xi') )d \plan_N(\Xi, \Xi') 
+ \int \big[ F(\Phi(t) (\Xi))  - F(\Phi^N(t) (\Xi'))\big] d \plan_N(\Xi, \Xi')
\notag\\
&=
\int F(\vec u_0) d \rhoo_N (\vec u_0) 
+  \int \big[ F(\Phi(t) (\Xi))  - F(\Phi^N(t) (\Xi'))\big] d \plan_N(\Xi, \Xi').
\end{align*}

\noi
By  the weak convergence of $ \rhoo_N $ to $\rhoo$, we have 
\[
\lim_{N \to \infty} \int F(\vec u_0) d \rhoo_N (\vec u_0) 
= \int F(\vec u_0) d \rhoo (\vec u_0).
\]

\noi
Hence, since 
$F$ is bounded and Lipschitz, 
\eqref{iv30} is reduced to showing that
\begin{align}
\begin{aligned}
&\int \min\big(\|\Phi(t) (\Xi) - \Phi^N(t) (\Xi')\|_{\C^{-100}\times \C^{-100}}, 1\big)  d \plan_N(\Xi, \Xi') \too 0, 
\end{aligned}
\label{iv3a}
\end{align}
as $N \to \infty$.

As in \eqref{sol0}, 
we write
\[\Phi(t)(\Xi) = \big(\Phi_1(t)(\Xi), \Phi_2(t)(\Xi)\big)
\qquad \text{and}
\qquad 
\Phi^N(t)(\Xi') = \big(\Phi_1^N(t)(\Xi'), \Phi_2^N(t)(\Xi')\big), 
\]

\noi
where $\Xi = (\Xi_1, \dots, \Xi_6)$ and $\Xi' = (\Xi'_1, \dots, \Xi'_6)$
 (see also \eqref{data3x} and \eqref{enh1}).
With the decomposition as in 
 \eqref{Ba4b},  we have 
\begin{align}
\begin{split}
\Phi_1(t) (\Xi) 
& = \Xi_1 + \s \Xi_3 + X(\Xi) + Y(\Xi),   \\
\Phi_1^N(t) (\Xi') 
& = \Xi_1' + \s \Xi_3' + X_N(\Xi') + Y_N(\Xi'), 
\end{split}
\label{iv33}
\end{align}

\noi
and $ \Phi_2(t)(\Xi) = \dt \Phi_1(t)(\Xi)$ and $\Phi_2^N(t)(\Xi')= \dt \Phi_1^N(t)(\Xi')$
are given by term-by-term differentiation
of the terms on the right-hand sides of \eqref{iv33}.
From the definition \eqref{data3} of the $\Xc^\eps_T$-norm, 
 we clearly have 
\begin{align*}
\| ( & \Xi_1 + \s \Xi_3)(t)
- (\Xi_1' + \s \Xi_3' )(t) \|_{\C^{-100}}\\
& + \|  (\dt \Xi_1 + \s \dt \Xi_3)(t)
- (\dt \Xi_1' + \s \dt \Xi_3' )(t) \|_{\C^{-100}}
\les  \| \Xi - \Xi' \|_{\mathcal{X}^{\eps}_T}.
\end{align*}

 \noi
Hence, in view of \eqref{Z1} with \eqref{M0}, 
\eqref{iv3a} is reduced to showing that
\begin{align}
\int \min\big(\| Z (\Xi) - Z_N(\Xi')\|_{Z^{s_1,s_2,s_3}(T)}, 1 \big) 
d \plan_N(\Xi, \Xi') \too 0, 
\label{iv3c}
\end{align}

\noi
as $N \to \infty$, 
where $Z(\Xi)  = (X, Y, \Res) (\Xi)$ and $Z_N (\Xi') = (X_N, Y_N, \Res_N) (\Xi')$ as in Part 1.

It follows from the second part of 
Proposition \ref{PROP:LWPv} 
(with $\kk = \| \Xi  - \Xi'\|_{\Xc^\eps_T}$)
and 
Proposition~\ref{PROP:plan} that 
\begin{align*}
\int&  \min\big(\| Z (\Xi) - Z_N(\Xi')\|_{Z^{s_1,s_2,s_3}(T)}, 1 \big) 
d \plan_N(\Xi, \Xi')\\ 
& \le A\big(T,  \| \Xi \|_{\Xc^\eps_T}, \|Z(\Xi) \|_{Z^{s_1,s_2,s_3}(T)}\big)
\\
& \quad \times   \int \min\big(\| \Xi - \Xi'\|_{\Xc^\eps_T} + N^{-\dl}, 1 \big) 
d \plan_N(\Xi, \Xi') 
\too 0, 
\end{align*}
\noi
as $N \to \infty$.
This proves \eqref{iv3c}
and therefore, 
we conclude~\eqref{iv30}, 
which proves  invariance of the Gibbs measure $\rhoo$
under the limiting hyperbolic $\Phi^3_3$-model.
\end{proof}

\appendix

\section{Absolute continuity with respect to the shifted measure}
\label{SEC:AC}

\subsection{Preliminary lemmas}
In this appendix, 
we prove that the $\Phi^3_3$-measure $\rho$ in the weakly nonlinear regime
($|\s|\ll 1$), 
constructed in Theorem \ref{THM:Gibbs}\,(i),  
is absolutely continuous
with respect to the shifted measure $\Law (Y(1) +\s \ZZ(1) + \W(1))$,
where $Y$ is as in \eqref{P2}, $\ZZ$ is defined as the limit of 
the antiderivative of 
$\dot \ZZ^N$ in \eqref{YZ12} as $N \to \infty$, and the auxiliary process $\W$ is defined by 
\begin{align}
\W (t) = (1-\Delta)^{-1} \int_0^t \jb{\nabla}^{-\frac 12 - \eps} 
\big(\jb{\nabla}^{-\frac 12 - \eps} Y(t')\big)^{5} dt'
\label{AC0}
\end{align}

\noi
for some small $\eps > 0$.
For the proof,
we construct a drift as in the discussion in Section 3 of \cite{BG2}.
See also Appendix C in \cite{OOTol1}.
The coercive term $\W$ is introduced  to 
guarantee  global existence of a drift on the time interval $[0, 1]$.
See Lemma \ref{LEM:globald} below.
We closely follow the presentation in Appendix C of our previous work \cite{OOTol1}.

First, we recall the following general lemma, 
giving a criterion for absolute continuity.
See Lemma C.1  in \cite{OOTol1} for the proof.

\begin{lemma} \label{LEM:AC1}
Let $\mu_n$ and $\rho_n$ be probability measures on a Polish space $X$.
Suppose that $\mu_n$ and $\rho_n$  converge weakly to $\mu$ and $\rho$, respectively.
Furthermore, suppose  that for every $\eps > 0$,
there exist $\delta(\eps) >0 $ and $\eta(\eps)>0 $ 
with $\delta(\eps)$, $\eta(\eps) \to 0$ as $\eps \to 0$ such that for every continuous function $F: X \to \R$ with $0 < \inf F \le F \le 1$ satisfying  
\[\mu_n(\{F \le \eps\}) \ge 1- \delta(\eps)\] 

\noi
for any $n \geq n_0 (F)$, 
we have
\begin{align}
\limsup_{n \to \infty} \int F(u) d \rho_n(u) \le \eta(\eps).
\notag
\end{align}
Then, $\rho$ is absolutely continuous with respect to $\mu$.
\end{lemma}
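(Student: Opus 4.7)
The plan is to establish $\rho(A)=0$ for every Borel set $A\subset X$ with $\mu(A)=0$ by first reducing to closed sets and then constructing explicit continuous test functions for which the hypothesis of the lemma applies.

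First, I would invoke inner regularity of the Borel probability measure $\rho$ on the Polish space $X$ to write $\rho(A)=\sup\{\rho(K):K\subset A,\ K\text{ compact}\}$. Any compact $K\subset A$ satisfies $\mu(K)\le\mu(A)=0$, so it suffices to prove $\rho(K)=0$ for every closed set $K\subset X$ with $\mu(K)=0$. Fix such a closed $K$, fix $\eps\in(0,1)$, and let $d$ denote a compatible complete metric on $X$. For $r>0$ define the continuous functions
\[
\tilde F_r(x)=\max\bigl(0,\, 1-r^{-1}d(x,K)\bigr),\qquad F_r:=\eps^2+(1-\eps^2)\tilde F_r.
\]
Then $F_r:X\to[\eps^2,1]$ is continuous with $\inf F_r=\eps^2>0$ and $F_r\equiv 1$ on $K$. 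A direct computation yields the inclusion
\[
\{F_r>\eps\}=\{d(\cdot,K)<r/(1+\eps)\}\subset\{d(\cdot,K)\le r/(1+\eps)\}=:K_r,
\]
and the set $K_r$ on the right is closed.

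The key step is now to verify the hypothesis of the lemma for $F=F_r$ with $r=r(\eps)$ chosen appropriately small. Since $K$ is closed, $K_r\searrow K$ as $r\to 0$, so by continuity from above $\mu(K_r)\to\mu(K)=0$; I would therefore select $r=r(\eps)>0$ so that $\mu(K_r)<\delta(\eps)/2$. Since $K_r$ is closed and $\mu_n\wto\mu$, the weak-convergence upper bound for closed sets gives $\limsup_n\mu_n(K_r)\le\mu(K_r)<\delta(\eps)/2$, so there exists $n_0=n_0(F_r)$ with
\[
\mu_n\bigl(\{F_r>\eps\}\bigr)\le\mu_n(K_r)\le\delta(\eps), \quad\text{i.e.,}\quad \mu_n\bigl(\{F_r\le\eps\}\bigr)\ge 1-\delta(\eps),
\]
for every $n\ge n_0$. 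Invoking the assumption then produces $\limsup_n\int F_r\,d\rho_n\le\eta(\eps)$, and since $F_r$ is continuous and bounded and $\rho_n\wto\rho$, the Portmanteau theorem gives $\int F_r\,d\rho\le\eta(\eps)$. Using $F_r\ge\ind_K$ we conclude $\rho(K)\le\int F_r\,d\rho\le\eta(\eps)$, and sending $\eps\to 0$ (so $\eta(\eps)\to 0$) yields $\rho(K)=0$, completing the reduction.

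The only real subtlety is the quantitative matching in the verification step: the weak-convergence upper bound is available for closed sets only, while $\{F_r>\eps\}$ is open. This forces the construction to interpose the strictly larger closed set $K_r$ between $\{F_r>\eps\}$ and $K$, which is the reason we work with $d(\cdot,K)\le r/(1+\eps)$ rather than with $d(\cdot,K)< r/(1+\eps)$; it is also the structural reason behind the ``$\inf F>0$'' condition in the statement, which forces the cutoffs $F_r$ to stay bounded away from $0$ on all of $X$ and not merely near $K$. Once this bookkeeping is arranged, the remaining two applications of weak convergence (to $\mu_n$ for the hypothesis and to $\rho_n$ for the conclusion) are entirely routine.
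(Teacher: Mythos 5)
Your proof is correct. The paper does not include a proof of this lemma (it cites Lemma C.1 of \cite{OOTol1}), so a line-by-line comparison is not possible, but your argument—inner regularity on the Polish space to reduce to closed $\mu$-null sets $K$, the explicit truncations $F_r = \eps^2 + (1-\eps^2)\max(0,\,1 - r^{-1}d(\cdot,K))$ which satisfy $\inf F_r \ge \eps^2 > 0$ and $F_r \equiv 1$ on $K$, the closed-set Portmanteau inequality applied to $K_r$ to verify the hypothesis, and then weak convergence of $\rho_n$ to pass the $\eta(\eps)$-bound to $\rho$—is the canonical route, and the hypothesis structure leaves essentially no room for a materially different argument. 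One small inaccuracy in your closing discussion: the ``$\inf F > 0$'' condition is not structurally caused by the open/closed bookkeeping you describe; it is simply a restriction on the class of admissible $F$ that the lemma's user must check, motivated by the intended application in Appendix~\ref{SEC:AC}, where $F = e^{-G}$ with $G$ bounded continuous and hence $\inf F \ge e^{-\sup G} > 0$ automatically—your construction just has to respect that restriction, which it does.
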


By regarding
$\dot \ZZ^N$ in \eqref{YZ12} and $\W$ in \eqref{AC0}
as functions of $Y$,
we write them as 
\begin{align}
\dot \ZZ^N (Y) (t) &= (1-\Delta)^{-1} \!:\!Y_N^2(t)\!:\, , \label{AC01} \\
\W (Y) (t) &= (1-\Delta)^{-1} \int_0^t \jb{\nabla}^{-\frac 12 - \eps} 
\big(\jb{\nabla}^{-\frac 12 - \eps} Y(t')\big)^{5} dt' \notag
\end{align}

\noi
and we set $\dot \ZZ_N (Y)  = \pi_N \dot \ZZ^N (Y)$.
Then, from 
 \eqref{AC01}, we have 
\begin{align}
\dot \ZZ_N (Y + \Theta) - \dot \ZZ_N (Y)
= (1-\Delta)^{-1} \pi_N ( 2\Theta_N Y_N + \Theta_N^2),
\label{Jb0}
\end{align}

\noi
where  $\Dr_N = \pi_N \Dr$.
We also define $\W_N(Y)(t)$ by 
\begin{align}
\W_N (Y) (t) &= (1-\Delta)^{-1} \pi_N \int_0^t \jb{\nabla}^{-\frac 12 - \eps} 
\big(\jb{\nabla}^{-\frac 12 - \eps} Y_N (t')\big)^{5} dt' .
\label{AC04}
\end{align}

Next, we state a lemma on the construction of  a drift $\Dr$.

\begin{lemma} \label{LEM:globald}
Let $\s \in \R$ and  $\dot \Ups \in L^2 ([0,1]; H^1(\T^3))$.
Then, given any $N \in \N$, 
the Cauchy problem for $\Dr$\textup{:}
\begin{equation} \label{ACde}
\begin{cases}
\dot \Theta + \s (1-\Delta)^{-1} \pi_N (2 \Theta_N Y_N + \Theta_N^2) +  \dot \W_N(Y+\Theta) - \dot \Ups = 0 \\
\Theta(0) = 0
\end{cases}
\end{equation}

\noi
is almost surely globally well-posed 
on the time interval $[0, 1]$
such that a solution $\Dr$ belongs to  $ C([0, 1]; H^1(\T^3))$.
Moreover, if  $\|\dot \Ups\|^2_{L^2([0,\tau]; H^1_x)} \le M$ for some $M>0$
and for some stopping time $\tau \in [0, 1]$, 
then, for any $ 1 \le p <\infty$, there exists $C= C(M,p)>0$ such that 
\begin{equation}\label{AC2}
\E \Big[ \|\dot \Theta\|_{L^2([0,\tau]; H^1_x)}^p \Big]
 \le C(M,p),
\end{equation}
where $C(M,p)$ is independent of $N \in \N$.
\end{lemma}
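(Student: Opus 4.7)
The plan is to combine Cauchy-Lipschitz local existence for the low-frequency component $\Theta_N := \pi_N\Theta$ with a coercive energy estimate drawn from the quintic $\dot\W_N$ term. Every summand in \eqref{ACde} except $\dot\Ups$ carries an outer $\pi_N$ (the factor $(1-\Dl)^{-1}\pi_N$ in the $\s$-term, and $\pi_N$ built into \eqref{AC04}), so writing $\Theta = \Theta_N + \Theta_N^\perp$ with $\Theta_N^\perp := (\Id - \pi_N)\Theta$, the high-frequency part decouples and satisfies $\dot\Theta_N^\perp = \pi_N^\perp\dot\Ups$, hence $\Theta_N^\perp(t) = \int_0^t \pi_N^\perp \dot\Ups(s)\,ds \in C([0,1];H^1)$ and $\|\pi_N^\perp\dot\Theta\|_{L^2_\tau H^1_x}^2 \le M$ already. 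The remaining equation for $\Theta_N$ is a locally Lipschitz ODE on the finite-dimensional subspace indexed by $\{n \in NQ\}$, so Cauchy-Lipschitz yields a unique maximal solution $\Theta_N \in C([0,\tau^*);H^1)$ almost surely.

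To rule out finite-time blowup, the next step is an $H^1$ energy estimate for $\Theta_N$. Taking the $H^1$ inner product of \eqref{ACde} with $\Theta_N$ and using $\langle f,(1-\Dl)^{-1}g\rangle_{H^1} = \langle f,g\rangle_{L^2}$ gives
\begin{align*}
\tfrac12\tfrac{d}{dt}\|\Theta_N\|_{H^1}^2
&= \langle\Theta_N,\dot\Ups\rangle_{H^1} - 2\s\!\int\!\Theta_N^2 Y_N\,dx - \s\!\int\!\Theta_N^3\,dx - \int\! u\,(u+v)^5\,dx,
\end{align*}
with the abbreviations $u := \jb{\nb}^{-\frac12-\eps}\Theta_N$ and $v := \jb{\nb}^{-\frac12-\eps}Y_N$. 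Expanding $u(u+v)^5$ isolates the coercive contribution $\|u\|_{L^6}^6$ together with five cross terms $\int u^{k+1}v^{5-k}dx$ which are absorbed by $\tfrac12\|u\|_{L^6}^6 + C\|v\|_{L^6}^6$ via Young's inequality. The two $\s$-contributions are estimated through Besov duality (Lemma~\ref{LEM:para}), Sobolev embedding $H^{1/2} \hookrightarrow L^3$, and Gagliardo-Nirenberg interpolation: schematically
\begin{align*}
\Big|\!\int\!\Theta_N^2 Y_N\,dx\Big| + \Big|\!\int\!\Theta_N^3\,dx\Big|
\les \eta\,\|u\|_{L^6}^6 + \eta\,\|\Theta_N\|_{H^1}^2 + C_{\eta,\s}\bigl(1 + \|Y_N\|_{\C^{-\frac12-\eps}}^c + \|\Theta_N\|_{L^2}^6\bigr),
\end{align*}
and $\langle\Theta_N,\dot\Ups\rangle_{H^1} \le \tfrac12\|\Theta_N\|_{H^1}^2 + \tfrac12\|\dot\Ups\|_{H^1}^2$. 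Choosing $\eta$ small, controlling $\|\Theta_N\|_{L^2}^6$ by a combination of $\|u\|_{L^6}^6$ and $\|\Theta_N\|_{H^1}^2$ (a frequency-splitting argument in the spirit of Lemma~\ref{LEM:Dr8}), and applying Gronwall yields the a priori bound
\begin{align*}
\sup_{t\in[0,1]}\|\Theta_N(t)\|_{H^1}^2 + \int_0^1\|u(t)\|_{L^6}^6\,dt
\les 1 + \|\dot\Ups\|_{L^2_1 H^1_x}^2 + \Psi(Y),
\end{align*}
where $\Psi(Y)$ is a polynomial in $\|Y\|_{C_1\C^{-\frac12-\eps}_x}$ and $\|v\|_{L^6_{t,x}}$, both with finite moments of every order by Lemma~\ref{LEM:Dr} and hypercontractivity. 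This precludes blowup and gives global existence on $[0,1]$.

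For the uniform-in-$N$ moment bound \eqref{AC2}, I read off from \eqref{ACde} and use $\|(1-\Dl)^{-1}\pi_N(\cdot)\|_{H^1} \le \|\cdot\|_{H^{-1}}$ to obtain
\begin{align*}
\|\dot\Theta_N\|_{H^1}
\les \|\dot\Ups\|_{H^1} + |\s|\bigl(\|\Theta_N Y_N\|_{H^{-1}} + \|\Theta_N^2\|_{H^{-1}}\bigr) + \|(u+v)^5\|_{H^{-\frac32-\eps}}.
\end{align*}
Lemmas~\ref{LEM:gko} and~\ref{LEM:para} bound the two middle terms by $\|\Theta_N\|_{H^1}\bigl(\|Y_N\|_{\C^{-\frac12-\eps}} + \|\Theta_N\|_{H^1}\bigr)$, while the embedding $L^{6/5} \hookrightarrow H^{-\frac32-\eps}$ yields $\|(u+v)^5\|_{H^{-3/2-\eps}} \les \|u+v\|_{L^6}^5$. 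Squaring, integrating in $t$ over $[0,\tau]$, and interpolating $\int_0^\tau\|u\|_{L^6}^{10}\,dt \le \sup_t\|u\|_{L^6}^4 \cdot \int_0^\tau\|u\|_{L^6}^6\,dt$ (with the supremum bounded via $\sup_t\|\Theta_N\|_{H^1}$ by Sobolev), the energy estimate above combined with the stochastic bounds of Lemmas~\ref{LEM:Dr} and~\ref{LEM:hyp} gives $\E[\|\dot\Theta_N\|_{L^2_\tau H^1_x}^p] \le C(M,p)$ independent of $N$, completing \eqref{AC2}.

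The main obstacle is verifying that the coercive term $\|u\|_{L^6}^6$, weakened by the smoothing $\jb{\nb}^{-\frac12-\eps}$ built into $\W_N$, actually suffices to absorb the critically-scaled cubic $\int\Theta_N^3\,dx$ appearing in the energy identity: there is essentially no margin, and the estimate must be closed by a frequency-splitting of $\Theta_N$ analogous to the one driving the proof of Lemma~\ref{LEM:Dr8}. This reflects the critical nature of the $\Phi^3_3$-model, and it is the reason for choosing the specific $\jb{\nb}^{-\frac12-\eps}$-weighted quintic taming in~\eqref{AC0} rather than any other coercive regularization.
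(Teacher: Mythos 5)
Your high-level strategy matches the paper's: local existence by a contraction/Cauchy--Lipschitz argument, global existence on $[0,1]$ by an $H^1$-energy estimate whose cubic terms are absorbed by the coercive quintic contribution from $\dot\W_N$, and the moment bound \eqref{AC2} by reading $\dot\Theta$ off the equation. The high-frequency decoupling observation you make at the outset is a helpful clarification and is implicit in the paper's writing of the energy identity \eqref{AC3} for the full $\Theta$.

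However, there is a genuine gap in the Gronwall step. You reduce the nonlinear contributions to, among other terms, $\|\Theta_N\|_{L^2}^6$, and propose to close the estimate by controlling $\|\Theta_N\|_{L^2}^6$ ``by a combination of $\|u\|_{L^6}^6$ and $\|\Theta_N\|_{H^1}^2$ (a frequency-splitting argument in the spirit of Lemma~\ref{LEM:Dr8}).'' This step does not go through. By interpolation one gets, with $u = \jb{\nb}^{-\frac12-\eps}\Theta_N$,
\begin{align*}
\|\Theta_N\|_{L^2}^6 \les \|\Theta_N\|_{H^1}^{6\theta}\,\| u \|_{L^6}^{6(1-\theta)}, \qquad \theta = \tfrac{1+2\eps}{3+2\eps} \approx \tfrac13,
\end{align*}
so the two exponents are each roughly $3$ and add up to $6$. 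Any Young split of $a^{3}b^{3}$ producing $\eps_0 b^{6}$ on the coercive factor forces $a^{6}$, i.e.\ $\|\Theta_N\|_{H^1}^6$, which is super-linear and cannot be fed into Gronwall. Invoking Lemma~\ref{LEM:Dr8} also fails here: that lemma controls $\|\Ups_N\|_{L^2}^6$ by the \emph{specific} positive quantity $|\int(2Y_N\Ups_N + \Ups_N^2)\,dx|^3$ (plus $\|\Ups_N\|_{H^1}^2$ and $B(\o)$), which is the coercive term of the Bou\'e--Dupuis functional $\U_N$ in \eqref{K10}; nothing of that form is available in the energy identity for $\Theta$, whose only coercive term is $\|u\|_{L^6}^6$.

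The fix, which is also what the paper does, is to never generate an $L^2$-sixth power: bound the cubic term directly as a \emph{degree-three} quantity $\|\Theta_N\|_{L^3}^3$ (and arrange the $\Theta_N^2 Y_N$ term to also produce $\|\Theta_N\|_{L^3}^3$ rather than $\|\Theta_N\|_{L^2}^6$, using $\|\Theta_N^2\|_{W^{\frac12+\eps,1}} \les \|\Theta_N\|_{H^1}\|\Theta_N\|_{L^3}$), and then interpolate
\begin{align*}
\|\Theta_N\|_{L^3}^3 \les \|\Theta_N\|_{H^1}^{\frac{3+6\eps}{3+2\eps}}\,\|\Theta_N\|_{W^{-\frac12-\eps,6}}^{\frac{6}{3+2\eps}} \le \|\Theta_N\|_{H^1}^2 + \eps_0\|\Theta_N\|_{W^{-\frac12-\eps,6}}^{6} + C_{\eps_0},
\end{align*}
where the three-term Young inequality works precisely because the total degree is $3$ rather than $6$ (the exponent sum $1/p+1/q$ comes out strictly less than $1$, leaving room for a constant). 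The first term enters Gronwall, and the middle term is absorbed by the coercivity from $\dot\W_N$. So the delicate ``no-margin'' absorption you flag is real, but it is resolved by reducing the degree before interpolating, not by a Dr8-style frequency splitting.
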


\subsection{Absolute continuity}

In this subsection, we
 prove the absolute continuity of 
the $\Phi^3_3$-measure $\rho$ with respect to $\Law (Y(1) +\s \ZZ(1) + \W(1))$
by assuming Lemma~\ref{LEM:globald}.
 We present the proof of 
Lemma~\ref{LEM:globald} at the end of this appendix.
For simplicity, we use the same short-hand notations as in 
Sections \ref{SEC:Gibbs} and \ref{SEC:non}, 
for instance,
$Y=Y(1)$, $\ZZ = \ZZ(1)$, $\W = \W (1)$, and $\W_N =  \W_N(1)$.

Given $L \gg1$, let $\dl(L)$ and $R(L)$ satisfy $\dl (L) \to 0$ and $R(L) \to \infty$ as $L \to \infty$, which will be specified later.
In view of Lemma \ref{LEM:AC1}, it suffices to show that 
if $G: \C^{-100}(\T^3) \to \R$ is a bounded continuous function
with $G > 0$ and
\begin{align}
\PP \big(\{G(Y +\s \ZZ_N+ \W_N) \ge L \}\big) \ge 1 - \delta(L),
\label{AC4b}
\end{align}

\noi
then
we have
\begin{align}
 \limsup_{N \to \infty} \int \exp(- G(u)) d \rho_N (u) \le \exp(-R (L)), 
\label{AC00}
\end{align}

\noi
where $\rho_N$ denotes the truncated $\Phi^3_3$-measure defined in \eqref{GibbsN}.
Here, think of  $\Law(Y +\s \ZZ_N+ \W_N)$ as the measure  $\mu_N$, weakly 
converging to $\mu = \Law(Y +\s \ZZ+ \W)$.

By the Bou\'e-Dupuis variational formula (Lemma \ref{LEM:var3}) and 
the change of variables \eqref{YZ13},
we have
\begin{align*}
- \log &  \bigg( \int \exp(- G(u) - R^{\dia}_N(u)) d \mu (u) \bigg) \\
&= \inf_{\dot \Ups^N \in  \mathbb H_a^1}\E \bigg[ 
G(Y+ \Ups^N+\s \ZZ_N)
+ \ft R^{\dia}_N (Y+\Ups^N +\s \ZZ_N) + \frac12 \int_0^1 \| \dot \Ups^N(t) \|_{H^1_x}^2dt \bigg],
\end{align*}

\noi
where $\ft R^{\dia}_N$ is as in \eqref{KZ16}.
We proceed as in Subsection \ref{SUBSEC:tight}, using 
 Lemmas \ref{LEM:Dr7} and \ref{LEM:Dr8} with 
Lemma \ref{LEM:Dr}, \eqref{YZ15}, and the smallness of $|\s|$.
See
\eqref{K9}, \eqref{K10}, and \eqref{KZ14a}.
Thus, 
we have
\begin{align}
\begin{split}
- \log &  \bigg( \int \exp(- G(u) - R^{\dia}_N(u)) d \mu (u)\bigg) \\
&\quad \ge \inf_{\dot \Ups^N \in  \mathbb H_a^1}\E \bigg[ 
G(Y+ \Ups^N+\s \ZZ_N)
+ \frac1{20} \int_0^1 \| \dot \Ups^N(t) \|_{H^1_x}^2dt \bigg]
-C_1
\end{split}
\label{AC4a}
\end{align}

\noi
for some constant $C_1 > 0$.
For $\dot \Ups^N \in \Ha^1$,
let $\Theta^N$ be the solution to \eqref{ACde} with $\dot \Ups$ replaced by $\dot\Ups^N$.
For any $M>0$, define the stopping time $\tau_M$ as 
\begin{align}
\begin{split}
\tau_M
&=\min\bigg(1,  \, \min \bigg\{ \tau : \int_0^\tau \|\dot\Ups^N (t) \|_{H^1_x}^2 dt = M \bigg\}, \\
&\hphantom{XXXXX}
\min \bigg\{ \tau : \int_0^\tau \|\dot\Theta^N (t) \|_{H^1_x}^2 dt = 2C(M,2)\bigg\}\bigg),
\end{split}
\label{AC4d}
\end{align}
where $C(M,2)$ is the constant appearing in \eqref{AC2} with $p=2$.
Let
\begin{align}
\Theta^N_M(t) := \Theta^N (\min(t,\tau_M)).
\label{AC4e}
\end{align}
From 
\eqref{P2}, we have  $Y(0) = 0$, while 
$\ZZ^N(0) = 0$ by definition.
Then, from the change of variables~\eqref{YZ13}
with $\Dr (0) = 0$, 
we see that $\Ups^N(0) = 0$.
We also have $\W_N(0) = 0$ from \eqref{AC04}.
Then, substituting  \eqref{Jb0} into \eqref{ACde} 
and integrating from $t = 0$ to $1$ gives 
\begin{align}
Y + \Ups^N +\s \ZZ_N
= Y +  \Theta^N_M +\s \ZZ_N(Y + \Theta^N_M) + \W_N(Y + \Theta^N_M)
\label{Ja9}
\end{align}
on the set $\{\tau_M = 1\}$.

From the definition \eqref{AC4e} with 
\eqref{AC4d}, we have 
\begin{align}
\| \dot \Theta_M^N \|_{L^2_t([0,1];H^1_x)}^2 \le 2C(M,2)
\label{AC4f}
\end{align}

\noi
and thus the Novikov condition is satisfied.
Then, 
Girsanov's theorem 
\cite[Theorem 10.14]{DZ}
yields that $\Law(Y + \Theta_M^N)$ is absolutely continuous with respect to $\Law (Y)$;
see \eqref{AC4c} below.
Let $\Q = \Q^{\dot \Theta_M^N}$  the probability measure
whose  Radon-Nikodym derivative 
with respect to $\PP$
is given by the following stochastic exponential:
\begin{equation}
\frac{d\Q}{d\PP} = e^{ - \int_0^1 \jb{ \dot \Theta_M^N (t),  dY(t)}_{H^1_x} 
- \frac{1}{2} \int_0^1 \| \dot \Theta_M^N (t) \|_{H^1_x}^2dt}
\label{Ja9a}
\end{equation}

\noi
such that, under this new measure $\Q$, 
the process  
\[W^{ \dot  \Theta_M^N}(t) =W(t) +   \jb{\nb}  \dot  \Theta_M^N(t)
=   \jb{\nb} (Y + \dot  \Theta_M^N)(t)\]
is a cylindrical 
 Wiener process on $L^2(\T^3)$.
By setting 
 $Y^{ \dot  \Theta_M^N} (t) = \jb{\nb}^{-1}W^{ \dot  \Theta_M^N} (t)$, 
 we have 
\begin{align}
  Y^{ \dot  \Theta_M^N}(t) = Y(t) + \Theta_M^N(t).
\label{Ja9b}
\end{align}

\noi
Moreover, 
from 
 Cauchy-Schwarz inequality with 
 \eqref{Ja9a}
and 
the bound~\eqref{AC4f}, 
and then~\eqref{Ja9b}, we have 
\begin{align}
\begin{split}
\PP\big(\{Y + \Theta_M^N \in E\}\big) 
& = \int \ind_{\{ Y+  \Theta_M^N\in E\}} \frac{d \PP}{d\Q} d\Q
 \le C_M \Big(\Q\big(\{  Y^{ \dot  \Theta_M^N} \in E\}\big)\Big)^\frac12 \\
& =   C_M \Big(\PP\big(\{Y \in E\}\big)\Big)^\frac12
\end{split}
\label{AC4c}
\end{align}
for any measurable set $E$.

From \eqref{AC4a}, \eqref{Ja9}, and  the non-negativity of $G$, 
we have
\begin{align}
\eqref{AC4a} &\ge \inf_{\dot \Ups^N \in  \mathbb H_a^1}\E \bigg[ 
\Big( G \big( Y+  \Theta^N_M +\s \ZZ_N(Y + \Theta^N_M) + \W_N(Y + \Theta^N_M) \big)\notag \\
&\phantom{XXXXXX}
+ \frac1{20} \int_0^1 \| \dot \Ups^N (t) \|_{H^1_x}^2dt\Big)\ind_{\{ \tau_M = 1 \}}\phantom{]} \notag \\
&\phantom{XXXXXX}
+\Big( G(Y+ \Ups^N+\s \ZZ_N)
+ \frac1{20} \int_0^1 \| \dot \Ups^N (t) \|_{H^1_x}^2dt\Big)\ind_{\{ \tau_M < 1 \}} \bigg]
-C_1 \notag \\
&\ge \inf_{\dot \Ups^N \in  \mathbb H_a^1}\E \bigg[ 
G \big( Y+  \Theta^N_M +\s \ZZ_N(Y + \Theta^N_M) + \W_N(Y + \Theta^N_M) \big)
\cdot \ind_{\{ \tau_M = 1 \}}\phantom{]} \notag \\
&\phantom{XXXXXX}
+ \frac1{20} \int_0^1 \| \dot \Ups^N (t) \|_{H^1_x}^2dt \cdot \ind_{\{ \tau_M < 1 \}} \bigg] - C_1.
\notag
\end{align}

\noi
Then, 
using the definition 
\eqref{AC4d} of the stopping time $\tau_M$ 
and applying 
\eqref{AC4c} and 
\eqref{AC4b}, we have 
\begin{align}
\eqref{AC4a} &\ge \inf_{\dot \Ups^N \in  \mathbb H_a^1}
\E \bigg[ L \cdot \ind_{\{\tau_M = 1\} 
\cap \{ G(Y+  \Theta^N_M +\s \ZZ_N(Y + \Theta^N_M) + \W_N(Y + \Theta^N_M)) \ge L \}} \notag \\
&\phantom{XXXXXX}
+ \frac M {20} \cdot \ind_{\{\tau_M < 1\} \cap \{\int_0^1 \| \dot \Theta^N_M (t) \|_{H^1_x}^2 dt < 2C(M,2)\}} \bigg]  - C_1 \notag \\
&\ge \inf_{\dot \Ups^N \in  \mathbb H_a^1}
\Bigg\{ L \Big( \PP(\{\tau_M = 1\}) - C_M \delta (L)^ \frac 12 \Big) \notag \\
&\hphantom{XXXXX}
+ \frac M {20} \PP\bigg(\{\tau_M < 1\} \cap \bigg\{\int_0^1 \| \dot \Theta^N_M (t) \|_{H^1_x}^2 dt < 2C(M,2)\bigg\}\bigg) \Bigg\} - C_1.
\label{ACX1}
\end{align}

In view of  \eqref{AC2} with \eqref{AC4d} and \eqref{AC4e}, 
Markov's inequality gives 
\[
\PP \bigg( \int_0^1 \| \dot \Theta^N_M (t) \|_{H^1_x}^2 dt
= \int_0^{\tau_M} \| \dot \Theta^N_M (t) \|_{H^1_x}^2 dt \ge 2C(M,2) \bigg) \le  \frac 12,
\]

\noi
which yields
\begin{align}
\PP \bigg( \{\tau_M < 1\} \cap \bigg\{\int_0^1 \| \dot \Theta^N_M (t) \|_{H^1_x}^2dt < 2C(M,2) \bigg\} \bigg) \ge \PP(\{\tau_M < 1\})- \frac 12. 
\label{ACX2}
\end{align}

\noi
Now, we set   $M = 20L$.
Note  from 
\eqref{AC4d} that $\PP(\{\tau_M = 1\})+ \PP(\{\tau_M < 1\}) = 1$.
Then, 
from~\eqref{ACX1} and \eqref{ACX2}, 
we obtain
\begin{align*}
- \log & \bigg( \int \exp(- G(u)- R^{\dia}_N(u)) d \mu (u) \bigg) \\
&\ge  \inf_{\dot \Ups^N \in  \mathbb H_a^1} \bigg\{
L \Big( \PP(\{\tau_M = 1\}) - C'_{L} \delta(L)^ \frac 12 \Big)
+ L \Big( \PP(\{\tau_M < 1\})- \frac 12 \Big) \bigg\} - C_1 \\
&= L \Big( \frac 12 - C_{L}' \delta(L)^\frac 12 \Big) - C_1.
\end{align*}

\noi
Therefore, 
by choosing $\delta(L)>0$ such that $C'_{L} \delta(L)^\frac12 \to 0$ as $L \to \infty$,
this shows \eqref{AC00} with 
\[R(L) = L \Big( \frac 12 - C'_{L} \delta(L)^\frac 12 \Big) - C_1 + \log Z, \]

\noi
where $Z = \lim_{N \to \infty} Z_N$ denotes the 
limit of the partition functions  for the truncated $\Phi^3_3$-measures $\rho_N$.

\smallskip

\subsection{Proof of Lemma~\ref{LEM:globald}}

We conclude this appendix  by presenting the proof of Lemma \ref{LEM:globald}.

\begin{proof}[Proof of Lemma \ref{LEM:globald}]
By Lemma \ref{LEM:gko} (ii) and Sobolev's inequality, 
we have
\begin{align}
\begin{split}
\| (1-\Delta)^{-1} ( 2\Theta_N Y_N + \Theta_N^2)(t) \|_{H^{1}_x}
&\les 
\| ( 2\Theta_N Y_N + \Theta_N^2)(t) \|_{H^{-1}_x} \\
&\les
\| \Dr_N (t) \|_{H^{\frac 12+\eps}_x} \| Y_N (t) \|_{W^{-\frac 12-\eps, \infty}_x}
+ \| \Dr_N^2 (t) \|_{L_x^{\frac 65}} \\
&\les
\| \Dr_N (t) \|_{H^1_x} \| Y_N (t) \|_{W^{-\frac 12-\eps, \infty}_x}
+ \| \Dr_N (t) \|_{H_x^1}^2
\end{split}
\label{Jb8}
\end{align}

\noi
for small $\eps>0$.
Moreover,
from \eqref{AC0}, we have
\begin{align}
\begin{split}
\| \dot \W_N(Y(t)+\Theta(t)) \|_{H^1_x}
&\les
\| \jb{\nb}^{-\frac 12-\eps}Y_N(t) \|_{L^\infty_x}^{5}
+ \| \jb{\nb}^{-\frac 12-\eps} \Dr_N (t)\|_{L^\infty_x}^{5} \\
&\les 
\| Y_N(t) \|_{W^{-\frac 12-\eps, \infty}_x}^{5}
+ \| \Dr_N (t)\|_{H^1_x}^{5}.
\end{split}
\label{Jb9}
\end{align}

\noi
Therefore,
by studying the integral formulation of 
\eqref{ACde}, 
a contraction argument 
in $L^\infty([0,T]; H^1(\T^3))$ for small $T>0$
with  \eqref{Jb8} and \eqref{Jb9}
yields local well-posedness.
Here, the local existence time $T$ depends on 
$\| \Dr(0)\|_{H^1}$, 
$\|\dot \Ups\|_{L^2_T H^1_x}$,   and 
$\| Y_N \|_{L^6_TW^{-\frac 12-\eps, \infty}_x}$, 
where the last term is almost surely bounded in view of Lemma \ref{LEM:Dr}
and \eqref{embed}.

Next, we prove global existence on $[0, 1]$
by establishing an a priori bound on the $H^1$-norm of a solution.
From  \eqref{ACde} with \eqref{AC04}, we have
\begin{align}
\begin{split}
\frac 12 \frac{d}{dt} \| \Theta(t) \|_{H^1}^2
&= -\s \int_{\T^3} ( 2\Theta_N(t) Y_N(t) + \Theta_N^2(t)) \Theta_N(t) dx \\
&\quad
- \int_{\T^3} \big(\jb{\nabla}^{-\frac 12 - \eps} 
(Y_N(t)+ \Theta_N(t))\big)^{5}\cdot \jb{\nabla}^{-\frac 12 - \eps} \Theta_N(t) dx \\
&\quad
+ \int_{\T^3} \jb{\nabla} \Theta(t) \cdot \jb{\nabla} \dot \Ups(t) dx. \label{AC3}
\end{split}
\end{align}

\noi
The second term on the right-hand side of \eqref{AC3}, 
coming from $\W$ is a coercive term, 
allowing us to hide part of the first term on the right-hand side.

From Lemma \ref{LEM:Bes}
and  Young's inequality,
we have
\begin{align}
\begin{split}
\bigg| \int_{\T^3} &  ( 2\Theta_N(t) Y_N(t) + \Theta_N^2(t)) \Theta_N(t) dx \bigg|\\
& \les
\| \Theta_N (t) \|_{H^1}^2 + \|\Dr_N(t)\|_{L^3}^3
+ \| Y_N (t) \|_{\C^{-\frac 12-\eps}}^c
\end{split}
\label{AC3a}
\end{align}

\noi
for small $\eps > 0$ and some $c>0$.
We now estimate the second term on the right-hand side of~\eqref{AC3a}.
By  \eqref{interp}, we have
\begin{align}
\begin{split}
\| \Theta_N(t) \|_{L^3}^3
& \les \| \Theta_N(t) \|_{H^1}^{\frac{3 + 6\eps}{3 + 2\eps}}
 \|  \Theta_N(t) \|_{W^{-\frac 12 - \eps, 6}}^{\frac{6}{3 + 2\eps}}
\\
&\le \| \Theta_N(t) \|_{H^1}^2 + \eps_0 \| \Theta_N(t) \|_{W^{-\frac 12 - \eps, 6}}^{6} 
+ C_{\eps_0}
\end{split}
\label{AC3b}
\end{align}
for small $\eps, \eps_0 > 0$.
As for the coercive term, from \eqref{YY9} and Young's inequality,  we have 
\begin{align}
\begin{split}
\int_{\T^3}&  \big(\jb{\nabla}^{-\frac 12 - \eps} 
(Y_N(t)+ \Theta_N(t))\big)^{5}\jb{\nabla}^{-\frac 12 - \eps} \Theta_N(t) dx \\
&\ge
\frac 12 \int_{\T^3} (\jb{\nabla}^{-\frac 12 - \eps} \Theta_N(t))^{6} dx
- c \int_{\T^3} \big|(\jb{\nabla}^{-\frac 12 - \eps} Y_N(t))^{5}
\jb{\nabla}^{-\frac 12 - \eps} \Theta_N(t)\big| dx \\
& \ge 
\frac 12 \| \Theta_N(t) \|_{W^{-\frac 12 - \eps, 6}}^{6}
- c \| Y_N(t) \|_{W^{-\frac12-\eps, 6}}^{5} \| \Theta_N(t) \|_{W^{-\frac12-\eps, 6}}\\
& \ge
\frac 14  \| \Theta_N(t) \|_{W^{-\frac 12 - \eps, 6}}^{6}
- c \| Y_N(t) \|_{W^{-\frac12-\eps, 6}}^{6}.
\end{split}
\label{AC3c}
\end{align}

\noi
Therefore, putting \eqref{AC3}, \eqref{AC3a}, \eqref{AC3b}, and  \eqref{AC3c} together
we obtain
\begin{align*}
 \frac{d}{dt} \| \Theta(t) \|_{H^1}^2
& \les
\| \Theta(t) \|_{H^1}^2
+ \|\dot \Ups(t)\|_{H^1}^2
 +  \| Y (t) \|_{\C^{-\frac 12-\eps}}^c + \| Y(t) \|_{W^{-\frac12-\eps, 6}}^{6} + 1.
\end{align*}

\noi
By Gronwall's inequality, we then obtain
\begin{align}
\| \Theta(t) \|_{H^1}^2
\les
\|\dot \Ups\|_{L^2([0,t];H^1_x)}^2
+ \| Y_N \|_{L^c([0,1]; \C^{-\frac 12-\eps}_x)}^c
+ \| Y \|_{L^{6}([0,1];W^{-\frac12-\eps, 6}_x)}^{6} + 1, 
\label{AC3ca}
\end{align}

\noi
uniformly in $ 0 \le t \le 1$.
The a priori bound \eqref{AC3ca} together with Lemma \ref{LEM:Dr} allows us to iterate
the local well-posedness argument, 
guaranteeing  existence of the solution $\Dr$ on $[0, 1]$.

Lastly, we prove the bound \eqref{AC2}.
From \eqref{Jb8}, \eqref{Jb9}, 
and \eqref{AC3ca}, we have 
\begin{align}
\begin{split}
\| \s (1 & -\Delta)^{-1} ( 2\Theta_N Y_N + \Theta_N^2) + \dot \W_N(Y+\Theta) \|_{L^2([0,\tau];H^1_x)}\\
& \les \| \dot \Ups \|_{L^2 ([0,\tau]; H^1_x)}^5 +
\| Y_N \|_{L^q([0,1]; \C^{-\frac 12-\frac 12 \eps}_x)}^{c_0} + 1
\end{split}
\label{AC3d}
\end{align}

\noi
for some finite $q, c_0 \geq 1$
and for any $0 \le \tau \leq 1$.
Then, using the equation \eqref{ACde}, the bound~\eqref{AC2}
follows from \eqref{AC3d}, the bound on $\dot \Ups$,
and the following corollary to 
Lemma \ref{LEM:Dr}:
\begin{align*}
\E \Big[ \| Y_N \|_{L^q([0,1]; \C^{-\frac 12-\frac 12 \eps}_x)}^p \Big]
< \infty
\end{align*}

\noi
for any finite $p, q \geq 1$,  uniformly in $N \in \N$.
\end{proof}

\begin{remark}\label{REM:ac}\rm
A slight modification of the argument presented above shows that 
the tamed $\Phi^3_3$-measure $\nu_\dl$ constructed in Proposition
\ref{PROP:ref}
is 
 absolutely continuous
with respect to the shifted measure $\Law (Y(1) +\s \ZZ(1) + \W(1))$.
In this setting, we can use  the analysis
in Subsection~\ref{SUBSEC:refm}
(Step 1 of the proof of Proposition \ref{PROP:ref})
to arrive at  
\eqref{AC4a}.  The rest of the argument remains unchanged.
As a consequence, the $\s$-finite version $\cj \rho_\dl$ of
the $\Phi^3_3$-measure defined in \eqref{ref1a}
is also 
 absolutely continuous
with respect to the shifted measure $\Law (Y(1) +\s \ZZ(1) + \W(1))$
for any $\dl > 0$.

\end{remark}

\begin{ackno}\rm
T.O.~was supported by the European Research Council (grant no.~637995 ``ProbDynDispEq''
and grant no.~864138 ``SingStochDispDyn"). 
M.O.~was supported by JSPS KAKENHI  Grant number JP20K14342.
L.T.~was supported by the Deutsche Forschungsgemeinschaft (DFG, German Research Foundation) through the Hausdorff Center for Mathematics under Germany's Excellence Strategy - EXC-2047/1 - 390685813 and through CRC 1060 - project number 211504053.

\end{ackno}

\end{document}